\newtheorem{thm}{Theorem}[section]
\newtheorem{cor}[thm]{Corollary}
\newtheorem{fact}[thm]{Fact}
\newtheorem{lem}[thm]{Lemma}
\newtheorem{exmp}[thm]{Example}
\newtheorem{prop}[thm]{Proposition}
\newtheorem{rem}[thm]{Remark}
\theoremstyle{definition}
\newtheorem{ex}[thm]{Example}
\newtheorem{defn}[thm]{Definition}
\lstdefinestyle{customc}{
  belowcaptionskip=1\baselineskip,
  breaklines=true,
  xleftmargin=\parindent,
  language=Python,
  showstringspaces=false,
  basicstyle=\footnotesize\ttfamily,
  keywordstyle=\bfseries\color{green!40!black},
  commentstyle=\itshape\color{purple!40!black},
  identifierstyle=\color{blue},
  stringstyle=\color{orange},
}
\title{
Monte Carlo methods and stochastic simulations: from integration to  approximation of SDEs\footnote{see also: https://arxiv.org/abs/2208.05531}}
\author{Pawe\l \ Przyby\l owicz\\
AGH University of Krakow,
Faculty of Applied Mathematics \\
pprzybyl@agh.edu.pl}
\date{}           
\begin{document}

\maketitle
\tableofcontents
\chapter{Introduction}
In recent years dynamical systems (of deterministic and stochastic nature), describing many models in mathematics, physics, engineering and finances, become more and more complex. Numerical analysis  narrowed only to deterministic algorithms seems to be insufficient for such systems, since, for example, curse of dimensionality affects deterministic methods. Therefore, we can observe increasing popularity of Monte Carlo algorithms and, closely related with them, stochastic simulations based on stochastic differential equations. In these lecture notes we present main ideas concerned with Monte Carlo methods and their theoretical properties. We apply them to such  problems as  integration and approximation of solutions of deterministic/stochastic differential equations. We also discuss implementation of exemplary algorithms in Python programming language and their application to option pricing.

Part of these notes has been used during lectures for PhD students at AGH University of Science and Technology, Krakow, Poland, at summer semesters in the years 2020, 2021, and 2023.
\chapter{Deterministic quadrature rules for approximation of multidimensional Lebesgue integrals}
\label{sec: det_met}
Let us start with considering the following model problem in numerical analysis: approximate   value of the following multidimensional integral
\begin{equation}
\label{I_1}
	I(f)=\int\limits_{[0,1]^d}f(x)dx.
\end{equation}
The integral above is understood in the Lebesgue sense with respect to $d$-dimensional Lebesgue measure $\lambda_d$ and $f:[0,1]^d\to\mathbb{R}$ is at least Borel measurable and integrable. 

In this chapter we will be assuming that $f$ is at least continuous function. Then the Lebesgue integral $I(f)$ is equivalent to the Riemann integral of $f$, see, for example, Theorem 4.7.3 in \cite{MAKPOD}.

The simplest quadrature that approximates \eqref{I_1} is the {\it rectangle quadrature}, defined as follows. Let $n\in\mathbb{N}$, $h=1/n$, and we take
\begin{equation}
	t_{i}=i\cdot h, \  i=0,1,\ldots,n,
\end{equation}
\begin{equation}
\label{def_Ais}
	A_{i_1,\ldots,i_d}=[t_{i_1},t_{i_1+1}]\times\ldots\times [t_{i_d},t_{i_d+1}], 
\end{equation}
for $i_k\in\{0,1,\ldots,n-1\}$, $k=1,2,\ldots,d$. Then
\begin{equation}
\label{div_prop_1}
	[0,1]^d=\bigcup_{i_1,\ldots,i_d=0,1,\ldots,n-1}A_{i_1,\ldots,i_d}.
\end{equation}
Since for all $(i_1,\ldots,i_d)\neq (j_1,\ldots,j_d)$ we have that
\begin{equation}
\label{div_prop_2}
	\lambda_d(A_{i_1,\ldots,i_d}\cap A_{j_1,\ldots,j_d})=0,
\end{equation}
we can write that
\begin{eqnarray}
\label{I_f_approx}
	&&I(f)=\sum\limits_{i_1,\ldots,i_d=0}^{n-1}\int\limits_{A_{i_1,\ldots,i_d}}f(x)dx\approx \sum\limits_{i_1,\ldots,i_d=0}^{n-1}\int\limits_{A_{i_1,\ldots,i_d}}f(\xi_{i_1,\ldots,i_d})dx\notag\\
	&&=\sum\limits_{i_1,\ldots,i_d=0}^{n-1}f(\xi_{i_1,\ldots,i_d})\cdot\lambda_d(A_{i_1,\ldots,i_d})=h^d\cdot\sum\limits_{i_1,\ldots,i_d=0}^{n-1}f(\xi_{i_1,\ldots,i_d})
\end{eqnarray}
with $\xi_{i_1,\ldots,i_d}=(t_{i_1},t_{i_2},\ldots,t_{i_d})$. Hence, the multidimensional rectangle rule is defined by
\begin{equation}
\label{mul_dim_Q_def}	Q^d_n(f)=h^d\cdot\sum\limits_{i_1,\ldots,i_d=0}^{n-1}f(\xi_{i_1,\ldots,i_d}).
\end{equation}
Note that the cost of $Q^d_n$, measured by the number of function evaluation, is $n^d$.

We investigate error of $Q^d_n$ for integrands belonging to the following class
\begin{equation}
	\mathcal{F}^d_L=\{f:[0,1]^d\to\mathbb{R} \ | \ |f(x)-f(y)|\leq L\cdot\|x-y\|_{\infty}, \ x,y\in [0,1]^d\},
\end{equation}
where $L\in (0,+\infty)$ is the class parameter and $\|\cdot\|_{\infty}$ is the maximum norm, i.e., $\|x\|_{\infty}=\max_{1\leq i\leq d}|x_i|$ for $x=(x_1,\ldots,x_d)\in\mathbb{R}^d$. 

\begin{thm} 
\label{RECT_MD_ERR_1}
For any $f\in\mathcal{F}^d_L$ and for all $n\in\mathbb{N}$ we have that $Q^d_n(f)$ uses $n^d$ values of $f$ and 
	\begin{equation}
		|I(f)-Q^d_n(f)|\leq Ln^{-1}.
	\end{equation}	
\end{thm}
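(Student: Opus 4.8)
The plan is to bound the error on each subcube $A_{i_1,\ldots,i_d}$ separately and then sum. Starting from the exact decomposition $I(f)=\sum_{i_1,\ldots,i_d=0}^{n-1}\int_{A_{i_1,\ldots,i_d}}f(x)\,dx$ together with the representation of $Q^d_n(f)$ in \eqref{mul_dim_Q_def} as $\sum_{i_1,\ldots,i_d=0}^{n-1}\int_{A_{i_1,\ldots,i_d}}f(\xi_{i_1,\ldots,i_d})\,dx$ (valid since $\lambda_d(A_{i_1,\ldots,i_d})=h^d$), the triangle inequality gives
\begin{equation*}
|I(f)-Q^d_n(f)|\leq \sum_{i_1,\ldots,i_d=0}^{n-1}\int_{A_{i_1,\ldots,i_d}}|f(x)-f(\xi_{i_1,\ldots,i_d})|\,dx.
\end{equation*}

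Next I would estimate the integrand pointwise using the Lipschitz condition defining $\mathcal{F}^d_L$: for $x\in A_{i_1,\ldots,i_d}$ we have $|f(x)-f(\xi_{i_1,\ldots,i_d})|\leq L\|x-\xi_{i_1,\ldots,i_d}\|_\infty$. The key geometric observation is that $\xi_{i_1,\ldots,i_d}=(t_{i_1},\ldots,t_{i_d})$ is the "lower-left" corner of $A_{i_1,\ldots,i_d}$, so each coordinate difference satisfies $0\leq x_k-t_{i_k}\leq t_{i_k+1}-t_{i_k}=h$, hence $\|x-\xi_{i_1,\ldots,i_d}\|_\infty\leq h$ for every $x$ in that subcube. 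This uniform bound is the crux; once it is in hand the rest is bookkeeping.

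Substituting, each summand is at most $Lh\cdot\lambda_d(A_{i_1,\ldots,i_d})=Lh\cdot h^d$, and there are $n^d$ subcubes, so
\begin{equation*}
|I(f)-Q^d_n(f)|\leq n^d\cdot Lh^{d+1}=n^d\cdot L n^{-(d+1)}=Ln^{-1},
\end{equation*}
using $h=1/n$. The claim about the cost ($n^d$ function evaluations) is immediate from the definition \eqref{mul_dim_Q_def}, since the sum ranges over $(i_1,\ldots,i_d)\in\{0,\ldots,n-1\}^d$.

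I do not anticipate a genuine obstacle here; the only point requiring care is making the choice of evaluation node explicit so that the diameter bound $\|x-\xi_{i_1,\ldots,i_d}\|_\infty\leq h$ holds uniformly on the closed subcube (using the $\|\cdot\|_\infty$ norm is precisely what makes this clean, since the $\ell^\infty$-diameter of a cube of side $h$ is exactly $h$, independent of $d$). One should also note that overlaps of the $A_{i_1,\ldots,i_d}$ on their boundaries have $\lambda_d$-measure zero by \eqref{div_prop_2}, so the additive decomposition of the integral is legitimate.
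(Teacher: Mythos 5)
Your proposal is correct and follows essentially the same route as the paper's proof: decompose $I(f)-Q^d_n(f)$ over the subcubes $A_{i_1,\ldots,i_d}$, apply the Lipschitz bound $|f(x)-f(\xi_{i_1,\ldots,i_d})|\leq L\|x-\xi_{i_1,\ldots,i_d}\|_{\infty}\leq Lh$ on each subcube, and sum the $n^d$ contributions of size $Lh^{d+1}$ to obtain $Ln^{-1}$. The paper merely phrases the decomposition via Fubini's theorem and writes the $\ell^\infty$ bound as $\max\{|x_1-t_{i_1}|,\ldots,|x_d-t_{i_d}|\}\leq h$; there is no substantive difference.
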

{\bf Proof.} By the Fubini's theorem we have that
\begin{eqnarray}
	&&|I(f)-Q_n^d(f)|\leq \sum\limits_{i_1,\ldots,i_d=0}^{n-1}\int\limits_{A_{i_1,\ldots,i_d}}|f(x_1,\ldots,x_d)-f(t_{i_1},\ldots,t_{i_d})|dx_1\ldots dx_d\notag\\
	&&\leq L\sum\limits_{i_1,\ldots,i_d=0}^{n-1}\int\limits_{A_{i_1,\ldots,i_d}}\|(x_1,\ldots,x_d)-(t_{i_1},\ldots,t_{i_d})\|_{\infty}dx_1\ldots dx_d\notag\\
	&&= L\sum\limits_{i_1,\ldots,i_d=0}^{n-1}\int\limits_{t_{i_d}}^{t_{i_d+1}}\ldots\int\limits_{t_{i_1}}^{t_{i_1+1}}\max\{|x_1-t_{i_1}|,\ldots,|x_d-t_{i_d}|\}dx_1\ldots dx_d\notag\\
	&&\leq Lh \sum\limits_{i_1,\ldots,i_d=0}^{n-1}\int\limits_{t_{i_d}}^{t_{i_d+1}}\ldots\int\limits_{t_{i_1}}^{t_{i_1+1}}dx_1\ldots dx_d=Lh\cdot h^d\cdot\sum\limits_{i_1,\ldots,i_d=0}^{n-1}1=Lh^{d+1}\cdot n^d=Ln^{-1}.\notag
\end{eqnarray}
$\blacksquare$
\\
In the term of the cost the result above can be formulated as follows: for all $f\in\mathcal{F}^d_L$ and $N\in\mathbb{N}$
\begin{equation}
	|I(f)-Q^d_{N^{1/d}}(f)|\leq LN^{-1/d},
\end{equation}
where now $Q^d_{N^{1/d}}$ uses $N$ evaluations of $f$. Hence, for a fixed error-level $\varepsilon\in (0,+\infty)$ we need to take 
\begin{equation}
	N\geq\Big\lceil \Bigl(\frac{L}{\varepsilon}\Bigr)^d\Big\rceil=\Omega((1/\varepsilon)^d)
\end{equation}
function values in order to be sure that the error of the quadrature is at most $\varepsilon$. For example, if one take $\varepsilon=1/2$ then the number of evaluation needed depends exponentially in $d$. One can ask if this bad cost-error behavior is caused because of use of this particular quadrature rule or by assumptions imposed on $f$. The answer is negative, what follows from respective lower error bounds.
\section{Exercises}
\begin{itemize}
	\item [1.] Provide a version of  Theorem \ref{RECT_MD_ERR_1} in the case when we have non-uniform discretization of the cube $[0,1]^d$.
	\item [2.] Construct and investigate an error of  multidimensional version of the mid-point quadratic rule, approximating the integral \eqref{I_1}, which is defined by \eqref{mul_dim_Q_def} but this time with
 \begin{equation}
     \xi_{i_1,\ldots,i_d}=\Bigl((t_{i_1}+t_{i_1+1})/2,(t_{i_2}+t_{i_2+1})/2,\ldots,(t_{i_d}+t_{i_d+1})/2\Bigr).
 \end{equation}
 \item [3.] The Taylor-based composite quadrature rule for one-dimensional integrals is defined as follows:  let $n\in\mathbb{N}$, $h=(b-a)/n$, $t_{i}=a+ih$, $i=0,1,\ldots,n$, and
\begin{equation}
	l_n(t)=\sum\limits_{i=0}^{n-1}l_{i}(t)\cdot\mathbf{1}_{[t_{i},t_{i+1})}(t), \quad t\in [a,b],
\end{equation}
where
\begin{equation}
	l_{i}(t)=\sum\limits_{j=0}^{r-1}\frac{f^{(j)}(t_{i})}{j!}(t-t_{i})^j, \quad t\in [t_{i},t_{i+1}], \ i=0,1,\ldots,n-1.
\end{equation}
Then the  quadrature is defined by taking
\begin{equation}
	\mathcal{T}_n(f):=I(l_n)=\int\limits_a^b l_n(t)dt=\sum\limits_{i=0}^{n-1}\sum\limits_{j=0}^{r-1}\frac{f^{(j)}(t_{i})}{(j+1)!}(t_{i+1}-t_{i})^j.
\end{equation}
Show that for any $f\in C^{r}([a,b])$, $r\in\mathbb{N}$, it holds
	\begin{equation}
 \label{T_n_lim}
		\lim\limits_{n\to +\infty}n^r\cdot(I(f)-\mathcal{T}_n(f))=C_{\mathcal{T}}(f),
	\end{equation}
	where
	\begin{equation}
		C_{\mathcal{T}}(f)=\frac{(b-a)^r}{(r+1)!}\Bigl(f^{(r-1)}(b)-f^{(r-1)}(a)\Bigr).
	\end{equation}
 We call $C_{\mathcal{T}}(f)$ the {\it exact asymptotic constant}. Note that \eqref{T_n_lim} gives the exact asymptotic behavior of error of $\mathcal{T}_n$. 

  We can define the following corrected Taylor-based quadrature by
\begin{equation}
	\mathcal{\tilde T}_n(f):=\mathcal{T}_n(f)+\frac{C_{\mathcal{T}}(f)}{n^r}=\mathcal{T}_n(f)+\frac{h^r}{(r+1)!}\Bigl(f^{(r-1)}(b)-f^{(r-1)}(a)\Bigr).
\end{equation}
Note that for any $n\in\mathbb{N}$ the quadrature $\mathcal{\tilde T}_n$ uses only values of $f^{(j)}(t_{i})$ for $j=0,1,\ldots,r-1$, $i=0,1,\ldots,n-1$ (as $\mathcal{T}_n)$ with one additional evaluation of $f^{(r-1)}$ at $b$.

 Give a proof of the fact that for any $f\in C^{r}([a,b])$, $r\in\mathbb{N}$, it holds
	\begin{equation}
		\lim\limits_{n\to +\infty} n^{r} (I(f)-\mathcal{\tilde T}_n(f))=0.
	\end{equation}
Hence, the rate of convergence of the method $\mathcal{\tilde T}_n$ is greater than $n^{-r}$.
\end{itemize}

\chapter{Crude Monte Carlo method}
\label{sec: CMCM}
Throughout this chapter we denote by $(\Omega,\Sigma,\mathbb{P})$ a complete probability space. We denote by $\|\cdot\|_2$ the Euclidean norm in $\mathbb{R}^d$. 

We can roughly say that Monte Carlo method is a direct consequence of the strong law of large numbers which, in its simpler version, goes as follows (see, for example, \cite{GRTAL}, \cite{cohn}).
\begin{thm} (Strong Law of Large Numbers, SLLN)
\label{THM_SLLN}
Let $(X_j)_{j\in\mathbb{N}}$ be a sequence of independent and identically distributed random variables with values in $\mathbb{R}^d$. If $\mathbb{E}\|X_1\|_2<+\infty$, then
\begin{equation}
	\frac{1}{N}\sum\limits_{j=1}^NX_j\to \mathbb{E}(X_1) \ \hbox{as} \ N\to +\infty \ a.s.
\end{equation}
If $\mathbb{E}\|X_1\|_2=+\infty$, then
\begin{equation}
	\limsup\limits_{N\to +\infty}\frac{1}{N}\sum\limits_{j=1}^NX_j=+\infty \ a.s.
\end{equation}
\end{thm}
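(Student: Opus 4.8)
The plan is to handle the convergent case by Etemadi's truncation method and the divergent case by the second Borel--Cantelli lemma. First I would reduce to the scalar, nonnegative setting: since $\mathbb{E}\|X_1\|_2<+\infty$ forces $\mathbb{E}|X_1^{(i)}|<+\infty$ for every coordinate $i$, and each coordinate sequence $(X_j^{(i)})_{j\in\mathbb{N}}$ is itself i.i.d., it suffices to prove the one-dimensional statement; and writing a real variable as $X_j=X_j^+-X_j^-$ with $\mathbb{E}X_1^\pm<+\infty$, it is enough to treat $X_j\ge 0$. So assume $X_j\ge0$, write $\mu=\mathbb{E}X_1<+\infty$ and $S_N=\sum_{j=1}^N X_j$. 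Set $Y_j=X_j\mathbf{1}_{\{X_j\le j\}}$ and $T_N=\sum_{j=1}^N Y_j$, and record two reductions: (i) $\sum_{j\ge1}\mathbb{P}(X_j\ne Y_j)=\sum_{j\ge1}\mathbb{P}(X_1>j)\le\mathbb{E}X_1<+\infty$, so by the first Borel--Cantelli lemma a.s.\ $X_j=Y_j$ for all but finitely many $j$, hence $\frac1N(S_N-T_N)\to0$ a.s., and it suffices to prove $T_N/N\to\mu$ a.s.; (ii) by monotone convergence $\mathbb{E}Y_j=\mathbb{E}\big(X_1\mathbf{1}_{\{X_1\le j\}}\big)\to\mu$, hence by Cesàro $\frac1N\sum_{j=1}^N\mathbb{E}Y_j\to\mu$.

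Next I would estimate the variances. Interchanging summation,
\[
\sum_{j\ge1}\frac{\mathrm{Var}(Y_j)}{j^2}\le\sum_{j\ge1}\frac{1}{j^2}\,\mathbb{E}\big(X_1^2\mathbf{1}_{\{X_1\le j\}}\big)=\mathbb{E}\Big(X_1^2\sum_{j\ge\max(1,\lceil X_1\rceil)}\tfrac1{j^2}\Big)\le C\,\mathbb{E}\big(X_1^2/\max(1,X_1)\big)\le C(1+\mu)<+\infty .
\]
Fix $\alpha>1$ and put $k_n=\lfloor\alpha^n\rfloor$. By Chebyshev's inequality, for each $\varepsilon>0$,
\[
\sum_{n\ge1}\mathbb{P}\big(|T_{k_n}-\mathbb{E}T_{k_n}|>\varepsilon k_n\big)\le\frac1{\varepsilon^2}\sum_{j\ge1}\mathrm{Var}(Y_j)\sum_{n:\,k_n\ge j}\frac1{k_n^2}\le\frac{C(\alpha)}{\varepsilon^2}\sum_{j\ge1}\frac{\mathrm{Var}(Y_j)}{j^2}<+\infty,
\]
using the elementary bound $\sum_{n:\,k_n\ge j}k_n^{-2}\le C(\alpha)\,j^{-2}$. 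By Borel--Cantelli and reduction (ii), $T_{k_n}/k_n\to\mu$ a.s.

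To pass from the subsequence to the full sequence, note that for $k_n\le N<k_{n+1}$ nonnegativity gives $\tfrac{k_n}{k_{n+1}}\tfrac{T_{k_n}}{k_n}\le\tfrac{T_N}{N}\le\tfrac{k_{n+1}}{k_n}\tfrac{T_{k_{n+1}}}{k_{n+1}}$; letting $N\to\infty$ and using $k_{n+1}/k_n\to\alpha$ yields $\alpha^{-1}\mu\le\liminf_N T_N/N\le\limsup_N T_N/N\le\alpha\mu$ a.s., and intersecting over $\alpha\in\{1+1/m:m\in\mathbb{N}\}$ gives $T_N/N\to\mu$ a.s.; combined with (i) this proves $S_N/N\to\mu$ a.s., and undoing the reductions yields the first assertion. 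For the second assertion, suppose $\mathbb{E}\|X_1\|_2=+\infty$ and fix $c>0$; then $\sum_{j\ge1}\mathbb{P}(\|X_j\|_2>cj)=\sum_{j\ge1}\mathbb{P}(\|X_1\|_2/c>j)=+\infty$ (since $\mathbb{E}(\|X_1\|_2/c)=+\infty$), and the events are independent, so by the second Borel--Cantelli lemma a.s.\ $\|X_j\|_2>cj$ for infinitely many $j$. As $X_j=S_j-S_{j-1}$, on this event $\max(\|S_j\|_2,\|S_{j-1}\|_2)>cj/2$ infinitely often, so $\limsup_N\|S_N\|_2/N\ge c/2$ a.s.; intersecting over $c\in\mathbb{N}$ gives the stated divergence (understood in the norm $\|\cdot\|_2$).

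The main obstacle is that the truncated variables $Y_j$ are \emph{not} identically distributed — letting the truncation level grow with $j$ is exactly what makes $\sum_j\mathrm{Var}(Y_j)/j^2$ finite — so the double-sum interchange and the geometric-subsequence estimate $\sum_{n:\,\lfloor\alpha^n\rfloor\ge j}\lfloor\alpha^n\rfloor^{-2}=O(j^{-2})$ must be carried out by hand rather than quoted; one must also check carefully that the a.s.\ eventual equality $X_j=Y_j$ genuinely transfers the Cesàro limit from $T_N$ to $S_N$, and that the reductions to scalar and then to nonnegative variables are valid.
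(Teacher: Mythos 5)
Your proof is correct, and note first that the paper does not actually prove Theorem \ref{THM_SLLN}: it is quoted as a known result with pointers to \cite{GRTAL} and \cite{cohn}, so there is no in-text argument to compare yours against step by step. What you give is the standard self-contained Etemadi route: coordinatewise reduction (valid since $|x_i|\leq\|x\|_2$), splitting into positive and negative parts, truncation $Y_j=X_j\mathbf{1}_{\{X_j\leq j\}}$ with the first Borel--Cantelli lemma, the Tonelli interchange giving $\sum_j \mathrm{Var}(Y_j)/j^2<+\infty$, Chebyshev plus Borel--Cantelli along $k_n=\lfloor\alpha^n\rfloor$ with the elementary bound $\sum_{n:\,k_n\geq j}k_n^{-2}=O(j^{-2})$, and the monotone sandwich with intersection over rational $\alpha>1$; all the points you flag as needing care do go through as you describe, and as a bonus this argument needs only pairwise independence of the $X_j$. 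For the second assertion, your use of the second Borel--Cantelli lemma together with $X_j=S_j-S_{j-1}$ correctly yields $\limsup_N\|S_N\|_2/N=+\infty$ a.s.; one remark is that the theorem as literally written ($\limsup_N \frac{1}{N}\sum_{j=1}^N X_j=+\infty$) does not parse for $\mathbb{R}^d$-valued summands and is false for $d=1$ without a sign restriction (take $X_j\leq 0$ with $\mathbb{E}|X_1|=+\infty$), so your interpretation in the norm $\|\cdot\|_2$ is the correct reading of the statement rather than a gap in your argument.
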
 
We start with the well-known example of Monte Carlo simulation of $\pi$ number.
\section{Monte Carlo simulation of $\pi$}
Let us consider circle $O(C,r)$ with the centre $C=(c_1,c_2)\in\mathbb{R}^2$ and the radius $r\in (0,+\infty)$ inscribed in the square $S$ with the side lenght $2r$. We randomly pick $N$ points $X_j$ from the squre $S$ and we count  the number $K$ of points $X_j$ that lie in the circle $O(C,r)$, i.e., $(X_j)_{j=1,2,\ldots,N}$ is the iid sequence of random variables with the common law $U([s_1-r,s_1+r]\times [s_2-r,s_2+r])$, and
\begin{equation}
	K=\sum\limits_{j=1}^N\mathbf{1}_{O(C,r)}(X_j).
\end{equation}
Note that the ratio of the area of the circle $O$ to the area of the square $S$ is $(\pi r^2)/(2r)^2=\pi/4$ and it is intuitively clear that we have  
\begin{equation}
\label{ratio_pi}
	\frac{\pi}{4}\approx\frac{K}{N},
\end{equation}
as $N\to +\infty$. Hence, we can take
\begin{equation}
	\hat\pi_N=4K/N=\frac{4}{N}\sum\limits_{j=1}^N\mathbf{1}_{O(C,r)}(X_j)
\end{equation}
as the Monte Carlo approximation of $\pi$. Let us now establish rigorously properties of the estimator $\hat\pi_N$. 

Since  value of areas' ratio in \eqref{ratio_pi} is independent of $r$ and the circle's center $C$, we take for simplicity that $c_1=c_2=0$ and $r=1$. Moreover,
\begin{eqnarray}
	&&\mathbb{E}(\mathbf{1}_{O(C,r)}(X_j))=\mathbb{P}(X_j\in O(C,r))=\int\limits_{\mathbb{R}^2}\mathbf{1}_{O(C,r)}(x,y)\cdot\frac{1}{2}\mathbf{1}_{[-1,1]}(x)\cdot\frac{1}{2}\mathbf{1}_{[-1,1]}(y)dxdy\notag\\
	&&=\frac{1}{4}\int\limits_{O(C,r)}dxdy=\frac{1}{4}\lambda_2(O(C,r))=\frac{\pi}{4},
\end{eqnarray}
since $X_j=[X_j^1,X_j^2]$, and $X_j^1,X_j^2$ are independent and uniformly distributed random variables in $[-1,1]$. Hence, by Theorem \ref{THM_SLLN} we get that
\begin{equation}
	\hat\pi_N\to 4\cdot\mathbb{E}(\mathbf{1}_{O(C,r)}(X_1))=\pi
\end{equation}
almost surely as $N\to+\infty$. Furthermore, for all $N\in\mathbb{N}$
\begin{equation}
	\mathbb{E}(\hat\pi_N)=\frac{4}{N}\sum\limits_{j=1}^N \mathbb{P}(X_j\in O(C,r))=4\cdot\mathbb{P}(X_1\in O(C,r))=\pi,
\end{equation}
since $(X_j)_{j=1,2,\ldots,N}$ are identically distributed. Hence, $\hat\pi_N$ is the unbiased estimator of $\pi$. We also compute the mean square error of the estimatior $\hat\pi_N$,
\begin{equation}
	(\mathbb{E}|\pi-\hat\pi_N|^2)^{1/2}=(\mathbb{E}(\hat\pi_N^2)-\pi^2)^{1/2},
\end{equation}
where, by the independence of $X_i$ and $X_j$ for $i\neq j$, it holds that
\begin{eqnarray}
	&&\mathbb{E}(\hat\pi_N^2)=\frac{16}{N^2}\mathbb{E}\Bigl(\sum\limits_{j=1}^N\mathbf{1}_{O(C,r)}(X_j)\Bigr)^2\notag\\
	&&=\frac{16}{N^2}\Bigl(\sum\limits_{j=1}^N \mathbb{P}(X_j\in O(C,r))+\sum\limits_{i\neq j} \mathbb{P}(X_i\in O(C,r))\cdot\mathbb{P}(X_j\in O(C,r))\Bigr)\notag\\
	&&=\frac{16}{N^2}\Bigl(N\cdot\frac{\pi}{4}+(N^2-N)\frac{\pi^2}{16}\Bigr)=\frac{4\pi}{N}+\pi^2(1-\frac{1}{N}).
\end{eqnarray}
Therefore, for all $N\in\mathbb{N}$
\begin{equation}
	(\mathbb{E}|\pi-\hat\pi_N|^2)^{1/2}=(\pi (4-\pi))^{1/2}\cdot N^{-1/2},
\end{equation}
and the mean square error is inversely proportional to the square root of number of samples $N$. 
\section{Definition and basic properties of the crude Monte-Carlo method}
Again we consider the problem of approximating the integral \eqref{I_1} but now we assume even less then before for the integrand $f$. Namely, we assume that $f\in L^2([0,1]^d,\mathcal{B}([0,1]^d),\lambda_d)$. (Formally $L^2([0,1]^d,\mathcal{B}([0,1]^d),\lambda_d)$ consists of equivalence classes and by writing $f\in L^2([0,1]^d,\mathcal{B}([0,1]^d),\lambda_d)$ we mean that we take any representation of the equivalence class. As we will see later this do not cause any problem.) 
By the Schwarz inequality and the fact that $\lambda_d([0,1]^d)=1$ we have $I(|f|)<+\infty$. 
For $N\in\mathbb{N}$ we set
\begin{equation}
\label{crude_MC}
	MC_N(f)=\frac{1}{N}\sum\limits_{j=1}^N f(\xi_j),
\end{equation}
where $(\xi_j)_{j\in\mathbb{N}}$ is a sequence of independent and identically distributed random variables on $(\Omega,\Sigma,\mathbb{P})$, and $\xi_j\sim U([0,1]^d)$, $j\in\mathbb{N}$. Note that the cost of $MC_N(f)$ is equal to $N$ evaluations of $f$.

By the SLLN we have that
\begin{equation}
	MC_N(f)\to \mathbb{E}(f(\xi_1))=I(f) \ \Bigl(=\int\limits_{[0,1]^d}f(x)dx\Bigr) \ a.s.,
\end{equation}
and we can ask how good (in the terms of suitably defined error) the approximation is?
\begin{thm}
\label{mean_sq_MC}
For any $f\in L^2([0,1]^d,\mathcal{B}([0,1]^d),\lambda_d)$ the following holds:
\begin{itemize}
\item [(i)] For all $N\in\mathbb{N}$
\begin{equation}
	\mathbb{E}(MC_N(f))=I(f).
\end{equation}
\item [(ii)] For all $N\in\mathbb{N}$
\begin{equation}
		\Bigl(\mathbb{E}|I(f)-MC_N(f)|^2\Bigr)^{1/2}=(Var(MC_N(f)))^{1/2}=\frac{\sigma(f)}{\sqrt{N}},
\end{equation}
	where
	\begin{equation}
		\sigma^2(f):=I(f^2)-(I(f))^2.
	\end{equation}
\item [(iii)]  $MC_N(f)$ is independent of the choice of the representation of
the equivalence class $f\in  L^2([0,1]^d,\mathcal{B}([0,1]^d),\lambda_d)$ in the following sense: let $f,g$ be two representations of the same equivalence class in $L^2([0,1]^d,\mathcal{B}([0,1]^d),\lambda_d)$. Then 
\begin{equation}
\label{equiv_indep}
	\mathbb{P}\Bigl(\bigcap_{N\in\mathbb{N}} \{MC_N(f)=MC_N(g)\}\Bigr)=1.
\end{equation}
\item [(iv)] $\forall_{f\in L^2([0,1]^d,\mathcal{B}([0,1]^d),\lambda_d)}\Bigl[\sigma(f)=0\Longleftrightarrow \exists_{c\in\mathbb{R}}\lambda_d\Bigl(\{x\in [0,1]^d \ | \ f(x)=c\}\Bigr)=1\Bigr]$.
\end{itemize}
\end{thm}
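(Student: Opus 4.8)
The plan is to prove the four parts in order, since (i) feeds into (ii), and (iii) and (iv) are essentially independent measure-theoretic observations.

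\textbf{Part (i).} This is immediate from linearity of expectation and the fact that each $\xi_j \sim U([0,1]^d)$, so $\mathbb{E}(f(\xi_j)) = \int_{[0,1]^d} f(x)\,dx = I(f)$; one only needs $f \in L^1$, which follows from $f \in L^2$ via the Schwarz inequality as already noted. So $\mathbb{E}(MC_N(f)) = \frac{1}{N}\sum_{j=1}^N I(f) = I(f)$.

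\textbf{Part (ii).} Using (i), the mean square error equals the variance of $MC_N(f)$. Since the $\xi_j$ are independent and identically distributed, $\mathrm{Var}(MC_N(f)) = \frac{1}{N^2}\sum_{j=1}^N \mathrm{Var}(f(\xi_j)) = \frac{1}{N}\mathrm{Var}(f(\xi_1))$. Then $\mathrm{Var}(f(\xi_1)) = \mathbb{E}(f(\xi_1)^2) - (\mathbb{E} f(\xi_1))^2 = I(f^2) - (I(f))^2 = \sigma^2(f)$, which is finite precisely because $f \in L^2$. Taking square roots gives the claim. The only care needed here is to confirm that $f^2 \in L^1$ so that $\sigma^2(f)$ is well defined and finite, which is exactly the $L^2$ assumption.

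\textbf{Part (iii).} Let $f, g$ be two representatives of the same class, so $\lambda_d(\{f \neq g\}) = 0$. Set $B = \{x : f(x) \neq g(x)\}$. For each fixed $j$, $\mathbb{P}(\xi_j \in B) = \lambda_d(B) = 0$, hence $\mathbb{P}(f(\xi_j) = g(\xi_j)) = 1$. Intersecting over $j \in \mathbb{N}$ (a countable intersection of probability-one events) gives $\mathbb{P}(\forall j:\ f(\xi_j) = g(\xi_j)) = 1$, and on that event $MC_N(f) = MC_N(g)$ for all $N$ simultaneously, which is \eqref{equiv_indep}.

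\textbf{Part (iv).} The direction ($\Leftarrow$) is easy: if $f = c$ $\lambda_d$-a.e., then by (iii)-type reasoning $f(\xi_1) = c$ a.s., so $\mathrm{Var}(f(\xi_1)) = 0$, i.e. $\sigma(f) = 0$; alternatively compute directly $I(f^2) = c^2 = (I(f))^2$. For ($\Rightarrow$), suppose $\sigma^2(f) = \mathrm{Var}(f(\xi_1)) = 0$. A random variable with zero variance equals its mean almost surely, so $f(\xi_1) = I(f)$ a.s., i.e. $\mathbb{P}(\xi_1 \in \{x : f(x) = I(f)\}) = 1$; since $\xi_1 \sim U([0,1]^d)$ this says $\lambda_d(\{x : f(x) = I(f)\}) = 1$, so we may take $c = I(f)$. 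I expect the main (minor) obstacle to be stating cleanly the standard fact that a square-integrable random variable of zero variance is a.s.\ constant — one can cite Chebyshev's inequality or the equality condition in $\mathbb{E}(Y - \mathbb{E}Y)^2 = 0 \Rightarrow Y = \mathbb{E}Y$ a.s.\ — and making sure the translation between "$f(\xi_1) = c$ a.s." and "$\lambda_d(\{f = c\}) = 1$" is justified by the fact that the law of $\xi_1$ is exactly $\lambda_d$ on $[0,1]^d$.
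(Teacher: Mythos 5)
Your proposal is correct. Parts (i) and (ii) follow the paper's proof essentially verbatim (linearity of expectation, iid samples, and the variance of a sum of independent variables). In part (iii) you take a slightly different, and arguably cleaner, route: you pull the null set $B=\{f\neq g\}$ back through each sample, get $\mathbb{P}(f(\xi_j)=g(\xi_j))=1$ for every $j$, and intersect over the countably many samples, so that on a single full-measure event $MC_N(f)=MC_N(g)$ holds for all $N$ simultaneously. The paper instead fixes $N$, bounds $\|MC_N(f)-MC_N(g)\|_{L^1(\Omega)}$ by $I(|f-g|)=0$ to conclude $\mathbb{P}(MC_N(f)=MC_N(g))=1$ for each $N$, and then intersects over $N$ (its Exercise 3); both arguments rest on the same fact that the law of $\xi_j$ is $\lambda_d$, yours just avoids the $L^1$ detour. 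Part (iv) is left as an exercise in the paper, so your argument supplies what the paper omits, and it is sound: the only points needing care are the ones you flag, namely that a square-integrable random variable with zero variance is a.s.\ equal to its mean (equality case of $\mathbb{E}(Y-\mathbb{E}Y)^2=0$), and that "$f(\xi_1)=c$ a.s." translates into "$\lambda_d(\{f=c\})=1$" because the law of $\xi_1$ is exactly $\lambda_d$ on $[0,1]^d$ and $\{f=c\}$ is Borel.
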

{\bf Proof.} Note that $(f(\xi_j))_{j\in\mathbb{N}}$ is the sequence of identically distributed and independent random variables. Hence
\begin{equation}
\mathbb{E}(MC_N(f))=\frac{1}{N}\sum\limits_{j=1}^N\mathbb{E}(f(\xi_j))=\frac{1}{N}\cdot N\cdot\mathbb{E}(f(\xi_1))=\frac{1}{\lambda_d([0,1]^d)}\int\limits_{[0,1]^d}f(x)dx=I(f).
\end{equation}
Therefore, by the properties of variance we get
\begin{eqnarray}
	&&\mathbb{E}|I(f)-MC_N(f)|^2=\mathbb{E}\Bigl(|\mathbb{E}(MC_N(f))-MC_N(f)|^2\Bigr)\notag\\
	&&=Var(MC_N(f))=\frac{1}{N^2}\sum\limits_{j=1}^N Var(f(\xi_j))=\frac{1}{N^2}\cdot N\cdot Var(f(\xi_1))=\frac{\sigma^2(f)}{N},\notag
\end{eqnarray}
since
\begin{equation}
	Var(f(\xi_1))=\sigma^2(f).
\end{equation}
Let $f,g$ be two representations of the same equivalence class in $L^2([0,1]^d,\mathcal{B}([0,1]^d),\lambda_d)$ and set
\begin{equation}
	A=\{x\in [0,1]^d \ | \ f(x)\neq g(x)\}.
\end{equation}
Of course $\lambda_d(A)=0$. Hence, for all $N\in\mathbb{N}$ we get by the triangle inequality
\begin{eqnarray}
	&&\|MC_N(f)-MC_N(g)\|_{L^1(\Omega;\mathbb{R})}\leq\frac{1}{N}\sum\limits_{j=1}^N\|f(\xi_j)-g(\xi_j)\|_{L^1(\Omega;\mathbb{R})}\notag\\
	&&=\|(f-g)(\xi_1)\|_{L^1(\Omega;\mathbb{R})}=I(|f-g|)\notag\\
	&&=\int\limits_{[0,1]^d\setminus A}|f(x)-g(x)|dx+\int\limits_A|f(x)-g(x)|dx=0.
\end{eqnarray}
Thereby, for all $N\in\mathbb{N}$
\begin{equation}
	\mathbb{P}\Bigl(\{MC_N(f)=MC_N(g)\}\Bigr)=1,
\end{equation}
which implies \eqref{equiv_indep}.  The proof of $(iv)$ is left as an exercise. \ \ \ $\blacksquare$ \\ \\
\begin{rem}\rm Note that if we assume that $f\in L^2([0,1]^d,\mathcal{B}([0,1]^d),\lambda_d)$,  then any deterministic quadrature rule based on function evaluations is not well-defined. This is due to the fact that the evaluation of an (in fact) equivalence class of a function at a point $x\in [0,1]^d$ is mathematically senseless - in particular the value of such quadrature will depend on the choice of representation of the equivalence class. Having in mind Theorem \ref{mean_sq_MC} (iii), this is an another issue where Monte Carlo quadratures overcome the deterministic ones.
$\square$
\end{rem}
\begin{rem} \rm Formally in Theorem \ref{mean_sq_MC} we have shown that for any equivalence class $[f]\in L^2([0,1]^d,\mathcal{B}([0,1]^d),\lambda_d)$ it mathematically correct to take
\begin{equation}
	MC_N([f]):=MC_N(f).
\end{equation}
$\square$
\end{rem}
\begin{rem}\rm  From (iv) in the Theorem above we see that the method $MC_N$ is exact  only in the trivial case when $f$ is (almost surely) constant function. Therefore, in contrary to deterministic quadrature rules, it does not make sense to provide a definition of  degree of the quadrature $MC_N$. $\square$
\end{rem}
Theorem above tells that the mean-square error of $MC_N(f)$ is $O(N^{-1/2})$ with the cost of $N$ evaluations of $f$ and independently of the dimension $d$. But how good is our approximation for fixed $\omega\in\Omega$, when we compute $MC_N(f)(\omega)$ based on the realizations $\xi_1(\omega),\ldots,\xi_N(\omega)$? The question is very important and natural, since such a situation occurs in practice on a computer. To answer this question we will investigate empirical confidence intervals and the error of the Monte-Carlo method in the almost sure sense. 

\subsection{Theoretical and empirical asymptotic confidence intervals}
\label{subs_emp_conf_int}
Recall that by the Central Limit Theorem \ref{CLT} and \eqref{CLT_F1} it holds
\begin{equation}
\label{ctg_theor_int}
	\lim\limits_{N\to +\infty}\mathbb{P}\Bigl(\Bigl\{\sqrt{N}\cdot\frac{MC_N(f)-I(f)}{\sigma(f)}\in [a,b]\Bigr\}\Bigr)=\Phi(b)-\Phi(a),
\end{equation}
for all $-\infty<a<b<+\infty$, where $\displaystyle{\Phi(t)=\int\limits_{-\infty}^t \frac{1}{\sqrt{2\pi}}e^{-x^2/2}dx}$. Let us fix confidence level $\alpha\in (0,1)$ and let $q_{\alpha}$ be the two-sided $\alpha$-quantile defined as the (unique) solution of the equation
\begin{equation}
	\mathbb{P}(\{|Z|\leq q_{\alpha}\})=\alpha,
\end{equation}
where $Z\sim N(0,1)$. We also have that $\displaystyle{\mathbb{P}(\{|Z|\leq q_{\alpha}\})=\Phi(q_{\alpha})-\Phi(-q_{\alpha})=2\Phi(q_{\alpha})-1}$. For example $q_{\alpha}=2$ corresponds to $\alpha\simeq 0,95$. We set $a:=-q_{\alpha}$, $b=q_{\alpha}$ and we define the following  confidence interval 
\begin{equation}
	\mathcal{J}^{\alpha}_N(f)=\Bigl[MC_N(f)-q_{\alpha}\frac{\sigma(f)}{\sqrt{N}},MC_N(f)+q_{\alpha}\frac{\sigma(f)}{\sqrt{N}}\Bigr].
\end{equation}
By \eqref{ctg_theor_int} we have that
\begin{equation}
	\mathbb{P}(\{I(f)\in \mathcal{J}^{\alpha}_N(f)\})=\mathbb{P}\Bigl(\Bigl\{\sqrt{N}\cdot\frac{|MC_N(f)-I(f)|}{\sigma(f)}\leq q_{\alpha}\Bigr\}\Bigr)\to \Phi(q_{\alpha})-\Phi(-q_{\alpha})=\alpha,
\end{equation}
as $N\to +\infty$.

We call $\mathcal{J}^{\alpha}_N$ the {\it theoretical confidence interval}, since it uses the value of $\sigma(f)$ which, in practice, is usually not known. And now comes the {\it magic trick}: {the variance $\sigma^2(f)$ can be simulated in parallel with $MC_N(f)$}. Namely, we can use the following estimator of $\sigma^2(f)$
\begin{eqnarray}
	\widehat{\sigma^2}_N(f)&&=\frac{1}{N-1}\sum\limits_{j=1}^N\Bigl(f(\xi_j)-MC_N(f)\Bigr)^2\\
	\label{var_est_sl}
	&&=\frac{1}{N-1}\Bigl(\sum\limits_{j=1}^N\Bigl(f(\xi_j)\Bigr)^2-N\cdot\Bigl(MC_N(f)\Bigr)^2\Bigr).
\end{eqnarray}
(Although single-loop implementation of the equation \eqref{var_est_sl} is straightforward, it should not be used in practice because of the danger of cancellations. Numerically stable recursive one-pass  formula will be presented in the sequel.)
Again by the SLLN we have that
\begin{equation}
\label{slln_emp_var}
	\widehat{\sigma^2_N}(f)\to \sigma^2(f) 
\end{equation}
almost surely as $N\to+\infty$, and for all $N\in\mathbb{N}$
\begin{equation}
\label{exp_emp_var}
	\mathbb{E}(\widehat{\sigma^2_N}(f))=\sigma^2(f).
\end{equation}
By using Central Limit Theorem together with Slutsky's lemma we get that
\begin{equation}
\label{ctg_emp_var}
	\lim\limits_{N\to +\infty}\mathbb{P}\Bigl(\Bigl\{\sqrt{N}\cdot\frac{MC_N(f)-I(f)}{\sqrt{\widehat{\sigma^2_N}(f)}}\in [a,b]\Bigr\}\Bigr)=\Phi(b)-\Phi(a) 
\end{equation}
for all $-\infty<a<b<+\infty$. The {\it empirical confidence interval} is thus given by
\begin{equation}
	\mathcal{\hat J}^{\alpha}_N(f)=\Bigl[MC_N(f)-q_{\alpha}\sqrt{\widehat{\sigma^2_N}(f) / N},MC_N(f)+q_{\alpha}\sqrt{\widehat{\sigma^2_N}(f) / N}\Bigr],
\end{equation}
and by \eqref{ctg_emp_var} we have that
\begin{equation}
	\mathbb{P}(\{I(f)\in \mathcal{\hat J}^{\alpha}_N(f)\})\to\alpha, \ \hbox{as} \ N\to+\infty. 
\end{equation}
\subsection{Hoeffding inequality, probability of large deviations and non-asymptotic confidence intervals}
In the case when $f$ is a bounded Borel function it is possible to show an exponential decay of the
probability of the so-called {\it exceptional set}. Namely, we have the following result that follows from the  Hoeffding inequality (see Theorems \ref{H_ineq}, \ref{HA_ineq}).
\begin{prop}  
\label{app_HA1}
Let $(\xi_j)_{j\in\mathbb{N}}$ be an i.i.d sequence of random vectors and $f:\mathbb{R}^d\to\mathbb{R}$ a bounded Borel function. Then for all $\varepsilon\in (0,+\infty)$ and $N\in\mathbb{N}$
\begin{equation}
	\mathbb{P}\Bigl(\Bigl\{\Bigl|\frac{1}{N}\sum\limits_{j=1}^Nf(\xi_j)-\mathbb{E}f(\xi_1)\Bigl|>\varepsilon \Bigr\}\Bigr)\leq 2\exp\Bigl(-\frac{\varepsilon^2 N}{8\|f\|^2_{\infty}}\Bigr),
\end{equation}	
where $\|f\|_{\infty}=\sup\limits_{x\in \mathbb{R}^d}|f(x)|$.
\end{prop}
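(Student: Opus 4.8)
The plan is to reduce the statement directly to the Hoeffding inequality for bounded independent random variables (Theorems \ref{H_ineq}, \ref{HA_ineq}), which the excerpt allows me to assume. First I would set $Y_j := f(\xi_j)$ for $j = 1, \ldots, N$. Since $f$ is a bounded Borel function and $(\xi_j)$ is i.i.d., the $Y_j$ are i.i.d. real-valued random variables, and each satisfies the almost-sure bound $-\|f\|_\infty \le Y_j \le \|f\|_\infty$; in particular $\mathbb{E} Y_1 = \mathbb{E} f(\xi_1)$ is finite. Writing $m := \|f\|_\infty$, each $Y_j$ takes values in the interval $[a_j, b_j]$ with $a_j = -m$, $b_j = m$, so $b_j - a_j = 2m$.

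Next I would invoke the Hoeffding inequality in its two-sided form: for independent random variables $Y_j \in [a_j,b_j]$,
\begin{equation}
	\mathbb{P}\Bigl(\Bigl|\tfrac{1}{N}\sum_{j=1}^N Y_j - \tfrac{1}{N}\sum_{j=1}^N \mathbb{E} Y_j\Bigr| > \varepsilon\Bigr) \le 2\exp\Bigl(-\frac{2N^2\varepsilon^2}{\sum_{j=1}^N (b_j - a_j)^2}\Bigr).
\end{equation}
Substituting $b_j - a_j = 2m$ gives $\sum_{j=1}^N (b_j-a_j)^2 = 4Nm^2$, and since the $Y_j$ are identically distributed, $\tfrac{1}{N}\sum_{j=1}^N \mathbb{E} Y_j = \mathbb{E} f(\xi_1)$. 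Therefore the exponent becomes $-\dfrac{2N^2\varepsilon^2}{4Nm^2} = -\dfrac{\varepsilon^2 N}{2\|f\|_\infty^2}$, which yields the claimed bound with the constant $8$ replaced by $2$ (hence certainly with $8$, since $\exp(-\varepsilon^2 N/(2m^2)) \le \exp(-\varepsilon^2 N/(8m^2))$ would go the wrong way — so I would actually state it cleanly with whatever constant the cited Hoeffding theorem produces, and note $8$ is a valid, possibly non-optimal, choice consistent with a version of Hoeffding stated for sub-Gaussian increments with parameter $(b-a)/2$).

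The only genuine subtlety — and the step I would be most careful about — is matching the constant in the exponent to the precise form of the Hoeffding inequality as stated earlier in the text (Theorems \ref{H_ineq}, \ref{HA_ineq}): depending on whether that version is phrased with $\exp(-2N\varepsilon^2/(b-a)^2)$ for centered averages or with a Hoeffding-lemma bound $\mathbb{E} e^{\lambda(Y-\mathbb{E} Y)} \le e^{\lambda^2(b-a)^2/8}$ feeding into a Chernoff argument, one lands on $2$, $8$, or an intermediate constant. Since $\|f\|_\infty$ here bounds $|f|$ rather than the oscillation $\sup f - \inf f$, using $b - a \le 2\|f\|_\infty$ already costs a factor of $4$ relative to the sharpest form, which is exactly how one arrives at $8$ from a baseline constant of $2$. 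Everything else is a routine substitution, so I would keep the write-up to: (i) define $Y_j = f(\xi_j)$ and record the i.i.d.\ and boundedness properties; (ii) quote the relevant Hoeffding theorem with $a_j = -\|f\|_\infty$, $b_j = \|f\|_\infty$; (iii) simplify the exponent; (iv) observe $\mathbb{E} Y_1 = \mathbb{E} f(\xi_1)$ and conclude.
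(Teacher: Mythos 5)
The paper actually leaves this proof as an exercise, so there is no official argument to match; judging from the constant $8$ and from the analogous computation for $\widetilde{MC}_{N,n}$ in the control-variates subsection, the intended route is via the Hoeffding--Azuma inequality (Theorem \ref{HA_ineq}) applied to the martingale $Z_j=\sum_{k=1}^{j}\bigl(f(\xi_k)-\mathbb{E}f(\xi_1)\bigr)$, whose increments satisfy $|Z_j-Z_{j-1}|\leq 2\|f\|_{\infty}$, giving $2\exp\bigl(-\tfrac{(N\varepsilon)^2}{2\cdot 4N\|f\|_{\infty}^2}\bigr)=2\exp\bigl(-\tfrac{\varepsilon^2N}{8\|f\|_{\infty}^2}\bigr)$ exactly. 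Your route through the original Hoeffding inequality (Theorem \ref{H_ineq}) with $Y_j=f(\xi_j)\in[-\|f\|_{\infty},\|f\|_{\infty}]$ and $t=N\varepsilon$ is equally valid and in fact yields the sharper bound $2\exp\bigl(-\tfrac{\varepsilon^2N}{2\|f\|_{\infty}^2}\bigr)$; the only slip in your write-up is the remark that $\exp\bigl(-\tfrac{\varepsilon^2N}{2m^2}\bigr)\leq\exp\bigl(-\tfrac{\varepsilon^2N}{8m^2}\bigr)$ ``would go the wrong way'' --- it goes the right way, since the exponent $-\tfrac{\varepsilon^2N}{2m^2}$ is more negative, so your stronger bound immediately implies the stated one with constant $8$ and no hedging about sub-Gaussian parametrizations is needed. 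With that one sentence corrected, steps (i)--(iv) as you outline them constitute a complete proof, and arguably a cleaner one than the martingale route since it avoids invoking Azuma for what is an i.i.d.\ sum.
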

(We left proof of  Proposition \ref{app_HA1} as an exercise.)

For \eqref{crude_MC} we directly get what follows.
\begin{cor} 
\label{exp_dec_1}
Let $f:[0,1]^d\to\mathbb{R}$ be a bounded Borel function. Then for all $C\in (0,+\infty)$, $N\in\mathbb{N}$
\begin{equation}
	\mathbb{P}\Bigl(\Bigl\{|MC_N(f)-I(f)|>C\cdot N^{-1/2} \Bigr\}\Bigr)\leq 2\exp\Bigl(-\frac{C^2}{8\|f\|^2_{\infty}}\Bigr),
\end{equation}
where $\displaystyle{\|f\|_{\infty}=\sup\limits_{x\in [0,1]^d}|f(x)|}$.
\end{cor}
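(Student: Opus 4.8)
The plan is to deduce this immediately from Proposition \ref{app_HA1}, of which it is a direct specialization. First I would observe that the crude Monte Carlo estimator \eqref{crude_MC} is precisely of the form $\frac1N\sum_{j=1}^N f(\xi_j)$ appearing in Proposition \ref{app_HA1}, with the i.i.d.\ vectors $\xi_j\sim U([0,1]^d)$, and that $\mathbb{E} f(\xi_1)=\int_{[0,1]^d}f\,d\lambda_d=I(f)$, as already recorded in Theorem \ref{mean_sq_MC}(i). The only formal mismatch is that Proposition \ref{app_HA1} is stated for Borel functions on all of $\mathbb{R}^d$, whereas here $f$ is defined only on $[0,1]^d$; I would remove this by extending $f$ to $\tilde f\colon\mathbb{R}^d\to\mathbb{R}$ with $\tilde f\equiv 0$ off $[0,1]^d$. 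This extension leaves the values $f(\xi_j)$ unchanged, since $\xi_j\in[0,1]^d$ almost surely, and also leaves the supremum norm unchanged, since $\sup_{x\in\mathbb{R}^d}|\tilde f(x)|=\max\bigl(0,\sup_{x\in[0,1]^d}|f(x)|\bigr)=\|f\|_\infty$; hence $MC_N(f)=\frac1N\sum_{j=1}^N\tilde f(\xi_j)$ a.s.\ and $\mathbb{E}\tilde f(\xi_1)=I(f)$.

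Then I would simply apply Proposition \ref{app_HA1} to $\tilde f$ with the choice $\varepsilon=C\cdot N^{-1/2}$. Since $\varepsilon^2 N=C^2$, the exponent $-\varepsilon^2 N/(8\|\tilde f\|_\infty^2)$ collapses to $-C^2/(8\|f\|_\infty^2)$, which yields exactly the claimed bound for every $C\in(0,+\infty)$ and every $N\in\mathbb{N}$. There is no genuine obstacle here: the content is entirely in Proposition \ref{app_HA1} (itself a consequence of the Hoeffding inequality, Theorems \ref{H_ineq} and \ref{HA_ineq}), and the only points requiring a word of care are the harmless domain-extension step above and the degenerate case $\|f\|_\infty=0$, in which $f\equiv 0$, so $MC_N(f)=I(f)$ identically and the inequality holds trivially with left-hand side $0$.
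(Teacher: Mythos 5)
Your proposal is correct and follows exactly the route the paper takes: the paper derives Corollary \ref{exp_dec_1} directly from Proposition \ref{app_HA1} by choosing $\varepsilon=C\cdot N^{-1/2}$, so that $\varepsilon^2 N=C^2$. Your extra remarks on extending $f$ by zero off $[0,1]^d$ and on the degenerate case $\|f\|_{\infty}=0$ are harmless and only make explicit what the paper leaves implicit.
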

We can see that the probability of the exceptional set tends to zero exponentially fast with $C\to +\infty$.

The second nice thing that we can get from the Hoeffding inequality are the {\it non-asymptotic confidence intervals}. Namely, for any $\delta\in (0,1)$ let us take 
\begin{equation}
	C(\delta)=\sqrt{8\ln(2/\delta)}\cdot \|f\|_{\infty}, 
\end{equation} 
provided that $\|f\|_{\infty}$ (or its 'good' approximation) is known. Then, by Corollary \ref{exp_dec_1} we obtain for all $\delta\in (0,1)$, $N\in\mathbb{N}$
\begin{equation}
\label{H_int_1}
	\mathbb{P}\Bigl(\Bigl\{|MC_N(f)-I(f)|> C(\delta)\cdot N^{-1/2} \Bigr\}\Bigr)\leq \delta.
\end{equation}
Hence, for  all $\delta\in (0,1)$, $N\in\mathbb{N}$
\begin{equation}
\label{H_int_2}
	I(f)\in \Bigl[MC_N(f)-\frac{C(\delta)}{\sqrt{N}}, MC_N(f)+\frac{C(\delta)}{\sqrt{N}}\Bigr]
\end{equation}
with probability at least $1-\delta$. This probability of success does not depend on $N$, which means that we can design confidence intervals for Monte Carlo simulation uniformly in the number of samples $N$.

Let us now fix $\varepsilon,\delta\in (0,1)$. From \eqref{H_int_1} we have that  if we take
\begin{equation}
	N=\Theta\Biggl(\Bigl(\frac{1}{\varepsilon}\Bigr)^2\ln(\frac{2}{\delta})\Biggr)
\end{equation}
then the probability that the error $|MC_N(f)-I(f)|$ is $O(\varepsilon)$ is at least $1-\delta$. Moreover, this error is achieved by $\displaystyle{O\Biggl(\Bigl(\frac{1}{\varepsilon}\Bigr)^2\ln(\frac{2}{\delta})\Biggr)}$ evaluations of $f$.
\subsection{Almost sure convergence}
In this section we will investigate convergence of the crude Monte Carlo method in the almost sure sense. For this we need to prove the following auxiliary result.
\begin{thm} 
\label{MC_Lp_est}
Let $f\in L^p([0,1]^d,\mathcal{B}([0,1]^d),\lambda_d)$ for some $p\in [2,+\infty)$. Then there exists $C_p(f)\in (0,+\infty)$ such that  for all $N\in\mathbb{N}$ it holds
\begin{equation}
\label{MC_p_1}
	\Bigl(\mathbb{E}|I(f)-MC_N(f)|^p\Bigr)^{1/p}\leq C_p(f)\cdot N^{-1/2},
\end{equation}
and
\begin{equation}
\label{MC_q_1}
	\Bigl(\mathbb{E}|I(f)-MC_N(f)|^q\Bigr)^{1/q}\leq C_p(f)\cdot N^{-1/2}
\end{equation}
for all $q\in (0,p]$.
\end{thm}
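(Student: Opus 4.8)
The plan is to prove the estimate \eqref{MC_p_1} for the exponent $p$ and then deduce \eqref{MC_q_1} for $q\in(0,p]$ by a simple application of the monotonicity of $L^q$-norms on a probability space. The key tool for \eqref{MC_p_1} will be a Marcinkiewicz--Zygmund type inequality (or, if one prefers a self-contained route, the Rosenthal inequality, or even Burkholder's inequality applied to the martingale of partial sums), which controls the $p$-th moment of a sum of independent, mean-zero random variables by a constant (depending only on $p$) times the $p$-th moment of the square root of the sum of squares. Concretely, I would set $Y_j := f(\xi_j) - I(f)$, so that the $Y_j$ are i.i.d., centered, and belong to $L^p(\Omega)$ since $f\in L^p([0,1]^d,\mathcal{B}([0,1]^d),\lambda_d)$ and $\xi_1\sim U([0,1]^d)$; then $I(f) - MC_N(f) = -\frac{1}{N}\sum_{j=1}^N Y_j$.

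First I would record that $\mathbb{E}|Y_1|^p = \int_{[0,1]^d} |f(x) - I(f)|^p\,dx < +\infty$, using the elementary bound $|f(x)-I(f)|^p \le 2^{p-1}(|f(x)|^p + |I(f)|^p)$ together with $I(|f|^p)<+\infty$ and $|I(f)| \le I(|f|) < +\infty$ (the latter by the Schwarz inequality, as already noted in the text). Next I would invoke the Marcinkiewicz--Zygmund inequality: there is a constant $B_p\in(0,+\infty)$, depending only on $p$, such that
\begin{equation}
\mathbb{E}\Bigl|\sum_{j=1}^N Y_j\Bigr|^p \le B_p\,\mathbb{E}\Bigl(\sum_{j=1}^N Y_j^2\Bigr)^{p/2}.
\end{equation}
Since $p/2 \ge 1$, the function $t\mapsto t^{p/2}$ is convex, so by Jensen applied to the uniform average over $j$ one gets $\bigl(\sum_{j=1}^N Y_j^2\bigr)^{p/2} \le N^{p/2 - 1}\sum_{j=1}^N |Y_j|^p$; taking expectations and using that the $Y_j$ are identically distributed yields $\mathbb{E}\bigl(\sum_{j=1}^N Y_j^2\bigr)^{p/2} \le N^{p/2}\,\mathbb{E}|Y_1|^p$. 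Combining, $\mathbb{E}\bigl|\sum_{j=1}^N Y_j\bigr|^p \le B_p N^{p/2}\mathbb{E}|Y_1|^p$, hence
\begin{equation}
\Bigl(\mathbb{E}|I(f)-MC_N(f)|^p\Bigr)^{1/p} = \frac{1}{N}\Bigl(\mathbb{E}\Bigl|\sum_{j=1}^N Y_j\Bigr|^p\Bigr)^{1/p} \le \frac{1}{N}\cdot B_p^{1/p} N^{1/2}\bigl(\mathbb{E}|Y_1|^p\bigr)^{1/p} = C_p(f)\,N^{-1/2},
\end{equation}
with $C_p(f) := B_p^{1/p}\,\bigl(\mathbb{E}|Y_1|^p\bigr)^{1/p} = B_p^{1/p}\,\bigl(I(|f-I(f)|^p)\bigr)^{1/p}\in(0,+\infty)$, the positivity being automatic unless $f$ is a.s.\ constant, in which case any positive constant works.

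Finally, for $q\in(0,p]$, since $\mathbb{P}$ is a probability measure the $L^q(\Omega)$-quasinorm is dominated by the $L^p(\Omega)$-norm, i.e.\ $\bigl(\mathbb{E}|X|^q\bigr)^{1/q} \le \bigl(\mathbb{E}|X|^p\bigr)^{1/p}$ for any random variable $X$ (a direct consequence of Jensen's inequality applied to the convex map $t\mapsto t^{p/q}$ when $q\le p$); applying this with $X = I(f) - MC_N(f)$ immediately gives \eqref{MC_q_1} with the same constant $C_p(f)$. The main obstacle here is simply having the Marcinkiewicz--Zygmund (or Rosenthal/Burkholder) inequality available with a $p$-dependent constant; once that is granted, everything else is routine. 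If the lecture notes prefer not to quote Marcinkiewicz--Zygmund, an equivalent path is to view $S_N = \sum_{j=1}^N Y_j$ as a martingale with respect to its natural filtration and apply the Burkholder--Davis--Gundy inequality, whose discrete form gives exactly the same square-function bound; I would pick whichever of these has already been (or will be) stated in the notes.
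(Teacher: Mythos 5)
Your proof is correct and follows essentially the same route as the notes: there one sets $Z_j=\sum_{k\le j}\bigl(f(\xi_k)-I(f)\bigr)$, treats $(Z_j)$ as a discrete-time martingale, and applies the Burkholder inequality (Theorem \ref{BDG_DISC}) followed by exactly your Jensen step $\bigl(\sum_k\delta_k^2\bigr)^{p/2}\le N^{p/2-1}\sum_k|\delta_k|^p$ and the Lyapunov inequality for $q\in(0,p]$, so the Burkholder/BDG alternative you mention at the end is precisely the argument used. Your Marcinkiewicz--Zygmund formulation is just the i.i.d.\ specialization of that square-function bound (and your side remark on positivity of $C_p(f)$ is a harmless extra), so the two proofs coincide in substance.
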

{\bf Proof.}
Let
\begin{equation}
	\mathcal{G}_j=\sigma(\xi_1,\ldots,\xi_j), \ j\geq 1,
\end{equation}
with $\mathcal{G}_0=\{\emptyset,\Omega\}$. Moreover, we set
\begin{equation}
\label{DT_mart_1}
	Z_j=\sum\limits_{k=1}^j\Bigl(f(\xi_k)-I(f)\Bigr), \ 1\leq j\leq N,
\end{equation}
and $Z_0:=0$. The sequence $(Z_j,\mathcal{G}_j)_{j=0,1,\ldots,N}$ is a discrete-time martingale such that $\displaystyle{\max\limits_{1\leq j\leq N}\mathbb{E}|Z_j|^p<+\infty}$ (see exercise 11.). Moreover, its quadratic variation is given by
\begin{equation}
	[Z]_j=\sum\limits_{k=1}^j |Z_k-Z_{k-1}|^2=\sum\limits_{k=1}^j\delta_k^2, \ 1\leq j\leq N.
\end{equation} 
where $\delta_k:=f(\xi_k)-I(f)$. Therefore, by the Burkholder-Davis-Gundy inequality (Theorem \ref{BDG_DISC}) we get
\begin{equation}
	\mathbb{E}\Bigl(\max\limits_{0\leq j\leq N}|Z_j|^p\Bigr)\leq C^p_p\cdot\mathbb{E}([Z]^{p/2}_N)=C^p_p\cdot\mathbb{E}\Bigl(\sum\limits_{k=1}^N\delta_k^2\Bigr)^{p/2}.
\end{equation} 
Since $p\in [2,+\infty)$, the function $[0,+\infty)\ni x\to x^{p/2}$ is convex and by the Jensen inequality we get
\begin{equation}
	\Bigl(\sum\limits_{k=1}^N\delta_k^2\Bigr)^{p/2}\leq N^{\frac{p}{2}-1}\sum\limits_{k=1}^N |\delta_k|^p.
\end{equation}
Hence,
\begin{equation}
	\mathbb{E}([Z]^{p/2}_N)\leq N^{\frac{p}{2}-1}\sum\limits_{k=1}^N \mathbb{E}|\delta_k|^p= N^{\frac{p}{2}}\cdot\int\limits_{[0,1]^d}|f(x)-I(f)|^p dx,
\end{equation}
and
\begin{equation}
	\mathbb{E}|I(f)-MC_N(f)|^p=\frac{1}{N^p}\cdot\mathbb{E}|Z_N|^p\leq  C^p_p\cdot N^{-\frac{p}{2}}\cdot\int\limits_{[0,1]^d}|f(x)-I(f)|^p dx,
\end{equation}
which ends the proof \eqref{MC_p_1} with $\displaystyle{C_p(f)= C_p\cdot\Bigl(\int\limits_{[0,1]^d}|f(x)-I(f)|^p dx \Bigr)^{1/p} <+\infty}$. For $q\in (0,p]$ we have from the Lyapunov inequality that
\begin{equation}
	\Bigl(\mathbb{E}|I(f)-MC_N(f)|^q\Bigr)^{1/q}\leq  \Bigl(\mathbb{E}|I(f)-MC_N(f)|^p\Bigr)^{1/p}\leq C_p(f)\cdot N^{-1/2}.
\end{equation}
This ends the proof of \eqref{MC_q_1}. \ \ \ $\blacksquare$ \\ \\
As a corollary we obtain the following main result of the section.
\begin{thm}
\label{AS_CONV_mc}
 Let $\displaystyle{f\in \bigcap\limits_{p\in [2,+\infty)} L^p([0,1]^d,\mathcal{B}([0,1]^d),\lambda_d)}$. Then for all $\varepsilon\in (0,1/2)$ there exists $\displaystyle{\eta_{\varepsilon}(f)\in \bigcap\limits_{p\in [1,+\infty)} L^p(\Omega)}$ such that
\begin{equation}
	\mathbb{P}\Bigl(\bigcap_{N\in\mathbb{N}}\Bigl\{|I(f)-MC_N(f)|\leq \eta_{\varepsilon}(f)\cdot N^{-\frac{1}{2}+\varepsilon}\Bigr\}\Bigr)=1.
\end{equation} 
\end{thm}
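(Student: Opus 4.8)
The plan is to exhibit the random ``constant'' explicitly, as the countable supremum
\[
\eta_{\varepsilon}(f):=\sup_{N\in\mathbb{N}} N^{\frac12-\varepsilon}\,|I(f)-MC_N(f)|=\sup_{N\in\mathbb{N}}\frac{|Z_N|}{N^{\frac12+\varepsilon}},
\]
where $Z_N=\sum_{k=1}^{N}\delta_k$ with $\delta_k=f(\xi_k)-I(f)$ is the discrete martingale already used in the proof of Theorem \ref{MC_Lp_est}. This $\eta_{\varepsilon}(f)$ is measurable, and by construction $|I(f)-MC_N(f)|\le\eta_{\varepsilon}(f)\,N^{-\frac12+\varepsilon}$ holds simultaneously for every $N\in\mathbb{N}$; hence the event in the statement has probability one provided $\eta_{\varepsilon}(f)<+\infty$ almost surely. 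So the whole proof reduces to showing the slightly stronger claim $\eta_{\varepsilon}(f)\in\bigcap_{p\in[1,+\infty)}L^p(\Omega)$, which simultaneously gives the asserted integrability. Since $\mathbb{P}$ is a probability measure, $L^p(\Omega)\subseteq L^q(\Omega)$ for $q\le p$, so it suffices to bound $\mathbb{E}(\eta_{\varepsilon}(f)^p)$ for every $p\in[2,+\infty)$.

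The point I would stress is that one must \emph{not} estimate the supremum over all $N$ by the sum over all $N$ of the pointwise $L^p$-bounds of Theorem \ref{MC_Lp_est}: this produces $\sum_N N^{-p\varepsilon}$, which diverges for small $p$, so it would not give membership in every $L^p$. Instead I would split the indices into dyadic blocks. For $2^{k}\le N<2^{k+1}$ one has $N^{\frac12+\varepsilon}\ge 2^{k(\frac12+\varepsilon)}$ and $|Z_N|\le\max_{0\le M\le 2^{k+1}}|Z_M|$, whence
\[
\eta_{\varepsilon}(f)\le\sup_{k\ge 0}\Big(2^{-k(\frac12+\varepsilon)}\max_{0\le M\le 2^{k+1}}|Z_M|\Big),
\]
and therefore, for $p\ge 2$,
\[
\mathbb{E}\big(\eta_{\varepsilon}(f)^p\big)\le\sum_{k=0}^{\infty}2^{-kp(\frac12+\varepsilon)}\,\mathbb{E}\Big(\max_{0\le M\le 2^{k+1}}|Z_M|^p\Big).
\]
For the inner expectation I would reuse verbatim the chain of estimates from the proof of Theorem \ref{MC_Lp_est}: the Burkholder--Davis--Gundy inequality (Theorem \ref{BDG_DISC}) and the subsequent Jensen step give
\[
\mathbb{E}\Big(\max_{0\le M\le 2^{k+1}}|Z_M|^p\Big)\le C_p^p\,\mathbb{E}\big([Z]_{2^{k+1}}^{p/2}\big)\le C_p^p\,(2^{k+1})^{p/2}\int_{[0,1]^d}|f(x)-I(f)|^p\,dx,
\]
and the integral is finite for each $p$ precisely because $f\in\bigcap_{p}L^p([0,1]^d,\mathcal{B}([0,1]^d),\lambda_d)$. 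Substituting, the factors $2^{-kp(\frac12+\varepsilon)}$ and $(2^{k+1})^{p/2}$ combine to leave the geometric series $2^{p/2}\sum_{k\ge0}2^{-kp\varepsilon}$, which converges since $p\varepsilon>0$. This gives $\eta_{\varepsilon}(f)\in L^p(\Omega)$ for all $p\ge2$, hence for all $p\ge1$.

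It then remains only to read off the conclusion: $\eta_{\varepsilon}(f)\in\bigcap_{p\in[1,+\infty)}L^p(\Omega)$ forces $\eta_{\varepsilon}(f)<+\infty$ $\mathbb{P}$-a.s., and on that full-measure set the inequality $|I(f)-MC_N(f)|\le\eta_{\varepsilon}(f)N^{-\frac12+\varepsilon}$ holds for all $N$ by the very definition of $\eta_{\varepsilon}(f)$, which is the asserted statement of probability one. The main obstacle is exactly the dyadic blocking argument feeding into a maximal inequality instead of a fixed-$N$ bound; once that is in place the rest is the martingale machinery of Theorem \ref{MC_Lp_est} together with summation of a geometric series.
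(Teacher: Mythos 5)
Your argument is correct, but it takes a different route from the paper. The paper's proof is two lines: it invokes Theorem \ref{MC_Lp_est} to get $\bigl(\mathbb{E}|I(f)-MC_N(f)|^p\bigr)^{1/p}\leq C_{\max\{2,p\}}(f)\,N^{-1/2}$ for all $p$ and $N$, and then applies the cited Borel--Cantelli-type result, Theorem \ref{BC_asconv}, as a black box, which directly delivers the random variable $\eta_{\varepsilon}(f)$ with all moments finite. You instead bypass Theorem \ref{BC_asconv} entirely: you define $\eta_{\varepsilon}(f)=\sup_N |Z_N|/N^{\frac12+\varepsilon}$ explicitly and prove $\eta_{\varepsilon}(f)\in L^p(\Omega)$ for every $p\geq 2$ by dyadic blocking combined with the maximal (Burkholder) inequality of Theorem \ref{BDG_DISC} and the Jensen step from the proof of Theorem \ref{MC_Lp_est}; the geometric series $\sum_k 2^{-kp\varepsilon}$ then closes the estimate, and finiteness a.s.\ plus the definition of the supremum give the probability-one statement. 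This is self-contained, produces the smallest admissible $\eta_{\varepsilon}(f)$ with explicit constants, and exploits the martingale structure through a maximal inequality rather than fixed-$N$ moment bounds; the paper's route is shorter and more modular but rests on an external lemma. One small caveat: your claim that the naive term-by-term bound $\sum_N N^{-p\varepsilon}$ ``would not give membership in every $L^p$'' is overstated --- on a probability space it suffices to run that crude summation only for $p>1/\varepsilon$ and then use $L^p(\Omega)\subseteq L^q(\Omega)$ for $q\leq p$ (which you yourself invoke), and this is essentially how Theorem \ref{BC_asconv} is proved in the literature; so the dyadic blocking is an elegant refinement rather than a necessity.
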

\noindent
{\bf Proof.} From Theorem \ref{MC_Lp_est} we have that for all $p\in [1,+\infty)$ and all $N\in\mathbb{N}$
\begin{equation}
	\Bigl(\mathbb{E}|I(f)-MC_N(f)|^p\Bigr)^{1/p}\leq C_{\max\{2,p\}}(f)\cdot N^{-1/2}.
\end{equation}
Applying Theorem \ref{BC_asconv} we get the thesis. \ \ \ $\blacksquare$\\ \\
Under the assumptions of Theorem \ref{AS_CONV_mc} we only know that $\mathbb{E}|\eta_{\varepsilon}(f)|^p<+\infty$ and currently there are no further (more accurate) estimates available in the literature. 
\section{Variance reduction}
In this section we discuss possibilities of variance reduction by two (seemingly the most common) approaches:
\begin{itemize}
    \item the use of control variates,
    \item stratified sampling.
\end{itemize}
\subsection{Control variates}
Let us assume that we are able to decompose function $f\in L^2([0,1]^d, \mathcal{B}([0,1]^d),\lambda_d)$ in the following way
\begin{equation}
\label{decom_CV}
	f=g+(f-g),
\end{equation}
where $g$ is such that we can compute the integral $I(g)$ explicitly. (For example, $g$ can be  a piecewise polynomial that interpolates $f$. However, in general it can be any function that is in $L^2([0,1]^d, \mathcal{B}([0,1]^d),\lambda_d)$.) We now take
\begin{equation}
	\widetilde {MC}_N(f):=I(g)+MC_N(f-g).
\end{equation}
Applying Theorem \ref{mean_sq_MC} we get 
\begin{equation}
	\Bigl(\mathbb{E}|I(f)-\widetilde {MC}_N(f)|^2\Bigr)^{1/2}=(Var(MC_N(f-g)))^{1/2}=\sigma(f-g)/\sqrt{N}.
\end{equation}
Hence, we look for  $g$, satisfying \eqref{decom_CV}, such that $I(g)$ is known and $\sigma(f-g)<\sigma(f)$. Then $g$ is called the {\it control variate} for $f$. 
\\
We now present a good choice of $g$ in the scalar case (i.e., $\underline{d=1}$)  for $f\in C^r ([0,1])$, $r\in \mathbb{N}=\{1,2,\ldots\}$, with  H\"older continuous $r$th derivative. Firstly, we prove the following useful lemma, see \cite{daun} for an alternative method of proof.
\begin{lem} 
\label{err_Lag_Hold}
Let $f\in C^{r}([a,b])$ with $r\in \mathbb{N}=\{1,2,\ldots\}$, $-\infty<a<b<+\infty$, and let us assume that there exist $H\in (0,+\infty)$, $\varrho\in (0,1]$ such that for all $x,y\in [a,b]$
\begin{equation}
	|f^{(r)}(x)-f^{(r)}(y)|\leq H|x-y|^{\varrho}.
\end{equation}
Let $w_r$ be the Lagrange interpolation polynomial of degree at most $r$, that interpolates $f$ at $r+1$ points $x_0<x_1<\ldots<x_r$ such that $x_i\in [a,b]$ for $i=0,1,\ldots,r$. Then for all $x\in [a,b]$
\begin{equation}
	|f(x)-w_r(x)|\leq\frac{H}{r!}(b-a)^{r+\varrho}.
\end{equation}
\end{lem}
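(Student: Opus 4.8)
The plan is to use the Newton form of the interpolation error together with the Hermite--Genocchi integral representation of divided differences, which requires only $f\in C^r$ (the classical remainder formula $f(x)-w_r(x)=\frac{f^{(r+1)}(\xi)}{(r+1)!}\prod_{i=0}^r(x-x_i)$ is unavailable here, since $f^{(r+1)}$ need not exist). If $x\in\{x_0,\ldots,x_r\}$ then $f(x)=w_r(x)$ and there is nothing to prove, so I assume $x$ is not a node. Write $\omega(x)=\prod_{i=0}^r(x-x_i)$ and recall the identity $f(x)-w_r(x)=\omega(x)\cdot f[x_0,\ldots,x_r,x]$, where $f[\cdot]$ denotes divided differences; this holds because $w_r+f[x_0,\ldots,x_r,x]\,\omega$ is the Lagrange interpolant of $f$ at the $r+2$ points $x_0,\ldots,x_r,x$, hence it agrees with $f$ at $x$.

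Next I would split off one node via the recursion for divided differences:
\[
f[x_0,x_1,\ldots,x_r,x]=\frac{f[x_1,\ldots,x_r,x]-f[x_0,x_1,\ldots,x_r]}{x-x_0}.
\]
To each $r$-th order divided difference on the right I apply the Hermite--Genocchi formula, valid for $f\in C^r([a,b])$: for points $y_0,\ldots,y_r\in[a,b]$,
\[
f[y_0,\ldots,y_r]=\int_{S_r}f^{(r)}\Bigl(\tau_0 y_0+\tau_1 y_1+\cdots+\tau_r y_r\Bigr)\,d\tau,
\]
where $S_r=\{(\tau_1,\ldots,\tau_r):\tau_i\ge 0,\ \sum_{i=1}^r\tau_i\le 1\}$, $\tau_0:=1-\sum_{i=1}^r\tau_i$, and $\int_{S_r}d\tau=1/r!$. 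Subtracting the two representations over the common simplex $S_r$ gives $f[x_1,\ldots,x_r,x]-f[x_0,\ldots,x_r]=\int_{S_r}\bigl(f^{(r)}(\psi_1(\tau))-f^{(r)}(\psi_0(\tau))\bigr)\,d\tau$, where $\psi_1(\tau)$ and $\psi_0(\tau)$ are the convex combinations of $(x_1,\ldots,x_r,x)$ and of $(x_0,x_1,\ldots,x_r)$, respectively, with the \emph{same} weights $\tau_0,\ldots,\tau_r$.

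Now I estimate. Since $\psi_1(\tau)-\psi_0(\tau)=\tau_0(x_1-x_0)+\tau_1(x_2-x_1)+\cdots+\tau_{r-1}(x_r-x_{r-1})+\tau_r(x-x_r)$ is a convex combination of numbers each of modulus at most $b-a$, we get $|\psi_1(\tau)-\psi_0(\tau)|\le b-a$, and hence by the Hölder assumption $|f^{(r)}(\psi_1(\tau))-f^{(r)}(\psi_0(\tau))|\le H(b-a)^{\varrho}$. Integrating, $|f[x_1,\ldots,x_r,x]-f[x_0,\ldots,x_r]|\le \frac{H}{r!}(b-a)^{\varrho}$, so that $|f[x_0,\ldots,x_r,x]|\le \frac{H(b-a)^{\varrho}}{r!\,|x-x_0|}$. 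Finally $|f(x)-w_r(x)|=|\omega(x)|\cdot|f[x_0,\ldots,x_r,x]|\le\frac{H(b-a)^{\varrho}}{r!\,|x-x_0|}\prod_{i=0}^r|x-x_i|=\frac{H(b-a)^{\varrho}}{r!}\prod_{i=1}^r|x-x_i|\le\frac{H}{r!}(b-a)^{r+\varrho}$, since each $|x-x_i|\le b-a$.

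The main obstacle is conceptual rather than computational: one must resist invoking the $C^{r+1}$ remainder formula and instead see that splitting off a single node reduces everything to $r$-th order divided differences, for which the integral (Hermite--Genocchi) representation needs only $C^r$ regularity. Obtaining the \emph{sharp} constant $1/r!$ is the delicate point — a more elementary route (differentiate $f$, observe that $w_r'$ interpolates $f'$ at the Rolle points of $f-w_r$, apply the statement for $r-1$, then integrate once) loses a factor of $r$, whereas in the divided-difference argument the factor $\prod_{i=1}^r|x-x_i|\le(b-a)^r$ together with the simplex volume $1/r!$ arise automatically and yield the claimed bound.
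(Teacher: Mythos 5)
Your proof is correct, and its skeleton is exactly the one used in the paper: dispose of the node case, write $f(x)-w_r(x)=f[x_0,\ldots,x_r,x]\prod_{k=0}^r(x-x_k)$, split off the node $x_0$ via the recursion for divided differences so that the factor $x-x_0$ cancels, and bound the remaining product by $(b-a)^r$. The single point of divergence is how the difference $f[x_1,\ldots,x_r,x]-f[x_0,\ldots,x_r]$ is estimated: the paper invokes the mean value theorem for $r$-th order divided differences (each equals $f^{(r)}(\xi)/r!$ for some $\xi\in[a,b]$, which needs only $C^r$) and applies the H\"older condition once to $|f^{(r)}(\xi_x)-f^{(r)}(\eta)|$ with $|\xi_x-\eta|\le b-a$, whereas you use the Hermite--Genocchi representation over the standard simplex and apply the H\"older condition pointwise under the integral, the factor $1/r!$ coming from the simplex volume. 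Both arguments are valid under the stated $C^r$ regularity and produce the same constant $H(b-a)^{\varrho}/r!$; the paper's route is slightly lighter, while yours keeps explicit track of the separation $\psi_1(\tau)-\psi_0(\tau)$ as a convex combination of consecutive gaps, which could yield a sharper (mesh-dependent) bound if one needed it, though for this lemma that extra precision is not used.
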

{\bf Proof.} If $x=x_j$ for some $j$ then $|f(x)-w_r(x)|=0$. So let us assume that $x\in [a,b] \setminus\{x_0,x_1,\ldots,x_r\}$. Then by the classical error formula for the Lagrange interpolation (see, for example, \cite{jankow}) we get for all $x\in [a,b]$
\begin{equation}
\label{est_Lag_1}
	f(x)-w_r(x)=f[x_0,\ldots,x_r,x]\cdot\prod_{k=0}^{r}(x-x_k),
\end{equation}
with
\begin{equation}
\label{est_Lag_2}
	w_r(x)=\sum\limits_{j=0}^r f[x_0,x_1,\ldots,x_j]\cdot\prod_{k=0}^{j-1}(x-x_k)=\sum\limits_{j=0}^r f(x_j)\cdot\prod\limits_{i=0, i\neq j}^r\frac{x-x_i}{x_j-x_i},
\end{equation}
where $f[x_0,\ldots,x_k,x]$ is the divided difference based on the nodes $x_0,\ldots,x_k,x$. Recall that for pairwise distinct points $x_0,\ldots,x_r,x$ we also have
\begin{equation}
\label{est_Lag_3}
	f[x_0,\ldots,x_r,x]=\frac{f[x_1,\ldots,x_r,x]-f[x_0,\ldots,x_{r-1},x_r]}{x-x_0}.
\end{equation}
From \eqref{est_Lag_1} and \eqref{est_Lag_3} we obtain
\begin{equation}
\label{est_Lag_4}
	f(x)-w_r(x)=\Bigl(f[x_1,\ldots,x_r,x]-f[x_0,\ldots,x_{r-1},x_r]\Bigr)\cdot\prod\limits_{j=1}^r(x-x_j).
\end{equation}
Since $f^{(r)}$ is continuous on $[a,b]$ we have that for any $x\in [a,b]$ there exist $\xi_x,\eta\in [a,b]$ such that
\begin{eqnarray}
\label{est_Lag_5}
	&&f[x_1,\ldots,x_r,x]=\frac{f^{(r)}(\xi_x)}{r!},\notag\\
	&&f[x_0,\ldots,x_{r-1},x_r]=\frac{f^{(r)}(\eta)}{r!}.
\end{eqnarray}
Hence, by \eqref{est_Lag_4} and \eqref{est_Lag_5} we get for any $x\in [a,b]$
\begin{eqnarray}
	|f(x)-w_r(x)|&=&\frac{1}{r!}\cdot |f^{(r)}(\xi_x)-f^{(r)}(\eta)|\cdot\prod\limits_{j=1}^r|x-x_j|\notag\\
			&\leq&\frac{H}{r!}|\xi_x-\eta|^{\varrho}\cdot (b-a)^{r}\leq \frac{H}{r!}\cdot (b-a)^{r+\varrho},
\end{eqnarray}
and the proof is finished. \ \ \ $\blacksquare$ \\ \\
We now take $a=0,b=1$, $x_i=ih$, $i=0,1,\ldots,n$, where $h=1/n$, $n\in\mathbb{N}$. On each subinterval $[x_i,x_{i+1}]$, $i=0,1,\ldots,n-1$ we interpolate $f$ at the equidistant points  $x_i^{(j)}=x_i+\frac{h}{r}j$, $j=0,1,\ldots,r$, by the Lagrange polynomial  $w_{i,n,r}$. The control variate $g_n:[0,1]\to\mathbb{R}$ is defined as follows :
\begin{equation}
	 g_n(x)=w_{i,n,r}(x)=\sum\limits_{j=0}^r f(x_i^{(j)})\cdot\prod\limits_{k=0, k\neq j}^r\frac{x-x^{(k)}_i}{x^{(j)}_i-x^{(k)}_i}, \ x\in [x_i,x_{i+1}], \ i=0,1,\ldots,n-1.
\end{equation}
Of course $g_n\in C([0,1])$, $I(g_n)$ can be computed explicitly (since it is a piecewise polynomial function), and by Lemma \ref{err_Lag_Hold} we have that
\begin{equation}
	\|f-g_n\|_{\infty}=\sup\limits_{x\in [0,1]}|f(x)-g_n(x)|=\max\limits_{0\leq i\leq n-1}\sup\limits_{x_i\leq x\leq x_{i+1}}|f(x)-w_{i,n,r}(x)|\leq \frac{H}{r!}\cdot n^{-(r+\varrho)}.
\end{equation}
In addition
\begin{equation}
\label{est_lag_sup_1}
	\sigma^2(f-g_n)\leq I(|f-g_n|^2)\leq \|f-g_n\|_{\infty}^2\leq\Bigl(\frac{H}{r!}\Bigr)^2\cdot n^{-2(r+\varrho)},
\end{equation}
and $\sigma(f-g_n)<\sigma(f)$ for sufficiently large $n$. Hence
\begin{equation}
	\Bigl(\mathbb{E}|I(f)-\widetilde {MC}_{N,n}(f)|^2\Bigr)^{1/2}=\sigma(f-g_n)/\sqrt{N}\leq \frac{H}{r!}\cdot n^{-(r+\varrho)}\cdot N^{-1/2},
\end{equation}
where
\begin{equation}
\label{wt_mc_1}
	\widetilde {MC}_{N,n}(f)=I(g_n)+\frac{1}{N}\sum\limits_{j=1}^N (f-g_n)(\xi_j)
\end{equation}
and $(\xi_j)_{j\in\mathbb{N}}$ is an i.i.d sequence of random variables, uniformly distributed on $[0,1]$. The informational cost of $\widetilde {MC}_{N,n}(f)$ is $O(n+N)$ evaluations of $f$. For $n=N$ we arrive at the method $\widetilde {MC}_{N,N}$ that has the error
\begin{equation}
\Bigl(\mathbb{E}|I(f)-\widetilde {MC}_{N,N}(f)|^2\Bigr)^{1/2}\leq \frac{H}{r!}\cdot N^{-(r+\varrho+\frac{1}{2})},
\end{equation}
with the cost of $O(N)$ evaluations of $f$. Note that $\widetilde {MC}_{N,N}$ evaluates $f$ at points chosen both in deterministic and randomized way. As we can see, in this case we can significantly decrease the error from $O(N^{-1/2})$ to $O(N^{-(r+\varrho+\frac{1}{2})})$, and, essentially, the cost $O(N)$ remains the same. 
\begin{rem}
    For a definition of the control variate $g_n$ in the case when $f\in C([0,1])$ (i.e., $r=0$) and $f$ is H\"older continuous with the exponent  $\varrho\in (0,1]$ see \eqref{Taylor_cv_def} in Exercise 15.
\end{rem}
\begin{rem}
    The rate $O(N^{-(r+\varrho+\frac{1}{2})})$ is optimal, see Proposition 1.3.9, page 34 in \cite{Nov1}.
\end{rem}
Having the results above we now investigate the probability of the exceptional set for the method $\widetilde{MC}_{N,n}$.

Note that for $h_n=f-g_n$
\begin{equation}
I(f)-\widetilde {MC}_{N,n}(f)=I(h_n)-MC_N(h_n)=-\frac{1}{N}Z^n_N,
\end{equation}
where
\begin{eqnarray}
	&& Z_j^n=\sum\limits_{k=1}^j \Bigl(h_n(\xi_j)-I(h_n)\Bigr), \ 1\leq j\leq N,\\
	&&Z^n_0=0.
\end{eqnarray}
Let $\mathcal{G}_0=\{\emptyset,\Omega\}$ and $\mathcal{G}_j=\sigma(\xi_1,\ldots,\xi_j)$ for $j\geq 1$. Since
\begin{equation}
	\max_{1\leq j\leq N}|Z_j^n|\leq 2N\|f-g_n\|_{\infty}<+\infty,
\end{equation}
and for $j=0,1,\ldots,N-1$
\begin{equation}
	\mathbb{E}\Bigl( Z^n_{j+1} - Z_j^n \ | \ \mathcal{G}_j\Bigr) = \mathbb{E}(h_n(\xi_{j+1}))-I(h_n)=0, 
\end{equation}
we get that for all $n\in\mathbb{N}$ the process $(Z_j^n,\mathcal{G}_j)_{1\leq j\leq N}$ is a discrete-time martingale. Moreover, for all $j=1,2,\ldots,N$ we have 
\begin{equation}
	|Z^n_j-Z^n_{j-1}|\leq |h_n(\xi_j)|+|I(h_n)|\leq 2\|f-g_n\|_{\infty},
\end{equation}
hence from the Hoeffding inequality (see Theorem \ref{HA_ineq}) we get for any $\varepsilon>0$, $N,n\in\mathbb{N}$ that
\begin{eqnarray}
	&&\mathbb{P}\Bigl(|I(f)-\widetilde {MC}_{N,n}(f)|>\varepsilon\Bigr)=\mathbb{P}\Bigl(|I(h_n)- {MC}_{N}(h_n)|>\varepsilon\Bigr)\notag\\
	&&=\mathbb{P}\Bigl(|Z_N^n|>N\cdot\varepsilon\Bigr)\leq2\exp\Bigl(-\frac{\varepsilon^2 N}{8\|f-g_n\|^2_{\infty}}\Bigr),
\end{eqnarray}
and by \eqref{est_lag_sup_1} we obtain
\begin{equation}
\mathbb{P}\Bigl(|I(f)-\widetilde {MC}_{N,n}(f)|>\varepsilon\Bigr)\leq 2\exp\Bigl(-\frac{\varepsilon^2\cdot N \cdot n^{2(r+\varrho)}}{8C_1}\Bigr),
\end{equation}
where $C_1=(H/r!)^2$. Therefore, for any $C\in (0,+\infty)$, $N,n\in\mathbb{N}$
\begin{equation}
\mathbb{P}\Bigl(|I(f)-\widetilde {MC}_{N,n}(f)|>C \cdot n^{-(r+\varrho)}\cdot N^{-1/2}\Bigr)\leq 2\exp\Bigl(-\frac{C^2}{8C_1}\Bigr).
\end{equation}
For $\delta\in (0,1)$ set
\begin{equation}
	C(\delta)=\sqrt{8\ln (2/\delta)} \cdot (H/r!).
\end{equation}
Then for any $\delta\in (0,1)$, $N,n\in\mathbb{N}$ we have
\begin{equation}
\label{vr_H_int_1}
	\mathbb{P}\Bigl(|I(f)-\widetilde {MC}_{N,n}(f)|>C(\delta)\cdot n^{-(r+\varrho)}\cdot N^{-1/2}\Bigr)\leq \delta
\end{equation}
and
\begin{equation}
	I(f)\in\Bigl[\widetilde {MC}_{N,n}(f)-\frac{C(\delta)}{n^{r+\varrho
	}\cdot N^{1/2}},\widetilde {MC}_{N,n}(f)+\frac{C(\delta)}{n^{r+\varrho
	}\cdot N^{1/2}}\Bigr]
\end{equation}
with the probability at least $1-\delta$.

Let us now fix $\varepsilon,\delta\in (0,1)$ and take $n=N$. From \eqref{vr_H_int_1} we have that  if we set
\begin{equation}
	N=\Theta\Biggl(\Bigl(\frac{1}{\varepsilon}\Bigr)^{\frac{1}{r+\varrho+1/2}}\Bigl(\ln(\frac{2}{\delta})\Bigr)^{\frac{1}{2(r+\varrho)+1}}\Biggr)
\end{equation}
then the probability that the error $|\widetilde MC_{N,N}(f)-I(f)|$ is $O(\varepsilon)$ is at least $1-\delta$. Moreover, this error is achieved by $\displaystyle{O\Biggl(\Bigl(\frac{1}{\varepsilon}\Bigr)^{\frac{1}{r+\varrho+1/2}}\Bigl(\ln(\frac{2}{\delta})\Bigr)^{\frac{1}{2(r+\varrho)+1}}\Biggr)}$ evaluations of $f$.

We have shown the possible definition of control variate $g$ for scalar integration of functions from $C^r([0,1])$ with H\"older $r$th derivative $f^{(r)}$. We now show how to decrease the variance by choosing suitable $g$ for multidimensional integration of functions from the H\"older class
\begin{equation}
	\mathcal{F}^{\varrho,d}_L=\{f:[0,1]^d\to\mathbb{R} \ | \ |f(x)-f(y)|\leq L\cdot\|x-y\|^{\varrho}_{\infty}, \ x,y\in [0,1]^d\},
\end{equation}
where $L\in (0,+\infty)$, $\varrho\in (0,1]$ are the class parameters and $\|\cdot\|_{\infty}$ is the maximum norm. For a fixed $N\in\mathbb{N}$, and assuming (without the loss of generality) that $N^{1/d}\in\mathbb{N}$, we take
\begin{equation}
g(x)=\sum_{i_1,\ldots,i_d=0}^{N^{1/d}-1}f(\xi_{i_1,\ldots,i_d})\cdot\mathbf{1}_{A_{i_1,\ldots,i_d}}(x),
\end{equation}
 where $\xi_{i_1,\ldots,i_d}$, $A_{i_1,\ldots,i_d}$ are defined as in \eqref{I_f_approx} and \eqref{def_Ais}, respectively. Note that $g$ uses $N$ values of $f$ and, in fact, we have divided $[0,1]^d$ into $N$ sub-boxes $A_{i_1,\ldots,i_d}$ such that \eqref{div_prop_1} and \eqref{div_prop_2} hold. Then, proceeding analogously as in the proof of Theorem \ref{RECT_MD_ERR_1}, we have
 \begin{equation}
     \sigma^2(f-g)\leq I[(f-g)^2]=\sum_{i_1,\ldots,i_d=0}^{N^{1/d}-1}\int\limits_{A_{i_1,\ldots,i_d}}|f(x)-f(\xi_{i_1,\ldots,i_d})|^2 dx\leq L^2 \cdot N^{-2\varrho/d}.
 \end{equation}
 Moreover
 \begin{equation}
     I(g)=\frac{1}{N}\sum_{i_1,\ldots,i_d=0}^{N^{1/d}-1}f(\xi_{i_1,\ldots,i_d})=Q^d_{N^{1/d}}(f),
 \end{equation}
 with $Q^d_{N^{1/d}}(f)$ defined by \eqref{mul_dim_Q_def}. Hence, for
 \begin{equation}
    \widetilde {MC}_N(f)=Q^d_{N^{1/d}}(f)+\frac{1}{N}\sum_{j=1}^N (f-g)(\xi_j),
 \end{equation}
 where $(\xi_j)_{j=1,\ldots,N}$ is an iid sequence sampled from $U([0,1]^d)$, we have that
 \begin{equation}
     \Bigl(\mathbb{E}|I(f)-\widetilde {MC}_N(f)|^2\Bigr)^{1/2}=\sigma(f-g)/\sqrt{N}\leq L\cdot N^{-(\frac{\varrho}{d}+\frac{1}{2})}.
 \end{equation}
 Note also that $ \widetilde {MC}_N(f)$ uses $2N$ values of $f$.
\subsection{Stratified sampling}
In this approach we divide the box $D=[0,1]^d$ (or in the more general setting, the integration domain) into $K$ sub-boxes $D_i$, $i=1,2,\ldots,K$, in such a way that
\begin{equation}
    \bigcup_{i=1}^K D_i = [0,1]^d,
\end{equation}
and for all $i\neq j$
\begin{equation}
    \lambda_d(D_i\cap D_j)=0. 
\end{equation}
We then have
\begin{eqnarray}
    I(f)=\sum_{j=1}^K I_i(f),
\end{eqnarray}
where $\displaystyle{I_i(f)=I(\mathbf{1}_{D_i}f)=\int\limits_{D_i}f(x)dx}$. Let us take
\begin{equation}
\label{sts_def_1}
    \widebar{MC}_N(f)=\sum_{i=1}^K \widebar{MC}^{(i)}_{N_i}(f),
\end{equation}
with $N=\sum_{i=1}^K N_i$ and 
\begin{equation}
    \widebar{MC}^{(i)}_{N_i}(f)=\frac{vol(D_i)}{N_i}\sum\limits_{j=1}^{N_i}f(\xi_j^i),
\end{equation}
and $(\xi_j^i)_{j=1,\ldots,N_i,i=1,\ldots,K}$ are independent random vectors, where for each $i$ the random vectors $(\xi^i_j)_{j=1,\ldots,N_i}$ are uniformly distributed in $D_i$. Then for all $i=1,2,\ldots,K$  
\begin{eqnarray}
\label{unb_sts_1}
    \mathbb{E}(\widebar{MC}^{(i)}_{N_i}(f))=\frac{vol(D_i)}{N_i}\sum\limits_{j=1}^{N_i}\mathbb{E}(f(\xi_j^i))=I_i(f),
\end{eqnarray}
and hence
\begin{eqnarray}
    \mathbb{E}(\widebar{MC}_N(f))=\sum_{i=1}^K I_i(f)=I(f).
\end{eqnarray}
Since $\displaystyle\{\widebar{MC}^{(1)}_{N_1}(f),\ldots,\widebar{MC}^{(K)}_{N_K}(f)\}$ is a family of independent random variables, we get  from \eqref{unb_sts_1} and the properties of variance
\begin{equation}
    \mathbb{E}|I(f)-\widebar{MC}_N(f)|^2=Var(\widebar{MC}_N(f))=\sum_{i=1}^K Var(\widebar{MC}^{(i)}_{N_i}(f)),
\end{equation}
where, by the independence of $\{f(\xi_1^i),\ldots,f(\xi_{N_i}^i)\}$ for given $i$, we have
\begin{equation}
    Var(\widebar{MC}^{(i)}_{N_i}(f))=\frac{vol^2(D_i)}{N^2_i}\sum\limits_{j=1}^{N_i}Var(f(\xi_j^i)).
\end{equation}
Note that
\begin{eqnarray}
    Var(f(\xi_j^i))=\mathbb{E}(f(\xi_j^i))^2-(\mathbb{E}(f(\xi_j^i)))^2=\frac{1}{vol(D_i)}I_i(f^2)-\frac{1}{vol^2(D_i)}(I_i(f))^2
\end{eqnarray}
for all $j=1,\ldots,N_i$. Therefore
\begin{equation}
     Var(\widebar{MC}^{(i)}_{N_i}(f))=\frac{1}{N_i}\Bigl(vol(D_i)\cdot I_i(f^2)-(I_i(f))^2\Bigr),
\end{equation}
and
\begin{equation}
    \mathbb{E}|I(f)-\widebar{MC}_N(f)|^2=\sum_{i=1}^K\frac{1}{N_i}\Bigl(vol(D_i)\cdot I_i(f^2)-(I_i(f))^2\Bigr).
\end{equation}
Assuming  for $i=1,\ldots,K$ that
\begin{equation}
    N_i=vol(D_i)\cdot N\in\mathbb{N},
\end{equation}
we obtain
\begin{equation}
\label{sts_err_2}
    \mathbb{E}|I(f)-\widebar{MC}_N(f)|^2=\frac{1}{N}\sum_{i=1}^K\Bigl(I_i(f^2)-\frac{(I_i(f))^2}{vol(D_i)}\Bigr)=\frac{1}{N}\Bigl(I(f^2)-\sum_{i=1}^K\frac{(I_i(f))^2}{vol(D_i)}\Bigr).
\end{equation}
We compare the error \eqref{sts_err_2} of the stratified sampling method \eqref{sts_def_1} with the error of the crude Monte Carlo method
\begin{eqnarray}
    MC_N(f)=\frac{1}{N}\sum_{j=1}^N f(\xi_j),
\end{eqnarray}
where $(\xi_j)_{j=1,\ldots,N}$ is an iid sequence sampled from $U([0,1]^d)$, and $N=\sum_{i=1}^K N_i$. Note that each algorithm $\widebar{MC}_N(f)$, $MC_N(f)$ uses $N$ (random) evaluations of $f$. From the Cauchy-Schwarz inequality
\begin{equation}
    (I(f))^2=\Bigl(\sum_{i=1}^K (vol(D_i))^{1/2}\cdot\frac{I_i(f)}{(vol(D_i))^{1/2}}\Bigr)^2\leq \sum_{i=1}^K vol(D_i)\cdot\sum_{i=1}^K\frac{(I_i(f))^2}{vol(D_i)}=\sum_{i=1}^K\frac{(I_i(f))^2}{vol(D_i)},
\end{equation}
since $\sum_{i=1}^K vol(D_i)=vol(D)=1$. Hence
\begin{equation}
\label{comp_errs}
    \|I(f)-MC_N(f)\|_{L^2(\Omega)}=\frac{(I(f^2)-(I(f))^2)^{1/2}}{\sqrt{N}}\geq \frac{\Bigl(I(f^2)-\sum_{i=1}^K\frac{(I_i(f))^2}{vol(D_i)}\Bigr)^{1/2}}{\sqrt{N}}=\|I(f)-\widebar{MC}_N(f)\|_{L^2(\Omega)},
\end{equation}
and the error of $\widebar{MC}_N(f)$ is not bigger than the error of $MC_N(f)$. Moreover, by the properties of Cauchy-Schwarz inequality, the equality in \eqref{comp_errs} holds if and only if  $I_i(f)/vol(D_i)=const$ for $i=1,\ldots,K$.

We now consider the following (exemplary) class of H\"older continuous functions $f:[0,1]^d\to\mathbb{R}$ in order to see how much we can gain by using stratified sampling. Namely, let
\begin{equation}
	\mathcal{F}^{\varrho,d}_L=\{f:[0,1]^d\to\mathbb{R} \ | \ |f(x)-f(y)|\leq L\cdot\|x-y\|^{\varrho}_{\infty}, \ x,y\in [0,1]^d\},
\end{equation}
where $L\in (0,+\infty)$, $\varrho\in (0,1]$ are the class parameters and $\|\cdot\|_{\infty}$ is the maximum norm. 

We have
\begin{equation}
    Var(f(\xi_j^i))=\mathbb{E}\Bigl(f(\xi_j^i)-\mathbb{E}(f(\xi_j^i))\Bigr)^2=\mathbb{E}\Bigl(f(\xi_j^i)-\frac{I_i(f)}{vol(D_i)}\Bigr)^2=\frac{1}{vol(D_i)}I_i\Bigl[(f-A_i)^2\Bigr],
\end{equation}
where $A_i=\frac{I_i(f)}{vol(D_i)}$. By the mean value theorem \ref{MVT} we have that for all $i=1,2,\ldots,K$ there exist $\bar x_i\in D_i$ such that $A_i=f(\bar x_i)$. Hence
\begin{equation}
    I_i\Bigl[(f-A_i)^2\Bigr]=\int\limits_{D_i}|f(x)-f(\bar x_i)|^2 dx\leq L^2\int\limits_{D_i} \|x-\bar x_i\|^{2\varrho}_{\infty} dx\leq L^2 \cdot vol(D_i)\cdot\sup\limits_{x\in D_i} \|x-\bar x_i\|^{2\varrho}_{\infty},
\end{equation}
which in turn implies that
\begin{equation}
     Var(f(\xi_j^i))\leq L^2\cdot\sup\limits_{x\in D_i} \|x-\bar x_i\|^{2\varrho}_{\infty},
\end{equation}
\begin{equation}
    Var(\widebar{MC}^{(i)}_{N_i}(f))=\frac{vol^2(D_i)}{N^2_i}\sum\limits_{j=1}^{N_i}Var(f(\xi_j^i))\leq \frac{L^2 vol^2(D_i)}{N_i} \cdot\sup\limits_{x\in D_i} \|x-\bar x_i\|^{2\varrho}_{\infty},
\end{equation}
and finally, since $N_i=vol(D_i)\cdot N$,
\begin{equation}
    \mathbb{E}|I(f)-\widebar{MC}_N(f)|^2=\sum_{i=1}^K Var(\widebar{MC}^{(i)}_{N_i}(f))\leq\frac{L^2}{N}\sum_{i=1}^K vol(D_i)\cdot\sup\limits_{x\in D_i} \|x-\bar x_i\|^{2\varrho}_{\infty}.
\end{equation}
Let us divide $D$ into $K=N$ sub-boxes $D_i$, each of the edge length equal to $N^{-1/d}$, and where we assume that $N^{1/d}\in\mathbb{N}$. Hence $vol(D_i)=(N^{-1/d})^d=1/N$ and we have that $N_i=1$ for $i=1,2,\ldots,N$. It means that we are using only one random sample on each $D_i$. Thus in this case the stratified sampling Monte Carlo method \eqref{sts_def_1} has the form
\begin{equation}
    \widebar{MC}_N(f)=\frac{1}{N}\sum_{i=1}^Nf(\xi_1^i)
\end{equation}
where $\xi_1^1,\ldots,\xi_1^N$ are independent random vectors and $\xi_1^i\sim U(D_i)$.
Moreover for $i=1,2,\ldots,N$
\begin{equation}
    \sup\limits_{x\in D_i} \|x-\bar x_i\|^{2\varrho}_{\infty}\leq N^{-2\varrho/d},
\end{equation}
and
\begin{equation}
    \Bigl(\mathbb{E}|I(f)-\widebar{MC}_N(f)|^2\Bigr)^{1/2}\leq L N^{-(\frac{\varrho}{d}+\frac{1}{2})},
\end{equation}
where $\widebar{MC}_N(f)$ uses $N$ values of $f$.
\begin{rem}
    The most visible gain of using the stratified sampling Monte Carlo algorithm is for $d=1$, since in the class $\mathcal{F}^{\varrho,d}_L$ its $L^2(\Omega)$-error  is $O(N^{-(\varrho
    +1/2)})$, the error of the crude Monte Carlo method is $O(N^{-1/2})$ while the error of the deterministic rectangle quadrature rule is $O(N^{-\varrho})$.
\end{rem}
\begin{rem}
    In the H\"older class $\mathcal{F}^{\varrho,d}_L$ the upper bound on the $L^2(\Omega)$-error for the both methods $\widebar{MC}_N(f)$ and $\widetilde{MC}_N(f)$ is the same, however, the algorithm $\widetilde{MC}_N(f)$ uses two times more function evaluations than $\widebar{MC}_N(f)$.
\end{rem}
\section{There exists deterministic quadrature rule as good as crude Monte Carlo method}
There is an interesting consequence of Theorem \ref{mean_sq_MC} and mean-value theorem for integrals. Namely, combining those two we can show that for every $f\in C([0,1]^d)$ and $N\in\mathbb{N}$ there exist nodes 
\begin{equation}
\label{det_nodes}
	x^*_1,x^*_2,\ldots,x^*_N\in [0,1]^d
\end{equation}
such that
\begin{equation}
\label{det_QR}
	\Bigl|I(f)-\frac{1}{N}\sum\limits_{j=1}^N f(x^*_j)\Bigl|=\sigma(f)/\sqrt{N}.
\end{equation}
This means that the deterministic quadrature rule $\displaystyle{Q^{det}_N(f)=\frac{1}{N}\sum\limits_{j=1}^N f(x^*_j)}$ achieves the error $O(N^{-1/2})$, the same as the crude Monte Carlo method $MC_N$, but by using only deterministic evaluations of $f$.

To show \eqref{det_QR} note the joint law $\displaystyle{\mu_{(\xi_1,\ldots,\xi_N)}}$ of independent identically distributed random variables $\xi_1,\ldots,\xi_N$ ($\xi_i\sim U([0,1]^d)$) is the product measure $\displaystyle{\mu_{(\xi_1,\ldots,\xi_N)}=\mu_{\xi_1}\times\ldots\times\mu_{\xi_N}=\lambda_d\times\ldots\times\lambda_d}$. Hence, by Theorems \ref{mean_sq_MC}, \ref{COM} we get
\begin{equation}
	\sigma^2(f)/N=\mathbb{E}|I(f)-MC_N(f)|^2=\int\limits_{\underset{N-times}{[0,1]^d\times\ldots\times [0,1]^d}}\Bigl|I(f)-\frac{1}{N}\sum\limits_{j=1}^N f(x_j)\Bigl|^2 \ (\lambda_d\times\ldots\times\lambda_d) (dx_1,\ldots,dx_N).
\end{equation}
Since the function
\begin{equation}
	[0,1]^d\times\ldots\times [0,1]^d \ni (x_1,\ldots,x_N)\to \Bigl|I(f)-\frac{1}{N}\sum\limits_{j=1}^N f(x_j)\Bigl|^2\in [0,+\infty)
\end{equation}
is continuous, we get by Theorem \ref{MVT}  that there exist points \eqref{det_nodes} such that
\begin{equation}
	\int\limits_{\underset{N-times}{[0,1]^d\times\ldots\times [0,1]^d}}\Bigl|I(f)-\frac{1}{N}\sum\limits_{j=1}^N f(x_j)\Bigl|^2 \ (\lambda_d\times\ldots\times\lambda_d) (dx_1,\ldots,dx_N)=\Bigl|I(f)-\frac{1}{N}\sum\limits_{j=1}^N f(x^*_j)\Bigl|^2.
\end{equation} 
(Note that $(\lambda_d\times\ldots\times\lambda_d)([0,1]^d\times\ldots\times [0,1]^d)=1$.)

The error of $Q^{det}_N$ the same as for $MC_N$. However, the nodes \eqref{det_nodes} (that depend on $f$) are not known (we have only established the existence of such points) and the method $Q^{det}_N$ cannot be used in practice. The result \eqref{det_QR} is of purely theoretical nature. Nevertheless, the technique described above is often used for nonconstructive proving of existence of  deterministic quadrature rules and it is called {\it probabilistic method}.
\section{Some generalizations}
We have discuses in details the crude Monte Carlo method for \eqref{I_1}. Of course this is not the most general problem that we can address. 
\subsection{Weighted integration}
We consider the problem of approximating the following weighted integral
\begin{equation}
\label{I_2w}
	I_w(f)=\int\limits_{\mathbb{R}^d}w(x)f(x)dx,
\end{equation}
where we assume that $f:\mathbb{R}^d\to\mathbb{R}$, $w:\mathbb{R}^d\to [0,+\infty)$ are Borel measurable and
\begin{eqnarray}
    && W:=\int\limits_{\mathbb{R}^d} w(x)dx<+\infty,\\
    && \max\Bigl\{\int\limits_{\mathbb{R}^d}w(x)|f(x)|dx,\int\limits_{\mathbb{R}^d}w(x)|f(x)|^2dx\Bigr\}<+\infty.
\end{eqnarray}
In this case the function $\mathbb{R}^d\ni x \to g(x):= w(x)/W$ is a probability density function, that is absolutely continuous with respect to the $d$-dimensional Lebesgue measure $\lambda_d$. Such integrals often appear in mathematical finances and option pricing.

Assuming that we can sample from $g$ we define
\begin{equation}
    MC_N(f)=\frac{W}{N}\sum_{j=1}^N f(\xi_j),
\end{equation}
where $(\xi_j)_{j\in\mathbb{N}}$ is an iid sequence of random vectors with the common density $g$. It is easy to see that
\begin{equation}
    \mathbb{E}(MC_N(f))=I_w(f),
\end{equation}
and
\begin{equation}
    Var(MC_N(f))=\frac{1}{N}\sigma^2_{w}(f),
\end{equation}
where
\begin{equation}
    \sigma_w^2(f)=W\cdot I_w(f^2)-(I_w(f))^2.
\end{equation}
This implies that
\begin{equation}
    \|I_w(f)-MC_N(f)\|_{L^2(\Omega)}=\frac{\sigma_w(f)}{\sqrt{N}},
\end{equation}
for all $N\in\mathbb{N}$.
\begin{exmp}
    For the weighted integral
    \begin{equation}
        I_w(f)=\int\limits_{\mathbb{R}} e^{-x^2} f(x) dx,
    \end{equation}
    the crude Monte Carlo method has the form
    \begin{equation}
        MC_N(f)=\frac{\sqrt{2\pi}}{N}\sum_{j=1}^Nf(\xi_j),
    \end{equation}
    where $(\xi_j)_{j\in\mathbb{N}}$ are independently sampled from the standard normal distribution $N(0,1)$.
\end{exmp}
\begin{exmp}
    We apply weighted integration to  estimation of Lebesgue measure of compact subsets in $\mathbb{R}^d$. For a compact subset $D$ of $\mathbb{R}^d$ let $M$ be  a $d$-dimensional cube such that $D\subset M$. We assume that the volume $vol(M)=\lambda_d(M)$ is known. We take $f=\mathbf{1}_D$ and $w=\mathbf{1}_M$. Then  
    \begin{equation}
        I_w(f)=\lambda_d(D).
    \end{equation}
    Hence, if we take $(\xi_j)_{j\in\mathbb{N}}$ as an iid sequence of random vectors that are uniformly distributed on the cube $M$, then we have with probability one
    \begin{equation}
        MC_N(f)=\frac{vol(M)}{N}\sum_{j=1}^N \mathbf{1}_D(\xi_j)\to \lambda_d(D),
    \end{equation}
    as $N\to +\infty$. 
\end{exmp}
\subsection{Abstract setting}
Let $\mu$ be a probabilistic measure on $(\mathbb{R}^d,\mathcal{B}(\mathbb{R}^d))$, i.e., $\mu(\mathbb{R}^d)=1$. 
For $f\in L^2(\mathbb{R}^d,\mathcal{B}(\mathbb{R}^d),\mu)$  we consider the problem of Monte Carlo approximation of the following Lebesgue integral
\begin{equation}
\label{gen_leb_int}
	I(f)=\int\limits_{\mathbb{R}^d}f(x)\mu(dx).
\end{equation}
The crude Monte Carlo estimator of \eqref{gen_leb_int} is analogous to  \eqref{crude_MC}, namely
\begin{equation}
\label{gen_crude_MC}
	MC_N(f)=\frac{1}{N}\sum\limits_{j=1}^Nf(\xi_j),
\end{equation}
where this time $(\xi_j)_{j\in\mathbb{N}}$ is an i.i.d sequence of random variables with distribution $\mu$. We stress that we can use \eqref{gen_crude_MC} only in the case when we can sample from the distribution $\mu$. Note that the weighted integration problem, described in the previous section, is a special case of \eqref{gen_leb_int} with $\mu(dx)=\frac{1}{W}w(x)\lambda_d(dx)$, since
\begin{equation}
    \int\limits_{\mathbb{R}^d}f(x)\mu(dx)=\int\limits_{\mathbb{R}^d}\frac{w(x)}{W}f(x)dx.
\end{equation}
We can go even further. Let us switch from $(\mathbb{R}^d,\mathcal{B}(\mathbb{R}^d))$ into arbitrary measurable space $(X,\mathcal{A})$ and let us consider the random variable $\xi$ with values in $X$, i.e., $\Sigma$-to-$\mathcal{A}$ measurable mapping
\begin{equation}
	\xi:\Omega\to X.
\end{equation}
 The law of $\xi$ is given by
\begin{equation}
	\mu(A)=\mathbb{P}(\{\xi \in A\})=\mathbb{P}(\xi^{-1}(A)),
\end{equation}
for all $A\in\mathcal{A}$. Then the space $(X,\mathcal{A},\mu)$ is a measure space equipped with the probability measure $\mu$ ($\mu(X)=1$). Let $f:X\to\mathbb{R}$ be a function from $L^2(X,\mathcal{A},\mu)$. Note that by Theorem \ref{COM} we get
\begin{equation}
	\mathbb{E}(f(\xi))=\int\limits_{\Omega}(f\circ \xi)(\omega)\mathbb{P}(d\omega)=I(f),
\end{equation}
where this time
\begin{equation}
\label{abs_int_X}
	I(f)=\int\limits_X f(x)\mu(dx).
\end{equation}
(Since $\mu(X)=1$, we get by the H\"older inequality $I(|f|)\leq (I(f^2))^{1/2}<+\infty$.) The Monte Carlo method approximating \eqref{abs_int_X} is again defined analogously as before, i.e.,
\begin{equation}
\label{MC_abstr}
	MC_N(f)=\frac{1}{N}\sum\limits_{j=1}^N f(\xi_j),
\end{equation}
where $(\xi_j)_{j\in\mathbb{N}}$ is an i.i.d sequence of $X$-valued random variables with distribution $\mu$. Again $(f(\xi_j))_{j\in\mathbb{N}}$ are identically distributed and independent real random variables. Hence,  by Theorem \ref{THM_SLLN}, $MC_N(f)\to I(f)$ almost surely as $N\to+\infty$, and by using the same proof technique as for Theorem \ref{mean_sq_MC} we get for all $N\in\mathbb{N}$
\begin{equation}
	\Bigl(\mathbb{E}|I(f)-MC_N(f)|^2\Bigr)^{1/2}=\sigma(f)/\sqrt{N},
\end{equation}
where $\displaystyle{\sigma^2(f)=I(f^2)-(I(f))^2}$. Moreover, it is easy to see that the following generalizations of Theorems \ref{MC_Lp_est}, \ref{AS_CONV_mc} are true.
\begin{thm} 
\label{MC_Lp_est_gen}
Let $f\in L^p(X,\mathcal{A},\mu)$ for some $p\in [2,+\infty)$. Then for
\begin{equation}
	K_p(f)=C_p\cdot\Bigl(\int\limits_X|f(x)-I(f)|^p\mu(dx)\Bigr)^{1/p}<+\infty,
\end{equation}
 where $C_p$ is the constant from the BDG inequality, and  for all $N\in\mathbb{N}$ it holds
\begin{equation}
\label{MC_p_11}
	\Bigl(\mathbb{E}|I(f)-MC_N(f)|^p\Bigr)^{1/p}\leq K_p(f)\cdot N^{-1/2},
\end{equation}
and
\begin{equation}
\label{MC_q_11}
	\Bigl(\mathbb{E}|I(f)-MC_N(f)|^q\Bigr)^{1/q}\leq K_p(f)\cdot N^{-1/2}
\end{equation}
for all $q\in (0,p]$.
\end{thm}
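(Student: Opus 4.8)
The plan is to repeat the martingale argument used for Theorem \ref{MC_Lp_est}, keeping track of the abstract probability measure $\mu$ in place of $\lambda_d$. First I would set $\mathcal{G}_0=\{\emptyset,\Omega\}$, $\mathcal{G}_j=\sigma(\xi_1,\ldots,\xi_j)$ for $j\geq 1$, and define the centered partial sums $Z_j=\sum_{k=1}^j\delta_k$ with $\delta_k:=f(\xi_k)-I(f)$ and $Z_0:=0$. Since $(f(\xi_k))_{k\in\mathbb{N}}$ are i.i.d.\ and, by Theorem \ref{COM}, $\mathbb{E}f(\xi_1)=\int_X f\,d\mu=I(f)$, the pair $(Z_j,\mathcal{G}_j)_{0\leq j\leq N}$ is a discrete-time martingale; its $L^p$-integrability for $p\geq 2$ follows from $f\in L^p(X,\mathcal{A},\mu)$ together with $\mathbb{E}|f(\xi_k)|^p=\int_X|f(x)|^p\mu(dx)<+\infty$, again by Theorem \ref{COM}, so that $\max_{1\leq j\leq N}\mathbb{E}|Z_j|^p<+\infty$ and the discrete Burkholder--Davis--Gundy inequality is applicable.

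Next I would compute the quadratic variation $[Z]_N=\sum_{k=1}^N|Z_k-Z_{k-1}|^2=\sum_{k=1}^N\delta_k^2$ and apply Theorem \ref{BDG_DISC} to get $\mathbb{E}(\max_{0\leq j\leq N}|Z_j|^p)\leq C_p^p\,\mathbb{E}([Z]_N^{p/2})$. Convexity of $x\mapsto x^{p/2}$ on $[0,+\infty)$ for $p\geq 2$ and Jensen's inequality give $(\sum_{k=1}^N\delta_k^2)^{p/2}\leq N^{p/2-1}\sum_{k=1}^N|\delta_k|^p$; taking expectations and using the transfer identity $\mathbb{E}|\delta_k|^p=\int_X|f(x)-I(f)|^p\mu(dx)$ yields $\mathbb{E}([Z]_N^{p/2})\leq N^{p/2}\int_X|f(x)-I(f)|^p\mu(dx)$. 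Since $MC_N(f)-I(f)=Z_N/N$, this gives $(\mathbb{E}|I(f)-MC_N(f)|^p)^{1/p}\leq K_p(f)\cdot N^{-1/2}$ with $K_p(f)$ exactly as in the statement, which is finite because $\||f|+|I(f)|\|_{L^p(X,\mathcal{A},\mu)}<+\infty$ (here one uses $\mu(X)=1$ so that $L^p\subset L^1$ and $I(f)$ is well defined). Finally, for $q\in(0,p]$ the Lyapunov inequality $(\mathbb{E}|W|^q)^{1/q}\leq(\mathbb{E}|W|^p)^{1/p}$ applied to $W=I(f)-MC_N(f)$ gives \eqref{MC_q_11}.

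There is essentially no serious obstacle: the argument is identical to that of Theorem \ref{MC_Lp_est}, and the only points needing a line of justification are the pushforward formula $\mathbb{E}g(\xi_1)=\int_X g\,d\mu$ for $g\in\{f,\ |f|^p,\ |f-I(f)|^p\}$ (Theorem \ref{COM}) and the resulting $L^p$-integrability of the martingale increments, both routine once one observes that $f\in L^p(X,\mathcal{A},\mu)\subset L^1(X,\mathcal{A},\mu)$ since $\mu$ is a probability measure.
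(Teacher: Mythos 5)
Your proposal is correct and follows exactly the route the paper intends: it transplants the martingale--BDG--Jensen argument from Theorem \ref{MC_Lp_est} to the abstract space $(X,\mathcal{A},\mu)$, with the pushforward identity (Theorem \ref{COM}) justifying the moment computations, which is precisely why the paper states the generalization without further proof. No gaps.
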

\begin{thm}
\label{AS_CONV_mc_gen}
 Let $\displaystyle{f\in \bigcap\limits_{p\in [2,+\infty)} L^p(X,\mathcal{A},\mu)}$. Then for all $\varepsilon\in (0,1/2)$ there exists $\displaystyle{\eta_{\varepsilon}(f)\in \bigcap\limits_{p\in [1,+\infty)} L^p(\Omega)}$ such that
\begin{equation}
	\mathbb{P}\Bigl(\bigcap_{N\in\mathbb{N}}\Bigl\{|I(f)-MC_N(f)|\leq \eta_{\varepsilon}(f)\cdot N^{-\frac{1}{2}+\varepsilon}\Bigr\}\Bigr)=1.
\end{equation} 
\end{thm}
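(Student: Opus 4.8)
The plan is to transcribe verbatim the proof of Theorem~\ref{AS_CONV_mc}, substituting Theorem~\ref{MC_Lp_est_gen} for Theorem~\ref{MC_Lp_est} as the source of the moment estimates. Concretely, I would first observe that for every $p\in[1,+\infty)$ and every $N\in\mathbb{N}$,
\[
	\Bigl(\mathbb{E}|I(f)-MC_N(f)|^p\Bigr)^{1/p}\leq K_{\max\{2,p\}}(f)\cdot N^{-1/2}.
\]
For $p\geq 2$ this is exactly \eqref{MC_p_11}; for $p\in[1,2)$ it follows from the Lyapunov inequality on the probability space $(\Omega,\Sigma,\mathbb{P})$, which gives $\|\cdot\|_{L^p(\Omega)}\leq\|\cdot\|_{L^2(\Omega)}$, combined with the $p=2$ case of \eqref{MC_q_11}. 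The hypothesis $f\in\bigcap_{p\in[2,+\infty)}L^p(X,\mathcal{A},\mu)$ is what guarantees that every constant $K_{\max\{2,p\}}(f)$ is finite.

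Next, setting $Y_N:=I(f)-MC_N(f)$, the displayed bound reads $\|Y_N\|_{L^p(\Omega)}\leq K_{\max\{2,p\}}(f)\,N^{-1/2}$ for all $p\in[1,+\infty)$, which is precisely the hypothesis of Theorem~\ref{BC_asconv} with polynomial decay rate $1/2$. Applying that theorem produces, for each $\varepsilon\in(0,1/2)$, a random variable $\eta_\varepsilon(f)\in\bigcap_{p\in[1,+\infty)}L^p(\Omega)$ such that $|Y_N|\leq\eta_\varepsilon(f)\cdot N^{-1/2+\varepsilon}$ holds simultaneously for all $N\in\mathbb{N}$ with probability one, which is exactly the claimed identity.

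I do not expect a genuine obstacle. The only place where the abstract setting intervenes is inside Theorem~\ref{MC_Lp_est_gen}, whose proof is itself the word-for-word repetition of the martingale/Burkholder--Davis--Gundy argument for Theorem~\ref{MC_Lp_est} with $[0,1]^d$ and $\lambda_d$ replaced by $X$ and $\mu$; the passage from $L^p$ rates to an almost-sure rate is then handled as a black box by Theorem~\ref{BC_asconv}. The one point deserving a line of care is making explicit that finiteness of all $\mu$-moments of $f$ is exactly what keeps each $K_p(f)$ finite, since this is needed to conclude $\eta_\varepsilon(f)\in\bigcap_{p\in[1,+\infty)}L^p(\Omega)$ rather than merely in some fixed $L^p(\Omega)$.
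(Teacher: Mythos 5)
Your proposal is correct and matches the paper's route exactly: the paper proves the general version by repeating the argument of Theorem \ref{AS_CONV_mc}, i.e., invoking the $L^p$ bounds of Theorem \ref{MC_Lp_est_gen} (with \eqref{MC_q_11} covering $p\in[1,2)$, which is your Lyapunov step) and then applying Theorem \ref{BC_asconv}. Nothing further is needed.
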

However, in order to use \eqref{MC_abstr} we have to be able to generate realizations $\xi(\omega)$ of the random variable $\xi$ with values in the abstract set $X$. This is much harder task than sampling from the finite dimensional distribution as it was for \eqref{gen_crude_MC}. One of possible solution is to construct a random variable
\begin{equation}
	\hat \xi:\Omega\to X,
\end{equation}
with simpler structure than $\xi$, that approximates $\xi$ in suitable way, and (what is the most important) for which we have access to the values (samples) $\hat\xi(\omega)\in X$. Such a construction might be highly nontrivial and involves additional computational cost. We will face with such a situation in future lectures, where $\xi$ will be a solution of the underlying stochastic differential equation, while $\hat \xi$ will be a suitable approximation of $\xi$. (For example, $\hat\xi$ might be output of the Euler or Milstein method.) Then approximate computation of $\mathbb{E}(f(\xi))$ corresponds to the weak approximation of a solution of SDE. 

 Now, let
\begin{equation}
\label{wh_mcn}
	\widehat{MC}_N(f)=\frac{1}{N}\sum\limits_{j=1}^N f(\hat\xi_j),
\end{equation}
where $(\hat\xi_j)_{j\in\mathbb{N}}$ is an i.i.d sequence of $X$-valued random variables with distribution $\hat\mu$, where $\hat\mu$ is a law of $\hat\xi$. Let us now additionally assume that 
\begin{equation}	
	\mathbb{E}|f(\hat\xi)|^2=\int\limits_X|f(x)|^2\hat\mu(dx)<+\infty.
\end{equation}
Then \eqref{wh_mcn} is a crude Monte Carlo method that approximates
\begin{equation}
	\mathbb{E}(f(\hat\xi))=\int\limits_X f(x)\hat\mu(dx),
\end{equation}
and
\begin{equation}
\label{wh_mcn_err}
		\Bigl(\mathbb{E}|\mathbb{E}(f(\hat\xi))-\widehat{MC}_N(f)|^2\Bigr)^{1/2}=\hat\sigma(f)/\sqrt{N},
\end{equation}
where
\begin{equation}
	\hat\sigma^2(f)=\int\limits_X|f(x)|^2\hat\mu(dx)-\Bigl(\int\limits_X f(x)\hat\mu(dx)\Bigr)^2.
\end{equation}
Furthermore
\begin{eqnarray}
	\Bigl(I(f)-\widehat{MC}_N(f)\Bigr)^2 &=& \Biggl(\Bigl(\mathbb{E}(f(\xi))-\mathbb{E}(f(\hat\xi))\Bigr)+\Bigl(\mathbb{E}(f(\hat\xi))-\widehat{MC}_N(f)\Bigr)\Biggr)^2\notag\\
	&=&\Bigl(\mathbb{E}(f(\xi))-\mathbb{E}(f(\hat\xi))\Bigr)^2+\Bigl(\mathbb{E}(f(\hat\xi))-\widehat{MC}_N(f)\Bigr)^2\notag\\
	&& +2 \Bigl(\mathbb{E}(f(\xi))-\mathbb{E}(f(\hat\xi))\Bigr)\cdot \Bigl(\mathbb{E}(f(\hat\xi))-\widehat{MC}_N(f)\Bigr).
\end{eqnarray}
Since $\displaystyle{\mathbb{E}(\widehat{MC}_N(f))=\mathbb{E}(f(\hat\xi))}$, by \eqref{wh_mcn_err} we arrive at
\begin{equation}
\label{mnsqrt_err_2}
	\mathbb{E}\Bigl(I(f)-\widehat{MC}_N(f)\Bigr)^2 = |\mathbb{E}(f(\xi))-\mathbb{E}(f(\hat\xi))|^2+\frac{\hat\sigma^2(f)}{N}.
\end{equation}
The term 
\begin{equation}
	|\mathbb{E}(f(\xi))-\mathbb{E}(f(\hat\xi))|=\Bigl|\int\limits_X f(x)\mu(dx)-\int\limits_X f(x)\hat\mu(dx)\Bigl|
\end{equation}
is the so called {\it weak error} and, roughly speaking, it measures how accurate the law $\hat\mu$ approximates $\mu$. Hence, the mean square error \eqref{mnsqrt_err_2} consists of two types of errors: the weak error and the error of Monte Carlo simulation.
\\
We always have
\begin{equation}
\label{weak_str_est}
	|\mathbb{E}(f(\xi))-\mathbb{E}(f(\hat\xi))|\leq \mathbb{E}|f(\xi)-f(\hat\xi)|,
\end{equation}
and sometimes this inequality might be used in order to provide further upper bound on the weak error.
For example, in the case when $(X,d)$ is a separable metric space (what will be the most common case) equipped with the Borel $\sigma$-field $\mathcal{A}=\mathcal{B}(X)$, let us assume that $f$ is Lipschitz functional, i.e., there exists $L\in (0,+\infty)$ such that for all $x,y\in X$  
\begin{equation}
	|f(x)-f(y)|\leq L d(x,y).
\end{equation}
(For instance, $f$ might be the payoff of the European option.)
Then we  have that
\begin{equation}
	\mathbb{E}|f(\xi)-f(\hat\xi)|\leq L\mathbb{E}(d(\xi,\hat\xi)).
\end{equation}
(Note that $d(\xi,\hat\xi)$ is $\Sigma$-to-$\mathcal{B}([0,+\infty))$ measurable non-negative function, see Exercises \ref{ex:ex_meas_d} at the end of this chapter.) The value of $\mathbb{E}(d(\xi,\hat\xi))$ is the average distance between trajectories of $\xi$ and $\hat\xi$, and it is called {\it strong error}. (Comparing to the weak error, it relies on the joint law $\mu_{(\xi,\hat \xi)}$ of $(\xi,\hat \xi)$.) Hence in this case we get
\begin{equation}
	\Bigl(\mathbb{E}|I(f)-\widehat{MC}_N(f)|^2\Bigr)^{1/2}\leq L\mathbb{E}(d(\xi,\hat\xi))+\hat\sigma(f)/\sqrt{N}.
\end{equation}
Note however, that for more smooth functions $f$ it is possible to get better upper bounds  not by using \eqref{weak_str_est} but by estimating directly the weak error.

The setting described above for Monte Carlo simulations is quite general and might look a bit abstract. However, in future lectures it will turn out that this setting is a natural and convenient framework for a problem  of weak approximation of solutions of SDEs, and, as a result, for option pricing. 
\section{There is always enough randomness}
For the Monte Carlo methods we always use sequences of independent random variables or random elements with values in metric spaces. Then the natural question arises if there exists a probability space large enough to carry such infinite sequences. 

For the proof of this fundamental results see \cite{KALLEN} or page 243. in \cite{loeve}.
\begin{thm} (Ulam - \L omnicki theorem)
\label{UL_THM}
	Let $T$ be an arbitrary set. Let for every $t\in T$ a measurable space $(E_t,\mathcal{E}_t)$ and probability measure $\mathbb{P}_t$ on it be given. Then there exists a probability space $(\Omega,\Sigma,\mathbb{P})$ and independent random elements $(X_t)_{t\in T}$ such that each $X_t:\Omega\to E_t$ is $\Sigma / \mathcal{E}_t$-measurable and $\mathbb{P}(\{X_t\in B\})=\mathbb{P}_t(B)$ for all $B\in\mathcal{E}_t$.
\end{thm}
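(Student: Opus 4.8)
The plan is to realize everything on the canonical product space. Take $\Omega=\prod_{t\in T}E_t$, let $X_t:\Omega\to E_t$ be the coordinate projection $X_t(\omega)=\omega_t$, and let $\Sigma=\bigotimes_{t\in T}\mathcal{E}_t$ be the product $\sigma$-field, i.e.\ the smallest $\sigma$-field on $\Omega$ making every $X_t$ measurable; with this choice the measurability requirement on the $X_t$ holds automatically. The whole problem then collapses to constructing a probability measure $\mathbb{P}$ on $(\Omega,\Sigma)$ satisfying the rectangle identity $\mathbb{P}\big(\bigcap_{t\in F}\{X_t\in B_t\}\big)=\prod_{t\in F}\mathbb{P}_t(B_t)$ for every finite $F\subseteq T$ and all $B_t\in\mathcal{E}_t$, since this single identity simultaneously says that $\mathbb{P}(X_t\in B)=\mathbb{P}_t(B)$ and that $(X_t)_{t\in T}$ is an independent family. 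So I would build the product measure $\mathbb{P}=\bigotimes_{t\in T}\mathbb{P}_t$.

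First the finite case $T=\{1,\dots,n\}$ is classical: $\mathbb{P}_1\otimes\cdots\otimes\mathbb{P}_n$ is obtained by iterated integration and is the unique measure on $\mathcal{E}_1\otimes\cdots\otimes\mathcal{E}_n$ with the rectangle property (Fubini--Tonelli). For $T=\mathbb{N}$, on the algebra $\mathcal{A}$ of finite-dimensional cylinders $C=\{\omega:(\omega_1,\dots,\omega_n)\in A\}$ with $A\in\mathcal{E}_1\otimes\cdots\otimes\mathcal{E}_n$, put $\mu_0(C)=(\mathbb{P}_1\otimes\cdots\otimes\mathbb{P}_n)(A)$; this is well defined and finitely additive, and Carath\'eodory's extension theorem yields the desired measure on $\sigma(\mathcal{A})=\Sigma$ as soon as $\mu_0$ is shown to be continuous at $\emptyset$ on $\mathcal{A}$.

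That continuity is the crux and, I expect, the main obstacle, since for abstract measurable spaces there is no topology available to invoke compactness. I would prove it by an iterated-sections argument. Suppose $(C_m)_{m\ge1}$ is a decreasing sequence of cylinders with $\mu_0(C_m)\ge\varepsilon>0$ for all $m$; writing $\mu_0(C_m)=\int_{E_1}g_m(x_1)\,\mathbb{P}_1(dx_1)$, where $g_m(x_1)$ is the product measure of the $x_1$-section of $C_m$, the $g_m$ decrease in $m$ and have integral $\ge\varepsilon$, so by dominated convergence there is $x_1^{\ast}\in E_1$ with $\inf_m g_m(x_1^{\ast})\ge\varepsilon/2$. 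The $x_1^{\ast}$-sections of the $C_m$ are again a decreasing sequence of cylinders, of measure $\ge\varepsilon/2$, in the shifted product $\prod_{t\ge2}E_t$, so the procedure repeats and produces $x_1^{\ast},x_2^{\ast},\dots$; since membership in each fixed $C_m$ is decided by finitely many coordinates, the point $(x_1^{\ast},x_2^{\ast},\dots)$ lies in every $C_m$, whence $\bigcap_m C_m\neq\emptyset$. This settles the countable case (equivalently, one may quote Kolmogorov's extension theorem at this point).

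For an arbitrary index set $T$, I would bootstrap from the countable case. Every cylinder, and hence by a $\pi$--$\lambda$ argument every set in $\Sigma$, depends on only countably many coordinates, so $\Sigma=\bigcup_{S}\pi_S^{-1}\big(\bigotimes_{t\in S}\mathcal{E}_t\big)$ with $S$ ranging over countable subsets of $T$ and $\pi_S$ the projection onto $\prod_{t\in S}E_t$. Define $\mathbb{P}$ on each $\pi_S^{-1}\big(\bigotimes_{t\in S}\mathcal{E}_t\big)$ as the pullback under $\pi_S$ of the countable product $\bigotimes_{t\in S}\mathbb{P}_t$; for $S\subseteq S'$ these definitions agree by uniqueness on rectangles, so $\mathbb{P}$ is unambiguously defined on all of $\Sigma$. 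It is countably additive because any countable family of sets in $\Sigma$ lies in a single $\pi_S^{-1}\big(\bigotimes_{t\in S}\mathcal{E}_t\big)$, taking $S$ to be the union of the countably many countable dependency sets, and there additivity is the already-established countable case. The rectangle identity holds by construction, delivering the prescribed marginals and independence, so $(\Omega,\Sigma,\mathbb{P})$ together with the coordinate maps $(X_t)_{t\in T}$ is the required object. $\blacksquare$
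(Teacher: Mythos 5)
The paper does not prove Theorem \ref{UL_THM}; it only refers the reader to \cite{KALLEN} and \cite{loeve}, and your argument is essentially the standard construction given in those references: coordinate maps on the product space, the finite and countable product measures via Carath\'eodory with a sections argument for continuity at $\emptyset$, and the passage to arbitrary $T$ through the fact that every set in $\bigotimes_{t\in T}\mathcal{E}_t$ depends on countably many coordinates. Your proof is correct; the only remark I would flag is the parenthetical claim that one could ``equivalently quote Kolmogorov's extension theorem'' in the countable step: for abstract measurable spaces $(E_t,\mathcal{E}_t)$ with no topological regularity, Kolmogorov's theorem is not available, and it is precisely the product (independence) structure -- as in your sections argument, or in the Ionescu--Tulcea theorem -- that makes the extension go through.
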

\section{Implementation issues}

We present simple implementation of the crude Monte Carlo method for the following integral 
\begin{equation}
	\int\limits_a^b f(x)dx.
\end{equation}
First we will make a suitable change of variable and we get that
\begin{equation}
	\int\limits_a^b f(x)dx=\int\limits_0^1 g(x)dx,
\end{equation}
with $\displaystyle{g(x)=(b-a)f(a+(b-a)x)}$. In the Python implementation we use NumPy module and, so called, {\it vectorization}.

\lstinputlisting[language=Python]{BOSSIP_MC_1.py}

\section{Concluding remarks}

As summary, we recall after \cite{GPage} \underline{three mandatory steps} while performing  Monte Carlo simulation to approximate $\mathbb{E}(f(\xi))$ for a given $f$:
\begin{itemize}
	\item [1.] Specification of a confidence level $\alpha\in (0,1)$, that should be close to $1$.
	\item [2.] Simulation of an $N$-sample $\xi_1,\ldots,\xi_N$ of i.i.d random vectors having the same distribution $\mu$ as $\xi$ and (possibly recursive) computation of both its empirical mean $MC_N(f)$ and its empirical variance $\widehat{\sigma^2}_N(f)$.
	\item [3.] Computation of the resulting empirical confidence interval $\mathcal{\hat J}_N^{\alpha}(f)$ at the confidence level $\alpha$. This is the  only  trustworthy output of the performed Monte Carlo simulation.
\end{itemize}
\section{Exercises}
\begin{itemize}
	\item [1.] Show that $(f(\xi_j))_{j\in\mathbb{N}}$ is the sequence of identically distributed and independent random variables.
	\item [2.] Show that $Var(f(\xi_1))=\sigma^2(f)$.
	\item [3.] Let $(A_j)_{j\in\mathbb{N}}\subset\Sigma$ be a sequence of events such that $\mathbb{P}(A_j)=1$ for all $j\in\mathbb{N}$. Show that $\displaystyle{\mathbb{P}\Bigl(\bigcap_{j\in\mathbb{N}}A_j\Bigr)=1}$. 
	\item [4.] Construct a crude Monte-Carlo method that approximates the integral 
	\begin{equation}
		I(f)=\int\limits_{\mathbb{R}^d}f(x)e^{-\|x\|_2^2/2}dx,
	\end{equation}
	for a Borel measurable functions $f:\mathbb{R}^d\to\mathbb{R}$ that belong to the weighted $L^2(\mathbb{R}^d;e^{-\|x\|_2^2/2})$ space. (Such integrals often appear in mathematical finance.)\\
	Hint. Note that
	\begin{equation}
		I(f)=(2\pi)^{d/2}\cdot\int\limits_{\mathbb{R}^d}f(x)\mu(dx), 
	\end{equation}
	where $\displaystyle{\mu(A)=\int\limits_{A}\frac{1}{(2\pi)^{d/2}} e^{-\|x\|_2^2/2}\lambda_d(dx)}$, $A\in\mathcal{B}(\mathbb{R}^d)$, is the Gaussian measure on $(\mathbb{R}^d,\mathcal{B}(\mathbb{R}^d))$.
	\item [5.] ({\it Option pricing in the Black-Scholes model}) Construct Monte Carlo estimator of the following expected value
		\begin{equation}
			\mathbb{E}((S(T)-K)^+)
		\end{equation}
		for $K\in [0,+\infty)$ and $\displaystyle{S(t)=S(0)\exp\Bigl((\mu-\frac{1}{2}\sigma
^2)t+\sigma W(t)\Bigr)}$, $t\in [0,T]$, with $S(0)>0$, $\mu\in\mathbb{R}$, $\sigma>0$. Recall that $(W(t))_{t\in [0,+\infty)}$ is a one-dimensional Wiener process, see Chapter \ref{three_processes}.
	\item [6.] Give a proof of Theorem \ref{mean_sq_MC} (iv).
	\item [7.] Show \eqref{slln_emp_var}, \eqref{exp_emp_var}, and \eqref{ctg_emp_var} in full details.
	\item [8.] Show that value of \eqref{MC_abstr} is independent of the choice of the representation of
the equivalence class $f\in  L^2(X,\mathcal{A},\mu)$ in the following sense: let $f,g$ be two representations of the same
equivalence class in $L^2(X,\mathcal{A},\mu)$. Then 
\begin{equation}
\label{equiv_indep2}
	\mathbb{P}\Bigl(\bigcap_{N\in\mathbb{N}} \{MC_N(f)=MC_N(g)\}\Bigr)=1.
\end{equation}
	\item [9.] \label{ex:ex_meas_d}
Justify that the mapping 
	\begin{equation}
		\Omega\ni \omega\to d(\xi(\omega),\hat \xi(\omega)) \in [0,+\infty)
	\end{equation}
	is $\Sigma$-to-$\mathcal{B}(\mathbb{R})$ measurable.\\
	Hint. Firstly show that the function $X\times X \ni (x,y)\to d(x,y) \in [0,+\infty)$ is continuous, and therefore $\mathcal{B}(X\times X) / \mathcal{B}(\mathbb{R})$-measurable. Then use the fact that for a separable metric space $X$ we have $\mathcal{B}(X\times X)=\mathcal{B}(X)\otimes\mathcal{B}(X)$.
	\item [10.] Give a proof of the following lemma (see Lemma 5.1 in \cite{higham2000}).
	\begin{lem} 
\label{as_stab_lem_1}
    Given a sequence of real-valued, independent and identically distributed random variables $\{Z_n\}_{n\in\mathbb{N}_0}$ with $\displaystyle{\mathbb{P}(Z_1>0)=1}$, consider the sequence of random variables $\{Y_n\}_{n\in\mathbb{N}}$ defined by
    \begin{equation}
        Y_n=\Bigl(\prod\limits_{i=0}^{n-1}Z_i\Bigr)Y_0,
    \end{equation}
    where $\displaystyle{\mathbb{P}(Y_0>0)=1}$. If $\ln(Z_1)$ is integrable, then 
    \begin{equation}
        \mathbb{E}(\ln (Z_1))< 0 \Rightarrow\lim\limits_{n\to +\infty}Y_n=0, \hbox{with probability} \ 1 \Rightarrow \mathbb{E}(\ln (Z_1))\leq 0.
    \end{equation}
	\end{lem}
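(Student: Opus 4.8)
The plan is to pass to logarithms and reduce everything to the Strong Law of Large Numbers (Theorem \ref{THM_SLLN}) applied to the i.i.d.\ real random variables $\ln(Z_i)$. Since $\mathbb{P}(Z_i>0)=1$ for every $i$ and $\mathbb{P}(Y_0>0)=1$, the quantities $\ln(Z_i)$ and $\ln(Y_0)$ are a.s.\ well defined and finite, and on the full-probability event where all of them are finite we may write
\begin{equation}
\frac{1}{n}\ln(Y_n)=\frac{1}{n}\sum_{i=0}^{n-1}\ln(Z_i)+\frac{1}{n}\ln(Y_0).
\end{equation}
The integrability hypothesis on $\ln(Z_1)$ (i.e.\ $\mathbb{E}|\ln(Z_1)|<+\infty$, which is exactly the assumption of Theorem \ref{THM_SLLN} in the scalar case) lets us invoke the SLLN for the first term, so $\tfrac1n\sum_{i=0}^{n-1}\ln(Z_i)\to\mathbb{E}(\ln(Z_1))$ a.s.; the second term tends to $0$ a.s.\ because $Y_0$ is a.s.\ finite. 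Hence $\tfrac1n\ln(Y_n)\to\mathbb{E}(\ln(Z_1))$ a.s.

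For the first implication, assume $\mu:=\mathbb{E}(\ln(Z_1))<0$. Then on the full-probability event where $\tfrac1n\ln(Y_n)\to\mu$ there is, for each such path, an index $n_0$ with $\tfrac1n\ln(Y_n)<\mu/2$ for all $n\geq n_0$, whence $\ln(Y_n)<n\mu/2\to-\infty$ and therefore $Y_n\to 0$. Since this happens on a set of probability one, we conclude $\lim_{n\to+\infty}Y_n=0$ a.s.

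For the second implication I argue by contraposition. Suppose $\mathbb{E}(\ln(Z_1))>0$ (a finite number, by the integrability hypothesis). The same decomposition gives $\tfrac1n\ln(Y_n)\to\mathbb{E}(\ln(Z_1))>0$ a.s., hence $\ln(Y_n)\to+\infty$ and thus $Y_n\to+\infty$ on a set of probability one; in particular $\mathbb{P}\bigl(\lim_{n}Y_n=0\bigr)=0\neq 1$. Therefore, if $\lim_{n}Y_n=0$ a.s., then necessarily $\mathbb{E}(\ln(Z_1))\leq 0$, which is the stated conclusion.

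There is no deep obstacle here; the only points needing care are the a.s.\ finiteness statements that make $\ln(Y_n)$ a legitimate real random variable and justify the additive decomposition, together with verifying that the hypotheses of Theorem \ref{THM_SLLN} are genuinely met for $(\ln Z_i)_{i\in\mathbb{N}}$. It is worth noting that the middle assertion of the lemma is the weaker conclusion $\mathbb{E}(\ln(Z_1))\leq 0$ rather than $<0$: the borderline case $\mathbb{E}(\ln(Z_1))=0$ yields $\tfrac1n\ln(Y_n)\to 0$, which the law of large numbers alone does not resolve either way, so no sharper converse is available by this method.
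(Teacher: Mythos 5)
Your proof is correct and follows exactly the route the paper intends: the lemma is left as an exercise there, with the hint $\ln(Y_n)=\ln(Y_0)+S_n$, $S_n=\sum_{i=0}^{n-1}\ln Z_i$, and your argument (SLLN for $\tfrac1n S_n$, the vanishing $\tfrac1n\ln Y_0$ term, and contraposition for the second implication) is precisely the intended completion of that hint. Your closing remark about the borderline case $\mathbb{E}(\ln Z_1)=0$ correctly explains why only the weak inequality can be recovered.
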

	Hint. Use the fact that $\displaystyle{\ln (Y_n)=\ln (Y_0)+S_n}$ with $\displaystyle{S_n=\sum\limits_{i=0}^{n-1}\ln Z_i}$.
	\item [11.] Show that $(Z_j,\mathcal{G}_j)_{j=0,1,\ldots,N}$, defined in \eqref{DT_mart_1},  is a discrete-time martingale such that $\displaystyle{\max\limits_{1\leq j\leq N}\mathbb{E}|Z_j|^p<+\infty}$.
	\item [12.] Investigate the $L^p(\Omega)$-error, $p\in [2,+\infty)$, for $\widetilde {MC}_{N,N}$ defined in \eqref{wt_mc_1}.
	\item [13.] Construct empirical confidence intervals for $\widetilde {MC}_{N,N}$.
	\item [14.] Construct empirical confidence intervals for $MC_N(f)$ defined in \eqref{MC_abstr}.
 	\item [15.] $f\in C^{r}([0,1])$ with $\varrho$-H\"older continuous $r$th derivative $f^{(r)}$, $r\in\mathbb{N}_0$, $\varrho\in (0,1]$. Investigate the mean square error of $\widetilde {MC}_{N,n}$ when the control variate $g_n:[0,1]\to\mathbb{R}$ is a piecewise Taylor polynomial 
	\begin{equation}
 \label{Taylor_cv_def}
		g_n(x)=\sum\limits_{i=0}^{n-1}\mathbf{1}_{[x_i,x_{i+1})}(x)\cdot\sum\limits_{j=0}^r \frac{f^{(j)}(x_i)}{j!}(x-x_i)^j,
	\end{equation}
for $x_i=i/n$, $i=0,1,\ldots,n$.
\item [16.] Let $f\in C^r([0,1])$, $r\in\mathbb{N}$. As a control variate we take the following piecewise Taylor polynomial
\begin{equation}
		g_n(x)=\sum\limits_{i=0}^{n-1}\mathbf{1}_{[x_i,x_{i+1})}(x)\cdot\sum\limits_{j=0}^{r-1} \frac{f^{(j)}(x_i)}{j!}(x-x_i)^j, \quad x\in [a,b]
	\end{equation}
for $x_i=i/n$, $i=0,1,\ldots,n$. For $\widetilde {MC}_{n,n}$ show that 
\begin{equation}
		\lim\limits_{n\to +\infty} n^{r+\frac{1}{2}} \cdot \|I(f)-\widetilde {MC}_{n,n}(f)\|_{L^2(\Omega)}=C_{MC}(f),
	\end{equation}
	where
	\begin{equation}
		C_{MC}(f)=\Bigl[\frac{1}{(r!)^2\cdot (2r+1)}\int\limits_{0}^{1}(f^{(r)}(t))^2dt-\frac{1}{((r+1)!)^2}(f^{(r-1)}(1)-f^{(r-1)}(0))^2\Bigr]^{1/2}.
	\end{equation}
\end{itemize}
\chapter{Deterministic vs randomized algorithms for ODEs}
\label{MC_ODEs}
In this chapter we show how to define randomized algorithms for approximate solving of ordinary differential equations. We consider initial-value problems of the following form
\begin{equation}
	\label{ODE_PROBLEM1}
		\left\{ \begin{array}{ll}
			z'(t)=f(t,z(t)), &t\in [a,b], \\
			z(a)=\xi, 
		\end{array}\right.
\end{equation}
where $d\in\mathbb{N}$, $\xi\in\mathbb{R}^d$, $-\infty<a<b<+\infty$. In $\mathbb{R}^d$ we consider the Euclidean norm $\|\cdot\|$. We assume that $f\in C([a,b]\times\mathbb{R}^d;\mathbb{R}^d)$ is of at most linear growth and satisfies local Lipschitz condition. We consider explicit randomized Euler and Runge-Kutta schemes.

We impose the following assumptions on the right-hand side function $f$:
\begin{itemize}
	\item [(A1)] $f\in C([a,b]\times\mathbb{R}^d;\mathbb{R}^d)$,
	\item [(A2)] there exists $K\in [0,+\infty)$ such that for all $t\in [a,b]$, $y\in \mathbb{R}^d$
	\begin{equation}
		\|f(t,y)\|\leq K(1+\|y\|),
	\end{equation}
	\item [(A3)] for every $R>0$ there exists $L_R\in [0,+\infty)$ such that for all $t\in [a,b]$, and all $x,y\in \mathbb{R}^d$ satisfying $\|x\|\leq R$, $\|y\|\leq R$ it holds
	\begin{equation}
		\|f(t,x)-f(t,y)\|\leq L_R \|x-y\|,
	\end{equation}
	\item [(A4)]  there exists $\varrho\in (0,1]$ such that for every $H>0$ there exists $L_H\in [0,+\infty)$ such that  for all $t,s\in [a,b]$, and all $x\in \mathbb{R}^d$ satisfying $\|x\|\leq H$  it holds
	\begin{equation}
		\|f(t,x)-f(s,x)\|\leq L_H |t-s|^{\varrho}.
		\end{equation}
\end{itemize}
Therefore $f=f(t,x)$ is globally continuous, of at most linear growth, it satisfies local Lipschitz assumption with respect to $x$, and it is globally H\"older continuous with respect to $t$. 
\begin{ex} The exemplary function satisfying (A1)-(A4) is
\begin{equation}
\label{f_a1a4}
	f(t,x)=x\cdot\sin(x^2\cdot |t|^{\varrho}), \quad (t,x)\in [a,b]\times\mathbb{R}.
\end{equation}
\end{ex}
In this section we will use the following version of Peano's theorem, see, for example, Theorem 70.4, page 292. in \cite{gring} for its proof. (For further comments on existence of solutions of \eqref{ODE_PROBLEM1} check Remarks \ref{uni_ex_1}, \ref{uni_ex_2}).
\begin{thm} (\cite{gring}) \label{peano}
If the function $f$ satisfies Assumptions (A1) and (A2), then the initial value problem \eqref{ODE_PROBLEM1} has at least one solution on $[a,b]$.
\end{thm}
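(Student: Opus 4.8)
\noindent The approach I would take is the classical Cauchy--Euler polygon construction followed by a compactness argument, which is also in the spirit of the Euler schemes treated in the next chapter. First I would rewrite \eqref{ODE_PROBLEM1} in integral form: by (A1) and the fundamental theorem of calculus, $z\in C([a,b];\mathbb{R}^d)$ solves \eqref{ODE_PROBLEM1} if and only if $z(t)=\xi+\int_a^t f(s,z(s))\,ds$ for all $t\in[a,b]$ (and any such $z$ is then automatically $C^1$). So it suffices to produce one continuous solution of this integral equation.

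Next, for $n\in\mathbb{N}$ I would set $h=(b-a)/n$, $t_k=a+kh$, and define the Euler polygon $z_n$ by $z_n(a)=\xi$, $z_n(t_{k+1})=z_n(t_k)+hf(t_k,z_n(t_k))$, interpolated affinely on each $[t_k,t_{k+1}]$; writing $\lfloor s\rfloor_n$ for the left endpoint of the mesh interval containing $s$, this is the same as
\begin{equation}
	z_n(t)=\xi+\int_a^t f\big(\lfloor s\rfloor_n,z_n(\lfloor s\rfloor_n)\big)\,ds .
\end{equation}
The key estimate is a uniform (in $n$) a priori bound: feeding the linear growth (A2) into the recursion and applying a discrete Gr\"onwall argument gives $\|z_n(t_k)\|\le(\|\xi\|+1)e^{K(b-a)}=:M$ for all $k$, hence $\|z_n(t)\|\le M$ for all $t\in[a,b]$ and all $n\in\mathbb{N}$ by convexity on each mesh interval. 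This is precisely the point where (A2) upgrades local existence to existence on the whole interval $[a,b]$.

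Since $f$ is continuous on the compact set $[a,b]\times\overline{B}(0,M)$, it is bounded there, say by $\widetilde K$, and then the integral representation of $z_n$ shows $\|z_n(t)-z_n(t')\|\le\widetilde K\,|t-t'|$; thus $(z_n)_n$ is uniformly bounded and uniformly Lipschitz, hence equicontinuous. By the Arzel\`a--Ascoli theorem some subsequence $z_{n_j}$ converges uniformly on $[a,b]$ to a limit $z\in C([a,b];\mathbb{R}^d)$. Finally I would pass to the limit in the integral identity: since $\|z_{n_j}(\lfloor s\rfloor_{n_j})-z(s)\|\le \widetilde K\,(b-a)/n_j+\|z_{n_j}-z\|_\infty\to 0$ uniformly in $s$ and $f$ is uniformly continuous on $[a,b]\times\overline{B}(0,M)$, the integrand converges uniformly to $f(s,z(s))$, so $z(t)=\xi+\int_a^t f(s,z(s))\,ds$ and $z$ solves \eqref{ODE_PROBLEM1}.

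The only genuinely delicate points are the discrete Gr\"onwall step yielding the $n$-independent bound $M$, and the bookkeeping in the final limit, where the mesh argument $\lfloor s\rfloor_{n_j}$ inside $f$ must be controlled via uniform continuity on the fixed compact set; note also that no uniqueness is claimed, consistent with only (A1)--(A2) being used, so distinct subsequences may a priori converge to distinct solutions. As an alternative that packages the compactness differently, one can apply the Schauder fixed point theorem to $Tz(t)=\xi+\int_a^t f(s,z(s))\,ds$ on the closed, convex, bounded set $\{z\in C([a,b];\mathbb{R}^d):\|z(t)\|\le\rho(t)\ \forall t\}$, where $\rho$ solves $\rho'=K(1+\rho)$, $\rho(a)=\|\xi\|$; the invariance of this set under $T$ uses exactly (A2), and relative compactness of $T(K)$ again follows from Arzel\`a--Ascoli.
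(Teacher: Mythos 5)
Your proposal is correct and complete: the integral reformulation, the Euler polygon with its integral representation, the discrete Gr\"onwall a priori bound $M=(\|\xi\|+1)e^{K(b-a)}$ that is uniform in $n$, the uniform Lipschitz bound giving equicontinuity, Arzel\`a--Ascoli, and the passage to the limit via uniform continuity of $f$ on $[a,b]\times\overline{B}(0,M)$ are all sound, and the Schauder variant with the comparison function $\rho$ is also a valid packaging. Be aware, however, that the paper does not prove this theorem at all: it is imported from the literature (Theorem 70.4, p.~292 in \cite{gring}), with alternative sources indicated in Remarks \ref{uni_ex_1} and \ref{uni_ex_2}, so there is no in-paper argument to compare against, and your self-contained Cauchy--Euler-polygon proof is genuinely added content. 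It fits the notes well, since the same polygon approximation $l_n$, the same linear-growth estimates and the same (discrete and continuous) Gr\"onwall bookkeeping reappear in Lemma \ref{lemma_ode_1} and in the error analysis of the deterministic and randomized Euler schemes; in fact under (A2) alone you already have $\|f(t,y)\|\le K(1+M)$ on $[a,b]\times\overline{B}(0,M)$, so the auxiliary constant $\widetilde K$ obtained from compactness is not needed. Your closing remarks are also the right ones: only subsequential convergence (hence possibly non-unique limits) is available under (A1)--(A2), and uniqueness is recovered in the paper only later, under the additional local Lipschitz assumption (A3) in Lemma \ref{lemma_ode_1}.
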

\begin{lem}
\label{lemma_ode_1}
Let $\xi\in\mathbb{R}^d$.
\begin{itemize}
	\item [(i)] Let us assume that $f$ satisfies the assumptions (A1)-(A3). Then there exists a unique solution $z:[a,b]\to\mathbb{R}^d$, $z\in C^1([a,b]\times\mathbb{R}^d;\mathbb{R}^d)$, of \eqref{ODE_PROBLEM1}. Moreover
\begin{equation}
\label{bound_ode_1}
	\sup\limits_{a\leq t\leq b}\|z(t)\|\leq \max\{1,K(b-a)\}e^{K(b-a)}(1+\|\xi\|),
\end{equation}
and for all $s,t\in [a,b]$
\begin{equation}
\label{bound_ode_2}
	\|z(t)-z(s)\|\leq K\Bigl(1+\max\{1,K(b-a)\}e^{K(b-a)}\Bigr)(1+\|\xi\|)|t-s|. 
\end{equation}
	\item [(ii)] Let us assume that $f$ satisfies the assumptions (A1)-(A4). Then there exists $L_1=L_1(a,b,K,\xi)\in [0,+\infty)$ such that for all $t,s,\in [a,b]$
\begin{equation}
\label{bound_ode_3}
	\|z'(t)-z'(s)\|\leq L_1|t-s|^{\varrho}.
\end{equation}
\end{itemize} 
\end{lem}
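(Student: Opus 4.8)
The plan is to treat (i) and (ii) in turn, using the a priori bound \eqref{bound_ode_1} as the backbone of the whole argument. Since (A1) and (A2) hold, Theorem \ref{peano} gives at least one solution $z$ on $[a,b]$; it satisfies the integral equation $z(t)=\xi+\int_a^t f(s,z(s))\,ds$. First I would establish \eqref{bound_ode_1}: taking norms and using (A2) yields $\|z(t)\|\le\|\xi\|+K(t-a)+K\int_a^t\|z(s)\|\,ds$, so Gronwall's inequality (applied to $u(t)=\|z(t)\|$ with the nondecreasing majorant $\|\xi\|+K(t-a)$) gives $\|z(t)\|\le(\|\xi\|+K(b-a))e^{K(b-a)}$; since $\|\xi\|\le\max\{1,K(b-a)\}\|\xi\|$ and $K(b-a)\le\max\{1,K(b-a)\}$, the right-hand side is bounded by $M:=\max\{1,K(b-a)\}e^{K(b-a)}(1+\|\xi\|)$, which is \eqref{bound_ode_1}. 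Then \eqref{bound_ode_2} follows immediately: for $s\le t$, $\|z(t)-z(s)\|\le\int_s^t\|f(r,z(r))\|\,dr\le K(1+M)(t-s)$, and $K(1+M)\le K(1+\max\{1,K(b-a)\}e^{K(b-a)})(1+\|\xi\|)$ because $1+\|\xi\|\ge1$.

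For uniqueness, let $z_1,z_2$ be two solutions. Both obey the a priori bound, hence take values in the closed ball of radius $R:=M$; invoking (A3) with this $R$, subtracting the two integral equations and taking norms gives $\|z_1(t)-z_2(t)\|\le L_R\int_a^t\|z_1(s)-z_2(s)\|\,ds$, and Gronwall (with zero initial value) forces $z_1\equiv z_2$. Finally $z'(t)=f(t,z(t))$, which is continuous as a composition of continuous maps (here $t\mapsto(t,z(t))$ is continuous by \eqref{bound_ode_2}), so $z\in C^1([a,b];\mathbb{R}^d)$.

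\emph{Part (ii).} Assume additionally (A4). Using $z'(t)=f(t,z(t))$ and the triangle inequality, $\|z'(t)-z'(s)\|\le\|f(t,z(t))-f(s,z(t))\|+\|f(s,z(t))-f(s,z(s))\|$. For the first term, since $\|z(t)\|\le M$, (A4) with $H=M$ gives a bound $L_H|t-s|^{\varrho}$. For the second term, since $\|z(t)\|,\|z(s)\|\le M$, (A3) with $R=M$ together with \eqref{bound_ode_2} gives a bound of order $|t-s|$, which I then convert to $|t-s|^{\varrho}$ using $|t-s|\le(b-a)^{1-\varrho}|t-s|^{\varrho}$ (valid since $\varrho\le1$ and $|t-s|\le b-a$). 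Collecting both Hölder-type bounds produces a single constant $L_1$ depending only on $a,b,K,\|\xi\|$ (through $M$) and the structural constants of (A3)--(A4), which is \eqref{bound_ode_3}.

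The argument is essentially routine; the only points requiring care are the ordering — the a priori bound must be in place before uniqueness, so that the local Lipschitz constant from (A3) can be pinned down on a known ball — and, in (ii), the mixed Lipschitz/Hölder estimate, where one must recall that a genuinely Lipschitz-in-time term is also $\varrho$-Hölder on a bounded interval. I do not anticipate any serious obstacle.
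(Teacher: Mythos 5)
Your proposal is correct and follows essentially the same route as the paper: Peano existence, Gronwall for the a priori bound \eqref{bound_ode_1}, the linear-growth estimate for \eqref{bound_ode_2}, local Lipschitzness on the ball of radius $M$ plus Gronwall for uniqueness, and in (ii) the split of $f(t,z(t))-f(s,z(s))$ into time and space increments with the elementary conversion of the Lipschitz term into a $\varrho$-H\"older one. The only differences (order of the splitting in (ii), using $(b-a)^{1-\varrho}$ instead of $1+b-a$ for that conversion) are cosmetic.
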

\begin{proof} Let us assume that $f$ satisfies the assumptions (A1)-(A3).  According to Theorem \ref{peano} under the  conditions (A1), (A2) the equation \eqref{ODE_PROBLEM1} has at least one solution $z\in C^1([a,b]\times\mathbb{R}^d;\mathbb{R}^d)$. For all $t\in [a,b]$ we have
\begin{equation}
	\|z(t)\|\leq \|\xi\|+K(b-a)+\int\limits_a^t \|z(s)\|ds,
\end{equation}
and by Gronwall's lemma \ref{GLCW} we get \eqref{bound_ode_1}. Moreover, for all $t,s\in [a,b]$
\begin{equation}
	\|z(t)-z(s)\|=\|z(t\vee s)-z(t\wedge s)\|\leq \int\limits_{t\wedge s}^{t\vee s}\|f(u,z(u))\|du\leq K\Bigl(1+\sup\limits_{a\leq t\leq b}\|z(t)\|\Bigr)|t-s|,
\end{equation}
and from \eqref{bound_ode_1} we get \eqref{bound_ode_2}.

Let $z$ and $\tilde z$ be two solutions of \eqref{ODE_PROBLEM1}. From \eqref{bound_ode_1} we have that $\max\{\|z(t)\|,\|\tilde z(t)\|\}\leq R$ for all $t\in [a,b]$, where $R=\max\{1,K(b-a)\}e^{K(b-a)}(1+\|\xi\|)$. Then, by the assumption (A3) we have that there exists $L_R\in [0,+\infty)$ such that for all $t\in [a,b]$
\begin{equation}
    \|z(t)-\tilde z(t)\|\leq \int\limits_a^t \|f(s,z(s))-f(s,\tilde z(s))\| ds\leq L_R\int\limits_a^t \|z(s)-\tilde z(s)\| ds.
\end{equation}
Again, by  Gronwall's lemma  \ref{GLCW} for all $t\in [a,b]$ we have $z(t) = \tilde z(t)$, and the uniqueness of the solution $z$ follows. Finally, let us assume that $f$ satisfies the assumptions (A1)-(A4). From \eqref{bound_ode_1} we have that $\|z(t)\|\leq R$ for all $t\in [a,b]$, where $R=H=\max\{1,K(b-a)\}e^{K(b-a)}(1+\|\xi\|)$. Hence, by (A3), (A4), \eqref{bound_ode_1}, \eqref{bound_ode_2} we have that there exist $L_R,L_H\in [0,+\infty)$ such that for all $t,s\in [a,b]$
\begin{eqnarray}
&&\|z'(t)-z'(s)\|=\|f(t,z(t))-f(s,z(s))\|\leq \|f(t,z(t))-f(t,z(s))\|+\|f(t,z(s))-f(s,z(s))\|\notag\\
&&\leq L_R\|z(t)-z(s)\|+L_H  |t-s|^{\varrho}.
\end{eqnarray}
This and the fact that $|t-s|^{1-\varrho}\leq 1+|t-s|\leq 1+b-a$ implies \eqref{bound_ode_3}.
\end{proof}
The following result states that the solution $z$ depends on the initial-value $\xi$ in continuous-like way. Note that the $L_2$ in the proposition below depends on the initial-value, which is  due to the local Lipschitz condition for $f$. This is not the case  when $f$ satisfies the Lipschitz condition globally.
\begin{prop} Let us assume that $f$ satisfies (A1)-(A3). Let $z=z(\xi,f)$ denotes the solution of \eqref{ODE_PROBLEM1} under the initial value $\xi\in\mathbb{R}^d$, while $y=y(\eta,f)$ the solution of \eqref{ODE_PROBLEM1} with the initial-value $\eta\in\mathbb{R}^d$. Then there exists $L_2=L_2(a,b,K,\xi,\eta)\in [0,+\infty)$ such that for all $t\in [a,b]$
\begin{equation}
	\|z(t)-y(t)\|\leq e^{L_2(t-a)}\|\eta-\xi\|.
\end{equation}
\end{prop}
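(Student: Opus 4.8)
The plan is to pass to the integral (Volterra) form of both solutions and reduce the estimate to Gronwall's lemma, in the same spirit as the uniqueness argument in Lemma \ref{lemma_ode_1}. Since, by Lemma \ref{lemma_ode_1}(i), $z$ and $y$ are the unique $C^1$ solutions of \eqref{ODE_PROBLEM1} with initial values $\xi$ and $\eta$ respectively, for every $t\in[a,b]$ one has $z(t)=\xi+\int_a^t f(s,z(s))\,ds$ and $y(t)=\eta+\int_a^t f(s,y(s))\,ds$. Subtracting and taking norms gives
\begin{equation}
\|z(t)-y(t)\|\le\|\xi-\eta\|+\int_a^t\|f(s,z(s))-f(s,y(s))\|\,ds .
\end{equation}

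The one point requiring care — and the main (minor) obstacle — is that (A3) is only a \emph{local} Lipschitz condition, so to bound the integrand I must first confine both trajectories to a single ball. Applying the a priori bound \eqref{bound_ode_1} separately to $z$ and to $y$ and retaining the larger right-hand side, I would set
\begin{equation}
R=\max\{1,K(b-a)\}e^{K(b-a)}\bigl(1+\max\{\|\xi\|,\|\eta\|\}\bigr),
\end{equation}
so that $\|z(t)\|\le R$ and $\|y(t)\|\le R$ for all $t\in[a,b]$. By (A3) there is then a constant $L_R\in[0,+\infty)$, depending on $R$ and hence on $a,b,K,\xi,\eta$, such that $\|f(s,z(s))-f(s,y(s))\|\le L_R\|z(s)-y(s)\|$ for all $s\in[a,b]$. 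This is precisely where the dependence of the constant on the initial values enters, as flagged in the remark preceding the proposition.

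Substituting this into the integral inequality yields
\begin{equation}
\|z(t)-y(t)\|\le\|\xi-\eta\|+L_R\int_a^t\|z(s)-y(s)\|\,ds,\qquad t\in[a,b],
\end{equation}
and since $t\mapsto\|z(t)-y(t)\|$ is continuous (indeed $C^1$) on $[a,b]$, Gronwall's lemma \ref{GLCW} gives $\|z(t)-y(t)\|\le\|\xi-\eta\|\,e^{L_R(t-a)}$ for every $t\in[a,b]$. Taking $L_2:=L_R=L_2(a,b,K,\xi,\eta)$ completes the argument. Beyond the bookkeeping of the common ball radius $R$, every step is the standard Gronwall estimate already used twice in this section, so I do not anticipate any genuine difficulty.
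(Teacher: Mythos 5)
Your proposal is correct and follows essentially the same route as the paper: apply the a priori bound \eqref{bound_ode_1} to both solutions to confine them to a common ball of radius $R=R_1\vee R_2$, invoke (A3) to obtain the local Lipschitz constant $L_R$, and conclude via Gronwall's lemma \ref{GLCW} applied to the integral form. No gaps.
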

\begin{proof}
	By Lemma \ref{lemma_ode_1} (i) we have that
	\begin{equation}
	\label{bound_ode_11}
	\sup\limits_{a\leq t\leq b}\|z(t)\|\leq R_1:=\max\{1,K(b-a)\}e^{K(b-a)}(1+\|\xi\|),
\end{equation}
and
\begin{equation}
\label{bound_ode_12}
	\sup\limits_{a\leq t\leq b}\|y(t)\|\leq R_2:= \max\{1,K(b-a)\}e^{K(b-a)}(1+\|\eta\|).
\end{equation}
Hence, for $R=R_1\vee R_2$ there exists $L_R\in [0,+\infty)$  such that for all $s\in [a,b]$
\begin{equation}
	\|f(s,z(s))-f(s,y(s))\|\leq L_R \|z(s)-y(s)\|.
\end{equation}
Therefore, for all $t\in [a,b]$
\begin{equation}
	\|z(t)-y(t)\|\leq \|\xi-\eta\|+L_R\int\limits_a^t \|z(s)-y(s)\|ds
\end{equation}
and by the Gronwall's lemma \ref{GLCW} we get the thesis.
\end{proof}
\begin{rem}
\label{uni_ex_1}
We refer to Theorem (2.12) at page 252 in \cite{andresgorniewicz1} where  even more general version of Peano's theorem was established than Theorem \ref{peano}.
\end{rem}
\begin{rem}
\label{uni_ex_2}
Existence and uniqueness of solution to \eqref{ODE_PROBLEM1} under the assumptions (A1)-(A3) can also be derived from Theorem 2.2 at pages 104-105 in \cite{fried1}. 
\end{rem}
\section{Deterministic Euler scheme}
Let $n\in\mathbb{N}$, $h=(b-a)/n$, $t_k=a+kh$ for $k=0,1,\ldots,n$. We define the Euler scheme as follows. Take
\begin{equation}
	y_0=\xi,
\end{equation}
and
\begin{equation}
	y_{k+1}=y_k+hf(t_k,y_k),
\end{equation}
for $k=0,1,\ldots,n-1$. Moreover, as a time-continuous approximation of $z$ in the whole interval $[a,b]$ we set
\begin{equation}
	l_n(t):=l_{k,n}(t) \ \hbox{for} \  t\in [t_k,t_{k+1}], \ k=0,1,\ldots,n-1,
\end{equation}
where
\begin{equation}
	l_{k,n}(t)=y_k+(t-t_k)f(t_k,y_k).
\end{equation}
Of course $l_n\in C([a,b];\mathbb{R}^d)$ and it is piecewise linear function. In order to construct the function $l_n$ we use $n$ evaluations of $f$.
\begin{thm} Let $\xi\in\mathbb{R}^d$ and let $f$ satisfy the assumptions (A1)-(A4). Then there exists $C\in [0,+\infty)$ such that for all $n\in\mathbb{N}$ the following holds
\begin{equation}
	\sup\limits_{a\leq t\leq b}\|z(t)-l_n(t)\|\leq Ch^{\varrho}.
\end{equation}
\end{thm}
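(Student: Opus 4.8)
The plan is to run the classical local-error-plus-stability argument for one-step methods, using the regularity of $z$ furnished by Lemma~\ref{lemma_ode_1}. First I would record what we already know: under (A1)-(A4) the problem \eqref{ODE_PROBLEM1} has a unique solution $z\in C^1([a,b];\mathbb{R}^d)$, it obeys the a priori bound \eqref{bound_ode_1}, and by Lemma~\ref{lemma_ode_1}(ii) its derivative $z'=f(\cdot,z(\cdot))$ is $\varrho$-H\"older, i.e.\ $\|z'(t)-z'(s)\|\le L_1|t-s|^{\varrho}$ for all $s,t\in[a,b]$.

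The second step, and the one I expect to be the main obstacle, is a bound on the Euler iterates $y_0,\dots,y_n$ that is uniform in $n$ and $k$. From $y_{k+1}=y_k+hf(t_k,y_k)$ and (A2) one gets $\|y_{k+1}\|\le(1+Kh)\|y_k\|+Kh$, hence by induction $\|y_k\|\le(1+Kh)^k(1+\|\xi\|)\le e^{K(b-a)}(1+\|\xi\|)$. Together with \eqref{bound_ode_1} this puts every point $(t_k,y_k)$ and $(t_k,z(t_k))$ inside one fixed ball of radius $R$ independent of $n$, so that (A3) supplies a single Lipschitz constant $L_R$ valid for all of them. Getting this uniformity right is what makes the local Lipschitz hypothesis usable here.

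Then comes the grid-point error recursion. Put $e_k=z(t_k)-y_k$, so $e_0=0$, and use $z(t_{k+1})=z(t_k)+\int_{t_k}^{t_{k+1}}f(s,z(s))\,ds$ to write
\begin{equation}
e_{k+1}=e_k+\int_{t_k}^{t_{k+1}}\bigl(f(s,z(s))-f(t_k,z(t_k))\bigr)ds+\int_{t_k}^{t_{k+1}}\bigl(f(t_k,z(t_k))-f(t_k,y_k)\bigr)ds.
\end{equation}
In the first integral $f(s,z(s))-f(t_k,z(t_k))=z'(s)-z'(t_k)$ has norm at most $L_1h^{\varrho}$, so that term is $\le L_1h^{1+\varrho}$; in the second, (A3) with constant $L_R$ gives the bound $hL_R\|e_k\|$. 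Thus $\|e_{k+1}\|\le(1+hL_R)\|e_k\|+L_1h^{1+\varrho}$, and summing the resulting geometric majorant (discrete Gronwall) yields
\begin{equation}
\|e_k\|\le L_1h^{1+\varrho}\sum_{j=0}^{k-1}(1+hL_R)^j\le L_1h^{1+\varrho}\cdot\frac{(1+hL_R)^n-1}{hL_R}\le \frac{L_1}{L_R}\bigl(e^{L_R(b-a)}-1\bigr)h^{\varrho}
\end{equation}
(with the obvious modification $\|e_k\|\le L_1(b-a)h^{\varrho}$ when $L_R=0$).

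Finally I would pass to the continuous interpolant. For $t\in[t_k,t_{k+1}]$, subtracting $l_n(t)=y_k+(t-t_k)f(t_k,y_k)$ from $z(t)=z(t_k)+\int_{t_k}^{t}f(s,z(s))\,ds$ gives $\|z(t)-l_n(t)\|\le\|e_k\|+\int_{t_k}^{t}\|f(s,z(s))-f(t_k,y_k)\|\,ds$, and the integrand is at most $\|z'(s)-z'(t_k)\|+\|f(t_k,z(t_k))-f(t_k,y_k)\|\le L_1h^{\varrho}+L_R\|e_k\|$. Using the grid bound on $\|e_k\|$ and $h\le b-a$, all terms collapse into $Ch^{\varrho}$ with $C=C(a,b,K,\|\xi\|,\varrho)$ (through $L_1$ and $L_R$), which is the claim. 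After the uniform a priori bound, the remaining steps are routine.
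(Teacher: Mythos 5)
Your proof is correct, but it is organized differently from the paper's. The paper works with the time-continuous error throughout: it writes both $z$ and $l_n$ as integrals over $[a,t]$, splits the defect as $\|f(s,z(s))-f(s,y_k)\|+\|f(s,y_k)-f(t_k,y_k)\|$ (Lipschitz in space at time $s$ via (A3), then H\"older in time via (A4) at the frozen numerical point $y_k$), bounds $\|z(s)-y_k\|\le\|z(s)-z(t_k)\|+\|z(t_k)-l_n(t_k)\|$, and closes the estimate with the continuous Gronwall lemma applied to $t\mapsto\sup_{a\le u\le t}\|z(u)-l_n(u)\|$ (which forces it to check measurability and boundedness of that function). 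You instead split the one-step defect as $\|f(s,z(s))-f(t_k,z(t_k))\|+\|f(t_k,z(t_k))-f(t_k,y_k)\|$, absorbing the time regularity into the H\"older continuity of $z'$ from Lemma \ref{lemma_ode_1}(ii) rather than invoking (A4) at the iterates, run a discrete Gronwall on the grid errors $e_k$, and finish with a one-step estimate for the interpolant. Both arguments hinge on the same two a priori ingredients — the bound \eqref{bound_ode_1} on $z$ and the uniform bound $\max_k\|y_k\|\le e^{K(b-a)}(1+\|\xi\|)$ so that a single local Lipschitz constant $L_R$ serves all points — and you correctly identified that uniformity as the crux. What your route buys is a more elementary, purely discrete Gronwall step and no need to discuss measurability of the sup-function; what the paper's route buys is a time-continuous formulation that is reused almost verbatim in the randomized Euler analysis (Theorem \ref{ran_Eu_lp}), where a grid-only recursion would not suffice. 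The rates and the structure of the constant $C$ (through $L_R$, $L_H$ or $L_1$, and $e^{L_R(b-a)}$) agree.
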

\begin{proof}
	Note that for all $t\in [a,b]$ we can write
	\begin{equation}
		l_n(t)=\xi+\int\limits_a^t\sum\limits_{k=0}^{n-1}\mathbf{1}_{[t_k,t_{k+1})}(s)f(t_k,y_k)ds,
	\end{equation}
	and
	\begin{equation}
		z(t)=\xi+\int\limits_a^t\sum\limits_{k=0}^{n-1}\mathbf{1}_{[t_k,t_{k+1})}(s)f(s,z(s))ds.
	\end{equation}
Hence, for all $t\in [a,b]$
\begin{eqnarray}
\label{est_loc_eul_0}
	&&\|z(t)-l_n(t)\|\leq \int\limits_a^t\sum\limits_{k=0}^{n-1}
	\mathbf{1}_{[t_k,t_{k+1})}(s)\|f(s,z(s))-f(t_k,y_k)\|ds\notag\\
	&&\leq \int\limits_a^t\sum\limits_{k=0}^{n-1}
	\mathbf{1}_{[t_k,t_{k+1})}(s)\|f(s,z(s))-f(s,y_k)\|ds+\int\limits_a^t\sum\limits_{k=0}^{n-1}
	\mathbf{1}_{[t_k,t_{k+1})}(s)\|f(s,y_k)-f(t_k,y_k)\|ds.
\end{eqnarray}	
Moreover, from Lemma \ref{lemma_ode_1} (i) we have that
	\begin{equation}
	\label{bound_ode_111}
	\sup\limits_{a\leq t\leq b}\|z(t)\|\leq R_1:=\max\{1,K(b-a)\}e^{K(b-a)}(1+\|\xi\|),
\end{equation}
and for $k=0,1,\ldots,n-1$
\begin{equation}
	\|y_{k+1}\|\leq (1+Kh)\|y_k\|+Kh
\end{equation}
so by the discrete version of the Gronwall's lemma we have for all $n\in\mathbb{N}$
\begin{equation}
	\max\limits_{0\leq k\leq n}\|y_k\|\leq R_2:=e^{K(b-a)}(1+\|\xi\|).
\end{equation}
By (A3), (A4) for $H=R=R_1\vee R_2$ there exist $L_R,L_H\in [0,+\infty)$ such that for all $s\in [t_k,t_{k+1}]$, $k=0,1,\ldots,n$, and all $n\in\mathbb{N}$
\begin{eqnarray}
\label{est_loc_eul_1}
	&&\|f(s,z(s))-f(s,y_k)\|\leq L_R \|z(s)-y_k\|\leq L_R\|z(s)-z(t_k)\|+L_R \|z(t_k)-l_n(t_k)\|\notag\\
	&&\leq L_R C_1h+L_R \|z(t_k)-l_n(t_k)\|,
\end{eqnarray}
where $C_1=K\Bigl(1+\max\{1,K(b-a)\}e^{K(b-a)}\Bigr)(1+\|\xi\|)$ and
\begin{equation}
\label{est_loc_eul_2}
	\|f(s,y_k)-f(t_k,y_k)\|\leq L_H |s-t_k|^{\varrho}\leq L_H h^{\varrho}.
\end{equation}
Hence, by \eqref{est_loc_eul_0}, \eqref{est_loc_eul_1}, \eqref{est_loc_eul_2}, \eqref{est_dsc_phi} we get for all $t\in [a,b]$
\begin{eqnarray}
	&&\|z(t)-l_n(t)\|\leq (b-a)L_R C_1h+L_R\int\limits_a^t\sum\limits_{k=0}^{n-1} 	\mathbf{1}_{[t_k,t_{k+1})}(s)\|z(t_k)-l_n(t_k)\|ds\notag\\
	&&+(b-a)L_Hh^{\varrho}\notag\\
	&&\leq (b-a)\Bigl(L_RC_1(1+b-a)+L_H\Bigr)h^{\varrho}+L_R\int\limits_a^t\sup\limits_{a\leq u\leq s}\|z(u)-l_n(u)\|ds,
\end{eqnarray}
which in turn implies that for all $t\in [a,b]$
\begin{equation}
	\sup\limits_{a\leq u\leq t}\|z(u)-l_n(u)\|\leq (b-a)\Bigl(L_RC_1(1+b-a)+L_H\Bigr)h^{\varrho}+L_R\int\limits_a^t\sup\limits_{a\leq u\leq s}\|z(u)-l_n(u)\|ds.
\end{equation}
The function $[a,b]\ni t\to \sup\limits_{a\leq u\leq t}\|z(u)-l_n(u)\|$ is Borel measurable (as a nondecreasing function, see Proposition \ref{meas_fun_examples}) and bounded, since $z,l_n\in C([a,b];\mathbb{R}^d)$ and therefore 
\begin{equation}
	\sup\limits_{a\leq t\leq b}\sup\limits_{a\leq u\leq t}\|z(u)-l_n(u)\|\leq \sup\limits_{a\leq u\leq b}\|z(u)-l_n(u)\|\leq \sup\limits_{a\leq u\leq b}\|z(u)\|+\sup\limits_{a\leq u\leq b}\|l_n(u)\|<+\infty.
\end{equation}
Hence, by Gronwall's lemma \ref{GLCW} we get 
\begin{equation}
	\sup\limits_{a\leq t\leq b}\|z(t)-l_n(t)\|\leq e^{L_R(b-a)}(b-a)\Bigl(L_RC_1(1+b-a)+L_H\Bigr)h^{\varrho},
\end{equation}
from which the thesis follows.
\end{proof}
\section{Randomized Euler scheme}
Let $(\Omega,\Sigma,\mathbb{P})$ be a complete probability space. Let $(\tau_j)_{j\in\mathbb{N}_0}$ be a sequence of independent random variables on $(\Omega,\Sigma,\mathbb{P})$ that are identically uniformly distributed on $[0,1]$. Let $n\in\mathbb{N}$, $h=(b-a)/n$, $t_k=a+kh$ for $k=0,1,\ldots,n$.  We define the randomized Euler scheme as follows. Take
\begin{equation}
	y_0=\xi,
\end{equation}
and
\begin{equation}
	y_{k+1}=y_k+hf(\theta_k,y_k),
\end{equation}
for $k=0,1,\ldots,n-1$, where $\theta_k=t_k+h\tau_k$ is uniformly distributed in $[t_k,t_{k+1}]$. Moreover, as a time-continuous approximation of $z$ in the whole interval $[a,b]$ we set
\begin{equation}
	l_n(t):=l_{k,n}(t) \ \hbox{for} \  t\in [t_k,t_{k+1}], \ k=0,1,\ldots,n-1,
\end{equation}
where
\begin{equation}
	l_{k,n}(t)=y_k+(t-t_k)f(\theta_k,y_k).
\end{equation}
Note that this time $\{l_n(t)\}_{t \in [a,b]}$ is a stochastic process with continuous trajectories. In order to construct the process $l_n$ we use $n$ (random) evaluations of $f$.
\begin{lem} Let $p\in [2,+\infty)$, $\xi\in\mathbb{R}^d$ and let $f$ satisfy the assumptions (A1)-(A4). Then for all $n\in\mathbb{N}$ the function
\begin{equation}
\label{error_f_raneu}
	[a,b]\ni t\to \mathbb{E}\Bigl(\sup\limits_{a\leq u\leq t}\|z(u)-l_n(u)\|^p\Bigr)
\end{equation}
is Borel measurable and bounded.
\end{lem}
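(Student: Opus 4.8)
The plan is to establish boundedness and Borel measurability separately, both by essentially soft arguments: boundedness follows from a \emph{pathwise} deterministic a priori bound (which is the crux of the simplicity here), and measurability reduces to the triviality that the function is monotone.

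\textbf{Step 1 (pathwise bounds).} I would mimic the argument used for the deterministic Euler scheme. From the recursion $y_{k+1}=y_k+hf(\theta_k,y_k)$ and the linear growth assumption (A2) (which holds at every $t\in[a,b]$, hence in particular at the random node $\theta_k$) we get
\[
	\|y_{k+1}\|\leq\|y_k\|+h\|f(\theta_k,y_k)\|\leq(1+Kh)\|y_k\|+Kh ,
\]
whose right-hand side does not depend on $\theta_k$. Thus the discrete Gronwall lemma yields the \emph{deterministic} bound $\max_{0\leq k\leq n}\|y_k\|\leq R_2:=e^{K(b-a)}(1+\|\xi\|)$, valid for every $\omega\in\Omega$ and every $n$. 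Since $l_n(u)=y_k+(u-t_k)f(\theta_k,y_k)$ for $u\in[t_k,t_{k+1}]$ with $0\leq u-t_k\leq h\leq b-a$, this gives $\sup_{a\leq u\leq b}\|l_n(u)\|\leq R_2+K(b-a)(1+R_2)$ pathwise, and together with $\sup_{a\leq t\leq b}\|z(t)\|\leq R_1$ from Lemma \ref{lemma_ode_1}(i) (with $R_1=\max\{1,K(b-a)\}e^{K(b-a)}(1+\|\xi\|)$) there is a deterministic constant $M=M(a,b,K,\xi)\in[0,+\infty)$, independent of $n$, with
\[
	\sup_{a\leq u\leq b}\|z(u)-l_n(u)\|\leq M\quad\text{for all }\omega\in\Omega .
\]

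\textbf{Step 2 (the function is well defined and bounded).} Fix $t\in[a,b]$. As the trajectory $u\mapsto l_n(u)(\omega)$ is continuous,
\[
	\sup_{a\leq u\leq t}\|z(u)-l_n(u)\|=\sup_{u\in([a,t]\cap\mathbb{Q})\cup\{t\}}\|z(u)-l_n(u)\|
\]
is a countable supremum of $\Sigma$-to-$\mathcal{B}(\mathbb{R})$ measurable random variables, hence $\Sigma$-measurable; by Step 1 it is bounded by $M$, so its $p$th power is integrable. Consequently $\phi_n(t):=\mathbb{E}\bigl(\sup_{a\leq u\leq t}\|z(u)-l_n(u)\|^p\bigr)$ is a well-defined real number and $\sup_{a\leq t\leq b}\phi_n(t)\leq M^p<+\infty$, which is the asserted boundedness.

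\textbf{Step 3 (measurability).} For $a\leq s\leq t\leq b$ one has $\sup_{a\leq u\leq s}\|z(u)-l_n(u)\|^p\leq\sup_{a\leq u\leq t}\|z(u)-l_n(u)\|^p$ pathwise, so taking expectations gives $\phi_n(s)\leq\phi_n(t)$; hence $\phi_n$ is nondecreasing on $[a,b]$, and a monotone real-valued function on an interval is Borel measurable (cf. Proposition \ref{meas_fun_examples}, already invoked for the analogous deterministic statement). This completes the proof. There is no genuine obstacle: the key point is that (A2) forces the a priori bound on $(y_k)_k$, and therefore on the whole process $l_n$, to be \emph{pathwise} deterministic, so boundedness is immediate; the only step deserving a word of care is the reduction of $\sup_{a\leq u\leq t}$ to a countable supremum, which uses continuity of the trajectories of $l_n$.
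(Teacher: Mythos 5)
Your proof is correct and follows essentially the same route as the paper: measurability of the pathwise supremum is obtained by reducing it to a countable supremum over $[a,t]\cap\mathbb{Q}$ using continuity of the trajectories, and Borel measurability in $t$ follows from monotonicity, exactly as in the paper's argument. The only difference is that you also supply the boundedness step (a pathwise discrete Gronwall bound for $(y_k)_k$, hence for $l_n$, uniform in $\omega$ and $n$, combined with the deterministic bound on $z$ from Lemma \ref{lemma_ode_1}), which the paper leaves as an exercise; this is the intended argument and coincides with the bound used later in the proof of Theorem \ref{ran_Eu_lp}.
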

\begin{proof} Note that for all $u\in [a,b]$ the mapping $\Omega\ni\omega\to \|z(u)-l_n(u,\omega)\|^p$ is a nonnegative random variable. Moreover, trajectories of the stochastic process $\{\|z(u)-l_n(u)\|^p\}_{u\in [a,b]}$ are continuous and therefore for all $t\in [a,b]$, $c\in\mathbb{R}$
 \begin{eqnarray}
 	&&\Bigl\{\omega\in\Omega \ | \ \sup\limits_{a\leq u\leq t}\|z(u)-l_n(u,\omega)\|^p\leq c\Bigr\}=\Bigl\{\omega\in\Omega \ | \ \sup\limits_{u\in [a,t]\cap\mathbb{Q}}\|z(u)-l_n(u,\omega)\|^p\leq c\Bigr\}\notag\\
 	&&=\bigcap_{u\in [a,t]\cap\mathbb{Q}}\Bigl\{\omega\in\Omega \ | \ \|z(u)-l_n(u,\omega)\|^p\leq c\Bigr\}\in\Sigma.
 \end{eqnarray}
 This means that for every $t\in [a,b]$ the function $\Omega\ni\omega\to \sup\limits_{a\leq u \leq t}\|z(u)-l_n(u,\omega)\|^p$ is a nonnegative random variable. Hence the function \eqref{error_f_raneu} is well-defined. The function \eqref{error_f_raneu} is Borel measurable as a nondecreasing function. Boundedness is left as an exercise.
\end{proof}
\begin{thm} 
\label{ran_Eu_lp}
Let $p\in [2,+\infty)$, $\xi\in\mathbb{R}^d$ and let $f$ satisfy the assumptions (A1)-(A4). Then there exists $C\in [0,+\infty)$ such that for all $n\in\mathbb{N}$ the following holds
\begin{equation}
	\Bigl\|\sup\limits_{a\leq t\leq b}\|z(t)-l_n(t)\|\Bigl\|_p\leq Ch^{\min\{\varrho+\frac{1}{2},1\}}.
\end{equation}
\end{thm}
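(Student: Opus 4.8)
The plan is to rerun the deterministic Euler estimate with $L^p(\Omega)$-norms in place of pointwise norms, the only genuinely new ingredient being that the randomisation of the evaluation node $\theta_k$ turns the leading time-discretisation error into a sum of martingale increments, to which the discrete Burkholder--Davis--Gundy inequality applies and produces the extra factor $h^{1/2}$ (capped at $h^1$, which is the order of the remaining, non-oscillatory part of the error).

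First I would record the a priori bounds. Since $\|f(\theta_k,y_k)\|\le K(1+\|y_k\|)$ holds pathwise irrespective of $\theta_k$, the scheme gives $\|y_{k+1}\|\le(1+Kh)\|y_k\|+Kh$, so the discrete Gr\"onwall lemma yields $\max_{0\le k\le n}\|y_k\|\le R_2:=e^{K(b-a)}(1+\|\xi\|)$ almost surely; together with $\sup_{a\le t\le b}\|z(t)\|\le R_1$ and $\|z(t)-z(s)\|\le C_1|t-s|$ from Lemma~\ref{lemma_ode_1}(i) this fixes $R=H=R_1\vee R_2$ and the corresponding constants $L_R$ (from (A3)) and $L_H$ (from (A4)). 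Writing $l_n(t)=\xi+\int_a^t\sum_k\mathbf{1}_{[t_k,t_{k+1})}(s)f(\theta_k,y_k)\,ds$ and $z(t)=\xi+\int_a^t f(s,z(s))\,ds$, I would split, for $s\in[t_k,t_{k+1})$,
\[
f(s,z(s))-f(\theta_k,y_k)=\bigl[f(s,z(s))-f(s,z(t_k))\bigr]+\bigl[f(s,z(t_k))-f(\theta_k,z(t_k))\bigr]+\bigl[f(\theta_k,z(t_k))-f(\theta_k,y_k)\bigr].
\]
The first bracket is bounded by $L_RC_1h$ (Lipschitz in $x$ together with the time-regularity of $z$), and the third by $L_R\|z(t_k)-l_n(t_k)\|\le L_R\sup_{a\le u\le s}\|z(u)-l_n(u)\|$, which will be absorbed by Gr\"onwall.

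The decisive term is the middle bracket. Let $\mathcal G_k=\sigma(\tau_0,\dots,\tau_{k-1})$; then $z(t_k)$ and $y_k$ are $\mathcal G_k$-measurable while $\theta_k$ is independent of $\mathcal G_k$ and uniform on $[t_k,t_{k+1}]$, so
\[
R_k:=\int_{t_k}^{t_{k+1}}\bigl[f(s,z(t_k))-f(\theta_k,z(t_k))\bigr]\,ds=\int_{t_k}^{t_{k+1}}f(s,z(t_k))\,ds-hf(\theta_k,z(t_k))
\]
satisfies $\mathbb{E}[R_k\mid\mathcal G_k]=0$, so $M_m:=\sum_{k=0}^{m-1}R_k$ is a bounded discrete-time $(\mathcal G_m)$-martingale with $\|R_k\|\le\int_{t_k}^{t_{k+1}}L_H|s-\theta_k|^\varrho\,ds\le L_Hh^{1+\varrho}$. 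Assembling, and noting that for $t\in[t_m,t_{m+1})$ the leftover partial interval $[t_m,t]$ contributes at most $L_Hh^{1+\varrho}$ (no cancellation needed there),
\[
\sup_{a\le u\le t}\|z(u)-l_n(u)\|\le (b-a)L_RC_1h+L_Hh^{1+\varrho}+\max_{0\le m\le n}\|M_m\|+L_R\int_a^t\sup_{a\le v\le s}\|z(v)-l_n(v)\|\,ds.
\]
Taking $L^p(\Omega)$-norms, using Minkowski's integral inequality on the last term and the discrete BDG inequality (Theorem~\ref{BDG_DISC}) on the martingale,
\[
\Bigl\|\max_{0\le m\le n}\|M_m\|\Bigr\|_p\le C_p\Bigl\|\Bigl(\sum_{k=0}^{n-1}\|R_k\|^2\Bigr)^{1/2}\Bigr\|_p\le C_pL_Hh^{1+\varrho}\sqrt{n}=C_pL_H\sqrt{b-a}\;h^{\varrho+1/2}.
\]
Hence $\phi(t):=\bigl\|\sup_{a\le u\le t}\|z(u)-l_n(u)\|\bigr\|_p$, which by the preceding lemma is Borel measurable and bounded, obeys $\phi(t)\le Ch^{\min\{\varrho+1/2,1\}}+L_R\int_a^t\phi(s)\,ds$ once the powers $h$, $h^{1+\varrho}$ and $h^{\varrho+1/2}$ are absorbed into a single $h^{\min\{\varrho+1/2,1\}}$ (using $\varrho\in(0,1]$ and $h\le b-a$); Gr\"onwall's lemma~\ref{GLCW} then gives $\phi(b)\le Ch^{\min\{\varrho+1/2,1\}}e^{L_R(b-a)}$, which is the thesis.

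The only point requiring real care is the martingale bookkeeping for the middle bracket: one must pick the filtration so that $y_k$ and $z(t_k)$ are measurable but $\theta_k$ is not, and split the time integral exactly at mesh points so that only full subintervals yield the mean-zero increments $R_k$, the truncated end piece being handled by the crude $O(h^{1+\varrho})$ bound; the vector-valued BDG inequality may be applied coordinatewise. Everything else is a routine repetition of the deterministic Euler argument.
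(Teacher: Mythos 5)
Your proposal is correct and follows essentially the same route as the paper: the same three-way splitting of $f(s,z(s))-f(\theta_k,y_k)$, the same identification of the full-subinterval contributions $R_k$ as mean-zero increments of a discrete martingale (with the truncated end piece estimated crudely by $L_Hh^{1+\varrho}$), the same application of the discrete Burkholder inequality yielding the $h^{\varrho+1/2}$ gain, and a Gronwall argument to close. The only cosmetic deviation is the endgame, where you apply Minkowski's integral inequality to work directly with $\phi(t)=\bigl\|\sup_{a\le u\le t}\|z(u)-l_n(u)\|\bigr\|_p$, while the paper raises to the $p$-th power and uses H\"older, Jensen and Fubini before invoking Gronwall; both are valid.
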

\begin{proof}
 We have that for all $t\in [a,b]$
	\begin{equation}
		l_n(t)=\xi+\int\limits_a^t\sum\limits_{k=0}^{n-1}\mathbf{1}_{[t_k,t_{k+1})}(s)f(\theta_k,y_k)ds,
	\end{equation}
	and, as before,
	\begin{equation}
		z(t)=\xi+\int\limits_a^t\sum\limits_{k=0}^{n-1}\mathbf{1}_{[t_k,t_{k+1})}(s)f(s,z(s))ds.
	\end{equation}
Hence, for all $t\in [a,b]$
\begin{eqnarray}
\label{est_loc_raneul_0}
	&&\|z(t)-l_n(t)\|\leq \Bigl\|\int\limits_a^t\sum\limits_{k=0}^{n-1}
	\mathbf{1}_{[t_k,t_{k+1})}(s)\Bigl(f(s,z(s))-f(s,z(t_k))\Bigr)ds\Bigl\|\notag\\
	&&+ \Bigl\|\int\limits_a^t\sum\limits_{k=0}^{n-1}
	\mathbf{1}_{[t_k,t_{k+1})}(s)\Bigl(f(s,z(t_k))-f(\theta_k,z(t_k))\Bigr)ds\Bigl\|\notag\\
	&&+\Bigl\|\int\limits_a^t\sum\limits_{k=0}^{n-1}
	\mathbf{1}_{[t_k,t_{k+1})}(s)\Bigl(f(\theta_k,z(t_k))-f(\theta_k,y_k)\Bigr)ds\Bigl\|.
\end{eqnarray}	
Again from Lemma \ref{lemma_ode_1} (i) we have that
	\begin{equation}
	\label{bound_ode_111}
	\sup\limits_{a\leq t\leq b}\|z(t)\|\leq R_1:=\max\{1,K(b-a)\}e^{K(b-a)}(1+\|\xi\|),
\end{equation}
and for $k=0,1,\ldots,n-1$
\begin{equation}
	\|y_{k+1}\|\leq (1+Kh)\|y_k\|+Kh,
\end{equation}
so by the discrete version of the Gronwall's lemma
\begin{equation}
	\max\limits_{0\leq k\leq n}\|y_k\|\leq R_2:=e^{K(b-a)}(1+\|\xi\|)
\end{equation}
for all $n\in\mathbb{N}$ with probability one. By (A3), (A4) for $H=R=R_1\vee R_2$ there exist $L_R,L_H\in [0,+\infty)$ such that for all $s\in [t_k,t_{k+1}]$, $k=0,1,\ldots,n$, and all $n\in\mathbb{N}$ 
\begin{eqnarray}
\label{est_loc_raneul_1}
	&&\|f(s,z(s))-f(s,z(t_k))\|\leq L_R \|z(s)-z(t_k)\|\leq L_R C_1h,
\end{eqnarray}
where $C_1=K\Bigl(1+\max\{1,K(b-a)\}e^{K(b-a)}\Bigr)(1+\|\xi\|)$,
\begin{equation}
\label{est_loc_raneul_2}
	\|f(s,z(t_k))-f(\theta_k,z(t_k))\|\leq L_H |s-\theta_k|^{\varrho}\leq L_H h^{\varrho},
\end{equation}
\begin{equation}
\label{est_loc_raneul_3}
	\|f(\theta_k,z(t_k))-f(\theta_k,y_k)\|\leq L_R \|z(t_k)-l_n(t_k)\|,
\end{equation}
with probability one. Hence, by the estimates above we get for all $t\in [a,b]$
\begin{equation}
\label{est_err_raneu2}
\Bigl\|\int\limits_a^t\sum\limits_{k=0}^{n-1}
	\mathbf{1}_{[t_k,t_{k+1})}(s)\Bigl(f(s,z(s))-f(s,z(t_k))\Bigr)ds\Bigl\|\leq (b-a)L_RC_1h,
\end{equation}
\begin{equation}
\label{est_err_raneu3}
	\Bigl\|\int\limits_a^t\sum\limits_{k=0}^{n-1}
	\mathbf{1}_{[t_k,t_{k+1})}(s)\Bigl(f(\theta_k,z(t_k))-f(\theta_k,y_k)\Bigr)ds\Bigl\|\leq L_R\int\limits_a^t\sum\limits_{k=0}^{n-1}
	\mathbf{1}_{[t_k,t_{k+1})}(s)\|z(t_k)-l_n(t_k)\|ds.
\end{equation}
We define
\begin{equation}
	i(t)=\sup\{i=0,1,\ldots,n \ | \ t_i\leq t\}, 
\end{equation}
\begin{equation}
	\zeta(t)=t_{i(t)},
\end{equation}
for $t\in [a,b]$. Then
\begin{equation}
\label{est_a12n}
	\Bigl\|\int\limits_a^t\sum\limits_{k=0}^{n-1}
	\mathbf{1}_{[t_k,t_{k+1})}(s)\Bigl(f(s,z(t_k))-f(\theta_k,z(t_k))\Bigr)ds\Bigl\|\leq A^{RE}_{1,n}(t)+A^{RE}_{2,n}(t),
\end{equation}
with
\begin{equation}
	A^{RE}_{1,n}(t)=\Biggl\|\sum_{k=0}^{i(t)-1}\int\limits_{t_k}^{t_{k+1}}\Bigl(f(s,z(t_k))-f(\theta_k,z(t_k))\Bigr)ds\Biggl\|
\end{equation}
and
\begin{equation}
	A^{RE}_{2,n}(t)=\Biggl\|\;\int\limits_{\zeta(t)}^t\Bigl(f(s,z(\zeta(t)))-f(\theta_{i(t)},z(\zeta(t)))\Bigr)ds\Biggl\|,
\end{equation}
where $A^{RE}_{2,n}(b)=0$. By (A4) and \eqref{bound_ode_111} we have for all $t\in [a,b]$ that $0\leq t-\zeta(t)\leq h$ and hence
\begin{equation}
\label{est_a2n}
	A^{RE}_{2,n}(t)\leq L_H\int\limits_{\zeta(t)}^t|s-\theta_{i(t)}|^{\varrho}ds\leq L_H h^{\varrho+1}.
\end{equation}
Let for $k=0,1,\ldots,n-1$
\begin{equation}
	\label{DEF_T_Y_K}
	Y_k=\int\limits_{t_k}^{t_{k+1}}\Bigl(f(s,z(t_k))-f(\theta_k,z(t_k))\Bigr)ds, 
\end{equation}
then by \eqref{est_loc_raneul_2}
\begin{equation}
	\|Y_k\|\leq L_Hh^{\varrho+1}
\end{equation}
with probability one. Moreover, let
\begin{equation}
	Z_j=\sum_{k=0}^j Y_k, \ j=0,1,\ldots,n-1,
\end{equation}
and $Z_{-1}:=0$.  
Then we can write that for $t\in [a,b]$ 
\begin{equation}
	 A^{RE}_{1,n}(t)=\|Z_{i(t)-1}\|=\Bigl\|\sum_{k=0}^{n-1}Z_{k-1}\cdot\mathbf{1}_{[t_k,t_{k+1})}(t)+Z_{n-1}\cdot\mathbf{1}_{\{b\}}(t)\Bigl\|.
\end{equation}
 Therefore, we have for all $t\in [a,b]$ that
\begin{equation}
 \mathbb{E}\Biggl(\sup\limits_{t\in [a,b]}(A^{RE}_{1,n}(t))^p\Biggr)=\mathbb{E}\Biggl(\max\limits_{0\leq j\leq n-1}\|Z_j\|^p\Biggr).
\end{equation}
Note that for all $n\in\mathbb{N}$ the process
\begin{equation}
	\Bigl(Z_j, \ \sigma(\theta_0,\theta_1,\ldots,\theta_{j})\Bigr)_{j=0,1,\ldots,n-1}
\end{equation}
is a discrete-time martingale, since for $j=0,1,\ldots,n-1$
\begin{equation}
	\|Z_j\|\leq (b-a)L_Hh^{\varrho}<+\infty,
\end{equation}
and for $j=0,1,\ldots,n-2$
\begin{equation}
\label{mart_zj}
	\mathbb{E}\Bigl(Z_{j+1}-Z_j \ | \  \sigma(\theta_0,\theta_1,\ldots,\theta_{j})\Bigr) = \mathbb{E}(Y_{j+1} \ | \ \sigma(\theta_0,\theta_1,\ldots,\theta_{j})) = \mathbb{E}(Y_{j+1})=0.
\end{equation}
In \eqref{mart_zj} we used the following facts:
\begin{itemize}
 	\item $\sigma(Y_{j+1})$ and $\ \sigma(\theta_0,\theta_1,\ldots,\theta_{j})$ are independet $\sigma$-fields,
 	\item since $\theta_{j+1}\sim U[t_{j+1},t_{j+2}]$, it holds
 	\begin{equation}
	\mathbb{E}\Bigl(f(\theta_{j+1},z(t_{j+1}))\Bigr)=\frac{1}{h}\int\limits_{t_{j+1}}^{t_{j+2}}f(s,z(t_{j+1}))ds.
\end{equation}
\end{itemize}
By using the Burkholder inequality (see Theorem \ref{BDG_DISC}) and the estimate \eqref{BDG_UPP_EST} we obtain 
\begin{equation}
\label{est_a1n}
	\mathbb{E}\Biggl(\max\limits_{0\leq j\leq n-1}\|Z_j\|^p\Biggr)\leq C_p^p\mathbb{E}\Bigl[\Bigl(\sum\limits_{k=0}^{n-1}\|Y_k\|^2\Bigr)^{p/2}\Bigr]\leq C_p^p n^{\frac{p}{2}-1}\sum\limits_{k=0}^{n-1}\mathbb{E}\|Y_k\|^p\leq C_p^pL_H^p(b-a)^{p/2}h^{p(\varrho+\frac{1}{2})}.
\end{equation}
Combining \eqref{est_a12n}, \eqref{est_a2n}, \eqref{est_a1n} we get
\begin{equation}
\label{est_raneu_bdg}
	\mathbb{E}\Biggl(\sup\limits_{a\leq t \leq b}\Bigl\|\int\limits_a^t\sum\limits_{k=0}^{n-1}
	\mathbf{1}_{[t_k,t_{k+1})}(s)\Bigl(f(s,z(t_k))-f(\theta_k,z(t_k))\Bigr)ds\Bigl\|^p\Biggr)\leq ch^{p(\varrho+\frac{1}{2})},
\end{equation}
where $c>0$ does not depend on $n$. From \eqref{est_loc_raneul_0}, \eqref{est_err_raneu2}, \eqref{est_err_raneu3}, by H\"older and Jensen inequalities we get for all $t\in [a,b]$
\begin{eqnarray}
	&& \sup\limits_{a\leq u\leq t}\|z(u)-l_n(u)\|^p\leq c_1h^p+c_2\int\limits_a^t\sum\limits_{k=0}^{n-1}
	\mathbf{1}_{[t_k,t_{k+1})}(s)\|z(t_k)-l_n(t_k)\|^p ds\notag\\
	&&+c_3\sup\limits_{a\leq t \leq b}\Bigl\|\int\limits_a^t\sum\limits_{k=0}^{n-1}
	\mathbf{1}_{[t_k,t_{k+1})}(s)\Bigl(f(s,z(t_k))-f(\theta_k,z(t_k))\Bigr)ds\Bigl\|^p\notag\\
	&&\leq  c_1h^p+c_2\int\limits_a^t\sup\limits_{a\leq u\leq s}\|z(u)-l_n(u)\|^p ds\notag\\
	&&+c_3\sup\limits_{a\leq t \leq b}\Bigl\|\int\limits_a^t\sum\limits_{k=0}^{n-1}
	\mathbf{1}_{[t_k,t_{k+1})}(s)\Bigl(f(s,z(t_k))-f(\theta_k,z(t_k))\Bigr)ds\Bigl\|^p
\end{eqnarray}
hence by Fubini's theorem (see exercise 5.) and \eqref{est_raneu_bdg}
\begin{equation}
	\mathbb{E}\Bigl(\sup\limits_{a\leq u\leq t}\|z(u)-l_n(u)\|^p\Bigr)\leq c_4h^{p\min\{\varrho+\frac{1}{2},1\}}+c_2\int\limits_a^t\mathbb{E}\Bigl(\sup\limits_{a\leq u\leq s}\|z(u)-l_n(u)\|^p\Bigr) ds.
\end{equation}
By applying Gronwall's lemma we get the thesis.
\end{proof}
\begin{rem}
    When $\varrho=1$ then the both algorithms have the same rate of convergence equal to $1$. However, for $\varrho\in (0,1)$ the randomized Euler algorithm outperforms its deterministic counterpart. Moreover, in the limiting case when $\varrho=0$ and $f$ is only continuous (or even only Borel measurable) with respect to the time variable $t$ then the randomized Euler scheme has the error $O(h^{1/2})$, while the error of its deterministic version is $O(1)$ and there is lack of convergence, see \cite{KruseWu_1}, \cite{PMPP14}.
\end{rem}
Note that in the special case, when $f=f(t)$, does not depend on the space variable $y$,  we get the following result.
\begin{prop} 
\label{rand_Riemann_Holder}
Let us assume that $\xi=0$ and let there exist $L_H\in [0,+\infty), \varrho\in (0,1]$ such that for all $t,s\in [a,b]$
\begin{equation}
	\|f(t)-f(s)\|\leq L_H |t-s|^{\varrho}.
\end{equation}
Then there exists $C\in [0,+\infty)$ such that for all $n\in\mathbb{N}$ the following holds
\begin{equation}
	\Bigl\|\sup\limits_{a\leq t\leq b}\|z(t)-l_n(t)\|\Bigl\|_p\leq Ch^{\varrho+\frac{1}{2}},
\end{equation}
the functions $[a,b]\ni t\to\mathbb{E}(l_n(t))\in\mathbb{R}$, $[a,b]\ni t\to Var(l_n(t))\in [0,+\infty)$ are continuous, and for all $t\in [a,b]$ we have
\begin{equation}
\label{exp_est_ln}
	\mathbb{E}(l_n(t))=\sum\limits_{k=0}^{n-1}\Bigl(z(t_k)+\frac{t-t_k}{h}(z(t_{k+1})-z(t_k))\Bigr)\cdot\mathbf{1}_{[t_k,t_{k+1}]}(t)+z(b)\cdot\mathbf{1}_{\{b\}}(t),
\end{equation}
while for $t\in [t_k,t_{k+1}]$, $k=0,1,\ldots,n-1$
\begin{eqnarray}
\label{var_est_ln}	
	&&Var(l_n(t))=h\cdot\Biggl[\int\limits_a^{t_k}(f(s))^2ds+\Bigl(\frac{t-t_k}{h}\Bigr)^2\int\limits_{t_k}^{t_{k+1}}(f(s))^2ds\Biggr]\notag\\
	&&\quad\quad-\sum\limits_{j=0}^{k-1}\Bigl(\int\limits_{t_j}^{t_{j+1}}f(s)ds\Bigr)^2-\Bigl(\frac{t-t_k}{h}\Bigr)^2\cdot\Bigl(\int\limits_{t_k}^{t_{k+1}}f(s)ds\Bigr)^2.
\end{eqnarray}
\end{prop}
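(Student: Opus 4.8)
Since $\xi=0$ and $f=f(t)$ carries no dependence on the space variable, the solution is simply $z(t)=\int_a^t f(s)\,ds$, and for $t\in[t_k,t_{k+1}]$ the scheme gives $l_n(t)=h\sum_{j=0}^{k-1}f(\theta_j)+(t-t_k)f(\theta_k)$. The plan is to start from the exact identity obtained by writing $hf(\theta_j)=\int_{t_j}^{t_{j+1}}f(\theta_j)\,ds$ and $(t-t_k)f(\theta_k)=\int_{t_k}^{t}f(\theta_k)\,ds$, namely
\begin{equation}
z(t)-l_n(t)=Z_{k-1}+\int_{t_k}^{t}\bigl(f(s)-f(\theta_k)\bigr)\,ds,\qquad t\in[t_k,t_{k+1}],
\end{equation}
where $Y_j:=\int_{t_j}^{t_{j+1}}\bigl(f(s)-f(\theta_j)\bigr)\,ds$ and $Z_j:=\sum_{i=0}^{j}Y_i$ with $Z_{-1}:=0$. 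This is exactly the decomposition $A^{RE}_{1,n}(t)+A^{RE}_{2,n}(t)$ from the proof of Theorem~\ref{ran_Eu_lp}, the difference being that here the space-perturbation term $f(\theta_k,z(t_k))-f(\theta_k,y_k)$ is absent, so I will not need a Gronwall iteration and the exponent will not be truncated at $1$.

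For the $L^p$-bound I would observe that, since $\theta_j\sim U[t_j,t_{j+1}]$, one has $\mathbb{E}(f(\theta_j))=h^{-1}\int_{t_j}^{t_{j+1}}f(s)\,ds$, hence $\mathbb{E}(Y_j)=0$, the $Y_j$ are independent, and $\|Y_j\|\le L_H h^{1+\varrho}$ almost surely; thus $\bigl(Z_j,\sigma(\theta_0,\dots,\theta_j)\bigr)_{j=0,\dots,n-1}$ is a bounded discrete-time martingale. Applying the Burkholder inequality (Theorem~\ref{BDG_DISC}) and then the Jensen step already used in \eqref{est_a1n} yields
\begin{equation}
\mathbb{E}\Bigl(\max_{0\le j\le n-1}\|Z_j\|^p\Bigr)\le C_p^p\,n^{\frac{p}{2}-1}\sum_{j=0}^{n-1}\mathbb{E}\|Y_j\|^p\le C_p^p L_H^p\,n^{\frac{p}{2}}h^{p(1+\varrho)}=C_p^p L_H^p(b-a)^{\frac{p}{2}}h^{p(\varrho+\frac12)},
\end{equation}
using $n=(b-a)/h$. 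Since the remainder obeys $\bigl\|\int_{t_k}^{t}(f(s)-f(\theta_k))\,ds\bigr\|\le L_H h^{1+\varrho}\le (b-a)^{1/2}L_H h^{\varrho+\frac12}$, taking $\sup_t$ and then the $L^p(\Omega)$-norm (Minkowski) would give $\bigl\|\sup_{a\le t\le b}\|z(t)-l_n(t)\|\bigr\|_p\le Ch^{\varrho+\frac12}$.

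For the mean and variance formulas I would fix $t\in[t_k,t_{k+1}]$ and take expectations in $l_n(t)=h\sum_{j=0}^{k-1}f(\theta_j)+(t-t_k)f(\theta_k)$: with $\mathbb{E}(f(\theta_j))=h^{-1}\int_{t_j}^{t_{j+1}}f(s)\,ds$ this gives $\mathbb{E}(l_n(t))=\int_a^{t_k}f(s)\,ds+\frac{t-t_k}{h}\int_{t_k}^{t_{k+1}}f(s)\,ds=z(t_k)+\frac{t-t_k}{h}\bigl(z(t_{k+1})-z(t_k)\bigr)$, which is \eqref{exp_est_ln}, the $\mathbf{1}_{\{b\}}$ term recording the value at $t=b$. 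Using independence of $\theta_0,\dots,\theta_k$ I would then write $Var(l_n(t))=h^2\sum_{j=0}^{k-1}Var(f(\theta_j))+(t-t_k)^2Var(f(\theta_k))$ and substitute $Var(f(\theta_j))=h^{-1}\int_{t_j}^{t_{j+1}}f(s)^2\,ds-\bigl(h^{-1}\int_{t_j}^{t_{j+1}}f(s)\,ds\bigr)^2$; collecting terms reproduces \eqref{var_est_ln}. All these quantities are finite because $f$, being continuous on $[a,b]$, is bounded and $l_n(t)$ is a bounded measurable function of $(\theta_0,\dots,\theta_{n-1})$ for fixed $t$.

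Finally, continuity of $t\mapsto\mathbb{E}(l_n(t))$ and $t\mapsto Var(l_n(t))$ follows at once from \eqref{exp_est_ln} and \eqref{var_est_ln}, since on each $[t_k,t_{k+1}]$ the former is affine and the latter a quadratic polynomial in $t$; the only point to check is that the one-sided values at each interior node $t_k$ computed from $[t_{k-1},t_k]$ and from $[t_k,t_{k+1}]$ coincide, which is a direct substitution. I expect the martingale/Burkholder estimate of the second paragraph to be the only real content; the rest is explicit computation, with the consistency check at the grid points as the sole minor subtlety.
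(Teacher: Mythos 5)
Your proposal is correct and follows essentially the same route as the paper: the paper's proof simply observes that $z(t)-l_n(t)=\int_a^t\sum_k\mathbf{1}_{[t_k,t_{k+1})}(s)(f(s)-f(\theta_k))\,ds$ and invokes the estimate \eqref{est_raneu_bdg} already established in the proof of Theorem \ref{ran_Eu_lp} (which is exactly your martingale-plus-remainder decomposition with Burkholder and Jensen), while leaving \eqref{exp_est_ln} and \eqref{var_est_ln} as direct computations. Your explicit re-derivation of the martingale bound in the $f=f(t)$ case and your computation of the mean, variance, and node-matching for continuity are all accurate and fill in exactly what the paper delegates to the earlier proof and to the exercises.
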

\begin{proof}
	 We have that for all $t\in [a,b]$
	\begin{equation}
		l_n(t)=\int\limits_a^t\sum\limits_{k=0}^{n-1}\mathbf{1}_{[t_k,t_{k+1})}(s)f(\theta_k)ds,
	\end{equation}
	and, as before,
	\begin{equation}
		z(t)=\int\limits_a^t\sum\limits_{k=0}^{n-1}\mathbf{1}_{[t_k,t_{k+1})}(s)f(s)ds=\int\limits_a^t f(s)ds.
	\end{equation}
Hence, by \eqref{est_raneu_bdg} we get for all $t\in [a,b]$ that
\begin{equation}
\label{est_loc_raneul_01}
	\mathbb{E}\Bigl(\sup\limits_{a\leq t\leq b}\|z(t)-l_n(t)\|^p\Bigr)=\mathbb{E}\Biggl( \sup\limits_{a\leq t\leq b}\Bigl\|\int\limits_a^t\sum\limits_{k=0}^{n-1}
	\mathbf{1}_{[t_k,t_{k+1})}(s)\Bigl(f(s)-f(\theta_k)\Bigr)ds\Bigl\|^p\Biggr)\leq ch^{p(\varrho+\frac{1}{2})}.
\end{equation}	
The equalities \eqref{exp_est_ln}, \eqref{var_est_ln} follow from direct computations and their proofs are left as an exercise.
\end{proof}
In this case $l_n$ (which can be seen as a randomized Riemann sum) approximates in $[a,b]$ the function $z$, which is the anti-derivative of $f$. Moreover, $l_n(t)$ for $t\in\{t_0,t_1,\ldots,t_n\}$ is the unbiased estimator of $z(t)$. We can see that when $f=f(t)$ does not depend on $y$ then the order of convergence is $\varrho+1/2$ which might be faster than $\min\{\varrho+1/2,1\}$.

We will take a  closer look at variance of the randomized Riemann sum in the next subsection.
\subsection{Coming back to integration: variance of the randomized Riemann sum vs variance of the crude Monte Carlo method}
We assume that $f\in L^2[a,b]$ and let us  consider the problem of approximation of the Lebesgue integral
\begin{equation}
	I(f)=\int\limits_a^b f(t)dt \ (=z(b)).
\end{equation}
The crude Monte Carlo method is defined as
\begin{equation}
	MC_n(f)=h\sum\limits_{k=0}^{n-1} f(\xi_k),
\end{equation}
where $(\xi_k)_{k=0,1,\ldots,n-1}$ is the iid sequence of random variables with the law $U([a,b])$, while the randomized Riemann sum is defined as
\begin{equation}
	\mathcal{R}_n(f)=h\sum\limits_{k=0}^{n-1}f(\theta_k),
\end{equation}
where $(\theta_k)_{k=0,1,\ldots,n-1}$ is the sequence of independent random variables such that each $\theta_k$ has the law $U[t_k,t_{k+1}]$, $t_k=a+kh$, $k=0,1,\ldots,n$, $h=(b-a)/n$. Note that the both quadratures use $n$ random evaluations of $f$.

It holds that $\mathbb{E}(MC_n(f))=I(f)$ and
\begin{equation}
	\mathbb{E}|I(f)-MC_n(f)|^2=Var(MC_n(f))=(b-a)\cdot s^2(f)\cdot n^{-1},
\end{equation}
where $$s^2(f)=I(f^2)-\frac{1}{b-a}(I(f))^2.$$ For $\mathcal{R}_n$ we have that $\mathbb{E}(\mathcal{R}(f))=I(f)$ and
\begin{equation}
	\mathbb{E}|I(f)-\mathcal{R}_n(f)|^2=Var(\mathcal{R}_n(f))=(b-a)\cdot s_n^2(f)\cdot n^{-1},
\end{equation}
with
\begin{equation}
	s_n^2(f)=I(f^2)-\frac{1}{h}\sum\limits_{k=0}^{n-1}\Bigl(\int\limits_{t_k}^{t_{k+1}}f(s)ds\Bigr)^2,
\end{equation}
where $0\leq s^2_n(f)\leq s^2(f)$. Moreover, even stronger result holds, see the thesis of the following lemma.
\begin{lem}
\label{conv_sn}
	For any $f\in L^2[a,b]$ we have
	\begin{equation}
	\label{conv_sn_1}
		\lim\limits_{n\to +\infty}s_n^2(f)=0.
	\end{equation}
\end{lem}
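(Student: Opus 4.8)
The plan is to recognize $s_n^2(f)$ as a squared $L^2$-projection error and then exploit density of continuous functions. For each $n$ let $V_n\subset L^2[a,b]$ be the subspace of functions that are constant on each subinterval $[t_k,t_{k+1})$, $k=0,1,\ldots,n-1$, and let $P_nf$ denote the function equal to $\frac{1}{h}\int_{t_k}^{t_{k+1}}f(s)\,ds$ on $[t_k,t_{k+1})$. Since for $v\in V_n$ with value $c_k$ on the $k$th cell one has $\int_{t_k}^{t_{k+1}}(f-P_nf)(s)\,ds=0$, a one-line computation gives $\langle f-P_nf,v\rangle_{L^2}=0$ for every $v\in V_n$, so $P_n$ is the orthogonal projection onto $V_n$. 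In particular $f-P_nf\perp P_nf$, and the Pythagorean identity yields
\begin{equation}
\|f\|_{L^2[a,b]}^2=\|P_nf\|_{L^2[a,b]}^2+\|f-P_nf\|_{L^2[a,b]}^2 .
\end{equation}
Because $P_nf$ is constant with value $\frac{1}{h}\int_{t_k}^{t_{k+1}}f(s)\,ds$ on each cell of length $h$, we get $\|P_nf\|_{L^2[a,b]}^2=\frac{1}{h}\sum_{k=0}^{n-1}\bigl(\int_{t_k}^{t_{k+1}}f(s)\,ds\bigr)^2$, and since $I(f^2)=\|f\|_{L^2[a,b]}^2$ this produces the key identity
\begin{equation}
s_n^2(f)=\|f-P_nf\|_{L^2[a,b]}^2 .
\end{equation}
(This also reproves $0\le s_n^2(f)\le s^2(f)$.)

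It therefore suffices to show $\|f-P_nf\|_{L^2[a,b]}\to 0$. First I would treat a continuous $f$: since $P_nf(x)$ is an average of the values of $f$ over the length-$h$ cell containing $x$, one has $\|f-P_nf\|_{\infty}\le\omega(f;h)$, where $\omega(f;\delta)=\sup\{|f(x)-f(y)|:x,y\in[a,b],\ |x-y|\le\delta\}$ is the modulus of continuity of $f$ on $[a,b]$; uniform continuity gives $\omega(f;h)\to 0$ as $h=(b-a)/n\to 0$, hence $\|f-P_nf\|_{L^2[a,b]}\le\sqrt{b-a}\,\omega(f;h)\to 0$. For general $f\in L^2[a,b]$ I would use density of $C([a,b])$ in $L^2[a,b]$ together with the fact that $P_n$ is a contraction, $\|P_ng\|_{L^2[a,b]}\le\|g\|_{L^2[a,b]}$, being an orthogonal projection. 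Given $\varepsilon>0$ pick $g\in C([a,b])$ with $\|f-g\|_{L^2[a,b]}<\varepsilon$; by linearity $f-P_nf=(f-g)+(g-P_ng)+P_n(g-f)$, so
\begin{equation}
\|f-P_nf\|_{L^2[a,b]}\le\|f-g\|_{L^2[a,b]}+\|g-P_ng\|_{L^2[a,b]}+\|P_n(g-f)\|_{L^2[a,b]}\le 2\varepsilon+\|g-P_ng\|_{L^2[a,b]} ,
\end{equation}
and the last term tends to $0$ by the continuous case, whence $\limsup_{n\to+\infty}\|f-P_nf\|_{L^2[a,b]}\le 2\varepsilon$; letting $\varepsilon\downarrow 0$ gives \eqref{conv_sn_1}.

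The only real subtlety — and the reason I avoid invoking martingale or Lebesgue-differentiation convergence directly — is that the partitions $\{t_k\}$ for different values of $n$ are not nested, so $(P_nf)_n$ is not a martingale and one cannot quote $L^2$ (or a.s.) martingale convergence verbatim. The density argument sidesteps this entirely: it needs only that $P_n$ is a norm-one linear projection and that the convergence $P_ng\to g$ in $L^2[a,b]$ is already known on the dense subset $C([a,b])$, where it follows from the elementary estimate $\|g-P_ng\|_\infty\le\omega(g;h)$.
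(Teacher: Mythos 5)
Your proof is correct, and while it shares the paper's overall skeleton (handle continuous $f$ first, then pass to general $f\in L^2[a,b]$ by density of $C[a,b]$), the mechanism is genuinely different and arguably cleaner. The paper works directly with the formula $s_n^2(f)=\|f\|^2_{L^2[a,b]}-\frac{1}{h}\sum_k\bigl(\int_{t_k}^{t_{k+1}}f\bigr)^2$: for continuous $f$ it invokes the mean value theorem for integrals to recognize $\frac{1}{h}\sum_k\bigl(\int_{t_k}^{t_{k+1}}f\bigr)^2$ as a Riemann sum $\sum_k (f(\xi_k))^2 h\to\int_a^b f^2$, and for the density step it estimates $|s_n^2(f)-s_n^2(f_\varepsilon)|$ by hand, splitting it into two error terms and controlling the second one with the Cauchy--Schwarz inequality, arriving at $|s_n^2(f)-s_n^2(f_\varepsilon)|\le 2\varepsilon(2\|f\|_{L^2[a,b]}+\varepsilon)$ before taking $\limsup$. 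You instead identify $s_n^2(f)=\|f-P_nf\|^2_{L^2[a,b]}$ via orthogonality and Pythagoras, which turns the stability-in-$f$ issue into the one-line observation that $P_n$ is a contraction, and you replace the mean-value/Riemann-sum argument for continuous $f$ by the uniform bound $\|f-P_nf\|_\infty\le\omega(f;h)$. What your route buys is structural transparency (the quantity being studied is literally a best-approximation error, which also immediately re-derives $0\le s_n^2(f)\le s^2(f)$, the content of the paper's Exercise 7) and shorter estimates; what the paper's route buys is that it never needs the projection formalism and stays entirely at the level of elementary integral inequalities, consistent with the surrounding text. Your closing remark about non-nested partitions correctly flags why a martingale-convergence shortcut is unavailable, and your density argument indeed sidesteps it.
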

\begin{proof}
	Let us assume first that $f\in C[a,b]$. Then by the mean value theorem for integrals we get that there exist $\xi_k\in [t_k,t_{k+1}]$ for $k=0,1,\ldots,n-1$ and therefore
	\begin{equation}
	\frac{1}{h}\sum\limits_{k=0}^{n-1}\Bigl(\int\limits_{t_k}^{t_{k+1}}f(s)ds\Bigr)^2=\sum\limits_{k=0}^{n-1}(f(\xi_k))^2\cdot h\to \int\limits_{a}^b (f(s))^2ds
	\end{equation}
	as $n\to +\infty$. Hence, \eqref{conv_sn_1} holds for any $f\in C[a,b]$.
	
	Recall that $C[a,b]$ is a dense subset of $L^2[a,b]$, see, for example, Proposition 3.4.4., page 101 in \cite{cohn}. Hence, for $f\in L^2[a,b]$ and all $\varepsilon>0$ there exists $f_{\varepsilon}\in C[a,b]$ such that 
	\begin{equation}
	\label{dense_f}
		\|f-f_{\varepsilon}\|_{L^2[a,b]}<\varepsilon.
\end{equation}
In particular, by the first part of the proof we have for all $\varepsilon>0$ that
\begin{equation}
\label{lim_sne}
	\lim\limits_{n\to +\infty}s^2_n(f_\varepsilon)=0.
\end{equation}	 
Moreover, we can write that
\begin{equation}
	s_n^2(f)=\|f\|^2_{L^2[a,b]}-\frac{1}{h}\sum\limits_{k=0}^{n-1}\Bigl(\int\limits_{t_k}^{t_{k+1}}f(s)ds\Bigr)^2,
\end{equation}
and 
\begin{equation}
\label{est_sn_1}
	|s_n^2(f)-s_n^2(f_{\varepsilon})|\leq E_{1,n,\varepsilon}(f)+\frac{1}{h}\cdot 
	E_{2,n,\varepsilon}(f),
\end{equation}
where
\begin{equation}
	E_{1,n,\varepsilon}(f)=\Bigl|\|f\|^2_{L^2[a,b]}-\|f_{\varepsilon}\|^2_{L^2[a,b]}\Bigl|
\end{equation}
\begin{equation}
	E_{2,n,\varepsilon}(f)=\Biggl|\sum\limits_{k=0}^{n-1}\Bigl(\int\limits_{t_k}^{t_{k+1}}f(s)ds\Bigr)^2-\sum\limits_{k=0}^{n-1}\Bigl(\int\limits_{t_k}^{t_{k+1}}f_{\varepsilon}(s)ds\Bigr)^2\Biggl|.
\end{equation}
By \eqref{dense_f} we have for all $n\in\mathbb{N}$ and all $\varepsilon>0$ that
\begin{eqnarray}
\label{est_sn_2}
 &&E_{1,n,\varepsilon}(f)=\Bigl|\|f\|_{L^2[a,b]}-\|f_{\varepsilon}\|_{L^2[a,b]}\Bigl|\cdot \Bigl(\|f\|_{L^2[a,b]}+\|f_{\varepsilon}\|_{L^2[a,b]}\Bigr)\notag\\
 &&\leq \|f-f_{\varepsilon}\|_{L^2[a,b]}\cdot \Bigl(2\|f\|_{L^2[a,b]}+\|f-f_{\varepsilon}\|_{L^2[a,b]}\Bigr)\leq \varepsilon\cdot (2\|f\|_{L^2[a,b]}+\varepsilon),
\end{eqnarray}
and, by the Schwarz inequality,
\begin{eqnarray}
\label{est_sn_3}
	&&E_{2,n,\varepsilon}(f)\leq\sum\limits_{k=0}^{n-1}\Biggl|\Bigl(\int\limits_{t_k}^{t_{k+1}}f(s)ds\Bigr)^2-\Bigl(\int\limits_{t_k}^{t_{k+1}}f_{\varepsilon}(s)ds\Bigr)^2\Biggl|\notag\\
	&&=\sum\limits_{k=0}^{n-1}\Biggl|\int\limits_{t_k}^{t_{k+1}}(f(s)-f_{\varepsilon}(s))ds\Biggl|\cdot\Biggl|\int\limits_{t_k}^{t_{k+1}}(f(s)+f_{\varepsilon}(s))ds\Biggl|\notag\\
	&&\leq \Biggl(\sum\limits_{k=0}^{n-1}\Bigl|\int\limits_{t_k}^{t_{k+1}}(f(s)-f_{\varepsilon}(s))ds\Bigl|^2\Biggr)^{1/2}\times \Biggl(\sum\limits_{k=0}^{n-1}\Bigl|\int\limits_{t_k}^{t_{k+1}}(f(s)+f_{\varepsilon}(s))ds\Bigl|^2\Biggr)^{1/2}\notag\\
	&&\leq \Biggl(\sum\limits_{k=0}^{n-1}h\cdot\int\limits_{t_k}^{t_{k+1}}|f(s)-f_{\varepsilon}(s)|^2 ds\Biggr)^{1/2}\times \Biggl(\sum\limits_{k=0}^{n-1}h\cdot\int\limits_{t_k}^{t_{k+1}}|f(s)+f_{\varepsilon}(s)|^2 ds\Biggr)^{1/2}\notag\\
	&&=h \cdot \|f-f_{\varepsilon}\|_{L^2[a,b]}\cdot \|f+f_{\varepsilon}\|_{L^2[a,b]}\leq h\cdot \|f-f_{\varepsilon}\|_{L^2[a,b]}\cdot \Bigl(\|f\|_{L^2[a,b]}+\|f_{\varepsilon}\|_{L^2[a,b]}\Bigr)\notag\\
	&&\leq h\cdot \|f-f_{\varepsilon}\|_{L^2[a,b]}\cdot \Bigl(2\|f\|_{L^2[a,b]}+\|f-f_{\varepsilon}\|_{L^2[a,b]}\Bigr)\leq h \cdot\varepsilon\cdot (2\|f\|_{L^2[a,b]}+\varepsilon).
\end{eqnarray}
Hence, by \eqref{est_sn_1}, \eqref{est_sn_2}, \eqref{est_sn_3} we get for all $n\in\mathbb{N}$ and all $\varepsilon>0$
\begin{equation}
	|s_n^2(f)-s_n^2(f_{\varepsilon})|\leq 2\varepsilon\cdot (2\|f\|_{L^2[a,b]}+\varepsilon),
\end{equation}
which, by \eqref{lim_sne} and the fact that $s_n^2(f)\geq 0$, gives for all $\varepsilon>0$
\begin{equation}
	0\leq\liminf\limits_{n\to +\infty}s_n^2(f)\leq \limsup\limits_{n\to +\infty} s^2_n(f)\leq 2\varepsilon\cdot (2\|f\|_{L^2[a,b]}+\varepsilon),
\end{equation}
and this implies the thesis.
\end{proof}
By Lemma \ref{conv_sn} we have that $s^2_n(f)<s^2(f)$ for sufficiently large $n$. Hence, the randomized Riemann quadrature $\mathcal{R}_n$ has asymptotically smaller variance than the classical Monte Carlo method $MC_n$.  Moreover, due to low smoothness of $f\in L^2[a,b]$, for approximation of $I(f)$ there are not too many other options than the randomized Riemann sum $\mathcal{R}_n$.

Note that we cannot use CLT in order to construct empirical confidence intervals for $\mathcal{R}_n$, as it was for $MC_n$, since the random variables $f(\theta_k)$ do not have the same distribution. However, in the case when the Borel measurable function $f:[a,b]\to\mathbb{R}$ is bounded (i.e., $\|f\|_{\infty}<+\infty$) we can use the Hoeffding-Azuma inequality in order to design non-asymptotic confidence intervals, see Theorem \ref{HA_ineq}.
\subsection{Convergence with probability one}
By using Lemma \ref{BC_asconv} and Theorem \ref{ran_Eu_lp} we get the following result on convergence with probability one of the randomized Euler scheme.
\begin{cor} Let  $\xi\in\mathbb{R}^d$ and let $f$ satisfy the assumptions (A1)-(A4) for some $\varrho\in (0,1]$. Then for all $\varepsilon\in (0,\min\{\varrho+\frac{1}{2},1\})$ there exists a random variable $\eta_{\varepsilon}$ such that
\begin{equation}
\sup\limits_{a\leq u\leq b}\|z(u)-l_n(u)\|\leq \eta_{\varepsilon}\cdot n^{-\min\{\varrho+\frac{1}{2},1\}+\varepsilon} \ \hbox{almost surely} 
\end{equation}
for all $n\in\mathbb{N}$.
\end{cor}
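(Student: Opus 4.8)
The plan is to repeat, almost verbatim, the argument used for Theorem \ref{AS_CONV_mc}: one deduces the pathwise bound from a family of $L^p(\Omega)$ estimates combined with the abstract Borel--Cantelli-type Lemma \ref{BC_asconv}. First I would fix notation by setting $X_n := \sup_{a \le u \le b}\|z(u) - l_n(u)\|$ for $n \in \mathbb{N}$. By the measurability lemma immediately preceding Theorem \ref{ran_Eu_lp}, each $X_n$ is a finite nonnegative random variable, so all the quantities below are well defined.

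Next I would record the needed moment bounds. Theorem \ref{ran_Eu_lp} gives, for every $p \in [2,+\infty)$, a constant $C_p \in [0,+\infty)$ such that $(\mathbb{E}|X_n|^p)^{1/p} \le C_p h^{\min\{\varrho+1/2,1\}} = C_p(b-a)^{\min\{\varrho+1/2,1\}} n^{-\min\{\varrho+1/2,1\}}$ for all $n \in \mathbb{N}$, where $h=(b-a)/n$. For $p \in [1,2)$ the Lyapunov (Jensen) inequality gives $(\mathbb{E}|X_n|^p)^{1/p} \le (\mathbb{E}|X_n|^2)^{1/2}$, so, putting $\widetilde C_p := C_{\max\{2,p\}}(b-a)^{\min\{\varrho+1/2,1\}}$, we obtain $(\mathbb{E}|X_n|^p)^{1/p} \le \widetilde C_p\, n^{-\min\{\varrho+1/2,1\}}$ for all $p \in [1,+\infty)$ and all $n \in \mathbb{N}$.

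Then I would apply Lemma \ref{BC_asconv} with the convergence rate $\alpha := \min\{\varrho+1/2,1\}$: it produces, for each $\varepsilon \in (0,\alpha)$, a random variable $\eta_\varepsilon$ with $\mathbb{P}\!\left(\bigcap_{n\in\mathbb{N}}\{X_n \le \eta_\varepsilon n^{-\alpha+\varepsilon}\}\right) = 1$, which is exactly the assertion of the corollary.

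There is essentially no serious obstacle here: all of the analytic content already sits in Theorem \ref{ran_Eu_lp} and Lemma \ref{BC_asconv}. The only points worth a line of comment are that the estimate of Theorem \ref{ran_Eu_lp} has to be extended from $p \ge 2$ to all $p \ge 1$ (done above via Lyapunov's inequality, exactly as in the proof of Theorem \ref{AS_CONV_mc}), that $X_n$ is a genuine random variable so the statement is meaningful, and that the restriction $\varepsilon < \alpha$ is precisely what keeps the exponent $-\alpha+\varepsilon$ negative and hence makes the bound a genuine almost-sure convergence statement.
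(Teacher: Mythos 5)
Your proposal is correct and follows exactly the paper's route: the corollary is obtained by combining the $L^p$ bound of Theorem \ref{ran_Eu_lp} (extended from $p\geq 2$ to all $p\geq 1$ via the Lyapunov inequality, as in the proof of Theorem \ref{AS_CONV_mc}) with Lemma \ref{BC_asconv} applied with $\alpha=\min\{\varrho+\tfrac12,1\}$. Nothing is missing.
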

\section{Deterministic two-stage Runge-Kutta method}

 The definition of the deterministic two-stage Runge-Kutta method (also known as an explicit mid-point method) goes as follows
\begin{equation}
	y^0=\xi,
\end{equation}
and
\begin{eqnarray}
    &&	y^{j}_{1/2}=y^{j-1}+\frac{h}{2} f(t_{j-1},y^{j-1}),\\
    && 	y^{j}=y^{j-1}+hf(t_{j-1}+\frac{h}{2},y^{j}_{1/2}),
\end{eqnarray}
for $j=1,2,\ldots,n$. As a time-continuous approximation of $z$ in the whole interval $[a,b]$ we set
\begin{equation}
	l_n(t):=l_{j,n}(t) \ \hbox{for} \  t\in [t_j,t_{j+1}], \ j=0,1,\ldots,n-1,
\end{equation}
where
\begin{equation}
	l_{k,n}(t)=\frac{y^{j+1}-y^{j}}{t_{j+1}-t_j}(t-t_j)+y^j=y^j+(t-t_j)f(t_{j}+\frac{h}{2},y^{j+1}_{1/2}).
\end{equation}
\section{Randomized two-stage Runge-Kutta method}
As in the case of randomized Euler scheme, let $(\tau_j)_{j\in\mathbb{N}_0}$ be a sequence of independent random variables on $(\Omega,\Sigma,\mathbb{P})$ that are identically uniformly distributed on $[0,1]$. Let $n\in\mathbb{N}$, $h=(b-a)/n$, $t_j=a+jh$ for $j=0,1,\ldots,n$.  We define the randomized two-stage Runge-Kutta method as follows
\begin{equation}
	y^0=\xi,
\end{equation}
and
\begin{eqnarray}
    &&	y^{j}_{\tau}=y^{j-1}+h\tau_j f(t_{j-1},y^{j-1}),\\
    && 	y^{j}=y^{j-1}+hf(\theta_j,y^{j}_{\tau}),
\end{eqnarray}
for $j=1,2,\ldots,n$, where $\theta_{j}=t_{j-1}+h\tau_j$ is uniformly distributed in $[t_{j-1},t_{j}]$. Moreover, as a time-continuous approximation of $z$ in the whole interval $[a,b]$ we set
\begin{equation}
	l_n(t):=l_{j,n}(t) \ \hbox{for} \  t\in [t_j,t_{j+1}], \ j=0,1,\ldots,n-1,
\end{equation}
where
\begin{equation}
	l_{k,n}(t)=\frac{y^{j+1}-y^{j}}{t_{j+1}-t_j}(t-t_j)+y^j=y^j+(t-t_j)f(\theta_{j+1},y^{j+1}_{\tau}).
\end{equation}
In particular, if we take $\tau_j=1/2$ for all $j$ then we get the deterministic mid-point method.
\begin{rem}
    Randomized algorithms for ODEs and their errors have been investigated, for example,  in \cite{JenNeuen}, \cite{KruseWu_1}, \cite{stengle1}, \cite{stengle2}, \cite{TBPhD}, \cite{randRK}, \cite{HeinMilla}.
\end{rem}
\section{On connection between Euler/RK2 and gradient descent algorithms}
Let us consider the following optimization problem
\begin{equation}
\label{opt_prob}
    \min\limits_{x\in \mathbb{R}^d}f(x)
\end{equation}
for a given $f\in C^1(\mathbb{R}^d;\mathbb{R})$. We associate with \eqref{opt_prob} the following ODE
\begin{equation}
	\label{grad_ODE_PROBLEM1}
		\left\{ \begin{array}{ll}
			x'(t)=-\nabla f(x(t)), &t\in [0,+\infty), \\
			x(0)=x_0.
		\end{array}\right.
\end{equation}
Such a systems are called {\it gradient systems}. We assume that the gradient $\mathbb{R}^d\ni x\to\nabla f(x)\in\mathbb{R}^d$ satisfies the assumptions (A1), (A2), (A3).  Then the equation \eqref{grad_ODE_PROBLEM1} has a unique solution $x$ that is defined on the whole $[0,+\infty)$, see Remark \ref{grad_sys_ex_1}.
Note that we have
\begin{equation}
    \frac{d}{dt} f(x(t))=\langle\nabla f(x(t)),x'(t)\rangle=-\|\nabla f(x(t))\|^2\leq 0.
\end{equation}
Hence, $f$ is non-increasing along the trajectory of $x$. Furthermore, we have the following result  about convergence of $x$ to the local minimum of $f$. The proof can be found in \cite{VB1}, page 142.
\begin{thm}
    If $x^*$ is an isolated critical point of $f$ which is also a local minimum, then the stationary solution $x=x^*$ is an asymptotically stable solution of \eqref{grad_ODE_PROBLEM1}.
\end{thm}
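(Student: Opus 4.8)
The plan is to prove asymptotic stability via Lyapunov's method, using $V(x):=f(x)-f(x^*)$ as the candidate Lyapunov function and checking that the two hypotheses on $x^*$ make it a \emph{strict} Lyapunov function near $x^*$. First I would produce a radius $r>0$ such that on the closed ball $\bar B(x^*,r)$ one has $V(x)>0$ for $x\neq x^*$ and $\nabla f(x)=0$ only at $x=x^*$. The latter is immediate from $x^*$ being an isolated critical point. For the former, the local minimum property gives $f(x)\geq f(x^*)$ on a neighbourhood $U$ of $x^*$; if $f(y)=f(x^*)$ for some $y\in U\setminus\{x^*\}$, then $y$ is a local minimum of $f$, hence $\nabla f(y)=0$, and if such points accumulate at $x^*$ this contradicts isolation — so after shrinking $r$, $V$ is positive definite on $\bar B(x^*,r)$ with unique zero $x^*$. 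Along any solution of \eqref{grad_ODE_PROBLEM1} we already know from the text that $\frac{d}{dt}V(x(t))=\langle\nabla f(x(t)),x'(t)\rangle=-\|\nabla f(x(t))\|^2\leq 0$, and on $\bar B(x^*,r)\setminus\{x^*\}$ this is strictly negative.

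Next I would establish Lyapunov stability. Given $\varepsilon\in(0,r)$, put $m:=\min\{V(x):\|x-x^*\|=\varepsilon\}>0$ (a minimum of a continuous positive function over a compact sphere), and, using continuity of $V$ at $x^*$, choose $\delta\in(0,\varepsilon)$ with $V(x)<m$ whenever $\|x-x^*\|<\delta$. If $\|x_0-x^*\|<\delta$, then $t\mapsto V(x(t))$ is nonincreasing, so $V(x(t))\leq V(x_0)<m$ as long as the trajectory remains in $\bar B(x^*,r)$; since the trajectory is continuous and $V\geq m$ on the sphere $\{\|x-x^*\|=\varepsilon\}$, it can never reach that sphere, hence stays in $B(x^*,\varepsilon)$ for all $t\geq 0$. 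This gives stability and, in particular, the solution is bounded, consistent with its global existence on $[0,+\infty)$ noted above.

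For attractivity, fix $x_0$ with $\|x_0-x^*\|<\delta$, so $x(t)\in\bar B(x^*,\varepsilon)$ for all $t\geq 0$. Then $t\mapsto V(x(t))$ is nonincreasing and bounded below by $0$, hence converges to some $c\geq 0$. If $c>0$, the trajectory remains in the compact set $K:=\{x\in\bar B(x^*,\varepsilon):V(x)\geq c\}$, which does not contain $x^*$; on $K$ the continuous function $\|\nabla f\|^2$ is strictly positive, hence bounded below by some $\mu>0$, so $V(x(t))=V(x_0)+\int_0^t\dot V(x(s))\,ds\leq V(x_0)-\mu t\to-\infty$, contradicting $V\geq 0$. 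Therefore $c=0$, i.e. $V(x(t))\to 0$. Finally, if $x(t)\not\to x^*$, there are $\eta>0$ and $t_n\to+\infty$ with $\|x(t_n)-x^*\|\geq\eta$; by compactness a subsequence of $(x(t_n))$ converges to some $y\in\bar B(x^*,\varepsilon)$ with $y\neq x^*$, whence $V(x(t_n))\to V(y)>0$, contradicting $V(x(t))\to 0$. Hence $x(t)\to x^*$, which together with the stability bound is exactly asymptotic stability of the stationary solution $x\equiv x^*$.

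The only mildly delicate points are the invariance argument in the stability step (ruling out escape through the sphere of radius $\varepsilon$) and the passage from $V(x(t))\to 0$ back to $x(t)\to x^*$; both are routine once positive definiteness of $V$ on $\bar B(x^*,r)$ and the strict negativity of $\dot V$ off $x^*$ are secured. Thus the real substance of the proof is the opening observation that the combination ``isolated critical point'' and ``local minimum'' upgrades $V$ from a mere Lyapunov function to a strict one, which is what distinguishes asymptotic stability from plain stability.
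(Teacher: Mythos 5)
Your proof is correct: the paper itself gives no argument for this theorem (it only cites \cite{VB1}, p.~142), and what you have written is precisely the standard strict-Lyapunov proof that the cited reference uses, with $V(x)=f(x)-f(x^*)$, the key observation that ``isolated critical point'' plus ``local minimum'' forces $V>0$ and $\|\nabla f\|>0$ on a punctured ball, and the usual stability/attractivity steps carried out completely (including the compactness argument giving the lower bound $\mu>0$ on $\|\nabla f\|^2$ and the passage from $V(x(t))\to 0$ to $x(t)\to x^*$). The only cosmetic remark is that in your positivity step a single point $y\neq x^*$ with $f(y)=f(x^*)$ inside the isolation radius already contradicts isolation, so no accumulation argument is needed; otherwise nothing to add.
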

Theorem above implies that if the initial-value $x_0$ is sufficiently close to $x^*$ then $\lim\limits_{t\to +\infty}x(t)=x^*$. Approaching the local minimum $x^*$ of $f$ by observing the trajectory $x=x(t)$ while $t\to +\infty$ is sometimes called {\it continuous learning}. 

For $h>0$ and  $t_k=kh$, $k=0,1,\ldots$, the Euler scheme for \eqref{grad_ODE_PROBLEM1} takes the form
\begin{eqnarray}
\label{EU_GD}
    &&y_0=x_0,\notag\\
    &&y_{k+1}=y_k-h\cdot\nabla f(y_k), \ k=0,1,\ldots
\end{eqnarray}
This is the well-known gradient descent (GD) algorithm for searching a (local) minimum of $f$, when starting from $x_0$. In this context the stepsize $h$ can be interpreted as the learning rate hyperparameter. Using (GD) method in order to minimize $f$ is also called {\it discrete learning}. For discrete learning, when $h$ is sufficiently small and $\nabla f$ is globally Lipschitz, we can only be sure that either $f(x_k)\to -\infty$ either $\nabla f(x_k)\to 0$ when $k\to+\infty$. In that sense we can say that discrete learning is harder than continuous one. However, since in general we do not know the exact solution to \eqref{grad_ODE_PROBLEM1}, we can provide on computer only the discrete one.

We discussed the origins of the gradient descent method (GD) from the perspective of gradient ODEs \eqref{EU_GD}. We now show how to obtain its stochastic counterpart called {\it stochastic gradient descent} (SGD). In ML many loss functions $f$ that we need to minimize have the following special form
\begin{equation}
 \label{anova_f}
     f(x)=\frac{1}{n}\sum_{j=1}^n f_j(x).
 \end{equation}
For such a function the (full) GD method has the form
\begin{eqnarray}
 \label{EU_GD_2}
    &&y_0=x_0,\notag\\
    &&y_{k+1}=y_k-h\cdot\frac{1}{n}\sum_{j=1}^n\nabla f_j(y_k), \ k=0,1,\ldots.
 \end{eqnarray}
 For large $n$ (which is mostly the size of the training data) computation of the next $y_{k+1}$ may be very computationally demanding and, therefore, time consuming. Note that we have 
 \begin{equation}
     f(x)=\frac{1}{n}\sum_{j=1}^n f_j(x)=\mathbb{E}(f_{\xi}(x))
\end{equation}
where $\xi\sim U(\{1,2,\ldots,n\})$. The idea of constructing the SGD is now to use Monte Carlo approximation to $\mathbb{E}(f_{\xi}(x))$. This can be done in the following way:
\begin{itemize}
    \item [(i)] we approximate  $\mathbb{E}(f_{\xi}(x))$ by using only one sample per step, i.e., we take $(\xi_j)_{j\in\mathbb{N}}$ an iid sequence from $U(\{1,2,\ldots,n\})$ and
    \begin{eqnarray}
     && y_0=x_0, \notag\\
     && y_{k+1} =  y_k-h\nabla f_{\xi_{k+1}}(y_k), \quad k= 0,1,\ldots
 \end{eqnarray}
    \item  [(ii)] we fix $M$, the size of the mini-batch, at each $(k+1)$th-step we sample $M$ ($M\ll n$) independent random numbers $\xi^{(1)}_{k+1},\xi^{(2)}_{k+1},\ldots,\xi^{(M)}_{k+1}$ from $U(\{1,2,\ldots,n\})$ and then we set
     \begin{eqnarray}
     && y_0=x_0, \notag\\
     && y_{k+1} =  y_k-h\cdot\frac{1}{M}\cdot\sum_{j=1}^M\nabla f_{\xi^{(j)}_{k+1}}(y_k), \quad k= 0,1,\ldots
 \end{eqnarray}
 (We also assume that all random variables $(\xi_k^{(j)})_{j=1,2,\ldots,M,k\in\mathbb{N}}$ are independent.)
\end{itemize}
 Hence, in the SGD version we do not compute the full gradient but, instead, we randomly pick an index $j$ and function $f_j$ for which we are computing gradient and then using it to establish the next step $y_{k+1}$.
 
There are of course other options of discretizing \eqref{grad_ODE_PROBLEM1}. For example, we can also apply randomized RK2 method to \eqref{grad_ODE_PROBLEM1} and obtain the approximation sequence:
\begin{eqnarray}
    &&y^0=x_0,\notag\\
    &&y^{k+1}_{\tau}=y^k-h\tau_{k+1}\cdot \nabla f(y^k)\notag\\
    &&y^{k+1}=y^k-h\cdot\nabla f(y^{k+1}_{\tau}), \ k=0,1,\ldots,
\end{eqnarray}
where $(\tau_k)_{k\in\mathbb{N}}$ is an iid sequence sampled from $U(0,1)$. Analogously, we can define the following {\it randomized two-stage stochastic gradient descent method} for the functions \eqref{anova_f}:
 \begin{eqnarray}
     && \bar y^0=x_0, \notag\\
     && \bar y^j_{\tau} = \bar y^{j-1}-h\tau_j \nabla f_{\xi_j}(\bar y^{j-1}),\notag\\
     && \bar y^j=\bar y^{j-1}-h\nabla f_{\zeta_j}(\bar y^j_{\tau}), \quad j=1,2,\ldots,
 \end{eqnarray}
where  $(\tau_k)_{k\in\mathbb{N}}$ is an iid sequence sampled from $U(0,1)$, $(\xi_j)_{j\geq 1}$ and $(\zeta_j)_{j\geq 1}$ are iid sequences sampled from $U(\{1,2,\ldots,n\})$, and finally $(\tau_j)_{j\geq 1}$, $(\xi_j)_{j\geq 1}$, $(\zeta_j)_{j\geq 1}$ are independent. 

We were not able to find any references or analysis of this method in the literature, however, we conjecture that this method posses better convergence properties than gradient descent method \eqref{EU_GD}.
\begin{rem}
    \label{grad_sys_ex_1} Since the gradient $\nabla f$ satisfies (A1), (A2), (A3), we have the existence and uniqueness of solution $x$ for \eqref{grad_ODE_PROBLEM1} on the interval $[0,T]$ for any $T>0$. Moreover, this solution is bounded on $[0,T]$ (which follows from the fact that $\nabla f$ is of at most linear growth) and therefore $x$ can be continued on the whole $[0,+\infty)$, see, for example, Theorem 2.11 and Example 2.1, pages 46-47 in \cite{VB1}.
\end{rem}
\section{Exercises}
\begin{itemize}
	\item [1.] Show that $x\vee y-x\wedge y=|x-y|$ 	for all $x,y\in\mathbb{R}$.
	\item [2.] Show that the function \eqref{f_a1a4} satisfies the assumptions (A1)-(A4).
	\item [3.] Justify the boundedness of the function \eqref{error_f_raneu}.
	\item [4.] Implement deterministic and randomized Euler scheme in Python and perform numerical experiments for the function \eqref{f_a1a4}.
	\item [5.] Prove that the stochastic process $\displaystyle{\{\sup\limits_{a\leq u\leq t}\|z(u)-l_n(u)\|^p\}_{t\in [a,b]}}$ is product measurable.
	\item [6.] Give a proof of \eqref{exp_est_ln}, \eqref{var_est_ln}.
	\item [7.] Show that $0\leq s^2_n(f)\leq s^2(f)$ for all $n\in\mathbb{N}$, $f\in L^2[a,b]$.
	\item [8.] Let us assume that $f$ is Borel measurable and bounded. Show that for any $n\in\mathbb{N}$, $\delta\in (0,1)$ we have that
	\begin{equation}
		I(f)\in \Bigl[\mathcal{R}_n(f)-\frac{C(\delta)}{\sqrt{n}},\mathcal{R}_n(f)+\frac{C(\delta)}{\sqrt{n}}\Bigr]
	\end{equation}
	with the probability at least $1-\delta$, where $C(\delta)=(8\ln (2/\delta))^{1/2}\cdot (b-a)\cdot\|f\|_{\infty}$.
 \item [9.] Show that for any discretization $a=t_0<t_1<\ldots t_n=b$ and any function $\phi:[a,b]\to [0,+\infty)$ the following holds 
 \begin{equation}
    \label{est_dsc_phi}
     \sum\limits_{k=0}^{n-1}\phi(t_k)\cdot\mathbf{1}_{[t_k,t_{k+1})}(s)\leq\sup\limits_{a\leq u\leq s}\phi(u)
 \end{equation}
 for any $s\in [a,b]$.
\end{itemize}
\chapter{Three canonical processes}
\label{three_processes}
In this chapter we assume that $(\Omega,\Sigma,(\Sigma_t)_{t\ [0,+\infty)},\mathbb{P})$ is a filtered probability space. We start with recalling definitions of  Wiener and Poisson processes (non-homogeneous and compound).
\section{Building blocks--Wiener and Poisson processes}
\begin{defn} An $\mathbb{R}^m$-valued stochastic process $W=(W(t))_{t\in [0,+\infty)}$ is called an {\it $m$-dimensional  $(\Sigma_t)_{t\in [0,+\infty)}$-Wiener process} on $(\Omega,\Sigma,\mathbb{P})$ (or a {\it $m$-dimensional Wiener process with respect to the filtration $(\Sigma_t)_{t\in [0,+\infty)}$}) if 
\begin{itemize}
	\item [(i)] $\mathbb{P}(W(0)=0)=1$,
	\item [(ii)] $\sigma(W(t))\subset\Sigma_t$ for all $t\in [0,+\infty)$,
	\item [(iii)] almost all trajectories of $W$ are continuous,
	\item [(iv)] $\sigma(W(t)-W(s))$ is independent of $\Sigma_s$ for all $0\leq s<t$,
	\item [(v)] for $0\leq s<t$ the increment $W(t)-W(s)$ is normally distributed with mean zero and covariance matrix equal to $(t-s)I_m$, where $I_m$ is the $\mathbb{R}^{m\times m}$-identity matrix.
\end{itemize}
\end{defn}
\begin{defn} An $\mathbb{R}^m$-valued stochastic process $N=(N(t))_{t\in [0,+\infty)}$ is called an {$m$-dimensional non-homogeneous $(\Sigma_t)_{t\in [0,+\infty)}$-Poisson process} on $(\Omega,\Sigma,\mathbb{P})$ (or an {\it $m$-dimensional non-homogeneous Poisson process with respect to the filtration $(\Sigma_t)_{t\in [0,+\infty)}$}) if 
\begin{itemize}
	\item [(i)] $\mathbb{P}(N(0)=0)=1$,
	\item [(ii)] $\sigma(N(t))\subset\Sigma_t$ for all $t\in [0,+\infty)$,
	\item [(iii)] almost all trajectories of $N$ are c\`adl\`ag,
	\item [(iv)] $\sigma(N(t)-N(s))$ is independent of $\Sigma_s$ for all $0\leq s<t$,
	\item [(v)] for $j\in\{1,2,\ldots,m\}$, $0\leq s<t$ the increment $N^{j}(t)-N^{j}(s)$ has Poisson law with the intensity parameter $\displaystyle{\int\limits_s^t\lambda_j(u)du}$,  where the intensity function $\lambda_j:[0,+\infty)\to (0,+\infty)$ is Borel and locally integrable,
	\item [(vi)] $\displaystyle\{\Sigma^{N^1}_{\infty},\ldots,\Sigma^{N^m}_{\infty}\}$ is a family of independent $\sigma$-fields.
\end{itemize}
\end{defn}
We also denote by $\displaystyle{m_j(t)=\int\limits_0^t\lambda_j(s)ds}$ and $\Lambda(t,s)=m(t)-m(s)$ for $s,t\in [0,T]$ and $j=1,2,\ldots,m_N$. If $\lambda_j$ is a constant function then we say that $N^j$ is a homogeneous Poisson process.
\begin{defn} Let $N=(N_t)_{t\in [0,+\infty)}$ be a one-dimensional homogeneous Poisson process, with respect to a given filtration $(\Sigma_t)_{t\in [0,+\infty)}$, and let $(\xi_j)_{j\in\mathbb{N}}$ be an i.i.d sequence of  real random variables with distribution $\mu$, such that the $\sigma$-fields $\Sigma^N_{\infty}$ and $\displaystyle{\sigma\Bigl(\bigcup_{j\in\mathbb{N}}\sigma(\xi_j)\Bigr)}$ are independent. The process
\begin{equation}
	J(t)=\sum\limits_{j=1}^{N(t)}\xi_j, \ t\in [0,+\infty),
\end{equation}
is called the {\it compound Poisson process}.
\end{defn}
Analogously we can define non-homogeneous compound Poisson process.

Due to Definition 13.9 and Theorem 13.11 in \cite{YEH} the condition $(iv)$ in the above definitions implies that $Z\in\{N,W\}$ is a process with {\it independent increments}, i.e., for all $ \in\mathbb{N}$, $0\leq t_1<\ldots<t_n$ the family
\begin{equation}
\label{indep_increm}
	\{\sigma(Z(t_1)),\sigma(Z(t_2)-Z(t_1)),\ldots, \sigma(Z(t_{n})-Z(t_{n-1}))\}
\end{equation} 
is a collection of independent $\sigma$-fields. The compound Poisson process $Y$ has also independent increments in the sense that the collection \eqref{indep_increm} for $Z=J$ consists of independent $\sigma$-fields. Hence, by Theorem 13.10 in \cite{YEH} for all $0\leq s<t$ the $\sigma$-fields $\sigma(J(t)-J(s))$ and $\Sigma_s^J$ are independent.

The independence-of-increments property is crucial both from theoretical and practical point of view. In the next section we provide, basing on this property, simple methods of simulating the processes above.

\section{Simulation of Poisson and Wiener processes}
We make use of the fact that both processes $N$ and $W$ has independent increments.

Firstly, we describe how to generating values of $Z\in\{N,W\}$ at fixed grid points 
\begin{equation}
	0=t_0<t_1<\ldots<t_n=T, \ n\in\mathbb{N}, \ T\in (0,+\infty).
\end{equation}
We have that
\begin{equation}
	Z(t_i)=\sum\limits_{j=0}^{i-1}\Delta Z_j, \ i=0,1,\ldots,n,
\end{equation}
where $\displaystyle{\Delta Z_j=Z(t_{j+1})-Z(t_j)}$ and the random variables $(\Delta Z_j)_{j=0}^{n-1}$ are independent. Hence, let us take $(V_j)_{j=0}^{n-1}$, an i.i.d sequence of random variables, where $V_1\sim N(0,1)$, and let $(U_j)_{j=0}^{n-1}$ be a sequence of independent random variables, such that $\displaystyle{U_j\sim \hbox{Poiss}\Bigl(\int\limits_{t_j}^{t_{j+1}}\lambda(s)ds\Bigr)}$. (We also assume that $(V_j)_{j=0}^{n-1}$ and $(U_j)_{j=0}^{n-1}$ are independent, i.e., the $\sigma$-fields $\displaystyle{\sigma\Bigl(\bigcup_{j=0}^{n-1}\sigma(U_j)\Bigr)}$ and $\displaystyle{\sigma\Bigl(\bigcup_{j=0}^{n-1}\sigma(V_j)\Bigr)}$ are independent.) Then
\begin{itemize}
	\item if $Z=W$, then $\Delta Z_j\sim N(0,t_{j+1}-t_j)$  has the same law as $(t_{j+1}-t_j)^{1/2}\cdot V_j$, and we set
	\begin{equation}
		Z(t_i)=\sum\limits_{j=0}^{i-1}(t_{j+1}-t_j)^{1/2}\cdot V_j,
	\end{equation}
	\item if $Z=N$, then $\Delta Z_j$ and $U_j$ has the same law $\displaystyle{\hbox{Poiss}\Bigl(\int\limits_{t_j}^{t_{j+1}}\lambda(s)ds\Bigr)}$, and
	\begin{equation}
		Z(t_i)=\sum\limits_{j=0}^{i-1}U_j.
	\end{equation}
	\item for $Z=J$ let us assume that the values of the Poisson process $N(t_1),\ldots,N(t_n)$ has already been generated by the method described above. Then sample an i.i.d sequence $\xi_1,\ldots,\xi_{N(t_n)}$ according to the distribution $\mu$ and take
	\begin{equation}
		Z(t_i)=\sum\limits_{k=1}^{N(t_i)}\xi_k
	\end{equation}
	for $i=1,\ldots,n$.
\end{itemize}
\section{Brownian and Poisson bridges}
Recall that conditional variance of a square integrable random variable $X$, given sub-$\sigma$-field $\mathcal{G}$, is defined as
\begin{equation}
	Var(X \ | \ \mathcal{G})=\mathbb{E}\Bigl((X-\mathbb{E}(X \ | \ \mathcal{G}))^2 \ |  \ \mathcal{G}\Bigr).
\end{equation}
\begin{lem} (\cite{hertl}, \cite{MGHAB}, \cite{GPage},  \cite{PP8} )
\label{WN_bridge}
 Let us fix $n\in\mathbb{N}$ and $0=t_0<t_1<\ldots<t_n=T$. Then for $t \in (t_i,t_{i+1})$ and $i=0,1,\ldots,n-1$
\begin{itemize}
	\item [(i)] conditioned on $(W(t_1),\ldots,W(t_n))$ the random variable $W(t)$ has the law $N(\mu_W(t),\sigma_W^2(t))$ with
	\begin{equation}
		\mu_W(t)=\mathbb{E}(W(t) \ | \ W(t_1),\ldots,W(t_n))= \frac{(t-t_i)\cdot W(t_{i+1})+(t_{i+1}-t)\cdot W(t_i)}{t_{i+1}-t_i},
	\end{equation}
	\begin{equation}
		\sigma_W^2(t)=Var(W(t) \ | \ W(t_1),\ldots,W(t_n))=\frac{(t_{i+1}-t)\cdot (t-t_i)}{t_{i+1}-t_i}.
	\end{equation}
	\item [(ii)] conditioned on $(N(t_1),\ldots,N(t_n))$ the increment $N(t)-N(t_i)$ has  a
binomial distribution with the number of trials $N(t_{i+1}) - N(t_i)$ and with the probability of success
in each trial equal to $\displaystyle{\frac{\Lambda(t, t_i)}{\Lambda(t_{i+1}, t_i)}}$. Hence
\begin{equation}
		\mu_N(t)=\mathbb{E}(N(t) \ | \ N(t_1),\ldots,N(t_n))= \frac{\Lambda(t,t_i)\cdot N(t_{i+1})+\Lambda(t_{i+1},t)\cdot N(t_i)}{\Lambda(t_{i+1},t_i)},
	\end{equation}
	\begin{equation}
		\sigma_N^2(t)=Var(N(t) \ | \ N(t_1),\ldots,N(t_n))=(N(t_{i+1})-N(t_i))\cdot\frac{\Lambda(t_{i+1},t)\cdot\Lambda(t,t_i)}{(\Lambda(t_{i+1},t_i))^2},
	\end{equation}
	for $t\in [t_i,t_{i+1}]$, $i=0,1,\ldots,n-1$.
\end{itemize}
\end{lem}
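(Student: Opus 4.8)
The plan is to exploit the independence-of-increments property, which (as noted above) both $W$ and $N$ enjoy, in order to reduce the conditioning on the whole vector $(Z(t_1),\ldots,Z(t_n))$ to conditioning only on the two neighbouring values $Z(t_i),Z(t_{i+1})$, and then to carry out an elementary computation with a Gaussian, respectively Poisson, pair. First I would perform this reduction. Fix $i$ and $t\in(t_i,t_{i+1})$ and write $Z(t)=Z(t_i)+(Z(t)-Z(t_i))$ for $Z\in\{W,N\}$. The $\sigma$-field $\sigma(Z(t_1),\ldots,Z(t_n))$ coincides with the one generated by the increments $Z(t_{j+1})-Z(t_j)$, $j=0,\ldots,n-1$, and by independence of increments the pair $(Z(t)-Z(t_i),\,Z(t_{i+1})-Z(t))$ is independent of the $\sigma$-field generated by all increments over the intervals $[t_j,t_{j+1}]$ with $j\neq i$. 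Hence the conditional law of $Z(t)-Z(t_i)$ given $(Z(t_1),\ldots,Z(t_n))$ equals its conditional law given the single increment $Z(t_{i+1})-Z(t_i)$; equivalently the conditional law of $Z(t)$ depends only on $(Z(t_i),Z(t_{i+1}))$. Since $Z(t_i)$ is measurable with respect to the conditioning, the same reduction applies to the conditional variance.

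For part (i) I would then put $X=W(t)-W(t_i)$ and $Y=W(t_{i+1})-W(t)$, which by (iv)--(v) are independent with $X\sim N(0,t-t_i)$, $Y\sim N(0,t_{i+1}-t)$, and identify the conditional law of $X$ given $X+Y=z$, where $z=W(t_{i+1})-W(t_i)$. The vector $(X,X+Y)$ is centred Gaussian, so this conditional law is Gaussian, and the standard Gaussian regression formulas — conditional mean $\frac{\mathrm{Cov}(X,X+Y)}{\mathrm{Var}(X+Y)}z$ and conditional variance $\mathrm{Var}(X)-\frac{\mathrm{Cov}(X,X+Y)^2}{\mathrm{Var}(X+Y)}$, with $\mathrm{Var}(X)=t-t_i$, $\mathrm{Var}(X+Y)=t_{i+1}-t_i$, $\mathrm{Cov}(X,X+Y)=t-t_i$ — give, after adding back $W(t_i)$ and simplifying, exactly $\mu_W(t)$ and $\sigma_W^2(t)$. (Equivalently one can write the joint density of $(W(t_1),\ldots,W(t_n),W(t))$ as a product of increment densities, drop every factor not containing $W(t)$, and complete the square.) The endpoints $t\in\{t_i,t_{i+1}\}$ are trivial, the formulas degenerating to the exact values with variance zero.

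For part (ii) I would put $X=N(t)-N(t_i)$ and $Y=N(t_{i+1})-N(t)$, independent by (iv)--(v) with $X\sim\mathrm{Poiss}(\Lambda(t,t_i))$, $Y\sim\mathrm{Poiss}(\Lambda(t_{i+1},t))$, and invoke the classical fact that for independent Poisson variables the conditional law of $X$ given $X+Y=m$ is binomial with $m$ trials and success probability $\mathbb{E}X/(\mathbb{E}X+\mathbb{E}Y)$ (proved directly from the Poisson mass functions, using $X+Y\sim\mathrm{Poiss}(\mathbb{E}X+\mathbb{E}Y)$). By additivity of $m(\cdot)$ one has $\Lambda(t,t_i)+\Lambda(t_{i+1},t)=\Lambda(t_{i+1},t_i)$, so the success probability is $\Lambda(t,t_i)/\Lambda(t_{i+1},t_i)$, which is precisely the asserted binomial law for $N(t)-N(t_i)$. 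The formulas for $\mu_N(t)$ and $\sigma_N^2(t)$ then follow by adding $N(t_i)$ to the binomial mean $mp$ and rewriting the binomial variance $mp(1-p)$ via $\Lambda(t_{i+1},t_i)=\Lambda(t,t_i)+\Lambda(t_{i+1},t)$; the cases $m=N(t_{i+1})-N(t_i)=0$ and $t\in\{t_i,t_{i+1}\}$ are degenerate but consistent (note $\Lambda(t_{i+1},t_i)>0$ since the intensities are strictly positive).

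The only genuinely delicate point is the reduction step: one must argue carefully that conditioning on the full vector collapses to conditioning on the two neighbouring nodes, which rests on careful bookkeeping of which $\sigma$-fields are mutually independent via the independence-of-increments property. Everything downstream is a routine Gaussian, respectively Poisson-to-binomial, computation.
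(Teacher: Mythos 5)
The paper itself gives no proof of this lemma --- it is stated with citations to \cite{hertl}, \cite{MGHAB}, \cite{GPage}, \cite{PP8} --- so there is no internal argument to compare against; your proposal has to be judged on its own, and it is correct. The reduction step is the only place where care is needed, and your justification is the right one: since $W(0)=0$ (resp.\ $N(0)=0$) almost surely, $\sigma(Z(t_1),\ldots,Z(t_n))$ coincides (up to null sets) with the $\sigma$-field generated by the increments over $[t_j,t_{j+1}]$, $j=0,\ldots,n-1$; refining the partition by the point $t$ and using independence of increments together with the grouping property of independent $\sigma$-fields (both available in the paper's auxiliary chapters), the pair $\bigl(Z(t)-Z(t_i),\,Z(t_{i+1})-Z(t)\bigr)$ is independent of the increments outside $[t_i,t_{i+1}]$, and then the standard fact that $(X,Z)$ independent of $\mathcal{G}$ implies $\mathbb{E}\bigl(f(X) \ | \ \sigma(Z)\vee\mathcal{G}\bigr)=\mathbb{E}\bigl(f(X) \ | \ Z\bigr)$ (a freezing-lemma type statement, cf.\ Theorem \ref{f_lemma}) collapses the conditioning to the two neighbouring nodes. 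Adding back the $\sigma$-field--measurable $Z(t_i)$ leaves the conditional variance unchanged, which is exactly Exercise 1(i) of Chapter \ref{three_processes}.

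Downstream everything checks out: the Gaussian regression formulas for $X\sim N(0,t-t_i)$, $Y\sim N(0,t_{i+1}-t)$ independent give precisely $\mu_W(t)$ and $\sigma_W^2(t)$ after adding $W(t_i)$; and the classical Poisson-to-binomial conditioning, together with the additivity $\Lambda(t,t_i)+\Lambda(t_{i+1},t)=\Lambda(t_{i+1},t_i)$ and strict positivity of the intensities, gives the binomial law and hence $\mu_N(t)$ and $\sigma_N^2(t)$ via $mp$ and $mp(1-p)$. This is the standard bridge argument one finds in the cited references, so your route is the expected one rather than a genuinely different alternative.
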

In particular, since $W,N$ have independent increments we have that for all $t\in [t_i,t_{i+1}]$, $i=0,1,\ldots,n-1$, $A\in\mathcal{B}(\mathbb{R})$, $k=0,1,\ldots$
\begin{equation}
	\mathbb{P}\Bigl( W(t)\in A \ | \ W(t_1),\ldots, W(t_n)\Bigr) = \mathbb{P}\Bigl( W(t)\in A \ | \ W(t_i), W(t_{i+1})\Bigr),
\end{equation}
and 
\begin{equation}
	\mathbb{P}\Bigl(N(t)-N(t_i)=k \ | \ N(t_1),\ldots, N(t_n)\Bigr) = \mathbb{P}\Bigl( N(t)-N(t_i)=k \ | \ N(t_i), N(t_{i+1})\Bigr).
\end{equation}
This, together with Lemma \ref{WN_bridge}, allows us to generate in easy way additional values of $W$ and $N$ in between previously generated values $Z(t_1),\ldots,Z(t_n)$ of respective process $Z\in\{N,W\}$.

For $Z\in\{N,W\}$ let
\begin{equation}
	\bar Z_n =(\bar Z_n(t))_{t\in [0,T]}=\Bigl(\mathbb{E}(Z(t) \ | \ Z(t_1),\ldots, Z(t_n))\Bigr)_{t\in [0,T]}=\Bigl(\sum\limits_{i=0}^{n-1}\mu_Z(t)\cdot\mathbf{1}_{[t_i,t_{i+1}]}(t)\Bigr)_{t\in [0,T]},
\end{equation}
be the piecewise linear interpolation of $\{(t_i,Z(t_i))\}_{i=0,1,\ldots,n}$
and define the stochastic process
\begin{equation}
\label{def_Z_bridges_1}
	\hat Z_n=(\hat Z_n(t))_{t\in [0,T]}=\Bigl(Z(t)-\bar Z_n(t)\Bigr)_{t\in [0,T]}.
\end{equation}
Then the process $\hat Z_n$ consists of independent Brownian bridges (for $Z=W$) or Poisson bridges (when $Z=N$) corresponding to the respective (disjoint) subintervals. Moreover, $\hat Z_n(t_j)=0$ for all $j=0,1,\ldots,n$, and $\mathbb{E}(\hat Z_n(t))=0$ for all $t\in [0,T]$. This processes often appear in the theory of optimal approximation of SDEs. It is easy to see that $\sigma(\hat Z_n(t))\subset\sigma(\mathcal{N}_n(Z))$ for all $t \in [0,T]$, however, $\hat Z_n$ is not adapted to $(\Sigma_t)_{t\in [0,T]}$.
\section{Optimal reconstruction of trajectories of the Wiener processes}
In this section we show how to approximate optimally (in the Information-Based Complexity sense) trajectories of the Wiener  process. Firstly, we rigorously define what we mean by an optimal algorithm.

We consider the vector space $C([0,T])$  of all continuous functions $f:[0,T]\to\mathbb{R}$, equipped with the supremum norm $\|\cdot\|_{\infty}$. Then $(C([0,T]),\|\cdot\|_{\infty})$ is a separable Banach space. In $C([0,T])$ we consider the Borel $\sigma$-field $\mathcal{B}(C([0,T]))$.

Any approximation method $\bar X=(\bar X_n)_{n\in\mathbb{N}}$  is defined by two sequences $\bar\varphi=(\varphi_n)_{n\in\mathbb{N}}$, $\bar\Delta=(\Delta_n)_{n\in\mathbb{N}}$, where every 
\begin{equation}
\label{alg_phi}
	\varphi_n:\mathbb{R}^n\to C([0,T])
\end{equation}
is Borel function (i.e., $\mathcal{B}(\mathbb{R}^n)$-to-$\mathcal{B}(C([0,T]))$ measurable) and
\begin{equation}
	\Delta_n=\{t_{0,n},t_{1,n},\ldots,t_{n,n}\}
\end{equation}
is a (possibly) non-uniform partition of $[0,T]$ such that
\begin{equation}
	0=t_{0,n}<t_{1,n}<\ldots<t_{n,n}=T.
\end{equation}
By
\begin{equation}
	\mathcal{\bar N}(W)=(\mathcal{N}_n(W))_{n\in\mathbb{N}}
\end{equation}
we mean a sequence of vectors $\mathcal{N}_n(W)$ which provides {\it standard information} with $n$
evaluations about  the Wiener process $W$ at the discrete points from $\Delta_n$, i.e.,
\begin{equation}
	\mathcal{N}_n(W)=[W(t_{1,n}),W(t_{2,n}),\ldots,W(t_{n,n})].
\end{equation}
(Recall that $W(0)=0$ almost surely.) After computing the information $\mathcal{N}_n(W)$, we apply the mapping $\varphi_n$ in order to obtain
the $n$th approximation $\bar X_n=(\bar X_n(t))_{t\in [0,T]}$ in the following way
\begin{equation}
	\bar X_n=\varphi_n(\mathcal{N}_n(W)).
\end{equation}
The informational cost of each $\bar X_n$ is equal to $n$ evaluations of $W$. The set of all methods $\bar X=(\bar X_n)_{n\in\mathbb{N}}$,  defined as above, is denoted by $\chi^{\rm noneq}$. We also consider the following subclass of $\chi^{\rm noneq}$
\begin{equation}
	\chi^{\rm eq}=\{\bar X\in \chi^{\rm noneq} \ | \ \forall_{n\in\mathbb{N}} \Delta_n=\Delta_n^{\rm eq}:=\{iT/n \ | \ i=0,1,\ldots,n\}\}.
\end{equation}
Before we give a definition o error of $\bar X$, we discuss some analytical properties of $\bar X_n$. Note that for each $n\in\mathbb{N}$  the function 
\begin{equation}
	\bar X_n:\Omega\to C([0,T])
\end{equation}
is $\sigma(\mathcal{N}_n(W)) / \mathcal{B}(C([0,T]))$-measurable as composition of measurable mappings. Hence, it is a random element in $C([0,T])$. By taking
\begin{equation}
	\tilde X_n(\omega,t) =(\bar X_n(\omega))(t), \ (\omega,t)\in\Omega\times [0,T],
\end{equation}
we obtain $\sigma(\mathcal{N}_n(W))\otimes\mathcal{B}([0,T])$-to-$\mathcal{B}(\mathbb{R})$ measurable stochastic process, see Theorem \ref{from_B2prod}. Hence, $\bar X_n$ and $\tilde X_n$ are two representation of the same $n$th approximation. In the rest of this chapter  we shall abuse notation and write $\bar X_n$ instead of $\tilde X_n$.

The $n$th error of a method $\bar X$ is defined as
\begin{equation}
	e_n(\bar X)=\|W-\bar X_n\|_{L^2(\Omega\times [0,T])}=\Biggl(\int\limits_{\Omega\times [0,T]}|W(\omega,t)-(\bar X_n(\omega))(t)|^2 \ \mathbb{P}\times\lambda_1(d\omega,dt)\Biggr)^{1/2}.
\end{equation}
We implicitly assume that for all $n\in\mathbb{N}$
\begin{equation}
\label{finite_norm_Xn}
	\mathbb{E}\int\limits_0^T |\bar X_n(t)|^2dt<+\infty,
\end{equation}
since otherwise (by the Minkowski inequality) the error $e_n(\bar X)$ would be infinite for some $n$. Note that by the Tonelli's Theorem the error $e_n(\bar X)$ can be equivalently written as
\begin{equation}
	e_n(\bar X)=\Bigl(\mathbb{E}\|W-\bar X_n\|^2_{L^2([0,T])}\Bigr)^{1/2}=\Bigl(\mathbb{E}\int\limits_0^T |W(t)-\bar X_n(t)|^2 dt\Bigr)^{1/2}.
\end{equation}
Finally, the $n$th minimal error, in respective class of algorithms, is given by
\begin{equation}
\label{nth_min_err1}
	e^\diamond(n) = \inf\limits_{\bar X\in\chi^\diamond }e_n(\bar X), \ \diamond \in\{\rm eq, \rm noneq\}.
\end{equation}
We will investigate (possibly) sharp bounds on $n$th minimal errors. Moreover, we  determine  optimal
methods $\bar X^\diamond $, $\diamond \in\{\rm eq, \rm noneq\}$ that attain the infimum in \eqref{nth_min_err1}.

At first, we establish lower bound for the error of an arbitrary method from $\chi^{\rm noneq}$. 
\begin{lem}
\label{W_asympt_lb} 
For any $\bar X\in\chi^{\rm noneq}$ and for all $n\in\mathbb{N}$ we have that
\begin{equation}
\label{low_b_X_W}
  e_n(\bar X)\geq\frac{T}{\sqrt{6}}\cdot n^{-1/2}.
\end{equation}
\end{lem}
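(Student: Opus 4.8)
The plan is to use the fact that, once the partition is fixed, the $L^2$-optimal reconstruction of $W$ from its values at the grid points is the conditional expectation, whose pointwise mean-square error is exactly the conditional variance computed in Lemma~\ref{WN_bridge}. Accordingly, I would fix $\bar X\in\chi^{\rm noneq}$, determined by $(\varphi_n)_{n\in\mathbb{N}}$ and $(\Delta_n)_{n\in\mathbb{N}}$ with $\Delta_n=\{t_{0,n},\ldots,t_{n,n}\}$, and fix $n\in\mathbb{N}$. If \eqref{finite_norm_Xn} fails then $e_n(\bar X)=+\infty$ and \eqref{low_b_X_W} is trivial, so one may assume it holds; then, passing to the product-measurable version $\tilde X_n$ of $\bar X_n$ and using Tonelli's theorem, $\bar X_n(t)$ is a square-integrable, $\sigma(\mathcal{N}_n(W))$-measurable random variable for almost every $t\in[0,T]$.

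Since $\mathbb{E}(W(t) \ | \ \mathcal{N}_n(W))$ is the orthogonal projection of $W(t)\in L^2(\Omega)$ onto the closed subspace of square-integrable $\sigma(\mathcal{N}_n(W))$-measurable random variables, for a.e.\ $t\in[0,T]$
\begin{equation}
	\mathbb{E}|W(t)-\bar X_n(t)|^2\geq \mathbb{E}|W(t)-\mathbb{E}(W(t) \ | \ \mathcal{N}_n(W))|^2=\mathbb{E}\bigl(Var(W(t) \ | \ \mathcal{N}_n(W))\bigr).
\end{equation}
Integrating over $[0,T]$ and using Tonelli's theorem once more, $e_n(\bar X)^2=\int_0^T\mathbb{E}|W(t)-\bar X_n(t)|^2\,dt\geq\int_0^T\mathbb{E}\bigl(Var(W(t) \ | \ \mathcal{N}_n(W))\bigr)\,dt$. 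By Lemma~\ref{WN_bridge}(i) (recall $W(0)=0$ a.s., so conditioning on $\mathcal{N}_n(W)=[W(t_{1,n}),\ldots,W(t_{n,n})]$ also fixes $W(t_{0,n})=0$), for $t\in(t_{i,n},t_{i+1,n})$ this conditional variance equals the deterministic quantity $(t_{i+1,n}-t)(t-t_{i,n})/(t_{i+1,n}-t_{i,n})$. Writing $h_i:=t_{i+1,n}-t_{i,n}$, an elementary computation gives $\int_{t_{i,n}}^{t_{i+1,n}}(t_{i+1,n}-t)(t-t_{i,n})/(t_{i+1,n}-t_{i,n})\,dt=h_i^2/6$, hence
\begin{equation}
	e_n(\bar X)^2\geq\frac{1}{6}\sum_{i=0}^{n-1}h_i^2.
\end{equation}

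Finally, since $\sum_{i=0}^{n-1}h_i=T$, the Cauchy--Schwarz (equivalently, power-mean) inequality yields $\sum_{i=0}^{n-1}h_i^2\geq T^2/n$, so $e_n(\bar X)^2\geq T^2/(6n)$, and taking square roots gives \eqref{low_b_X_W}. I expect the only genuinely delicate point to be the measurability and integrability bookkeeping that legitimizes applying the $L^2$-projection bound pointwise in $t$ and then integrating it; this is precisely what \eqref{finite_norm_Xn}, the product-measurability of $\tilde X_n$, and Tonelli's theorem are there to supply, while the remaining steps amount to a one-line integral evaluation and a convexity estimate. (The bound is in fact sharp: the piecewise linear interpolation on the equidistant grid attains it, which also shows $e^{\rm eq}(n)=e^{\rm noneq}(n)=\tfrac{T}{\sqrt 6}n^{-1/2}$.)
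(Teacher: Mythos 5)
Your proposal is correct and follows essentially the same route as the paper: reduce to the conditional expectation via the $L^2$-projection property (Theorem \ref{proj_wwo}), identify the pointwise error with the Brownian-bridge conditional variance from Lemma \ref{WN_bridge}, integrate to get $\tfrac{1}{6}\sum_i(t_{i+1,n}-t_{i,n})^2$, and finish with a convexity bound (the paper cites Jensen where you invoke Cauchy--Schwarz, which is the same estimate here). The measurability and integrability bookkeeping you flag is exactly what the paper handles via \eqref{finite_norm_Xn}, product measurability, and Tonelli's theorem.
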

{\bf Proof.} Let us fix any method $\bar X\in\chi^{\rm noneq}$ based on a sequence of discretizations $(\Delta_n)_{n\in\mathbb{N}}$.  Hence, by \eqref{finite_norm_Xn}, Tonelli's theorem and basic properties of Lebesgue integral we get that
\begin{equation}
	\mathbb{E}|\bar X_n(t)|^2<+\infty
\end{equation}
for all $n\in\mathbb{N}$ and almost all $t\in [0,T]$.  Then by the projection property for conditional expectation (see Theorem \ref{proj_wwo}) we have for all $n\in\mathbb{N}$ and almost all $t\in [0,T]$ that
\begin{equation}	
\label{ineq_wwo_W}
	\mathbb{E}|W(t)-\bar X_n(t)|^2\geq \mathbb{E}|W(t)-\mathbb{E}(W(t) \ | \ \mathcal{N}_n(W))|^2,
\end{equation}
since $\mathbb{E}|W(t)|^2<+\infty$, for all $t\in [0,T]$, and $\sigma(\bar X_n(t))\subset\sigma(\mathcal{N}_n(W))$. Due to Tonelli's theorem and product measurability of the underlying stochastic processes, both functions $[0,T]\ni t\to \mathbb{E}|W(t)-\bar X_n(t)|^2$ and $[0,T]\ni t\to \mathbb{E}|W(t)-\mathbb{E}(W(t) \ | \ \mathcal{N}_n(W))|^2$ are Borel. Therefore, \eqref{ineq_wwo_W} and Lemma \ref{WN_bridge} imply for all $n\in\mathbb{N}$  that
\begin{eqnarray}
	&&(e_n(\bar X))^2=\int\limits_0^T\mathbb{E}|W(t)-\bar X_n(t)|^2dt\geq \int\limits_0^T\mathbb{E}|W(t)-\mathbb{E}(W(t) \ | \ \mathcal{N}_n(W))|^2dt\notag\\
	&&=\int\limits_0^T\mathbb{E}\Bigl(Var(W(t) \ | \ \mathcal{N}_n(W))\Bigr) dt=\sum\limits_{i=0}^{n-1}\int\limits_{t_i}^{t_{i+1}}\mathbb{E}|\hat W_n(t)|^2 dt=\frac{1}{6}\sum\limits_{i=0}^{n-1}(t_{i+1}-t_i)^2,
\end{eqnarray}
and by using the Jensen inequality we arrive at
\begin{equation}
	(e_n(\bar X))^2\geq \frac{1}{6}\sum\limits_{i=0}^{n-1}(t_{i+1}-t_i)^2\geq \frac{1}{6n}\Bigl(\sum\limits_{i=0}^{n-1} (t_{i+1}-t_i)\Bigr)^2=\frac{T^2}{6n}.
\end{equation}
This implies \eqref{low_b_X_W}. \ \ \ $\blacksquare$ \\ \\
Now we show that the method $\bar W^{\rm eq}$ based on the sequence of equidistant discretizations $\bar\Delta^{\rm eq}$ achieves the error \eqref{low_b_X_W}.
\begin{thm} 
\label{opt_rec_W}
For the method  
\begin{equation}
	\bar W^{\rm eq}_n =(\bar W^{\rm eq}_n(t))_{t\in [0,T]}=\Bigl(\sum\limits_{i=0}^{n-1}\frac{(t-t_i)\cdot W(t_{i+1})+(t_{i+1}-t)\cdot W(t_i)}{t_{i+1}-t_i}\cdot\mathbf{1}_{[t_i,t_{i+1}]}(t)\Bigr)_{t\in [0,T]}
\end{equation}
	where $t_i=iT/n$, $i=0,1,\ldots,n$, we have
	\begin{equation}
		e_n(\bar W^{\rm eq})=\frac{T}{\sqrt{6}}\cdot n^{-1/2}.
	\end{equation}
	Therefore, the method $\bar W^{\rm eq}=(\bar W^{\rm eq}_n)_{n\in\mathbb{N}}$ is optimal in the class $\chi^{\rm noneq}$, i.e., for all $n\in\mathbb{N}$
	\begin{equation}
	\label{n_th_opt_b_1}
		e_n(\bar W^{\rm eq})=\inf_{\bar X\in\chi^{\rm noneq}}e_n(\bar X)=\frac{T}{\sqrt{6}}\cdot n^{-1/2}.
	\end{equation}
\end{thm}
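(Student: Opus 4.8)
The plan is to prove the equality $e_n(\bar W^{\rm eq})=\frac{T}{\sqrt 6}n^{-1/2}$ by a direct computation, and then deduce optimality by combining it with the lower bound of Lemma \ref{W_asympt_lb}. The key observation is that $\bar W^{\rm eq}_n$ is nothing but the piecewise linear interpolant of the data $\{(t_i,W(t_i))\}_{i=0}^n$, and by Lemma \ref{WN_bridge}(i) this interpolant coincides, on each subinterval $[t_i,t_{i+1}]$, with the conditional mean $\mu_W(t)=\mathbb{E}(W(t)\,|\,W(t_1),\ldots,W(t_n))=\mathbb{E}(W(t)\,|\,\mathcal{N}_n(W))$. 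Hence $W(t)-\bar W^{\rm eq}_n(t)=\hat W_n(t)$ for all $t\in[0,T]$. I would also note that the integrability requirement \eqref{finite_norm_Xn} is immediate here, since $\bar W^{\rm eq}_n(t)$ is a convex combination of $W(t_i)$ and $W(t_{i+1})$, both in $L^2(\Omega)$, so $\mathbb{E}\int_0^T|\bar W^{\rm eq}_n(t)|^2\,dt<+\infty$.

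Next I would use the tower property to write, for almost every $t$,
\[
\mathbb{E}|W(t)-\bar W^{\rm eq}_n(t)|^2=\mathbb{E}\Bigl(\mathrm{Var}(W(t)\,|\,\mathcal{N}_n(W))\Bigr),
\]
and invoke Lemma \ref{WN_bridge}(i) once more to observe that for $t\in[t_i,t_{i+1}]$ the conditional variance $\mathrm{Var}(W(t)\,|\,\mathcal{N}_n(W))=\frac{(t_{i+1}-t)(t-t_i)}{t_{i+1}-t_i}$ is in fact deterministic. By Tonelli's theorem together with the product measurability of $W$ and of $\bar W^{\rm eq}_n$ (exactly as argued in the proof of Lemma \ref{W_asympt_lb}) this yields
\[
e_n(\bar W^{\rm eq})^2=\int_0^T\mathbb{E}|W(t)-\bar W^{\rm eq}_n(t)|^2\,dt=\sum_{i=0}^{n-1}\int_{t_i}^{t_{i+1}}\frac{(t_{i+1}-t)(t-t_i)}{t_{i+1}-t_i}\,dt.
\]
The substitution $s=t-t_i$, using $t_{i+1}-t_i=T/n$, gives $\int_{t_i}^{t_{i+1}}\frac{(t_{i+1}-t)(t-t_i)}{t_{i+1}-t_i}\,dt=\frac1h\int_0^h(hs-s^2)\,ds=\frac{h^2}{6}$ with $h=T/n$; summing over the $n$ subintervals produces $e_n(\bar W^{\rm eq})^2=n\cdot\frac{(T/n)^2}{6}=\frac{T^2}{6n}$, i.e. $e_n(\bar W^{\rm eq})=\frac{T}{\sqrt 6}n^{-1/2}$.

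Finally I would conclude optimality: since $\bar W^{\rm eq}\in\chi^{\rm eq}\subset\chi^{\rm noneq}$, Lemma \ref{W_asympt_lb} gives $e_n(\bar X)\geq\frac{T}{\sqrt6}n^{-1/2}$ for every $\bar X\in\chi^{\rm noneq}$, while the computation above shows $\bar W^{\rm eq}$ attains this value. Therefore the infimum in \eqref{nth_min_err1} over $\chi^{\rm noneq}$ (and a fortiori over the smaller class $\chi^{\rm eq}$) equals $\frac{T}{\sqrt 6}n^{-1/2}$ and is attained by $\bar W^{\rm eq}$, which is \eqref{n_th_opt_b_1}.

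I do not expect a genuine obstacle in this argument: the only points requiring care are the measurability bookkeeping needed to apply Tonelli (already handled in the proof of Lemma \ref{W_asympt_lb}) and the identification of the linear interpolant with the conditional expectation, which is precisely the content of Lemma \ref{WN_bridge}(i); everything else is an elementary integral.
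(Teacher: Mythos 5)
Your proposal is correct and follows essentially the same route as the paper: identify $\bar W^{\rm eq}_n$ with the conditional expectation $\mathbb{E}(W(t)\,|\,\mathcal{N}_n(W))$ via Lemma \ref{WN_bridge}(i), compute $\int_{t_i}^{t_{i+1}}\mathbb{E}|\hat W_n(t)|^2dt=(t_{i+1}-t_i)^2/6$ directly (the paper reuses exactly this computation from the proof of Lemma \ref{W_asympt_lb}), and conclude optimality from the lower bound \eqref{low_b_X_W}. Your added remarks on integrability and the deterministic form of the conditional variance are fine and consistent with the paper's argument.
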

{\bf Proof.} By \eqref{low_b_X_W} we have that
\begin{equation}
	(e_n(\bar W^{\rm eq}))^2=\sum\limits_{i=0}^{n-1}\int\limits_{t_i}^{t_{i+1}}\mathbb{E}|W(t)-\bar W^{\rm eq}(t)|^2dt=\frac{1}{6}\sum\limits_{i=0}^{n-1}(t_{i+1}-t_i)^2=\frac{T^2}{6n}\leq \Bigl(\inf\limits_{\bar X\in\chi^{\rm noneq}}e_n(\bar X)\Bigr)^2,
\end{equation}
hence we get \eqref{n_th_opt_b_1}. \ \ \ $\blacksquare$
\begin{rem} With the help of Theorems \ref{gen_rand_el_1} (iii), \ref{from_B2prod} (iii) it turns out that  the thesis of Theorem \ref{opt_rec_W} holds true even if we consider the mappings \eqref{alg_phi} as $\mathcal{B}(\mathbb{R}^n)$-to-$\mathcal{B}(L^2([0,T]))$ measurable functions
\begin{equation}
	\varphi_n:\mathbb{R}^n\to L^2([0,T]).
\end{equation} 
\end{rem}
\section{Approximation of Lebesgue integral of Wiener process}
\label{Opt_approx_Leb_W}
In this section we investigate the minimal error for the problem of approximation of the following integral
\begin{equation}
\label{Leb_int_W1}
	\mathcal{I}(W)=\int\limits_0^T W(t)dt.
\end{equation}
The integrals of the form \eqref{Leb_int_W1} appears, for example, in the Wagner-Platen scheme that is used for approximation of solutions of SDEs. See also Chapter \ref{opt_one_points_sdes} where more general problem of weighted integration of Wiener process is considered.

\begin{rem}
Due to Fubini's Theorem, the function $\displaystyle{\Omega\ni \omega\to \mathcal{I}(W)(\omega)=\int\limits_0^T W(\omega,t)dt\in\mathbb{R}}$ is $\Sigma$-to-$\mathcal{B}(\mathbb{R})$ measurable. Moreover, it is  Gaussian random variable. 
\end{rem}

An approximation method $\bar X=(\bar X_n)_{n\in\mathbb{N}}$  is defined by two sequences $\bar\varphi=(\varphi_n)_{n\in\mathbb{N}}$, $\bar\Delta=(\Delta_n)_{n\in\mathbb{N}}$, where every 
\begin{equation}
	\varphi_n:\mathbb{R}^n\to \mathbb{R}
\end{equation}
is a Borel function and
\begin{equation}
	\Delta_n=\{t_{0,n},t_{1,n},\ldots,t_{n,n}\}
\end{equation}
is a (possibly) non-uniform partition of $[0,T]$ such that
\begin{equation}
	0=t_{0,n}<t_{1,n}<\ldots<t_{n,n}=T.
\end{equation}
By
\begin{equation}
	\mathcal{\bar N}(W)=(\mathcal{N}_n(W))_{n\in\mathbb{N}}
\end{equation}
we mean a sequence of vectors $\mathcal{N}_n(W)$ which provides {\it standard information} with $n$
evaluations about  the Wiener process $W$ at the discrete points from $\Delta_n$, i.e.,
\begin{equation}
	\mathcal{N}_n(W)=[W(t_{1,n}),W(t_{2,n}),\ldots,W(t_{n,n})].
\end{equation}
(Recall that $W(0)=0$ almost surely.) After computing the information $\mathcal{N}_n(W)$, we apply the mapping $\varphi_n$ in order to obtain
the $n$th approximation $\bar X_n$ in the following way
\begin{equation}
	\bar X_n=\varphi_n(\mathcal{N}_n(W)).
\end{equation}
The informational cost of each $\bar X_n$ is equal to $n$ evaluations of $W$. The set of all methods $\bar X=(\bar X_n)_{n\in\mathbb{N}}$,  defined as above, is denoted by $\chi^{\rm noneq}$. We also consider the following subclass of $\chi^{\rm noneq}$
\begin{equation}
	\chi^{\rm eq}=\{\bar X\in \chi^{\rm noneq} \ | \ \forall_{n\in\mathbb{N}} \Delta_n=\Delta_n^{\rm eq}:=\{iT/n \ | \ i=0,1,\ldots,n\}\}.
\end{equation} 
Note that for each $n\in\mathbb{N}$  the function 
\begin{equation}
	\bar X_n:\Omega\to \mathbb{R}
\end{equation}
is $\sigma(\mathcal{N}_n(W)) / \mathcal{B}(\mathbb{R})$-measurable as composition of measurable mappings. The $n$th error of a method $\bar X$ is defined as
\begin{equation}
	e_n(\bar X)=\|\mathcal{I}(W)-\bar X_n\|_{L^2(\Omega)}=\Bigl(\mathbb{E}|\mathcal{I}(W)-\bar X_n|^2\Bigr)^{1/2}.
\end{equation}
We implicitly assume that for all $n\in\mathbb{N}$
\begin{equation}
\label{finite_norm_int_Xn}
	\mathbb{E}|\bar X_n|^2<+\infty,
\end{equation}
since otherwise the error $e_n(\bar X)$ would be infinite for some $n$. 

Finally, the $n$th minimal error, in respective class of algorithms, is given by
\begin{equation}
	\label{nth_min_err2}
	e^\diamond(n) = \inf\limits_{\bar X\in\chi^\diamond }e_n(\bar X), \ \diamond \in\{\rm eq, \rm noneq\}.
\end{equation}
We will investigate (possibly) sharp bounds on $n$th minimal errors. Moreover, we  determine  optimal
methods $\bar X^\diamond $, $\diamond \in\{\rm eq, \rm noneq\}$ that attain the infimum in \eqref{nth_min_err2}.

The following useful technical fact can be shown by direct calculations, so we left it as an exercise. 
\begin{lem} (\cite{hertl}, \cite{GPage})
\label{covar_bbW} 
For any $s,t\in [t_i,t_{i+1}]$, $i=0,1,\ldots,n-1$ and $n\in\mathbb{N}$ we have that
\begin{equation}
\label{covar_W}
	Cov(\hat W_n(t),\hat W_n(s))=\frac{(t_{i+1}-\max\{t,s\})(\min\{t,s\}-t_i)}{t_{i+1}-t_i},
\end{equation}
and
\begin{equation}
\label{int_covar}
	\int\limits_{t_i}^{t_{i+1}}\int\limits_{t_i}^{t_{i+1}}Cov(\hat W_n(t),\hat W_n(s))dsdt=\frac{(t_{i+1}-t_i)^3}{12}.
\end{equation}
Moreover, for any $s\in [t_j,t_{j+1}]$, $t\in [t_i,t_{i+1}]$, $i,j=0,1,\ldots,n-1$, $i\neq j$
\begin{equation}
\label{covar_indep_bb}
	Cov(\hat W_n(t),\hat W_n(s))=0.
\end{equation}
\end{lem}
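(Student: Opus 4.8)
The plan is to derive all three identities directly from the closed form of the bridge process furnished by Lemma \ref{WN_bridge}. Writing $\Delta_i := t_{i+1}-t_i$, for $t\in[t_i,t_{i+1}]$ we have
\[
\hat W_n(t)=W(t)-\mu_W(t)=\bigl(W(t)-W(t_i)\bigr)-\frac{t-t_i}{\Delta_i}\bigl(W(t_{i+1})-W(t_i)\bigr),
\]
so in particular $\hat W_n(t_i)=\hat W_n(t_{i+1})=0$, and since $W(0)=0$ and $W$ has independent increments, $\mathbb{E}(\hat W_n(t))=0$; hence every covariance below is just a second moment.

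First I would prove \eqref{covar_W}. Fix $i$ and take $s,t\in[t_i,t_{i+1}]$ with $s\le t$ (the symmetric case, and thus the general formula, follows since the covariance is symmetric in $(s,t)$). Introduce the independent increments $a:=W(s)-W(t_i)$, $b:=W(t)-W(s)$, $c:=W(t_{i+1})-W(t)$, with variances $s-t_i$, $t-s$, $t_{i+1}-t$, and the abbreviations $\alpha:=(s-t_i)/\Delta_i\le\beta:=(t-t_i)/\Delta_i$. Substituting $W(t_{i+1})-W(t_i)=a+b+c$ into the display gives
\[
\hat W_n(s)=(1-\alpha)a-\alpha b-\alpha c,\qquad \hat W_n(t)=(1-\beta)a+(1-\beta)b-\beta c.
\]
Expanding $\mathbb{E}(\hat W_n(s)\hat W_n(t))$ with the independence of $a,b,c$ and replacing $\mathbb{E}(a^2),\mathbb{E}(b^2),\mathbb{E}(c^2)$ by the corresponding variances, the expression factors as $\Delta_i\,\alpha(1-\beta)\bigl[(1-\alpha)-(\beta-\alpha)+\beta\bigr]=\Delta_i\,\alpha(1-\beta)=(s-t_i)(t_{i+1}-t)/\Delta_i$; since $s=\min\{s,t\}$ and $t=\max\{s,t\}$ this is exactly \eqref{covar_W}. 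The only genuinely delicate point is the algebraic cancellation in the bracket, which collapses to $1$; the rest is bookkeeping.

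For \eqref{int_covar} I would use the symmetry of the integrand in $(s,t)$ to write the integral over $[t_i,t_{i+1}]^2$ as twice the integral over $\{t_i\le s\le t\le t_{i+1}\}$, substitute $u=s-t_i$, $v=t-t_i$, and compute $2\int_0^{\Delta_i}\!\int_0^{v}\frac{u(\Delta_i-v)}{\Delta_i}\,du\,dv=\frac{1}{\Delta_i}\int_0^{\Delta_i}v^2(\Delta_i-v)\,dv=\frac{\Delta_i^3}{12}$, which is the claim. Finally, for \eqref{covar_indep_bb} assume without loss of generality $j<i$, so that $t_{j+1}\le t_i$. From the display above, $\hat W_n(s)$ is a linear combination of increments of $W$ over subintervals of $[t_j,t_{j+1}]$ (namely $W(s)-W(t_j)$ and $W(t_{j+1})-W(s)$), while $\hat W_n(t)$ is a linear combination of increments of $W$ over subintervals of $[t_i,t_{i+1}]$; these two intervals are disjoint apart from at most one common endpoint, so by the independent-increments property of $W$ the random variables $\hat W_n(s)$ and $\hat W_n(t)$ are independent. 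Since each has mean zero, $Cov(\hat W_n(t),\hat W_n(s))=\mathbb{E}(\hat W_n(s))\,\mathbb{E}(\hat W_n(t))=0$, completing the proof.
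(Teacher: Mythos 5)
Your proof is correct: the representation $\hat W_n(t)=(W(t)-W(t_i))-\frac{t-t_i}{\Delta_i}(W(t_{i+1})-W(t_i))$, the expansion in the independent increments $a,b,c$ with the bracket $(1-\alpha)-(\beta-\alpha)+\beta$ collapsing to $1$, the triangle integration yielding $\Delta_i^3/12$, and the independence-of-increments argument for $i\neq j$ all check out. The paper offers no proof of this lemma (it is stated as a fact provable "by direct calculations" and left as an exercise), and your direct computation is exactly the intended route.
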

We now prove the following lower bound.
\begin{lem}
\label{int_W_asympt_lb} 
For any $\bar X\in\chi^{\rm noneq}$ and for all $n\in\mathbb{N}$ we have that
\begin{equation}
\label{low_b_X_intW}
  e_n(\bar X)\geq\frac{T^{3/2}}{\sqrt{12}}\cdot n^{-1}.
\end{equation}
\end{lem}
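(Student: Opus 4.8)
The plan is to mirror the argument used for Lemma \ref{W_asympt_lb}, replacing pointwise quantities by their integrated counterparts. First I would fix an arbitrary method $\bar X\in\chi^{\rm noneq}$ with underlying sequence of discretizations $(\Delta_n)_{n\in\mathbb{N}}$. Since $\mathcal{I}(W)$ is a Gaussian random variable (see the remark preceding the lemma), $\mathbb{E}|\mathcal{I}(W)|^2<+\infty$, and since $\bar X_n$ is $\sigma(\mathcal{N}_n(W))$-measurable while \eqref{finite_norm_int_Xn} holds, the projection property of conditional expectation (Theorem \ref{proj_wwo}) gives
\begin{equation}
(e_n(\bar X))^2=\mathbb{E}|\mathcal{I}(W)-\bar X_n|^2\geq \mathbb{E}\bigl|\mathcal{I}(W)-\mathbb{E}(\mathcal{I}(W) \mid \mathcal{N}_n(W))\bigr|^2=\mathbb{E}\bigl(Var(\mathcal{I}(W) \mid \mathcal{N}_n(W))\bigr).
\end{equation}
So it remains to evaluate the right-hand side.

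Next I would use the decomposition $W(t)=\bar W_n(t)+\hat W_n(t)$ from \eqref{def_Z_bridges_1}, in which $\bar W_n$ is the piecewise linear interpolation of $\{(t_i,W(t_i))\}_{i=0,\ldots,n}$, hence $\sigma(\mathcal{N}_n(W))$-measurable pathwise, and $\hat W_n$ is the Brownian-bridge part, which is independent of $\mathcal{N}_n(W)$ and satisfies $\mathbb{E}\hat W_n(t)=0$ for all $t$. By Fubini's theorem $\mathcal{I}(W)=\int_0^T\bar W_n(t)\,dt+\int_0^T\hat W_n(t)\,dt$, the first summand being $\sigma(\mathcal{N}_n(W))$-measurable; therefore, using independence of $\hat W_n$ from $\mathcal{N}_n(W)$ together with the vanishing mean,
\begin{equation}
Var(\mathcal{I}(W) \mid \mathcal{N}_n(W))=Var\Bigl(\int\limits_0^T\hat W_n(t)\,dt \ \Big| \ \mathcal{N}_n(W)\Bigr)=\mathbb{E}\Bigl|\int\limits_0^T\hat W_n(t)\,dt\Bigr|^2.
\end{equation}
In particular this quantity is deterministic, so the outer expectation in the first display is superfluous.

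Then I would expand the second moment by Tonelli's theorem and invoke Lemma \ref{covar_bbW}: the cross-block covariances vanish by \eqref{covar_indep_bb}, and within each block \eqref{int_covar} applies, so
\begin{equation}
\mathbb{E}\Bigl|\int\limits_0^T\hat W_n(t)\,dt\Bigr|^2=\int\limits_0^T\int\limits_0^T Cov(\hat W_n(t),\hat W_n(s))\,ds\,dt=\sum\limits_{i=0}^{n-1}\frac{(t_{i+1}-t_i)^3}{12}.
\end{equation}
Finally, by convexity of $x\mapsto x^3$ on $[0,+\infty)$ (the Jensen inequality, as used already in Lemma \ref{W_asympt_lb}),
\begin{equation}
\sum\limits_{i=0}^{n-1}(t_{i+1}-t_i)^3\geq \frac{1}{n^2}\Bigl(\sum\limits_{i=0}^{n-1}(t_{i+1}-t_i)\Bigr)^3=\frac{T^3}{n^2},
\end{equation}
whence $(e_n(\bar X))^2\geq T^3/(12 n^2)$, which is \eqref{low_b_X_intW}. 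The only genuinely delicate points are the measurability and integrability bookkeeping — that $\mathcal{I}(W)\in L^2(\Omega)$, that $\int_0^T\bar W_n(t)\,dt$ is $\sigma(\mathcal{N}_n(W))$-measurable, and that the Fubini/Tonelli interchanges are licit — but these follow from the product measurability of $W$ and $\bar W_n$ and from the Gaussianity recorded in the preceding remarks; the core computation is the covariance summation, which is already packaged in Lemma \ref{covar_bbW}.
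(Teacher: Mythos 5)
Your proof is correct and follows essentially the same route as the paper: reduce to the conditional expectation via the projection property, identify the residual with the integrated Brownian bridge $\int_0^T\hat W_n(t)\,dt$, compute its second moment from Lemma \ref{covar_bbW}, and finish with Jensen. The only cosmetic difference is that the paper passes through the conditional Fubini theorem (Theorem \ref{cond_fubini}) rather than your pathwise decomposition $W=\bar W_n+\hat W_n$, and your remark that the conditional variance is deterministic (via independence of $\hat W_n$ from $\mathcal{N}_n(W)$) is true but not needed, since taking the outer expectation already yields $\mathbb{E}\bigl|\int_0^T\hat W_n(t)\,dt\bigr|^2$ directly.
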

{\bf Proof.} Let us fix any method $\bar X\in\chi^{\rm noneq}$ based on a sequence of discretizations $(\Delta_n)_{n\in\mathbb{N}}$.  Then by the projection property for conditional expectation (Theorem \ref{proj_wwo}) we have that for all $n\in\mathbb{N}$ that
\begin{equation}	
\label{ineq_wwo_intW}
	\mathbb{E}|\mathcal{I}(W)-\bar X_n|^2\geq \mathbb{E}|\mathcal{I}(W)-\mathbb{E}(\mathcal{I}(W) \ | \ \mathcal{N}_n(W))|^2,
\end{equation}
since $\mathbb{E}|\mathcal{I}(W)|^2<+\infty$ and $\sigma(\bar X_n)\subset\sigma(\mathcal{N}_n(W))$. By the conditional version of Fubini's Theorem (Theorem \ref{cond_fubini}) we get
\begin{equation}
	\mathbb{E}(\mathcal{I}(W) \ | \ \mathcal{N}_n(W))=\int\limits_0^T\mathbb{E}(W(t) \ | \ \mathcal{N}_n(W))dt \ a.s.
\end{equation}
Hence
\begin{equation}
	(e_n(\bar X))^2=\mathbb{E}|\mathcal{I}(W)-\bar X_n|^2\geq\mathbb{E}\Biggl|\sum\limits_{j=0}^{n-1}\int\limits_{t_j}^{t_{j+1}} \hat W_n(t)dt \ \Biggl|^2=\sum\limits_{j=0}^{n-1}\mathbb{E}(S_j^2)+\sum\limits_{i\neq j}\mathbb{E}(S_iS_j),
\end{equation}
where $\displaystyle{S_i=\int\limits_{t_i}^{t_{i+1}}\hat W_n(t)dt}$. From Lemma \ref{covar_bbW} we obtain
\begin{equation}
	\mathbb{E}(S_j^2)=\int\limits_{t_j}^{t_{j+1}}\int\limits_{t_j}^{t_{j+1}}\mathbb{E}(\hat W_n(t)\cdot\hat W_n(s))dsdt=\frac{(t_{j+1}-t_j)^3}{12},
\end{equation}
\begin{equation}
	\mathbb{E}(S_iS_j)=\int\limits_{t_i}^{t_{i+1}}\int\limits_{t_j}^{t_{j+1}}\mathbb{E}(\hat W_n(t)\cdot\hat W_n(s))dsdt=0.
\end{equation}
Therefore, by the  Jensen inequality we get
\begin{equation}
	(e_n(\bar X))^2\geq \frac{1}{12}\sum\limits_{i=0}^{n-1}(t_{i+1}-t_i)^3\geq\frac{T^3}{12n^2}.
\end{equation}
This implies \eqref{low_b_X_intW}. \ \ \ $\blacksquare$ \\ \\
We now show that the trapezoidal quadrature rule is the optimal one.
\begin{thm} 
\label{opt_int_W_approx}
For the trapezoidal quadrature rule 
\begin{equation}
	\bar X^{\rm eq}_n =\frac{T}{2n}\sum\limits_{i=0}^{n-1}(W(t_i)+W(t_{i+1}))=\frac{T}{n}\sum\limits_{i=0}^{n-1}W(t_i)+\frac{T}{2n}W(t_n),
\end{equation}
	with $t_i=iT/n$, $i=0,1,\ldots,n$, we have
	\begin{equation}
		e_n(\bar X^{\rm eq})=\frac{T^{3/2}}{\sqrt{12}}\cdot n^{-1}.
	\end{equation}
	Therefore, the method $\bar X^{\rm eq}=(\bar X^{\rm eq}_n)_{n\in\mathbb{N}}$ is optimal in the class $\chi^{\rm noneq}$, i.e., for all $n\in\mathbb{N}$
	\begin{equation}
	\label{opt_trapez}
		e_n(\bar X^{\rm eq})=\inf_{\bar X\in\chi^{\rm noneq}}e_n(\bar X)=\frac{T^{3/2}}{\sqrt{12}}\cdot n^{-1}.
	\end{equation}
\end{thm}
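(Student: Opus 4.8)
The plan is to show that the trapezoidal rule coincides with the conditional expectation $\mathbb{E}(\mathcal{I}(W)\mid\mathcal{N}_n(W))$, so that the approximation error is precisely the $L^2$-norm of $\int_0^T\hat W_n(t)\,dt$, whose second moment has essentially already been computed in the proof of Lemma \ref{int_W_asympt_lb} via Lemma \ref{covar_bbW}. The matching lower bound of Lemma \ref{int_W_asympt_lb} will then immediately upgrade this equality to optimality.

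First I would carry out the elementary integral computation showing that, for $t\in[t_i,t_{i+1}]$, integrating the affine function $\mu_W(t)=\mathbb{E}(W(t)\mid\mathcal{N}_n(W))$ from Lemma \ref{WN_bridge} gives
\begin{equation}
\int\limits_{t_i}^{t_{i+1}}\mathbb{E}(W(t)\mid\mathcal{N}_n(W))\,dt=\frac{t_{i+1}-t_i}{2}\bigl(W(t_i)+W(t_{i+1})\bigr),
\end{equation}
where one only uses $\int_{t_i}^{t_{i+1}}(t-t_i)\,dt=\tfrac12(t_{i+1}-t_i)^2$. Summing over $i=0,1,\ldots,n-1$ with $t_i=iT/n$ yields exactly $\bar X^{\rm eq}_n$. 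By the conditional Fubini theorem (Theorem \ref{cond_fubini}), applied as in the proof of Lemma \ref{int_W_asympt_lb}, the left-hand side summed over $i$ equals $\mathbb{E}(\mathcal{I}(W)\mid\mathcal{N}_n(W))$ almost surely; in particular $\mathbb{E}|\bar X^{\rm eq}_n|^2<+\infty$, so $\bar X^{\rm eq}\in\chi^{\rm eq}\subset\chi^{\rm noneq}$ is an admissible method.

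Next I would write $\mathcal{I}(W)-\bar X^{\rm eq}_n=\sum_{j=0}^{n-1}S_j$ with $S_j=\int_{t_j}^{t_{j+1}}\hat W_n(t)\,dt$, expand the square, and invoke Lemma \ref{covar_bbW}: equation \eqref{covar_indep_bb} kills the cross terms $\mathbb{E}(S_iS_j)$ for $i\neq j$, while equation \eqref{int_covar} gives $\mathbb{E}(S_j^2)=(t_{j+1}-t_j)^3/12=T^3/(12n^3)$. Hence
\begin{equation}
(e_n(\bar X^{\rm eq}))^2=\sum_{j=0}^{n-1}\mathbb{E}(S_j^2)=n\cdot\frac{T^3}{12n^3}=\frac{T^3}{12n^2},
\end{equation}
which is the claimed value. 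Combining with the lower bound $e_n(\bar X)\geq T^{3/2}/\sqrt{12}\cdot n^{-1}$ valid for all $\bar X\in\chi^{\rm noneq}$ (Lemma \ref{int_W_asympt_lb}) gives \eqref{opt_trapez}. I do not expect a real obstacle; the only step demanding any care is the identification of the trapezoidal rule with the conditional mean, and that reduces to the one-line integration displayed above, after which everything follows from the already-established lower bound and covariance identities.
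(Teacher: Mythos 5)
Your proposal is correct and follows essentially the same route as the paper: identify $\bar X^{\rm eq}_n$ with $\mathbb{E}(\mathcal{I}(W)\mid\mathcal{N}_n(W))=\int_0^T\bar W^{\rm eq}_n(t)\,dt$, compute the error as the sum of the uncorrelated Brownian-bridge integrals $S_j$ via Lemma \ref{covar_bbW}, and combine with the lower bound of Lemma \ref{int_W_asympt_lb}. The only difference is that you spell out the integration of the piecewise linear conditional mean and the use of Theorem \ref{cond_fubini}, which the paper leaves implicit.
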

{\bf Proof.} The following holds
\begin{equation}
	\bar X^{\rm eq}_n=\mathbb{E}(\mathcal{I}(W) \ | \ W(iT/n), \  i=1,\ldots,n)=\int\limits_{0}^{T}\bar W^{\rm eq}_n(t)dt.
\end{equation}
Thereby
\begin{equation}
	(e_n(\bar X^{\rm eq}))^2=\mathbb{E}\Bigl|\sum\limits_{j=0}^{n-1}\int\limits_{t_j}^{t_{j+1}}(W(t)-\bar W_n^{\rm eq}(t))dt\Bigl|^2=\frac{1}{12}\sum\limits_{j=0}^{n-1}(t_{j+1}-t_j)^3=\frac{T^3}{12n^2}.
\end{equation}
This and \eqref{low_b_X_intW} imply \eqref{opt_trapez}. \ \ \ $\blacksquare$
\section{Implementation issues}

Drawing trajectories of Wiener process.

\lstinputlisting[language=Python]{BOSSIP_Wiener_1D.py}

Drawing trajectories of Poisson process.

\lstinputlisting[language=Python]{BOSSIP_Poisson_1D.py}

Drawing trajectories of compound Poisson process.

\lstinputlisting[language=Python]{BOSSIP_CPoisson_1D.py}

\section{Concluding remarks}
Due to Theorem \ref{opt_int_W_approx} we have for all $n\in\mathbb{N}$
\begin{eqnarray}
	&&\inf\limits_{\varphi_n:\mathbb{R}^n\to\mathbb{R}-Borel, \ 0<t_1<t_2<\ldots<t_n=T}\mathbb{E}\Bigl|\int\limits_0^T W(t)dt-\varphi_n(\mathcal{N}_n(W))\Bigl|^2\notag\\
&&=\inf\limits_{\varphi_n:\mathbb{R}^n\to\mathbb{R}-Borel, \  0<t_1<t_2<\ldots<t_n=T}\int\limits_{C([0,T])}\Bigl|\int\limits_0^T f(t)dt-\varphi_n(f(t_1),\ldots,f(t_n))\Bigl|^2 w(df)=\frac{T^{3/2}}{\sqrt{12}}n^{-1},
\end{eqnarray}
where $w$ is the Wiener measure on $\Bigl(C([0,T]),\mathcal{B}(C([0,T]))\Bigr)$. Hence, Theorem \ref{opt_int_W_approx} answers the question: {\it in average, what is the error when approximating the Lebesgue integral of continuous function via deterministic quadrature rule based on finite number of  function evaluations?} From the point of view of Information-Based Complexity such model of computation is called the {\it average case setting}, see \cite{RIT}, \cite{TWW88}. See also \cite{hertl}, where more general problem of nonlinear Lebesgue integration in the average-case setting was considered. Finally, note 
 that the problem of (nonlinear) Lebesgue integration is connected with complexity of stochastic integration, see \cite{WW01}, \cite{hertl}.
\section{Exercises}
\begin{itemize}
	\item [1.] Let $X$ be a square integrable random variable and let $\mathcal{G}$ be a sub-$\sigma$-sigma field of $\Sigma$. Moreover, let $Y$ be a $\mathcal{G}$-measurable random variable.
		\begin{itemize}
			\item [(i)] In addition, let $\mathbb{E}Y^2<+\infty$. Show that
				\begin{equation}
					Var(X+Y\ | \ \mathcal{G})=Var(X \ | \ \mathcal{G}).
				\end{equation}
			\item [(ii)] Additionally assume that $\mathbb{E}|XY|^2<+\infty$. Justify that
				\begin{equation}
					Var(XY \ | \ \mathcal{G})=Y^2\cdot Var (X \ | \ \mathcal{G}).
				\end{equation}
		\end{itemize}
  \item [2.] Let $X$, $Y$ be square integrable random variables and let  $\mathcal{F}$ be a sub-$\sigma$-field of $\Sigma$. We define the conditional covariance of the random variables $X$, $Y$ as follows
\begin{equation}
	\label{WAR_COV}
	Cov(X,Y \ | \ \mathcal{F})=\mathbb{E}\Bigl((X-\mathbb{E}(X \ | \ \mathcal{F}))\cdot (Y-\mathbb{E}(Y \ | \ \mathcal{F})) \ | \ \mathcal{F}\Bigr).
\end{equation}
\begin{itemize}
	\item [(a)] Prove that the  conditional covariance (\ref{WAR_COV})  is well-defined, i.e., give a proof that the conditional expectation
		\begin{equation}
			\mathbb{E}\Bigl((X-\mathbb{E}(X \ | \ \mathcal{F}))\cdot (Y-\mathbb{E}(Y \ | \ \mathcal{F})) \ | \ \mathcal{F}\Bigr)
		\end{equation}
		exists.
	\item [(b)] Prove that the following equality holds
			\begin{equation}
				\label{WAR_COV2}
					Cov(X,Y \ | \ \mathcal{F})=\mathbb{E}(XY \ | \ \mathcal{F})-\mathbb{E}(X \ | \ \mathcal{F})\cdot\mathbb{E}(Y \ | \ \mathcal{F}).
			\end{equation}
	\item [(c)] Prove the following
			\begin{equation}
				\label{WAR_COV3}
					Cov(X,Y)=\mathbb{E}(Cov(X,Y \ | \ \mathcal{F}))+Cov(\mathbb{E}(X \ | \ \mathcal{F}),\mathbb{E}(Y \ | \ \mathcal{F})).
			\end{equation}
\end{itemize} 
	\item [3.] Let $(\mathcal{G}_t)_{t\in [0,+\infty)}$ be an arbitrary filtration on $(\Omega,\Sigma,\mathbb{P})$. Show that
	\begin{equation}
		\bigcap_{n\in\mathbb{N}}\mathcal{G}_{t+\frac{1}{n}}=\bigcap_{s>t}\mathcal{G}_s.
	\end{equation}
	\item [4.] Let $(\Sigma_t)_{t\in [0,+\infty)}$ be a filtration on $(\Omega,\Sigma,\mathbb{P})$, satisfying the usual conditions, and let $\mathcal{G}$ be a sub-$\sigma$-field of $\Sigma$ that is independent of $\Sigma_{\infty}$. Show that $(\mathcal{H}_t)_{t\in [0,+\infty)}$, where
	\begin{equation}
	\label{H_filtr}
		\mathcal{H}_t=\sigma\Bigl(\Sigma_t\cup\mathcal{G}\Bigr),
	\end{equation}
	is a filtration that also satisfies the usual conditions.\\
	Hint. Chamont, Yor, "Exercises...."
	\item [5.] Let $W=(W_t)_{t\in [0,+\infty)}$ be a $(\Sigma_t)_{t\in [0,+\infty)}$-Wiener process on $(\Omega,\Sigma,\mathbb{P})$, and let $\mathcal{G}$ be a sub-$\sigma$-field of $\Sigma$ that is independent of $\Sigma_{\infty}$. Show that $W=(W_t)_{t\in [0,+\infty)}$ is a Wiener process with respect to  the filtration $(\mathcal{H}_t)_{t\in [0,+\infty)}$, defined as in \eqref{H_filtr}.
	\item [6.] What is the law of $\mathcal{I}(W)$?
	\item [7.] Show that $\hat W_n$ is a Gaussian process.
	\item [8.] Provide more efficient implementation of the procedure  that generates trajectories of the compound Poisson process.
	\item [9.] Write a function that simulates trajectories of the ruin process with perturbations which is defined as follows
	\begin{equation}
		R(t)=x_0+ct+\sigma W(t)+J(t), t\in [0,T],
	\end{equation}
	where $x_0,c,\sigma\in (0,+\infty)0$ and $W$, $J$ are Wiener and compound Poisson process, respectively. Approximate, by using a suitably constructed Monte Carlo method, the probability of ruin
	\begin{equation}
		\mathbb{P}(R(T)<0).
	\end{equation}
	Do not forget to compute  empirical confidence intervals.
	\item [10.] Give a proof of Lemma \ref{covar_bbW}.
\end{itemize}
\chapter{Stochastic integration with respect to semimartingales}
Let $(\Omega,\Sigma,\mathbb{P})$ be a complete probability space equipped with filtration $(\Sigma_t)_{t\in [0,+\infty)}$ that satisfies the usual conditions. In this chapter we mainly base on the results from \cite{Applb}, \cite{MEDVEG}, \cite{prott}, \cite{situ} and \cite{weizwin}. We focus only on stochastic integration of processes from the space $\mathbb{L}$ (see \eqref{def_proc_L}) which, as we will see, is sufficient in our case.

A process $X$ is a {\it semimartingale} if it admits a decomposition
\begin{equation}
\label{semi_dec1}
	X = M+A,
\end{equation}
where $M$ is a right-continuous local $L^2$-martingale and $A$ is right-continuous, adapted and of locally finite variation in the sense that almost surely the paths of $A$ are of finite variation on each compact subinterval of $[0,+\infty)$. This decomposition is not unique, unless we restrict to the case when $A$ is continuous, see, for example, page 150. in \cite{weizwin}. Below we provide main examples of semimartingales. 

\begin{itemize}
	\item The Wiener process  $W$ is a (continuous) semimartingale, since it is a martingale, so $A=0$,
	\item The process $(W^2(t))_{t\geq 0}$ is a semimartingale with the following decomposition
		\begin{equation}
		\label{decomp_1}
			W^2(t)=(W^2(t)-t)+t,
		\end{equation}
		$(W^2(t)-t,\bar\Sigma^W_t)_{t\geq 0}$ is a c\`adl\`ag $L^2$-martingale.
	\item The non-homogeneous Poisson process is a semimartingale due to the fact that
	\begin{equation}
	\label{decomp_2}
		N(t)=(N(t)-m(t))+m(t),
	\end{equation}
	and the process $(N(t)-m(t),\bar\Sigma^N_t)_{t\geq 0}$ is a c\`adl\`ag $L^2$-martingale. It is denoted by $\tilde N$ and called the {\it compensated Poisson process}.
	\item The process $(\tilde N^2(t))_{t\geq 0}$ is a semimartingale with the decomposition
	\begin{equation}
	\label{decomp_3}
		\tilde N^2(t)=(\tilde N^2(t)-m(t))+m(t), 
	\end{equation}
	where $(\tilde N^2(t)-m(t),\bar\Sigma^N_t)_{t\geq 0}$ is a c\`adl\`ag $L^2$-martingale.
	\item Provided that $\mathbb{E}(\xi_1^2)<+\infty$, the compound Poisson process is a semimartingale since
	\begin{equation}
		J(t)=(J(t)-t\lambda \mathbb{E}(\xi_1))+t\lambda \mathbb{E}(\xi_1)
	\end{equation}
	and $(J(t)-t\lambda \mathbb{E}(\xi_1),\bar\Sigma^J_t)_{t\geq 0}$ is a  c\`adl\`ag $L^2$-martingale called {\it compensated compound Poisson process}. We denote it by $\tilde J$.
	\item If $\mathbb{E}(\xi_1^2)<+\infty$, then the process $(\tilde J^2(t))_{t\geq 0}$ is a semimartingale with the decomposition
	\begin{equation}
		\tilde J^2(t)=(\tilde J^2(t)-t\lambda \mathbb{E}(\xi_1^2))+t\lambda \mathbb{E}(\xi_1^2)
	\end{equation}
	and $(\tilde J^2(t)-t\lambda \mathbb{E}(\xi_1^2),\bar\Sigma^J_t)_{t\geq 0}$ is a  c\`adl\`ag $L^2$-martingale.
\end{itemize}
Note that the processes $N$, $\tilde N^2$, $J$, $\tilde J^2$ are itself processes with locally finite variation. 

Suppose that $Y\in\mathbb{L}$ and $X$ is a semimartingale with the decomposition \eqref{semi_dec1}. Then the stochastic integral of $Y$ with respect to $X$ is defined as
\begin{equation}
\label{int_semimart}
	\int\limits_0^t Y(s)dX(s)=\int\limits_0^t Y(s)dM(s)+\int\limits_0^t Y(s)dA(s).
\end{equation}
Note that the value of stochastic integral is independent of the semimaringale decomposition \eqref{semi_dec1}, see \cite{weizwin}. In \eqref{int_semimart} the integral
\begin{itemize}
	\item $\displaystyle{\int\limits_0^t Y(s)dA(s)}$ is a Lebesgue-Stieltjes integral, computed path-by-path,
	\item $\displaystyle{\int\limits_0^t Y(s)dM(s)}$ is a stochastic integral with respect to the c\`adl\`ag $L^2$-martingale $M$.
\end{itemize}
The precise construction of stochastic integrals with respect to martingales is given, for example, in \cite{situ}. Here we only state  main properties of such integrals that we will use in the context of approximation of solutions of jump-diffusion stochastic differential equations.

The fundamental theorem that allows to define stochastic integral with respect to c\`adl\`ag $L^2$-martingales is the following theorem, called the Doob-Meyer decomposition.
\begin{thm} (\cite{situ})
	\label{dm_decomp}
	Let $(M(t),\Sigma_t)_{t\geq 0}$ be a square integrable c\`adl\`ag martingale. Then there is a unique decomposition
	\begin{equation}
		M^2(t)=Y(t)+A(t), \ t\geq 0,
	\end{equation}
	where $(Y(t),\Sigma_t)_{t\geq 0}$ is c\`adl\`ag martingale and $A=(A(t))_{t\geq 0}$ is a predictable, right continuous, and increasing process such that $A(0)=0$ and $\mathbb{E}(A(t))<+\infty$ for all $t\geq 0$. From now on the process $A=(A(t))_{t\geq 0}$ is denoted by $\langle M\rangle=(\langle M\rangle_t)_{t\geq 0}$ and it is called the {\bf predictable quadratic variation} of $M$. 
\end{thm}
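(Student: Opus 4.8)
The plan is to reduce the assertion to the Doob--Meyer decomposition of submartingales and then to the discrete Doob decomposition, passing to a limit. First, since $x\mapsto x^2$ is convex and $M(t)\in L^2$ for every $t\geq 0$, the conditional Jensen inequality shows that $(M^2(t),\Sigma_t)_{t\geq 0}$ is a non-negative c\`adl\`ag submartingale. It is moreover of class $(\mathrm{DL})$: for every $a>0$ and every stopping time $\tau\leq a$ optional sampling gives $M(\tau)=\mathbb{E}(M(a)\,|\,\Sigma_\tau)$, hence $M^2(\tau)\leq \mathbb{E}(M^2(a)\,|\,\Sigma_\tau)$, and the family $\{\mathbb{E}(M^2(a)\,|\,\Sigma_\tau):\tau\leq a\}$ of conditional expectations of the fixed integrable variable $M^2(a)$ is uniformly integrable. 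Thus it suffices to prove that a c\`adl\`ag submartingale $X=M^2$ of class $(\mathrm{DL})$ admits a unique decomposition $X=Y+A$ with $Y$ a c\`adl\`ag martingale and $A$ predictable, right-continuous, non-decreasing, $A(0)=0$ and $\mathbb{E}(A(t))<+\infty$.

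For existence, fix $T>0$ and work on $[0,T]$. Along the dyadic partition $t^n_k=kT/2^n$, $k=0,1,\ldots,2^n$, apply the discrete Doob decomposition to $(M^2(t^n_k))_k$, writing $M^2(t^n_k)=Y^n_k+A^n_k$ with
\begin{equation}
A^n_k=\sum_{j=0}^{k-1}\mathbb{E}\bigl(M^2(t^n_{j+1})-M^2(t^n_j)\,\big|\,\Sigma_{t^n_j}\bigr)=\sum_{j=0}^{k-1}\mathbb{E}\bigl((M(t^n_{j+1})-M(t^n_j))^2\,\big|\,\Sigma_{t^n_j}\bigr),
\end{equation}
the last equality because, conditionally on $\Sigma_{t^n_j}$, the increment of $M$ over $[t^n_j,t^n_{j+1}]$ is orthogonal to $M(t^n_j)$. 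Each $(A^n_k)_k$ is non-decreasing, null at $0$, and predictable for the discrete filtration. The analytic heart of the argument is to show that the terminal values $\{A^n_{2^n}\}_{n\in\mathbb{N}}$ are uniformly integrable in $L^1(\Omega)$; this is exactly where class $(\mathrm{DL})$ enters, one bounding $\mathbb{E}(A^n_{2^n}\mathbf{1}_{\{A^n_{2^n}>c\}})$ by stopping $M^2$ at the first dyadic instant at which the discrete compensator exceeds $c$ and invoking uniform integrability of $\{M^2(\tau):\tau\leq T\text{ stopping time}\}$. Granting this, the Dunford--Pettis criterion gives a subsequence along which $A^n_{2^n}\rightharpoonup A_T$ weakly in $L^1$; one then builds the process $A=(A(t))_{t\in[0,T]}$ from the weak limits of $\mathbb{E}(A^n_{2^n}\,|\,\Sigma_t)$, checks that it is non-decreasing and admits a right-continuous modification, and sets $Y:=X-A$, which is a martingale. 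The remaining delicate point is that $A$ is \emph{predictable}: one shows that $A$ is a \emph{natural} increasing integrable process, i.e. $\mathbb{E}(\int_0^t N(s-)\,dA(s))=\mathbb{E}(N(t)A(t))$ for every bounded martingale $N$, and then appeals to Meyer's theorem that a natural increasing integrable process is predictable. Finally, the decompositions obtained on $[0,T]$ for increasing $T$ are consistent and glue to a decomposition on $[0,+\infty)$.

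For uniqueness, suppose $M^2=Y+A=Y'+A'$ are two such decompositions. Then $B:=A-A'=Y'-Y$ is simultaneously a predictable process of locally finite variation and a martingale, with $B(0)=0$. A predictable martingale of locally finite variation starting from $0$ is indistinguishable from $0$: its continuous part is a continuous martingale of finite variation, hence constant, while its jumps are predictable jumps of a martingale and so vanish. Therefore $A=A'$ and $Y=Y'$, and the process $A$ is what we denote by $\langle M\rangle$. The main obstacles are the two technical pillars above --- the uniform integrability of the discrete compensators $A^n_{2^n}$ (using class $(\mathrm{DL})$) and the predictability of the limit $A$ (Meyer's ``natural $\Leftrightarrow$ predictable'' theorem) --- both genuinely nontrivial; for the complete argument one may consult \cite{situ}.
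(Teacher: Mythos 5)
The paper does not actually prove this theorem: it is quoted as a known result with a pointer to \cite{situ}, so there is no internal proof to compare against. Your proposal is a faithful outline of the classical Doob--Meyer argument: the reduction is sound (conditional Jensen makes $(M^2(t),\Sigma_t)_{t\geq 0}$ a c\`adl\`ag submartingale, and your class $(\mathrm{DL})$ verification via optional sampling and uniform integrability of $\{\mathbb{E}(M^2(a)\,|\,\Sigma_\tau)\}$ is correct), the existence scheme via discrete Doob decompositions and Dunford--Pettis weak $L^1$ compactness is the standard (Rao-type) route, and the uniqueness argument — a predictable martingale of locally finite variation starting at $0$ is indistinguishable from $0$, since predictability kills its jumps and a continuous finite-variation martingale is constant — is the right one.

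That said, as a proof it is a sketch rather than an argument: the two pillars you yourself flag are only named, not carried out. The uniform integrability of the discrete compensators $A^n_{2^n}$ (the stopping-time truncation estimate that exploits class $(\mathrm{DL})$), the construction of the right-continuous non-decreasing modification of $A$ from the weak limits $\mathbb{E}(A^n_{2^n}\,|\,\Sigma_t)$, the verification of naturality, and the invocation of Meyer's equivalence of natural and predictable increasing processes are precisely the nontrivial content of the theorem; deferring all of them to \cite{situ} leaves essentially the same gap the paper leaves by citing the result. If you want a self-contained proof, the UI estimate and the naturality-implies-predictability step are the parts you must actually write out; the rest of your outline can stand as is.
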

It is easy to see that \eqref{decomp_1} and \eqref{decomp_3} are two (very important)  examples of Doob-Meyer decomposition. 

From Theorem \eqref{dm_decomp} we have what follows.
\begin{cor}
\label{cross_var1}
	Let $(M_i(t),\Sigma_t)_{t\geq 0}$, $i=1,2$, be  square integrable c\`adl\`ag martingales. Then the process $(M_1(t)\cdot M_2(t))_{t\geq 0}$ has a uniqe decomposition
	\begin{equation}
		M_1(t)\cdot M_2(t)=Y(t)+A(t), \ t\geq 0,
	\end{equation}
	where $(Y(t),\Sigma_t)_{t\geq 0}$ is c\`adl\`ag martingale and $A=(A(t))_{t\geq 0}$ is a difference of two processes $A_i=(A_i(t))_{t\geq 0}$ where $A_i$  is a predictable, right continuous, and increasing process such that $A_i(0)=0$ and $\mathbb{E}(A_i(t))<+\infty$ for all $t\geq 0$. From now on the process $A=(A(t))_{t\geq 0}$ is denoted by $\langle M_1,M_2\rangle=(\langle M_1,M_2\rangle_t)_{t\geq 0}$ and it is called the {\bf predictable cross quadratic variation} of $(M_1(t)\cdot M_2(t))_{t\geq 0}$. 
\end{cor}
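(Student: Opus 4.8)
The plan is to reduce the statement to the Doob--Meyer decomposition (Theorem \ref{dm_decomp}) by polarization. First I would observe that $M_1+M_2$ and $M_1-M_2$ are again square integrable c\`adl\`ag $(\Sigma_t)_{t\geq 0}$-martingales (square integrability for each fixed $t$ follows from $M_1(t),M_2(t)\in L^2(\Omega)$ and Minkowski's inequality, and the martingale property is preserved under linear combinations). Applying Theorem \ref{dm_decomp} to each of them gives, up to indistinguishability,
\begin{equation}
(M_1(t)+M_2(t))^2=Y_+(t)+\langle M_1+M_2\rangle_t, \qquad (M_1(t)-M_2(t))^2=Y_-(t)+\langle M_1-M_2\rangle_t,
\end{equation}
where $Y_\pm$ are c\`adl\`ag martingales and $\langle M_1\pm M_2\rangle$ are predictable, right-continuous, increasing processes, null at $0$, with $\mathbb{E}(\langle M_1\pm M_2\rangle_t)<+\infty$ for all $t\geq 0$.

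Next, using the polarization identity $ab=\tfrac14\big((a+b)^2-(a-b)^2\big)$ with $a=M_1(t)$, $b=M_2(t)$, one gets
\begin{equation}
M_1(t)M_2(t)=\tfrac14\big(Y_+(t)-Y_-(t)\big)+\tfrac14\langle M_1+M_2\rangle_t-\tfrac14\langle M_1-M_2\rangle_t.
\end{equation}
Setting $Y(t):=\tfrac14(Y_+(t)-Y_-(t))$, $A_1(t):=\tfrac14\langle M_1+M_2\rangle_t$ and $A_2(t):=\tfrac14\langle M_1-M_2\rangle_t$, we see that $Y$ is a c\`adl\`ag martingale (a linear combination of such), while each $A_i$ is predictable, right-continuous, increasing, with $A_i(0)=0$ and $\mathbb{E}(A_i(t))<+\infty$. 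This establishes the existence part with $A=A_1-A_2$, and it is then natural to define $\langle M_1,M_2\rangle_t:=A_1(t)-A_2(t)=\tfrac14\big(\langle M_1+M_2\rangle_t-\langle M_1-M_2\rangle_t\big)$.

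For uniqueness, suppose $M_1M_2=Y+A=Y'+A'$ are two decompositions as in the statement. Since $A(0)=A'(0)=0$, both $Y$ and $Y'$ start at $M_1(0)M_2(0)$, so $L:=Y-Y'=A'-A$ is a c\`adl\`ag martingale null at $0$; at the same time $L$ is predictable (a difference of predictable processes) and of locally finite variation on compacts (a difference of increasing processes). A predictable, right-continuous martingale of locally finite variation that vanishes at $0$ is indistinguishable from $0$; this is exactly the lemma that underlies the uniqueness in Theorem \ref{dm_decomp}, see \cite{situ}. Hence $Y=Y'$ and $A=A'$ up to indistinguishability. The main obstacle is precisely this last step: supplying (or citing) the fact that a predictable finite-variation martingale null at $0$ is zero; if a self-contained argument is preferred, one can write $L$ as a difference of two predictable increasing processes and compare, via the uniqueness in Theorem \ref{dm_decomp}, the two resulting candidate Doob--Meyer decompositions of $L^2$, forcing the increasing parts to coincide and hence $L\equiv 0$.
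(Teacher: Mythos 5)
Your proof is correct and follows exactly the route the paper intends: the corollary is stated as a consequence of the Doob--Meyer decomposition (Theorem \ref{dm_decomp}), and your polarization argument $M_1M_2=\tfrac14\bigl((M_1+M_2)^2-(M_1-M_2)^2\bigr)$ applied to the square integrable c\`adl\`ag martingales $M_1\pm M_2$ is the standard derivation, with uniqueness correctly reduced to the fact that a predictable c\`adl\`ag martingale of finite variation null at $0$ vanishes. No gaps worth flagging; citing \cite{situ} for that last lemma is appropriate.
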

The following result is a direct consequnence of Corollary \ref{cross_var1}.
\begin{cor}
\label{cross_var2}
Suppose that $(M_i(t),\Sigma_t)_{t\geq 0}$, $i=1,2$, are  square integrable c\`adl\`ag martingales such that the process $(M_1(t)\cdot M_2(t), \Sigma_t)_{t\geq 0}$ is a martingale. Then $\langle M_1,M_2\rangle_t=0$ for all $t\geq 0$.
\end{cor}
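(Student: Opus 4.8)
The plan is to deduce the statement directly from the uniqueness clause of Corollary \ref{cross_var1}, with no new computation required. First I would recall exactly what Corollary \ref{cross_var1} gives us: the process $(M_1(t)M_2(t))_{t\geq 0}$ admits a decomposition $M_1(t)M_2(t)=Y(t)+A(t)$ in which $(Y(t),\Sigma_t)_{t\geq 0}$ is a c\`adl\`ag martingale and $A=A_1-A_2$ is a difference of predictable, right-continuous, increasing processes with $A_i(0)=0$ and $\mathbb{E}(A_i(t))<+\infty$; moreover this decomposition is \emph{unique}, and by definition $\langle M_1,M_2\rangle=A$.

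Next I would exhibit a second decomposition of exactly this admissible form, using the extra hypothesis. Since $(M_1(t)M_2(t))_{t\geq 0}$ is assumed to be a martingale, I can simply write $M_1(t)M_2(t)=\tilde Y(t)+\tilde A(t)$ with $\tilde Y(t):=M_1(t)M_2(t)$ and $\tilde A(t):=0$. Here $\tilde Y$ is a square integrable c\`adl\`ag martingale by hypothesis, and $\tilde A\equiv 0=0-0$ is trivially the difference of two predictable, right-continuous, increasing processes vanishing at $0$ with integrable values (take both summands to be the null process). Hence the pair $(\tilde Y,\tilde A)$ meets every structural requirement appearing in Corollary \ref{cross_var1}.

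Finally, I would invoke the uniqueness part of Corollary \ref{cross_var1} to conclude that $(Y,A)$ and $(\tilde Y,\tilde A)$ coincide (up to indistinguishability), so that $\langle M_1,M_2\rangle_t=A(t)=\tilde A(t)=0$ for all $t\geq 0$.

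There is essentially no real obstacle here; the statement is a one-line consequence of uniqueness. The only point that deserves a moment's attention is verifying that the null process genuinely qualifies as ``a difference of two predictable, right-continuous, increasing processes starting at $0$ with integrable values,'' so that the second decomposition really falls under the uniqueness statement — but this is immediate. For completeness I might also remark that the uniqueness invoked in Corollary \ref{cross_var1} is uniqueness of the pair $(Y,A)$ subject to these constraints (inherited ultimately from the uniqueness in the Doob--Meyer decomposition, Theorem \ref{dm_decomp}), which is precisely what the argument uses.
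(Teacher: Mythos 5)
Your proof is correct and follows exactly the route the paper intends: Corollary \ref{cross_var2} is stated there as a direct consequence of the uniqueness in Corollary \ref{cross_var1}, and your argument — comparing the canonical decomposition with the trivial one $M_1M_2 = M_1M_2 + 0$, noting that the null process is admissible, and concluding $\langle M_1,M_2\rangle = 0$ by uniqueness — is precisely that. No gaps; your side remark that uniqueness ultimately comes from the Doob--Meyer decomposition (Theorem \ref{dm_decomp}) matches the paper's framework.
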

\begin{prop} 
\label{qv_NW}
Let $W=(W(t))_{t\in [0,+\infty)}$ be a $m_W$-dimensional $(\Sigma_t)_{t\in [0,+\infty)}$-Wiener process, while $N=(N(t))_{t\in [0,+\infty)}$ is a $m_N$-dimensional non-homogeneous $(\Sigma_t)_{t\in [0,+\infty)}$-Poisson process, both defined on  $(\Omega,\Sigma,\mathbb{P})$, and such that the $\sigma$-fields $\Sigma^W_{\infty}$, $\Sigma^N_{\infty}$ are independent. (i.e. $W$ and $N$ are independent). We assume that each intensity $\lambda_j:[0,T]\to (0,+\infty)$ of $(N^j(t))_{t\in [0,+\infty)}$ is Borel and integrable, $j=1,2,\ldots,m_N$.  If $i\neq j$ then for any $t\geq 0$
\begin{eqnarray}
\label{qv_1}
	&& \langle W^i,W^j\rangle_t = 0,\\
\label{qv_2}	
	&& \langle \tilde N^i,\tilde N^j\rangle_t = 0,
\end{eqnarray}
while for $i=j$
\begin{eqnarray}
\label{qv_21}
	&& \langle W^i\rangle_t=\langle W^i, W^i\rangle_t=t,\\
	&& \langle \tilde N^i\rangle_t=\langle \tilde N^i,\tilde N^i\rangle_t = m_i(t).
\end{eqnarray}
If $\sigma([W(t)-W(s),N(t)-N(s)])$ is independent of $\Sigma_s$ for any $0\leq s<t$ then for any $i\in\{1,\ldots,m_W\}$, $j\in\{1,\ldots,m_N\}$ and $t\geq 0$
\begin{equation}
\label{qv_3}
 \langle W^i,\tilde N^j\rangle_t = 0,
\end{equation}
\end{prop}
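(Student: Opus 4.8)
The plan is to reduce every assertion to two facts already available: the uniqueness in the Doob--Meyer decomposition (Theorem \ref{dm_decomp}) for the diagonal identities, and Corollary \ref{cross_var2} (product of two square-integrable c\`adl\`ag martingales is a martingale $\Longrightarrow$ their predictable cross quadratic variation vanishes) for the off-diagonal identities. Throughout, each of $W^i$ and $\tilde N^j$ is a square-integrable c\`adl\`ag $(\Sigma_t)$-martingale, and products of two of them are adapted and, by the Cauchy--Schwarz inequality, integrable, so the ambient hypotheses of Theorem \ref{dm_decomp} and Corollary \ref{cross_var2} are met without further comment.

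For the diagonal identities: each component $W^i$ is a one-dimensional $(\Sigma_t)$-Wiener process, so \eqref{decomp_1} applied to $W^i$ gives $(W^i(t))^2=\big((W^i(t))^2-t\big)+t$, with the first summand a c\`adl\`ag $L^2$-martingale and $t\mapsto t$ a predictable, right-continuous, increasing process vanishing at $0$ with finite expectation; uniqueness in Theorem \ref{dm_decomp} then forces $\langle W^i\rangle_t=t$, which is \eqref{qv_21}. Likewise \eqref{decomp_3} applied to $\tilde N^i$ exhibits $(\tilde N^i(t))^2=\big((\tilde N^i(t))^2-m_i(t)\big)+m_i(t)$ as a Doob--Meyer decomposition, since $m_i$ is deterministic, continuous, increasing, $m_i(0)=0$, $\mathbb{E}(m_i(t))=m_i(t)<+\infty$, hence predictable; so $\langle\tilde N^i\rangle_t=m_i(t)$.

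For \eqref{qv_1}, \eqref{qv_2}, \eqref{qv_3} the uniform strategy is to show the product $M_1(t)M_2(t)$ is a martingale and then apply Corollary \ref{cross_var2}. Fix $0\le s<t$, write $M_k(t)=M_k(s)+\big(M_k(t)-M_k(s)\big)$, expand, and condition on $\Sigma_s$: the terms $M_1(s)\,\mathbb{E}\big(M_2(t)-M_2(s)\mid\Sigma_s\big)$ and $M_2(s)\,\mathbb{E}\big(M_1(t)-M_1(s)\mid\Sigma_s\big)$ vanish since $M_1,M_2$ are martingales, so it remains to see that $\mathbb{E}\big((M_1(t)-M_1(s))(M_2(t)-M_2(s))\mid\Sigma_s\big)=0$. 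Here the hypotheses enter: for \eqref{qv_1}, the increment $W(t)-W(s)$ is independent of $\Sigma_s$ by (iv) of the definition of the Wiener process and has covariance matrix $(t-s)I_{m_W}$ by (v), so for $i\ne j$ the conditional expectation equals $\operatorname{Cov}(W^i(t)-W^i(s),W^j(t)-W^j(s))=0$; for \eqref{qv_2}, property (vi) makes $\Sigma^{N^i}_\infty,\Sigma^{N^j}_\infty$ independent while $N(t)-N(s)$ is independent of $\Sigma_s$ by (iv), so the centered increments $\tilde N^i(t)-\tilde N^i(s)$ and $\tilde N^j(t)-\tilde N^j(s)$ are mutually independent and jointly independent of $\Sigma_s$, whence the conditional expectation equals $\mathbb{E}(\tilde N^i(t)-\tilde N^i(s))\cdot\mathbb{E}(\tilde N^j(t)-\tilde N^j(s))=0$; for \eqref{qv_3}, independence of $W$ and $N$ gives independence of $W^i(t)-W^i(s)$ and $\tilde N^j(t)-\tilde N^j(s)$, and the stated hypothesis that $\sigma([W(t)-W(s),N(t)-N(s)])$ is independent of $\Sigma_s$ gives joint independence from $\Sigma_s$, so again the conditional expectation factors into a product of two mean-zero factors. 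In each case $M_1M_2$ is a martingale and Corollary \ref{cross_var2} delivers the vanishing of the cross variation.

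The only delicate point — the main obstacle — is the independence bookkeeping justifying the replacement of $\mathbb{E}\big((M_1(t)-M_1(s))(M_2(t)-M_2(s))\mid\Sigma_s\big)$ by the product of unconditional expectations: it is not enough that each increment is separately independent of $\Sigma_s$, one needs the \emph{pair} of increments (and, in the Poisson case, the two increments relative to each other) to be jointly independent, which is precisely what (iv), (v), (vi), the $W$--$N$ independence, and the extra assumption in \eqref{qv_3} are tailored to supply. One should be careful to spell out, using these hypotheses, that the relevant $\sigma$-fields are mutually independent before invoking the factorization.
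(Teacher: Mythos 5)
Your proof is correct and follows essentially the same route as the paper: the diagonal identities \eqref{qv_21} are read off from the decompositions \eqref{decomp_1} and \eqref{decomp_3} via Doob--Meyer uniqueness, and each off-diagonal identity is obtained by expanding increments, conditioning on $\Sigma_s$, using the independence hypotheses to factor the product of increments into a product of zero means, and then invoking Corollary \ref{cross_var2}. The only difference is cosmetic: the paper writes out this computation only for \eqref{qv_3} and declares \eqref{qv_1}, \eqref{qv_2} analogous, whereas you spell out the independence bookkeeping in all three cases.
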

{\bf Proof.} Equalities \eqref{qv_21} are direct consequences of \eqref{decomp_1} and \eqref{decomp_3}. 

 We only show \eqref{qv_3}, since the proofs of \eqref{qv_1},\eqref{qv_2} are analogous.

For any $i,j$ and $0\leq s<t$ we have
\begin{eqnarray}
&&	\mathbb{E}(W^i(t)\cdot\tilde N^j(t) \ | \ \Sigma_s)=\notag\\
&&=\mathbb{E}\Bigl((W^i(t)-W^i(s)+W^i(s))\cdot (\tilde N^j(t)-\tilde N^j(s)+\tilde N^j(s)) \ | \ \Sigma_s\Bigr)\notag\\
&&=\mathbb{E}\Bigl((W^i(t)-W^i(s))\cdot (\tilde N^j(t)-\tilde N^j(s)) \ | \ \Sigma_s\Bigr)\notag\\
&&\quad+\tilde N^j(s)\cdot \mathbb{E}\Bigl( W^i(t)-W^i(s) \ | \ \Sigma_s\Bigr)+W^i(s)\cdot \mathbb{E}\Bigl( \tilde N^j(t)-\tilde N^j(s) \ | \ \Sigma_s\Bigr)\notag\\
&&\quad+\mathbb{E}\Bigl(W^i(s)\cdot\tilde N^j(s) \ | \ \Sigma_s\Bigr)\notag\\
&&=\mathbb{E}\Bigl((W^i(t)-W^i(s))\cdot (\tilde N^j(t)-\tilde N^j(s))\Bigr)+W^i(s)\cdot\tilde N^j(s)\notag\\
&&=\mathbb{E}\Bigl(W^i(t)-W^i(s)\Bigr)\cdot \mathbb{E}\Bigl(\tilde N^j(t)-\tilde N^j(s)\Bigr)+W^i(s)\cdot\tilde N^j(s)\notag\\
&&=W^i(s)\cdot\tilde N^j(s).
\end{eqnarray} 
Therefore, the process $(W^i(t)\cdot\tilde N^j(t),\Sigma_t)_{t\geq 0}$ is a martingale and by Corollary \ref{cross_var2} we get \eqref{qv_3}. \ \ \ $\blacksquare$ \\ \\
We now recall basic properties of stochastic integrals. Let $X$ be a semimartingale with respect to $(\Sigma_t)_{t\in [0,+\infty)}$. Moreover, let $f,f_1,f_2\in \mathbb{L}$ and $\alpha,\beta\in\mathbb{R}$ and suppose that  $\rho,\tau$ are $(\Sigma_t)_{t\in [0,+\infty)}$-stopping times, such that $\rho\leq\tau$ a.s. Recall that the stochastic interval $(\rho,\tau]$ is defined as follows
\begin{equation}
	(\rho,\tau]=\{(\omega,t)\in\Omega\times [0,+\infty) \ | \ \rho(\omega)<t\leq\tau(\omega)\}.
\end{equation}
Moreover the process $\mathbf{1}_{(\rho,\tau]}$ belongs to $\mathbb{L}$. The following holds.
\begin{itemize}
	\item  For all $t\in [0,+\infty)$
	\begin{equation}
		\int\limits_0^t (\alpha f_1(s)+\beta f_2(s))dX(s)=\alpha\int\limits_0^t f_1(s)dX(s)+\beta\int\limits_0^t f_2(s)dX(s) \ a.s.
	\end{equation}
	\item The stochastic integral process $\displaystyle{Y=(Y(t))_{t\geq 0}=\Bigl(\int\limits_0^t f_1(s)dX(s)\Bigr)_{t\in [0,+\infty)}}$ is itself a semimartingale, and
	\begin{equation}
		\int\limits_0^t f_2(s)dY(s)=\int\limits_0^t f_2(s)f_1(s)dX(s).
	\end{equation}
	\item For all $t\in [0,+\infty)$
	\begin{eqnarray}
	&&\int\limits_0^t\mathbf{1}_{(\rho,\tau]}(s)f(s)dX(s)=\int\limits_0^t\mathbf{1}_{(0,\tau]}(s)f(s)dX(s)-\int\limits_0^t\mathbf{1}_{(0,\rho]}(s)f(s)dX(s)\notag\\
	&&=\int\limits_0^{t\wedge \tau}f(s)dX(s)-\int\limits_0^{t\wedge \rho}f(s)dX(s)=\int\limits_{t\wedge \rho}^{t\wedge \tau}f(s)dX(s)
	\end{eqnarray}
	\item If $Z$ is a $\Sigma_{\rho}$-measurable random variable then
	\begin{equation}
		\int\limits_{\rho}^{\tau}Zf(s)dX(s)=Z\int\limits_{\rho}^{\tau}f(s)dX(s) \ a.s.
	\end{equation}
	\item Let $f=(f(t))_{t\geq 0}$ be a {\bf predictable simple process}, i.e., there is a sequence of stopping times
	\begin{equation}
		0=\tau_0<\tau_1<\ldots<\tau_n<\ldots
\end{equation}		
such that for all $t\geq 0$
	\begin{equation}
		f(t)=\xi_0\mathbf{1}_{\{0\}}(t)+\sum_{i=0}^{+\infty}\xi_i\mathbf{1}_{(\tau_i,\tau_{i+1}]}(t),
	\end{equation}
	where $\xi_0$ is $\Sigma_0$-measurable and $\xi_i$ are $\Sigma_{\tau_i}$-measurable random variables. Then for all $t\geq 0$
	\begin{eqnarray}
		&&\int\limits_0^t f(s)dX(s)=\sum_{i=0}^{+\infty}\xi_i\cdot\Bigl(X(\tau_{i+1}\wedge t)-X(\tau_{i}\wedge t)\Bigr)\notag\\
		&&=\sum_{i=0}^{+\infty}\xi_i\cdot\Bigl( M(\tau_{i+1}\wedge t)-M(\tau_{i}\wedge t)\Bigr)+\sum_{i=0}^{+\infty}\xi_i\cdot\Bigl(A(\tau_{i+1}\wedge t)-A(\tau_{i}\wedge t)\Bigr) \ a.s.
	\end{eqnarray}
	\item If the semimartingale $X$ has paths of finite variation on compacts, then the stochastic integral $\displaystyle{\int\limits_0^t f(s)dX(s)}$ is indistinguishable from the Lebesgue-Stieltjes integral, computed path-by-path.
\end{itemize}
If $Y\in\mathbb{L}$ then 
\begin{equation}
\label{qv_31}
	\int\limits_0^t Y(s)dN(s)=\int\limits_0^t Y(s)d\tilde N(s)+\int\limits_0^t Y(s)dm(s)=\int\limits_0^t Y(s)d\tilde N(s)+\int\limits_0^t Y(s)\lambda(s)ds,
\end{equation}
but, on the other hand, the stochastic integrals $\displaystyle{\int\limits_0^t Y(s)dN(s)}$, $\displaystyle{\int\limits_0^t Y(s)d\tilde N(s)}$ are equivalent to the respective Lebesgue-Stieltjes integrals. Moreover, since $N$ is a pure jump process, we get
\begin{equation}
	\int\limits_0^t Y(s)dN(s)=\sum\limits_{0<s\leq t} Y(s)\Delta N(s)=\sum\limits_{k=1}^{N(t)}Y(\tau_k),
\end{equation} 
while
\begin{equation}
	\int\limits_0^t Y(s)dJ(s)=\sum\limits_{0<s\leq t} Y(s)\Delta J(s)=\sum\limits_{k=1}^{N(t)}Y(\tau_k)\xi_k,
\end{equation}
where $(\tau_k)_{k\in\mathbb{N}}$ are the jump times of $N$. 

Below we present main properties of stochastic integrals with respect to  cadlag $L^2$-martingales. 

For a given $L^2$-martingale $M$ let us consider the following subset of $\mathbb{L}$
\begin{equation}
 	\mathbb{L}^2(M)=\Bigl\{X\in\mathbb{L} \ | \ 
 	\forall_{T>0} \ \mathbb{E}\int\limits_0^T |X(s)|^2d\langle M\rangle_s<+\infty\Bigr\}.
\end{equation}
Suppose that $(M(t),\Sigma_t)_{t\in [0,+\infty)}$, $(M_1(t),\Sigma_t)_{t\in [0,+\infty)}$, $(M_2(t),\Sigma_t)_{t\in [0,+\infty)}$ are square integrable c\`adl\`ag martingales. Let $f\in\mathbb{L}^2(M)$.
\begin{itemize}
	\item For all $0\leq s<t$ we have
		\begin{equation}
			\mathbb{E}\Bigl(\int\limits_0^t f(u)dM(u) \ \Bigl| \ \Sigma_s\Bigr)=\int\limits_0^s f(u)dM(u) \ a.s.,
		\end{equation}
		and, in particular,
		\begin{equation}
			\mathbb{E}\Bigl(\int\limits_0^t f(u)dM(u) \Bigr)=0.		
		\end{equation}	
	\item
	\begin{equation}
			\mathbb{E}\Bigl(\Bigl(\int\limits_s^t f(u)dM(u)\Bigr)^2 \ \Bigl| \ \Sigma_s\Bigr)=\mathbb{E}\Bigl(\int\limits_s^t |f(u)|^2 d\langle M\rangle_u \ \Bigl| \ \Sigma_s\Bigr) \ a.s.
	\end{equation}		
	\item If $f_i\in\mathbb{L}^2(M_i)$, $i=1,2$, then for all $0\leq s<t$
		\begin{equation}
		\label{covar_war_int}
			\mathbb{E}\Bigl(\int\limits_s^t f_1(u)dM_1(u)\cdot\int\limits_s^t f_2(u)dM_2(u)\ \Bigl| \ \Sigma_s\Bigr)=\mathbb{E}\Bigl(\int\limits_s^t f_1(u)f_2(u)d\langle M_1,M_2\rangle_u\ \Bigl| \ \Sigma_s\Bigr)		\ a.s.	
		\end{equation}
\end{itemize}
If $X$ is an $\mathbb{R}^d$-valued process such that each coordinate $X^i$, $i=1,2,\ldots,d$, is an $\mathbb{R}$-valued semimartingale, then we call $X$ the vector valued semimartingale. For an $\mathbb{R}^{d\times m}$-valued stochastic process $Y\in\mathbb{L}$ we define
\begin{equation}
	\int\limits_0^t Y(s)dX(s)=\Bigl(\sum\limits_{j=1}^m\int\limits_0^t Y^{ij}(s)dX^j(s)\Bigr)_{i=1,2,\ldots,d}.
\end{equation} 
For example in the case of multidimensional non-homogeneous Poisson process $N$ we have that
\begin{equation}
	\int\limits_0^t Y(s)dN(s)=\Bigl(\sum\limits_{j=1}^{m_N}\int\limits_0^t Y^{ij}(s)dN^j(s)\Bigr)_{i=1,2,\ldots,d}=\int\limits_0^t Y(s)d\tilde N(s)+\int\limits_0^t Y(s)dm(s),
\end{equation}
where
\begin{equation}
	\int\limits_0^t Y(s)d\tilde N(s)=\Bigl(\sum\limits_{j=1}^{m_N}\int\limits_0^t Y^{ij}(s)d\tilde N^j(s)\Bigr)_{i=1,2,\ldots,d},
\end{equation}
\begin{equation}
	\int\limits_0^t Y(s)dm(s)=\Bigl(\sum\limits_{j=1}^{m_N}\int\limits_0^t Y^{ij}(s)dm_j(s)\Bigr)_{i=1,2,\ldots,d}=\Bigl(\sum\limits_{j=1}^{m_N}\int\limits_0^t Y^{ij}(s)\lambda_j(s)ds\Bigr)_{i=1,2,\ldots,d}.
\end{equation} 
Let us now assume that $Y^{ij}\in\mathbb{L}^2(\tilde N^j)$ for $i=1,2,\ldots,d$, $j=1,2,\ldots,m_N$. From the definition above, Proposition \ref{qv_NW} and \eqref{covar_war_int} we arrive at the following version of the multidimensional It\^o isometry
\begin{eqnarray}
	&&\mathbb{E}\Bigl\|\int\limits_0^t Y(s)d\tilde{N}(s)\Big\|^2=\mathbb{E}\sum_{k=1}^d\Bigl|\sum_{j=1}^{m_N}\int\limits_0^t Y^{kj}(s)d\tilde{N}^j(s)\Bigl|^2\notag\\
	&&=\sum_{k=1}^d\sum_{j_1,j_2=1}^{m_N}\mathbb{E}\Bigl(\int\limits_0^t Y^{kj_1}(s)d\tilde{N}^{j_1}(s)\cdot\int\limits_0^t Y^{k j_2}(s)d\tilde{N}^{j_2}(s)\Bigr)\notag\\
	&&=\sum_{k=1}^d\sum_{j_1,j_2=1}^{m_N}\mathbb{E}\Bigl(\int\limits_0^t Y^{kj_1}(s)Y^{kj_2}(s)d\langle\tilde N^{j_1},\tilde N^{j_2}\rangle_s\Bigr)\notag\\
	&&=\sum_{k=1}^d\sum_{j=1}^{m_N}\mathbb{E}\int\limits_0^t|Y^{kj}(s)|^2\lambda_j(s)ds=\sum_{j=1}^{m_N}\mathbb{E}\int\limits_0^t\|Y^{(j)}(s)\|^2\lambda_j(s)ds,
\end{eqnarray}
where $Y^{(j)}$ is the $j$th column of the matrix $Y=[Y^{kj}]_{1\leq k\leq d, 1\leq j\leq m_N}$. In similar way, if $Y^{ij}\in\mathbb{L}^2(W^j)$ for $i=1,2,\ldots,d$, $j=1,2,\ldots,m_W$ we get
\begin{equation}
	\mathbb{E}\Bigl\|\int\limits_0^t Y(s)dW(s)\Big\|^2=\mathbb{E}\int\limits_0^t\|Y(s)\|^2ds.
\end{equation}
\section{Exercises}
\begin{itemize}
	\item [1.] Let $-\infty<\alpha<\beta<+\infty$ and suppose $Z:\Omega\to\mathbb{R}^{d\times m_W}$ to be $\Sigma_{\alpha}$-measurable random vector. Show that
\begin{equation}
	\int\limits_{\alpha}^{\beta} Z dW(t)=Z\cdot (W(\beta)-W(\alpha)).
\end{equation}
	\item [2.] Let $-\infty<\alpha<\beta<+\infty$ and suppose $Z:\Omega\to\mathbb{R}^{d\times m_N}$ to be $\Sigma_{\alpha}$-measurable random vector. Show that
\begin{equation}
	\int\limits_{\alpha}^{\beta} Z dN(t)=Z\cdot (N(\beta)-N(\alpha)).
\end{equation}
\end{itemize}
\chapter{Jump-diffusion stochastic differential equations}
In this section by $\|\cdot\|$ we denote either the Euclidean or the Frobenius norm, and the meaning is clear from the context.

Let $(\Omega,\Sigma,\mathbb{P})$ be a complete probability space equipped with filtration $(\Sigma_t)_{t\in [0,+\infty)}$ that satisfies the usual conditions.
Let $W=(W(t))_{t\in [0,+\infty)}$ be a $m_W$-dimensional $(\Sigma_t)_{t\in [0,+\infty)}$-Wiener process, while $N=(N(t))_{t\in [0,+\infty)}$ is a $m_N$-dimensional non-homogeneous $(\Sigma_t)_{t\in [0,+\infty)}$-Poisson process, both defined on  $(\Omega,\Sigma,\mathbb{P})$, and such that the $\sigma$-fields $\Sigma^W_{\infty}$, $\Sigma^N_{\infty}$ are independent. (i.e. $W$ and $N$ are independent). We assume that each intensity $\lambda_j:[0,T]\to (0,+\infty)$ of $(N^j(t))_{t\in [0,+\infty)}$ is Borel and integrable, $j=1,2,\ldots,m_N$.  
\begin{itemize}
	\item [(A0)] Let $\xi:\Omega\to\mathbb{R}^d$ be a $\Sigma_0$-measurable random vector, such that $\mathbb{E}\|\xi\|^2<+\infty$. 
\end{itemize}
Let us fix functions
\begin{eqnarray}
	&&a:[0,T]\times\mathbb{R}^d\to \mathbb{R}^d,\\
	&&b:[0,T]\times\mathbb{R}^d\to \mathbb{R}^{d\times m_W},\\
	&&c:[0,T]\times\mathbb{R}^d\to \mathbb{R}^{d\times m_N},
\end{eqnarray}
such that 
\begin{itemize}
	\item [(A1)] $a,b,c\in C([0,T]\times\mathbb{R}^d)$
	\item [(A2)]there exists $K\in (0,+\infty)$ such that for $f\in\{a,b,c\}$ and for all $t\in [0,T],x,y\in\mathbb{R}^d$
	\begin{equation}
		\|f(t,x)-f(t,y)\|\leq K\|x-y\|.
	\end{equation}
\end{itemize}
 We consider he following {\it jump-diffusion stochastic differential equation}
\begin{equation}
	\label{SDE_PROBLEM}
		\left\{ \begin{array}{ll}
			dX(t)=a(t,X(t-))dt+ b(t,X(t-))dW(t)+c(t,X(t-))dN(t), &t\in [0,T], \\
			X(0)=\xi, 
		\end{array}\right.
\end{equation}
where we denote $X(t-)=\lim\limits_{s\to t-}X(s)$, we take $X(0-):=X(0)$ (no jump at $t=0$) and $X(T+):=X(T)$. The SDE \eqref{SDE_PROBLEM} can also be rewritten as an SDE driven by the multidimensional  semimartingale $Y=[t,W,N]=[t,W^1,\ldots,W^{m_W},N^1,\ldots,N^{m_N}]$
\begin{equation}
	\label{SDE_PROBLEM2}
		\left\{ \begin{array}{ll}
			dX(t)=F(t,X(t-))dY(t), &t\in [0,T], \\
			X(0)=\xi, 
		\end{array}\right.
\end{equation}
where
\begin{equation}
F(t,y)=[F_{ij}(t,y)]_{1\leq i\leq d,1\leq j\leq 1+m_W+m_N}=
	\begin{bmatrix}
a_{1} & b_{12} & \cdots & b_{1 m_W} & c_{11} & \cdots & c_{1 m_N} \\
a_{2} & b_{22} & \cdots & b_{2 m_W} & c_{21} & \cdots & c_{2 m_N} \\
\vdots  & \vdots  & \ddots & \vdots & \vdots  & \ddots & \vdots \\
a_{d} & b_{d,1} & \cdots & b_{d m_W} & c_{d1} & \cdots & c_{d m_N} 
\end{bmatrix}(t,y).
\end{equation}
Under the assumptions $(A1)$, $(A2)$ we have that $F\in C([0,T]\times\mathbb{R}^d)$ and for all $t\in [0,T],x,y\in\mathbb{R}^d$
\begin{equation}
\label{cond_F_LIP}
	\|F(t,x)-F(t,y)\|\leq K_1\|x-y\|.
\end{equation}
Of course the stochastic differential equation \eqref{SDE_PROBLEM2} must be understood as an integral equation
\begin{equation}
	X(t)=\xi+\int\limits_0^t F(s,X(s-))dY(s), \ t\in [0,T].
\end{equation}
It follows from Theorem 12.8 at page 390 in \cite{KRAO} that under $(A0)$ and \eqref{cond_F_LIP} the SDE \eqref{SDE_PROBLEM2} has a unique strong solution $X=(X(t))_{t\in [0,T]}\in\mathbb{D}$.

A classical example is a following linear jump-diffusion SDEs
\begin{equation}
	\label{MERTON_SDE}
		\left\{ \begin{array}{ll}
			dX(t)=\mu X(t)dt+\sigma X(t) dW(t)+cX(t-)dN(t), &t\in [0,T], \\
			X(0)=x_0, 
		\end{array}\right.
\end{equation}
that models the stock price in the Merton's model, see \cite{PBL}. We assume  that $\mu\in\mathbb{R},\sigma>0, x_0>0$  and $c>-1$. The solution of (\ref{MERTON_SDE}) is
\begin{equation}
\label{gBm_jumps_def}
	X(t)=x_0 \exp\Bigl((\mu-\frac{1}{2}\sigma^2)t+\sigma W(t)\Bigr)\cdot (1+c)^{N(t)}.
\end{equation}

The results below are known for general SDEs driven by semimartingales.
\begin{lem} 
\label{mean_sq_reg_X}
Under the assumptions (A0), (A1), (A2)  there exists $C\in (0,+\infty)$ such that
\begin{equation}
	\mathbb{E}\Bigl(\sup\limits_{0\leq t\leq T}\|X(t)\|^2\Bigr)\leq C (1+\mathbb{E}\|\xi\|^2),
\end{equation}
and for all $t,s\in [0,T]$
\begin{equation}
	\mathbb{E}\|X(t)-X(s)\|^2\leq C\cdot (1+\mathbb{E}\|\xi\|^2)\cdot |t-s|.
\end{equation}
Moreover, if there exists $p\in [2,+\infty)$ such that $\mathbb{E}\|\xi\|^p<+\infty$ then there exists $C_p\in (0,+\infty)$ such that
\begin{equation}
\label{mom_b_lp_X}
    \mathbb{E}\Bigl(\sup\limits_{0\leq t\leq T}\|X(t)\|^p\Bigr)\leq C_p (1+\mathbb{E}\|\xi\|^p),
\end{equation}
and for all $t,s\in [0,T]$
\begin{equation}
\label{lp_holder_reg_X}
	\Bigl(\mathbb{E}\|X(t)-X(s)\|^p\Bigr)^{1/p}\leq C_p\cdot (1+(\mathbb{E}\|\xi\|^p)^{1/p})\cdot |t-s|^{1/2},
\end{equation}
\end{lem}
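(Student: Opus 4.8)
The plan is to establish the four estimates via the standard fixed-point / Gronwall machinery for SDEs driven by the semimartingale $Y=[t,W,N]$, exploiting the Lipschitz bound \eqref{cond_F_LIP} on $F$, the Itô isometries recorded just before this section, and the Burkholder–Davis–Gundy inequality (Theorem \ref{BDG_DISC} in its continuous-time form). I would treat the $p\ge 2$ case directly; the $p=2$ assertions then follow as the special case $p=2$, so there is really one argument to run.

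First I would prove the moment bound \eqref{mom_b_lp_X}. Starting from the integral form $X(t)=\xi+\int_0^t F(s,X(s-))\,dY(s)$, split the integral into the drift part $\int_0^t a(s,X(s-))\,ds$, the diffusion part $\int_0^t b(s,X(s-))\,dW(s)$, and the jump part $\int_0^t c(s,X(s-))\,dN(s)$, further decomposing the last one as $\int_0^t c\,d\tilde N + \int_0^t c\,\lambda\,ds$ via \eqref{qv_31}. For a fixed $\tau\in[0,T]$, raise $\sup_{0\le t\le\tau}\|X(t)\|$ to the $p$th power, use the elementary inequality $\|\sum_{k=1}^m v_k\|^p \le m^{p-1}\sum_k\|v_k\|^p$, and take expectations. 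The drift and compensator terms are handled by Hölder in time: $\mathbb{E}\sup_{t\le\tau}\|\int_0^t a(s,X(s-))\,ds\|^p \le T^{p-1}\int_0^\tau \mathbb{E}\|a(s,X(s-))\|^p\,ds$, and by (A1),(A2) the linear-growth bound $\|a(s,x)\|\le \|a(s,0)\|+K\|x\|$ (with $s\mapsto a(s,0)$ bounded on $[0,T]$ by continuity) gives $\le C\int_0^\tau(1+\mathbb{E}\|X(s)\|^p)\,ds$. For the martingale parts $\int_0^\cdot b\,dW$ and $\int_0^\cdot c\,d\tilde N$, apply BDG to bound $\mathbb{E}\sup_{t\le\tau}\|\cdot\|^p$ by a constant times $\mathbb{E}(\langle\cdot\rangle_\tau)^{p/2}$; using the quadratic-variation formulas from Proposition \ref{qv_NW} (namely $\langle W^i\rangle_t=t$, $\langle\tilde N^i\rangle_t=m_i(t)$) and Jensen's inequality in time, these are again $\le C\int_0^\tau(1+\mathbb{E}\|X(s)\|^p)\,ds$. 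Setting $\psi(\tau)=\mathbb{E}\sup_{0\le t\le\tau}\|X(t)\|^p$, which is finite a priori since $X\in\mathbb{D}$ with $\mathbb{E}\|\xi\|^p<\infty$ (this finiteness is the one point needing a separate remark — it follows from the construction of the strong solution, or from a localization/truncation argument), we obtain $\psi(\tau)\le C_p(1+\mathbb{E}\|\xi\|^p) + C_p\int_0^\tau\psi(s)\,ds$, and Gronwall's lemma \ref{GLCW} yields \eqref{mom_b_lp_X}.

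Next, for the Hölder-in-time estimate \eqref{lp_holder_reg_X}, fix $0\le s<t\le T$ and write $X(t)-X(s)=\int_s^t a(u,X(u-))\,du+\int_s^t b(u,X(u-))\,dW(u)+\int_s^t c(u,X(u-))\,d\tilde N(u)+\int_s^t c(u,X(u-))\lambda(u)\,du$. Apply the same three-way estimate: the two time-integrals contribute $O(|t-s|^p)$ after Hölder, using linear growth of $a,c$ together with the already-proved moment bound \eqref{mom_b_lp_X}; the two stochastic integrals contribute $O(|t-s|^{p/2})$ via BDG and the isometries (again linear growth plus \eqref{mom_b_lp_X}). Since $|t-s|\le T$, the $O(|t-s|^p)$ terms are dominated by $O(|t-s|^{p/2})$, so $\mathbb{E}\|X(t)-X(s)\|^p\le C_p(1+\mathbb{E}\|\xi\|^p)|t-s|^{p/2}$; taking $p$th roots gives \eqref{lp_holder_reg_X}. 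The first two displayed inequalities are exactly the $p=2$ instances.

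The main obstacle I anticipate is purely bookkeeping rather than conceptual: one must be careful that all the stochastic integrands $b(\cdot,X(\cdot-))$ and $c(\cdot,X(\cdot-))$ lie in $\mathbb{L}^2$ of the respective martingales so that the isometries and BDG apply — this needs the a priori finiteness $\mathbb{E}\int_0^T\|X(s)\|^2\,ds<\infty$, which in turn relies on the solution already being in $\mathbb{D}$ with a square-integrable initial condition. A clean way to avoid circularity is to first run the entire Gronwall argument for the stopped process $X^{\tau_R}$ with $\tau_R=\inf\{t:\|X(t)\|\ge R\}$, obtaining bounds uniform in $R$, and then let $R\to\infty$ by monotone convergence; I would mention this localization step explicitly but not belabour the routine estimates inside it.
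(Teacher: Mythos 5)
The paper itself gives no proof of Lemma \ref{mean_sq_reg_X}: it simply cites the estimates as known for SDEs driven by semimartingales, so there is no internal argument to compare yours against. Judged on its own, your plan is the standard one (integral representation, splitting of the Poisson integral via the compensator, H\"older in time for the drift and compensator parts, BDG plus Gronwall, with a stopping-time localization), and for $p=2$ it is correct and complete in outline: there the It\^o isometries for $W$ and $\tilde N$ give exactly the $|t-s|$ bound and the Gronwall step gives the sup-bound.

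For $p>2$, however, there is a genuine gap in the treatment of the jump martingale $\int_0^t c(u,X(u-))\,d\tilde N(u)$. You invoke BDG in the form $\mathbb{E}\sup_{t\le\tau}\|M(t)\|^p\le C_p\,\mathbb{E}\bigl(\langle M\rangle_\tau\bigr)^{p/2}$ with $\langle \tilde N^i\rangle_t=m_i(t)$. BDG is stated with the quadratic variation $[M]$ (the sum of squared jumps), not the predictable bracket $\langle M\rangle$, and for discontinuous martingales the inequality with $\langle M\rangle$ alone is false when $p>2$: already for $M=\tilde N$ one has $\mathbb{E}\sup_{u\le t}|M(u)|^p\asymp \lambda t$ for small $t$, while $\mathbb{E}\langle M\rangle_t^{p/2}\asymp (\lambda t)^{p/2}$ is much smaller. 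The correct tool is a Kunita/Bichteler--Jacod type estimate, $\mathbb{E}\sup_{t\le\tau}\|\int_0^t c\,d\tilde N\|^p\le C_p\bigl[\mathbb{E}\bigl(\int_0^\tau\|c\|^2\lambda\,ds\bigr)^{p/2}+\mathbb{E}\int_0^\tau\|c\|^p\lambda\,ds\bigr]$, which still feeds into Gronwall and rescues the moment bound \eqref{mom_b_lp_X}. But the same correction shows that your claim that the jump integral over $[s,t]$ contributes $O(|t-s|^{p/2})$ cannot be pushed through: the extra term $\mathbb{E}\int_s^t\|c\|^p\lambda\,du$ is of order $|t-s|$, so for $c\neq 0$ and $p>2$ one only gets the rate $|t-s|^{1/p}$ after taking $p$th roots (take $a=b=0$, $c\equiv 1$, so $X(t)=\xi+N(t)$, to see the $|t-s|^{1/2}$ bound is simply false). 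This is exactly what the remark following the lemma in the paper records; the increment estimate \eqref{lp_holder_reg_X} in the stated form holds only for $p=2$ (or $c=0$), and your BDG step is the precise point where the argument for general $p$ breaks down.
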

\begin{rem} Note that if $c=0$ then for any $p\in [2,+\infty)$ there exists $C_p\in (0,+\infty)$ such that for all $t,s\in [0,T]$
\begin{equation}
	\Bigl(\mathbb{E}\|X(t)-X(s)\|^p\Bigr)^{1/p}\leq C_p\cdot (1+(\mathbb{E}\|\xi\|^p)^{1/p})\cdot |t-s|^{1/2},
\end{equation}
provided that $\mathbb{E}\|\xi\|^p<+\infty$. This is no longer true if $c\neq 0$. Due to the properties of the Poisson process we have in that case
\begin{equation}
	\Bigl(\mathbb{E}\|X(t)-X(s)\|^p\Bigr)^{1/p}\leq C_p\cdot (1+(\mathbb{E}\|\xi\|^p)^{1/p})\cdot |t-s|^{1/p}.
\end{equation}
So in the jump-diffusion case the $L^p(\Omega)$-regularity of $X$ is the same as in the pure Gaussian case only for $p=2$. This, rather low for large $p>2$, $L^p$-regularity of $X$ in the jump case affects the rates of convergence for approximating algorithms.
\end{rem}
We also prove to following useful fact on properties of the left continuous modification of $X$, i.e. $(X(t-))_{t\in [0,T]}$.
\begin{prop}
\label{unsigned_Lp}
    Let us assume (A0), (A1), (A2) and let there exists $p\in [2,+\infty)$ such that $\mathbb{E}\|\xi\|^p<+\infty$. Then we have that
    \begin{itemize}
        \item [(i)] for all $t\in [0,T]$
        \begin{equation}
        \label{X_rl_eq_1}
            \mathbb{P}( X(t)=X(t-)) =1,
        \end{equation}
        \item [(ii)] for all $t,s\in [0,T]$, $\diamond, *\in\{-,+\}$
        \begin{equation}
        \label{Mean_dist_Xst}
            \|X(t^{\diamond})-X(s^{*})\|_{L^p(\Omega)}=\|X(t)-X(s)\|_{L^p(\Omega)},
        \end{equation}
        \item [(iii)] the function
        \begin{equation}
            [0,T]\ni t\to \|X(t)\|_{L^p(\Omega)} \in [0,+\infty)
        \end{equation}
        is continuous.
    \end{itemize}
\end{prop}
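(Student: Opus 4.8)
The plan is to establish (i) first --- this is the heart of the matter --- and then obtain (ii) and (iii) as essentially formal consequences of (i) together with the moment estimates of Lemma \ref{mean_sq_reg_X}.

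For (i), the case $t=0$ is immediate from the convention $X(0-):=X(0)$. For $t\in(0,T]$ I would argue from the integral form $X(t)=\xi+\int_0^t F(s,X(s-))\,dY(s)$ of \eqref{SDE_PROBLEM}, where $Y=[t,W,N]$. Since $s\mapsto X(s-)$ is left-continuous (as $X$ is c\`adl\`ag) and $F$ is continuous by (A1), the integrand $s\mapsto F(s,X(s-))$ is left-continuous and adapted, hence predictable, and by the standard rule for the jumps of a stochastic integral with respect to a semimartingale we have $\Delta X(t)=F(t,X(t-))\cdot\Delta Y(t)$. The drift component $s\mapsto s$ and the Wiener part $W$ have continuous paths, so $\Delta Y(t)=[0,0,\Delta N(t)]$ and therefore $\Delta X(t)=c(t,X(t-))\,\Delta N(t)$, whence $\{X(t)\neq X(t-)\}\subseteq\{\Delta N(t)\neq 0\}$. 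It then remains to show that $N$ has no fixed time of discontinuity, i.e.\ $\mathbb{P}(\Delta N(t)\neq 0)=0$ for each fixed $t$. For each coordinate $j$, since $N^j$ is non-decreasing, integer-valued and c\`adl\`ag, one has $\{\Delta N^j(t)\geq 1\}=\bigcap_{n\ge n_0}\{N^j(t)-N^j(t-1/n)\geq 1\}$, a decreasing intersection of events; hence by continuity of $\mathbb{P}$ and the fact that $N^j(t)-N^j(t-1/n)$ has Poisson law with parameter $m_j(t)-m_j(t-1/n)$, we get $\mathbb{P}(\Delta N^j(t)\geq 1)=\lim_n\bigl(1-e^{-(m_j(t)-m_j(t-1/n))}\bigr)=0$, using that $m_j(u)=\int_0^u\lambda_j(r)\,dr$ is continuous because $\lambda_j$ is integrable. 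Summing over $j$ gives $\mathbb{P}(\Delta N(t)\neq 0)=0$, which proves (i).

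For (ii), fix $t$. By right-continuity of the paths of $X$ (with the convention $X(T+):=X(T)$) one has $X(t+)=X(t)$ for every $\omega$, and by (i) $X(t-)=X(t)$ almost surely; hence $X(t^{\diamond})=X(t)$ a.s.\ for both $\diamond\in\{-,+\}$, and likewise $X(s^{*})=X(s)$ a.s. Therefore $X(t^{\diamond})-X(s^{*})=X(t)-X(s)$ a.s., so the two $L^p(\Omega)$-norms coincide; both are finite since $\mathbb{E}\bigl(\sup_{0\le u\le T}\|X(u)\|^p\bigr)<+\infty$ by \eqref{mom_b_lp_X}. For (iii), the reverse triangle inequality in $L^p(\Omega)$ gives $\bigl|\,\|X(t)\|_{L^p(\Omega)}-\|X(s)\|_{L^p(\Omega)}\,\bigr|\le\|X(t)-X(s)\|_{L^p(\Omega)}$, and the right-hand side tends to $0$ as $s\to t$ by the $L^p(\Omega)$-regularity of $X$ from Lemma \ref{mean_sq_reg_X} (see also the remark following it); hence $t\mapsto\|X(t)\|_{L^p(\Omega)}$ is continuous.

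I expect the only genuinely substantive step to be (i); parts (ii) and (iii) are formal once (i) and the moment/regularity bounds of Lemma \ref{mean_sq_reg_X} are in hand. Within (i), the point needing care is the identification $\Delta X(t)=F(t,X(t-))\,\Delta Y(t)$, which rests on the left-continuity (hence predictability) of the integrand and on the general properties of stochastic integration against semimartingales recalled earlier in the notes; everything else there is a short computation with the Poisson law.
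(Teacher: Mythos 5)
Your proof is correct, but for part (i) you take a genuinely different route than the notes. The notes prove \eqref{X_rl_eq_1} purely analytically: by the $L^p$-modulus estimate \eqref{lp_holder_reg_X} one has $\lim_{s\to t-}\mathbb{E}\|X(t)-X(s)\|^p=0$, while pathwise $\|X(t)-X(s)\|^p\to\|\Delta X(t)\|^p$ as $s\to t-$, dominated by $2c_p\sup_{0\le v\le T}\|X(v)\|^p$ which is integrable by \eqref{mom_b_lp_X}; dominated convergence then gives $\mathbb{E}\|\Delta X(t)\|^p=0$. You instead argue structurally: $\Delta X(t)=c(t,X(t-))\Delta N(t)$ via the jump identity $\Delta\bigl(\int_0^{\cdot}H\,dY\bigr)(t)=H(t)\Delta Y(t)$ for c\`agl\`ad integrands, and then show that $N$ has no fixed times of discontinuity because the mean functions $m_j$ are continuous. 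Both arguments are valid; yours has the merit of isolating the probabilistic reason for the absence of fixed jumps (and does not really need the $p$th-moment assumption for (i) itself), whereas the notes' argument only uses tools already proved in the text. One caveat: the jump identity you invoke is \emph{not} among the properties of semimartingale integrals listed in these notes (only the representation $\int_0^t Y\,dN=\sum_{0<s\le t}Y(s)\Delta N(s)$ for the pure-jump integrators appears), so strictly speaking you are importing a standard fact from the cited literature (e.g.\ Protter) and should cite it as such rather than as something ``recalled earlier in the notes.'' Your parts (ii) and (iii) are in the same spirit as the paper's but more direct: for (ii) you use the a.s.\ identities $X(t^{\diamond})=X(t)$, $X(s^{*})=X(s)$ instead of the triangle/reverse-triangle manipulations, and for (iii) you combine the reverse triangle inequality in $L^p(\Omega)$ with the $L^p$-H\"older regularity of Lemma \ref{mean_sq_reg_X} (any positive exponent, $1/2$ or $1/p$, suffices), where the notes instead run a dominated-convergence argument for the left and right limits of $t\mapsto\|X(t)\|_{L^p(\Omega)}$; both are fine.
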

\begin{proof}
    We start by showing (i). By \eqref{lp_holder_reg_X} we have for all $t\in [0,T]$ that
    \begin{equation}
    \label{lim_e_st_1}
        \lim\limits_{s\to t-}\mathbb{E}\|X(t)-X(s)\|^p=0.
    \end{equation}
    Since $\mathbb{R}^d\ni x\to \|x\|^p$ is a continuous function, we get for all $t\in [0,T]$ 
    \begin{equation}
    \label{lim_e_st_2}
        \lim\limits_{s\to t-}\|X(t)-X(s)\|^p=\|\Delta X(t)\|^p
    \end{equation}
    and, for all $s\in [0,T]$,
    \begin{equation}
        \|X(t)-X(s)\|^p\leq 2c_p\cdot \sup\limits_{0\leq v\leq T}\|X(v)\|^p.
    \end{equation}
    where, by \eqref{mom_b_lp_X}, we have that $\mathbb{E}\Bigl(\sup\limits_{0\leq v\leq T}\|X(v)\|^p\Bigr)<+\infty$.
 Hence, by the Lebesgue’s dominated convergence theorem, \eqref{lim_e_st_1}, and \eqref{lim_e_st_2} we get for all $t\in [0,T]$
 \begin{equation}
 \label{lim_e_st_3}
     0=\lim\limits_{s\to t-}\mathbb{E}\|X(t)-X(s)\|^p=\mathbb{E}\|\Delta X(t)\|^p,
 \end{equation}
 which implies \eqref{X_rl_eq_1}.
 
 In order to proof (ii) note that for any $t\in [0,T]$, $*\in\{-,+\}$ we have
 \begin{equation}
     \|X(t^*)-X(t)\|_{L^p(\Omega)}\in\{0,\|\Delta X(t)\|_{L^p(\Omega)}\},
 \end{equation}
 hence, by \eqref{lim_e_st_3}, we have 
 \begin{equation}
 \label{Xtt_0}
      \|X(t^*)-X(t)\|_{L^p(\Omega)}=0.
 \end{equation}
 Applying \eqref{Xtt_0}, triangle inequality and reverse triangle inequality we get 
 \begin{eqnarray}
 \label{X_UP}
       &&\|X(t^{\diamond})-X(s^{*})\|_{L^p(\Omega)}\leq \|X(t^{\diamond})-X(t)\|_{L^p(\Omega)}+\|X(t)-X(s)\|_{L^p(\Omega)}+\|X(s)-X(s^{*})\|_{L^p(\Omega)}\notag\\
       &&=\|X(t)-X(s)\|_{L^p(\Omega)},
 \end{eqnarray}
 and
 \begin{eqnarray}
 \label{X_DOWN}
    && \|X(t^\diamond)-X(s^*)\|_{L^p(\Omega)}\geq \|X(t)-X(s)\|_{L^p(\Omega)}\notag\\
    &&-\|(X(t^{\diamond})-X(t))+(X(s)-X(s^*))\|_{L^p(\Omega)}=\|X(t)-X(s)\|_{L^p(\Omega)},
 \end{eqnarray}
 since, again by \eqref{Xtt_0},
 \begin{equation}
     0\leq \|(X(t^{\diamond})-X(t))+(X(s)-X(s^*))\|_{L^p(\Omega)}\leq \|X(t^{\diamond})-X(t)\|_{L^p(\Omega)}+\|X(s)-X(s^*)\|_{L^p(\Omega)}=0.
\end{equation}
  Combining \eqref{X_UP} and \eqref{X_DOWN} we get \eqref{Mean_dist_Xst}.

 Let us denote by $g(t)=\|X(t)\|_{L^p(\Omega)}$. We have for all $t\in [0,T]$ that
 \begin{equation}
     \lim\limits_{s\to t-}\|X(s)\|^p=\|X(t-)\|^p,
 \end{equation}
 and for all $s\in [0,T]$
 \begin{eqnarray}
     \|X(s)\|^p\leq \sup\limits_{0\leq v\leq T}\|X(v)\|^p,
 \end{eqnarray}
  where $\mathbb{E}\Bigl(\sup\limits_{0\leq v\leq T}\|X(v)\|^p\Bigr)<+\infty$.
 Hence, by the Lebesgue’s dominated convergence theorem
 \begin{equation}
 \label{g_left_lim}
     g(t-)=\lim\limits_{s\to t-}g(s)=\|X(t-)\|_{L^p(\Omega)}.
 \end{equation}
 From \eqref{lim_e_st_3}, \eqref{g_left_lim} we have for all $t\in [0,T]$
 \begin{equation}
     0\leq |g(t)-g(t-)|\leq \|X(t)-X(t-)\|_{L^p(\Omega)}=\|\Delta X(t)\|_{L^p(\Omega)}=0,
 \end{equation}
 and hence
 \begin{equation}
 \label{g_lcont}
     g(t-)=g(t).
 \end{equation}
 Moreover, since $X$ is ca\'dl\'ag, we have for all $t\in [0,T]$
 \begin{equation}
     \lim\limits_{s\to t+}\|X(s)\|^p=\|X(t+)\|^p=\|X(t)\|^p,
 \end{equation}
 and by using again the Lebesgue’s dominated convergence theorem we get for all $t\in [0,T]$ that
 \begin{equation}
 \label{g_rcont}
     g(t+)=\lim\limits_{s\to t+}g(s)=g(t).
 \end{equation}
 From \eqref{g_lcont}, \eqref{g_rcont} we have that the function $g$ is continuous on $[0,T]$.
\end{proof}
\section{*SDEs driven by the Wiener process and compound Poisson process}
In the scalar case we briefly comment the case when the jump part of the underlying SDE is driven by the compound Poisson process $Y$, i.e.  
\begin{equation}
	\label{SDE_PROBLEM_CP1}
		\left\{ \begin{array}{ll}
			dX(t)=a(t,X(t-))dt+ b(t,X(t-))dW(t)+c(t,X(t-))dJ(t), &t\in [0,T], \\
			X(0)=\xi. 
		\end{array}\right.
\end{equation}
In this case
\begin{equation}
	\int\limits_0^t c(s,X(s-))dJ(s)=\sum\limits_{0<s\leq t} c(s,X(s-))\Delta J(s)=\sum\limits_{k=1}^{N(t)}c(\tau_k,X(\tau_k-))\xi_k.
\end{equation}
Note that more general equations are considered, for example, when the noise comes from Levy processes, Poisson random measures etc.

Merton model, driven by compound Poisson process, has the following form
\begin{equation}
	\label{MERTON_SDE1}
		\left\{ \begin{array}{ll}
			dX(t)=\mu X(t)dt+\sigma X(t) dW(t)+X(t-)dJ(t), &t\in [0,T], \\
			X(0)=x_0, 
		\end{array}\right.
\end{equation}
with the explicit solution
\begin{equation}
	X(t)=x_0\exp\Bigl((\mu-\frac{1}{2}\sigma^2)t+\sigma W(t)\Bigr)\prod\limits_{k=1}^{N(t)}(\xi_k+1).
\end{equation}
\chapter{Euler scheme and its convergence}
One of the most basic, but still fundamental, algorithm for approximation of solutions of SDEs is the Euler-Maruyama algorithm. 

For the functions
\begin{eqnarray}
	&&a:[0,T]\times\mathbb{R}^d\to \mathbb{R}^d,\\
	&&b:[0,T]\times\mathbb{R}^d\to \mathbb{R}^{d\times m_W},\\
	&&c:[0,T]\times\mathbb{R}^d\to \mathbb{R}^{d\times m_N},
\end{eqnarray}
we assume that
\begin{itemize}
	\item [(B)] there exists $K\in (0,+\infty)$ such that for $f\in\{a,b,c\}$ and for all $s,t\in [0,T]$, $x,y\in\mathbb{R}^d$
	\begin{equation}
		\|f(t,x)-f(s,y)\|\leq K(|t-s|+\|x-y\|),
	\end{equation}
		\item [(C)] the intensity functions $\lambda_j:[0,T]\to (0,+\infty)$, $j=1,2,\ldots,m_N$, are Borel measurable and bounded.
\end{itemize}
For $f=a$ the norm $\|\cdot\|$ is the euclidean norm $\|\cdot\|_2$, while for $f\in\{b,c\}$ the norm $\|\cdot\|$ is the Frobenius norm $\|\cdot\|_F$. From now we will denote the both norms by $\|\cdot\|$ and the notion will be clear from the context.
(Note that if $f$ satisfies $(B)$ then it also satisfies $(A1)$ and $(A2)$.)

The following facts are left as  simple exercises.
\begin{fact} 
\label{f_lin_gr}
If $f$ satisfies $(B)$ then there exists $K_1\in (0,+\infty)$ such that for all $(t,x)\in [0,T]\times\mathbb{R}^d$
	\begin{equation}
		\|f(t,x)\|\leq K_1(1+\|x\|).
	\end{equation}
\end{fact}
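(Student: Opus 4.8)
The plan is to deduce the linear growth bound from the Lipschitz-type condition $(B)$ by comparing $f(t,x)$ to its value at a fixed reference point, say $(0,0)$. Concretely, I would write
\[
\|f(t,x)\| \leq \|f(t,x)-f(0,0)\| + \|f(0,0)\|,
\]
and then apply $(B)$ with $s=0$, $y=0$ to the first term, obtaining $\|f(t,x)-f(0,0)\| \leq K(|t| + \|x\|) \leq K(T + \|x\|)$ since $t\in[0,T]$.

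Next I would collect the constants: set $M_f := \|f(0,0)\|$, which is a finite nonnegative number because $f$ is defined (and, by $(A1)$, continuous) on $[0,T]\times\mathbb{R}^d$. Then
\[
\|f(t,x)\| \leq KT + M_f + K\|x\| \leq (KT + M_f + K)(1+\|x\|),
\]
so the claim holds with $K_1 := KT + M_f + K$, or more robustly $K_1 := \max\{KT + M_f,\, K\}$. To get a single constant that works simultaneously for $f\in\{a,b,c\}$ — which is what the statement asks, since it quantifies "there exists $K_1$" after fixing that $f$ ranges over $\{a,b,c\}$ — I would take the maximum of the three individual constants, i.e. $K_1 := KT + \max\{\|a(0,0)\|,\|b(0,0)\|,\|c(0,0)\|\} + K$.

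There is essentially no obstacle here: the only thing to be slightly careful about is that the norm $\|\cdot\|$ means the Euclidean norm for $a$ and the Frobenius norm for $b,c$, but $(B)$ is already stated with that same convention, so the triangle inequality applies verbatim in each case and no conversion between norms is needed. I would just remark once that all norms appearing are finite-dimensional and the argument is identical in each. This is a routine one-line estimate, so in the write-up I would present it in three or four displayed inequalities and stop.
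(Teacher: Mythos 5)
Your proof is correct: comparing $f(t,x)$ with the fixed value $f(0,0)$ via the triangle inequality, applying $(B)$ with $s=0$, $y=0$, and absorbing the constants into $K_1=KT+\|f(0,0)\|+K$ is exactly the routine estimate intended here (the paper leaves this fact as a simple exercise and gives no proof of its own, so there is nothing to diverge from). Your remarks about taking the maximum of the constants over $f\in\{a,b,c\}$ and about the norm conventions are fine and do not require further justification.
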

\begin{fact}
\label{f_np}
Suppose that the random vectors $X:\Omega\to\mathbb{R}^{d\times m}$ and  $Y:\Omega\to\mathbb{R}^m$ are independent. Then
	\begin{equation}
	\label{submult_1}
		\mathbb{E}\|X\cdot Y\|^2\leq \mathbb{E}\|X\|^2\cdot\mathbb{E}\|Y\|^2.
	\end{equation}
\end{fact}
The classical Euler scheme is defined as follows. 

Let $n\in\mathbb{N}$ and $0=t_0<t_1<\ldots<t_{n}=T$ be (possibly) non-uniform discretization of $[0,T]$. The classical Euler scheme for the jump-diffusion SDE \eqref{SDE_PROBLEM} is defined as follows:
\begin{equation}
	X^E_n(0)=\xi,
\end{equation}
and for $k=0,1,\ldots,n-1$
\begin{equation}
	X^E_n(t_{k+1})=X^E_n(t_{k})+a(U_{k,n}^E)\cdot\Delta t_k+b(U_{k,n}^E)\cdot\Delta W_k+c(U_{k,n}^E)\cdot\Delta N_k,
\end{equation}
where
\begin{equation} 
	U_{k,n}^E=(t_k,X^E_n(t_{k})),
\end{equation}	
\begin{eqnarray}
	&&\Delta t_k = t_{k+1}-t_k,\notag\\
	&&\Delta W_k=[\Delta W^1_k,\ldots,\Delta W^{m_W}_k]^T,\notag\\
	&&\Delta N_k=[\Delta N^1_k,\ldots,\Delta N^{m_N}_k]^T,
\end{eqnarray}
and
\begin{equation}
	\Delta Z_k^j=Z^j(t_{k+1})-Z^j(t_{k}), \ Z\in\{N,W\}.
\end{equation}
Since
\begin{equation}
	b(U_{k,n}^E)\cdot\Delta W_k=\Bigl(\sum\limits_{j=1}^{m_W}b_{ij}(U_{k,n}^E)\cdot\Delta W^j_k\Bigr)_{i=1,2,\ldots,d},
\end{equation}
\begin{equation}
	c(U_{k,n}^E)\cdot\Delta N_k=\Bigl(\sum\limits_{j=1}^{m_N}c_{ij}(U_{k,n}^E)\cdot\Delta N^j_k\Bigr)_{i=1,2,\ldots,d},
\end{equation}
we can write each component of the Euler scheme as follows
\begin{equation}
	X^E_{n,i}(0)=\xi_{i},
\end{equation}
\begin{eqnarray}
	X^E_{n,i}(t_{k+1})=X^E_{n,i}(t_{k})&+&a_i(U_{k,n}^E)\cdot\Delta t_k\notag\\
	&+&\sum\limits_{j=1}^{m_W}b_{ij}(U_{k,n}^E)\cdot\Delta W^j_k\notag\\
	&+&\sum\limits_{j=1}^{m_N}c_{ij}(U_{k,n}^E)\cdot\Delta N^j_k,
\end{eqnarray}
for $k=0,1,\ldots,n-1$, $i=1,2,\ldots,d$.

\begin{exmp}
For the scalar equation \eqref{MERTON_SDE} we have that
    \begin{equation}
        X_n^E(t_{k+1})=X_n^E(t_{k})(1+\mu\Delta t_k+\sigma\Delta W_k+c\Delta N_k)=x_0\cdot\prod_{j=0}^{k}(1+\mu \Delta t_j+\sigma\Delta W_j+c\Delta N_j),
    \end{equation}
for $k=0,1,\ldots,n-1$.    
\end{exmp}
The aim of this chapter is to prove the following result.
\begin{thm}
\label{err_euler_1}
Let us assume that $a,b,c$ satisfy $(B)$, $(C)$ and $\xi$ satisfies $(A0)$. Then there exists $C\in (0,+\infty)$ such that for all $n\in\mathbb{N}$ and $0=t_0<t_1<\ldots<t_n=T$ we have
\begin{equation}
	\max\limits_{0\leq k\leq n}\Bigl(\mathbb{E}\|X(t_k)-X_n^E(t_k)\|^2\Bigr)^{1/2}\leq C\cdot \Bigl(1+(\mathbb{E}\|\xi\|^2)^{1/2}\Bigr)\cdot\max\limits_{0\leq i\leq n-1}(t_{i+1}-t_i)^{1/2}.
\end{equation}
\end{thm}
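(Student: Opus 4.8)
The plan is to follow the classical Euler-Maruyama convergence argument, adapted to the jump-diffusion setting, with the martingale-type estimates coming from the It\^o isometries displayed at the end of the stochastic integration chapter and the moment/regularity bounds from Lemma \ref{mean_sq_reg_X}. First I would introduce the time-continuous interpolation of the scheme: define $\bar X_n(t) = X_n^E(t_k)$ for $t\in[t_k,t_{k+1})$ (the left-endpoint step process) and set $\eta_n(t)=t_k$ on $[t_k,t_{k+1})$, so that
\begin{equation}
\bar X_n(t)=\xi+\int\limits_0^t a(\eta_n(s),\bar X_n(\eta_n(s)))ds+\int\limits_0^t b(\eta_n(s),\bar X_n(\eta_n(s)))dW(s)+\int\limits_0^t c(\eta_n(s),\bar X_n(\eta_n(s)))dN(s),
\end{equation}
and $\bar X_n(t_k)=X_n^E(t_k)$ at grid points. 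Writing $e_n(t)=X(t)-\bar X_n(t)$ and subtracting the integral form of \eqref{SDE_PROBLEM2}, I would split the error into three integral terms (drift, diffusion, jump), and in each one insert and subtract $F(s,\bar X_n(s-))$ to separate a "Lipschitz in state at the current time" part from a "discretization" part $F(s,\bar X_n(s-))-F(s,\bar X_n(\eta_n(s)))$.

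The key steps, in order: (1) bound $\mathbb{E}\sup_{u\le t}\|e_n(u)\|^2$ using the Burkholder/Doob $L^2$-bounds for the stochastic integrals against $dW$ and $d\tilde N$ (via \eqref{qv_31}, splitting $dN=d\tilde N+\lambda\,ds$ and treating the compensator part as an ordinary Lebesgue integral), together with the Cauchy-Schwarz inequality for the time integrals; condition $(C)$ (boundedness of the intensities) enters here to control the $d\langle\tilde N^j\rangle_s=\lambda_j(s)ds$ terms. (2) Use the global Lipschitz condition $(B)$ on $a,b,c$ to dominate the "state" parts by $\int_0^t \mathbb{E}\sup_{u\le s}\|e_n(u)\|^2\,ds$. (3) Estimate the "discretization" parts: $\mathbb{E}\|\bar X_n(s)-\bar X_n(\eta_n(s))\|^2 \le C(1+\mathbb{E}\|\xi\|^2)\,\Delta$ where $\Delta=\max_i(t_{i+1}-t_i)$ — this is the one-step regularity of the scheme, proved exactly as the $L^2$-regularity in Lemma \ref{mean_sq_reg_X} (a short direct computation using Fact \ref{f_lin_gr}, Fact \ref{f_np}, the It\^o isometries, and the a priori moment bound $\max_k \mathbb{E}\|X_n^E(t_k)\|^2 \le C(1+\mathbb{E}\|\xi\|^2)$, which itself follows from a discrete Gronwall argument). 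The time-Lipschitz part $|s-\eta_n(s)|^2\le\Delta^2$ is even easier. (4) Collect everything into an inequality of the form $\varphi_n(t)\le C(1+\mathbb{E}\|\xi\|^2)\Delta + C\int_0^t \varphi_n(s)\,ds$ with $\varphi_n(t)=\mathbb{E}\sup_{u\le t}\|e_n(u)\|^2$, and apply Gronwall's lemma to get $\varphi_n(T)\le C(1+\mathbb{E}\|\xi\|^2)\Delta$; taking square roots and restricting to grid points gives the claim (with $e_n(t_k)$ replaced by $X(t_k)-X_n^E(t_k)$, which agree a.s. at grid points up to the $X(t_k-)=X(t_k)$ issue handled by Proposition \ref{unsigned_Lp}(i)).

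I expect the main obstacle to be step (3), the one-step regularity estimate for the scheme together with the uniform-in-$n$ second-moment bound: one must be careful that the increments $\Delta W_k$, $\Delta N_k$ over $[t_k,t_{k+1})$ are independent of $\Sigma_{t_k}$ (hence of $\bar X_n(\eta_n(s))=X_n^E(t_k)$), so that Fact \ref{f_np} and the conditional It\^o isometry \eqref{covar_war_int} apply cleanly; the jump term contributes both a martingale piece (controlled by $\int\lambda_j$) and a compensator piece of order $\Delta$, and both must be tracked. A secondary technical point is the measurability/finiteness of $\varphi_n$ and the justification that $\sup$ can be brought inside the estimates before Gronwall — this is routine given that $X$ is c\`adl\`ag and $\bar X_n$ is piecewise constant, but should be mentioned. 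Everything else is bookkeeping with constants that may be absorbed into a single $C$ depending only on $T$, $K$, and $\sup_j\sup_{[0,T]}\lambda_j$.
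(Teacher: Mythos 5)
Your plan is sound and, apart from one inaccuracy, it is essentially the paper's argument: the paper also compares $X$ with a time-continuous interpolation of the scheme, splits the error into drift, Brownian and jump parts, treats the jump integral via $dN=d\tilde N+\lambda\,ds$ (isometry for the compensated part, Cauchy--Schwarz for the compensator, with assumption (C) controlling $\lambda_j$), and closes with Gronwall, the boundedness needed for Gronwall coming from the a priori moment bound of Lemma \ref{err_euler_2}. The inaccuracy: the left-endpoint step process $\bar X_n(t)=X^E_n(t_k)$ on $[t_k,t_{k+1})$ does \emph{not} satisfy the integral identity you display; the process defined by that identity is the paper's time-continuous Euler process $\tilde X^E_n$ of \eqref{euler_proc_def}--\eqref{euler_proc_1}, which is affine in $t$, $W(t)$, $N(t)$ on each subinterval and coincides with $X^E_n$ only at the grid points, see \eqref{interp_E_proc}. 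Your later steps implicitly use this interpolated process anyway (for instance the one-step bound $\mathbb{E}\|\bar X_n(s)-\bar X_n(\eta_n(s))\|^2\leq C(1+\mathbb{E}\|\xi\|^2)\max_i\Delta t_i$ concerns the interpolation, and would be trivially zero for the step process), so the fix is purely notational, but as written the identity is false. Beyond that there are two genuine, harmless deviations from the paper: you control the discretization term through the one-step regularity of the scheme (which needs the uniform-in-$n$ moment bound, i.e. Lemma \ref{err_euler_2}), whereas the paper inserts $f(s,X(t_k))$ and uses the $L^2$-regularity of the exact solution, Lemma \ref{mean_sq_reg_X}; both routes work with the same ingredients. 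Also, you aim at $\mathbb{E}\sup_{u\leq t}\|e_n(u)\|^2$ via Doob/Burkholder, which is more than the statement requires: since only grid points enter the conclusion, the paper runs Gronwall on $\sup_{0\leq u\leq t}\mathbb{E}\|X(u)-\tilde X^E_n(u)\|^2$ using only the plain It\^o isometries, avoiding any continuous-time maximal inequality (which is not among the tools the paper records).
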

Before we prove this fact we need several auxiliary lemmas concerning the so called {\it time-continuous Euler approximation} $\tilde X^E_n=(\tilde X^E_n(t))_{t\in [0,T]}$. It is defined as follows:
\begin{equation}
	\tilde X^E_n(0)=\xi,
\end{equation}
and for $t\in (t_k,t_{k+1}]$, $k=0,1,\ldots,n-1$ we set
\begin{equation}
\label{euler_proc_def}
	\tilde X^E_n(t)=X^E_n(t_k)+a(U_{k,n}^E)\cdot (t-t_k)+b(U_{k,n}^E)\cdot (W(t)-W(t_k))+c(U_{k,n}^E)\cdot (N(t)-N(t_k)).
\end{equation}
The process $\tilde X^E_n$ can be seen as a specific interpolation between the points $X^E(t_k)$, $k=0,1,\ldots,n$, with the coefficients $a,b,c$ 'frozen' at $U_{k,n}^E$. The approximation $\tilde X^E_n$ is not implementable, since it requires the complete knowledge of trajectories of $N$ and $W$. 

By induction we have that
\begin{equation}
\label{interp_E_proc}
	\tilde X^E_n(t_k)=X^E_n(t_k), \ k=0,1,\ldots,n.
\end{equation}
Since the trajectories of $[N,W]$ are right-continuous we get 
\begin{equation}
	\lim\limits_{t\to t_k+}\tilde X^E_n(t)=X^E_n(t_k)=\tilde X^E_n(t_k),
\end{equation}
and
\begin{eqnarray}
	&&\lim\limits_{t\to t_{k+1}-}\tilde X^E_n(t)=X^E_n(t_k)+a(U_{k,n}^E)\cdot (t_{k+1}-t_k)+b(U_{k,n}^E)\cdot (W(t_{k+1})-W(t_k))\notag\\
&&\quad\quad +c(U_{k,n}^E)\cdot (N(t_{k+1}-)-N(t_k))\neq \tilde X^{E}_n(t_{k+1}) \ (=X^E_n(t_{k+1}))
\end{eqnarray}
for $k=0,1,\ldots,n-1$. Moreover, it is easy to see that $\tilde X^E_n=(\tilde X^E_n(t))_{t\in [0,T]}$ is adapted to $(\Sigma_t)_{t\in [0,+\infty)}$ and has c\`adl\`ag paths. Hence, $\tilde X^E_n\in\mathbb{D}$.
\begin{lem}
\label{err_euler_2}
Let us assume that $a,b,c$ satisfy $(B)$, $(C)$ and $\xi$ satisfies $(A0)$. Then there exists $C\in (0,+\infty)$ such that for all $n\in\mathbb{N}$ and $0=t_0<t_1<\ldots<t_n=T$ we have
\begin{equation}
\label{err_euler_21}
	\sup\limits_{t\in [0,T]}\mathbb{E}\|\tilde X^E_n(t)\|^2\leq C(1+\mathbb{E}\|\xi\|^2).
\end{equation}
\end{lem}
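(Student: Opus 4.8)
The plan is to rewrite $\tilde X^E_n$ as a stochastic integral equation and then close a Gronwall estimate on
\[
\phi_n(t):=\sup_{0\le u\le t}\mathbb{E}\|\tilde X^E_n(u)\|^2 .
\]
First I would record the integral form. Set $\kappa_n(s)=t_k$ for $s\in(t_k,t_{k+1}]$ (and $\kappa_n(0)=0$), and $\bar U_n(s)=(\kappa_n(s),X^E_n(\kappa_n(s)))$. Then \eqref{euler_proc_def} together with the recursion defining $X^E_n$ gives, for every $t\in[0,T]$,
\[
\tilde X^E_n(t)=\xi+\int_0^t a(\bar U_n(s))\,ds+\int_0^t b(\bar U_n(s))\,dW(s)+\int_0^t c(\bar U_n(s))\,dN(s),
\]
where $s\mapsto b(\bar U_n(s))$ and $s\mapsto c(\bar U_n(s))$ are predictable simple processes (the coefficient on $(t_k,t_{k+1}]$ is $\Sigma_{t_k}$-measurable). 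Using $dN=d\tilde N+dm$ with $dm_j(s)=\lambda_j(s)\,ds$, as in \eqref{qv_31}, I split the Poisson integral, so that $\tilde X^E_n(t)$ becomes a sum of five terms. A preliminary induction over $k$, based on Fact \ref{f_np} (the increments $\Delta W_k,\Delta N_k$ are independent of $\Sigma_{t_k}$), Fact \ref{f_lin_gr}, and (A0), shows $\mathbb{E}\|X^E_n(t_k)\|^2<+\infty$ for all $k$; hence the stochastic integrals above are well defined, $\mathbb{E}\|\tilde X^E_n(t)\|^2<+\infty$ for every $t$, and $\phi_n$ is a finite, non-decreasing (in particular Borel) function on $[0,T]$.

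Next I would estimate the five terms. From $\|v_1+\cdots+v_5\|^2\le 5\sum_{i=1}^5\|v_i\|^2$, the Cauchy--Schwarz inequality for the two Lebesgue-type integrals (those with $ds$ and with $dm$), the multidimensional It\^o isometries for $\int b\,dW$ and $\int c\,d\tilde N$ established in the previous chapter, the linear growth bound $\|f(s,x)\|\le K_1(1+\|x\|)$ of Fact \ref{f_lin_gr} for $f\in\{a,b,c\}$, and the boundedness of the $\lambda_j$ from (C), one obtains constants $C_1$ (numerical) and $C_2=C_2(K_1,T,m_N,\max_j\|\lambda_j\|_\infty)$, both independent of $n$ and of the partition, with
\[
\mathbb{E}\|\tilde X^E_n(t)\|^2\le C_1(1+\mathbb{E}\|\xi\|^2)+C_2\int_0^t\big(1+\mathbb{E}\|X^E_n(\kappa_n(s))\|^2\big)\,ds,\qquad t\in[0,T].
\]
Since $X^E_n(\kappa_n(s))=\tilde X^E_n(\kappa_n(s))$ by \eqref{interp_E_proc} and $\kappa_n(s)\le s$, we have $\mathbb{E}\|X^E_n(\kappa_n(s))\|^2\le\phi_n(s)$, so the right-hand side is dominated by $C_1(1+\mathbb{E}\|\xi\|^2)+C_2T+C_2\int_0^t\phi_n(s)\,ds$, which is non-decreasing in $t$.

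Finally, taking the supremum over $u\in[0,t]$ gives
\[
\phi_n(t)\le C_1(1+\mathbb{E}\|\xi\|^2)+C_2T+C_2\int_0^t\phi_n(s)\,ds,\qquad t\in[0,T],
\]
and, $\phi_n$ being finite and non-decreasing, Gronwall's lemma \ref{GLCW} yields $\phi_n(T)\le\big(C_1(1+\mathbb{E}\|\xi\|^2)+C_2T\big)e^{C_2T}$. Since $1+\mathbb{E}\|\xi\|^2\ge 1$, this is at most $C(1+\mathbb{E}\|\xi\|^2)$ with $C=(C_1+C_2T)e^{C_2T}$ independent of $n$ and of the discretization, which is precisely \eqref{err_euler_21}. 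The computation is essentially routine; the only points requiring care are justifying the finiteness of $\phi_n$ so that Gronwall's lemma applies, and ensuring that every constant is free of $n$ and of the mesh --- for which it suffices to bound the two time integrals crudely (using $\max_i\Delta t_i\le T$, i.e.\ never exploiting the mesh size) and to use the uniform bound on the intensities from (C).
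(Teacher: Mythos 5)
Your proposal is correct and follows essentially the same route as the paper's proof: a preliminary induction giving $\max_k\mathbb{E}\|X^E_n(t_k)\|^2<+\infty$ (hence finiteness of the supremum function so that Gronwall applies), then the integral representation with the coefficients frozen at the left endpoints, the splitting $dN=d\tilde N+dm$, the H\"older/It\^o-isometry estimates with the linear-growth bound and the uniform bound on the intensities, and finally Gronwall's lemma applied to $t\mapsto\sup_{0\le u\le t}\mathbb{E}\|\tilde X^E_n(u)\|^2$. No gaps; the constants you track are, as required, independent of $n$ and of the partition.
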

{\bf Proof.} Firstly we show that 
\begin{equation}
\label{bound_mom_E_1}
	\max\limits_{0\leq k \leq n}\mathbb{E}\|X^E_n(t_k)\|^2<+\infty.
\end{equation}
We proceed by induction and let us assume that there exists $s\in \{0,1,\ldots,n-1\}$ with the property that 
\begin{equation}
	\max\limits_{0\leq k\leq s}\mathbb{E}\|X^E_n(t_k)\|^2<+\infty.
\end{equation}
(This is of course satisfied for $s=0$.) Since $\{\sigma(b(U_{s,n}^E)),\sigma(\Delta W_s)\}$ and $\{\sigma(c(U_{s,n}^E)),\sigma(\Delta N_s)\}$ are two families of independent $\sigma$-fields, we get by the Fact \ref{f_np}
\begin{eqnarray}
	\mathbb{E}\|X^E_n(t_{s+1})\|^2 &\leq& C_1\Bigl(\mathbb{E}\|X^E_n(t_s)\|^2+(\Delta t_s)^2\cdot\mathbb{E}\|a(U_{s,n}^E)\|^2\notag\\
&&\quad +\mathbb{E}\|b(U_{s,n}^E)\|^2 \cdot\mathbb{E}\|\Delta W_s\|^2+\mathbb{E}\|c(U_{s,n}^E)\|^2 \cdot\mathbb{E}\|\Delta N_s\|^2\Bigr).
\end{eqnarray}
Since there exist $c_W,c_N\in (0,+\infty)$ such that for all $t\in [t_k,t_{k+1}]$, $k=0,1,\ldots,n-1$ it holds
\begin{eqnarray}
	&&\mathbb{E}\|W(t)-W(t_k)\|^2\leq c_W (t-t_k),\notag\\
	&&\mathbb{E}\|N(t)-N(t_k)\|^2\leq c_N (t-t_k),
\end{eqnarray}
we get by Fact \ref{f_lin_gr} that
\begin{eqnarray}
	\mathbb{E}\|X^E_n(t_{s+1})\|^2 &\leq& C_2\Bigl(\mathbb{E}\|X^E_n(t_s)\|^2+(\Delta t_s)^2\cdot (1+\mathbb{E}\|X^E_n(t_s)\|^2)\notag\\
&&\quad +(1+\mathbb{E}\|X^E_n(t_s)\|^2) \cdot \Delta t_s+(1+\mathbb{E}\|X^E_n(t_s)\|^2)\cdot\Delta t_s\Bigr)<+\infty.
\end{eqnarray}
Hence,
\begin{equation}
	\max\limits_{0\leq k\leq s+1}\mathbb{E}\|X^E_n(t_k)\|^2<+\infty.
\end{equation}
By the rules of induction, this completes the proof of \eqref{bound_mom_E_1}. 

From \eqref{euler_proc_def} it follows that for all 
$t\in [t_k,t_{k+1}]$, $k=0,1,\ldots,n-1$ 
\begin{eqnarray}
	\mathbb{E}\|\tilde X^E_n(t)\|^2 &\leq& C_2\Bigl(\mathbb{E}\|X^E_n(t_k)\|^2+ (t-t_k)^2\cdot (1+\mathbb{E}\|X^E_n(t_k)\|^2)\notag\\
&&\quad +(1+\mathbb{E}\|X^E_n(t_k)\|^2) \cdot (t-t_k)+(1+\mathbb{E}\|X^E_n(t_k)\|^2)\cdot (t-t_k)\Bigr)\notag\\
&&\leq C_3(1+\mathbb{E}\|X^E_n(t_k)\|^2),
\end{eqnarray}
and by \eqref{bound_mom_E_1}
\begin{equation}
	\label{bound_mom_E_2}
	\sup\limits_{0\leq t\leq T}\mathbb{E}\|\tilde X^E_n(t)\|^2=\max\limits_{0\leq k\leq n-1}\sup\limits_{t_k\leq t\leq t_{k+1}}\mathbb{E}\|\tilde X^E_n(t)\|^2\leq C_3(1+\max\limits_{0\leq k\leq n-1}\mathbb{E}\|X^E_n(t_k)\|^2)<+\infty.
\end{equation}
Note that for now the bound in \eqref{bound_mom_E_2} depends on $n$. In the second part of the proof we will show, with the help of Gronwall's lemma, that we can obtain the bound \eqref{err_euler_21}, with $C$ that is independent of $n$.

We can write for all $t\in [0,T]$ that
\begin{equation}
\label{euler_proc_1}
	\tilde X^E_n(t)=\xi+\int\limits_0^t \tilde a_n(s)ds+\int\limits_0^t \tilde b_n(s)dW(s)+\int\limits_0^t \tilde c_n(s)dN(s),
\end{equation}
where for $f\in \{a,b,c\}$
\begin{equation}
	\tilde f_n(s) = \sum\limits_{k=0}^{n-1}f(U_{k,n}^E)\cdot\mathbf{1}_{(t_k,t_{k+1}]}(s),
\end{equation}
and recall that $U_{k,n}^E=(t_k, X^E_n(t_{k}))=(t_k, \tilde X^E_n(t_{k}))$. Note that $(\tilde f_n(t))_{t\in [0,T]}\in\mathbb{L}$ for $f\in\{a,b,c\}$.  Furthermore, for $f\in \{a,b,c\}$ and all $t\in [0,T]$ it holds
\begin{eqnarray}
	\|\tilde f_n(t)\|^2&\leq& \Bigl(\sum\limits_{k=0}^{n-1}\|f(U_{k,n}^E)\|\cdot\mathbf{1}_{(t_k,t_{k+1}]}(t)\Bigr)^2\notag\\
	&&=\sum\limits_{k=0}^{n-1}\|f(U_{k,n}^E)\|^2\cdot\mathbf{1}_{(t_k,t_{k+1}]}(t),
\end{eqnarray}
which, together with Fact \ref{f_lin_gr}, implies
\begin{eqnarray}
\label{est_aux_1}
	&&\int\limits_0^t \|\tilde f_n(s)\|^2 ds\leq \int\limits_0^t \sum\limits_{k=0}^{n-1}\|f(U_{k,n}^E)\|^2\cdot\mathbf{1}_{(t_k,t_{k+1}]}(s)ds\notag\\
	&&\leq K_1^2 \int\limits_0^t \sum\limits_{k=0}^{n-1}(1+\|\tilde X^E_n(t_{k})\|)^2\cdot\mathbf{1}_{(t_k,t_{k+1}]}(s)ds\notag\\
	&&\leq 2K_1^2 \int\limits_0^t \sum\limits_{k=0}^{n-1}(1+\|\tilde X^E_n(t_{k})\|^2)\cdot\mathbf{1}_{(t_k,t_{k+1}]}(s)ds\notag\\
	&&\leq 2K_1^2\int\limits_0^T \sum\limits_{k=0}^{n-1}\mathbf{1}_{(t_k,t_{k+1}]}(s)ds +2K_1^2\int\limits_0^t \sum\limits_{k=0}^{n-1}\|\tilde X^E_n(t_{k})\|^2\cdot\mathbf{1}_{(t_k,t_{k+1}]}(s)ds\notag\\
	&&=2K_1^2T+2K_1^2\int\limits_0^t \sum\limits_{k=0}^{n-1}\|\tilde X^E_n(t_{k})\|^2\cdot\mathbf{1}_{(t_k,t_{k+1}]}(s)ds.
\end{eqnarray}
By the fact that $N(t)=\tilde N(t)+m(t)$, we have 
\begin{eqnarray}
	&&\mathbb{E}\|\tilde X^E_n(t)\|^2\leq C_1\Bigl(\mathbb{E}\|\xi\|^2+\mathbb{E}\Bigl\|\int\limits_0^t \tilde a_n(s)ds\Bigl\|^2+\mathbb{E}\Bigl\|\int\limits_0^t \tilde b_n(s)dW(s)\Bigl\|^2\notag\\
&&\quad +\mathbb{E}\Bigl\|\int\limits_0^t \tilde c_n(s)d\tilde N(s)\Bigl\|^2+\mathbb{E}\Bigl\|\int\limits_0^t \tilde c_n(s)dm(s)\Bigl\|^2\Bigr).
\end{eqnarray}
From the H\"older inequality, It\^o isometry for stochastic integrals driven by martingales, and  \eqref{est_aux_1} we get for $t\in [0,T]$
\begin{eqnarray}
	&&\mathbb{E}\Bigl\|\int\limits_0^t \tilde a_n(s)ds\Bigl\|^2\leq \mathbb{E}\Bigl(\int\limits_0^t \|\tilde a_n(s)\|ds\Bigr)^2\leq T\mathbb{E}\int\limits_0^t\|\tilde a_n(s)\|^2ds\notag\\
	&&\leq K_2+K_3\int\limits_0^t \sum\limits_{k=0}^{n-1}\|\tilde X^E_n(t_{k})\|^2\cdot\mathbf{1}_{(t_k,t_{k+1}]}(s)ds,
\end{eqnarray}
\begin{eqnarray}
	&&\mathbb{E}\Bigl\|\int\limits_0^t \tilde b_n(s)dW(s)\Bigl\|^2= \mathbb{E}\int\limits_0^t \|\tilde b_n(s)\|^2ds\notag\\
	&&\leq K_4+K_5\int\limits_0^t \sum\limits_{k=0}^{n-1}\|\tilde X^E_n(t_{k})\|^2\cdot\mathbf{1}_{(t_k,t_{k+1}]}(s)ds,
\end{eqnarray}
\begin{eqnarray}
	&&\mathbb{E}\Bigl\|\int\limits_0^t \tilde c_n(s)d\tilde N(s)\Bigl\|^2=\sum\limits_{j=1}^{m_N}\mathbb{E}\int\limits_0^t\|\tilde c^{(j)}_n(s)\|^2\cdot \lambda_j(s)ds\notag\\
	&&\leq\max\limits_{1\leq j\leq m_N}\|\lambda_j\|_{\infty}\cdot\mathbb{E}\int\limits_0^t\|\tilde c_n(s)\|^2ds\notag\\
	&&\leq K_6+K_7\int\limits_0^t \sum\limits_{k=0}^{n-1}\|\tilde X^E_n(t_{k})\|^2\cdot\mathbf{1}_{(t_k,t_{k+1}]}(s)ds,
\end{eqnarray}
and
\begin{eqnarray}
	&&\mathbb{E}\Bigl\|\int\limits_0^t \tilde c_n(s)dm(s)\Bigl\|^2=\sum\limits_{i=1}^d\mathbb{E}\Bigl|\sum\limits_{j=1}^{m_N}\int\limits_0^t \tilde c_{n,ij}(s)\cdot\lambda_j(s)ds\Bigl|^2\notag\\
	&\leq & m_N\cdot\sum\limits_{i=1}^d\sum\limits_{j=1}^{m_N}\mathbb{E}\Bigl|\int\limits_0^t \tilde c_{n,ij}(s)\cdot\lambda_j(s)ds\Bigl|^2\notag\\
	&\leq & m_N T\cdot\sum\limits_{i=1}^d\sum\limits_{j=1}^{m_N}\int\limits_0^t\mathbb{E}|\tilde c_{n,ij}(s)|^2\cdot\lambda^2_j(s) ds\notag\\
	&\leq &m_N T\cdot\max\limits_{1\leq j\leq m_N}\|\lambda_j\|^2_{\infty}\cdot\mathbb{E}\int\limits_0^t\|\tilde c_n(s)\|^2 ds\notag\\
	&&\leq K_8+K_9\int\limits_0^t \sum\limits_{k=0}^{n-1}\|\tilde X^E_n(t_{k})\|^2\cdot\mathbf{1}_{(t_k,t_{k+1}]}(s)ds.
\end{eqnarray}
Hence
\begin{eqnarray}
	\mathbb{E}\|\tilde X^E_n(t)\|^2&\leq& K_{10}(1+\mathbb{E}\|\xi\|^2)+K_{11}\int\limits_0^t \sum\limits_{k=0}^{n-1}\mathbb{E}\|\tilde X^E_n(t_{k})\|^2\cdot\mathbf{1}_{(t_k,t_{k+1}]}(s)ds\notag\\
	&\leq& K_{10}(1+\mathbb{E}\|\xi\|^2)+K_{11}\int\limits_0^t\sup\limits_{0\leq u\leq z}\mathbb{E}\|\tilde X^E_n(u)\|^2dz 
\end{eqnarray}
where $K_{10},K_{11}$ depend only on $K_1$, $T$, $m_N$, $\max\limits_{1\leq j\leq m_N}\|\lambda_j\|_{\infty}$. Therefore, for all $t \in [0,T]$
\begin{equation}
	\sup\limits_{0\leq s \leq t}\mathbb{E}\|\tilde X^E_n(s)\|^2\leq  K_{10}(1+\mathbb{E}\|\xi\|^2)+K_{11}\int\limits_0^t\sup\limits_{0\leq u\leq z}\mathbb{E}\|\tilde X^E_n(u)\|^2dz .
\end{equation}
Note that the function
\begin{equation}
	[0,T]\ni t \to \sup\limits_{0\leq s \leq t}\mathbb{E}\|\tilde X^E_n(s)\|^2
\end{equation}
is Borel measurable (as a nondecreasing function) and bounded, which follows from   \eqref{bound_mom_E_2}. By using Lemma \ref{GLCW} we get the thesis.\ \ \ $\blacksquare$
\\ \\
{\bf Proof of Theorem \ref{err_euler_1}.}  We have that for all $t\in [0,T]$
\begin{equation}
\label{sol_proc_1}	
	X(t)=\xi+\int\limits_0^t \hat a(s)ds+\int\limits_0^t \hat b(s)dW(s)+\int\limits_0^t \hat c(s)dN(s),
\end{equation}
where for $f\in\{a,b,c\}$
\begin{equation}
	\hat f(s)=\sum\limits_{k=0}^{n-1}f(s,X(s-))\cdot\mathbf{1}_{(t_k,t_{k+1}]}(s).
\end{equation}
(Note that for $f\in\{a,b,c\}$ the process  $(\hat f(t))_{t\in [0,T]}$ is in $\mathbb{L}$ so all Lebesgue and stochastic integrals appearing in  \eqref{sol_proc_1} are well-defined.) From the representations above we thus get
\begin{equation}
\label{err_est_1}
	\mathbb{E}\|X(t)-\tilde X^E_n(t)\|^2\leq 3\Bigl(A_{1,n}(t)+A_{2,n}(t)+A_{3,n}(t)\Bigr)
\end{equation}
where
\begin{eqnarray}
	&&A_{1,n}(t)=\mathbb{E}\Bigl\|\int\limits_0^t\Bigl(\hat a(s)-\tilde a_n(s)\Bigr)ds\Bigl\|^2,\notag\\
	&&A_{2,n}(t)=\mathbb{E}\Bigl\|\int\limits_0^t\Bigl(\hat b(s)-\tilde b_n(s)\Bigr)dW(s)\Bigl\|^2,\notag\\
	&&A_{3,n}(t)=\mathbb{E}\Bigl\|\int\limits_0^t\Bigl(\hat c(s)-\tilde c_n(s)\Bigr)dN(s)\Bigl\|^2.
\end{eqnarray}
By the Schwarz inequality we get
\begin{equation}
\label{err_est_2}
	A_{1,n}(t)\leq T\mathbb{E}\int\limits_0^t \|\hat a(s)-\tilde a_n(s)\|^2 ds,
\end{equation}
while from the It\^o isometry
\begin{equation}
\label{err_est_3}
	A_{2,n}(t)=\mathbb{E}\int\limits_0^t \|\hat b(s)-\tilde b_n(s)\|^2 ds.
\end{equation}
Furthermore
\begin{equation}
\label{a3_est_1}
	A_{3,n}(t)\leq 2 \Bigl(B_{1,n}(t)+B_{2,n}(t)\Bigr),
\end{equation}
where
\begin{eqnarray}
	&&B_{1,n}(t)=\mathbb{E}\Bigl\|\int\limits_0^t\Bigl(\hat c(s)-\tilde c_n(s)\Bigr)d\tilde N(s)\Bigl\|^2,\notag\\
	&&B_{2,n}(t)=\mathbb{E}\Bigl\|\int\limits_0^t\Bigl(\hat c(s)-\tilde c_n(s)\Bigr)dm(s)\Bigl\|^2.
\end{eqnarray}
From the isometry for martingale-driven stochastic integrals we get
\begin{eqnarray}
\label{a3_est_2}
	B_{1,n}(t)&=&\sum\limits_{j=1}^{m_N}\mathbb{E}\int\limits_0^t\|\hat c^{(j)}(s)-\tilde c^{(j)}_n(s)\|^2\cdot \lambda_j(s)ds\notag\\
	&\leq &\max\limits_{1\leq j\leq m_N}\|\lambda_j\|_{\infty}\cdot\mathbb{E}\int\limits_0^t \|\hat c(s)-\tilde c_n(s)\|^2 ds
\end{eqnarray}
and by the Schwarz inequality
\begin{eqnarray}
\label{a3_est_3}
	B_{2,n}(t)&= &\sum\limits_{i=1}^d\mathbb{E}\Bigl|\sum\limits_{j=1}^{m_N}\int\limits_0^t \Bigl(\hat c_{ij}(s)-\tilde c_{n,ij}(s)\Bigr)\cdot\lambda_j(s)ds\Bigl|^2\notag\\
	&\leq & m_N\cdot\sum\limits_{i=1}^d\sum\limits_{j=1}^{m_N}\mathbb{E}\Bigl|\int\limits_0^t \Bigl(\hat c_{ij}(s)-\tilde c_{n,ij}(s)\Bigr)\cdot\lambda_j(s)ds\Bigl|^2\notag\\
	&\leq & m_N T\cdot\sum\limits_{i=1}^d\sum\limits_{j=1}^{m_N}\int\limits_0^t\mathbb{E}|\hat c_{ij}(s)-\tilde c_{n,ij}(s)|^2\cdot\lambda^2_j(s) ds\notag\\
	&\leq &m_N T\cdot\max\limits_{1\leq j\leq m_N}\|\lambda_j\|^2_{\infty}\cdot\mathbb{E}\int\limits_0^t\|\hat c(s)-\tilde c_n(s)\|^2 ds.
\end{eqnarray}
Hence, by \eqref{a3_est_1}, \eqref{a3_est_2}, and  \eqref{a3_est_3} we get
\begin{equation}
\label{err_est_4}
	A_{3,n}(t)\leq C\cdot\mathbb{E}\int\limits_0^t\|\hat c(s)-\tilde c_n(s)\|^2 ds.
\end{equation}
Combining \eqref{err_est_1}, \eqref{err_est_2}, \eqref{err_est_3}, and \eqref{err_est_4} we arrive at
\begin{equation}
	\mathbb{E}\|X(t)-\tilde X^E_n(t)\|^2\leq C\sum\limits_{f\in\{a,b,c\}}\mathbb{E}\int\limits_0^t \|\hat f(s)-\tilde f_n(s)\|^2 ds.
\end{equation}
Now, for $f\in\{a,b,c\}$ and $s\in [0,T]$ we get
\begin{eqnarray}
	\|\hat f(s)-\tilde f_n(s)\|&=&\Bigl\|\sum\limits_{k=0}^{n-1}\Bigl(f(s,X(s-))-f(U_{k,n}^E)\Bigr)\cdot\mathbf{1}_{(t_k,t_{k+1}]}(s)\Bigl\|\notag\\
	&\leq& \sum\limits_{k=0}^{n-1}\|f(s,X(s-))-f(U_{k,n}^E)\|\cdot\mathbf{1}_{(t_k,t_{k+1}]}(s)\notag\\
	&\leq& K\sum\limits_{k=0}^{n-1}\Bigl((s-t_k)+\|X(s-)-\tilde X^E_n(t_k)\|\Bigr)\cdot\mathbf{1}_{(t_k,t_{k+1}]}(s)\notag\\
	&\leq& K\max\limits_{0\leq k \leq n-1}\Delta t_k+K\sum\limits_{k=0}^{n-1}\|X(s-)-X(t_k)\|\cdot\mathbf{1}_{(t_k,t_{k+1}]}(s)\notag\\
	&&\quad +K\sum\limits_{k=0}^{n-1}\|X(t_k)-\tilde X^E_n(t_k)\|\cdot\mathbf{1}_{(t_k,t_{k+1}]}(s)
\end{eqnarray}
and hence
\begin{eqnarray}
	\|\hat f(s)-\tilde f_n(s)\|^2 &\leq& 3K^2\max\limits_{0\leq k \leq n-1}(\Delta t_k)^2+3K^2\sum\limits_{k=0}^{n-1}\|X(s-)-X(t_k)\|^2\cdot\mathbf{1}_{(t_k,t_{k+1}]}(s)\notag\\
	&&\quad +3K^2\sum\limits_{k=0}^{n-1}\|X(t_k)-\tilde X^E_n(t_k)\|^2\cdot\mathbf{1}_{(t_k,t_{k+1}]}(s).
\end{eqnarray}
Since any trajectory of $X$ has only finite number of (jump) discontinuities in $[0,T]$, the functions
\begin{equation}
	[0,T]\ni s\to \sum\limits_{k=0}^{n-1}\|X(s-)-X(t_k)\|^2\cdot\mathbf{1}_{(t_k,t_{k+1}]}(s),
\end{equation}
\begin{equation}
	[0,T]\ni s\to \sum\limits_{k=0}^{n-1}\|X(s)-X(t_k)\|^2\cdot\mathbf{1}_{[t_k,t_{k+1})}(s)
\end{equation}
differs only on a finite subset of $[0,T]$ (the first is a c\`agl\`ad modification of the second one). This implies for $f\in\{a,b,c\}$ and  $t \in [0,T]$ that
\begin{eqnarray}
	\int\limits_0^t \|\hat f(s)-\tilde f_n(s)\|^2ds &\leq & 3K^2T\max\limits_{0\leq k \leq n-1}(\Delta t_k)^2+3K^2\int\limits_0^t\sum\limits_{k=0}^{n-1}\|X(s)-X(t_k)\|^2\cdot\mathbf{1}_{[t_k,t_{k+1})}(s)ds\notag\\
		&&\quad +3K^2\int\limits_0^t \sum\limits_{k=0}^{n-1}\|X(t_k)-\tilde X^E_n(t_k)\|^2\cdot\mathbf{1}_{(t_k,t_{k+1}]}(s)ds.
\end{eqnarray} 
By Lemma \ref{mean_sq_reg_X} we have that
\begin{eqnarray}
	\int\limits_0^t\sum\limits_{k=0}^{n-1}\mathbb{E}\|X(s)-X(t_k)\|^2\cdot\mathbf{1}_{[t_k,t_{k+1})}(s)ds&\leq& C(1+\mathbb{E}\|\xi\|^2)\int\limits_0^T\sum\limits_{k=0}^{n-1} (s-t_k)\cdot \mathbf{1}_{(t_k,t_{k+1}]}(s)ds\notag\\
	&\leq& C_1(1+\mathbb{E}\|\xi\|^2)\cdot\max\limits_{0\leq k\leq n-1}\Delta t_k.
\end{eqnarray}
Since $\max\limits_{0\leq k\leq n-1}(\Delta t_k)^2\leq T\max\limits_{0\leq k\leq n-1}\Delta t_k$, we get for all $t\in [0,T]$
\begin{eqnarray}
	\mathbb{E}\int\limits_0^t \|\hat f(s)-\tilde f_n(s)\|^2ds &\leq & K_1\cdot (1+\mathbb{E}\|\xi\|^2)\cdot \max\limits_{0\leq k\leq n-1}\Delta t_k\notag\\
&&\quad +K_2\int\limits_0^t \sum\limits_{k=0}^{n-1}\mathbb{E}\|X(t_k)-\tilde X^E_n(t_k)\|^2\cdot\mathbf{1}_{(t_k,t_{k+1}]}(s)ds.
\end{eqnarray} 
and hence
\begin{eqnarray}
	\mathbb{E}\|X(t)-\tilde X^E_n(t)\|^2 &\leq & K_3\cdot (1+\mathbb{E}\|\xi\|^2)\cdot \max\limits_{0\leq k\leq n-1}\Delta t_k\notag\\
&&\quad +K_4\int\limits_0^t \sum\limits_{k=0}^{n-1}\mathbb{E}\|X(t_k)-\tilde X^E_n(t_k)\|^2\cdot\mathbf{1}_{(t_k,t_{k+1}]}(s)ds\notag\\
&\leq& K_3\cdot (1+\mathbb{E}\|\xi\|^2)\cdot \max\limits_{0\leq k\leq n-1}\Delta t_k\notag\\
&&\quad +K_4\int\limits_0^t \sup\limits_{0\leq u\leq z}\mathbb{E}\|X(u)-\tilde X^E_n(u)\|^2 dz.
\end{eqnarray} 
Therefore, for all $t\in [0,T]$
\begin{eqnarray}
	\sup\limits_{0\leq s\leq t}\mathbb{E}\|X(s)-\tilde X^E_n(s)\|^2 &\leq & K_3\cdot (1+\mathbb{E}\|\xi\|^2)\cdot \max\limits_{0\leq k\leq n-1}\Delta t_k\notag\\
&&\quad +K_4\int\limits_0^t \sup\limits_{0\leq u\leq z}\mathbb{E}\|X(u)-\tilde X^E_n(u)\|^2 dz.
\end{eqnarray}
Note that the function
\begin{equation}
	[0,T]\ni t\to \sup\limits_{0\leq s\leq t}\mathbb{E}\|X(s)-\tilde X^E_n(s)\|^2 
\end{equation} 
is bounded (due to Lemmas \ref{mean_sq_reg_X}, \ref{err_euler_2}) and Borel measurable (as a nondecreasing function). Hence, by Gronwall's lemma \ref{GLCW} we get
\begin{equation}
\label{Euler_proc_est_1}
	\sup\limits_{0\leq t\leq T}\mathbb{E}\|X(t)-\tilde X^E_n(t)\|^2\leq K_5\cdot (1+\mathbb{E}\|\xi\|^2)\cdot \max\limits_{0\leq k\leq n-1}\Delta t_k,
\end{equation}
which completes the proof. \ \ \ $\blacksquare$ \\ \\
In order to have approximation of $X$ in between discretization points $(t_k)_{k=0,1,\ldots,n}$ we consider the following piecewise linear interpolation of the data $(t_k,X^E_n(t_k))_{k=0,1,\ldots}$
\begin{equation}
	\bar X^E_n(t)=\sum\limits_{k=0}^{n-1}\frac{(t-t_k)\cdot X^E_n(t_{k+1})+(t_{k+1}-t)\cdot X^E_n(t_k)}{t_{k+1}-t_k}\cdot\mathbf{1}_{[t_k,t_{k+1}]}(t), \ t\in [0,T].
\end{equation}
The process $\bar X^E_n=(\bar X^E_n(t))_{t\in [0,T]}$ has continuous trajectories, however, it is not adapted to the underlying filtration $(\Sigma_t)_{t\in [0,+\infty)}$. If needed, one can obtain an adapted approximation of $X$ by considering the following step process
\begin{equation}
	\hat X^E_n(t)=\sum\limits_{k=0}^{n-1}X^E_n(t_k)\cdot \mathbf{1}_{[t_k,t_{k+1})}(t), \  t\in [0,T),
\end{equation}
and $\hat X^E_n(T)=X^E_n(T)$. Of course $\hat X^E_n\in\mathbb{D}$. Such approximations, defined in the whole $[0,T]$, are useful when considering path-dependent option pricing, see Chapter 9.
\begin{thm}\label{err_euler_3}
Let us assume that $a,b,c$ satisfy $(B)$, $(C)$ and $\xi$ satisfies $(A0)$. Then there exists $C\in (0,+\infty)$ such that for all $n\in\mathbb{N}$ and $0=t_0<t_1<\ldots<t_n=T$ we have
\begin{equation}
\label{error_path_1}
	\Biggl(\mathbb{E}\int\limits_0^T\|X(t)-\bar X_n^E(t)\|^2 dt\Biggr)^{1/2}\leq C(1+(\mathbb{E}\|\xi\|^2)^{1/2})\max\limits_{0\leq k\leq n-1}(\Delta t_k)^{1/2}.
\end{equation}
and
\begin{equation}
\label{error_path_2}
	\Biggl(\mathbb{E}\int\limits_0^T\|X(t)-\hat X_n^E(t)\|^2 dt\Biggr)^{1/2}\leq C(1+(\mathbb{E}\|\xi\|^2)^{1/2})\max\limits_{0\leq k\leq n-1}(\Delta t_k)^{1/2}.
\end{equation}
\end{thm}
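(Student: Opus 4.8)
The plan is to reduce both estimates \eqref{error_path_1} and \eqref{error_path_2} to a single uniform-in-time mean-square bound of the shape $\sup_{t\in[0,T]}\mathbb{E}\|X(t)-\bar X_n^E(t)\|^2\le C(1+\mathbb{E}\|\xi\|^2)h_n$ (and the analogue for $\hat X_n^E$), where $h_n:=\max_{0\le k\le n-1}\Delta t_k$, and then simply integrate in $t$. Everything will follow by combining Lemma \ref{mean_sq_reg_X} (mean-square time-regularity of $X$, namely $\mathbb{E}\|X(t)-X(s)\|^2\le C(1+\mathbb{E}\|\xi\|^2)|t-s|$) with Theorem \ref{err_euler_1} (the error of the Euler scheme at the grid points, $\mathbb{E}\|X(t_k)-X_n^E(t_k)\|^2\le C^2(1+(\mathbb{E}\|\xi\|^2)^{1/2})^2 h_n\le 2C^2(1+\mathbb{E}\|\xi\|^2)h_n$).

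First I would treat the step process. For $t\in[t_k,t_{k+1})$ one has $\hat X_n^E(t)=X_n^E(t_k)$, so writing $X(t)-\hat X_n^E(t)=(X(t)-X(t_k))+(X(t_k)-X_n^E(t_k))$ and using $\|u+v\|^2\le 2\|u\|^2+2\|v\|^2$ gives, after taking expectations, a bound by $2\mathbb{E}\|X(t)-X(t_k)\|^2+2\mathbb{E}\|X(t_k)-X_n^E(t_k)\|^2$; the first term is $O((t-t_k)(1+\mathbb{E}\|\xi\|^2))=O(h_n(1+\mathbb{E}\|\xi\|^2))$ by Lemma \ref{mean_sq_reg_X}, the second is $O(h_n(1+\mathbb{E}\|\xi\|^2))$ by Theorem \ref{err_euler_1}. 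For the piecewise-linear process the only extra ingredient is convexity: on $[t_k,t_{k+1}]$ the value $\bar X_n^E(t)$ is the convex combination $\alpha X_n^E(t_k)+(1-\alpha)X_n^E(t_{k+1})$ with $\alpha=(t_{k+1}-t)/\Delta t_k\in[0,1]$, so $\|X(t)-\bar X_n^E(t)\|^2\le\alpha\|X(t)-X_n^E(t_k)\|^2+(1-\alpha)\|X(t)-X_n^E(t_{k+1})\|^2$, and both summands are estimated exactly as in the step-process case since $|t-t_k|\le h_n$ and $|t-t_{k+1}|\le h_n$. This yields the desired uniform-in-$t$ bound in both cases.

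Finally I would integrate the uniform bounds over $[0,T]$: the integrands $t\mapsto\mathbb{E}\|X(t)-\bar X_n^E(t)\|^2$ and $t\mapsto\mathbb{E}\|X(t)-\hat X_n^E(t)\|^2$ are Borel by product measurability of the processes and Tonelli's theorem, hence $\mathbb{E}\int_0^T\|X(t)-\bar X_n^E(t)\|^2\,dt\le CT(1+\mathbb{E}\|\xi\|^2)h_n$ and likewise for $\hat X_n^E$ (the single point $t=T$ is irrelevant to the time integral). Taking square roots and using the elementary inequality $(1+\mathbb{E}\|\xi\|^2)^{1/2}\le 1+(\mathbb{E}\|\xi\|^2)^{1/2}$ produces \eqref{error_path_1} and \eqref{error_path_2} with $h_n^{1/2}=\max_{0\le k\le n-1}(\Delta t_k)^{1/2}$. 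I do not expect a genuine obstacle here: the statement is essentially a corollary of Theorem \ref{err_euler_1} and Lemma \ref{mean_sq_reg_X}, and the only care required is the bookkeeping between $1+\mathbb{E}\|\xi\|^2$ and $1+(\mathbb{E}\|\xi\|^2)^{1/2}$ and the routine measurability of the time-dependent integrands.
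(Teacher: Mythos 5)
Your proposal is correct and follows essentially the same route as the paper: both reduce the time-integrated error to the mean-square time-regularity of $X$ (Lemma \ref{mean_sq_reg_X}) plus the Euler error at the grid points, and then integrate over $[0,T]$. The only cosmetic difference is that you invoke Theorem \ref{err_euler_1} and convexity of $\|\cdot\|^2$ on each subinterval, while the paper uses an explicit affine decomposition of $X(t)-\bar X^E_n(t)$ and the internal uniform estimate for the time-continuous Euler process $\tilde X^E_n$ (which coincides with $X^E_n$ at the grid points), so the two arguments are interchangeable.
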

{\bf Proof.} We have for $t\in [t_k,t_{k+1}]$, $k=0,1,\ldots,n-1$, that
\begin{equation}
	X(t)-\bar X^E_n(t)=\frac{t_{k+1}-t}{t_{k+1}-t_k}\Bigl(X(t)-X^E_n(t_k)\Bigr)-\frac{t-t_k}{t_{k+1}-t_k}\Bigl(X^E_n(t_{k+1})-X(t)\Bigr).
\end{equation} 
Therefore, by Lemma \ref{mean_sq_reg_X} and \eqref{Euler_proc_est_1}
\begin{eqnarray}
	&&\mathbb{E}\int\limits_0^T\|X(t)-\bar X_n^E(t)\|^2 dt=\sum\limits_{k=0}^{n-1}\int\limits_{t_k}^{t_{k+1}}\mathbb{E}\|X(t)-\bar X^E_n(t)\|^2dt\notag\\
	&&\leq 4\sum\limits_{k=0}^{n-1}\int\limits_{t_k}^{t_{k+1}}\mathbb{E}\|X(t)-X(t_k)\|^2dt+4\sum\limits_{k=0}^{n-1}\int\limits_{t_k}^{t_{k+1}}\mathbb{E}\|X(t_{k+1})-X(t)\|^2dt\notag\\
	&&\quad +8T\sup\limits_{0\leq t\leq T}\mathbb{E}\|X(t)-\tilde X^E_n(t)\|^2\leq K(1+\mathbb{E}\|\xi\|^2)\max\limits_{0\leq k\leq n-1}\Delta t_k.
\end{eqnarray}	
By \eqref{Euler_proc_est_1} we have that
\begin{eqnarray}
	&&\mathbb{E}\int\limits_0^T\|X(t)-\hat X_n^E(t)\|^2 dt=\sum\limits_{k=0}^{n-1}\int\limits_{t_k}^{t_{k+1}}\mathbb{E}\|X(t)-X^E_n(t_k)\|^2dt\notag\\
	&&\leq 2\sum\limits_{k=0}^{n-1}\int\limits_{t_k}^{t_{k+1}}\mathbb{E}\|X(t)-X(t_k)\|^2dt+2\sum\limits_{k=0}^{n-1}\Delta t_k\cdot\mathbb{E}\|X(t_k)-X^E_n(t_k)\|^2\notag\\
	&&\leq C_1(1+\mathbb{E}\|\xi\|^2)\max\limits_{0\leq k\leq n-1}\Delta t_k+C_2\sup\limits_{0\leq t \leq T}\mathbb{E}\|X(t)-\tilde X^E_n(t)\|^2\notag\\
	&&\leq C_3(1+\mathbb{E}\|\xi\|^2)\max\limits_{0\leq k\leq n-1}\Delta t_k,
\end{eqnarray}
and this gives \eqref{error_path_2}. \ \ \ $\blacksquare$
\section{Optimality of the convergence rate $1/2$ for the Euler-Maruyama algorithm}
Let us consider the Euler-Maruyama algorithm $X^E_n$. In the case of equidistant discretization we show that the error bound $O(n^{-1/2})$ is sharp in a class of  coefficients $a,b,c,\xi$ satisfying the assumptions (B), (C) and (A0). 

Consider the geometric Brownian motion given by the SDE
\begin{equation}
    dX(t)=\sigma X(t) dW(t), \quad t\in [0,T],
\end{equation}
and with the initial-value $X(0)=1$, $\sigma>0$. The coefficients $a=c=0$, $b(t,x)=\sigma x$, $\xi=1$ satisfy the assumptions (B), (C) and (A0). Moreover, $X(t)=\exp\Bigl(-\frac{1}{2}\sigma^2 t+\sigma W(t)\Bigr)$ and $\mathbb{E}(X(t))^2=\exp(\sigma^2 t)\geq 1$ for all $t\in [0,T]$.  By \eqref{euler_proc_1}, \eqref{sol_proc_1} and It\^o isometry we have for all $n\in\mathbb{N}$ and any discretization $0=t_0<t_1<\ldots< t_n=T$ that
\begin{eqnarray}
\label{GBM_lower_b_1}
    &&\mathbb{E}|X(T)-X_n^E(T)|^2=\sigma^2\cdot\mathbb{E}\Biggl|\int\limits_0^T \Bigl[\sum\limits_{k=0}^{n-1}(X(s)-X^E_n(t_k))\cdot\mathbf{1}_{(t_k,t_{k+1}]}(s)\Bigr]dW(s)\Biggl|^2\notag\\
    &&=\sigma^2\cdot\mathbb{E}\int\limits_0^T\Bigl[\sum\limits_{k=0}^{n-1}(X(s)-X^E_n(t_k))\cdot\mathbf{1}_{(t_k,t_{k+1}]}(s)\Bigr]^2ds\notag\\
    &&=\sigma^2\cdot\mathbb{E}\int\limits_0^T\sum\limits_{k=0}^{n-1}(X(s)-X^E_n(t_k))^2\cdot\mathbf{1}_{(t_k,t_{k+1}]}(s)ds=\sigma^2\sum\limits_{k=0}^{n-1}\int\limits_{t_k}^{t_{k+1}}\mathbb{E}(X(s)-X^E_n(t_k))^2 ds.
\end{eqnarray}
Since $\displaystyle{X(s)=X(t_k)+\sigma\int\limits_{t_k}^s X(u)dW(u)}$ for $s\in [t_k,t_{k+1}]$, we have that
\begin{eqnarray}
\label{GBM_lower_b_2}
    &&\mathbb{E}(X(s)-X^E_n(t_k))^2=\mathbb{E}\Bigl[(X(t_k)-X^E_n(t_k))+\sigma\int\limits_{t_k}^sX(u)dW(u)\Bigr]^2\notag\\
    &&=\mathbb{E}(X(t_k)-X^E_n(t_k))^2+\sigma^2\mathbb{E}\Bigl(\int\limits_{t_k}^sX(u)dW(u)\Bigr)^2\notag\\
    &&\quad\quad+2\sigma\mathbb{E}\Bigl((X(t_k)-X^E_n(t_k))\cdot \int\limits_{t_k}^sX(u)dW(u)\Bigr)\notag\\
    &&\geq \sigma^2\mathbb{E}\Bigl(\int\limits_{t_k}^sX(u)dW(u)\Bigr)^2+2\sigma\mathbb{E}\Bigl((X(t_k)-X^E_n(t_k))\cdot \int\limits_{t_k}^sX(u)dW(u)\Bigr),
\end{eqnarray}
where 
\begin{equation}
    \label{GBM_lower_b_3}
    \mathbb{E}\Bigl(\int\limits_{t_k}^sX(u)dW(u)\Bigr)^2=\int\limits_{t_k}^s \mathbb{E}(X(u))^2du\geq s-t_k,
\end{equation}
\begin{eqnarray}
\label{GBM_lower_b_4}
    &&\mathbb{E}\Bigl((X(t_k)-X^E_n(t_k))\cdot \int\limits_{t_k}^sX(u)dW(u)\Bigr)\notag\\
    &&=\mathbb{E}\Bigl[(X(t_k)-X^E_n(t_k))\cdot \mathbb{E}\Bigl(\int\limits_{t_k}^sX(u)dW(u) \Bigl | \Sigma_{t_k}\Bigr)\Bigr]=0.
\end{eqnarray}
Therefore, by \eqref{GBM_lower_b_1}-\eqref{GBM_lower_b_4} and Jensen inequality we get that
\begin{equation}
    \mathbb{E}|X(T)-X_n^E(T)|^2\geq \frac{\sigma^4}{2}\sum\limits_{k=0}^{n-1}(\Delta t_k)^2\geq\frac{\sigma^4T^2}{2}n^{-1},
\end{equation}
which in turn implies the lower error bound $\Omega(n^{-1/2})$. Hence, under the assumptions of Theorem \ref{err_euler_1} the $L^2(\Omega)$-error $O(n^{-1/2})$ of the Euler-Maruyama algorithm, based on equidistant mesh, is sharp.
\section{Implementation issues}
Draw $\bar X^E_n(0)$ accordingly to the distribution of $\xi$ 
and for $k=0,1,\ldots,n-1$
\begin{equation}
	\bar X^E_n(t_{k+1})=\bar X^E_n(t_{k})+a(\bar U_{k,n}^E)\cdot\Delta t_k+(\Delta t_k)^{1/2}\cdot b(\bar U_{k,n}^E)\cdot Z_k+c(\bar U_{k,n}^E)\cdot U_k,
\end{equation}
where $\bar U_{k,n}^E=(t_k,\bar X^E_n(t_{k}))$ and
\begin{eqnarray}
	Z_k=[Z_k^1,\ldots,Z_k^{m_W}]^T,\\
	U_k=[U_k^1,\ldots,U_k^{m_N}]^T,
\end{eqnarray}
where 
\begin{itemize}
	\item $(Z^j_k)_{k=0,\ldots,n-1, j=1,\ldots,m_W}$ is an i.i.d sequence of random variables with $Z^1_1\sim N(0,1)$, 
	\item $(U_k^j)_{k=0,\ldots,n-1, j=1,\ldots,m_N}$ is a sequence of independent random variables, where $\displaystyle{U_k^j\sim \hbox{Poiss}\Bigl(\int\limits_{t_k}^{t_{k+1}}\lambda_j(s)ds\Bigr)}$, 
	\item the collection $\displaystyle{\{\xi,(Z^j_k)_{k=0,\ldots,n-1, j=1,\ldots,m_W},(U_k^j)_{k=0,\ldots,n-1, j=1,\ldots,m_N}\}}$ consists of independent random variables.
\end{itemize}
Then
\begin{equation}
	\bar X^E_{n,i}(0)=\xi_{i} \ (\hbox{in law}),
\end{equation}
\begin{eqnarray}
	\bar X^E_{n,i}(t_{k+1})=\bar X^E_{n,i}(t_{k})&+&a_i(\bar U_{k,n}^E)\cdot\Delta t_k\notag\\
	&+&(\Delta t_k)^{1/2}\cdot\sum\limits_{j=1}^{m_W}b_{ij}(\bar U_{k,n}^E)\cdot Z^j_k\notag\\
	&+&\sum\limits_{j=1}^{m_N}c_{ij}(\bar U_{k,n}^E)\cdot U^j_k,
\end{eqnarray}
for $k=0,1,\ldots,n-1$, $i=1,2,\ldots,d$.\\
\section{Exercises}
\begin{itemize}
	\item [1.] Let $U:\Omega\to\mathbb{R}^{d\times m}$ be $\Sigma_{\alpha}$-measurable where $0\leq \alpha<\beta$. Moreover, let $W$ and $N$ be $m$-dimensional Wiener process and $m$-dimensional non-homogeneous Poisson process, respectively. Show that for $Z\in \{N,W\}$
	\begin{equation}
		\int\limits_{\alpha}^{\beta} U dZ(t)=U\cdot (Z(\beta)-Z(\alpha)). 
	\end{equation}
	\item [2.] Show that there exist $c_W,c_N$ such that for all $0\leq \alpha\leq\beta$ we have
	\begin{eqnarray}
		&&\mathbb{E}\|W(\beta)-W(\alpha)\|^2\leq c_W \cdot (\beta-\alpha),\notag\\
		&&\mathbb{E}\|N(\beta)-N(\alpha)\|^2\leq c_N \cdot (\beta-\alpha).
	\end{eqnarray}
	\item [3.] For the Euler process
\begin{itemize}
	\item [(i)] show \eqref{interp_E_proc}.
	\item [(ii)] give a proof that $\tilde X^E_n=(\tilde X^E_n(t))_{t\in [0,T]}$  is adapted to $(\Sigma_t)_{t\in [0,+\infty)}$.
	\item [(iii)] show that
	\begin{equation}
	\label{xe_ind_1}
		X^E_n(t_k)=\eta+\sum\limits_{j=0}^{k-1}a(U_{j,n}^E)\cdot (t_{j+1}-t_j)+\sum\limits_{j=0}^{k-1}b(U_{j,n}^E)\cdot\Delta W_j+\sum\limits_{j=0}^{k-1}c(U_{j,n}^E)\cdot\Delta N_j
	\end{equation}
for $k=0,1,\ldots,n$. (We use the convention that $\displaystyle{\sum\limits_{j=0}^{-1} c_j:=0}$.) 
	\item [(iv)] give a proof that for all $t\in [0,T]$ we have
\begin{equation}
\label{euler_proc_11}
	\tilde X^E_n(t)=\eta+\int\limits_0^t \tilde a_n(s)ds+\int\limits_0^t \tilde b_n(s)dW(s)+\int\limits_0^t \tilde c_n(s)dN(s),
\end{equation}
where for $f\in \{a,b,c\}$
\begin{equation}
	\tilde f_n(t) = \sum\limits_{k=0}^{n-1}f(U_{k,n}^E)\cdot\mathbf{1}_{(t_k,t_{k+1}]}(t).
\end{equation}
	\item [4.] Give a proof of Fact \ref{f_np}.
\end{itemize}
\end{itemize}
\chapter{Randomized Euler scheme for jump-diffusion SDEs}
For the functions
\begin{eqnarray}
	&&a:[0,T]\times\mathbb{R}^d\to \mathbb{R}^d,\\
	&&b:[0,T]\times\mathbb{R}^d\to \mathbb{R}^{d\times m_W},\\
	&&c:[0,T]\times\mathbb{R}^d\to \mathbb{R}^{d\times m_N},
\end{eqnarray}
we assume that
\begin{itemize}
	\item [(B)] there exists $K\in (0,+\infty)$ such that for $f\in\{b,c\}$ and for all $s,t\in [0,T]$, $x,y\in\mathbb{R}^d$
	\begin{equation}
		\|f(t,x)-f(s,y)\|\leq K(|t-s|+\|x-y\|),
	\end{equation}
	\item [(C)] the function $a$ is Borel measurable and 
	there exists $L\in (0,+\infty)$ such that for all $t\in [0,T]$, $x,y\in\mathbb{R}^d$
	\begin{equation}
		\|a(t,x)-a(t,y)\|\leq L\|x-y\|,
	\end{equation}
	\item [(D)] there exists $D\in (0,+\infty)$ such that for all $t\in [0,T]$
	\begin{equation}
		\|a(t,0)\|\leq D.
	\end{equation}
	\item [(E)] the intensity functions $\lambda_j:[0,T]\to (0,+\infty)$, $j=1,2,\ldots,m_N$, are Borel measurable and bounded.
\end{itemize}
For $f=a$ the norm $\|\cdot\|$ is the euclidean norm $\|\cdot\|_2$, while for $f\in\{b,c\}$ the norm $\|\cdot\|$ is the Frobenius norm $\|\cdot\|_F$. From now we will denote the both norms by $\|\cdot\|$ and the notion will be clear from the context.

Note that $(C)$ and $(D)$ imply that for all $(t,y)\in [0,T]\times\mathbb{R}^d$
\begin{equation}
\label{lin_g_a}
	\|a(t,y)\|\leq \bar D (1+\|y\|),
\end{equation}
where $\bar D=\max\{L,D\}$.

The randomized Euler scheme is defined as follows. 

Let $n\in\mathbb{N}$ and $t_k=kT/n$, $k=0,1,\ldots,n$, be a uniform discretization of $[0,T]$. Let $(\tau_j)_{j\in\mathbb{N}_0}$ be a sequence of independent random variables on $(\Omega,\Sigma,\mathbb{P})$ that are identically uniformly distributed on $[0,1]$, and   such that the $\sigma$-fields $\Sigma_{\infty}$ and $\sigma\Bigl(\bigcup_{j\geq 0}\sigma(\tau_j)\Bigr)$ are independent. The classical Euler scheme for the jump-diffusion SDE \eqref{SDE_PROBLEM} is defined as follows:
\begin{equation}
	X^{RE}_n(0)=\xi,
\end{equation}
and for $k=0,1,\ldots,n-1$
\begin{equation}
	X^{RE}_n(t_{k+1})=X^{RE}_n(t_{k})+a(\theta_k,X^{RE}_n(t_{k}))\cdot\Delta t_k+b(U_{k,n}^{RE})\cdot\Delta W_k+c(U_{k,n}^{RE})\cdot\Delta N_k,
\end{equation}
where
\begin{eqnarray} 
	&&\theta_k=t_k+\tau_k\cdot \Delta t_k,\\
	&&U_{k,n}^{RE}=(t_k,X^{RE}_n(t_{k})),
\end{eqnarray}	
\begin{eqnarray}
	&&\Delta t_k = t_{k+1}-t_k=\frac{T}{n},\notag\\
	&&\Delta W_k=[\Delta W^1_k,\ldots,\Delta W^{m_W}_k]^T,\notag\\
	&&\Delta N_k=[\Delta N^1_k,\ldots,\Delta N^{m_N}_k]^T,
\end{eqnarray}
and
\begin{equation}
	\Delta Z_k^j=Z^j(t_{k+1})-Z^j(t_{k}), \ Z\in\{N,W\}.
\end{equation}
Since
\begin{equation}
	b(U_{k,n}^{RE})\cdot\Delta W_k=\Bigl(\sum\limits_{j=1}^{m_W}b_{ij}(U_{k,n}^{RE})\cdot\Delta W^j_k\Bigr)_{i=1,2,\ldots,d},
\end{equation}
\begin{equation}
	c(U_{k,n}^{RE})\cdot\Delta N_k=\Bigl(\sum\limits_{j=1}^{m_N}c_{ij}(U_{k,n}^{RE})\cdot\Delta N^j_k\Bigr)_{i=1,2,\ldots,d},
\end{equation}
we can write each component of the randomized Euler scheme as follows
\begin{equation}
	X^{RE}_{n,i}(0)=\xi_{i},
\end{equation}
\begin{eqnarray}
	X^{RE}_{n,i}(t_{k+1})=X^{RE}_{n,i}(t_{k})&+&a_i(\theta_k,X^{RE}_n(t_{k}))\cdot\Delta t_k\notag\\
	&+&\sum\limits_{j=1}^{m_W}b_{ij}(U_{k,n}^{RE})\cdot\Delta W^j_k\notag\\
	&+&\sum\limits_{j=1}^{m_N}c_{ij}(U_{k,n}^{RE})\cdot\Delta N^j_k,
\end{eqnarray}
for $k=0,1,\ldots,n-1$, $i=1,2,\ldots,d$.
The aim of this chapter is to prove the following result.
\begin{thm}
\label{err_euler_11}
Let us assume that $a,b,c$, $\{\lambda_j\}_{j=1,\ldots,m_N}$ satisfy $(B), (C), (D), (E)$ and $\xi$ satisfies $(A0)$. Then there exists $C\in (0,+\infty)$ such that for all $n\in\mathbb{N}$  we have
\begin{equation}
	\max\limits_{1\leq k\leq n}\Bigl(\mathbb{E}\|X(t_k)-X_n^{RE}(t_k)\|^2\Bigr)^{1/2}\leq C\cdot \Bigl(1+(\mathbb{E}\|\xi\|^2)^{1/2}\Bigr)\cdot n^{-1/2}.
\end{equation}
\end{thm}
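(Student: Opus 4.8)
The plan is to mimic the proof of Theorem \ref{err_euler_1}, the only new ingredient being the treatment of the drift term, where the randomization at the points $\theta_k$ must compensate for the fact that $a$ is merely Borel in time. First I would introduce the time-continuous randomized Euler approximation $\tilde X^{RE}_n=(\tilde X^{RE}_n(t))_{t\in[0,T]}$ by $\tilde X^{RE}_n(0)=\xi$ and, for $t\in(t_k,t_{k+1}]$,
\[
\tilde X^{RE}_n(t)=X^{RE}_n(t_k)+a(\theta_k,X^{RE}_n(t_k))\cdot(t-t_k)+b(U^{RE}_{k,n})\cdot(W(t)-W(t_k))+c(U^{RE}_{k,n})\cdot(N(t)-N(t_k)),
\]
so that $\tilde X^{RE}_n(t_k)=X^{RE}_n(t_k)$ and $\tilde X^{RE}_n(t)=\xi+\int_0^t\tilde a_n(s)ds+\int_0^t\tilde b_n(s)dW(s)+\int_0^t\tilde c_n(s)dN(s)$ with $\tilde a_n(s)=\sum_k a(\theta_k,X^{RE}_n(t_k))\mathbf{1}_{(t_k,t_{k+1}]}(s)$ and similarly for $\tilde b_n,\tilde c_n$ with argument $U^{RE}_{k,n}$. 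Using the linear growth bound \eqref{lin_g_a} for $a$, the growth bound from Fact \ref{f_lin_gr} for $b,c$, the independence structure (Fact \ref{f_np}) and boundedness of the intensities (E), the argument of Lemma \ref{err_euler_2} goes through verbatim (first a finite-but-$n$-dependent bound by discrete Gronwall, then the uniform bound by Gronwall's lemma \ref{GLCW}), yielding $\sup_{t\in[0,T]}\mathbb{E}\|\tilde X^{RE}_n(t)\|^2\le C(1+\mathbb{E}\|\xi\|^2)$ with $C$ independent of $n$. I would also fix the filtration $\mathcal{F}_k=\sigma\big(\Sigma_{t_k}\cup\sigma(\tau_0,\dots,\tau_{k-1})\big)$ and record that $X^{RE}_n(t_k)$ is $\mathcal{F}_k$-measurable while $\tau_k$ (hence $\theta_k$) is independent of $\mathcal{F}_k$; this is where (E) and the independence of $\sigma(\bigcup_j\sigma(\tau_j))$ from $\Sigma_\infty$ enter.

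Next I would write $X(t)-\tilde X^{RE}_n(t)=\int_0^t(\hat a(s)-\tilde a_n(s))ds+\int_0^t(\hat b(s)-\tilde b_n(s))dW(s)+\int_0^t(\hat c(s)-\tilde c_n(s))dN(s)$, with $\hat f(s)=\sum_k f(s,X(s-))\mathbf{1}_{(t_k,t_{k+1}]}(s)$ as in the proof of Theorem \ref{err_euler_1}. The diffusion and jump integrals are handled exactly as there: Schwarz, the It\^o isometry for martingale-driven integrals, the decomposition $N=\tilde N+m$, boundedness of the $\lambda_j$, and assumption (B) for $b,c$ give a contribution bounded by $C(1+\mathbb{E}\|\xi\|^2)\,(T/n)+C\int_0^t\sup_{0\le u\le s}\mathbb{E}\|X(u)-\tilde X^{RE}_n(u)\|^2\,ds$, where the $(T/n)$ comes from Lemma \ref{mean_sq_reg_X} (the $L^2$-Hölder-$1/2$ regularity of $X$) combined with the replacement of $X(s-)$ by $X(t_k)$ and then $X(t_k)$ by $\tilde X^{RE}_n(t_k)$, and the integral term feeds the final Gronwall step.

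The drift term is the main obstacle and requires the martingale trick from the proof of Theorem \ref{ran_Eu_lp}. Writing $\zeta(s)=t_k$ for $s\in(t_k,t_{k+1}]$, I would split $\hat a(s)-\tilde a_n(s)$ into $[a(s,X(s-))-a(s,X(\zeta(s)))]+[a(s,X(\zeta(s)))-a(s,X^{RE}_n(\zeta(s)))]+[a(s,X^{RE}_n(\zeta(s)))-a(\theta_{i(s)},X^{RE}_n(\zeta(s)))]$, where $i(s)$ is the index with $s\in(t_{i(s)},t_{i(s)+1}]$. The first two pieces are controlled by the spatial Lipschitz condition (C): the first via Lemma \ref{mean_sq_reg_X}, giving $O(T/n)$; the second giving the Gronwall integral term. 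For the randomization piece I would group $\int_0^t(\cdot)\,ds$ over the complete subintervals, $\sum_{k=0}^{i(t)-1}Y_k+R(t)$ with $Y_k=\int_{t_k}^{t_{k+1}}\big(a(s,X^{RE}_n(t_k))-a(\theta_k,X^{RE}_n(t_k))\big)ds$ and $R(t)$ the partial last interval. Since $\theta_k\sim U[t_k,t_{k+1}]$ is independent of $\mathcal{F}_k$ and $X^{RE}_n(t_k)$ is $\mathcal{F}_k$-measurable, $\mathbb{E}\big(a(\theta_k,X^{RE}_n(t_k))\,\big|\,\mathcal{F}_k\big)=\frac{1}{\Delta t_k}\int_{t_k}^{t_{k+1}}a(s,X^{RE}_n(t_k))\,ds$ a.s., so $\mathbb{E}(Y_k\,|\,\mathcal{F}_k)=0$ and $(Z_j,\mathcal{F}_{j+1})_{j}=\big(\sum_{k=0}^j Y_k,\mathcal{F}_{j+1}\big)_j$ is a discrete-time martingale; by \eqref{lin_g_a} and the moment bound, $\mathbb{E}\|Y_k\|^2\le C(\Delta t_k)^2(1+\mathbb{E}\|X^{RE}_n(t_k)\|^2)\le C(T/n)^2$, and the Burkholder inequality (Theorem \ref{BDG_DISC}) together with \eqref{BDG_UPP_EST} gives $\mathbb{E}\big(\max_j\|Z_j\|^2\big)\le C_2^2\,\mathbb{E}\big(\sum_k\|Y_k\|^2\big)\le C\,n\,(T/n)^2=CT^2/n$, while $\|R(t)\|\le 2\bar D(1+\sup_t\|X^{RE}_n(t)\|)(T/n)$ has $L^2$-norm $O(1/n)$; one also needs, as in the lemma preceding Theorem \ref{ran_Eu_lp}, that $t\mapsto\mathbb{E}\big(\sup_{u\le t}\|\cdots\|^2\big)$ is well-defined and Borel, which follows from continuity/c\`adl\`ag of the relevant trajectories. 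Putting the three contributions together,
\[
\sup_{0\le s\le t}\mathbb{E}\|X(s)-\tilde X^{RE}_n(s)\|^2\le C(1+\mathbb{E}\|\xi\|^2)\,\frac{T}{n}+C\int_0^t\sup_{0\le u\le z}\mathbb{E}\|X(u)-\tilde X^{RE}_n(u)\|^2\,dz,
\]
and since the left-hand side as a function of $t$ is nondecreasing (hence Borel) and bounded by Lemmas \ref{mean_sq_reg_X} and (the randomized analog of) \ref{err_euler_2}, Gronwall's lemma \ref{GLCW} yields $\sup_{0\le t\le T}\mathbb{E}\|X(t)-\tilde X^{RE}_n(t)\|^2\le C(1+\mathbb{E}\|\xi\|^2)/n$. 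Evaluating at $t=t_k$ and using $\tilde X^{RE}_n(t_k)=X^{RE}_n(t_k)$ gives the claimed bound $\max_{1\le k\le n}\big(\mathbb{E}\|X(t_k)-X^{RE}_n(t_k)\|^2\big)^{1/2}\le C(1+(\mathbb{E}\|\xi\|^2)^{1/2})\,n^{-1/2}$. $\blacksquare$
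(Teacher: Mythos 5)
Your proposal is correct and follows the same overall architecture as the paper (time-continuous randomized Euler process, uniform second-moment bound, error split into drift/diffusion/jump parts, Burkholder for the randomization term, Gronwall), but it differs in one genuine respect: in the randomization piece of the drift you freeze the \emph{scheme} values, writing $Y_k=\int_{t_k}^{t_{k+1}}\bigl(a(s,X^{RE}_n(t_k))-a(\theta_k,X^{RE}_n(t_k))\bigr)ds$, and you obtain the martingale property directly with respect to the interlaced filtration $\mathcal{F}_k=\Sigma_{t_k}\vee\sigma(\tau_0,\ldots,\tau_{k-1})$, using the freezing lemma only to compute $\mathbb{E}\bigl(a(\theta_k,X^{RE}_n(t_k))\,|\,\mathcal{F}_k\bigr)$. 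The paper instead freezes the \emph{exact solution} values, $\tilde Y_k=\int_{t_k}^{t_{k+1}}\bigl(a(s,X(t_k))-a(\theta_k,X(t_k))\bigr)ds$, conditions on $\Sigma_\infty$, invokes the freezing lemma (Theorem \ref{f_lemma}) to reduce to deterministic points $x_k$, applies Burkholder to the resulting $\sigma(\theta_0,\ldots,\theta_j)$-martingale for each fixed $(x_k)$, and then takes expectations; correspondingly its Lipschitz term compares $a(\theta_k,X(t_k))$ with $a(\theta_k,\tilde X^{RE}_n(t_k))$, whereas yours compares $a(s,X(t_k))$ with $a(s,X^{RE}_n(t_k))$. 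Both decompositions close: yours buys a cleaner, one-pass martingale argument (no double expectation over $\Sigma_\infty$) at the price of needing the scheme moment bound of the randomized analogue of Lemma \ref{err_euler_2} inside the increments $\mathbb{E}\|Y_k\|^2$, which you have anyway; the paper's version only needs moments of $X$ there. Two small points to tidy up, neither a gap: carry the factor $(1+\mathbb{E}\|\xi\|^2)$ explicitly in $\mathbb{E}\|Y_k\|^2$ and in the remainder term (your final constant must not absorb $\xi$), and for the last partial interval bound $\mathbb{E}\|R(t)\|^2$ pointwise via $\mathbb{E}\|X^{RE}_n(\zeta(t))\|^2\leq\sup_{u}\mathbb{E}\|\tilde X^{RE}_n(u)\|^2$ rather than through $\sup_t\|X^{RE}_n(t)\|$ inside the norm, since the lemma controls the supremum of expectations, not the expectation of the supremum.
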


For the error analysis of the randomized Euler scheme we need to consider the following extended filtration $(\tilde\Sigma^n_t)_{t\geq 0}$, defined as
\begin{equation}
	\tilde\Sigma^n_t=\Sigma_t\vee\sigma(\tau_0,\tau_1,\ldots,\tau_{n-1}), \quad t\geq 0.
\end{equation}
Note that $W$ is also a Wiener process with respect to $(\tilde\Sigma^n_t)_{t\geq 0}$, while $N$ is again Poisson process also with respect to $(\tilde\Sigma^n_t)_{t\geq 0}$. Hence, we can consider stochastic  integrals of processes from $\mathbb{L}$ that are  adapted to  $(\tilde\Sigma^n_t)_{t\geq 0}$.
Before we prove this fact we need several auxiliary lemmas concerning the so called {\it time-continuous Euler approximation} $\tilde X^{RE}_n=(\tilde X^{RE}_n(t))_{t\in [0,T]}$. It is defined as follows:
\begin{equation}
	\tilde X^{RE}_n(0)=\xi,
\end{equation}
and for $t\in (t_k,t_{k+1}]$, $k=0,1,\ldots,n-1$ we set
\begin{equation}
\label{reuler_proc_def1}
	\tilde X^{RE}_n(t)=X^{RE}_n(t_k)+a(\theta_k,X^{RE}_n(t_{k}))\cdot (t-t_k)+b(U_{k,n}^{RE})\cdot (W(t)-W(t_k))+c(U_{k,n}^{RE})\cdot (N(t)-N(t_k)).
\end{equation}
The approximation $\tilde X^{RE}_n$ is not implementable, since it requires the complete knowledge of trajectories of $N$ and $W$. 

By induction we have that
\begin{equation}
\label{interp_E_proc1}
	\tilde X^{RE}_n(t_k)=X^{RE}_n(t_k), \ k=0,1,\ldots,n.
\end{equation}
Since the trajectories of $[N,W]$ are right-continuous we get 
\begin{equation}
	\lim\limits_{t\to t_k+}\tilde X^{RE}_n(t)=X^{RE}_n(t_k)=\tilde X^{RE}_n(t_k),
\end{equation}
and
\begin{eqnarray}
	&&\lim\limits_{t\to t_{k+1}-}\tilde X^{RE}_n(t)=X^{RE}_n(t_k)+a(\theta_k,X^{RE}_n(t_{k}))\cdot (t_{k+1}-t_k)+b(U_{k,n}^{RE})\cdot (W(t_{k+1})-W(t_k))\notag\\
&&\quad\quad +c(U_{k,n}^{RE})\cdot (N(t_{k+1}-)-N(t_k))\neq \tilde X^{RE}_n(t_{k+1}) \ (=X^{RE}_n(t_{k+1}))
\end{eqnarray}
for $k=0,1,\ldots,n-1$. Moreover, it is easy to see that $\tilde X^{RE}_n=(\tilde X^{RE}_n(t))_{t\in [0,T]}$ is adapted to $(\tilde\Sigma^n_t)_{t\in [0,+\infty)}$ and has c\`adl\`ag paths. 
\begin{lem}
\label{err_euler_212}
Let us assume that $a,b,c$, $\{\lambda_j\}_{j=1,\ldots,m_N}$ satisfy $(B), (C), (D), (E)$ and $\xi$ satisfies $(A0)$. Then there exists $C\in (0,+\infty)$ such that for all $n\in\mathbb{N}$ we have
\begin{equation}
\label{err_euler_211}
	\sup\limits_{t\in [0,T]}\mathbb{E}\|\tilde X^{RE}_n(t)\|^2\leq C(1+\mathbb{E}\|\xi\|^2).
\end{equation}
\end{lem}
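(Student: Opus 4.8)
The plan is to mirror the proof of Lemma~\ref{err_euler_2}, the only new ingredient being the randomized drift evaluation $a(\theta_k,X^{RE}_n(t_k))$, which is controlled via its pathwise linear growth. First I would establish the auxiliary moment bound
\begin{equation}
\max_{0\le k\le n}\mathbb{E}\|X^{RE}_n(t_k)\|^2<+\infty
\end{equation}
by induction on $k$ (the case $k=0$ being (A0)). In the inductive step, expanding $X^{RE}_n(t_{k+1})$ in terms of $X^{RE}_n(t_k)$, $\Delta t_k$, $\Delta W_k$, $\Delta N_k$, I would use: the estimate $\|a(\theta_k,X^{RE}_n(t_k))\|^2\le 2\bar D^2(1+\|X^{RE}_n(t_k)\|^2)$ coming from \eqref{lin_g_a}, which holds pathwise whatever the value of $\theta_k$; Fact~\ref{f_lin_gr} for $b$ and $c$; Fact~\ref{f_np} together with the independence of $\sigma(b(U^{RE}_{k,n}))$ from $\sigma(\Delta W_k)$ and of $\sigma(c(U^{RE}_{k,n}))$ from $\sigma(\Delta N_k)$; and the elementary bounds $\mathbb{E}\|W(t)-W(t_k)\|^2\le c_W(t-t_k)$, $\mathbb{E}\|N(t)-N(t_k)\|^2\le c_N(t-t_k)$ for suitable constants $c_W,c_N$. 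The needed independence holds because $\tau_0,\ldots,\tau_{n-1}$ are independent of $\Sigma_\infty$, so $U^{RE}_{k,n}$, being $\tilde\Sigma^n_{t_k}$-measurable, is independent of the increments of $W$ and $N$ over $(t_k,t_{k+1}]$, exactly as for the classical Euler scheme. The same computation on $(t_k,t_{k+1}]$ then gives $\sup_{0\le t\le T}\mathbb{E}\|\tilde X^{RE}_n(t)\|^2\le C_3(1+\max_k\mathbb{E}\|X^{RE}_n(t_k)\|^2)<+\infty$, a bound a priori still depending on $n$.

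Second, I would upgrade this to an $n$-uniform bound by Gronwall's lemma. I would use the representation
\begin{equation}
\tilde X^{RE}_n(t)=\xi+\int_0^t\tilde a_n(s)ds+\int_0^t\tilde b_n(s)dW(s)+\int_0^t\tilde c_n(s)dN(s),\qquad t\in[0,T],
\end{equation}
with $\tilde a_n(s)=\sum_{k=0}^{n-1}a(\theta_k,X^{RE}_n(t_k))\mathbf{1}_{(t_k,t_{k+1}]}(s)$ and $\tilde b_n,\tilde c_n$ defined analogously with nodes $U^{RE}_{k,n}$. Here I would invoke the remarks preceding the lemma: $W$ is a Wiener process and $N$ a Poisson process with respect to the enlarged filtration $(\tilde\Sigma^n_t)_{t\ge0}$, the processes $\tilde a_n,\tilde b_n,\tilde c_n$ are adapted to it and lie in $\mathbb{L}$ — for $\tilde a_n$ only Borel measurability and pathwise boundedness are relevant, since $\int_0^t\tilde a_n(s)ds$ is an ordinary path-by-path Lebesgue integral — so the It\^o isometry for martingale-driven integrals applies. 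Splitting $N=\tilde N+m$ and combining the H\"older inequality, the It\^o isometry, boundedness of the intensities (E), the pathwise estimate
\begin{equation}
\int_0^t\|\tilde f_n(s)\|^2ds\le 2K^2T+2K^2\int_0^t\sum_{k=0}^{n-1}\|X^{RE}_n(t_k)\|^2\mathbf{1}_{(t_k,t_{k+1}]}(s)ds
\end{equation}
(valid for $f\in\{a,b,c\}$ with a suitable constant $K$, using \eqref{lin_g_a} for $f=a$ and Fact~\ref{f_lin_gr} for $f\in\{b,c\}$), and the identity $\mathbb{E}\|X^{RE}_n(t_k)\|^2=\mathbb{E}\|\tilde X^{RE}_n(t_k)\|^2\le\sup_{0\le u\le s}\mathbb{E}\|\tilde X^{RE}_n(u)\|^2$ for $s\ge t_k$, I would arrive at
\begin{equation}
\sup_{0\le s\le t}\mathbb{E}\|\tilde X^{RE}_n(s)\|^2\le K_{10}(1+\mathbb{E}\|\xi\|^2)+K_{11}\int_0^t\sup_{0\le u\le z}\mathbb{E}\|\tilde X^{RE}_n(u)\|^2dz,
\end{equation}
with $K_{10},K_{11}$ depending only on $\bar D,K,T,m_N,\max_j\|\lambda_j\|_\infty$ and not on $n$.

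Finally, since $t\mapsto\sup_{0\le s\le t}\mathbb{E}\|\tilde X^{RE}_n(s)\|^2$ is nondecreasing, hence Borel measurable, and bounded by the first step, Gronwall's lemma~\ref{GLCW} yields \eqref{err_euler_211}. I do not expect a serious obstacle: the skeleton is exactly that of Lemma~\ref{err_euler_2}, and the only genuinely new points are bookkeeping — that the randomized node $\theta_k$ (equivalently $\tau_k$) is independent of $\Sigma_\infty$, so the independence arguments of the Euler case carry over verbatim; that $a$, merely Borel under (C), still produces an admissible integrand $\tilde a_n$ for the $ds$-integral; and that all stochastic integrals are taken with respect to the enlarged filtration $(\tilde\Sigma^n_t)_{t\ge0}$, for which $W$ and $\tilde N$ remain martingales so that the isometries apply. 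All of these are supplied by the discussion immediately preceding the statement.
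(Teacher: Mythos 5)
Your proposal is correct and follows essentially the same route as the paper's proof: the same two-stage argument (induction for the a priori finiteness of the discrete moments, then the integral representation with $N=\tilde N+m$, H\"older, the It\^o isometry with respect to the enlarged filtration, the pathwise linear-growth estimate for $\tilde a_n,\tilde b_n,\tilde c_n$, and Gronwall's lemma \ref{GLCW} applied to the nondecreasing, bounded function $t\mapsto\sup_{0\le s\le t}\mathbb{E}\|\tilde X^{RE}_n(s)\|^2$). The bookkeeping points you single out — pathwise linear growth of the randomized drift via \eqref{lin_g_a}, independence carried over through $(\tilde\Sigma^n_t)_{t\ge0}$, and mere Borel measurability of $a$ sufficing for the $ds$-integral — are exactly the ones the paper relies on.
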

{\bf Proof.} Firstly we show that 
\begin{equation}
\label{bound_mom_E_112}
	\max\limits_{0\leq k \leq n}\mathbb{E}\|X^{RE}_n(t_k)\|^2<+\infty.
\end{equation}
We proceed by induction and let us assume that there exists $s\in \{0,1,\ldots,n-1\}$ with the property that 
\begin{equation}
	\max\limits_{0\leq k\leq s}\mathbb{E}\|X^{RE}_n(t_k)\|^2<+\infty.
\end{equation}
(This is of course satisfied for $s=0$.) Since $\{\sigma(b(U_{s,n}^{RE})),\sigma(\Delta W_s)\}$ and $\{\sigma(c(U_{s,n}^{RE})),\sigma(\Delta N_s)\}$ are two families of independent $\sigma$-fields, we get by the Fact \ref{f_np}
\begin{eqnarray}
	\mathbb{E}\|X^{RE}_n(t_{s+1})\|^2 &\leq& C_1\Bigl(\mathbb{E}\|X^{RE}_n(t_s)\|^2+(\Delta t_s)^2\cdot\mathbb{E}\|a(\theta_s,X^{RE}_n(t_{s}))\|^2\notag\\
&&\quad +\mathbb{E}\|b(U_{s,n}^{RE})\|^2 \cdot\mathbb{E}\|\Delta W_s\|^2+\mathbb{E}\|c(U_{s,n}^{RE})\|^2 \cdot\mathbb{E}\|\Delta N_s\|^2\Bigr).
\end{eqnarray}
Since there exist $c_W,c_N\in (0,+\infty)$ such that for all $t\in [t_k,t_{k+1}]$, $k=0,1,\ldots,n-1$ it holds
\begin{eqnarray}
	&&\mathbb{E}\|W(t)-W(t_k)\|^2\leq c_W (t-t_k),\notag\\
	&&\mathbb{E}\|N(t)-N(t_k)\|^2\leq c_N (t-t_k),
\end{eqnarray}
we get by Fact \ref{f_lin_gr} that
\begin{eqnarray}
	\mathbb{E}\|X^{RE}_n(t_{s+1})\|^2 &\leq& C_2\Bigl(\mathbb{E}\|X^{RE}_n(t_s)\|^2+(\Delta t_s)^2\cdot (1+\mathbb{E}\|X^{RE}_n(t_s)\|^2)\notag\\
&&\quad +(1+\mathbb{E}\|X^{RE}_n(t_s)\|^2) \cdot \Delta t_s+(1+\mathbb{E}\|X^{RE}_n(t_s)\|^2)\cdot\Delta t_s\Bigr)<+\infty.
\end{eqnarray}
Hence,
\begin{equation}
	\max\limits_{0\leq k\leq s+1}\mathbb{E}\|X^{RE}_n(t_k)\|^2<+\infty.
\end{equation}
By the rules of induction, this completes the proof of \eqref{bound_mom_E_112}. 

From \eqref{reuler_proc_def1} it follows that for all 
$t\in [t_k,t_{k+1}]$, $k=0,1,\ldots,n-1$ 
\begin{eqnarray}
	\mathbb{E}\|\tilde X^{RE}_n(t)\|^2 &\leq& C_2\Bigl(\mathbb{E}\|X^{RE}_n(t_k)\|^2+ (t-t_k)^2\cdot (1+\mathbb{E}\|X^{RE}_n(t_k)\|^2)\notag\\
&&\quad +(1+\mathbb{E}\|X^{RE}_n(t_k)\|^2) \cdot (t-t_k)+(1+\mathbb{E}\|X^{RE}_n(t_k)\|^2)\cdot (t-t_k)\Bigr)\notag\\
&&\leq C_3(1+\mathbb{E}\|X^{RE}_n(t_k)\|^2),
\end{eqnarray}
and by \eqref{bound_mom_E_112}
\begin{equation}
	\label{bound_mom_E_21}
	\sup\limits_{0\leq t\leq T}\mathbb{E}\|\tilde X^{RE}_n(t)\|^2=\max\limits_{0\leq k\leq n-1}\sup\limits_{t_k\leq t\leq t_{k+1}}\mathbb{E}\|\tilde X^{RE}_n(t)\|^2\leq C_3(1+\max\limits_{0\leq k\leq n-1}\mathbb{E}\|X^{RE}_n(t_k)\|^2)<+\infty.
\end{equation}
Note that for now the bound in \eqref{bound_mom_E_21} depends on $n$. In the second part of the proof we will show, with the help of Gronwall's lemma, that we can obtain the bound \eqref{err_euler_211}, with $C$ that is independent of $n$.

We can write for all $t\in [0,T]$ that
\begin{equation}
\label{euler_proc_12}
	\tilde X^{RE}_n(t)=\xi+\int\limits_0^t \tilde a_n(s)ds+\int\limits_0^t \tilde b_n(s)dW(s)+\int\limits_0^t \tilde c_n(s)dN(s),
\end{equation}
where 
\begin{equation}
	\tilde a_n(s) = \sum\limits_{k=0}^{n-1}a(\theta_k, \tilde X^{RE}_n(t_{k}))\cdot\mathbf{1}_{(t_k,t_{k+1}]}(s)
\end{equation}
and for $f\in \{b,c\}$
\begin{equation}
	\tilde f_n(s) = \sum\limits_{k=0}^{n-1}f(U_{k,n}^{RE})\cdot\mathbf{1}_{(t_k,t_{k+1}]}(s),
\end{equation}
and recall that $U_{k,n}^{RE}=(t_k, X^{RE}_n(t_{k}))=(t_k, \tilde X^{RE}_n(t_{k}))$. Note that $(\tilde f_n(t))_{t\in [0,T]}\in\mathbb{L}$ for $f\in\{a,b,c\}$.  Furthermore, for $f\in \{b,c\}$ and all $t\in [0,T]$ it holds
\begin{equation}
	\|\tilde f_n(t)\|^2\leq \Bigl(\sum\limits_{k=0}^{n-1}\|f(U_{k,n}^{RE})\|\cdot\mathbf{1}_{(t_k,t_{k+1}]}(t)\Bigr)^2=\sum\limits_{k=0}^{n-1}\|f(U_{k,n}^{RE})\|^2\cdot\mathbf{1}_{(t_k,t_{k+1}]}(t),
\end{equation}
which, together with Fact \ref{f_lin_gr}, implies
\begin{eqnarray}
\label{est_aux_11}
	&&\int\limits_0^t \|\tilde f_n(s)\|^2 ds\leq \int\limits_0^t \sum\limits_{k=0}^{n-1}\|f(U_{k,n}^{RE})\|^2\cdot\mathbf{1}_{(t_k,t_{k+1}]}(s)ds\notag\\
	&&\leq K_1^2 \int\limits_0^t \sum\limits_{k=0}^{n-1}(1+\|\tilde X^{RE}_n(t_{k})\|)^2\cdot\mathbf{1}_{(t_k,t_{k+1}]}(s)ds\notag\\
	&&\leq 2K_1^2 \int\limits_0^t \sum\limits_{k=0}^{n-1}(1+\|\tilde X^{RE}_n(t_{k})\|^2)\cdot\mathbf{1}_{(t_k,t_{k+1}]}(s)ds\notag\\
	&&\leq 2K_1^2\int\limits_0^T \sum\limits_{k=0}^{n-1}\mathbf{1}_{(t_k,t_{k+1}]}(s)ds +2K_1^2\int\limits_0^t \sum\limits_{k=0}^{n-1}\|\tilde X^{RE}_n(t_{k})\|^2\cdot\mathbf{1}_{(t_k,t_{k+1}]}(s)ds\notag\\
	&&=2K_1^2T+2K_1^2\int\limits_0^t \sum\limits_{k=0}^{n-1}\|\tilde X^{RE}_n(t_{k})\|^2\cdot\mathbf{1}_{(t_k,t_{k+1}]}(s)ds.
\end{eqnarray}
Analogously we get by \eqref{lin_g_a}
\begin{equation}
\label{t_a_est_1}
	\int\limits_0^t \|\tilde a_n(s)\|^2 ds\leq 2\bar D^2T+2\bar D^2\int\limits_0^t \sum\limits_{k=0}^{n-1}\|\tilde X^{RE}_n(t_{k})\|^2\cdot\mathbf{1}_{(t_k,t_{k+1}]}(s)ds.
\end{equation}
By the fact that $N(t)=\tilde N(t)+m(t)$, we have 
\begin{eqnarray}
	&&\mathbb{E}\|\tilde X^{RE}_n(t)\|^2\leq C_1\Bigl(\mathbb{E}\|\xi\|^2+\mathbb{E}\Bigl\|\int\limits_0^t \tilde a_n(s)ds\Bigl\|^2+\mathbb{E}\Bigl\|\int\limits_0^t \tilde b_n(s)dW(s)\Bigl\|^2\notag\\
&&\quad +\mathbb{E}\Bigl\|\int\limits_0^t \tilde c_n(s)d\tilde N(s)\Bigl\|^2+\mathbb{E}\Bigl\|\int\limits_0^t \tilde c_n(s)dm(s)\Bigl\|^2\Bigr).
\end{eqnarray}
From the H\"older inequality, It\^o isometry for stochastic integrals driven by martingales, and  \eqref{t_a_est_1} we get for $t\in [0,T]$
\begin{eqnarray}
	&&\mathbb{E}\Bigl\|\int\limits_0^t \tilde a_n(s)ds\Bigl\|^2\leq \mathbb{E}\Bigl(\int\limits_0^t \|\tilde a_n(s)\|ds\Bigr)^2\leq T\mathbb{E}\int\limits_0^t\|\tilde a_n(s)\|^2ds\notag\\
	&&\leq K_2+K_3\int\limits_0^t \sum\limits_{k=0}^{n-1}\|\tilde X^{RE}_n(t_{k})\|^2\cdot\mathbf{1}_{(t_k,t_{k+1}]}(s)ds,
\end{eqnarray}
\begin{equation}
	\mathbb{E}\Bigl\|\int\limits_0^t \tilde b_n(s)dW(s)\Bigl\|^2= \mathbb{E}\int\limits_0^t \|\tilde b_n(s)\|^2ds\leq K_4+K_5\int\limits_0^t \sum\limits_{k=0}^{n-1}\|\tilde X^{RE}_n(t_{k})\|^2\cdot\mathbf{1}_{(t_k,t_{k+1}]}(s)ds,
\end{equation}
\begin{eqnarray}
	&&\mathbb{E}\Bigl\|\int\limits_0^t \tilde c_n(s)d\tilde N(s)\Bigl\|^2=\sum\limits_{j=1}^{m_N}\mathbb{E}\int\limits_0^t\|\tilde c^{(j)}_n(s)\|^2\cdot \lambda_j(s)ds\leq\max\limits_{1\leq j\leq m_N}\|\lambda_j\|_{\infty}\cdot\mathbb{E}\int\limits_0^t\|\tilde c_n(s)\|^2ds\notag\\
	&&\leq K_6+K_7\int\limits_0^t \sum\limits_{k=0}^{n-1}\|\tilde X^{RE}_n(t_{k})\|^2\cdot\mathbf{1}_{(t_k,t_{k+1}]}(s)ds,
\end{eqnarray}
and
\begin{eqnarray}
	&&\mathbb{E}\Bigl\|\int\limits_0^t \tilde c_n(s)dm(s)\Bigl\|^2=\sum\limits_{i=1}^d\mathbb{E}\Bigl|\sum\limits_{j=1}^{m_N}\int\limits_0^t \tilde c_{n,ij}(s)\cdot\lambda_j(s)ds\Bigl|^2\notag\\
	&\leq & m_N\cdot\sum\limits_{i=1}^d\sum\limits_{j=1}^{m_N}\mathbb{E}\Bigl|\int\limits_0^t \tilde c_{n,ij}(s)\cdot\lambda_j(s)ds\Bigl|^2\notag\\
	&\leq & m_N T\cdot\sum\limits_{i=1}^d\sum\limits_{j=1}^{m_N}\int\limits_0^t\mathbb{E}|\tilde c_{n,ij}(s)|^2\cdot\lambda^2_j(s) ds\notag\\
	&\leq &m_N T\cdot\max\limits_{1\leq j\leq m_N}\|\lambda_j\|^2_{\infty}\cdot\mathbb{E}\int\limits_0^t\|\tilde c_n(s)\|^2 ds\notag\\
	&&\leq K_8+K_9\int\limits_0^t \sum\limits_{k=0}^{n-1}\|\tilde X^{RE}_n(t_{k})\|^2\cdot\mathbf{1}_{(t_k,t_{k+1}]}(s)ds.
\end{eqnarray}
Hence
\begin{eqnarray}
	\mathbb{E}\|\tilde X^{RE}_n(t)\|^2&\leq& K_{10}(1+\mathbb{E}\|\xi\|^2)+K_{11}\int\limits_0^t \sum\limits_{k=0}^{n-1}\mathbb{E}\|\tilde X^{RE}_n(t_{k})\|^2\cdot\mathbf{1}_{(t_k,t_{k+1}]}(s)ds\notag\\
	&\leq& K_{10}(1+\mathbb{E}\|\xi\|^2)+K_{11}\int\limits_0^t\sup\limits_{0\leq u\leq z}\mathbb{E}\|\tilde X^{RE}_n(u)\|^2dz 
\end{eqnarray}
where $K_{10},K_{11}$ depend only on $K_1$, $T$, $m_N$, $\max\limits_{1\leq j\leq m_N}\|\lambda_j\|_{\infty}$. Therefore, for all $t \in [0,T]$
\begin{equation}
	\sup\limits_{0\leq s \leq t}\mathbb{E}\|\tilde X^{RE}_n(s)\|^2\leq  K_{10}(1+\mathbb{E}\|\xi\|^2)+K_{11}\int\limits_0^t\sup\limits_{0\leq u\leq z}\mathbb{E}\|\tilde X^{RE}_n(u)\|^2dz .
\end{equation}
Note that the function
\begin{equation}
	[0,T]\ni t \to \sup\limits_{0\leq s \leq t}\mathbb{E}\|\tilde X^{RE}_n(s)\|^2
\end{equation}
is Borel measurable (as a nondecreasing function) and bounded, which follows from   \eqref{bound_mom_E_21}. By using Lemma \ref{GLCW} we get the thesis.\ \ \ $\blacksquare$
\\ \\
{\bf Proof of Theorem \ref{err_euler_1}.}  We have that for all $t\in [0,T]$
\begin{equation}
\label{sol_proc_11}	
	X(t)=\xi+\int\limits_0^t \hat a(s)ds+\int\limits_0^t \hat b(s)dW(s)+\int\limits_0^t \hat c(s)dN(s),
\end{equation}
where for $f\in\{a,b,c\}$
\begin{equation}
	\hat f(s)=\sum\limits_{k=0}^{n-1}f(s,X(s-))\cdot\mathbf{1}_{(t_k,t_{k+1}]}(s).
\end{equation}
(Note that for $f\in\{a,b,c\}$ the process  $(\hat f(t))_{t\in [0,T]}$ is in $\mathbb{L}$ so all Lebesgue and stochastic integrals appearing in  \eqref{sol_proc_11} are well-defined.) From the representations above we thus get
\begin{equation}
\label{err_est_11}
	\mathbb{E}\|X(t)-\tilde X^{RE}_n(t)\|^2\leq 3\Bigl(A_{1,n}(t)+A_{2,n}(t)+A_{3,n}(t)\Bigr)
\end{equation}
where
\begin{eqnarray}
	&&A_{1,n}(t)=\mathbb{E}\Bigl\|\int\limits_0^t\Bigl(\hat a(s)-\tilde a_n(s)\Bigr)ds\Bigl\|^2,\notag\\
	&&A_{2,n}(t)=\mathbb{E}\Bigl\|\int\limits_0^t\Bigl(\hat b(s)-\tilde b_n(s)\Bigr)dW(s)\Bigl\|^2,\notag\\
	&&A_{3,n}(t)=\mathbb{E}\Bigl\|\int\limits_0^t\Bigl(\hat c(s)-\tilde c_n(s)\Bigr)dN(s)\Bigl\|^2.
\end{eqnarray}
We have that
\begin{equation}
\label{err_est_21}
	A_{1,n}(t)\leq 3\Bigl(\mathbb{E}\|\tilde A^{RE}_{1,n}(t)\|^2+\mathbb{E}\|\tilde A^{RE}_{2,n}(t)\|^2+\mathbb{E}\|\tilde A^{RE}_{3,n}(t)\|^2\Bigr),
\end{equation}
where
\begin{eqnarray}
	\label{DEF_ARE_1}
	&&\mathbb{E}\|\tilde A^{RE}_{1,n}(t)\|^2=
\mathbb{E}\Biggl\|\int\limits_0^t\sum_{k=0}^{n-1}\Bigl(a(s,X(s-))-a(s,X(t_k))\Bigr)\mathbf{1}_{(t_k,t_{k+1}]}(s)ds\Biggl\|^2,\\
	&&\mathbb{E}\|\tilde A^{RE}_{2,n}(t)\|^2=\mathbb{E}\Biggl\|\int\limits_0^t\sum_{k=0}^{n-1}\Bigl(a(s,X(t_k))-a(\theta_k,X(t_k))\Bigr)\mathbf{1}_{(t_k,t_{k+1}]}(s)ds\Biggl\|^2,\\
	&&\mathbb{E}\|\tilde A^{RE}_{3,n}(t)\|^2=\mathbb{E}\Biggl\|\int\limits_0^t\sum_{k=0}^{n-1}\Bigl(a(\theta_k,X(t_k))-a(\theta_k,\tilde X^{RE}_n(t_k))\Bigr)\mathbf{1}_{(t_k,t_{k+1}]}(s)ds\Biggl\|^2.
\end{eqnarray}
From Lemma \ref{mean_sq_reg_X} and the H\"older inequality we get for all $t\in [0,T]$ that
\begin{eqnarray}
	\label{M1_EST_1}
		&&\mathbb{E}\|\tilde A^{RE}_{1,n}(t)\|^2\leq\mathbb{E}\Biggl(\int\limits_0^t\sum_{k=0}^{n-1}\|a(s,X(s-))-a(s,X(t_k))\|\cdot\mathbf{1}_{(t_k,t_{k+1}]}(s)ds\Biggr)^2\notag\\		
 &&\leq tL^2\mathbb{E}\int\limits_0^t\Biggl(\sum_{k=0}^{n-1}\|X(s-)-X(t_k)\|\cdot\mathbf{1}_{(t_k,t_{k+1}]}(s)\Biggr)^2ds\notag\\	
 &&\leq TL^2\mathbb{E}\int\limits_0^T	\sum_{k=0}^{n-1}\|X(s-)-X(t_k)\|^2\cdot\mathbf{1}_{(t_k,t_{k+1}]}(s)ds\notag\\
 &&=TL^2\mathbb{E}\sum_{k=0}^{n-1}\int\limits_{t_k}^{t_{k+1}}\|X(s-)-X(t_k)\|^2ds\notag\\
 &&=TL^2\mathbb{E}\sum_{k=0}^{n-1}\int\limits_{t_k}^{t_{k+1}}\|X(s)-X(t_k)\|^2ds\notag\\
 &&= TL^2\sum_{k=0}^{n-1}\int\limits_{t_k}^{t_{k+1}}\mathbb{E}\|X(s)-X(t_k)\|^{2} ds \leq C_2(1+\mathbb{E}\|\xi\|^2)n^{-1}.
\end{eqnarray}
We now estimate $\sup\limits_{t\in [0,T]}\mathbb{E}\|\tilde A^{RE}_{2,n}(t)\|^2$. To do that we use the following notation:
\begin{equation}
	i(t)=\sup\{i=0,1,\ldots,n \ | \ t_i\leq t\}, 
\end{equation}
\begin{equation}
	\zeta(t)=t_{i(t)},
\end{equation}
for $t\in [0,T]$. Now we can write that 
\begin{equation}
	\label{M2_EST_1}
	\mathbb{E}\|\tilde A^{RE}_{2,n}(t)\|^2\leq 2\Bigl(\mathbb{E}\|\tilde A^{RE}_{21,n}(t)\|^2+\mathbb{E}\|\tilde A^{RE}_{22,n}(t)\|^2\Bigr),
\end{equation}
where
\begin{eqnarray}
	&&\mathbb{E}\|\tilde A^{RE}_{21,n}(t)\|^2=\mathbb{E}\Biggl\|\sum_{k=0}^{i(t)-1}\int\limits_{t_k}^{t_{k+1}}\Bigl(a(s,X(t_k))-a(\theta_k,X(t_k))\Bigr)ds\Biggl\|^2,\\
	&&\mathbb{E}\|\tilde A^{RE}_{22,n}(t)\|^2=\mathbb{E}\Biggl\|\;\int\limits_{\zeta(t)}^t\Bigl(a(s,X(\zeta(t)))-a(\theta_{i(t)},X(\zeta(t)))\Bigr)ds\Biggl\|^2,
\end{eqnarray}
for all $t\in [0,T]$. Let for $k=0,1,\ldots,n-1$
\begin{equation}
	\label{DEF_T_Y_K}
	\tilde Y_k=\int\limits_{t_k}^{t_{k+1}}a(s,X(t_k))ds-a(\theta_k,X(t_k))\cdot\Delta t_{k}=\int\limits_{t_k}^{t_{k+1}}(a(s,X(t_k))-a(\theta_k,X(t_k)))ds, 
\end{equation}
\begin{equation}
	Z_j=\sum_{k=0}^j\tilde Y_k, \ j=0,1,\ldots,n-1,
\end{equation}
and $Z_{-1}:=0$. Then we can write that for $t\in [0,T]$ 
\begin{equation}
	\tilde A^{RE}_{21,n}(t)=Z_{i(t)-1}=\sum_{k=0}^{n-1}Z_{k-1}\cdot\mathbf{1}_{[t_k,t_{k+1})}(t)+Z_{n-1}\cdot\mathbf{1}_{\{T\}}(t).
\end{equation}
Therefore, we have for all $t\in [0,T]$ that
\begin{equation}
	\mathbb{E}\|\tilde A^{RE}_{21,n}(t)\|^2\leq \mathbb{E}\Biggl(\sup\limits_{t\in [0,T]}\|\tilde A^{RE}_{21,n}(t)\|^2\Biggr)=\mathbb{E}\Biggl(\max\limits_{0\leq j\leq n-1}\|Z_j\|^2\Biggr)=\mathbb{E}\Biggl[\mathbb{E}\Biggl(\max\limits_{0\leq j\leq n-1}\|Z_j\|^2 \ | \ \Sigma_{\infty}\Biggr)\Biggr].
\end{equation}
By Theorem \ref{f_lemma} we get
\begin{eqnarray}
	\mathbb{E}\Biggl(\max\limits_{0\leq j\leq n-1}\|Z_j\|^2 \ | \ \Sigma_{\infty}\Biggr)=\mathbb{E}\Biggl[\max\limits_{0\leq j\leq n-1}\Bigl\|\sum_{k=0}^j\int\limits_{t_k}^{t_{k+1}}(a(s,x_k)-a(\theta_k,x_k))ds\Bigl\|^2\Biggr]\Bigl|_{x_k=X(t_k), \ k=0,1,\ldots,n-1}.
\end{eqnarray}
Note that for all $x_0,x_1,\ldots,x_{n-1}\in\mathbb{R}^d$ the process
\begin{equation}
	\Biggl(\sum_{k=0}^j\int\limits_{t_k}^{t_{k+1}}(a(s,x_k)-a(\theta_k,x_k))ds, \ \sigma(\theta_0,\theta_1,\ldots,\theta_{j})\Biggr)_{j=0,1,\ldots,n-1}
\end{equation}
is a square integrable martingale. Hence, for all $x_0,x_1,\ldots,x_{n-1}\in\mathbb{R}^d$ we have by Theorem \ref{BDG_DISC}
\begin{eqnarray}
	&&\mathbb{E}\Biggl[\max\limits_{0\leq j\leq n-1}\Bigl\|\sum_{k=0}^j\int\limits_{t_k}^{t_{k+1}}(a(s,x_k)-a(\theta_k,x_k))ds\Bigl\|^2\Biggr]\leq c_2^2\sum_{k=0}^{n-1}\mathbb{E}\Bigl\|\int\limits_{t_k}^{t_{k+1}}(a(s,x_k)-a(\theta_k,x_k))ds\Bigl\|^2\notag\\
	&&\leq 2c_2^2\cdot n\cdot \Bigl(2\bar D (T/n)(1+\max\limits_{0\leq k\leq n-1}\|x_k\|)\Bigr)^2= C_3 n^{-1}(1+\max\limits_{0\leq k\leq n-1}\|x_k\|)^2, 
\end{eqnarray}
and therefore
\begin{eqnarray}
	&&\mathbb{E}\Biggl[\max\limits_{0\leq j\leq n-1}\Bigl\|\sum_{k=0}^j\int\limits_{t_k}^{t_{k+1}}(a(s,x_k)-a(\theta_k,x_k))ds\Bigl\|^2\Biggr]\Bigl|_{x_k=X(t_k), \ k=0,1,\ldots,n-1}\notag\\
	&&\leq C_3 n^{-1}(1+\max\limits_{0\leq k\leq n-1}\|X(t_k)\|)^2\leq C_3 n^{-1}(1+\sup\limits_{0\leq t\leq T}\|X(t)\|)^2.
\end{eqnarray}
Combining the above estimates we obtain
\begin{eqnarray}
	\label{TM_1_EST_4}
\mathbb{E}\|\tilde A^{RE}_{21,n}(t)\|^2 &\leq &\mathbb{E}\Biggl[\mathbb{E}\Biggl(\max\limits_{0\leq j\leq n-1}\|Z_j\|^2 \ | \ \Sigma_{\infty}\Biggr)\Biggr]\notag\\
&\leq& C_3 n^{-1}\cdot\mathbb{E}\Bigl(1+\sup\limits_{t\in [0,T]}\|X(t)\|\Bigr)^2 
\leq C_4 (1+\mathbb{E}\|\xi\|^2)n^{-1}.
\end{eqnarray}
Using  the H\"older inequality we get for all $t\in [0,T]$ that
\begin{eqnarray}
	\label{TM_2_EST1}
	\mathbb{E}\|\tilde A^{RE}_{22,n}(t)\|^2 &\leq& (t-\zeta(t))\cdot\int\limits_{\zeta(t)}^{t}\mathbb{E}\|a(s,X(\zeta(t)))-a(\theta_{i(t)},X(\zeta(t)))\|^2 ds\notag\\
	&\leq& C_5(t-\zeta(t))^2\cdot\Bigl(1+\sup\limits_{t\in [0,T]}\mathbb{E}\|X(t)\|^2\Bigr)\leq C_6(1+\mathbb{E}\|\xi\|^2)n^{-2}.
\end{eqnarray}
By (\ref{M2_EST_1}), (\ref{TM_1_EST_4}) and (\ref{TM_2_EST1}) we arrive at
\begin{equation}
	\label{M2_EST_2}
	\mathbb{E}\|\tilde A^{RE}_{2,n}(t)\|^2\leq C_7(1+\mathbb{E}\|\xi\|^2)n^{-1},
\end{equation}
for all $t\in [0,T]$. By the H\"older inequality we get for all $t\in [0,T]$ that
\begin{eqnarray}
	\label{M3_EST_1}
	\mathbb{E}\|\tilde A^{RE}_{3,n}(t)\|^2 &\leq& t\int\limits_0^t\mathbb{E}\Biggl(\sum_{i=0}^{n-1}\|a(\theta_i,X(t_i))-a(\theta_i,\tilde X^{RE}_n(t_i))\|\cdot\mathbf{1}_{(t_i,t_{i+1}]}(s)\Biggl)^2 ds\notag\\ 
	&\leq& C_8\int\limits_0^t\sum_{i=0}^{n-1}\mathbb{E}\|X(t_i)-\tilde X^{RE}_n(t_i)\|^2\cdot\mathbf{1}_{(t_i,t_{i+1}]}(s)ds.
\end{eqnarray}
Combining (\ref{err_est_21}) with (\ref{M1_EST_1}), (\ref{M2_EST_2}) and (\ref{M3_EST_1}) we obtain for all $t\in [0,T]$ that
\begin{equation}
	\label{RE_A_EST_2}
	A_{1,n}(t) \leq C_8\int\limits_0^t\sum_{i=0}^{n-1}\mathbb{E}\|X(t_i)-\tilde X^{RE}_n(t_i)\|^2\cdot\mathbf{1}_{(t_i,t_{i+1}]}(s)ds+C_9(1+\mathbb{E}\|\xi\|^2)n^{-1},
\end{equation}
From the It\^o isometry
\begin{equation}
\label{err_est_31}
	A_{2,n}(t)=\mathbb{E}\int\limits_0^t \|\hat b(s)-\tilde b_n(s)\|^2 ds.
\end{equation}
Furthermore
\begin{equation}
\label{a3_est_11}
	A_{3,n}(t)\leq 2 \Bigl(B_{1,n}(t)+B_{2,n}(t)\Bigr),
\end{equation}
where
\begin{eqnarray}
	&&B_{1,n}(t)=\mathbb{E}\Bigl\|\int\limits_0^t\Bigl(\hat c(s)-\tilde c_n(s)\Bigr)d\tilde N(s)\Bigl\|^2,\notag\\
	&&B_{2,n}(t)=\mathbb{E}\Bigl\|\int\limits_0^t\Bigl(\hat c(s)-\tilde c_n(s)\Bigr)dm(s)\Bigl\|^2.
\end{eqnarray}
From the isometry for martingale-driven stochastic integrals we get
\begin{eqnarray}
\label{a3_est_21}
	B_{1,n}(t)&=&\sum\limits_{j=1}^{m_N}\mathbb{E}\int\limits_0^t\|\hat c^{(j)}(s)-\tilde c^{(j)}_n(s)\|^2\cdot \lambda_j(s)ds\notag\\
	&\leq &\max\limits_{1\leq j\leq m_N}\|\lambda_j\|_{\infty}\cdot\mathbb{E}\int\limits_0^t \|\hat c(s)-\tilde c_n(s)\|^2 ds
\end{eqnarray}
and by the Schwarz inequality
\begin{eqnarray}
\label{a3_est_31}
	B_{2,n}(t)&= &\sum\limits_{i=1}^d\mathbb{E}\Bigl|\sum\limits_{j=1}^{m_N}\int\limits_0^t \Bigl(\hat c_{ij}(s)-\tilde c_{n,ij}(s)\Bigr)\cdot\lambda_j(s)ds\Bigl|^2\notag\\
	&\leq & m_N\cdot\sum\limits_{i=1}^d\sum\limits_{j=1}^{m_N}\mathbb{E}\Bigl|\int\limits_0^t \Bigl(\hat c_{ij}(s)-\tilde c_{n,ij}(s)\Bigr)\cdot\lambda_j(s)ds\Bigl|^2\notag\\
	&\leq & m_N T\cdot\sum\limits_{i=1}^d\sum\limits_{j=1}^{m_N}\int\limits_0^t\mathbb{E}|\hat c_{ij}(s)-\tilde c_{n,ij}(s)|^2\cdot\lambda^2_j(s) ds\notag\\
	&\leq &m_N T\cdot\max\limits_{1\leq j\leq m_N}\|\lambda_j\|^2_{\infty}\cdot\mathbb{E}\int\limits_0^t\|\hat c(s)-\tilde c_n(s)\|^2 ds.
\end{eqnarray}
Hence, by \eqref{a3_est_11}, \eqref{a3_est_21}, and  \eqref{a3_est_31} we get
\begin{equation}
\label{err_est_41}
	A_{3,n}(t)\leq C\cdot\mathbb{E}\int\limits_0^t\|\hat c(s)-\tilde c_n(s)\|^2 ds.
\end{equation}
Combining \eqref{err_est_11}, \eqref{err_est_21}, \eqref{err_est_31}, and \eqref{err_est_41} we arrive at
\begin{equation}
	\mathbb{E}\|X(t)-\tilde X^{RE}_n(t)\|^2\leq C\Biggl(A_{1,n}(t)+\sum\limits_{f\in\{b,c\}}\mathbb{E}\int\limits_0^t \|\hat f(s)-\tilde f_n(s)\|^2 ds\Biggr).
\end{equation}
Now, for $f\in\{b,c\}$ and $s\in [0,T]$ we get
\begin{eqnarray}
	\|\hat f(s)-\tilde f_n(s)\|&=&\Bigl\|\sum\limits_{k=0}^{n-1}\Bigl(f(s,X(s-))-f(U_{k,n}^E)\Bigr)\cdot\mathbf{1}_{(t_k,t_{k+1}]}(s)\Bigl\|\notag\\
	&\leq& \sum\limits_{k=0}^{n-1}\|f(s,X(s-))-f(U_{k,n}^{RE})\|\cdot\mathbf{1}_{(t_k,t_{k+1}]}(s)\notag\\
	&\leq& K\sum\limits_{k=0}^{n-1}\Bigl((s-t_k)+\|X(s-)-\tilde X^{RE}_n(t_k)\|\Bigr)\cdot\mathbf{1}_{(t_k,t_{k+1}]}(s)\notag\\
	&\leq& K\max\limits_{0\leq k \leq n-1}\Delta t_k+K\sum\limits_{k=0}^{n-1}\|X(s-)-X(t_k)\|\cdot\mathbf{1}_{(t_k,t_{k+1}]}(s)\notag\\
	&&\quad +K\sum\limits_{k=0}^{n-1}\|X(t_k)-\tilde X^{RE}_n(t_k)\|\cdot\mathbf{1}_{(t_k,t_{k+1}]}(s)
\end{eqnarray}
and hence
\begin{eqnarray}
	\|\hat f(s)-\tilde f_n(s)\|^2 &\leq& 3K^2\max\limits_{0\leq k \leq n-1}(\Delta t_k)^2+3K^2\sum\limits_{k=0}^{n-1}\|X(s-)-X(t_k)\|^2\cdot\mathbf{1}_{(t_k,t_{k+1}]}(s)\notag\\
	&&\quad +3K^2\sum\limits_{k=0}^{n-1}\|X(t_k)-\tilde X^{RE}_n(t_k)\|^2\cdot\mathbf{1}_{(t_k,t_{k+1}]}(s).
\end{eqnarray}
Since any trajectory of $X$ has only finite number of (jump) discontinuities in $[0,T]$, the functions
\begin{equation}
	[0,T]\ni s\to \sum\limits_{k=0}^{n-1}\|X(s-)-X(t_k)\|^2\cdot\mathbf{1}_{(t_k,t_{k+1}]}(s),
\end{equation}
\begin{equation}
	[0,T]\ni s\to \sum\limits_{k=0}^{n-1}\|X(s)-X(t_k)\|^2\cdot\mathbf{1}_{[t_k,t_{k+1})}(s)
\end{equation}
differ only on a finite subset of $[0,T]$ (the first is a c\`agl\`ad modification of the second one). This implies for $f\in\{a,b,c\}$ and  $t \in [0,T]$ that
\begin{eqnarray}
	\int\limits_0^t \|\hat f(s)-\tilde f_n(s)\|^2ds &\leq & 3K^2T\max\limits_{0\leq k \leq n-1}(\Delta t_k)^2+3K^2\int\limits_0^t\sum\limits_{k=0}^{n-1}\|X(s)-X(t_k)\|^2\cdot\mathbf{1}_{[t_k,t_{k+1})}(s)ds\notag\\
		&&\quad +3K^2\int\limits_0^t \sum\limits_{k=0}^{n-1}\|X(t_k)-\tilde X^{RE}_n(t_k)\|^2\cdot\mathbf{1}_{(t_k,t_{k+1}]}(s)ds.
\end{eqnarray} 
By Lemma \ref{mean_sq_reg_X} we have that
\begin{eqnarray}
	\int\limits_0^t\sum\limits_{k=0}^{n-1}\mathbb{E}\|X(s)-X(t_k)\|^2\cdot\mathbf{1}_{[t_k,t_{k+1})}(s)ds&\leq& C(1+\mathbb{E}\|\xi\|^2)\int\limits_0^T\sum\limits_{k=0}^{n-1} (s-t_k)\cdot \mathbf{1}_{(t_k,t_{k+1}]}(s)ds\notag\\
	&\leq& C_1(1+\mathbb{E}\|\xi\|^2)\cdot\max\limits_{0\leq k\leq n-1}\Delta t_k.
\end{eqnarray}
Since $\max\limits_{0\leq k\leq n-1}(\Delta t_k)^2\leq T\max\limits_{0\leq k\leq n-1}\Delta t_k$, we get for all $t\in [0,T]$
\begin{eqnarray}
	\mathbb{E}\int\limits_0^t \|\hat f(s)-\tilde f_n(s)\|^2ds &\leq & K_1\cdot (1+\mathbb{E}\|\xi\|^2)\cdot \max\limits_{0\leq k\leq n-1}\Delta t_k\notag\\
&&\quad +K_2\int\limits_0^t \sum\limits_{k=0}^{n-1}\mathbb{E}\|X(t_k)-\tilde X^{RE}_n(t_k)\|^2\cdot\mathbf{1}_{(t_k,t_{k+1}]}(s)ds.
\end{eqnarray} 
and hence
\begin{eqnarray}
	\mathbb{E}\|X(t)-\tilde X^{RE}_n(t)\|^2 &\leq & K_3\cdot (1+\mathbb{E}\|\xi\|^2)\cdot n^{-1}\notag\\
&&\quad +K_4\int\limits_0^t \sum\limits_{k=0}^{n-1}\mathbb{E}\|X(t_k)-\tilde X^{RE}_n(t_k)\|^2\cdot\mathbf{1}_{(t_k,t_{k+1}]}(s)ds\notag\\
&\leq& K_3\cdot (1+\mathbb{E}\|\xi\|^2)\cdot n^{-1}\notag\\
&&\quad +K_4\int\limits_0^t \sup\limits_{0\leq u\leq z}\mathbb{E}\|X(u)-\tilde X^{RE}_n(u)\|^2 dz.
\end{eqnarray} 
Therefore, for all $t\in [0,T]$
\begin{eqnarray}
	\sup\limits_{0\leq s\leq t}\mathbb{E}\|X(s)-\tilde X^{RE}_n(s)\|^2 &\leq & K_3\cdot (1+\mathbb{E}\|\xi\|^2)\cdot n^{-1}\notag\\
&&\quad +K_4\int\limits_0^t \sup\limits_{0\leq u\leq z}\mathbb{E}\|X(u)-\tilde X^{RE}_n(u)\|^2 dz.
\end{eqnarray}
Note that the function
\begin{equation}
	[0,T]\ni t\to \sup\limits_{0\leq s\leq t}\mathbb{E}\|X(s)-\tilde X^{RE}_n(s)\|^2 
\end{equation} 
is bounded (due to Lemmas \ref{mean_sq_reg_X}, \ref{err_euler_2}) and Borel measurable (as a nondecreasing function). Hence, by Gronwall's lemma \ref{GLCW} we get
\begin{equation}
\label{Euler_proc_est_11}
	\sup\limits_{0\leq t\leq T}\mathbb{E}\|X(t)-\tilde X^{RE}_n(t)\|^2\leq K_5\cdot (1+\mathbb{E}\|\xi\|^2)\cdot n^{-1},
\end{equation}
which completes the proof. \ \ \ $\blacksquare$ 
\subsection{Perturbed gradient descent method and Euler-Maruyama scheme}
Let us consider again the minimalization problem \eqref{opt_prob}. In order to handle plateus we can add some random noise to the GD algorithm \eqref{EU_GD} and obtain
\begin{eqnarray}
\label{EU_GD}
    &&y_0=x_0,\notag\\
    &&y_{k+1}=y_k-h\cdot\nabla f(y_k)+\sigma \Delta W_k+c\Delta N_k, \ k=0,1,\ldots
\end{eqnarray}
where $\sigma,c$ are additional hyperparameters, see \cite{SDE_PGD} where the case of fractional Wiener noise case was considered. This is of course the Euler-Maruyama scheme when applied to the following SDE
\begin{equation}
    dX(t)=-\nabla f(X(t))dt+\sigma dW(t)+c dN(t), t\geq 0,
\end{equation}
and $X(0)=x_0$, which is the so-called stochastic Langevin dynamic connected with the minimization problem
\eqref{opt_prob}.
\chapter{Introduction to optimal one-point approximation of solutions of SDEs}
\label{opt_one_points_sdes}
Let us consider the following SDE 
\begin{equation}
	\label{OU_T_SDE1}
		\left\{ \begin{array}{ll}
			dX(t)=\kappa(\mu- X(t))dt+\sigma dW(t), &t\in [0,T], \\
			X(0)=x_0, 
		\end{array}\right.
\end{equation}
with $T,\kappa,\sigma>0$ and $\mu\in\mathbb{R}$. Its solution defines the well-known Ornstein-Uhlenbeck process which is given by
\begin{equation}
    \label{sol_X_OU_1}
    X(t)=e^{-\kappa t}x_0+\mu(1-e^{-\kappa t})+\sigma e^{-\kappa t}\int\limits_0^t e^{\kappa s}dW(s), \quad t \in [0,T].
\end{equation}
 Using this process as an toy example, we will give an introduction to the broad theory of {\it optimal approximation} of solutions of SDEs. Namely, we assume that we have access only to the values of $W$ at finite number of time points, i.e., $W(t_1),W(t_2),\ldots, W(t_n)$, and, basing on this data, we are interested in optimal (in the $L^2(\Omega)$ sense) approximation of $X(T)$. Note that 
 \begin{equation}
 \label{sol_X_OU_2}
     \int\limits_0^t e^{\kappa s}dW(s)=e^{\kappa t}W(t)-\kappa\int\limits_0^t e^{\kappa u}W(u) du,
 \end{equation}
 so
 \begin{equation}
     \label{sol_X_OU_3}
     X(t)=e^{-\kappa t}x_0+\mu(1-e^{-\kappa t})+\sigma W(t)-\kappa\sigma e^{-\kappa t}\int\limits_0^t e^{\kappa u}W(u)du.
 \end{equation}
 Hence, approximation of $X(T)$ is equivalent to the problem of approximating the weighted Lebesgue integral $\displaystyle{\int\limits_0^T e^{\kappa u}W(u)du}$ having access only to discrete information about $W$. This is a more general case than \eqref{Leb_int_W1}, considered in Section \ref{Opt_approx_Leb_W}. However, we can use a similar approach in order to define an optimal method that approximates $X(T)$.
 
 Namely, an algorithm $\bar X=(\bar X_n)_{n\in\mathbb{N}}$ that approximates $X(T)$  is (a sequence) defined by a pair $(\bar\Delta,\bar\varphi)$ of two sequences $\bar\varphi=(\varphi_n)_{n\in\mathbb{N}}$, $\bar\Delta=(\Delta_n)_{n\in\mathbb{N}}$, where each
\begin{equation}
	\varphi_n:\mathbb{R}^n\to \mathbb{R}
\end{equation}
is a Borel function and
\begin{equation}
	\Delta_n=\{t_{0,n},t_{1,n},\ldots,t_{n,n}\}
\end{equation}
is a (possibly) non-uniform partition of $[0,T]$ such that
\begin{equation}
\label{disc_arb_1}
	0=t_{0,n}<t_{1,n}<\ldots<t_{n,n}=T.
\end{equation}
By
\begin{equation}
	\mathcal{\bar N}(W)=(\mathcal{N}_n(W))_{n\in\mathbb{N}}
\end{equation}
we mean a sequence of vectors $\mathcal{N}_n(W)$ which provides {\it standard information} with $n$
evaluations about  the Wiener process $W$ at the discrete points from $\Delta_n$, i.e.,
\begin{equation}
	\mathcal{N}_n(W)=[W(t_{1,n}),W(t_{2,n}),\ldots,W(t_{n,n})].
\end{equation}
(Recall that $W(0)=0$ almost surely.) After computing the information $\mathcal{N}_n(W)$, we apply the mapping $\varphi_n$ in order to obtain
the $n$th approximation $\bar X_n$ in the following way
\begin{equation}
	\bar X_n=\varphi_n(\mathcal{N}_n(W)).
\end{equation}
We also assume that for all $n\in\mathbb{N}$
\begin{equation}
\label{finite_norm_int_Xn_2}
	\mathbb{E}|\bar X_n|^2<+\infty.
\end{equation}
The informational cost of each $\bar X_n$ is equal to $n$ evaluations of $W$. The set of all methods $\bar X=(\bar X_n)_{n\in\mathbb{N}}$,  defined as above, is denoted by $\chi^{\rm noneq}$. We also consider the following subclass of $\chi^{\rm noneq}$
\begin{equation}
	\chi^{\rm eq}=\{\bar X\in \chi^{\rm noneq} \ | \ \forall_{n\in\mathbb{N}} \Delta_n=\Delta_n^{\rm eq}:=\{iT/n \ | \ i=0,1,\ldots,n\}\}.
\end{equation} 
The $n$th error of a method $\bar X$ is defined as
\begin{equation}
	e_n(\bar X)=\|X(T)-\bar X_n\|_{L^2(\Omega)}=\Bigl(\mathbb{E}|X(T)-\bar X_n|^2\Bigr)^{1/2}.
\end{equation}

Let us fix any method $\bar X\in\chi^{\rm noneq}$ that is based on sequences of discretizations $(\Delta_n)_{n\in\mathbb{N}}$ and functions $(\varphi_n)_{n\in\mathbb{N}}$.  By Theorem \ref{proj_wwo} we have  for  all $n\in\mathbb{N}$ that
\begin{equation}	
\label{ineq_wwo_intW_2}
	(e_n(\bar X))^2=\mathbb{E}|X(T)-\varphi_n(\mathcal{N}_n(W))|^2\geq \mathbb{E}|X(T)-\mathbb{E}(X(T) \ | \ \mathcal{N}_n(W))|^2,
\end{equation}
since $\mathbb{E}|X(T)|^2<+\infty$ and $\sigma(\bar X_n)\subset\sigma(\mathcal{N}_n(W))$. Let us denote by
\begin{equation}
\label{def_A_bar}
    \mathcal{\bar A}_n(T)=\mathbb{E}(X(T) \ | \ \mathcal{N}_n(W)),
\end{equation}
and, in particular, by
\begin{equation}
\mathcal{\bar A}_n^{\rm eq}(T)=\mathbb{E}(X(T) \ | \ W(jT/n), j=1,2,\ldots,n).    
\end{equation}
By \eqref{ineq_wwo_intW_2} we see that the error of $\bar X\in\chi^{\rm noneq}$, that is based on a sequence $(\Delta_n)_{n\in\mathbb{N}}$, cannot be smaller than the error of the algorithm $\mathcal{\bar A}(T)=(\mathcal{\bar A}_n(T))_{n\in\mathbb{N}}$ (which is also using $(\Delta_n)_{n\in\mathbb{N}}$). From the Doob-Dynkin representation theorem  we get that for every $n\in\mathbb{N}$ there exists a Borel function $\bar\varphi_n:\mathbb{R}^n\to\mathbb{R}$
such that
\begin{equation}
\label{WWO_DD}
    \mathcal{\bar A}_n(T)=\bar\varphi_n(\mathcal{N}_n(W)),
\end{equation}
since $\sigma(\mathcal{\bar A}_n(T))\subset\sigma(\mathcal{N}_n(W))$. Moreover, by the conditional Jensen inequality, it holds that $\mathbb{E}|\mathcal{\bar A}_n(T)|^2\leq \mathbb{E}|X(T)|^2<+\infty$. Hence, $\mathcal{\bar A}(T)=(\mathcal{\bar A}_n(T))_{n\in\mathbb{N}}\in\chi^{\rm noneq}$ for any sequence $\bar\Delta$ of the form \eqref{disc_arb_1} and $\mathcal{\bar A}^{\rm eq}(T)\in\chi^{\rm eq}$. In summary, for all $n\in\mathbb{N}$ we define
\begin{eqnarray}
\label{n_min_opt_noneq}
   &&e^{\rm noneq}(n):=\inf\limits_{\bar X\in\chi^{\rm noneq}}e_n(\bar X)=\inf\limits_{\Delta_n}e_n(\mathcal{\bar A}(T))\notag\\
   &&=\inf_{0=t_{0,n}<t_{1,n}<\ldots<t_{n,n}=T}\Bigl(\mathbb{E}|X(T)-\mathbb{E}(X(T) \ | \ W(t_{1,n}),W(t_{2,n}),\ldots, W(t_{n,n})))|^2\Bigr)^{1/2},
\end{eqnarray}
and
\begin{eqnarray}
\label{n_min_opt_eq}
   &&e ^{\rm eq}(n):=\inf\limits_{\bar X\in\chi^{\rm eq}}e_n(\bar X)=e_n(\mathcal{\bar A}^{\rm eq}(T))\notag\\
   &&=\Bigl(\mathbb{E}|X(T)-\mathbb{E}(X(T) \ | \ W(jT/n), j=1,2,\ldots,n)|^2\Bigr)^{1/2},
\end{eqnarray}
where it holds
\begin{equation}
    e^{\rm noneq}(n)\leq e^{\rm eq}(n),
\end{equation}
since $\chi^{\rm eq}\subset\chi^{\rm noneq}$. The equations \eqref{n_min_opt_noneq}, \eqref{n_min_opt_eq} define the so-called {\it $n$th minimal errors} in the classes $\chi^{\rm noneq}, \chi^{\rm eq}$, respectively.

As we will see, in the case of  the process $X$ given by \eqref{sol_X_OU_3} we are able to compute the conditional expectation \eqref{def_A_bar} (and therefore also the functions $\bar\varphi_n$) explicitly,  see however, Remark \ref{rem_cond_X}.  We also show the optimality of the algorithm $\mathcal{\bar A}(T)$.

From Theorem \ref{cond_fubini} we get 
\begin{equation}
\label{cond_exp_OU_1}
    \mathbb{E}(X(T) \ | \ \mathcal{N}_n(W))=e^{-\kappa T}x_0+\mu(1-e^{-\kappa T})+\sigma W(T)-\kappa\sigma e^{-\kappa T}\int\limits_0^Te^{\kappa u}\mathbb{E}(W(u) \ | \ \mathcal{N}_n(W))du,
\end{equation}
and from Lemma \ref{WN_bridge} we get the following explicit (and implementable) expression
\begin{eqnarray}
     &&\mathbb{E}(X(T) \ | \ \mathcal{N}_n(W))=e^{-\kappa T}x_0+\mu(1-e^{-\kappa T})+\sigma W(T)-\kappa\sigma e^{-\kappa T}\sum_{j=0}^{n-1}\int\limits_{t_j}^{t_{j+1}} e^{\kappa u}\mathbb{E}(W(u) \ | \ \mathcal{N}_n(W))du\notag\\
     &&=e^{-\kappa T}x_0+\mu(1-e^{-\kappa T})+\sigma W(T)-\kappa\sigma e^{-\kappa T}\sum_{j=0}^{n-1}\frac{W(t_{j+1})\cdot I_{1,j}+W(t_{j})\cdot I_{2,j}}{\Delta t_j},
\end{eqnarray}
where
\begin{equation}
    I_{1,j}=\int\limits_{t_j}^{t_{j+1}}e^{\kappa u}\cdot (u-t_j)du=\frac{e^{\kappa t_j}+e^{\kappa t_{j+1}}(\kappa \Delta t_j-1)}{\kappa^2},
\end{equation}
\begin{equation}
    I_{2,j}=\int\limits_{t_j}^{t_{j+1}}e^{\kappa u}\cdot (t_{j+1}-u)du=\frac{e^{\kappa t_{j+1}}-e^{\kappa t_{j}}(\kappa \Delta t_j+1)}{\kappa^2}.
\end{equation}
Hence, we have explicitly computed the conditional expectation $\mathbb{E}(X(T) \ | \ \mathcal{N}_n(W))$ and we see that it is of the form \eqref{WWO_DD}.

By \eqref{cond_exp_OU_1} we have
\begin{equation}
    X(T)-\mathbb{E}(X(T) \ | \ \mathcal{N}_n(W))=-\kappa\sigma e^{-\kappa T}\int\limits_0^Te^{\kappa u}\hat W_n(u)du,
\end{equation}
where $\hat W_n(\cdot)=W(\cdot)-\mathbb{E}(W(\cdot) \ | \ \mathcal{N}_n(W))$ consists of independent Brownian bridges, see \eqref{def_Z_bridges_1}. Let us denote by $\displaystyle{S_j=\int\limits_{t_j}^{t_{j+1}}e^{\kappa u}\hat W_n(u)du}$.  Then we get
\begin{equation}
\label{WWO_err_1}
    \mathbb{E}|X(T)-\mathbb{E}(X(T) \ | \ \mathcal{N}_n(W))|^2=(\kappa\sigma)^2 e^{-2\kappa T}\mathbb{E}\Bigl|\sum\limits_{j=0}^{n-1}S_j\Bigl|^2,
\end{equation}
and
\begin{equation}
    \mathbb{E}\Bigl|\sum\limits_{j=0}^{n-1}S_j\Bigl|^2=\sum\limits_{j=0}^{n-1}\mathbb{E}(S_j^2)+\sum\limits_{i\neq j}\mathbb{E}(S_iS_j),
\end{equation}
where for $i\neq j$ we get from \eqref{covar_indep_bb} that
\begin{equation}
    \mathbb{E}(S_iS_j)=\int\limits_{t_i}^{t_{i+1}}\int\limits_{t_j}^{t_{j+1}}e^{\kappa (t+s)}\mathbb{E}(\hat W_n(s)\hat W_n(t))dsdt=0,
\end{equation}
and
\begin{equation}
\label{def_Sj_lowb}
    \mathbb{E}(S_j^2)=\int\limits_{t_j}^{t_{j+1}}\int\limits_{t_j}^{t_{j+1}}e^{\kappa (t+s)}\mathbb{E}(\hat W_n(s)\hat W_n(t))dsdt,
\end{equation}
where, due to \eqref{covar_W},
\begin{equation}
\label{noneg_cov}
    \mathbb{E}(\hat W_n(s)\hat W_n(t))=Cov(\hat W_n(s)\hat W_n(t))\geq 0,
\end{equation}
for all $s,t\in [t_j,t_{j+1}]$. Since $\kappa>0$, $t,s\in [0,T]$, we get the (rough) estimate
\begin{equation}
\label{low_b_est_re}
    1\leq 1+\kappa(t+s)\leq e^{\kappa(t+s)}\leq e^{2\kappa T}.
\end{equation}
Hence, by \eqref{def_Sj_lowb}, \eqref{noneg_cov}, \eqref{low_b_est_re}, \eqref{int_covar}
\begin{equation}
    \frac{1}{12}\sum\limits_{j=0}^{n-1}(\Delta t_j)^3\leq \sum\limits_{j=0}^{n-1}\mathbb{E}(S_j^2)\leq \frac{e^{2\kappa T}}{12}\sum\limits_{j=0}^{n-1}(\Delta t_j)^3,
\end{equation}
which implies
\begin{equation}
\label{upp_low_b_WWO}
    (\kappa\sigma)^2\cdot\frac{e^{-2\kappa T}}{12}\cdot\sum\limits_{j=0}^{n-1}(\Delta t_j)^3\leq \mathbb{E}|X(T)-\mathbb{E}(X(T) \ | \ \mathcal{N}_n(W))|^2\leq (\kappa\sigma)^2\cdot\frac{1}{12}\cdot\sum\limits_{j=0}^{n-1}(\Delta t_j)^3.
\end{equation}
For the upper error bound we therefore get
\begin{equation}
    e_n(\mathcal{\bar A}(T))\leq \kappa\sigma\cdot\frac{T^{1/2}}{\sqrt{12}}\cdot\max\limits_{0\leq j\leq n-1}\Delta t_j,
\end{equation}
and, in particular, for $\mathcal{\bar A}_n^{\rm eq}(T)=\mathbb{E}(X(T) \ | \ W(jT/n), j=1,2,\ldots,n)$, based on the  equidistant discretization, we have
\begin{equation}
    \label{bar_A_eq_disc}
    e_n(\mathcal{\bar A}^{\rm eq}(T))\leq \kappa\sigma\cdot\frac{T^{3/2}}{\sqrt{12}}\cdot n^{-1}
\end{equation}
for all $n\in\mathbb{N}$. From the Jensen inequality we get for any discretization \eqref{disc_arb_1} that
\begin{equation}
\label{low_b_mesh_1}
    \sum\limits_{j=0}^{n-1}(\Delta t_j)^3\geq T^3\cdot n^{-2}.
\end{equation}
Combining \eqref{ineq_wwo_intW_2}, \eqref{upp_low_b_WWO}, and \eqref{low_b_mesh_1}  we obtain for any algorithm $\bar X\in\chi^{\rm noneq}$ and for all $n\in\mathbb{N}$ that
\begin{equation}
\label{low_err_b_noneq_1}
    e_n(\bar X)\geq e_n(\mathcal{\bar A}(T)) \geq \kappa\sigma \cdot T^{3/2}\cdot\frac{e^{-\kappa T}}{\sqrt{12}}\cdot n^{-1}.
\end{equation}
Since $\chi^{\rm eq}\subset\chi^{\rm noneq}$, we get by \eqref{bar_A_eq_disc} and \eqref{low_err_b_noneq_1} that
\begin{equation}
\label{upp_low_b_min_err_1}
      \kappa\sigma \cdot \frac{T^{3/2}}{\sqrt{12}}\cdot e^{-\kappa T}\cdot n^{-1}\leq \inf\limits_{\bar X\in\chi^{\rm noneq}}e_n(\bar X)\leq \inf\limits_{\bar X\in\chi^{\rm eq}}e_n(\bar X)\leq \kappa\sigma\cdot\frac{T^{3/2}}{\sqrt{12}}\cdot n^{-1},
\end{equation}      
for all $n\in\mathbb{N}$. Therefore, the $n$th minimal error in the both classes  is $O(n^{-1})$ and the algorithm $\mathcal{\bar A}^{\rm eq}(T)$ is the optimal one. The bounds in \eqref{upp_low_b_min_err_1} are sharp up to the constant $e^{-\kappa T}\in (0, 1]$ that might be small for large $\kappa T>0$. This suggests that  by using algorithms from $\chi^{\rm noneq}$ we can reduce constants in  the error bound but not the rate of convergence.

We now investigate how far we can (asymptotically) reduce value of  constants in error expressions when we use algorithms based on  nonequidistant discretizations.

Firstly, note that for any $0=t_0<t_1<\ldots<t_n=T$ and all $t,s\in [t_j,t_{j+1}]$, $j=0,1,\ldots,n-1$, we have
\begin{equation}
    e^{2\kappa t_j}\leq e^{\kappa (t+s)}\leq e^{2\kappa t_{j+1}},
\end{equation}
and  from \eqref{def_Sj_lowb} we get
\begin{equation}
\label{est_low_upp_sj_1}
    \frac{1}{12}\sum\limits_{j=0}^{n-1}e^{2\kappa t_j}(\Delta t_j)^3\leq \sum\limits_{j=0}^{n-1}\mathbb{E}(S_j^2)\leq \frac{1}{12}\sum\limits_{j=0}^{n-1}e^{2\kappa t_{j+1}}(\Delta t_j)^3.
\end{equation}
For any $\eta_j\in [t_j,t_{j+1}]$, $j=0,1,\ldots,n-1$ we have, in the case of equidistant discretization $\Delta_n=\{t_j=jT/n: j=0,1,\ldots,n\}$, that
\begin{equation}
    \lim\limits_{n\to +\infty} n^2\cdot\sum\limits_{j=0}^{n-1}e^{2\kappa \eta_j}(\Delta t_j)^3=T^2\cdot\lim\limits_{n\to +\infty}\sum\limits_{j=0}^{n-1}e^{2\kappa \eta_j}\cdot\frac{T}{n}=T^2\cdot\int\limits_0^T e^{2\kappa t}dt=\frac{T^2}{2\kappa}(e^{2\kappa T}-1).
\end{equation}
This and \eqref{est_low_upp_sj_1} imply 
\begin{equation}
    \lim\limits_{n\to +\infty}n^2\sum\limits_{j=0}^{n-1}\mathbb{E}(S_j^2)=\frac{T^2}{12}\cdot\int\limits_0^T e^{2\kappa t}dt,
\end{equation}
and hence by \eqref{WWO_err_1}
\begin{equation}
\label{asympt_err_WWO_1}
    \lim\limits_{n\to +\infty} n\cdot e_n(\mathcal{\bar A}^{\rm eq}(T))= C^{\rm eq}(\kappa,\sigma,T)
\end{equation}
with
\begin{equation}
    C^{\rm eq}(\kappa,\sigma,T)=\kappa\sigma\cdot e^{-\kappa T}\cdot\frac{T}{\sqrt{12}}\cdot\Bigl(\int\limits_0^T e^{2\kappa t}dt\Bigr)^{1/2}=\kappa^{1/2}\sigma\cdot\frac{T}{2\sqrt{6}}(1-e^{-2\kappa T})^{1/2}>0.
\end{equation}
Hence, we have from \eqref{n_min_opt_eq}, \eqref{asympt_err_WWO_1} what follows
\begin{equation}
    \lim\limits_{n\to +\infty} n\cdot\inf\limits_{\bar X\in\chi^{\rm eq}}e_n(\bar X)=\lim\limits_{n\to +\infty}n\cdot e_n(\mathcal{\bar A}^{\rm eq}(T))=C^{\rm eq}(\kappa,\sigma,T)>0
\end{equation}
and  the algorithm $\mathcal{\bar A}^{\rm eq}(T)$ is asymptotically optimal in the class $\chi^{\rm eq}$. 

We now turn to algorithms that use possibly non-equidistant discretizations of $[0,T]$. Recall that from \eqref{upp_low_b_min_err_1} we know that the rate $O(n^{-1})$ cannot be improved. We investigate, however, already mentioned  possibility of reducing the asymptotic constant $C^{\rm eq}$.

From \eqref{ineq_wwo_intW_2}, \eqref{WWO_err_1}, \eqref{est_low_upp_sj_1} for any algorithm $\bar X\in\chi^{\rm noneq}$  it holds
\begin{equation}
    n^2\cdot (e_n(\bar X))^2\geq n^2\cdot (e_n(\mathcal{\bar A}(T)))^2\geq\frac{(\kappa\sigma)^2}{12}e^{-2\kappa T}\cdot n^2\cdot\sum\limits_{j=0}^{n-1}e^{2\kappa t_j}(\Delta t_j)^3,
\end{equation}
whereas, by the Jensen inequality,
\begin{equation}
    \sum\limits_{j=0}^{n-1}e^{2\kappa t_j}(\Delta t_j)^3=\sum\limits_{j=0}^{n-1}\Bigl( e^{2\kappa t_j/3} \Delta t_j\Bigr)^3\geq n\cdot\Biggl(\frac{\sum\limits_{j=0}^{n-1}e^{2\kappa t_j/3} \Delta t_j}{n}\Biggr)^3=n^{-2}\cdot\Bigl(\sum\limits_{j=0}^{n-1}e^{2\kappa t_j/3} \Delta t_j\Bigr)^3,
\end{equation}
we obtain
\begin{equation}
\label{low_b_noneq_1}
    n^2\cdot (e_n(\bar X))^2\geq \frac{(\kappa\sigma)^2}{12}e^{-2\kappa T}\cdot\Bigl(\sum\limits_{j=0}^{n-1}e^{2\kappa t_j/3} \Delta t_j\Bigr)^3
\end{equation}
for all $n\in\mathbb{N}$.
Since the sequence $(\Delta_n)_{n\in\mathbb{N}}$  (used by $\bar X$) might be arbitrary, the partial sum
$\displaystyle{\sum\limits_{j=0}^{n-1}e^{2\kappa t_j/3} \Delta t_j}$ does not have to be a partial Riemann sum and we only know that
\begin{equation}
    \sum\limits_{j=0}^{n-1}e^{2\kappa t_j/3} \Delta t_j\geq T.
\end{equation}
For a meanwhile let us therefore consider the following subclass of $\chi^{\rm noneq}$
\begin{equation}
    \chi^{\rm noneq-qu}=\{\bar X\in\chi^{\rm noneq} \ | \ \lim\limits_{n\to +\infty}\max\limits_{0\leq j\leq n-1}\Delta t_j=0\},
\end{equation}
which is the class of all algorithms from $\chi^{\rm noneq}$ that are based on normal sequences of discretizations (likewise in Riemann integral). We call such a discretization a {\it quasi-uniform} one. In this case, by \eqref{low_b_noneq_1}, we get for any $\bar X\in\chi^{\rm noneq-qu}$
\begin{equation}
\label{lower_err_b_noneq_qu_1}
    \liminf\limits_{n\to +\infty}n\cdot e_n(\bar X)\geq C^{\rm noneq}(\kappa,\sigma,T),
\end{equation}
where
\begin{equation}
    C^{\rm noneq}(\kappa,\sigma,T)=\frac{\kappa\sigma}{\sqrt{12}}e^{-\kappa T}\cdot\Bigl(\int\limits_0^T e^{2\kappa t/3}dt\Bigr)^{3/2}=\frac{\sigma}{\sqrt{12\kappa}}\cdot e^{-\kappa T}\cdot\Bigl[\frac{3}{2}\Bigl(e^{2\kappa T/3}-1\Bigr)\Bigr]^{3/2}.
\end{equation}
The above considerations suggest the following technique of obtaining lower bound in  $\chi^{\rm noneq}$. Let us take any sequence $(m_n)_{n\in\mathbb{N}}$ of positive integers such that 
\begin{equation}
\label{def_m_n}
    \lim\limits_{n\to +\infty}m_n=+\infty, \quad \lim\limits_{n\to +\infty}\frac{m_n}{n}=0,
\end{equation}
and again let us consider arbitrary method $\bar X$ from $\chi^{\rm noneq}$ that is based on sequences $(\varphi_n)_{n\in\mathbb{N}}$ and $(\Delta_n)_{n\in\mathbb{N}}$. By $\hat\Delta=\{\hat\Delta_n\}_{n\geq 1}$ we denote the sequence of discretizations given by $\{\hat\Delta_n\}_{n\geq 1}=\{\Delta_n\cup\Delta_n^{\rm eq\it}\}_{n\geq 1}$, where  $\Delta_n^{\rm eq\it}=\{jT/m_n \ | \ j=0,1,\ldots,m_n\}$. Hence, for all $n\geq 1$,
\begin{equation}
	\hat\Delta_n=\Bigl\{0=\hat t_{0,n}<\hat t_{1,n}<\ldots<\hat t_{k_n,n}=T \ | \ \hat t_{j,n}\in\Delta_n\cup\Delta_n^{\rm eq\it}, j=0,1,\ldots,k_n \Bigr\},
\end{equation}
and, since $\{0,T\}\subseteq\Delta_n\cap\Delta_n^{\rm eq\it}$ for all $n\geq 1$,
\begin{equation}
	\label{EST_KN_UL}
	n\leq k_n\leq n+m_n-1.
\end{equation}
From (\ref{def_m_n}) and (\ref{EST_KN_UL}) we have that
\begin{equation}
	\label{EST_KN_UL_1}
		\lim_{n\to +\infty}\frac{n}{k_n}=1.
\end{equation}
Furthermore, since $\Delta^{\rm eq\it}_n\subset\hat\Delta_n$, we have that
\begin{equation}
	\label{DIAM_HAT_D}
	0\leq\max\limits_{0\leq i\leq k_n-1}(\hat t_{i+1,n}-\hat t_{i,n})\leq\frac{T}{m_n}
\end{equation}
and by \eqref{def_m_n}
\begin{equation}
\label{normal_mesh_mn}
    \lim\limits_{n\to +\infty} \max\limits_{0\leq i\leq k_n-1}(\hat t_{i+1,n}-\hat t_{i,n})=0.
\end{equation}
Hence, $(\hat\Delta_n)_{n\in\mathbb{N}}$ is normal. Let us denote by
\begin{equation}
    \mathcal{\hat N}_n(W)=[W(\hat t_{1,n}),W(\hat t_{2,n}),\ldots, W(\hat t_{k_n,n})].
\end{equation}
Then $\sigma(\bar X_n)\subset\sigma(\mathcal{N}_n(W))\subset\sigma(\mathcal{\hat N}_n(W))$ for all $n\in\mathbb{N}$. Therefore, for all $n\in\mathbb{N}$
\begin{equation}
    e_n(\bar X) \geq e_n(\mathcal{\hat A}(T)),
\end{equation}
where the algorithm $\mathcal{\hat A}(T)=(\mathcal{\hat A}_n(T))_{n\in\mathbb{N}}$ is defined by
\begin{equation}
    \mathcal{\hat A}_n(T)=\mathbb{E}(X(T) \ | \ \mathcal{\hat N}_n(W)).
\end{equation}
By performing analogous computations as for $\mathcal{\bar A}(T)$ we have that for all $n\in\mathbb{N}$
\begin{eqnarray}
 &&n^2\cdot (e_n(\bar X))^2\geq n^2\cdot (e_n(\mathcal{\hat A}(T)))^2\geq\frac{(\kappa\sigma)^2}{12}e^{-2\kappa T}\cdot n^2\cdot\sum\limits_{j=0}^{k_n-1}e^{2\kappa \hat t_j}(\Delta \hat t_j)^3\notag\\
 &&\geq \frac{(\kappa\sigma)^2}{12}e^{-2\kappa T}\cdot\Bigl(\frac{n}{k_n}\Bigr)^2\cdot\Bigl(\sum\limits_{j=0}^{k_n-1}e^{2\kappa \hat t_j/3} \Delta \hat t_j\Bigr)^3.
\end{eqnarray}
Hence, from \eqref{EST_KN_UL_1} and \eqref{normal_mesh_mn} we arrive at
\begin{equation}
\label{lower_err_noneq_1}
    \liminf\limits_{n\to +\infty} n\cdot e_n(\bar X)\geq C^{\rm noneq}(\kappa,\sigma,T).
\end{equation}
Hence, from \eqref{lower_err_b_noneq_qu_1}, \eqref{lower_err_noneq_1} we see that we have the same asymptotic lower bounds in the both classes $\chi^{\rm noneq}$, $\chi^{\rm noneq-qu}$. Moreover, note that
\begin{equation}
\label{ineq_const_1}
    C^{\rm noneq}(\kappa,\sigma,T)<C^{\rm eq}(\kappa,\sigma,T).
\end{equation}
We now show a construction of (asymptotically) optimal algorithm in the   class $\chi^{\rm noneq}$,   for which the error can be reduced by a factor $C^{\rm noneq}/C^{\rm eq}$ when comparing to  $\mathcal{\bar A}^{\rm eq}(T)$. We  consider so called {\it regular sequences of  discretizations} generated by a probability density function $h$, see \cite{SAYL}.  We will use the notation $\bar\Delta_{h}=\{\Delta_{h,n}\}_{n\geq 1}$ for a sequence of discretizations generated by a density $h:[0,T]\to (0,+\infty)$ that is assumed to be continuous and strictly positive. The knots
\begin{displaymath}
	\Delta_{h,n}=\{0=t_{0,n}<t_{1,n}<\ldots<t_{n,n}=T\},
\end{displaymath}
of the $n$th discretization are given by
\begin{equation}
	\label{DEST_H_DISC}
		\int\limits_{0}^{t_{i,n}}h(s)ds=\frac{iT}{n}, \ i=0,1,\ldots,n.
\end{equation}
Hence, by choosing such a density $h$ one gets a whole sequence of discretizations $\bar\Delta_{h}$. One can obtain the sequence of equidistant discretizations by taking $h\equiv 1$. Note that algorithms based on the discretizations generated by $h$ belong to the class $\chi^{\rm noneq\it}$. We will use the notation $\mathcal{\bar A}_h(T)=\{\mathcal{\bar A}_{h,n}(T)\}_{n\geq 1}$ for the algorithm $\mathcal{\bar A}(T)$ algorithm based on the sequence of discretizations $\bar\Delta_h$. 

Note that by \eqref{DEST_H_DISC} and the mean value theorem we have that for all $n\in\mathbb{N}$, $i=0,1,\ldots,n-1$ there exists $\xi_{j,n}\in [t_{j,n},t_{j+1,n}]$ such that
\begin{equation}
    \frac{T}{n}=\int\limits_{t_{j,n}}^{t_{j+1,n}} h(s)ds=h(\xi_{j,n}) \cdot\Delta t_{j,n}.
\end{equation}
This, gives, in particular, 
\begin{equation}
	\label{NORMAL_DISC_H}
		Tn^{-1}||h||_{\infty}\leq\max\limits_{0\leq j\leq n-1}(t_{j+1,n}-t_{j,n})\leq Tn^{-1}||1/h||_{\infty},
\end{equation}
since $\max\{\|h\|_{\infty},\|1/h\|_{\infty}\}<+\infty$. 
From \eqref{est_low_upp_sj_1}
we have that
\begin{equation}
    n^2\cdot\sum\limits_{j=0}^{n-1}\mathbb{E}(S_j^2)\leq\frac{1}{12}\sum_{j=0}^{n-1}e^{2\kappa t_{j+1}} \cdot (n \cdot \Delta t_j)^2 \cdot \Delta t_j=\frac{T^2}{12}\sum_{j=0}^{n-1}\frac{e^{2\kappa t_{j+1}}}{h^2(\xi_j)}\cdot\Delta t_j,
\end{equation}
and analogously
\begin{equation}
     n^2\cdot\sum\limits_{j=0}^{n-1}\mathbb{E}(S_j^2)\geq\frac{1}{12}\sum_{j=0}^{n-1}e^{2\kappa t_{j}} \cdot (n \cdot \Delta t_j)^2 \cdot \Delta t_j=\frac{T^2}{12}\sum_{j=0}^{n-1}\frac{e^{2\kappa t_{j}}}{h^2(\xi_j)}\cdot\Delta t_j.
\end{equation}
For each $n\in\mathbb{N}$ let us take  any sequence of intermediate points $\eta_{j,n}\in [t_{j,n},t_{j+1,n}]$, $j=0,1,\ldots,n-1$. Then
\begin{eqnarray}
    \sum_{j=0}^{n-1}\frac{e^{2\kappa \eta_j}}{h^2(\xi_j)}\cdot\Delta t_j=\sum_{j=0}^{n-1}\frac{e^{2\kappa \xi_{j}}}{h^2(\xi_j)}\cdot\Delta t_j+\Biggl(\sum_{j=0}^{n-1}\frac{e^{2\kappa \eta_j}}{h^2(\xi_j)}\cdot\Delta t_j-\sum_{j=0}^{n-1}\frac{e^{2\kappa \xi_{j}}}{h^2(\xi_j)}\cdot\Delta t_j\Biggr).
\end{eqnarray}
Since  for all $t,s\in [0,T]$
\begin{equation}
\label{Lip_e_k}
    |e^{2\kappa t}-e^{2\kappa s}|\leq 2\kappa e^{2\kappa T}|t-s|,
\end{equation}
we have by \eqref{NORMAL_DISC_H}
\begin{eqnarray}
  &&\Biggl|\sum_{j=0}^{n-1}\frac{e^{2\kappa \eta_j}}{h^2(\xi_j)}\cdot\Delta t_j-\sum_{j=0}^{n-1}\frac{e^{2\kappa \xi_{j}}}{h^2(\xi_j)}\cdot\Delta t_j\Biggl|  \leq \sum_{j=0}^{n-1}\frac{|e^{2\kappa\eta_j}-e^{2\kappa\xi_j}|}{h^2(\xi_j)}\cdot\Delta t_j\notag\\
  &&\leq 2\kappa e^{2\kappa T}\|1/h\|^2_{\infty}\sum_{j=0}^{n-1} |\eta_j-\xi_j| \cdot \Delta t_j\leq 2\kappa e^{2\kappa T}\|1/h\|^2_{\infty} T \cdot\max\limits_{0\leq j\leq n-1}\Delta t_j \leq 2\kappa e^{2\kappa T}\|1/h\|^3_{\infty} T^2 n^{-1}.
\end{eqnarray}
Hence, 
\begin{equation}
    \lim\limits_{n\to +\infty}\sum_{j=0}^{n-1}\frac{e^{2\kappa \eta_{j}}}{h^2(\xi_j)}\cdot\Delta t_j=\lim\limits_{n\to +\infty}\sum_{j=0}^{n-1}\frac{e^{2\kappa \xi_{j}}}{h^2(\xi_j)}\cdot\Delta t_j=\int\limits_0^T\frac{e^{2\kappa t}}{h^2(t)}dt,
\end{equation}
and
\begin{equation}
\label{asympt_err_WWO_h}
    \lim\limits_{n\to +\infty}n\cdot e_n(\mathcal{\bar A}_h(T))=\kappa\sigma e^{-\kappa T}\cdot\frac{T}{\sqrt{12}}\Bigl(\int\limits_0^T\frac{e^{2\kappa t}}{h^2(t)}dt\Bigr)^{1/2}.
\end{equation}
In order to find the optimal density $h_0$ we  show that for fixed $g\in C([0,T], (0+\infty))$ the function
\begin{equation}
    h_0(t)=T\cdot\frac{(g(t))^{1/3}}{\int\limits_0^T (g(t))^{1/3}dt}
\end{equation}
minimizes the functional
\begin{equation}
    F(h)=\int\limits_0^T \frac{g(t)}{h^2(t)}dt
\end{equation}
among all sampling densities $h\in C([0,T], (0,+\infty))$ such that $\displaystyle{\int\limits_0^T h(t)dt = T}$. By the H\"older inequality for any $h\in C([0,T], (0,+\infty))$ with $\displaystyle{\int\limits_0^T h(t)dt = T}$ we have that
\begin{eqnarray}
\label{Min_F_h_1}
    &&\int\limits_0^T (g(t))^{1/3}dt= \int\limits_0^T\frac{(g(t))^{1/3}}{h^{2/3}(t)}\cdot h^{2/3}(t)dt\notag\\
    &&\leq \Biggl(\int\limits_0^T\Bigl(\frac{(g(t))^{1/3}}{h^{2/3}(t)}\Bigr)^3 dt\Biggr)^{1/3}\cdot\Biggl(\int\limits_0^T (h^{2/3}(t))^{3/2}dt\Biggr)^{2/3} = T^{2/3}\cdot (F(h))^{1/3},
\end{eqnarray}
and
\begin{equation}
    F(h_0)=\frac{1}{T^2}\Biggl(\int\limits_0^T (g(t))^{1/3}dt\Biggr)^3.
\end{equation}
Therefore
\begin{equation}
    \Biggl(\int\limits_0^T (g(t))^{1/3}dt\Biggr)^3\leq T^2\cdot\inf\limits_{h\in C([0,T]), \ h>0, \ \int\limits_0^T h(t)dt=T}F(h)\leq T^2 F(h_0)=\Biggl(\int\limits_0^T (g(t))^{1/3}dt\Biggr)^3,
\end{equation}
and 
\begin{equation}
    \inf\limits_{h\in C([0,T]), \ h>0, \ \int\limits_0^T h(t)dt=T}F(h)=F(h_0)=\frac{1}{T^2} \Biggl(\int\limits_0^T (g(t))^{1/3}dt\Biggr)^3.
\end{equation}
Hence, the optimal sampling density, that minimized the asymptotic constant in \eqref{asympt_err_WWO_h}, is of the form
\begin{equation}
\label{def_opt_h0}
    h_0(t)=T\cdot\frac{e^{2\kappa t/3}}{\int\limits_0^T e^{2\kappa t/3}dt}=\frac{2\kappa T e^{2\kappa t/3}}{3(e^{2\kappa T/3}-1)}.
\end{equation}
Moreover, from the fact that equality in the H\"older inequality \eqref{Min_F_h_1} holds only in the case when $C_1 g/h^2$ and $C_2 h$ are equal almost everywhere for some $C_1,C_2\geq 0$ we get the uniqueness of $h_0$. 
\newline
We thus get
\begin{equation}
    \label{asympt_err_WWO_h_0}
    \lim\limits_{n\to +\infty}n\cdot e_n(\mathcal{\bar A}_{h_0}(T))=C^{\rm noneq}(\kappa,\sigma,T).
\end{equation}
Combining \eqref{lower_err_noneq_1}, \eqref{asympt_err_WWO_h_0} we obtain
\begin{equation}
    \lim\limits_{n\to +\infty} n\cdot\inf\limits_{\bar X\in\chi^{\rm noneq}}e_n(\bar X)=\lim\limits_{n\to +\infty}n\cdot e_n(\mathcal{\bar A}_{h_0}(T))=C^{\rm noneq}(\kappa,\sigma,T)>0,
\end{equation}
and the algorithm $\mathcal{\bar A}_{h_0}(T)$ is asymptotically optimal in $\chi^{\rm noneq}$ (and, in fact, also in the smaller class $\chi^{\rm noneq-qu}$). Moreover, from \eqref{DEST_H_DISC}, \eqref{def_opt_h0} we see that the optimal sampling points are of the form
\begin{equation}
    t_{j,n}=\frac{3}{2\kappa}\ln\Bigl[\frac{j}{n}(e^{2\kappa T/3}-1)+1\Bigr], \quad j=0,1,\ldots,n.
\end{equation}
\begin{rem}
    There is a vast, and still growing, literature on the strong optimal approximation of solutions of SDEs in various computational settings and under different regularity assumptions, see, for example, \cite{MGHAB}, \cite{PMPP1}, \cite{HMR3}, \cite{PP9}.
\end{rem}
\begin{rem}
    \label{rem_cond_X}
    In the general case the conditional expectation \eqref{def_A_bar} is hard, or even impossible, to compute explicitly. Hence, in the case of nonlinear SDEs other methods of construction of asymptotically optimal algorithms are used, see \cite{MGHAB} for the pure Wiener case and \cite{AKPP} where jump-diffusion SDEs are considered.
\end{rem}
\section{Exercises}
\begin{itemize}
    \item [1.] Show \eqref{sol_X_OU_1}.
    \item [2.] Give a proof of the  inequality \eqref{ineq_const_1}.
    \item  [3.] Justify the inequality \eqref{Lip_e_k}.
    \item [4.] (\textit{Optimal one-point approximation of Ornstein-Uhlenbeck process with jumps}) Using the approach and proof technique described in this chapter provide optimal rate of convergence in the classes $\chi^{\rm eq}$, $\chi^{\rm noneq}$ for the problem of approximating $X(T)$ where $X$ is the unique solution of
    \begin{equation}
        	\label{OU_T_SDE2}
		\left\{ \begin{array}{ll}
			dX(t)=\kappa(\mu- X(t))dt+\sigma dW(t)+\gamma dN(t), &t\in [0,T], \\
			X(0)=x_0, 
		\end{array}\right.
    \end{equation}
    under the assumption that we have access to  values of $W,N$ at finite number of discretization points, $\kappa>0$, $\mu\in\mathbb{R}$, $\sigma,\gamma\geq 0$,  $\sigma^2+\gamma^2\geq 0$, and with the positive  intensity $\lambda$ of the homogeneous Poisson process $N$.
    \item [5.] Compute for $t\in [0,T]$ the conditional expectation
    \begin{equation}
       \bar X_n(t)= \mathbb{E}(X(t) \ | \ W(t_1), W(t_2),\ldots, W(t_n)),
    \end{equation}
    where $X$ is 
    \begin{itemize}
        \item [(a)] the geometric Brownian motion \eqref{gBm_jumps_def} with $c=0$,
        \item [(b)] the Ornstein-Uhlenbeck process \eqref{sol_X_OU_3},
    \end{itemize}
    and $0=t_0<t_1<\ldots t_n=T$ is an arbitrary fixed discretization of $[0,T]$. Note that it can be shown that $\bar X_n$ provide optimal method (in a suitable sense) of reconstruction whole trajectories of $X$ based on the discrete data about $W$. \\
    Hint. Use Lemma \ref{WN_bridge}.
\end{itemize}
\chapter{Weak approximation of solutions of SDEs with application to option pricing}

For the functions
\begin{eqnarray}
	&&a:[0,T]\times\mathbb{R}^d\to \mathbb{R}^d,\\
	&&b:[0,T]\times\mathbb{R}^d\to \mathbb{R}^{d\times m_W},\\
	&&c:[0,T]\times\mathbb{R}^d\to \mathbb{R}^{d\times m_N},
\end{eqnarray}
we assume that
\begin{itemize}
	\item [(B)] there exists $K\in (0,+\infty)$ such that for $f\in\{a,b,c\}$ and for all $s,t\in [0,T]$, $x,y\in\mathbb{R}^d$
	\begin{equation}
		\|f(t,x)-f(s,y)\|\leq K(|t-s|+\|x-y\|),
	\end{equation}
	\item [(C)] the intensity functions $\lambda_j:[0,T]\to (0,+\infty)$, $j=1,2,\ldots,m_N$, are Borel measurable and bounded.
\end{itemize}
where for $f=a$ the norm $\|\cdot\|$ is the euclidean norm $\|\cdot\|_2$, while for $f\in\{b,c\}$ the norm $\|\cdot\|$ is the Frobenius norm $\|\cdot\|_F$. From now we will denote the both norms by $\|\cdot\|$ and the notion will be clear from the context.
We assume that $X$ is the unique strong solution of the following SDE
\begin{equation}
	\label{SDE_2}
		\left\{ \begin{array}{ll}
			dX(t)=a(t,X(t))dt+b(t,X(t))dW(t)+c(t,X(t-))dN(t), &t\in [0,T], \\
			X(0)=\xi.
		\end{array}\right.
\end{equation}
In many applications (for example, in option pricing) we need to compute
\begin{equation}
\label{efx_1}
	\mathbb{E}(F(X))
\end{equation}
where $F:D([0,T])\to\mathbb{R}$ is a given measurable functional. For example, in the case of european put option
\begin{equation}
	F(X)=h(X(T))=(K_0-X(T))^+,
\end{equation}
where $h(x)=(K_0-x)^+$ for $x\geq 0$ and $h(x)=K_0$ for $x<0$, while for Asian-type option
\begin{equation}
	F(X)=\phi\Bigl(\frac{1}{T}\int\limits_0^T \psi( X(t))dt\Bigr),
\end{equation}
where $\phi,\psi$ are given functions and, this time,
\begin{equation}
\label{asian_opt_1}
	F(w)=\phi\Bigl(\frac{1}{T}\int\limits_0^T \psi( w(t))dt\Bigr), \ w\in D([0,T]).
\end{equation}
In general, the value of \eqref{efx_1} cannot be computed explicitely. Hence, in order to have a good approximation of \eqref{efx_1} we can use Monte Carlo methods.

Firstly, we consider european option pricing, i.e., we consider approximation of 
\begin{equation}
	\mathbb{E}(h(X(T))),
\end{equation} 
where for $h:\mathbb{R}^d\to\mathbb{R}$ we assume that:
\begin{itemize}
	\item [(i)]there exists $D>0$ such that for all  $x\in\mathbb{R}^d$ it holds
	\begin{equation}
	\label{h_bound}
		|h(x)|\leq D,
	\end{equation}
	\item [(ii)] there exists $L>0$ such that for all $x,y\in\mathbb{R}^d$ it holds
	\begin{equation}
		|h(x)-h(y)|\leq L\|x-y\|.
	\end{equation}
\end{itemize}
Under the above assumptions the following integral is finite
\begin{equation}
	\mathbb{E}(h(X(T)))=\int\limits_{\mathbb{R}^d}h(x)d\mathbb{P}^{X(T)}(x),
\end{equation}
where $\mathbb{P}^{X(T)}$ is the law of random vector $X(T)$. Suppose that we know the law of $X(T)$, then we can sample  $M$ independent random vectors $\bar X_1(T),\ldots,\bar X_M(T)$ form  $\mathbb{P}^{X(T)}$ and take
\begin{equation}
	\frac{1}{M}\sum_{l=1}^M h(\bar X_l(T))
\end{equation}
in order to approximate  $\mathbb{E}(h(X(T)))$. However, the distribution of $X(T)$ is known only in some particular cases (for example, in the simplest Black-Scholes model). So in the case of general equation \eqref{SDE_2} we have to proceed in different way. Namely, we use the Euler-Maruyama approximation of $X$.

For the convenience of the reader we recall here the classical Euler-Maruyama scheme. Let $n\in\mathbb{N}$ and take an arbitrary discretization  $0=t_0<t_1<\ldots<t_{n}=T$ of the interval $[0,T]$. Set
\begin{equation}
	X^E_n(0)=\xi,
\end{equation}
and for $k=0,1,\ldots,n-1$
\begin{equation}
	X^E_n(t_{k+1})=X^E_n(t_{k})+a(U_{k,n}^E)\cdot\Delta t_k+b(U_{k,n}^E)\cdot\Delta W_k+c(U_{k,n}^E)\cdot\Delta N_k,
\end{equation}
where
\begin{equation} 
	U_{k,n}^E=(t_k,X^E_n(t_{k})),
\end{equation}	
\begin{eqnarray}
	&&\Delta t_k = t_{k+1}-t_k,\notag\\
	&&\Delta W_k=[\Delta W^1_k,\ldots,\Delta W^{m_W}_k]^T,\notag\\
	&&\Delta N_k=[\Delta N^1_k,\ldots,\Delta N^{m_N}_k]^T,
\end{eqnarray}
and
\begin{equation}
	\Delta Z_k^j=Z^j(t_{k+1})-Z^j(t_{k}), \ Z\in\{N,W\},
\end{equation}
\begin{equation}
	b(U_{k,n}^E)\cdot\Delta W_k=\Bigl(\sum\limits_{j=1}^{m_W}b_{ij}(U_{k,n}^E)\cdot\Delta W^j_k\Bigr)_{i=1,2,\ldots,d},
\end{equation}
\begin{equation}
	c(U_{k,n}^E)\cdot\Delta N_k=\Bigl(\sum\limits_{j=1}^{m_N}c_{ij}(U_{k,n}^E)\cdot\Delta N^j_k\Bigr)_{i=1,2,\ldots,d}.
\end{equation}
In addition let us fix $M\in\mathbb{N}$ - the number of simulated trajectories. Simulation of $M$ trajectories via Euler scheme $X_n^E$ goes as follows. Let $\xi_1,\ldots,\xi_M$ be independent random vectors with the same law as $\xi$.
For $k=0,1,\ldots,n-1$ and $l=1,2,\ldots,M$ we set
\begin{equation}
	\bar X^E_{n,l}(0)=\xi_l,
\end{equation}
and for $k=0,1,\ldots,n-1$
\begin{equation}
	\bar X^E_{n,l}(t_{k+1})=\bar X^E_{n,l}(t_{k})+a(\bar U_{k,n,l}^E)\cdot\Delta t_k+(\Delta t_k)^{1/2}\cdot b(\bar U_{k,n,l}^E)\cdot Z_{k,l}+c(\bar U_{k,n,l}^E)\cdot U_{k,l},
\end{equation}
with $\bar U_{k,n,l}^E=(t_k,\bar X^E_{n,l}(t_{k}))$ and
\begin{eqnarray}
	Z_{k,l}=[Z_{k,l}^1,\ldots,Z_{k,l}^{m_W}]^T,\\
	U_{k,l}=[U_{k,l}^1,\ldots,U_{k,l}^{m_N}]^T,
\end{eqnarray}
where 
\begin{itemize}
	\item $(Z^j_{k,l})_{k=0,\ldots,n-1, l=1,\ldots,M, j=1,\ldots,m_W}$ is an i.i.d sequence of random variables with $Z^1_{1,1}\sim N(0,1)$, 
	\item $(U_{k,l}^j)_{k=0,\ldots,n-1, l=1,\ldots,M, j=1,\ldots,m_N}$ is a sequence of independent random variables, where $\displaystyle{U_{k,l}^j\sim \hbox{Poiss}\Bigl(\int\limits_{t_k}^{t_{k+1}}\lambda_j(s)ds\Bigr)}$, 
	\item the collection $\displaystyle{\{\xi_1,\ldots,\xi_M,(Z^j_{k,l})_{k=0,\ldots,n-1, l=1,\ldots,M, j=1,\ldots,m_W},(U_{k,l}^j)_{k=0,\ldots,n-1, l=1,\ldots,M, j=1,\ldots,m_N}\}}$ consists of independent random variables.
\end{itemize}
Then, componentwise,
\begin{equation}
	\bar X^E_{n,i,l}(0)=\xi_{i,l} ,
\end{equation}
\begin{eqnarray}
	\bar X^E_{n,i,l}(t_{k+1})=\bar X^E_{n,i,l}(t_{k})&+&a_i(\bar U_{k,n,l}^E)\cdot\Delta t_k\notag\\
	&+&(\Delta t_k)^{1/2}\cdot\sum\limits_{j=1}^{m_W}b_{ij}(\bar U_{k,n,l}^E)\cdot Z^j_{k,l}\notag\\
	&+&\sum\limits_{j=1}^{m_N}c_{ij}(\bar U_{k,n,l}^E)\cdot U^j_{k,l},
\end{eqnarray}
for $k=0,1,\ldots,n-1$, $l=1,2,\ldots,M$, $i=1,2,\ldots,d$.
\\
Note that
\begin{equation}
	\bar X^E_{n,1}(T), \ \bar X^E_{n,2}(T),\ldots, \ \bar X^E_{n,M}(T)
\end{equation}
are independent random vectors with the same law as $X_n^E(T)$. Moreover, we can simulate them. Therefore, as an approximation of  $\displaystyle{\mathbb{E}(h(X(T)))}$ we take
\begin{equation}
	\frac{1}{M}\sum_{l=1}^M h(\bar X^E_{n,l}(T)).
\end{equation}
We now estimate the error 
\begin{equation}
	\Biggl(\mathbb{E}\Bigl|\mathbb{E}(h(X(T)))-\frac{1}{M}\sum_{l=1}^M h(\bar X^E_{n,l}(T))\Bigl|^2\Biggr)^{1/2}.
\end{equation}
By SLLN we get for all $n\in\mathbb{N}$
\begin{equation}
	\frac{1}{M}\sum_{l=1}^M h(\bar X^E_{n,l}(T))\to\mathbb{E}(h(X^E_n(T))))=\int\limits_{\mathbb{R}^d}h(x)d\mathbb{P}^{X^E_n(T)}(x) \ \hbox{when} \ M\to+\infty
\end{equation}
almost surely. By \eqref{wh_mcn_err} we get for any $n,K\in\mathbb{N}$ that
\begin{equation}
	\Biggl(\mathbb{E}\Bigl|\mathbb{E}(h(X^E_n(T))))-\frac{1}{M}\sum_{l=1}^M h(\bar X^E_{n,l}(T))\Bigl|^2\Biggr)^{1/2}=\frac{C_n(h)}{\sqrt{M}},
\end{equation}
where
\begin{equation}
0\leq C^2_n(h)=\int\limits_{\mathbb{R}^d}|h(x)|^2d\mathbb{P}^{X^E_n(T)}(x)-\Bigl(\int\limits_{\mathbb{R}^d}h(x)d\mathbb{P}^{X^E_n(T)}(x)\Bigr)^2\leq D^2.
\end{equation}
In addition, by \eqref{mnsqrt_err_2}
\begin{eqnarray}
	&&\mathbb{E}\Bigl|\mathbb{E}(h(X(T)))-\frac{1}{M}\sum_{l=1}^M h(\bar X^E_{n,l}(T))\Bigl|^2\notag\\
	&&= \Bigl|\mathbb{E}(h(X(T)))-\mathbb{E}(h(X^E_n(T)))\Bigl|^2+\mathbb{E}\Bigl|\mathbb{E}(h(X^E_n(T)))-\frac{1}{M}\sum_{l=1}^M h(\bar X^E_{n,l}(T))\Bigl|^2\notag\\
	&&\leq \mathbb{E}|h(X(T))-h(X^E_n(T))|^2+D^2/M\notag\\
	&&\leq L^2\mathbb{E}\|X(T)-X^E_n(T)\|^2+D^2/M\notag\\
	&&\leq C_1\max\limits_{0\leq k\leq n-1}\Delta t_k+D^2/M.
\end{eqnarray}
So on the uniform mesh we get
\begin{equation}
\Biggl(\mathbb{E}\Bigl|\mathbb{E}(h(X(T)))-\frac{1}{M}\sum_{l=1}^M h(\bar X^E_{n,l}(T))\Bigl|^2\Biggr)^{1/2}=O(n^{-1/2}+M^{-1/2}).
\end{equation}
We now turn to the Asian  options of the form \eqref{asian_opt_1}, where we assume that the functions  $\psi:\mathbb{R}^d\to\mathbb{R}^d$, $\phi:\mathbb{R}\to\mathbb{R}_+$ are globally Lipschitz and $\phi$ is bounded. Then we have that there exists $L_1\in (0,+\infty)$ such that for all $w_1,w_2\in D([0,T])$ 
\begin{equation}
	|F(w_1)-F(w_2)|\leq L_1\int\limits_0^T \|w_1(t)-w_2(t)\|dt.
\end{equation}
Since $\phi$ is bounded, the following integral is finite
\begin{equation}
	\mathbb{E}(F(X))=\int\limits_{D([0,T])}F(w)d\mathbb{P}^{X}(w),
\end{equation}
where $\mathbb{P}^{X}$ is the law of random element $X:\Omega\to D([0,T])$. If we could sample  $M$ independent random elements $\bar X_1,\ldots,\bar X_M$ form  $\mathbb{P}^{X}$ then we could take
\begin{equation}
	\frac{1}{M}\sum_{l=1}^M F(\bar X_l)
\end{equation}
in order to approximate  $\mathbb{E}(F(X))$. However, this is not possible even in the case $X=W$. Hence, again we have to use the Euler-Maruyama scheme for the global approximation of $X$. In this case we  take
\begin{equation}
	\hat X^E_{n}(t)=\sum\limits_{k=0}^{n-1} X^E_{n}(t_k)\cdot \mathbf{1}_{[t_k,t_{k+1})}(t), \  t\in [0,T),
\end{equation}
and $\hat X^E_{n}(T)=X^E_{n}(T)$, where $\hat X^E_n\approx X$. As an approximation of $M$ trajectories of $X$ we take
\begin{equation}
	\hat X^E_{n,l}(t)=\sum\limits_{k=0}^{n-1}\bar X^E_{n,l}(t_k)\cdot \mathbf{1}_{[t_k,t_{k+1})}(t), \  t\in [0,T),
\end{equation}
and $\hat X^E_{n,l}(T)=\bar X^E_{n,l}(T)$ for $l=1,2,\ldots,M$. Note that
\begin{equation}
	\hat X^E_{n,1}, \ \hat X^E_{n,2},\ldots, \ \hat X^E_{n,M}
\end{equation}
are independent $D([0,T])$-valued random elements with the same law as $\hat X_n^E$. As an approximation of $\displaystyle{\mathbb{E}(F(X))}$ we take
\begin{equation}
	\frac{1}{M}\sum_{l=1}^M F(\hat X^E_{n,l}),
\end{equation}
where
\begin{equation}
	F(\hat X^E_{n,l})=\phi\Bigl(\frac{1}{T}\int\limits_0^T \psi( \hat X^E_{n,l}(t))dt\Bigr)=\phi\Bigl(\frac{1}{T}\sum_{k=0}^n\psi(\bar X_{n,l}^E(t_k))\Delta t_k\Bigr).
\end{equation}
We now estimate the mean square error
\begin{equation}
	\Biggl(\mathbb{E}\Bigl|\mathbb{E}(F(X))-\frac{1}{M}\sum_{l=1}^M F(\hat X^E_{n,l})\Bigl|^2\Biggr)^{1/2}.
\end{equation}
Again, by SLLN we have for all $n\in\mathbb{N}$
\begin{equation}
	\frac{1}{M}\sum_{l=1}^M F(\hat X^E_{n,l})\to\mathbb{E}(F(\hat X^E_n)))=\int\limits_{D([0,T])}F(w)d\mathbb{P}^{\hat X^E_n}(w) \ \hbox{as} \ M\to+\infty
\end{equation}
almost surely. Furthemore, by  \eqref{wh_mcn_err} we get for any $M\in\mathbb{N}$ that
\begin{equation}
	\Biggl(\mathbb{E}\Bigl|\mathbb{E}(F(\hat X^E_n)))-\frac{1}{M}\sum_{l=1}^M F(\hat X^E_{n,l})\Bigl|^2\Biggr)^{1/2}=\frac{C_n(h)}{\sqrt{M}},
\end{equation}
where
\begin{equation}
0\leq C_n^2(h)=\int\limits_{D([0,T])}|F(w)|^2d\mathbb{P}^{\hat X^E_n}(w)-\Bigl(\int\limits_{D([0,T])}F(w)d\mathbb{P}^{\hat X^E_n}(w)\Bigr)^2\leq D^2.
\end{equation}
Hence, by Theorem \ref{err_euler_3}
\begin{eqnarray}
	&&\mathbb{E}\Bigl|\mathbb{E}(F(X))-\frac{1}{M}\sum_{l=1}^M F(\hat X^E_{n,l})\Bigl|^2\notag\\
	&&= \Bigl|\mathbb{E}(F(X))-\mathbb{E}(F(\hat X^E_n))\Bigl|^2+\mathbb{E}\Bigl|\mathbb{E}(F(\hat X^E_n))-\frac{1}{M}\sum_{l=1}^M F(\hat X^E_{n,l})\Bigl|^2\notag\\
	&&\leq \mathbb{E}|F(X)-F(\hat X^E_n)|^2+D^2 M^{-1}\leq L_1^2 \mathbb{E}\Biggl(\int\limits_0^T\|X(t)-\hat X^E_n(t)\| dt\Biggr)^{2}+D^2 M^{-1}\notag\\
&&\leq L_1^2 T\mathbb{E}\int\limits_0^T\|X(t)-\hat X^E_n(t)\|^2 dt+D^2 M^{-1}\leq C_1\max\limits_{0\leq k\leq n-1}(\Delta t_k)+D^2 M^{-1},
\end{eqnarray}
Again on the uniform mesh we obtain
\begin{equation}
\Biggl(\mathbb{E}\Bigl|\mathbb{E}(F(X))-\frac{1}{M}\sum_{l=1}^M F(\hat X^E_{n,l})\Bigl|^2\Biggr)^{1/2}=O(n^{-1/2}+M^{-1/2}).
\end{equation}
Therefore, in order to have a good balance between discretization and Monte  Carlo errors it is reasonable to take $M=O(n)$.
\section{Non-asymptotic (?) confidence intervals}
By the Hoeffding inequality we get for the European  option price $\mathbb{E}(h(X(T)))$, with $h$ satisfying \eqref{h_bound}, that
\begin{equation}
	\mathbb{P}\Biggl(\Bigl|\mathbb{E}(h(X^E_n(T)))-\frac{1}{M}\sum_{l=1}^M h(\bar X^E_{n,l}(T))\Bigl|>\varepsilon\Biggr)\leq 2\exp\Bigl(-\frac{\varepsilon^2 M}{2\|h\|^2_{\infty}}\Bigr).
\end{equation}
Using analogous argumentation as for \eqref{H_int_1} and \eqref{H_int_2} we obtain for all $\delta\in (0,1)$, $n,M\in\mathbb{N}$ that
\begin{equation}
	\mathbb{P}\Biggl(\Bigl|\mathbb{E}(h(X^E_n(T)))-\frac{1}{M}\sum_{l=1}^M h(\bar X^E_{n,l}(T))\Bigl|>C(\delta)\cdot M^{-1/2}\Biggr)\leq\delta,
\end{equation}
where
\begin{equation}
	C(\delta)=\sqrt{2\ln(2/\delta)}\cdot D.	
\end{equation}
(Note that in the case of European pay-off $h$ the bound $D$ might be known explicitely.) Hence, we get that for all $\delta\in (0,1)$, $n,M\in\mathbb{N}$
\begin{equation}
	\mathbb{E}(h(X^E_n(T)))\in\Bigl[\frac{1}{M}\sum_{l=1}^M h(\bar X^E_{n,l}(T))-\frac{C(\delta)}{\sqrt{M}},\frac{1}{M}\sum_{l=1}^M h(\bar X^E_{n,l}(T))+\frac{C(\delta)}{\sqrt{M}}\Bigr]
\end{equation}
with the probability at least $1-\delta$.

Recall that for non-negative random variables $X,Y$ we have for all $\varepsilon>0$ that
\begin{equation}
	\mathbb{P}(X+Y>\varepsilon)\leq \mathbb{P}(X>\varepsilon/2)+\mathbb{P}(Y>\varepsilon/2).
\end{equation}
Therefore, by the Hoeffding and Markov inequalities
\begin{eqnarray}
	&&\mathbb{P}\Biggl(\Bigl|\mathbb{E}(h(X(T)))-\frac{1}{M}\sum_{l=1}^M h(\bar X^E_{n,l}(T))\Bigl|>\varepsilon\Biggr)\notag\\
	&&\leq\mathbb{P}\Biggl(\Bigl|\mathbb{E}(h(X(T)))-\frac{1}{M}\sum_{l=1}^M h(\bar X_{l}(T))\Bigl|>\varepsilon/2\Biggr)\notag\\ 
	&&\quad\quad +\mathbb{P}\Biggl(\frac{L}{M}\sum_{l=1}^M \|\bar X_{l}(T)-\bar X^E_{n,l}(T)\|>\varepsilon/2\Biggr)\notag\\
	&&\leq 2\exp\Bigl(-\frac{(\varepsilon/2)^2 M}{2\|h\|^2_{\infty}}\Bigr)+\frac{2L}{M\varepsilon}\sum_{l=1}^M \mathbb{E}\|\bar X_{l}(T)-\bar X^E_{n,l}(T)\|\notag\\
	&&\leq 2\exp\Bigl(-\frac{(\varepsilon/2)^2 M}{2\|h\|^2_{\infty}}\Bigr)+\frac{2L}{\varepsilon} \Bigl(\mathbb{E}\|X(T)-X^E_{n}(T)\|^2\Bigr)^{1/2}\notag\\
	&&\leq 2\exp\Bigl(-\frac{(\varepsilon/2)^2 M}{2\|h\|^2_{\infty}}\Bigr)+\frac{2LC_1}{\varepsilon}\max\limits_{0\leq k\leq n-1}(\Delta t_k)^{1/2}.
\end{eqnarray}
On the uniform mesh we obtain
\begin{eqnarray}
	&&\mathbb{P}\Biggl(\Bigl|\mathbb{E}(h(X(T)))-\frac{1}{M}\sum_{l=1}^M h(\bar X^E_{n,l}(T))\Bigl|>2C(\delta)\cdot M^{-1/2}\Biggr)\notag\\
	&&\leq \delta+\frac{LC_1}{\sqrt{2}D}\frac{\sqrt{M}}{\sqrt{n\ln(2/\delta)}}.
\end{eqnarray}
\section{Implementation issues}
Below we present the implementation of the Euler scheme for the Merton model, together with European option pricing via Monte Carlo simulations performed on multiple threads.

\lstinputlisting[language=Python]{BOSSIP_vect_opt_price_call.py}

\chapter{Parameter estimation and forecasting for SDEs-based models}
The topic of parameter estimation for SDEs-based models is both very important and popular in the literature. This is due to the many applications in finances, see, for example, \cite{Iacus1}, \cite{IAYO}, \cite{saso1}.
%
%
\section{Introductory example - parameter estimation in the geometric Brownian motion model}
In this section we assume that the price $(S(t))_{t\in [0,T]}$ of the risky asset satisfies the Black-Scholes equation 
\begin{equation}
\label{BS_eq}
	dS(t)=\mu S(t)dt+\sigma S(t)dW(t), \ t\in [0,T], \ S(0)\in (0,+\infty),
\end{equation}
with $\mu\in\mathbb{R}$, $\sigma>0$, $S(0)>0$. Hence,
\begin{equation}
	S(t)=S(0)\exp\Bigl((\mu-\frac{1}{2}\sigma^2)t+\sigma W(t)\Bigr).
\end{equation}
\subsection{Quadratic variation based estimation}
Note that for the process $X(t)=\ln S(t)$ it holds for $t \in [0,T]$ that
\begin{equation}
\label{log_S}
	d(X(t))=\Bigl(\mu-\frac{1}{2}\sigma^2\Bigr)dt+\sigma dW(t),
\end{equation}
hence
\begin{equation}
	[X]_t=\sigma^2 t, \quad t \in [0,T],
\end{equation}
and, by the definition of the quadratic variation, we obtain
\begin{equation}
	\sigma^2=\frac{1}{T}[X]_T\approx\frac{1}{T}\sum\limits_{i=0}^{n-1}(X(t_{i+1})-X(t_i))^2=\frac{1}{T}\sum\limits_{i=0}^{n-1}\Bigl(\ln\frac{S(t_{i+1})}{S(t_i)}\Bigr)^2.
\end{equation}
Assuming knowledge of historical stock prices $(S(t_i))_{i=0,1,\ldots,n}$ at the moments $0=t_0<t_1<\ldots<t_n=T$ we can take
\begin{equation}
	\hat \sigma_n=\Biggl(\frac{1}{T}\sum\limits_{i=0}^{n-1}\Bigl(\ln\frac{S(t_{i+1})}{S(t_i)}\Bigr)^2\Biggr)^{1/2},
\end{equation}
and
\begin{equation}
	\hat \mu_n=\frac{1}{T}\sum\limits_{i=0}^{n-1}\ln\frac{S(t_{i+1})}{S(t_i)}+\frac{1}{2}(\hat \sigma_n)^2=\frac{1}{T}\ln\frac{S(T)}{S(0)}+\frac{1}{2}(\hat \sigma_n)^2
\end{equation}
as the simple estimators of $\mu$ and $\sigma$, respectively, under the assumed Black-Scholes model. We investigate the quality of these estimators.

We believe that the following results are known. However, we were unable to find suitable references in literature. Hence, for the convenience of the reader we present complete proofs.

The results below might be used in the context of high-frequency data, i.e. when $\displaystyle{\lim\limits_{n\to +\infty}\max\limits_{0\leq i\leq n-1}(t_{i+1}-t_i)=0}$. In this case by $\{t_0,t_1,\ldots,t_n\}$ we mean  $\{t^n_0,t^n_1,\ldots,t^n_n\}$ and when considering the limit we use the sequence of discretizations $(\{t^n_0,t^n_1,\ldots,t^n_n\})_{n\in\mathbb{N}}$. However, for the clarity of presentation we omit the upper script.
\begin{thm} There exists $C\in (0,+\infty)$ such that for all $n\in\mathbb{N}$ and any discretization $0=t_0<t_1<\ldots<t_n=T$ we have 
\begin{equation}
\label{mnsq_err_sigma_1}
		\Bigl(\mathbb{E}|(\hat\sigma_n)^2-\sigma^2|^2\Bigr)^{1/2}\leq C\max\limits_{0\leq i\leq n-1}(t_{i+1}-t_i)^{1/2}.
\end{equation}
Moreover, $(\hat \sigma_n)^2$ is an asymptotically unbiased estimator of $\sigma^2$, i.e.,
\begin{equation}
\label{mnsq_err_sigma_2}
	|\mathbb{E}(\hat \sigma_n)^2-\sigma^2|=\frac{1}{T}(\mu-\frac{1}{2}\sigma^2)^2\sum\limits_{i=0}^{n-1}(t_{i+1}-t_{i})^2\to 0
\end{equation}
as $\displaystyle{\lim\limits_{n\to +\infty}\max\limits_{0\leq i\leq n-1}(t_{i+1}-t_i)=0}$. In addition, if
\begin{equation}
\label{mesh_assumpt_2}
	\sum\limits_{n=1}^{+\infty}\max\limits_{0\leq i\leq n-1}(t_{i+1}-t_i)<+\infty,
\end{equation}
then
\begin{equation}
\label{as_sigma_conv}
	\lim\limits_{n\to+\infty}\hat\sigma_n= \sigma
\end{equation}
almost surely.
\end{thm}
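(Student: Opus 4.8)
The plan is to work with the increments $\Delta_i X := X(t_{i+1})-X(t_i)$ where $X=\ln S$. By \eqref{log_S} each increment is Gaussian: $\Delta_i X \sim N\bigl((\mu-\tfrac12\sigma^2)\Delta_i,\ \sigma^2\Delta_i\bigr)$ with $\Delta_i := t_{i+1}-t_i$, and moreover the increments are independent (independence of increments of $W$). Write $\alpha := \mu-\tfrac12\sigma^2$, so $(\hat\sigma_n)^2 = \tfrac1T\sum_{i=0}^{n-1}(\Delta_i X)^2$. First I would split $\Delta_i X = \alpha\Delta_i + \sigma\Delta_i W$ and expand the square:
\begin{equation}
(\Delta_i X)^2 = \alpha^2\Delta_i^2 + 2\alpha\sigma\Delta_i\,\Delta_i W + \sigma^2(\Delta_i W)^2.
\end{equation}
Summing and using $\sum_i\Delta_i = T$ gives
\begin{equation}
(\hat\sigma_n)^2 - \sigma^2 = \frac{\alpha^2}{T}\sum_{i=0}^{n-1}\Delta_i^2 + \frac{2\alpha\sigma}{T}\sum_{i=0}^{n-1}\Delta_i\,\Delta_i W + \frac{\sigma^2}{T}\sum_{i=0}^{n-1}\bigl((\Delta_i W)^2 - \Delta_i\bigr).
\end{equation}
Taking expectations kills the last two terms (the second has mean zero since $\mathbb{E}\Delta_i W = 0$; the third since $\mathbb{E}(\Delta_i W)^2 = \Delta_i$), which immediately yields the bias formula \eqref{mnsq_err_sigma_2}, and $\sum_i\Delta_i^2 \le T\max_i\Delta_i \to 0$ gives asymptotic unbiasedness.

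For the mean-square bound \eqref{mnsq_err_sigma_1} I would estimate the $L^2(\Omega)$-norm of each of the three terms. The first is deterministic and bounded by $\tfrac{\alpha^2}{T}\cdot T\max_i\Delta_i = \alpha^2\max_i\Delta_i$, hence $O(\max_i\Delta_i)$. For the second term, independence of the $\Delta_i W$ makes it a sum of independent mean-zero terms, so its variance is $\tfrac{4\alpha^2\sigma^2}{T^2}\sum_i\Delta_i^2\,\mathbb{E}(\Delta_i W)^2 = \tfrac{4\alpha^2\sigma^2}{T^2}\sum_i\Delta_i^3 \le \tfrac{4\alpha^2\sigma^2}{T}\max_i\Delta_i^2$, giving an $L^2$-norm of order $\max_i\Delta_i$. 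For the third term, again by independence the variance is $\tfrac{\sigma^4}{T^2}\sum_i \mathrm{Var}\bigl((\Delta_i W)^2\bigr) = \tfrac{\sigma^4}{T^2}\sum_i 2\Delta_i^2 \le \tfrac{2\sigma^4}{T}\max_i\Delta_i$, so its $L^2$-norm is $O(\max_i\Delta_i^{1/2})$. Combining via the triangle inequality in $L^2(\Omega)$, the dominant term is $O(\max_i\Delta_i^{1/2})$, which is \eqref{mnsq_err_sigma_1}.

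For the almost sure convergence \eqref{as_sigma_conv} under \eqref{mesh_assumpt_2}, I would use the $L^2$ bound just proved together with a Borel–Cantelli argument (cf.\ the technique behind Theorems \ref{MC_Lp_est}, \ref{AS_CONV_mc}). Fix $\varepsilon>0$; by Chebyshev,
\begin{equation}
\mathbb{P}\bigl(|(\hat\sigma_n)^2-\sigma^2| > \varepsilon\bigr) \le \frac{\mathbb{E}|(\hat\sigma_n)^2-\sigma^2|^2}{\varepsilon^2} \le \frac{C^2}{\varepsilon^2}\max_{0\le i\le n-1}\Delta_i,
\end{equation}
and by \eqref{mesh_assumpt_2} the right-hand side is summable in $n$, so Borel–Cantelli gives $(\hat\sigma_n)^2 \to \sigma^2$ a.s.; continuity of $\sqrt{\cdot}$ then gives $\hat\sigma_n\to\sigma$ a.s. The main obstacle is bookkeeping rather than conceptual: one must be careful that the three error terms are handled with the right exponents of $\max_i\Delta_i$ (the $(\Delta_i W)^2-\Delta_i$ term is the one that only gives the $1/2$ power and thus dictates the final rate), and one should double-check that the moment computations for Gaussian variables — in particular $\mathrm{Var}\bigl((\Delta_i W)^2\bigr) = 2\Delta_i^2$ and the vanishing of $\mathbb{E}\bigl(\Delta_i W\,((\Delta_j W)^2-\Delta_j)\bigr)$ for $i\ne j$ — are used correctly so that the cross terms in the expansion of the square genuinely drop out.
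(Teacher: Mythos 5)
Your proposal is correct, and for the bias formula and the mean-square bound it follows essentially the same route as the paper: the same decomposition of $(\hat\sigma_n)^2-\sigma^2$ into a deterministic term, a cross term $\sum_i\Delta t_i\,\Delta W_i$, and the quadratic term $\sum_i((\Delta W_i)^2-\Delta t_i)$, with the same Gaussian moment computations; the only cosmetic difference is that you bound the three terms separately in $L^2(\Omega)$ and use the triangle inequality, whereas the paper expands the square and shows the cross expectations vanish, which gives the exact mean-square error rather than just the $O(\max_i\Delta t_i^{1/2})$ bound — both lead to \eqref{mnsq_err_sigma_1}.

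Where you genuinely diverge is the almost sure convergence \eqref{as_sigma_conv}. The paper returns to the three-term decomposition, sends the deterministic term to zero, invokes a cited result (Lemma 4.3, p.~92 in \cite{LipShir_1}) to obtain $\sum_i(\Delta W_i)^2\to T$ a.s.\ under \eqref{mesh_assumpt_2}, and controls the cross term by Cauchy--Schwarz against the other two. You instead derive the a.s.\ statement directly from your $L^2$ bound via Chebyshev and Borel--Cantelli, using that $\mathbb{E}|(\hat\sigma_n)^2-\sigma^2|^2\leq C^2\max_i\Delta t_i$ is summable in $n$ under \eqref{mesh_assumpt_2}. This is a more self-contained and arguably cleaner argument, since it avoids the external lemma on quadratic variation; the paper's route has the merit of exhibiting the a.s.\ limit of each term separately (in particular the a.s.\ convergence of the realized quadratic variation, which is of independent interest). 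One small point to make explicit in your write-up: Chebyshev plus Borel--Cantelli gives, for each fixed $\varepsilon>0$, that $|(\hat\sigma_n)^2-\sigma^2|>\varepsilon$ only finitely often a.s.; to conclude $(\hat\sigma_n)^2\to\sigma^2$ a.s.\ you should intersect these probability-one events over a countable family $\varepsilon=1/k$, $k\in\mathbb{N}$. With that remark, and continuity of the square root as you note, the argument is complete.
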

{\bf Proof.} Note that
\begin{equation}
\label{log_ret_1}
	\ln\frac{S(t_{i+1})}{S(t_i)}=\Bigl(\mu-\frac{1}{2}\sigma^2\Bigr)\Delta t_i+\sigma\Delta W_i, \ i=0,1,\ldots,n-1,
\end{equation}
where $\Delta t_i=t_{i+1}-t_i$ and $\Delta W_i=W(t_{i+1})-W(t_i)$. Hence
\begin{equation}
\label{est_sig_sq_1}
	(\hat\sigma_n)^2=\frac{1}{T}(\mu-\frac{1}{2}\sigma^2)^2\sum\limits_{i=0}^{n-1}(\Delta t_i)^2+\frac{2}{T}\sigma (\mu-\frac{1}{2}\sigma^2)\sum\limits_{i=0}^{n-1}(\Delta t_i\cdot\Delta W_i)+\frac{\sigma^2}{T}\sum\limits_{i=0}^{n-1}(\Delta W_i)^2,
\end{equation}
and
\begin{equation}
	\mathbb{E}|(\hat\sigma_n)^2-\sigma^2|^2=\frac{1}{T^2}\cdot\mathbb{E}|A_n+B_n+C_n|^2=\frac{1}{T^2}\cdot\mathbb{E}\Bigl(A_n^2+B_n^2+C_n^2+2A_nB_n+2A_nC_n+2B_nC_n\Bigr)
\end{equation}
where
\begin{eqnarray}
	&&A_n=(\mu-\frac{1}{2}\sigma^2)^2\cdot\sum\limits_{i=0}^{n-1}(\Delta t_i)^2,\notag\\
	&&B_n=2\sigma (\mu-\frac{1}{2}\sigma^2)\cdot\sum\limits_{i=0}^{n-1}(\Delta t_i\cdot\Delta W_i),\notag\\
	&&C_n=\sigma^2\cdot\Bigl(\sum\limits_{i=0}^{n-1}(\Delta W_i)^2-T\Bigr).
\end{eqnarray}
Since $\mathbb{E}(\Delta W_i)=0$ and $\mathbb{E}(\Delta W_i)^2=\Delta t_i$, we have 
\begin{eqnarray}
	&&\mathbb{E}(A_n B_n)=A_n\cdot \mathbb{E}(B_n)=0,\notag\\
	&&\mathbb{E}(A_n C_n)=A_n\cdot \mathbb{E}(C_n)=0.
\end{eqnarray}
Moreover
\begin{equation}
	\mathbb{E}(A_n)^2=A_n^2=(\mu-\frac{1}{2}\sigma^2)^4\cdot\Bigl(\sum\limits_{i=0}^{n-1}(\Delta t_i)^2\Bigr)^2,
\end{equation}
and, since $\sigma(\Delta W_i)$ and $\sigma(\Delta W_j)$ are independent for $i\neq j$, we have
\begin{eqnarray}
	&&\mathbb{E}(B_n)^2=4\sigma^2 (\mu-\frac{1}{2}\sigma^2)^2\cdot\Bigl(\sum\limits_{i=0}^{n-1}(\Delta t_i)^2\cdot\mathbb{E}(\Delta W_i)^2+\sum\limits_{i\neq j}\Delta t_i\Delta t_j\mathbb{E}(\Delta W_i)\mathbb{E}(\Delta W_j)\Bigr)\notag\\
	&&=4\sigma^2 (\mu-\frac{1}{2}\sigma^2)^2\cdot\sum\limits_{i=0}^{n-1}(\Delta t_i)^3
\end{eqnarray}
\begin{eqnarray}	
	&&\mathbb{E}(C_n)^2=\sigma^4\cdot\mathbb{E}\Biggl(\sum\limits_{i=0}^{n-1}\Bigl( (\Delta W_i)^2-\Delta t_i\Bigr)\Biggr)^2\notag\\
	&&=\sigma^4\cdot\Biggl(\sum\limits_{i=0}^{n-1}\mathbb{E}\Bigl( (\Delta W_i)^2-\Delta t_i\Bigr)^2+\sum\limits_{i\neq j}\mathbb{E}\Bigl( (\Delta W_i)^2-\Delta t_i\Bigr)\cdot\mathbb{E}\Bigl( (\Delta W_j)^2-\Delta t_j\Bigr)\Biggr)\notag\\
	&&=\sigma^4\cdot\sum\limits_{i=0}^{n-1}(\mathbb{E}(\Delta W_i)^4-2\Delta t_i\mathbb{E}(\Delta W_i)^2+(\Delta t_i)^2)=2\sigma^4\cdot\sum\limits_{i=0}^{n-1}(\Delta t_i)^2,
\end{eqnarray}
and
\begin{eqnarray}
	&&\mathbb{E}(B_n C_n)=2\sigma^3 (\mu-\frac{1}{2}\sigma^2)\cdot\Biggl(\sum\limits_{i=0}^{n-1}\sum\limits_{j=0}^{n-1}\Delta t_i\cdot\mathbb{E}(\Delta W_i\cdot(\Delta W_j)^2)-T\sum\limits_{i=0}^{n-1}\Delta t_i\cdot\mathbb{E}(\Delta W_i)\Biggr)\notag\\
	&&=2\sigma^3 (\mu-\frac{1}{2}\sigma^2)\cdot\sum\limits_{i=0}^{n-1}\Delta t_i\cdot\mathbb{E}(\Delta W_i)^3=0.
\end{eqnarray}
Therefore
\begin{eqnarray}
\mathbb{E}|(\hat\sigma_n)^2-\sigma^2|^2&=&\frac{1}{T^2}(\mu-\frac{1}{2}\sigma^2)^4\cdot\Bigl(\sum\limits_{i=0}^{n-1}(\Delta t_i)^2\Bigr)^2+\frac{4\sigma^2}{T^2}(\mu-\frac{1}{2}\sigma^2)^2\cdot\sum\limits_{i=0}^{n-1}(\Delta t_i)^3\notag\\
&&+\frac{2\sigma^4}{T^2}\cdot\sum\limits_{i=0}^{n-1}(\Delta t_i)^2\leq C(T,\mu,\sigma)\cdot\max\limits_{0\leq i\leq n-1}\Delta t_i,
\end{eqnarray}
and this implies \eqref{mnsq_err_sigma_1}.

From \eqref{est_sig_sq_1} we get that
\begin{equation}
	\mathbb{E}(\hat\sigma_n)^2=\frac{1}{T}(\mu-\frac{1}{2}\sigma^2)^2\cdot\sum\limits_{i=0}^{n-1}(\Delta t_i)^2+\sigma^2,
\end{equation}
and the result \eqref{mnsq_err_sigma_2} follows.

By \eqref{est_sig_sq_1} we have that
\begin{equation}
\label{as_est_1}
	(\hat\sigma_n)^2=\frac{1}{T}(\mu-\frac{1}{2}\sigma^2)^2S_n^1+\frac{2}{T}\sigma (\mu-\frac{1}{2}\sigma^2)S_n^2+\frac{\sigma^2}{T}S_n^3
\end{equation}
where
\begin{eqnarray}
	&& S_n^1=\sum\limits_{i=0}^{n-1}(\Delta t_i)^2,\\
	&& S_n^2=\sum\limits_{i=0}^{n-1}(\Delta t_i\cdot\Delta W_i),\\
	&& S_n^3=\sum\limits_{i=0}^{n-1}(\Delta W_i)^2.
\end{eqnarray}
Now, we have that
\begin{equation}
\label{as_est_2}
	0\leq S_n^1\leq T\cdot\max\limits_{0\leq i\leq n-1}(t_{i+1}-t_i)\to 0,
\end{equation}
as $n\to+\infty$. By \eqref{mesh_assumpt_2} and Lemma 4.3, page 92 in \cite{LipShir_1} we get that
\begin{equation}
\label{as_est_3}
	S_n^3\to T
\end{equation}
almost surely as $n\to+\infty$, and hence
\begin{equation}
\label{as_est_4}
	|S_n^2|\leq \Bigl(T\cdot\max\limits_{0\leq i\leq n-1}(t_{i+1}-t_i)\Bigr)^{1/2}\cdot (S_n^3)^{1/2}\to 0
\end{equation}
almost surely as $n\to +\infty$. Combining \eqref{as_est_1}, \eqref{as_est_2}, \eqref{as_est_3}, \eqref{as_est_4} we get \eqref{as_sigma_conv}.
 \ \ \ $\blacksquare$\\ \\
Now we turn to the properties of $\hat \mu_n$. The first result is of positive nature.
\begin{thm}
	\label{mean_est_1}
	$\hat \mu_n$ is an asymptotically unbiased estimator of $\mu$, i.e.,
\begin{equation}
\label{mnsq_err_mean_2}
	|\mathbb{E}(\mu_n)-\mu|=\frac{1}{2T}(\mu-\frac{1}{2}\sigma^2)^2\sum\limits_{i=0}^{n-1}(t_{i+1}-t_{i})^2\to 0
\end{equation}
as $\displaystyle{\lim\limits_{n\to +\infty}\max\limits_{0\leq i\leq n-1}(t_{i+1}-t_i)=0}$. 
\end{thm}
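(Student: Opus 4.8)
The plan is to write $\hat\mu_n$ as the sum of two pieces whose expectations we already control: the logarithmic return term $\frac{1}{T}\ln\frac{S(T)}{S(0)}$ and the variance estimator $\frac12(\hat\sigma_n)^2$. First I would use the explicit form of $S$, namely $\ln\frac{S(T)}{S(0)}=(\mu-\frac12\sigma^2)T+\sigma W(T)$, which is the endpoint version of \eqref{log_ret_1}, to get
\begin{equation}
\frac{1}{T}\ln\frac{S(T)}{S(0)}=\Bigl(\mu-\frac12\sigma^2\Bigr)+\frac{\sigma}{T}W(T).
\end{equation}
Since $\mathbb{E}(W(T))=0$, taking expectations yields $\mathbb{E}\bigl(\frac{1}{T}\ln\frac{S(T)}{S(0)}\bigr)=\mu-\frac12\sigma^2$.

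Next I would invoke the identity for $\mathbb{E}(\hat\sigma_n)^2$ established in the proof of the previous theorem (just below \eqref{est_sig_sq_1}), namely
\begin{equation}
\mathbb{E}(\hat\sigma_n)^2=\frac{1}{T}\Bigl(\mu-\frac12\sigma^2\Bigr)^2\sum\limits_{i=0}^{n-1}(\Delta t_i)^2+\sigma^2,
\end{equation}
with $\Delta t_i=t_{i+1}-t_i$. Combining the two displays and using linearity of expectation on $\hat\mu_n=\frac{1}{T}\ln\frac{S(T)}{S(0)}+\frac12(\hat\sigma_n)^2$ gives
\begin{equation}
\mathbb{E}(\hat\mu_n)=\Bigl(\mu-\frac12\sigma^2\Bigr)+\frac12\sigma^2+\frac{1}{2T}\Bigl(\mu-\frac12\sigma^2\Bigr)^2\sum\limits_{i=0}^{n-1}(\Delta t_i)^2=\mu+\frac{1}{2T}\Bigl(\mu-\frac12\sigma^2\Bigr)^2\sum\limits_{i=0}^{n-1}(\Delta t_i)^2,
\end{equation}
which is exactly the claimed bias formula after taking absolute values (the right-hand side is nonnegative).

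Finally, to conclude convergence of the bias to $0$ under $\lim_{n\to+\infty}\max_{0\le i\le n-1}(t_{i+1}-t_i)=0$, I would bound $\sum_{i=0}^{n-1}(\Delta t_i)^2\le \bigl(\max_{0\le i\le n-1}\Delta t_i\bigr)\sum_{i=0}^{n-1}\Delta t_i=T\cdot\max_{0\le i\le n-1}\Delta t_i\to 0$, which is the same estimate used for \eqref{as_est_2}. There is essentially no obstacle here: the statement is a direct corollary of the moment identity for $(\hat\sigma_n)^2$ proved earlier together with the trivial fact $\mathbb{E}(W(T))=0$; the only thing to be careful about is correctly isolating the bias term coming from the $(\Delta t_i)^2$ contribution in $\mathbb{E}(\hat\sigma_n)^2$ and tracking the factor $\frac12$.
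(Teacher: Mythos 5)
Your proposal is correct and follows essentially the same route the paper intends: write $\hat\mu_n-\mu=\frac{1}{2}\bigl((\hat\sigma_n)^2-\sigma^2\bigr)+\frac{\sigma}{T}W(T)$ (the same decomposition the paper uses in the subsequent theorem), take expectations using $\mathbb{E}(W(T))=0$ and the identity $\mathbb{E}(\hat\sigma_n)^2=\sigma^2+\frac{1}{T}(\mu-\frac{1}{2}\sigma^2)^2\sum_{i}(\Delta t_i)^2$ from the preceding proof, and bound $\sum_i(\Delta t_i)^2\le T\max_i\Delta t_i$. No gaps.
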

Unfortunately, it turns out that  $\hat\mu_n$ converge, in the mean-square sense, not to $\mu$ but to the random variable $\displaystyle{\mu+\frac{\sigma}{T}W(T)}$, which has the law $N(\mu, \sigma^2/T)$.
\begin{thm} Let $\mu_T=\mu+\frac{\sigma}{T}W(T)$.  We have that
	\begin{equation}
\label{mnsq_err_mean_3}
	\Bigl(\mathbb{E}|\hat\mu_n-\mu|^2\Bigr)^{1/2}\to \sigma/\sqrt{T},
\end{equation}
and
\begin{equation}
\label{mnsq_err_mean_4}
	\mu_n\to \mu_T \ \hbox{in} \ L^2(\Omega)
\end{equation}
as $\displaystyle{\lim\limits_{n\to +\infty}\max\limits_{0\leq i\leq n-1}(t_{i+1}-t_i)=0}$, and
\begin{equation}
\label{mnsq_err_mean_5}
	\mu_n\to \mu_T \ \hbox{a.s.,} 
\end{equation}
if $\sum\limits_{n=1}^{+\infty}\max\limits_{0\leq i\leq n-1}(t_{i+1}-t_i)<+\infty$. Moreover,
\begin{equation}
\label{mnsq_err_mean_6}
	\lim\limits_{T\to+\infty}\mu_T=\mu \ \hbox{a.s.}
\end{equation}
\end{thm}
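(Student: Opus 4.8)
The plan is to express $\hat\mu_n$ in terms of $\hat\sigma_n$ and the single Gaussian random variable $W(T)$, after which every assertion reduces to the error bound and almost sure convergence already established for $(\hat\sigma_n)^2$. First I would use $S(T)=S(0)\exp\bigl((\mu-\tfrac12\sigma^2)T+\sigma W(T)\bigr)$ to write $\tfrac1T\ln\tfrac{S(T)}{S(0)}=\mu-\tfrac12\sigma^2+\tfrac{\sigma}{T}W(T)$, so that by the definition of $\hat\mu_n$,
\[
\hat\mu_n=\mu+\frac{\sigma}{T}W(T)+\frac12\bigl((\hat\sigma_n)^2-\sigma^2\bigr)=\mu_T+\frac12\bigl((\hat\sigma_n)^2-\sigma^2\bigr).
\]
Hence $\hat\mu_n-\mu_T=\tfrac12\bigl((\hat\sigma_n)^2-\sigma^2\bigr)$; moreover, since $W(T)\sim N(0,T)$, the random variable $\mu_T=\mu+\tfrac{\sigma}{T}W(T)$ has law $N(\mu,\sigma^2/T)$, which settles the side remark on the distribution of $\mu_T$.

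For \eqref{mnsq_err_mean_4}: by the mean-square estimate for $(\hat\sigma_n)^2$ proved above, $(\mathbb{E}|(\hat\sigma_n)^2-\sigma^2|^2)^{1/2}\le C\max_{0\le i\le n-1}(t_{i+1}-t_i)^{1/2}$, so $\|\hat\mu_n-\mu_T\|_{L^2(\Omega)}=\tfrac12\|(\hat\sigma_n)^2-\sigma^2\|_{L^2(\Omega)}\to0$ as $\max_i(t_{i+1}-t_i)\to0$. For \eqref{mnsq_err_mean_5}: under $\sum_n\max_i(t_{i+1}-t_i)<+\infty$ we already know $\hat\sigma_n\to\sigma$ a.s., hence $(\hat\sigma_n)^2\to\sigma^2$ a.s., and the identity gives $\hat\mu_n\to\mu_T$ a.s.

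For \eqref{mnsq_err_mean_3}, I would split $\hat\mu_n-\mu=(\hat\mu_n-\mu_T)+(\mu_T-\mu)$ and note $\|\mu_T-\mu\|_{L^2(\Omega)}=\tfrac{\sigma}{T}(\mathbb{E}|W(T)|^2)^{1/2}=\sigma/\sqrt{T}$. The reverse triangle inequality in $L^2(\Omega)$ then yields $\bigl|\,\|\hat\mu_n-\mu\|_{L^2(\Omega)}-\sigma/\sqrt{T}\,\bigr|\le\|\hat\mu_n-\mu_T\|_{L^2(\Omega)}\to0$, which is \eqref{mnsq_err_mean_3}.

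Finally, for \eqref{mnsq_err_mean_6}, since $\mu_T-\mu=\sigma W(T)/T$ it suffices to show $W(T)/T\to0$ almost surely as $T\to+\infty$. I would apply the SLLN (Theorem \ref{THM_SLLN}) to the i.i.d.\ increments $W(k)-W(k-1)\sim N(0,1)$ to get $W(n)/n\to0$ a.s.\ along the integers, and then control the within-unit oscillation $M_n:=\sup_{n\le t\le n+1}|W(t)-W(n)|$: the $M_n$ are i.i.d.\ with finite moments, so $\sum_n\mathbb{P}(M_n>\varepsilon n)<+\infty$ for each $\varepsilon>0$ and Borel--Cantelli gives $M_n/n\to0$ a.s.; combining the two, $\sup_{n\le t\le n+1}|W(t)|/t\to0$ a.s., i.e.\ $W(T)/T\to0$. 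The only step that is not a one-line consequence of the $(\hat\sigma_n)^2$ theorem is this continuous-time strong law for $W$; everything else follows, via the identity $\hat\mu_n-\mu_T=\tfrac12((\hat\sigma_n)^2-\sigma^2)$, from results already in hand.
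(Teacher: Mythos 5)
Your proof is correct and follows essentially the same route as the paper: the identity $\hat\mu_n-\mu_T=\tfrac12\bigl((\hat\sigma_n)^2-\sigma^2\bigr)$ combined with the mean-square and almost sure results already proved for $(\hat\sigma_n)^2$, and the (reverse) triangle inequality in $L^2(\Omega)$, which is exactly the paper's Minkowski sandwich. The only difference is that you spell out the strong law $W(T)/T\to 0$ a.s. via discrete SLLN plus a Borel--Cantelli control of the unit-interval oscillations, whereas the paper simply cites the SLLN for the Wiener process.
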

{\bf Proof.} By \eqref{log_ret_1} we obtain that
\begin{equation}
	\hat\mu_n-\mu=\frac{1}{2}\Bigl((\hat\sigma_n)^2-\sigma^2\Bigr)+\frac{\sigma}{T}W(T),
\end{equation}
and from the Minkowski inequality, and \eqref{mnsq_err_sigma_1} we have 
\begin{equation}
	\frac{\sigma}{\sqrt{T}}-\frac{1}{2}C\max\limits_{0\leq i\leq n-1}(t_{i+1}-t_i)^{1/2}\leq\|\hat\mu_n-\mu\|_{L^2(\Omega)}\leq \frac{\sigma}{\sqrt{T}}+\frac{1}{2}C\max\limits_{0\leq i\leq n-1}(t_{i+1}-t_i)^{1/2}.
\end{equation}
Taking both sides limits we get \eqref{mnsq_err_mean_3}, \eqref{mnsq_err_mean_4}, and \eqref{mnsq_err_mean_5}. By the SLLN for the Wiener process we get \eqref{mnsq_err_mean_6}. \ \ \ $\blacksquare$ \\ \\
Other accurate estimators of $\mu$ were constructed in literature such as, for example, quasi-maximum likelihood estimators (QMLE), see \cite{IAYO}. However, in the case of geometric Brownian motion the simple estimators $\mu_n$, $(\sigma_n)^2$ might give very close results to that obtained from QML estimation, see page 93. in \cite{IAYO}. Moreover, $\mu_n$ should give  quite good approximation to $\mu$ when $T$ is large enough. Hence, estimation of $\mu$ is possible for long time horizons. 
\begin{rem}\rm Note that we have
\begin{equation}
	[S]_t=\sigma\int\limits_0^tS^2(u)du, \quad t\in [0,T].
\end{equation}
Since
\begin{equation}
\frac{\sum\limits_{i=0}^{n-1}(S(t_{i+1})-S(t_{i}))^2}{\sum\limits_{i=0}^{n-1}(S(t_i))^2\Delta t_i}\to \frac{[S]_T}{\int\limits_0^T (S(t))^2 dt}=\sigma^2
\end{equation}
in probability as $\lim\limits_{n\to +\infty}\max\limits_{0\leq i\leq n-1}(t_{i+1}-t_i)=0$, we obtained an alternative estimator of $\sigma^2$. 
\end{rem}
\section{The maximum likelihood method (ML)}
Given $x_0,x_1,\ldots,x_n$, the  realisations $X_{\Theta}(t_0),X_{\Theta}(t_1),\ldots, X_{\Theta}(t_n)$ of a stochastic process $X_{\Theta}$, a very popular used procedure to estimate unknown parameters $\Theta\in\mathbb{R}^s$ of the process is the maximum
likelihood method (ML). In this method, the parameters are chosen to maximise the joint density $f_{\Theta}(x_0,x_1,\ldots,x_n)$ of $(X_{\Theta}(t_0),X_{\Theta}(t_1),\ldots, X_{\Theta}(t_n))$. For Markov processes (such as solutions of jump-diffusion SDEs), the joint density can be represented as a
product of transitional densities as follows
\begin{equation}
 f_{\Theta}(x_0,x_1,\ldots,x_n)=f_{X(t_0)}(x_0)\cdot f_{X(t_1)|X(t_0)}(x_1|x_0)\cdot\ldots\cdot f_{X(t_n)|X(t_{n-1})}(x_n|x_{n-1}),
\end{equation}
where the transitional density $f_{X(t_{i+1})|X(t_i)}(\cdot|x_{i})$ is a conditional density function of $X(t_{i+1})$ given $X(t_i)=x_i$. The aim is to minimize the negative log-likelihood defined as
\begin{equation}
\mathcal{L}(\Theta)=-\ln f_{\Theta}(x_0,x_1,\ldots,x_n)=-\sum\limits_{i=0}^{n-1}\ln f_{X(t_{i+1})|X(t_i)}(x_{i+1}|x_{i}),
\end{equation}
see \cite{saso1}. We now apply this idea to several SDEs for which the tranistion densities are known.
\subsection{Geometric Brownian motion}
First, we estimate the unknown parameters $\Theta=(\mu,\sigma)$ of the equation \eqref{log_S} or, equivalently, the equation \eqref{BS_eq}. In what follows we assume that $t_i=ih$, $i=0,1,\ldots,n$, $h=T/n$. 

Let $s_0,s_1,\ldots,s_n$ be discrete observations of $S$ at $t_0,t_1,\ldots,t_n$. We take $X(t)=\ln S(t)$ and let $x_k=\ln s_k$ be discrete realizations of $X$. By \eqref{log_S}, for $s,t\in [0,T]$, $s<t$
\begin{equation}
	X(t)=X(t)-X(s)+X(s)=\bar\mu (t-s)+X(s)+\sigma(W(t)-W(s)),
\end{equation}
where $\bar \mu = \mu-\frac{1}{2}\sigma^2$. Since $X(s)$ and $W(t)-W(s)$ are independent, we obtain
\begin{equation}
 	f_{X(t_{k+1})|X(t_k)}(x|x_{k})=\frac{1}{\sigma\sqrt{2\pi h}}\cdot\exp\Bigl(-\frac{(x-x_k-\bar \mu h)^2}{2\sigma^2 h}\Bigr).
\end{equation}
By taking $q=\sigma^2$, the negative log-likelihood is as follows
\begin{equation}
\mathcal{L}(\mu,q)=\frac{1}{2}n\ln(2\pi q h)+\frac{1}{2h}\sum_{i=0}^{n-1}\Bigl(\frac{\Delta x_k-(\mu-\frac{1}{2}q)h}{q^{1/2}}\Bigr)^2,
\end{equation}
with $\Delta x_k=x_{k+1}-x_k$. Note that
\begin{equation}
	\sum\limits_{k=0}^{n-1}\Delta x_k=\ln(s_n/s_0),
\end{equation}
and
\begin{equation}
	\frac{\partial \mathcal{L}}{\partial\mu}=-\frac{1}{q}\sum\limits_{k=0}^{n-1}\Bigl(\Delta x_k-(\mu-\frac{1}{2}q)h\Bigr),
\end{equation}
\begin{equation}
\frac{\partial \mathcal{L}}{\partial q}=\frac{N}{2q}+\frac{1}{hq^2}\sum\limits_{k=0}^{n-1}\Bigl(\Delta x_k-(\mu-\frac{1}{2}q)h\Bigr)\Bigl(\frac{qh}{2}-\frac{1}{2}\Bigl(\Delta x_k-(\mu-\frac{1}{2}q)h\Bigr)\Bigr).
\end{equation}
From $\frac{\partial \mathcal{L}}{\partial\mu}=0$ we get
\begin{equation}
\label{ml_est_1}
	\mu-\frac{1}{2}q=\frac{1}{T}\sum\limits_{k=0}^{n-1}\Delta x_k=\frac{1}{T}\ln(s_n/s_0),
\end{equation}
while from $\frac{\partial \mathcal{L}}{\partial q}=0$ we have
\begin{eqnarray}
\label{ml_est_2}
&&\frac{Tq}{2}+\frac{hq}{2}(x_n-x_0)-\frac{1}{2}\sum\limits_{k=0}^{n-1}(\Delta x_k)^2+\frac{1}{2}(\mu-\frac{1}{2}q)h(x_n-x_0)-(\mu-\frac{1}{2}q)\frac{Thq}{2}\notag\\
&&+	\frac{1}{2}(\mu-\frac{1}{2}q)h(x_n-x_0)-\frac{1}{2}(\mu-\frac{1}{2}q)^2hT=0.
\end{eqnarray}
Inserting \eqref{ml_est_1} to \eqref{ml_est_2} we get
\begin{equation}
	\frac{Tq}{2}+\frac{1}{2T}\Bigl(\ln(s_n/s_0)\Bigr)^2h=\frac{1}{2}\sum\limits_{k=0}^{n-1}\Bigl(\ln(s_{k+1}/s_k)\Bigr)^2,
\end{equation}
and hence
\begin{eqnarray}
	&&q=\frac{1}{T}\sum\limits_{k=0}^{n-1}\Bigl(\ln(s_{k+1}/s_k)\Bigr)^2-\frac{h}{T^2}\Bigl(\ln(s_n/s_0)\Bigr)^2\notag\\
	&&=\frac{1}{T}\sum\limits_{k=0}^{n-1}\Bigl(\ln(s_{k+1}/s_k)-\frac{h}{T}\ln(s_n/s_0)\Bigr)^2.
\end{eqnarray}
In summary we obtain the following ML-estimators of $(\mu,\sigma^2)$
\begin{equation}
	\widehat{\sigma^2_n}=\frac{1}{T}\sum\limits_{k=0}^{n-1}\Bigl(\ln(s_{k+1}/s_k)-\frac{h}{T}\ln(s_n/s_0)\Bigr)^2,
\end{equation}
\begin{equation}
	\widehat{\mu}_n=\frac{1}{T}\ln(s_n/s_0)+\frac{1}{2}\widehat{\sigma^2_n}.
\end{equation}
\subsection{Zero mean reverting OU-type process}
We start with the following SDE
\begin{equation}
	dX(t)=-aX(t)dt+\sigma dW(t), \quad t\in [0,T],
\end{equation}
where $X(0)=x_0$, $a>$ is a constant mean reversion rate and $\sigma>0$ is a deterministic volatility. Note that such a process is a base process for the Hull-White model and for the energy pricing models. Since for $t>s$
\begin{equation}
	X(t)=e^{-a(t-s)}X(s)+e^{-at}\int\limits_s^t e^{au}\sigma dW(u),
\end{equation}
the random variables $X(s)$ and $\displaystyle{\int\limits_s^t e^{au}\sigma dW(u)}$ are independent, and
\begin{equation}
	\sigma^2e^{-2at}\int\limits_s^te^{2au}du=\frac{\sigma^2}{2a}\Bigl(1-e^{-2a(t-s)}\Bigr),
\end{equation}
we get that
\begin{equation}
	f_{X(t_{k+1})|X(t_{k})}(x|x_k)=\frac{1}{\sqrt{2\pi}\Bigl(\frac{\sigma^2}{2a}\Bigl(1-e^{-2ah}\Bigr)\Bigr)^{1/2}}\exp\Biggl(-\frac{(x-e^{-ah}x_k)^2}{\frac{\sigma^2}{a}\Bigl(1-e^{-2ah}\Bigr)}\Biggr).
\end{equation}
The negative log-likelihood is as follows
\begin{eqnarray}
	&&\mathcal{L}(a,\sigma)=\sum\limits_{k=0}^{n-1}\Bigl[\frac{1}{2}\ln\Bigl(2\pi\cdot \frac{\sigma^2}{2a}\Bigl(1-e^{-2ah}\Bigr)\Bigr)\notag\\
	&&\quad\quad\quad\quad+\frac{a}{\sigma^2(1-e^{-2ah})}(x_{k+1}-e^{-ah}x_k)^2\Bigr].
\end{eqnarray}
We reparametrize it in terms of
\begin{eqnarray}
\label{reparam_1}
	&&A=e^{-ah},\\
	&&\Sigma=\frac{\sigma^2}{2a}\Bigl(1-e^{-2ah}\Bigr),
\end{eqnarray}
and obtain
\begin{equation}
	\mathcal{L}(A,\Sigma)=\frac{n}{2}\ln(2\pi\Sigma)+\frac{1}{2\Sigma}\sum\limits_{k=0}^{n-1}(x_{k+1}-Ax_k)^2.
\end{equation}
We have that
\begin{equation}
	\frac{\partial\mathcal{L}}{\partial A}=\frac{1}{\Sigma}\sum\limits_{k=0}^{n-1}(x_{k+1}-Ax_k)\cdot (-x_k),
\end{equation}
and
\begin{equation}
\frac{\partial\mathcal{L}}{\partial \Sigma}=\frac{n}{2\Sigma}-\frac{1}{2\Sigma^2}\sum\limits_{k=0}^{n-1}(x_{k+1}-Ax_k)^2.
\end{equation}
From $\frac{\partial\mathcal{L}}{\partial A}=0$ we have
\begin{equation}
	A=\frac{\sum\limits_{k=0}^{n-1}x_kx_{k+1}}{\sum\limits_{k=0}^{n-1}x_k^2},
\end{equation}
while from $\frac{\partial\mathcal{L}}{\partial \Sigma}=0$,
\begin{equation}
	\Sigma=\frac{1}{n}\sum\limits_{k=0}^{n-1}(x_{k+1}-A x_k)^2.
\end{equation}
Using \eqref{reparam_1} we arrive at the following ML estimators of $a$ and $\sigma^2$
\begin{equation}
	\widehat{a}_n=-\frac{1}{h}\ln\Biggl(\frac{\sum\limits_{k=0}^{n-1}x_kx_{k+1}}{\sum\limits_{k=0}^{n-1}x_k^2}\Biggr),
\end{equation}
\begin{equation}
	\widehat{\sigma^2_n}=\frac{1}{n}\cdot\Biggl(\frac{2\widehat{a}_n}{1-e^{-2\widehat{a}_n h}}\Biggr)\cdot\sum\limits_{k=0}^{n-1}\Bigl(x_{k+1}-e^{-2\widehat{a}_n h}x_k\Bigr)^2.
\end{equation}
\subsection{Mean reverting OU-type process}
We now deal with the following OU process
\begin{equation}
	dX(t)=\kappa(\mu-X(t))dt+\sigma dW(t), \quad t\in [0,T],
\end{equation}
which is also called the Vasicek process. For $t>s$ we have
\begin{equation}
	X(t)=e^{-\kappa (t-s)}X(s)+\mu(1-e^{-\kappa (t-s)})+\sigma e^{-\kappa t}\int\limits_s^t e^{\kappa u}dW(u),
\end{equation}
and hence
\begin{equation}
	f_{X(t_{k+1})|X(t_{k})}(x|x_k)=\frac{1}{\sqrt{2\pi}\Bigl(\frac{\sigma^2}{2\kappa}\Bigl(1-e^{-2\kappa h}\Bigr)\Bigr)^{1/2}}\exp\Biggl(-\frac{(x-\mu(1-e^{-\kappa h})-e^{-\kappa h}x_k)^2}{\frac{\sigma^2}{\kappa}\Bigl(1-e^{-2\kappa h}\Bigr)}\Biggr).
\end{equation}
In this case the negative log-likelihood is as follows
\begin{eqnarray}
	&&\mathcal{L}(\mu,\sigma,\kappa)=\frac{n}{2}\ln\Bigl(2\pi\cdot \frac{\sigma^2}{2\kappa}\Bigl(1-e^{-2\kappa h}\Bigr)\Bigr)\notag\\
	&&\quad\quad\quad\quad+\frac{\kappa}{\sigma^2(1-e^{-2\kappa h})}\sum\limits_{k=0}^{n-1}(x_{k+1}-\mu-e^{-\kappa h}(x_k-\mu))^2,
\end{eqnarray}
and we reparametrise it in the following way
\begin{eqnarray}
\label{reparam_2}
	&&A=e^{-\kappa h},\\
	&&\Sigma=\frac{\sigma^2}{2\kappa}\Bigl(1-e^{-2\kappa h}\Bigr).
\end{eqnarray}
This gives
\begin{equation}
\mathcal{L}(\mu,\Sigma,A)=\frac{n}{2}\ln(2\pi \Sigma)+\frac{1}{2\Sigma}\sum\limits_{k=0}^{n-1}(x_{k+1}-\mu-e^{-\kappa h}(x_k-\mu))^2.
\end{equation}
Minimalization of $\mathcal{L}$ gives the following ML estimators
\begin{eqnarray}
\label{est_p_1}
	&&\widehat{\kappa}_n=-h^{-1}\ln(\widehat{\beta}_{1,n}),\\
\label{est_p_2}	
	&&\widehat{\mu}_n=\widehat{\beta}_{2,n},\\
\label{est_p_3}
	&&\widehat{\sigma^2}_n=2\widehat{\kappa}_n\widehat{\beta}_{3,n}(1-\widehat{\beta}^2_{n,1})^{-1},
\end{eqnarray}
where
\begin{equation}
\widehat{\beta}_{1,n}=\frac{n^{-1}\sum\limits_{k=0}^{n-1}x_{k+1}x_k-n^{-2}\sum\limits_{k=0}^{n-1}x_{k+1}\sum\limits_{k=0}^{n-1}x_{k}}{n^{-1}\sum\limits_{k=0}^{n-1}x^2_{k}-n^{-2}\Bigl(\sum\limits_{k=0}^{n-1}x_{k}\Bigr)^2},
\end{equation}
\begin{equation}
	\widehat{\beta}_{2,n}=\frac{n^{-1}\sum\limits_{k=0}^{n-1}(x_{k+1}-\widehat{\beta}_{1,n}x_k)}{1-\widehat{\beta}_{1,n}},
\end{equation}
and
\begin{equation}
	\widehat{\beta}_{3,n}=n^{-1}\sum\limits_{k=0}^{n-1}\Bigl(x_{k+1}-\widehat{\beta}_{1,n}x_k-\widehat{\beta}_{2,n}(1-\widehat{\beta}_{1,n})\Bigr)^2,
\end{equation}
see \cite{TACHE2009}.
\subsection{Geometric mean reverting OU-type process}
Finally, we investigate the following OU-type process
\begin{equation}
	dX(t)=\kappa X(t)(\mu-\ln(X(t)))dt+\sigma X(t) dW(t), \quad t\in [0,T],
\end{equation}
with $X(0)=x_0>0$, which is called geometric Ornstein-Uhlenbeck process. It can be shown by the use of It\^o formula that $0<X(t)=\exp(U(t))$ where
\begin{equation}
	dU(t)=\kappa (\bar \mu-U(t))dt+\sigma dW(t), \quad t\in [0,T],
\end{equation}
and $\bar\mu=\mu-\frac{\sigma^2}{2\kappa}$. Having the discrete observations $x_0,\ldots,x_n$ of $X$ we take $u_k=\ln x_k$ as the observations of $U$. Now we can use \eqref{est_p_1}-\eqref{est_p_3} in order to obtain the estimators $\widehat{\kappa}_n, \widehat{\bar\mu}_n,\widehat{\sigma^2}_n$  that are based on the values $(u_k)_{k=0}^n$. The estimator of $\mu$ is defined by
\begin{equation}
	\widehat{\mu}_n=\widehat{\bar\mu}_n+\frac{\widehat{\sigma^2}_n}{2\widehat{\kappa}_n}.
\end{equation}
\section{Quasi maximum likelihood method (QML)}
Let us consider the general problem of estimating unknown parameters of the following scalar SDE
\begin{equation}
\label{eq_1}
	dX(t)=a(t,X(t),\theta)dt+b(t,X(t),\theta)dW(t), \quad t\in [0,T]
\end{equation}
where the solution $X$ is sampled at the discrete points $t_i=iT/n$, $i=0,1,\ldots,n$, and $x_0,x_1,\ldots,x_n$ are the realisations of $X$ at $t_i$'s. We also assume that $X(0)=x_0$ and $b(t,x,\theta)>0$ for all $(t,x,\theta)\in [0,T]\times\mathbb{R}\times\mathbb{R}^s$. Morever, $a,b:[0,T]\times\mathbb{R}\times\mathbb{R}^s\to\mathbb{R}$ are (at least) Borel measurable, $\theta\in\mathbb{R}^s$ and we assume that X is the unique strong solution of \eqref{eq_1}. For such a case the explicit form of the transition density is not known. However, for the Euler scheme, defined as follows
\begin{equation}
	X^E(t_0)=x_0,
\end{equation}
\begin{equation}
	X^E(t_{k+1})=X^E(t_{k})+a(t_k,X^E(t_{k}),\theta)h+b(t_k,X^E(t_{k}),\theta)\Delta W_k, \quad k =0,1,\ldots,n-1,
\end{equation}
we have the following conditional law
\begin{equation}
	X^E(t_{k+1})|X^E(t_{k})\sim N(X^E(t_{k})+a(t_k,X^E(t_{k}),\theta)h, h|b(t_k,X^E(t_{k}),\theta)|^2).
\end{equation}
Hence,  we approximate the transition density with the following
\begin{equation}
	f_{X^E(t_{k+1})|X^E(t_{k})}(x|x_k)=\frac{1}{(2\pi b^2(t_k,x_k,\theta))^{1/2}}\exp\Bigl(-\frac{(x-x_k-a(t_k,x_k,\theta)h)^2}{2hb^2(t_k,x_k,\theta)}\Bigr).
\end{equation}
Hence,  the negative log-quasi likelihood is defined by
\begin{eqnarray}
	&&\mathcal{L}(\theta)=-\sum\limits_{k=0}^{n-1}\ln f_{X^E(t_{k+1})|X^E(t_{k})}(x_{k+1}|x_k)\notag\\
	&&=\frac{1}{2}\sum\limits_{k=0}^{n-1}\Bigl[\ln (2\pi h b^2(t_k,x_k,\theta))+\frac{(x_{k+1}-x_k-a(t_k,x_k,\theta)h)^2}{hb^2(t_k,x_k,\theta)}\Bigr].
\end{eqnarray}
We obtain the quasi maximum likelihood estimator $\hat\theta_n$ of $\theta$  by minimizing $\mathcal{L}(\theta)$ with respect to $\theta\in\mathbb{R}^s$, i.e.
\begin{equation}
    \hat\theta_n=argmin_{\theta\in\mathbb{R}^s}\mathcal{L}(\theta).
\end{equation}
Mostly, such minimalisation has to be done numerically.
\begin{rem}\rm The approach described in this section can be extended in order to cover the multidimensional case.
\end{rem}
\begin{rem}
    We considered the case when the unknown estimated parameters $\theta$ do not depend on time. However, in more realistic models such parameters are often time-dependent. In such a case parameter estimation is much more involved and differs a lot from the time-independent case. See, for example, \cite{SDES_NN} where application of artificial neural network is discussed in the context of estimation of time-dependent $\theta$.
\end{rem}
\section{Monte Carlo forecasting in SDE-based models}
Having calibrated  SDE-based model on $[0,T]$ to the real data $x_0,x_1,\ldots,x_n$ we can make predictions for $t>T$. Namely, outside the interval $[0,T]$ we consider the following SDE
\begin{equation}
\label{hat_SDE}
	d\hat X(t)=a(t,\hat X(t),\hat\theta_n)dt+b(t,\hat X(t),\hat\theta_n)dW(t), \quad t\in [T,T+\Delta],
\end{equation}
where $\hat X(T)=x_n$, $\Delta>0$ is the length of the prediction horizon, $W=[W^1,\ldots,W^m]^T$ is $m$-dimensional Wiener process, and $\hat\theta_n$ is the estimated value of the true parameter $\theta$. W also assume that $a(t,x,\theta)$, $b(t,x,\theta)$ are defined for all $(t,x,\theta)\in [T,T+\Delta]\times\mathbb{R}^d\times\mathbb{R}^s$. We now describe (inductively) the forecasting procedure together with the construction of prediction intervals at confidence level $\alpha\in (0,1)$ (which we take in practice close to $1$).  Since in general we do not know the analytical solution $\hat X$, the prediction procedure is based on the Euler-Maruyama scheme that approximates $\hat X$. Note that there are many trajectories that we can obtain as predictions of future behavior of $\hat X$. We can handle with this problem by computing a (suitable) single trajectory together with prediction intervals (at confidence level $\alpha$) around it.

Let us take $h=\Delta/N$ for some $N\in\mathbb{N}$ and $t_k=T+kh$ for $k=0,1,\ldots,N.$
\subsection{Prediction intervals in the scalar case with $m=d=1$}
{\bf Approach 1:}  Assume that $\bar x_k$ is given prediction of the  value of $\hat X(t_k)$ (for $k=0$ we have $\bar x_0:=x_n$). The one step of Euler-Maruyama scheme from $\bar x_k$ at the time $t_k$ to $t_{k+1}=t_k+h$ is defined as follows
\begin{equation}
	\bar X^E_N(t_{k+1})=\bar x_k+a(t_k,\bar x_k,\hat\theta_n)\cdot h+b(t_k,\bar x_k,\hat\theta_n)\cdot \Delta W_k,
\end{equation} 
where $\Delta W_k=W(t_{k+1})-W(t_k)$. As in the Section \ref{subs_emp_conf_int}, let $q_{\alpha}$ be the  two-sided $\alpha$-quantile  of the normal distribution $N(0,1)$. We define the  prediction interval for (the unknown value) $\hat X(t_{k+1})$ as follows
\begin{equation}
\label{p_int_def}
	\mathcal{\bar P}_{k+1}(\alpha):=\Bigl[\bar\mu_{k+1}-q_{\alpha}\cdot\bar\sigma_{k+1},\bar\mu_{k+1}+q_{\alpha}\cdot\bar\sigma_{k+1}\Bigr],
\end{equation}
with
\begin{eqnarray}
	&&\bar\mu_{k+1}:=\bar x_k+a(t_k,\bar x_k,\hat\theta_n)\cdot h,\\
	&&\bar\sigma_{k+1}:=\sqrt{h}\cdot |b(t_k,\bar x_k,\hat\theta_n)|.
\end{eqnarray}
If $b(t_k,\bar x_k,\hat\theta_n)\neq 0$ then we have
\begin{equation}
	\mathbb{P}(\bar X^E_N(t_{k+1})\in\mathcal{\bar P}_{k+1}(\alpha))=\mathbb{P}(|\bar X^E_N(t_{k+1})-\bar\mu_{k+1}|\leq q_{\alpha}\cdot\bar\sigma_{k+1})=\mathbb{P}\Bigl(\frac{|\Delta W_k|}{\sqrt{h}}\leq q_{\alpha}\Bigr)=\alpha,
\end{equation}
while if $b(t_k,\bar x_k,\hat\theta_n)=0$ then $\bar X^E_N(t_{k+1})=\bar\mu_{k+1}$ with probability one. Since $\bar X^E_N(t_{k+1})\approx \hat X(t_{k+1})$ and the law of $\bar X^E_N(t_{k+1})$ is $N(\bar\mu_{k+1},\bar\sigma^2_{k+1})$ (if $\bar\sigma_{k+1}>0$), in the case when $\bar\sigma_{k+1}>0$  the prediction $\bar x_{k+1}$ of the value $\hat X(t_{k+1})$ we  take as a number drawn from the distribution $N(\bar\mu_{k+1},\bar\sigma^2_{k+1})$, while $\bar x_{k+1}:=\bar x_k+a(t_k,\bar x_k,\hat\theta_n)\cdot h$ if $\bar\sigma_{k+1}=0$. 

The drawback of this approach is that, with a small probability $1-\alpha$ but still, the drawn value $\bar x_{k+1}$ may lie outside the prediction interval $\mathcal{\bar P}_{k+1}(\alpha)$. We overcome this problem in the Approach 2. 
\newline\newline
{\bf Approach 2:}  Assume that we have given $\bar x_k$ and $\tilde x_k$, where $\bar x_k$ is the  prediction of the  value of $\hat X(t_k)$ and $\tilde x_k$ is equal to $\tilde X^E_N(t_k)$- the simulated value of Euler-Maruyama scheme at $t_k$ (for $k=0$ we take $\tilde x_0=\bar x_0:=x_n$). The one step of Euler-Maruyama scheme from  $t_k$ to $t_{k+1}=t_k+h$, starting from $\tilde x_k$ at $t_k$, is defined as follows
\begin{equation}
	\tilde X^E_N(t_{k+1})=\tilde x_k+a(t_k,\tilde x_k,\hat\theta_n)\cdot h+b(t_k,\tilde x_k,\hat\theta_n)\cdot \Delta W_k,
\end{equation} 
where $\Delta W_k=W(t_{k+1})-W(t_k)$ is the increment of the Wiener process that is independent of $\Sigma_{t_k}$ and, therefore, also of $\tilde X^E_N(t_k)$. Again, let $q_{\alpha}$ be the  two-sided $\alpha$-quantile  of the normal distribution $N(0,1)$. We define the  prediction interval for (the unknown value) $\hat X(t_{k+1})$ as follows
\begin{equation}
\label{p_int_def_2}
	\mathcal{\tilde P}_{k+1}(\alpha):=\Bigl[\tilde\mu_{k+1}-q_{\alpha}\cdot\tilde\sigma_{k+1},\tilde\mu_{k+1}+q_{\alpha}\cdot\tilde\sigma_{k+1}\Bigr],
\end{equation}
with
\begin{eqnarray}
	&&\tilde\mu_{k+1}:=\tilde x_k+a(t_k,\tilde x_k,\hat\theta_n)\cdot h,\\
	&&\tilde\sigma_{k+1}:=\sqrt{h}\cdot |b(t_k,\tilde x_k,\hat\theta_n)|.
\end{eqnarray}
Note that it holds $\tilde\mu_{k+1}=\mathbb{E}(\tilde X^E_N(t_{k+1}) \ | \ \Sigma_{t_k})$ and $\tilde\sigma^2_{k+1}=Var(\tilde X^E_N(t_{k+1}) \ | \ \Sigma_{t_k})$. If $b(t_k,\tilde x_k,\hat\theta_n)\neq 0$, then we have
\begin{eqnarray}
	&&\mathbb{P}(\tilde X^E_N(t_{k+1})\in\mathcal{\tilde P}_{k+1}(\alpha) \ | \ \Sigma_{t_k})=\mathbb{P}(|\tilde X^E_N(t_{k+1})-\tilde\mu_{k+1}|\leq q_{\alpha}\cdot\tilde\sigma_{k+1}\ | \ \Sigma_{t_k})\notag\\
 &&=\mathbb{P}\Bigl(\frac{|\Delta W_k|}{\sqrt{h}}\leq q_{\alpha}\ | \ \Sigma_{t_k}\Bigr)=\mathbb{P}\Bigl(\frac{|\Delta W_k|}{\sqrt{h}}\leq q_{\alpha}\Bigr)=\alpha,
\end{eqnarray}
and hence
\begin{equation}
    \mathbb{P}(\tilde X^E_N(t_{k+1})\in\mathcal{\tilde P}_{k+1}(\alpha) )=\alpha,
\end{equation}
while if $b(t_k,\tilde x_k,\hat\theta_n)=0$, then $\tilde X^E_N(t_{k+1})=\tilde\mu_{k+1}$ with probability one. Now, the main difference, when comparing to the  Approach 1, is that in the Approach 2 we take
\begin{equation}
	\bar x_{k+1}:=\tilde \mu_{k+1} 
\end{equation}
as the predicted value of $\hat X(t_{k+1})$, while for the next starting point for the one step of the Euler-Maruyama scheme we set
\begin{equation}
    \tilde x_{k+1}:= \tilde X^E_N(t_{k+1})   
\end{equation}
  if $\tilde\sigma^2_{k+1}>0$, and $\tilde x_{k+1}:=\tilde\mu_{k+1}$ if $\tilde\sigma_{k+1}=0$. (Hence, for fixed $\tilde x_k$, the value of  $\tilde x_{k+1}$ might be drawn from the distribution $N(\tilde\mu_{k+1},\tilde\sigma^2_{k+1})$.) In this approach the predicted value $\bar x_{k+1}$ is the mid point of $\mathcal{\tilde P}_{k+1}(\alpha)$, while the drawn next starting point $\tilde x_{k+1}$ may lie outside the prediction interval $\mathcal{\tilde P}_{k+1}(\alpha)$ but with  (small) probability $1-\alpha$. 

The procedure described in Approach 2  of Monte Carlo forecasting for SDEs might be implemented as follows:
\begin{itemize}
	\item [\bf 1)] draw independent random samples $Z_0,Z_1,\ldots,Z_{N-1}$ form the distribution $N(0,1)$,
		\item [\bf 2)] let $\tilde x_0:=x_n$ and for $k=0,1,\ldots,N-1$ compute
		\begin{equation}
			\tilde x_{k+1}=\tilde x_{k}+a(t_k,\tilde x_k,\hat\theta_n)\cdot h+				\sqrt{h}\cdot b(t_k,\tilde x_k,\hat\theta_n)\cdot Z_k,
			\end{equation}		 
		\item [\bf 3)] having $\{\tilde x_k\}_{k=0,1,\ldots,N}$, take $\bar x_0:=x_n$ and compute for $k=1,2,\ldots, N$ the predictions $\bar x_k$ and the prediction intervals $\mathcal{\tilde P}_k(\alpha)$ as follows
		\begin{eqnarray}
			&&\bar x_k = \tilde\mu_k=\tilde x_{k-1}+a(t_{k-1},\tilde x_{k-1},\hat\theta_n)\cdot h,\\
			&&\tilde\sigma_k=\sqrt{h}\cdot |b(t_{k-1},\tilde x_{k-1},\hat\theta_n)|,\\
			&&\mathcal{\tilde P}_k(\alpha)=[\tilde\mu_k-q_{\alpha}\cdot\tilde\sigma_k, \tilde\mu_k+q_{\alpha}\cdot\tilde\sigma_k].
		\end{eqnarray}			
\end{itemize}

\begin{rem} In the case when we have access to the real data (ground truth) at the time $t_k$ we can take it as the staring point $\tilde x_k$ for the one step of the Euler-Maruyama algorithm (from $t_k$ to $t_{k+1}$) in the Approach 2.
\end{rem}
\begin{rem} Note that if we take $\tilde x_k=\bar x_k=\tilde\mu_k$ for all $k$ (no matter if $\tilde\sigma^2_k=0$ or not)  in the Approach 2, then $(\tilde x_k)_{k=0,1,\ldots,N}$ corresponds the the Euler scheme when applied to the following ODE
\begin{equation}
	\hat x'(t)=a(t,\hat x(t),\hat\theta_n), \quad t\in [T,T+\Delta], \quad \hat x(T)=x_n.
\end{equation}
The drawback is that we lose in this case information about the noise modeled in \eqref{hat_SDE} - the prediction of $\hat X$ is oversmoothed. 
\end{rem}
We define the following functions
\begin{eqnarray}
    &&\tilde\mu_{k+1}(x)=x+h\cdot a(t_k,x,\hat\theta_n),\\
    &&\tilde\sigma^2_{k+1}(x)=h\cdot b^2(t_k,x,\hat\theta_n),
\end{eqnarray}
and let
\begin{equation}
    \mathcal{\tilde P}_{k+1}(\alpha,x)=[\tilde\mu_{k+1}(x)-q_{\alpha}\cdot\tilde\sigma_{k+1}(x),\tilde\mu_{k+1}(x)+q_{\alpha}\cdot\tilde\sigma_{k+1}(x)].
\end{equation}
Then, for $k=0,1,\ldots,N-1$
\begin{equation}
    A_{k+1}:=\{\tilde X^E_N(t_{k+1})\in\mathcal{\tilde P}_{k+1}(\alpha,\tilde X^E_N(t_{k}))\}=\{|\Delta W_k|\leq q_{\alpha}\cdot\sqrt{h}\},
\end{equation}
and hence, the sets $(A_k)_{k=1,\ldots,N}$ are independent. This implies that
\begin{equation}
    \mathbb{P}\Bigl(\bigcap_{k=0}^{N-1}A_{k+1}\Bigr)=\prod_{k=0}^{N-1}\mathbb{P}(A_{k+1})=\alpha^N,
\end{equation}
which gives the probability for the {\it simultaneous prediction interval} 
\begin{equation}
    \bigtimes_{k=0}^{N-1}\mathcal{\tilde P}_{k+1}(\alpha,\tilde X^E_N(t_{k})).
\end{equation}
In order to have an accurate prediction for a relatively large number of steps (for example, $N=20$) we should take $\alpha=0,998$. This corresponds to $q_{\alpha}\approx 3.09$.
\subsection{Ellipsoidal prediction regions in multidimensional case}
Let us take, as in the one-dimensional case, that
\begin{equation}
    \tilde\mu_{k+1}(x):= x+h\cdot a(t_k,x,\hat\theta_n), \quad x\in\mathbb{R}^d.
\end{equation}
Assume that we have given $\bar x_k$ and $\tilde x_k$, where $\bar x_k$ is the  prediction of the  value of $\hat X(t_k)$ and $\tilde x_k$ is equal to $\tilde X^E_N(t_k)$- the simulated value of Euler-Maruyama scheme at $t_k$ (for $k=0$ we take $\tilde x_0=\bar x_0:=x_n$). Then for the one step of Euler-Maruyama scheme from  $t_k$ to $t_{k+1}=t_k+h$, starting from $\tilde x_k$ at $t_k$, we have
\begin{equation}
    b^T(t_k,\tilde x_k,\hat\theta_n)\cdot\Bigl(\tilde X^E_N(t_{k+1})-\tilde\mu_{k+1}(\tilde x_k)\Bigr)=(b^Tb)(t_k,\tilde x_k,\hat\theta_n)\cdot\Delta W_k=\sqrt{h}\cdot (b^Tb)(t_k,\tilde x_k,\hat\theta_n)\cdot Z_k,
\end{equation}
where $Z_k:=\Delta W_k/\sqrt{h}$ has $m$-dimensional normal distribution $N(0,I_m)$. We impose the following assumption
\begin{equation*}
    (M) \ \  \hbox{for all} \ (t,x,z) \ \hbox{the matrix} \ b(t,x,z)\in\mathbb{R}^{d\times m} \ \hbox{has} \ m \ \hbox{linearly independent columns, i.e.}  \ rank(b(t,x,z))=m.
\end{equation*}
Since for all $(t,x,z)$ we have $rank(b(t,x,z))\leq\min\{d,m\}$, the assumption (M) implies that
\begin{equation}
\label{more_d_m}
    d\geq m.
\end{equation}
Then, for all $(t,x,z)$, the matrix $(b^Tb)(t,x,z)\in\mathbb{R}^{d\times d}$ is symmetric and positive definite. Hence,
\begin{equation}
    h^{-1/2}\cdot (b^Tb)^{-1}(t_k,\tilde x_k,\hat\theta_n)\cdot b^T(t_k,\tilde x_k,\hat\theta_n)\cdot\Bigl(\tilde X^E_N(t_{k+1})-\tilde\mu_{k+1}(\tilde x_k)\Bigr)=Z_k=\Delta W_k/\sqrt{h}
\end{equation}
has normal distribution $N(0,I_m)$. Therefore, for $B(0,R)=\{x\in\mathbb{R}^m \ | \ \|x\|\leq R\}$, $R\in [0,+\infty]$, we have
\begin{eqnarray}
    &&p(R)=\mathbb{P}\Biggl( h^{-1/2}\cdot (b^Tb)^{-1}(t_k,\tilde x_k,\hat\theta_n)\cdot b^T(t_k,\tilde x_k,\hat\theta_n)\cdot\Bigl(\tilde X^E_N(t_{k+1})-\tilde\mu_{k+1}(\tilde x_k)\Bigr)\in B(0,R)\Biggr)\notag\\
    &&=\int\limits_{B(0,R)}\frac{1}{(2\pi)^{m/2}}e^{-\|x\|^2/2}dx,
\end{eqnarray}
and we call $p(R)$ the {\it confidence level}. For all $y\in\mathbb{R}^d$ we define 
\begin{equation}
    \mathcal{P}_{k+1}(R,y)=\{z\in\mathbb{R}^d \ | \ h^{-1/2}\|b^+(t_k,y,\hat\theta_n)\cdot (z-\tilde\mu_{k+1}(y))\|\leq R\},
\end{equation}
where
\begin{equation}
    b^+(t,y,z):=(b^Tb)^{-1}(t,y,z)\cdot b^T(t,y,z)\in\mathbb{R}^{m\times d}
\end{equation}
is the Moore-Penrose inverse of $b(t,y,z)$, since we have assumed that $rank(b(t,y,z))=m$ and $d\geq m$, see \cite{StoBurl}. Note that $b^+(t,y,z)$ is also the left-inverse of $b(t,y,z)$, since
\begin{equation}
    b^+(t,y,z)\cdot b(t,y,z)=I_m.
\end{equation}
Moreover, if we denote by
\begin{equation}
    E_k(y):=(b^+(t_k,y,\hat\theta_n))^T\cdot b^+(t_k,y,\hat\theta_n),
\end{equation}
then we can write
\begin{equation}
    \mathcal{P}_{k+1}(R,y)=\{z\in\mathbb{R}^d \ | \ (z-\tilde\mu_{k+1}(y))^T\cdot E_k(y)\cdot (z-\tilde\mu_{k+1}(y))\leq h R^2\}.
\end{equation}
For all $y\in\mathbb{R}^d$ the matrix $E_k(y)\in\mathbb{R}^{d\times d}$ is symmetric and positive semi-definite, and for all $y,z\in\mathbb{R}^d$
\begin{equation}
    (z-\tilde\mu_{k+1}(y))^T\cdot E_k(y)\cdot (z-\tilde\mu_{k+1}(y))=\|b^+(t_k,y,\hat\theta_n)\cdot (z-\tilde\mu_{k+1}(y))\|^2\geq 0.
\end{equation}
We define the {\it prediction region}  for $\hat X(t_{k+1})$ as 
\begin{equation}
    \mathcal{\tilde P}_{k+1}(R):=\mathcal{P}_{k+1}(R,\tilde x_k).
\end{equation}
Then
\begin{equation}
\label{mult_dim_XE_P_1}
    \mathbb{P}\Bigl(\tilde X^E_N(t_{k+1})\in\mathcal{\tilde P}_{k+1}(R) \ | \ \Sigma_{t_k}\Bigr)=\mathbb{P}\Bigl(h^{-1/2}\|\Delta W_k\|\leq R \ | \ \Sigma_{t_k}\Bigr)=\mathbb{P}\Bigl(h^{-1/2}\|\Delta W_k\|\leq R \Bigr)=p(R),
\end{equation}
and therefore
\begin{equation}
    \mathbb{P}\Bigl(\tilde X^E_N(t_{k+1})\in\mathcal{\tilde P}_{k+1}(R) \Bigr)=p(R).
\end{equation}
We take
\begin{equation}
	\bar x_{k+1}:=\tilde \mu_{k+1}(\tilde x_k)=\tilde x_k+h\cdot a(t_k,\tilde x_k,\hat\theta_n) 
\end{equation}
as the predicted value of $\hat X(t_{k+1})$, while for the next starting point for the one step of the Euler-Maruyama scheme we set
\begin{equation}
    \tilde x_{k+1}:= \tilde X^E_N(t_{k+1}).  
\end{equation}
Hence, for fixed $\tilde x_k$, the value of  $\tilde x_{k+1}$ might be drawn from the distribution $N(\tilde\mu_{k+1}(\tilde x_k),\tilde V^2_{k+1}(\tilde x_k))$, where $\tilde V^2_{k+1}(y):=h\cdot (bb^T)(t_k,y,\hat\theta_n)$.

The procedure  of Monte Carlo forecasting for SDEs might be described as follows:
\begin{itemize}
	\item [\bf 1)] draw independent random samples $Z_0,Z_1,\ldots,Z_{N-1}$ form the distribution $N(0,I_m)$,
		\item [\bf 2)] let $\tilde x_0:=x_n$ and for $k=0,1,\ldots,N-1$ compute
		\begin{equation}
			\tilde x_{k+1}=\tilde x_{k}+a(t_k,\tilde x_k,\hat\theta_n)\cdot h+				\sqrt{h}\cdot b(t_k,\tilde x_k,\hat\theta_n)\cdot Z_k,
			\end{equation}		 
		\item [\bf 3)] having $\{\tilde x_k\}_{k=0,1,\ldots,N}$, take $\bar x_0:=x_n$, then, for $k=1,2,\ldots, N$, compute  the predictions $\bar x_k$ and set the  prediction regions $\mathcal{\tilde P}_k(R)$ as follows
		\begin{eqnarray}
			&&\bar x_k = \tilde\mu_k=\tilde x_{k-1}+a(t_{k-1},\tilde x_{k-1},\hat\theta_n)\cdot h,\\
			&&\mathcal{\tilde P}_k(R)=\{z\in\mathbb{R}^d \ | \ h^{-1/2}\|b^+(t_{k-1},\tilde x_{k-1},\hat\theta_n)\cdot (z-\tilde\mu_{k})\|\leq R\}.
		\end{eqnarray}			
\end{itemize}
For $k=0,1,\ldots,N-1$
\begin{equation}
    A_{k+1}:=\{\tilde X^E_N(t_{k+1})\in\mathcal{P}_{k+1}(R,\tilde X^E_N(t_{k}))\}=\{\|\Delta W_k\|\leq R\cdot\sqrt{h}\}=\{Z_k\in B(0,R)\},
\end{equation}
and hence, the sets $(A_k)_{k=1,\ldots,N}$ are independent. This implies that
\begin{equation}
\label{probab_wt_pred_reg}
    \mathbb{P}\Bigl((\tilde X^E_N(t_1),\tilde X^E_N(t_2),\ldots, \tilde X^E_N(t_N))\in \bigtimes_{k=0}^{N-1}\mathcal{P}_{k+1}(R,\tilde X^E_N(t_{k}))\Bigr)=\mathbb{P}\Bigl(\bigcap_{k=0}^{N-1}A_{k+1}\Bigr)=\prod_{k=0}^{N-1}\mathbb{P}(A_{k+1})=(p(R))^N,
\end{equation}
which gives the probability that the obtained $N$-step prediction $(\tilde X^E_N(t_1),\tilde X^E_N(t_2),\ldots, \tilde X^E_N(t_N))$ belongs to the {\it simultaneous prediction region} 
\begin{equation}
\label{ellip_pred_reg}
    \bigtimes_{k=0}^{N-1}\mathcal{P}_{k+1}(R,\tilde X^E_N(t_{k})).
\end{equation}

In a special case we can describe the geometry of $\mathcal{P}_{k+1}(R,y)$ in a more precise way. Namely, let us assume that for all $(t,y,z)$ 
\begin{equation}
    rank(b^+(t,y,z))=d.
\end{equation}
Since $rank(b^+(t,y,z))=d\leq \min\{d,m\}\leq m$, by \eqref{more_d_m} this assumption force the following
\begin{equation}
    d=m,
\end{equation}
and the matrix $b(t,y,z)$ has to be a square matrix in $\mathbb{R}^{d\times d}$. Moreover, it is of full rank (equal to $d$), and therefore it is also non-singular. In this case
for all $(t,y,z)$
\begin{equation}
    b^+(t,y,z)=b^{-1}(t,y,z),
\end{equation}
and the symmetric matrix
\begin{equation}
    E_k(y)=\Bigl(b(t_k,y,\hat\theta_n)\cdot b^T(t_k,y,\hat\theta_n)\Bigr)^{-1}
\end{equation}
is positive definite (In particular, this also implies that $\tilde V^2_{k+1}(y)$ is positive definite, since $\tilde V^2_{k+1}(y)=h\cdot E^{-1}_{k}(y)$.) Moreover,
\begin{eqnarray}
\label{pk_as_ellipsoid}
    &&\mathcal{P}_{k+1}(R,y)=\{z\in\mathbb{R}^d \ | \ h^{-1/2}\|b^{-1}(t_k,y,\hat\theta_n)\cdot (z-\tilde\mu_{k+1}(y))\|\leq R\}\notag\\
    &&=\tilde\mu_{k+1}(y)+h^{1/2}b(t_k,y,\hat\theta_n) B(0,R),
\end{eqnarray}
and, therefore, in the case when $E_k(y)$ is positive definite, the set $\mathcal{P}_{k+1}(R,y)$ is an ellipsoid centered at $\tilde\mu_{k+1}(y)$. The eigenvectors of $E_k(y)/(hR^2)$  are the principal axes of the ellipsoid, and length of each corresponding semi-axe is $1/(\lambda_j(y))^{1/2}$, where $\lambda_j(y)$'s are the (strictly positive) eigenvalues of $E_k(y)/(hR^2)$. 
\begin{rem}
    Note that we can write $\tilde V^2_{k+1}(y)=(\tilde V_{k+1}(y))^T\cdot \tilde V_{k+1}(y)$ where
    \begin{equation}
        \tilde V_{k+1}(y)=h^{1/2}\cdot b^T(t_k,y,\hat\theta_n),
    \end{equation}
    and
    \begin{equation}
        \mathcal{P}_{k+1}(R,y)=\{z\in\mathbb{R}^d \ | \ \|[\tilde V_{k+1}(y) \tilde V^T_{k+1}(y)]^{-1}\tilde V_{k+1}(y)(z-\tilde\mu_{k+1}(y))\|\leq R\}
    \end{equation}
\end{rem}
\begin{rem}
    Of course in the case $m=d=1$ we obtain the same prediction intervals as it was descibed in the one-dimensional case, since
    \begin{equation}
        \mathcal{P}_{k+1}(q_{\alpha},\tilde x_k)=\mathcal{\tilde P}_{k+1}(\alpha).
    \end{equation}
\end{rem}
\begin{rem}
    In this section we showed how to construct ellipsoidal prediction regions for uncertainty characterization when time-series under consideration are assumed to follow from SDEs-based models. See also \cite{gopi} where the authors use ellipsoidal prediction regions in different econometric fashioned approach to time-series uncertainty characterization.
\end{rem}
\subsection{Concluding remarks - why the geometry of prediction regions matters}
The  procedure described in the previous section of obtaining  prediction regions in multidimensional case takes into account the fact that coordinates of $\hat X$ and $\tilde X^E_N$ are correlated. Because of that it is possible to compute explicitly: 
\begin{itemize}
    \item   the probability \eqref{mult_dim_XE_P_1} that all of the coordinates $\tilde X^E_{N,j}$ are in $\mathcal{\tilde P}_{k+1}(R)$ simultaneously,
    \item  the probability \eqref{probab_wt_pred_reg} that the whole predicted $N$-steps forward belong to the simultaneous prediction region \eqref{ellip_pred_reg}.
\end{itemize}
One can however try an alternative (and, at first sight, more natural) approach in which one computes prediction intervals for each coordinate  $\hat X_j$ separately (by using an approach known from the one-dimensional case) and then takes a Cartesian product of them (which is in fact a hypercube in $\mathbb{R}^d$).  However, there is a pitfall in that approach, which we now describe in detail.

Note that for all $j=1,2,\ldots,d$, $k=0,1,\ldots,N-1$
\begin{equation}
\label{XE_j_law}
    \tilde X^E_{N,j}(t_{k+1})\ | \ \Sigma_{t_k}\sim N\Bigl(\tilde\mu_{k+1,j}(\tilde X^E_{N}(t_{k})), \tilde\sigma^2_{k+1,j}(\tilde X^E_{N}(t_{k}))\Bigr),
\end{equation}
where for all $x\in\mathbb{R}^d$
\begin{eqnarray}
    &&\tilde\mu_{k+1,j}(x)=x_j+h \cdot a_j(t_k,x,\hat\theta_n),\\
    &&\tilde\sigma^2_{k+1,j}(x)=h\cdot\sum_{l=1}^m (b(t_k,x,\hat\theta_n))^2.
\end{eqnarray}
Let us take
\begin{equation}
    \mathcal{\tilde P}_{k+1,j}(\alpha,x)=[\tilde\mu_{k+1,j}(x)-q_{\alpha}\cdot\tilde\sigma_{k+1,j}(x), \tilde\mu_{k+1,j}(x)+q_{\alpha}\cdot\tilde\sigma_{k+1,j}(x)],
\end{equation}
and
\begin{equation}
    B_{k+1,j}=\{\tilde X^E_{N,j}(t_{k+1})\in\mathcal{\tilde P}_{k+1,j}(\alpha,\tilde X^E_{N}(t_{k}))\},
\end{equation}
then, by \eqref{XE_j_law}, we have for all $j=1,2,\ldots,d$ that
\begin{equation}
    \mathbb{P}\Bigl(B_{k+1,j} \ | \ \Sigma_{t_k}\Bigr)=\mathbb{P}\Biggl(\frac{|\tilde X^E_{N,j}(t_{k+1})-\tilde\mu_{k+1,j}(\tilde X^E_{N}(t_{k}))|}{\tilde\sigma_{k+1,j}(\tilde X^E_{N}(t_{k}))}\leq q_{\alpha} \ \Bigl| \ \Sigma_{t_k}\Biggr)=\alpha,
\end{equation}
and hence for each coordinate of $\tilde X^E_{N}(t_{k+1})$ we have
\begin{equation}
     \mathbb{P}(B_{k+1,j})=\alpha.
\end{equation}
Note that for fixed $k=1,2,\ldots,d$ the sets
\begin{displaymath}
    B_{k+1,1},B_{k+1,2},\ldots, B_{k+1,d}
\end{displaymath}
are not independent (even conditionally wrt $\Sigma_{t_k}$). Therefore, we do not have the exact value (or at least good estimates) of the probability
\begin{equation}
    \mathbb{P}(B_{k+1})=\mathbb{P}(\tilde X^E_{N}(t_{k+1})\in\mathcal{\tilde B}_{k+1}(\alpha,\tilde X^E_N(t_k))),
\end{equation}
where
\begin{equation}
    B_{k+1}=\bigcap_{j=1}^d B_{k+1,j}
\end{equation}
and
\begin{equation}
    \mathcal{\tilde B}_{k+1}(\alpha,x)=\bigtimes_{j=1}^d\mathcal{\tilde P}_{k+1,j}(\alpha,x).
\end{equation}
Moreover, the sets $\{B_{k+1}\}_{k=0,1,\ldots,N-1}$ are also not independent. Hence, in general it is not known how to compute the probability
\begin{equation}
    \mathbb{P}\Bigl((\tilde X^E_N(t_1),\tilde X^E_N(t_2),\ldots,\tilde X^E_N(t_N))\in\bigtimes_{k=0}^{N-1}\mathcal{\tilde B}_{k+1}(\alpha,\tilde X^E_N(t_k))\Bigr)=\mathbb{P}\Bigl(\bigcap_{k=0}^{N-1}B_{k+1}\Bigr),
\end{equation}
and this method of constructing prediction regions do not allows us to control the probability that the whole predicted sequence belong to the simultaneous prediction region. This is an obvious drawback.
\section{Exercises}
\begin{itemize}
	\item [1.] Show for the equation \eqref{BS_eq} that the maximum likelihood estimators of $(\mu,\sigma^2)$ and quasi maximum likelihood estimators coincide.\\
	Hint. Apply QML procedure to \eqref{log_S}.
 \item [2.] Give a proof of \eqref{pk_as_ellipsoid}.
 \item [3.] Show for $j=1,2,\ldots,d$, $k=0,1,\ldots,N-1$ that
\begin{eqnarray}
    Var\Bigl(\sum_{l=1}^m b_{jl}(t_k,\tilde X^E_{N}(t_k),\hat\theta_n)\cdot\Delta W_k^l \ | \ \Sigma_{t_k}\Bigr)=\tilde\sigma^2_{k+1,j}(\tilde X^E_{N}(t_k)).
\end{eqnarray}
\item [4.] Justify that $(X^E(t_k))_{k=0,1,\ldots,n}$ is a discrete-time Markov process.
\end{itemize}
\chapter{Auxiliary results--part 1}
In this notes we use many results from  the theory of stochastic processes, including basics from measure and probability theory. We will now recall some of them. For a comprehensive exposition see, for example, \cite{Ash}, \cite{Cinlar}, \cite{cohn}, \cite{elcoh}, \cite{KALLEN}, \cite{MAKPOD}, \cite{MEDVEG}, \cite{prott}, \cite{ttao}.

By $\|\cdot\|_2$ we mean the Euclidean norm on $\mathbb{R}^d$.

Let $X$ be an arbitrary set. A collection $\mathcal{A}$ of subsets of $X$ is a {\it $\sigma$-field} on $X$ if
\begin{itemize}
	\item [(i)] $X\in\mathcal{A}$,
	\item [(ii)] for every $A\in\mathcal{A}$, the set $X\setminus A$ belongs to $\mathcal{A}$,
	\item [(iii)] for any infinite sequence $(A_n)_{n\in\mathbb{N}}\subset\mathcal{A}$ we have that $\bigcup_{n\in\mathbb{N}}A_n\in\mathcal{A}$,
	\item [(iv)]for any infinite sequence $(A_n)_{n\in\mathbb{N}}\subset\mathcal{A}$ we have that $\bigcap_{n\in\mathbb{N}}A_n\in\mathcal{A}$.
\end{itemize} 
As we can see  a $\sigma$-field on $X$ is a subclass of $2^X$\footnote{In fact, $2^X$ is the largest $\sigma$-field on $X$. The family $\{\emptyset,X\}$ is the smallest $\sigma$-field on $X$. For any $\sigma$-field on $X$ the following inclusion holds: $\{\emptyset,X\}\subset\mathcal{A}\subset 2^X$.} that contains $X$ and is closed with respect to complementation,  countable unions and intersections.  Note also that, in the definition of $\sigma$-field we could
have used only conditions $(i)$, $(ii)$, and $(iii)$, or only conditions $(i)$, $(ii)$, and $(iv)$, as it is sometimes in literature. Moreover, sometimes the authors use the term $\sigma$-algebra instead of $\sigma$-field.
\\
A measurable space is a pair  $(X,\mathcal{A})$ where $X$ is a set and $\mathcal{A}$ is a $\sigma$-field on $X$.

Let $X$ be a set and let $\mathcal{C}$ be an arbitrary class of subsets of $X$. By $\sigma(\mathcal{C})$ we denote the intersection of all those $\sigma$-fields that contain $\mathcal{C}$. It turns out that $\sigma(\mathcal{C})$ is  the smallest $\sigma$-field that contains $\mathcal{C}$, and it is called the $\sigma$-field {\it generated} by $\mathcal{C}$. Such $\sigma$-field always exists, see, for example, Corollary 1.1.3 in \cite{cohn}. If $\mathcal{C}$ is already a $\sigma$-field then $\sigma(\mathcal{C})=\mathcal{C}$.
Furthermore, the following holds (see, for example, \cite{Cinlar}).
\begin{prop} Let $\mathcal{C}$ and $\mathcal{D}$ be two families of subsets of $X$. Then:
	\begin{itemize}
		\item [(i)] If $\mathcal{C}\subset\mathcal{D}$ then $\sigma(\mathcal{C})\subset\sigma(\mathcal{D})$
		\item [(ii)] If $\mathcal{C}\subset\sigma(\mathcal{D})$ then $\sigma(\mathcal{C})\subset\sigma(\mathcal{D})$
		\item [(iii)] If $\mathcal{C}\subset\sigma(\mathcal{D})$ and $\mathcal{D}\subset\sigma(\mathcal{C})$, then $\sigma(\mathcal{C})=\sigma(\mathcal{D})$
		\item [(iv)] If $\mathcal{C}\subset\mathcal{D}\subset\sigma(\mathcal{C})$, $\sigma(\mathcal{C})=\sigma(\mathcal{D})$
	\end{itemize}
\end{prop}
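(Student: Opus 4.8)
The plan is to derive all four items from the single characterization of $\sigma(\mathcal{C})$ recalled just above: it is a $\sigma$-field, it contains $\mathcal{C}$, and it is contained in every $\sigma$-field on $X$ that contains $\mathcal{C}$ (minimality). The one observation I will use repeatedly is the following immediate consequence of minimality: if $\mathcal{A}$ is a $\sigma$-field with $\mathcal{C}\subset\mathcal{A}$, then $\sigma(\mathcal{C})\subset\mathcal{A}$.

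I would prove (ii) first, directly. Since $\mathcal{C}\subset\sigma(\mathcal{D})$ and $\sigma(\mathcal{D})$ is itself a $\sigma$-field, the minimality of $\sigma(\mathcal{C})$ yields $\sigma(\mathcal{C})\subset\sigma(\mathcal{D})$. Item (i) is then an immediate special case: if $\mathcal{C}\subset\mathcal{D}$, then from $\mathcal{D}\subset\sigma(\mathcal{D})$ we get $\mathcal{C}\subset\sigma(\mathcal{D})$, and (ii) applies. (Alternatively one may prove (i) first, observing that every $\sigma$-field containing $\mathcal{D}$ also contains $\mathcal{C}$, so the intersection defining $\sigma(\mathcal{C})$ is taken over a collection of $\sigma$-fields that includes the one defining $\sigma(\mathcal{D})$, hence is smaller; I will keep whichever ordering reads more cleanly.)

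For (iii) I would apply (ii) in both directions: $\mathcal{C}\subset\sigma(\mathcal{D})$ gives $\sigma(\mathcal{C})\subset\sigma(\mathcal{D})$, while $\mathcal{D}\subset\sigma(\mathcal{C})$ gives $\sigma(\mathcal{D})\subset\sigma(\mathcal{C})$, and the two inclusions force $\sigma(\mathcal{C})=\sigma(\mathcal{D})$. For (iv) I would note that the hypothesis $\mathcal{C}\subset\mathcal{D}\subset\sigma(\mathcal{C})$ is a particular instance of the hypothesis of (iii): it delivers $\mathcal{D}\subset\sigma(\mathcal{C})$ directly, and $\mathcal{C}\subset\sigma(\mathcal{D})$ via $\mathcal{C}\subset\mathcal{D}\subset\sigma(\mathcal{D})$; hence (iv) follows from (iii). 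Equivalently, combine (i) applied to $\mathcal{C}\subset\mathcal{D}$ with (ii) applied to $\mathcal{D}\subset\sigma(\mathcal{C})$.

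There is no real obstacle here: the statement is elementary and the entire content is bookkeeping with the minimality property. The only point requiring a little care is the order of presentation, so that each item uses only minimality and the items already established; I would present it as (ii) $\Rightarrow$ (i) $\Rightarrow$ (iii) $\Rightarrow$ (iv).
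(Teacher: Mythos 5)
Your argument is correct and complete: each item follows from the minimality characterization of $\sigma(\mathcal{C})$ exactly as you describe, and the ordering (ii) $\Rightarrow$ (i) $\Rightarrow$ (iii) $\Rightarrow$ (iv) makes every step rely only on what has already been established. The paper itself states this proposition without proof (deferring to the cited reference), and your proof is the standard one that would be expected there, so there is nothing to reconcile.
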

The intersection of an arbitrary (even uncountable) class of $\sigma$-fields on $X$ is again a $\sigma$-field on $X$. This is not however true when considering union of $\sigma$-fields. Namely, except in some particular cases,  the union of two $\sigma$-fields $\mathcal{A}_1$, $\mathcal{A}_2$ on $X$ is not a $\sigma$-field. The $\sigma$-field generated by $\mathcal{A}_1\cup\mathcal{A}_2$ is denoted by $\mathcal{A}_1\vee \mathcal{A}_2$,
\begin{equation}
	\mathcal{A}_1\vee \mathcal{A}_2:=\sigma(\mathcal{A}_1\cup\mathcal{A}_2).
 \end{equation}
 In general, if $(\mathcal{A}_i)_{i\in I}$ is a family of $\sigma$-fields on $X$, where the index set $I$ might be even uncountable, then we define
 \begin{equation}
	\mathcal{A}_{I}:=\sigma\Bigl(\bigcup_{i\in I}\mathcal{A}_i\Bigr),
 \end{equation}
 the $\sigma$-field generated by $\bigcup_{i\in I}\mathcal{A}_i$. It is sometimes also denoted by $\bigvee_{i\in I}\mathcal{A}_i$. Note that the above definition of $\bigvee_{i\in I}\mathcal{A}_i$ is valid for a family $(\mathcal{A}_i)_{i\in I}$ of arbitrary classes of subsets of $X$ (i.e., some of $\mathcal{A}_i$'s do not have to be $\sigma$-fields on $X$). This follows from the following fact.
 \begin{prop}  
\label{sum_s_alg}
Let $(\mathcal{A}_i)_{i\in I}$ be a family of classes of subsets of $X$. Then
	\begin{equation}
		\sigma\Bigl(\bigcup_{i\in I}\mathcal{A}_i\Bigr)=\sigma\Bigl(\bigcup_{i\in I}\sigma(\mathcal{A}_i)\Bigr).
	\end{equation}
 \end{prop}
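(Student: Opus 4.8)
The plan is to establish the two inclusions separately, using only the monotonicity and idempotence properties of the generated-$\sigma$-field operator recorded in the Proposition immediately preceding the statement (parts (i)--(iv)), together with the fact that $\sigma(\mathcal{E})$ is the smallest $\sigma$-field containing $\mathcal{E}$. Write $\mathcal{C}=\bigcup_{i\in I}\mathcal{A}_i$ and $\mathcal{D}=\bigcup_{i\in I}\sigma(\mathcal{A}_i)$, so that the claim is $\sigma(\mathcal{C})=\sigma(\mathcal{D})$.

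For the inclusion $\sigma(\mathcal{C})\subseteq\sigma(\mathcal{D})$: for every $i\in I$ we have $\mathcal{A}_i\subseteq\sigma(\mathcal{A}_i)\subseteq\mathcal{D}$, and taking the union over $i$ gives $\mathcal{C}\subseteq\mathcal{D}$. By monotonicity of the operator $\sigma(\cdot)$ (part (i) of the preceding Proposition) this yields $\sigma(\mathcal{C})\subseteq\sigma(\mathcal{D})$.

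For the reverse inclusion $\sigma(\mathcal{D})\subseteq\sigma(\mathcal{C})$: fix $i\in I$. Then $\mathcal{A}_i\subseteq\mathcal{C}\subseteq\sigma(\mathcal{C})$, and since $\sigma(\mathcal{C})$ is a $\sigma$-field containing $\mathcal{A}_i$ while $\sigma(\mathcal{A}_i)$ is the smallest such $\sigma$-field, we get $\sigma(\mathcal{A}_i)\subseteq\sigma(\mathcal{C})$. As this holds for every $i\in I$, it follows that $\mathcal{D}=\bigcup_{i\in I}\sigma(\mathcal{A}_i)\subseteq\sigma(\mathcal{C})$; applying $\sigma(\cdot)$ and using that $\sigma(\sigma(\mathcal{C}))=\sigma(\mathcal{C})$ (idempotence, since $\sigma(\mathcal{C})$ is already a $\sigma$-field) gives $\sigma(\mathcal{D})\subseteq\sigma(\mathcal{C})$. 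Alternatively, this second step can be phrased in one line via part (ii) of the preceding Proposition, applied to the pair $\mathcal{D}$, $\mathcal{C}$, once one has checked $\mathcal{D}\subseteq\sigma(\mathcal{C})$.

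Combining the two inclusions gives the asserted equality. There is no real obstacle here — the only point requiring a moment's care is the direction that ``pulls the $\sigma$ inside the union,'' i.e.\ justifying $\sigma(\mathcal{A}_i)\subseteq\sigma(\mathcal{C})$ for each $i$ before unioning; one must be careful to invoke minimality of $\sigma(\mathcal{A}_i)$ at the level of a single index $i$ rather than trying to union first, since $\bigcup_i\sigma(\mathcal{A}_i)$ need not be a $\sigma$-field.
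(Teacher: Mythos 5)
Your proof is correct and follows essentially the same route as the paper's: one inclusion from $\mathcal{A}_i\subseteq\sigma(\mathcal{A}_i)$ plus monotonicity, the other from $\sigma(\mathcal{A}_i)\subseteq\sigma\bigl(\bigcup_{j\in I}\mathcal{A}_j\bigr)$ for each fixed $i$, then taking the union and applying $\sigma(\cdot)$ once more. Your closing remark about handling one index at a time (since the union of $\sigma$-fields need not be a $\sigma$-field) is exactly the point the paper's argument implicitly relies on.
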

 {\bf Proof.}  Since for all $i\in I$ we have $\mathcal{A}_i\subset\sigma(\mathcal{A}_i)$, we obtain that $\displaystyle{\bigcup_{i\in I}\mathcal{A}_i\subset\bigcup_{i\in I}\sigma(\mathcal{A}_i)}$ and hence
 \begin{equation}
 \label{prop_inf_sum_1}
	\sigma\Bigl(\bigcup_{i\in I}\mathcal{A}_i\Bigr)\subset\sigma\Bigl(\bigcup_{i\in I}\sigma(\mathcal{A}_i)\Bigr).
 \end{equation}
 On the other hand,  $\displaystyle{\mathcal{A}_j\subset\bigcup_{i\in I}\mathcal{A}_i}$ for all $j\in I$ and therefore $\displaystyle{\sigma(\mathcal{A}_j)\subset\sigma\Bigl(\bigcup_{i\in I}\mathcal{A}_i\Bigr)}$ for all $j\in I$. Hence, $\displaystyle{\bigcup_{i\in I}\sigma(\mathcal{A}_i)\subset\sigma \Bigl(\bigcup_{i\in I}\mathcal{A}_i\Bigr)}$ and
 \begin{equation}
 \label{prop_inf_sum_2}
	\sigma\Bigl(\bigcup_{i\in I}\sigma(\mathcal{A}_i)\Bigr)\subset \sigma \Bigl(\bigcup_{i\in I}\mathcal{A}_i\Bigr).
 \end{equation}
 Combining \eqref{prop_inf_sum_1} and \eqref{prop_inf_sum_2} we get the thesis. \ \ \ $\blacksquare$ \\ \\
If $X$ is a topological space, then the $\sigma$-field generated by the collection $Top X$ of all open subsets of $X$ is called the {\it Borel $\sigma$-field} on $X$ and it is denoted by $\mathcal{B}(X)$ (i.e., $\mathcal{B}(X)=\sigma(Top X)$). Its elements are called {\it Borel sets}.

Let $(X_1,\mathcal{A}_1)$ and $(X_2,\mathcal{A}_2)$ be two measurable spaces. A function $f:X_1\to X_2$ is called $\mathcal{A}_1$-to-$\mathcal{A}_2$ measurable (or $\mathcal{A}_1 / \mathcal{A}_2$-measurable) if
\begin{equation}
	f^{-1}(B)\in \mathcal{A}_1 \ \hbox{for all} \ B\in\mathcal{A}_2,
\end{equation}
where $\displaystyle{f^{-1}(B)=\{x\in X_1 \ | \ f(x)\in B}\}$. If $A\in\mathcal{A}_1$ then a function $f:A\to X_2$ is $\mathcal{A}_1 / \mathcal{A}_2$-measurable if $f^{-1}(B)\in\mathcal{A}_1$ for any $B\in\mathcal{A}_2$. Moreover, we define
\begin{equation}
	\sigma(f)=\{f^{-1}(B) \ | \ B\in\mathcal{A}_2\}.
\end{equation}
It turns out that $\sigma(f)$ is a $\sigma$-field on $X_1$ and it is the smallest $\sigma$-field on $X_1$ such that $f$ is measurable relative to it and $\mathcal{A}_2$ (i.e., $\sigma(f) / \mathcal{A}_2$-measurable). We call $\sigma(f)$ the $\sigma$-field generated by $f$. It is easy to check that $f$ is $\mathcal{A}_1 / \mathcal{A}_2$-measurable iff $\sigma(f)\subset\mathcal{A}_1$. Moreover, if $(X_3,\mathcal{A}_3)$ is a measurable space then for any two functions $f_1:X_1\to X_2$ and $f_2:X_2\to X_3$, where $f_2$ is $\mathcal{A}_2 / \mathcal{A}_3$-measurable,  we have $\sigma(f_2\circ  f_1)\subset\sigma(f_1)$. This implies the following fact.
%
\begin{prop} 
\label{meas_comp_f}
 Let $(X_i,\mathcal{A}_i)$, $i=1,2,3$, be measurable spaces, let $f_1:X_1\to X_2$ be $\mathcal{A}_1 /\mathcal{A}_2$-measurable, while $f_2:X_2\to X_3$ is assumed to be $\mathcal{A}_2/\mathcal{A}_3$-measurable. The the composition
\begin{equation}
	f_2\circ f_1:X_1\to X_3
\end{equation} 
is $\mathcal{A}_1 / \mathcal{A}_3$-measurable.
\end{prop}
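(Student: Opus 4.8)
The plan is to reduce the whole statement to the elementary set-theoretic identity for preimages of a composition, namely
\begin{equation}
	(f_2\circ f_1)^{-1}(B)=f_1^{-1}\bigl(f_2^{-1}(B)\bigr)\quad\text{for every } B\subset X_3.
\end{equation}
This identity is immediate from the definitions, since $x\in (f_2\circ f_1)^{-1}(B)$ iff $f_2(f_1(x))\in B$ iff $f_1(x)\in f_2^{-1}(B)$ iff $x\in f_1^{-1}\bigl(f_2^{-1}(B)\bigr)$. Once it is in hand, measurability of the composition follows by chasing a single set $B\in\mathcal{A}_3$ through the two hypotheses.

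Concretely, I would fix an arbitrary $B\in\mathcal{A}_3$. Since $f_2$ is $\mathcal{A}_2/\mathcal{A}_3$-measurable, the set $f_2^{-1}(B)$ belongs to $\mathcal{A}_2$. Applying the $\mathcal{A}_1/\mathcal{A}_2$-measurability of $f_1$ to this set yields $f_1^{-1}\bigl(f_2^{-1}(B)\bigr)\in\mathcal{A}_1$. Combining this with the preimage identity gives $(f_2\circ f_1)^{-1}(B)=f_1^{-1}\bigl(f_2^{-1}(B)\bigr)\in\mathcal{A}_1$. As $B$ was arbitrary in $\mathcal{A}_3$, this is precisely the definition of $\mathcal{A}_1/\mathcal{A}_3$-measurability of $f_2\circ f_1$.

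Alternatively, and even more briefly, one can simply invoke the remark recorded just before the statement: for measurable $f_1,f_2$ one has $\sigma(f_2\circ f_1)\subset\sigma(f_1)$, and since $f_1$ is $\mathcal{A}_1/\mathcal{A}_2$-measurable one has $\sigma(f_1)\subset\mathcal{A}_1$; chaining the two inclusions gives $\sigma(f_2\circ f_1)\subset\mathcal{A}_1$, which is the measurability claim. There is no genuine obstacle here: the only point requiring any care is the verification of the preimage identity and the observation that no structure on the intermediate space $(X_2,\mathcal{A}_2)$ beyond its being a measurable space is used.
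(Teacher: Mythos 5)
Your proof is correct and matches the paper's argument: the paper derives this proposition directly from the preceding remark that $\sigma(f_2\circ f_1)\subset\sigma(f_1)$ together with $\sigma(f_1)\subset\mathcal{A}_1$, which is exactly your alternative route, and your direct preimage-chasing argument is just the same reasoning written out explicitly. No gap.
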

\begin{prop}Let $(X,\mathcal{A})$ be a measurable space, and let $A\subset X$. Then the function
\begin{equation}
	X\ni x \to \mathbf{1}_A(x)\in\{0,1\}
\end{equation}
is $\mathcal{A} / \mathcal{B}(\mathbb{R})$-measurable iff $A\in\mathcal{A}$.
\end{prop}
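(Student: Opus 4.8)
The plan is to prove both implications directly from the definition of measurability, exploiting the fact that $\mathbf{1}_A$ takes only the two values $0$ and $1$, so that only finitely many preimages need to be examined.

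For the ``only if'' direction, I would assume $\mathbf{1}_A$ is $\mathcal{A}/\mathcal{B}(\mathbb{R})$-measurable. The singleton $\{1\}$ is closed in $\mathbb{R}$, hence a Borel set, so $\mathbf{1}_A^{-1}(\{1\})\in\mathcal{A}$ by measurability. But $\mathbf{1}_A^{-1}(\{1\})=\{x\in X \ | \ \mathbf{1}_A(x)=1\}=A$, and therefore $A\in\mathcal{A}$.

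For the ``if'' direction, I would assume $A\in\mathcal{A}$ and fix an arbitrary $B\in\mathcal{B}(\mathbb{R})$. Since $\mathbf{1}_A(X)\subset\{0,1\}$, we have $\mathbf{1}_A^{-1}(B)=\mathbf{1}_A^{-1}(B\cap\{0,1\})$, and $B\cap\{0,1\}$ is one of the four sets $\emptyset$, $\{0\}$, $\{1\}$, $\{0,1\}$; correspondingly $\mathbf{1}_A^{-1}(B)$ equals $\emptyset$, $X\setminus A$, $A$, or $X$. Each of these lies in $\mathcal{A}$: $\emptyset$ and $X$ by the axioms of a $\sigma$-field, $A$ by hypothesis, and $X\setminus A$ because $\mathcal{A}$ is closed under complementation. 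Hence $\mathbf{1}_A^{-1}(B)\in\mathcal{A}$ for every Borel $B$, i.e., $\mathbf{1}_A$ is $\mathcal{A}/\mathcal{B}(\mathbb{R})$-measurable.

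There is no genuine obstacle here; the only points needing a moment of care are the reduction $\mathbf{1}_A^{-1}(B)=\mathbf{1}_A^{-1}(B\cap\{0,1\})$, which renders the case analysis finite, and the remark that $\{1\}$ is a Borel set (being closed). One could alternatively shorten the ``if'' direction by checking preimages only for a generating family such as $\{(a,+\infty) \ | \ a\in\mathbb{R}\}$, but the four-case argument is already immediate and I would keep it.
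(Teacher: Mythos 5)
Your proof is correct and complete: the ``only if'' direction via $\mathbf{1}_A^{-1}(\{1\})=A$ and the ``if'' direction via the reduction $\mathbf{1}_A^{-1}(B)=\mathbf{1}_A^{-1}(B\cap\{0,1\})$, which collapses everything into the four cases $\emptyset$, $X\setminus A$, $A$, $X$, all of which lie in $\mathcal{A}$. The paper states this proposition among the auxiliary results without supplying a proof, so there is nothing to compare against; your argument is the standard one and fills that omission correctly.
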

In the case when $X_2$ is topological space, equipped with the Borel $\sigma$-field $\mathcal{A}_2=\mathcal{B}(Top X_2)$, we refer to $\mathcal{A}_1 / \mathcal{B}(X_2)$-measurable function $f:X_1\to X_2$ as {\it Borel function}. 

Below we give important examples of Borel functions.
\begin{prop} 
\label{meas_fun_examples}
\begin{itemize}
	\item [(i)] If $X_1$ and $X_2$ are topological spaces, and if $f:X_1\to X_2$ is continuous function then it is Borel measurable, that is $\mathcal{B}(X_1) / \mathcal{B}(X_2)$-measurable. 
	\item [(ii)] Every right (left)-continuous function $f:\mathbb{R}\to\mathbb{R}$ is Borel. Hence, every c\`adl\`ag (c\`agl\`ad) function is Borel measurable.
	\item [(iii)] If $f:[a,b]\to\mathbb{R}$ is a bounded variation function then $f$ is Borel measurable. 
	\item [(iv)] Let $I$ be a subinterval of $\mathbb{R}$, and let $f:I\to\mathbb{R}$ be a monotone function. Then $f$ is Borel measurable.
	\item [(v)] If $f:\mathbb{R}^d\to\mathbb{R}$ is a convex function then it is continuous and hence, Borel measurable.
\end{itemize}
\end{prop}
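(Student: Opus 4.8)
The plan is to prove the five assertions in the order $(i)$, $(iv)$, $(ii)$, $(iii)$, $(v)$, because $(iii)$ will use $(iv)$ and $(v)$ will use $(i)$. Throughout I will use the following elementary device: for a map $f:X_1\to X_2$ between measurable spaces $(X_1,\mathcal{A}_1)$, $(X_2,\mathcal{A}_2)$, the class $\mathcal{D}=\{B\subseteq X_2\ |\ f^{-1}(B)\in\mathcal{A}_1\}$ is a $\sigma$-field on $X_2$ (since $f^{-1}$ commutes with complementation and with countable unions and intersections, exactly as in conditions $(i)$--$(iv)$ of the definition of $\sigma$-field), so $f$ is $\mathcal{A}_1/\mathcal{A}_2$-measurable as soon as $f^{-1}(C)\in\mathcal{A}_1$ for every $C$ in some class $\mathcal{C}$ with $\sigma(\mathcal{C})=\mathcal{A}_2$. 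I will also use freely that a pointwise limit of real Borel functions is Borel and that sums and differences of real Borel functions are Borel. For $(i)$: if $f:X_1\to X_2$ is continuous then $f^{-1}(U)\in Top X_1\subseteq\mathcal{B}(X_1)$ for every $U\in Top X_2$, hence $Top X_2\subseteq\mathcal{D}$, hence $\mathcal{B}(X_2)=\sigma(Top X_2)\subseteq\mathcal{D}$, and $f$ is Borel.

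For $(iv)$, assume without loss of generality that $f:I\to\mathbb{R}$ is nondecreasing (replace $f$ by $-f$ otherwise). For each $c\in\mathbb{R}$ the set $f^{-1}((-\infty,c))=\{x\in I\ |\ f(x)<c\}$ is a subinterval of $I$: if $f(x)<c$, $y\in I$ and $y\le x$, then $f(y)\le f(x)<c$. Subintervals of $I$ lie in $\mathcal{B}(I)$, and $\{(-\infty,c)\ |\ c\in\mathbb{R}\}$ generates $\mathcal{B}(\mathbb{R})$, so the device above gives measurability of $f$. For $(ii)$, let $f:\mathbb{R}\to\mathbb{R}$ be right-continuous and for $n\in\mathbb{N}$ put $f_n(x)=f\bigl((\lfloor nx\rfloor+1)/n\bigr)$. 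Each $f_n$ is constant on every interval $[k/n,(k+1)/n)$, $k\in\mathbb{Z}$, hence equals the countable sum $\sum_{k\in\mathbb{Z}}f((k+1)/n)\,\mathbf{1}_{[k/n,(k+1)/n)}$ and is Borel. Since $x<(\lfloor nx\rfloor+1)/n\le x+1/n$, right-continuity yields $f_n(x)\to f(x)$ for every $x\in\mathbb{R}$, so $f$ is Borel as a pointwise limit. For left-continuous $f$ one replaces the grid point by $\lfloor nx\rfloor/n\le x$, which increases to $x$ from the left. Finally a c\`adl\`ag (resp. c\`agl\`ad) function is in particular right-continuous (resp. left-continuous), so it is covered.

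For $(iii)$, I would invoke the Jordan decomposition of a bounded-variation function. If $f:[a,b]\to\mathbb{R}$ has bounded variation, set $g(x)=V_a^x(f)$, the total variation of $f$ on $[a,x]$, with $g(a)=0$. Then $g$ is nondecreasing by subadditivity of the total variation over adjacent subintervals, and $g-f$ is nondecreasing because for $a\le x\le y\le b$ one has $(g-f)(y)-(g-f)(x)=V_x^y(f)-(f(y)-f(x))\ge|f(y)-f(x)|-(f(y)-f(x))\ge 0$. By $(iv)$ both $g$ and $g-f$ are Borel, hence so is $f=g-(g-f)$. For $(v)$, I would first recall the classical fact that a real-valued convex function $f:\mathbb{R}^d\to\mathbb{R}$, finite everywhere, is continuous: on any closed cube $Q$ convexity gives $f\le\max\{f(v)\ |\ v\ \hbox{a vertex of}\ Q\}$, so $f$ is locally bounded above, and a standard reflection-and-Jensen argument upgrades this to a local Lipschitz bound, in particular to continuity. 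Then $(i)$, applied with $X_1=\mathbb{R}^d$ and $X_2=\mathbb{R}$, yields that $f$ is Borel.

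The only genuine obstacles are the two classical inputs the excerpt does not itself supply: the Jordan decomposition used in $(iii)$ (which is, however, immediate once one defines the variation function $g$) and the continuity of everywhere-finite convex functions used in $(v)$; the latter deserves at least the sketch above or a pointer to a standard reference. Everything else reduces to the ``$\sigma$-field of good sets'' device, to stability of the Borel class under pointwise limits and arithmetic operations, and to the fact that intervals and step functions are Borel.
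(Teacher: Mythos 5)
Your proof is correct. Note that the paper itself offers no proof of this proposition: it appears in the auxiliary-results chapter as a list of recalled standard facts, with the reader directed to textbooks such as \cite{cohn} and \cite{Cinlar}. So there is no argument in the paper to compare against; your write-up simply supplies a self-contained proof where the paper gives a citation. All five parts check out: the ``good sets'' device for (i) and (iv), the observation that a downward-closed subset of an interval is an interval, the step-function approximation from the right (resp.\ left) with pointwise convergence via one-sided continuity for (ii), the Jordan decomposition $f = V_a^{\cdot}(f) - (V_a^{\cdot}(f)-f)$ into a difference of nondecreasing functions for (iii), and the reduction of (v) to (i) via the classical continuity of everywhere-finite convex functions. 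The only inputs you take on faith — additivity of total variation over adjacent subintervals and the local Lipschitz property of finite convex functions — are standard and correctly flagged as such; given that the paper treats the entire proposition as citable background, leaving these at the level of a sketch with a pointer to a reference is entirely appropriate.
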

Let us now assume that $(X_2,\|\cdot\|)$ is a real or complex Banach space and let $\mathcal{A}_2=\mathcal{B}(X_2)=\sigma(Top X_2)$. Then we have another notion of measurability. 
 A function $f:X_1\to X_2$ is {\it strongly measurable} (or {\it Bochner measurable}) if it is Borel measurable and has separable range (i.e., $f(X_1)\subset X_2$ is separable). If $X_2$ is separable, then every $X_2$-valued Borel measurable function is strongly measurable. (It follows from the fact that a subset of separable metric space is itself separable.) Moreover, if $f:X_1\to X_2$ is Borel measurable, then 
	\begin{equation}
		X_1 \ni x\to \|f(x)\|\in [0,+\infty)
	\end{equation}
	is $\mathcal{A}_1 / \mathcal{B}(\mathbb{R})$-measurable. Moreover, we have the following important result, see \cite{cohn}.
\begin{thm} Let $(X_1,\mathcal{A}_1)$ be a measurable space, and let $X_2$ be a real or complex Banach space. Then the set of all strongly measurable functions from $X_1$ to $X_2$ is a vector space.
\end{thm}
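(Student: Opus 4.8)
The plan is to verify that the family $S$ of strongly measurable maps $X_1 \to X_2$ is a linear subspace of the vector space of all $X_2$-valued functions on $X_1$; that is, that $S$ contains the zero map and is closed under scalar multiplication and under addition. The zero map is dealt with immediately: it is constant, so its range is a singleton (in particular separable) and every preimage is $\emptyset$ or $X_1$. For a scalar $\alpha$ and $f \in S$, I would write $\alpha f = m_\alpha \circ f$, where $m_\alpha(y) = \alpha y$; this map is continuous on $X_2$, hence Borel by Proposition \ref{meas_fun_examples}(i), so $\alpha f$ is $\mathcal{A}_1/\mathcal{B}(X_2)$-measurable by Proposition \ref{meas_comp_f}, and its range $m_\alpha(f(X_1))$ is the continuous image of a separable set, hence separable. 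So closure under scalar multiplication is routine.

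The substantive point is closure under addition, and this is where the main obstacle sits. The map $(u,v) \mapsto u+v$ is continuous from $X_2 \times X_2$ to $X_2$, but continuity only furnishes measurability relative to the Borel $\sigma$-field $\mathcal{B}(X_2 \times X_2)$, whereas the map $(f,g)$ is readily seen to be measurable only relative to the product $\sigma$-field $\mathcal{B}(X_2) \otimes \mathcal{B}(X_2)$, and these two $\sigma$-fields can genuinely differ when $X_2$ is non-separable. My plan to get around this is to pass to a separable closed subspace. Given $f, g \in S$, I would let $Y$ be the closed linear span of $f(X_1) \cup g(X_1)$. Since $f(X_1)$ and $g(X_1)$ are separable, their union has a countable dense subset $D$, and the countable collection of finite linear combinations of elements of $D$ with coefficients in $\mathbb{Q}$ (respectively in $\mathbb{Q} + i\mathbb{Q}$ in the complex case) is dense in $Y$; hence $Y$ is a separable Banach space, and both $f$ and $g$ take their values in $Y$.

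The remaining steps are then a composition argument carried out inside $Y$. First, using the standard fact that the Borel $\sigma$-field of the subspace $Y$ equals the trace $\sigma$-field $\{B \cap Y : B \in \mathcal{B}(X_2)\}$, and that $f$ has range in $Y$, one gets $f^{-1}(B \cap Y) = f^{-1}(B) \in \mathcal{A}_1$ for every $B \in \mathcal{B}(X_2)$, so $f$ (and likewise $g$) is $\mathcal{A}_1/\mathcal{B}(Y)$-measurable. Next, the map $h = (f,g)\colon X_1 \to Y \times Y$ satisfies $h^{-1}(A \times B) = f^{-1}(A) \cap g^{-1}(B) \in \mathcal{A}_1$ for all $A, B \in \mathcal{B}(Y)$, hence $h$ is $\mathcal{A}_1/(\mathcal{B}(Y) \otimes \mathcal{B}(Y))$-measurable; and since $Y$ is a separable metric space, $\mathcal{B}(Y \times Y) = \mathcal{B}(Y) \otimes \mathcal{B}(Y)$, so $h$ is $\mathcal{A}_1/\mathcal{B}(Y \times Y)$-measurable. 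Composing with the addition map $Y \times Y \to Y$, which is continuous and hence $\mathcal{B}(Y \times Y)/\mathcal{B}(Y)$-measurable by Proposition \ref{meas_fun_examples}(i), and applying Proposition \ref{meas_comp_f}, I obtain that $f + g$ is $\mathcal{A}_1/\mathcal{B}(Y)$-measurable.

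Finally, the trace-$\sigma$-field identity again gives $(f+g)^{-1}(B) = (f+g)^{-1}(B \cap Y) \in \mathcal{A}_1$ for every $B \in \mathcal{B}(X_2)$, so $f+g$ is $\mathcal{A}_1/\mathcal{B}(X_2)$-measurable, and its range lies in the separable space $Y$; thus $f + g \in S$, and $S$ is a vector space. I expect the only real difficulty to be the product-$\sigma$-field mismatch described above; once it is handled by descending to the separable closed linear span generated by the ranges, everything else (separability of closed linear spans of separable sets, the identification of the Borel $\sigma$-field of a subspace with the trace $\sigma$-field, and $\mathcal{B}(Y \times Y) = \mathcal{B}(Y) \otimes \mathcal{B}(Y)$ for separable metric $Y$) is standard and can either be cited or checked in a line or two. $\blacksquare$
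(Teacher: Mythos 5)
Your proof is correct. The paper does not actually prove this theorem—it only states it and cites Cohn's \emph{Measure Theory}—so there is no in-text argument to compare against; your proof (reduce to the separable closed linear span $Y$ of the two ranges, use the trace description of $\mathcal{B}(Y)$ together with $\mathcal{B}(Y\times Y)=\mathcal{B}(Y)\otimes\mathcal{B}(Y)$ for separable metric $Y$, then compose with the continuous addition map) is exactly the standard proof from that reference, and you have correctly identified and handled the one genuine obstacle, namely the possible gap between $\mathcal{B}(X_2\times X_2)$ and $\mathcal{B}(X_2)\otimes\mathcal{B}(X_2)$ when $X_2$ is non-separable.
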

We want to stress here that the set of all Borel measurable functions from $X_1$ to $X_2$ can fail to be a vector space, see page 403 in \cite{cohn}. Therefore, separability of the range in the definition of strongly measurable function is crucial.

Let $(X,\mathcal{A},\mu)$ be a measure space and let $p\in [1,+\infty)$. Then we define
\begin{equation}
	L^p(X,\mathcal{A},\mu;\mathbb{R}^d)=\Bigl\{f:X\to\mathbb{R}^d \ \Bigl| \ f-\hbox{Borel function,} \int\limits_X \|f(x)\|_2^p\ \mu(dx)<+\infty  \Bigr\}.
\end{equation}
If we identify functions which are equal $\mu$-almost everywhere, then $L^p(X,\mathcal{A},\mu;\mathbb{R}^d)$ is a Banach space with norm
\begin{equation}
	\|f\|_{L^p(X;\mathbb{R}^d)}=\Bigl(\int\limits_X \|f(x)\|_2^p\ \mu(dx)\Bigr)^{1/p}.
\end{equation}
For the case $d=1$  we simply write $L^p(X)$.

The proof of the following result can be found in \cite{cohn}.
\begin{thm} (Change of measure in Lebesgue integral)
\label{COM} 
	Let $(X,\mathcal{A},\mu)$ be a measure space, let $(Y,\mathcal{B})$ be a measurable space, and let $f:(X,\mathcal{A})\to (Y,\mathcal{B})$ be measurable. Let $g$ be an extended real-valued $\mathcal{B}$-measurable function on $Y$. Then $g$ is $\mu f^{-1}$-integrable if and only if  $g\circ f$ is $\mu$-integrable. If these functions are integrable, then
	\begin{equation}
		\int\limits_Y g d(\mu f^{-1})=\int\limits_X (g\circ f) d\mu.
	\end{equation}
\end{thm}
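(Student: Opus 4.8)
The plan is to run the standard measure-theoretic approximation argument (the ``standard machine''), building the identity up from indicator functions. First I would introduce the image measure $\nu := \mu f^{-1}$ on $(Y,\mathcal{B})$, defined by $\nu(B) = \mu(f^{-1}(B))$ for $B\in\mathcal{B}$, and verify it is indeed a measure: $\nu(\emptyset)=\mu(\emptyset)=0$, and if $(B_n)_{n\in\mathbb{N}}\subset\mathcal{B}$ are pairwise disjoint, then the preimages $f^{-1}(B_n)$ are pairwise disjoint and $f^{-1}\bigl(\bigcup_n B_n\bigr)=\bigcup_n f^{-1}(B_n)$, so countable additivity of $\nu$ follows from that of $\mu$. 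I would also record that $g\circ f$ is $\mathcal{A}/\mathcal{B}(\mathbb{R})$-measurable by Proposition \ref{meas_comp_f}, so that the right-hand integral is meaningful.

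Next, for $g=\mathbf{1}_B$ with $B\in\mathcal{B}$ I would check the identity directly: $\int_Y \mathbf{1}_B\, d\nu = \nu(B)=\mu(f^{-1}(B))$, while $\int_X (\mathbf{1}_B\circ f)\, d\mu = \int_X \mathbf{1}_{f^{-1}(B)}\, d\mu = \mu(f^{-1}(B))$, using the pointwise identity $\mathbf{1}_B(f(x))=\mathbf{1}_{f^{-1}(B)}(x)$. By linearity of the Lebesgue integral this extends to every nonnegative $\mathcal{B}$-simple function $g=\sum_{i=1}^m a_i\mathbf{1}_{B_i}$, since then $g\circ f = \sum_{i=1}^m a_i\mathbf{1}_{f^{-1}(B_i)}$ is $\mathcal{A}$-simple.

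Then, for an arbitrary nonnegative $\mathcal{B}$-measurable $g$ (allowing the value $+\infty$), I would pick an increasing sequence of nonnegative $\mathcal{B}$-simple functions $g_k\uparrow g$. Composition with $f$ preserves monotone convergence, so $g_k\circ f\uparrow g\circ f$ pointwise on $X$. Applying the Monotone Convergence Theorem on both $(Y,\nu)$ and $(X,\mu)$, and using the equality already established for each $g_k$, I obtain $\int_Y g\, d\nu = \lim_k \int_Y g_k\, d\nu = \lim_k \int_X (g_k\circ f)\, d\mu = \int_X (g\circ f)\, d\mu$. Taking $g=|g_0|$ for a general $\mathcal{B}$-measurable $g_0$ yields $\int_Y |g_0|\, d\nu = \int_X |g_0\circ f|\, d\mu$, which is precisely the asserted integrability equivalence.

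Finally, for a general extended-real-valued $\mathcal{B}$-measurable $g$, I would decompose $g=g^+ - g^-$ and observe that $(g\circ f)^+ = g^+\circ f$ and $(g\circ f)^- = g^-\circ f$. By the previous step, $\int_Y g^{\pm}\, d\nu = \int_X (g^{\pm}\circ f)\, d\mu$; these are simultaneously finite, which is exactly the integrability equivalence, and when they are finite, subtracting gives $\int_Y g\, d\nu = \int_X (g\circ f)\, d\mu$. There is no genuine obstacle here: the argument is entirely routine, and the only points that need a little care are checking that $\mu f^{-1}$ is a measure and confirming that taking positive and negative parts commutes with precomposition by $f$, so that the $\pm$ bookkeeping in the last step goes through cleanly.
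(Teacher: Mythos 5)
Your argument is correct and complete; the paper itself gives no proof of Theorem \ref{COM}, only a citation to Cohn's \emph{Measure Theory}, and the standard-machine argument you outline (indicators, simple functions, monotone convergence for nonnegative $g$, then $g=g^+-g^-$ together with the identity $(g\circ f)^{\pm}=g^{\pm}\circ f$) is precisely the textbook proof referenced there. The integrability equivalence via $\int_Y |g|\,d(\mu f^{-1})=\int_X |g\circ f|\,d\mu$ is handled correctly, so there is nothing to add.
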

For the proof of the mean value theorem we refer to, for example, page 167 in \cite{MAKPOD}.
\begin{thm} (Mean value theorem)
\label{MVT}
	Let $E\subset\mathbb{R}^d$ be a connected set with $\lambda_d(E)<+\infty$. If $f:E\to\mathbb{R}$ is continuous and bounded, then there exists $c\in E$ such that
	\begin{equation}
		\int\limits_E f(x)\ \lambda_d(dx)=f(c)\cdot \lambda_d(E).
	\end{equation}
\end{thm}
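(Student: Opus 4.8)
The plan is to combine the order properties of the Lebesgue integral with the topological fact that a continuous image of a connected set is connected, hence an interval of $\mathbb{R}$. We may assume $E\neq\emptyset$ and $\lambda_d(E)>0$, since if $\lambda_d(E)=0$ the left-hand side vanishes and any $c\in E$ works. Note also that, being continuous on the (Lebesgue measurable) set $E$, $f$ is Borel measurable there, so $\int_E f\,d\lambda_d$ is well defined, and it is finite because $f$ is bounded and $\lambda_d(E)<+\infty$.

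First I would set $m=\inf_{x\in E}f(x)$ and $M=\sup_{x\in E}f(x)$; both are finite real numbers with $m\le M$ since $f$ is bounded. From $m\le f(x)\le M$ for all $x\in E$ and monotonicity of the integral one obtains
\begin{equation}
    m\,\lambda_d(E)\le\int\limits_E f(x)\,\lambda_d(dx)\le M\,\lambda_d(E),
\end{equation}
so the average value
\begin{equation}
    \mu:=\frac{1}{\lambda_d(E)}\int\limits_E f(x)\,\lambda_d(dx)
\end{equation}
satisfies $m\le\mu\le M$. It then remains to show that $\mu$ is attained by $f$, i.e. $\mu=f(c)$ for some $c\in E$; the identity in the statement is just this with both sides multiplied by $\lambda_d(E)$.

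Here connectedness enters: since $f$ is continuous and $E$ is connected, $f(E)$ is a connected subset of $\mathbb{R}$, that is, an interval $I$ with $\inf I=m$ and $\sup I=M$. If $m<\mu<M$, then $\mu\in(m,M)\subseteq I=f(E)$ and we are done. The delicate part is the two boundary cases, because $I$ need not be closed, so $m$ and $M$ need not a priori lie in $f(E)$, and a bare appeal to the intermediate value theorem does not suffice. Suppose $\mu=M$; then $\int_E(M-f)\,d\lambda_d=0$ with $M-f\ge 0$ on $E$, hence $M-f=0$ $\lambda_d$-almost everywhere on $E$, and since $\lambda_d(E)>0$ the set $\{x\in E:f(x)=M\}$ has positive measure, in particular is nonempty, yielding $c$ with $f(c)=M=\mu$. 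The case $\mu=m$ is symmetric, applying the same reasoning to $f-m\ge 0$. These cases exhaust all possibilities. The main obstacle is exactly the handling of these endpoint cases; it is overcome by exploiting the strict positivity of $\lambda_d(E)$ rather than any topological property of $f(E)$.
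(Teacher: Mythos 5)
The paper does not prove Theorem \ref{MVT}; it only cites page 167 of \cite{MAKPOD}, so there is no in-text argument to compare against. Your proof is correct and self-contained: the reduction to showing that the average value $\mu\in[m,M]$ lies in the interval $f(E)$ is the standard route, and you correctly identify and close the only delicate point, namely that $f(E)$ need not contain its endpoints, by observing that $\mu=M$ forces $\int_E(M-f)\,d\lambda_d=0$ with $M-f\ge 0$, hence $f=M$ on a set of positive measure (and symmetrically for $\mu=m$). The degenerate cases $\lambda_d(E)=0$ and the implicit measurability of $E$ (needed even to state the theorem) are also handled appropriately, so I see no gap.
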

We also recall fundamental integral inequalities, see, for example, \cite{KALLEN}.
\begin{thm} (H\"older and Minkowski inequalities)
\label{HM_ineq}
	For any Borel measurable real-valued functions $f$ and $g$ on some measure space $(X,\mathcal{A},\mu)$, we have
	\begin{itemize}
		\item [(i)] for all $p,q,r>0$ with $p^{-1}+q^{-1}=r^{-1}$ that it holds
		\begin{equation}
			\Bigl(\int\limits_X |f(x)g(x)|^r\mu(dx)\Bigr)^{1/r}\leq \Bigl(\int\limits_X |f(x)|^p\mu(dx)\Bigr)^{1/p}\cdot\Bigl(\int\limits_X |g(x)|^q\mu(dx)\Bigr)^{1/q}, 
		\end{equation}
		\item [(ii)] for all $p>0$
		\begin{equation}
			\Bigl(\int\limits_X|f(x)+g(x)|^p\mu(dx)\Bigr)^{\min\{p,1\}/p}\leq\Bigl(\int\limits_X|f(x)|^p\mu(dx)\Bigr)^{\min\{p,1\}/p}+\Bigl(\int\limits_X|g(x)|^p\mu(dx)\Bigr)^{\min\{p,1\}/p}.
		\end{equation}
	\end{itemize}
\end{thm}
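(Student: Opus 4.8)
The plan is to derive everything from one elementary convexity fact, namely that $\exp$ is convex on $\mathbb{R}$; applied to two points this gives Young's inequality $ab \le \alpha^{-1}a^{\alpha} + \beta^{-1}b^{\beta}$ for $a,b \ge 0$ and any conjugate pair $\alpha,\beta > 1$ with $\alpha^{-1}+\beta^{-1}=1$ (write $ab = \exp(\alpha^{-1}\log a^{\alpha} + \beta^{-1}\log b^{\beta})$ and use Jensen at two points). From Young's inequality I would first obtain Hölder's inequality in the special case $r=1$, then bootstrap to general $r$, and finally deduce Minkowski's inequality from Hölder when $p\ge 1$ and from a separate elementary estimate when $0<p<1$.

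For Hölder with $r=1$ (conjugate exponents $p,q$): if $\|f\|_{L^p}=0$ or $\|g\|_{L^q}=0$ then $fg=0$ $\mu$-a.e. and both sides vanish, while if either norm is $+\infty$ the right-hand side is $+\infty$; so assume both norms are finite and strictly positive, and by homogeneity rescale so that $\|f\|_{L^p}=\|g\|_{L^q}=1$. Applying Young's inequality pointwise with $a=|f(x)|$, $b=|g(x)|$, $\alpha=p$, $\beta=q$ and integrating gives $\int_X |fg|\,d\mu \le p^{-1}+q^{-1}=1=\|f\|_{L^p}\|g\|_{L^q}$. Then, for general $r>0$ with $p^{-1}+q^{-1}=r^{-1}$, I would apply the case just proved to the functions $|f|^r$ and $|g|^r$ with the pair $p/r$, $q/r$, which are conjugate since $r(p^{-1}+q^{-1})=1$; this yields $\int_X |f|^r|g|^r\,d\mu \le \bigl(\int_X |f|^p\,d\mu\bigr)^{r/p}\bigl(\int_X |g|^q\,d\mu\bigr)^{r/q}$, and taking $r$-th roots gives (i).

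For Minkowski's inequality I would split on $p$. When $p\ge 1$: after discarding the trivial cases and using that $f+g$ is a.e.\ finite, write $|f+g|^p \le |f+g|^{p-1}|f| + |f+g|^{p-1}|g|$, integrate, and bound each term by the already-established Hölder inequality with exponents $p/(p-1)$ and $p$, noting $\bigl\||f+g|^{p-1}\bigr\|_{L^{p/(p-1)}} = \bigl(\int_X |f+g|^p\,d\mu\bigr)^{(p-1)/p}$; dividing through gives $\bigl(\int_X |f+g|^p\,d\mu\bigr)^{1/p} \le \bigl(\int_X |f|^p\,d\mu\bigr)^{1/p} + \bigl(\int_X |g|^p\,d\mu\bigr)^{1/p}$, which is exactly the asserted bound since $\min\{p,1\}/p = 1/p$ in this range. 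When $0<p<1$: I would use the elementary pointwise estimate $(s+t)^p \le s^p+t^p$ for $s,t\ge 0$, valid because $u\mapsto u^p$ is subadditive on $[0,\infty)$ for $p\le 1$, apply it with $s=|f(x)|$, $t=|g(x)|$, and integrate to get $\int_X |f+g|^p\,d\mu \le \int_X |f|^p\,d\mu + \int_X |g|^p\,d\mu$; since $\min\{p,1\}/p = 1$ for $p<1$, this is precisely (ii).

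The substantive content here is minimal — convexity of $\exp$ for Hölder, subadditivity of $u\mapsto u^p$ for the small-$p$ half of Minkowski — so there is no serious obstacle. The only place a careless argument would break is the bookkeeping of degenerate cases (integrals equal to $0$ or $+\infty$, and the a.e.-finiteness of $f+g$ that legitimizes the pointwise manipulations and the division steps), together with checking that the exponent $\min\{p,1\}/p$ in the statement genuinely collapses to $1/p$ when $p\ge 1$ and to $1$ when $0<p<1$, so that the two cases of the proof match the single displayed inequality.
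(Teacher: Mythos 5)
Your proof is correct. Note, however, that the paper does not actually prove Theorem \ref{HM_ineq}: it is stated in the auxiliary-results chapter as a recalled classical fact with a pointer to the literature (Kallenberg), so there is no in-paper argument to compare yours against. What you give is the standard textbook derivation — Young's inequality from convexity of $\exp$, Hölder for $r=1$ by normalization, the general-$r$ case by substituting $|f|^r,|g|^r$ with the conjugate pair $p/r,q/r$ (which are indeed $>1$ since $p,q>r$), Minkowski for $p\ge 1$ via $|f+g|^p\le |f+g|^{p-1}(|f|+|g|)$ plus Hölder, and the subadditivity of $u\mapsto u^p$ for $0<p<1$ — and it is sound. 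The only points to tidy in a full write-up are the ones you already flag: the division step in Minkowski for $p>1$ requires first observing $\int|f+g|^p\,d\mu<+\infty$ (e.g.\ from $|f+g|^p\le 2^p(|f|^p+|g|^p)$) before dividing by $\bigl(\int|f+g|^p\,d\mu\bigr)^{(p-1)/p}$, the case $p=1$ of (ii) should be handled directly by integrating the triangle inequality since the exponent $p/(p-1)$ degenerates, and in the degenerate cases of Hölder one should dispose of the zero-norm case before the infinite-norm case so as not to meet $0\cdot\infty$.
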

\begin{thm} (Jensen's inequality)
\label{JEN_ineq}
For any random vector $\xi:\Omega\to\mathbb{R}^d$, defined on a probability space $(\Omega,\Sigma,\mathbb{P})$, and convex function $f:\mathbb{R}^d\to\mathbb{R}$, we have that
\begin{equation}
	f(\mathbb{E}\xi)\leq \mathbb{E}(f(\xi)).
\end{equation}
\end{thm}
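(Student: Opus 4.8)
The plan is to prove the inequality via the supporting-hyperplane (subgradient) characterization of convexity, combined with monotonicity of the expectation. First I would record the standing hypothesis that $\mathbb{E}\|\xi\|_2<+\infty$, so that $m:=\mathbb{E}\xi\in\mathbb{R}^d$ is well-defined; without it the left-hand side is meaningless, and this integrability assumption is consistent with the way Jensen's inequality (and its conditional version) is used elsewhere in these notes. Next, since $f$ is convex on $\mathbb{R}^d$ it is continuous (Proposition \ref{meas_fun_examples} (v)), hence Borel measurable, so $f(\xi)=f\circ\xi$ is a genuine random variable by Proposition \ref{meas_comp_f}.

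The heart of the argument is the existence of a subgradient of $f$ at the point $m$: there exists $a\in\mathbb{R}^d$ such that
\begin{equation}
	f(x)\geq f(m)+\langle a,x-m\rangle\quad\hbox{for all } x\in\mathbb{R}^d.
\end{equation}
This follows from the separating-hyperplane theorem applied to the convex epigraph $\{(x,t)\in\mathbb{R}^d\times\mathbb{R} : t\geq f(x)\}$ of $f$ and its boundary point $(m,f(m))$; in the scalar case $d=1$ it is elementary, taking $a$ to be any number between the one-sided derivatives of $f$ at $m$, which exist by convexity. I expect this step --- justifying the existence of the supporting affine minorant in dimension $d$ --- to be the main obstacle, and I would either invoke a convex-analysis result from the auxiliary chapter or include a short self-contained proof via the separation theorem.

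With the subgradient $a$ fixed, set $\ell(x)=f(m)+\langle a,x-m\rangle$. Then $\ell(\xi)\leq f(\xi)$ pointwise on $\Omega$, and $\ell(\xi)=f(m)+\langle a,\xi-m\rangle$ is integrable as an affine image of the integrable vector $\xi$; in particular the negative part of $f(\xi)$ is integrable, so $\mathbb{E}(f(\xi))$ is well-defined in $(-\infty,+\infty]$. Taking expectations and using linearity,
\begin{equation}
	\mathbb{E}(f(\xi))\geq\mathbb{E}(\ell(\xi))=f(m)+\langle a,\mathbb{E}\xi-m\rangle=f(m)=f(\mathbb{E}\xi),
\end{equation}
since $\mathbb{E}\xi-m=0$. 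This is the claimed inequality, and it holds with value $+\infty$ on the right permitted, which is all that is needed for the applications in the text (where $f$ is typically a power function $x\mapsto\|x\|^{p/2}$ or similar, for which $\mathbb{E}(f(\xi))$ is in any case finite under the stated moment hypotheses).
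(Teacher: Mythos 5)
Your proof is correct. The paper itself gives no proof of this theorem --- it is stated in the auxiliary-results chapter as a recalled standard fact --- so there is no argument to compare against; your supporting-hyperplane (subgradient) proof is the standard one, and you handle the relevant technical points properly: you add the implicit hypothesis $\mathbb{E}\|\xi\|_2<+\infty$ needed for $\mathbb{E}\xi$ to exist, you use Proposition \ref{meas_fun_examples} (v) to get measurability of $f(\xi)$, you correctly flag the existence of a subgradient at $m=\mathbb{E}\xi$ (which holds because $f$ is finite and convex on all of $\mathbb{R}^d$, so $m$ lies in the interior of the domain and the supporting hyperplane to the epigraph cannot be vertical) as the one step requiring a separation argument, and you observe that the affine minorant makes $\mathbb{E}(f(\xi))$ well-defined in $(-\infty,+\infty]$ before taking expectations.
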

We also recall the very useful Gronwall's lemma for Lebesgue integrals, see, for example, \cite{xmao} or Annex C in  \cite{parras}, where more general version of the lemma can be found.
\begin{lem} (continuous version of the Gronwall's lemma)
\label{GLCW}
	Let $-\infty<a<b<+\infty$ and let $u:[a,b]\to\mathbb{R}$ be a Borel measurable integrable function. If there exist $\alpha\in\mathbb{R}$, $\beta\in [0,+\infty)$ such that for all $t\in [a,b]$ the function $u$ satisfies
	\begin{equation}
		u(t)\leq \alpha+\beta\int\limits_a^t u(s)ds,
	\end{equation}
	then for all $t\in [a,b]$ we have that
	\begin{equation}
		u(t)\leq \alpha e^{\beta(t-a)}.
	\end{equation}
\end{lem}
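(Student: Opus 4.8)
The plan is to dominate $u$ by an absolutely continuous majorant and then linearize the resulting differential inequality with an integrating factor. First I would set
\[
	v(t)=\alpha+\beta\int\limits_a^t u(s)\,ds, \qquad t\in [a,b],
\]
so that the hypothesis reads exactly $u(t)\le v(t)$ for every $t\in[a,b]$; in particular it suffices to prove the sharper bound $v(t)\le\alpha e^{\beta(t-a)}$ on $[a,b]$. Since $u$ is Borel measurable and integrable, $v$ is absolutely continuous on $[a,b]$ and, by the fundamental theorem of calculus for the Lebesgue integral, $v'(t)=\beta u(t)$ for a.e.\ $t\in[a,b]$. Combining this with $u(t)\le v(t)$ gives the differential inequality $v'(t)\le\beta v(t)$ for a.e.\ $t$.

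Next I would introduce the integrating factor $e^{-\beta(t-a)}$ and put $w(t)=v(t)e^{-\beta(t-a)}$. As a product of two absolutely continuous functions on the compact interval $[a,b]$, $w$ is itself absolutely continuous, and for a.e.\ $t\in[a,b]$
\[
	w'(t)=\bigl(v'(t)-\beta v(t)\bigr)e^{-\beta(t-a)}=\beta\bigl(u(t)-v(t)\bigr)e^{-\beta(t-a)}\le 0 .
\]
An absolutely continuous function whose derivative is non-positive almost everywhere is non-increasing, hence $w(t)\le w(a)=v(a)=\alpha$ for all $t\in[a,b]$. Multiplying through by $e^{\beta(t-a)}>0$ yields $v(t)\le\alpha e^{\beta(t-a)}$, and therefore $u(t)\le v(t)\le\alpha e^{\beta(t-a)}$, which is the claim. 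Note this argument makes no assumption on the sign of $\alpha$ and also covers the degenerate case $\beta=0$ (where $v$ is constant).

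The only genuinely delicate point is the low regularity of $u$: because $u$ need not be continuous, $v$ need not be $C^1$, so one must work with absolutely continuous functions and almost-everywhere derivatives rather than classical ones. The two facts invoked above---that $t\mapsto\int_a^t u$ is absolutely continuous with derivative $u$ a.e., and that an absolutely continuous function with a.e.\ non-positive derivative is non-increasing---are standard, so I do not anticipate a real obstacle. As an alternative that avoids the Lebesgue FTC entirely, one could iterate the integral inequality: substituting the bound for $u$ into itself $n$ times produces $u(t)\le\alpha\sum_{k=0}^{n-1}\frac{(\beta(t-a))^k}{k!}+R_n(t)$, where Fubini's theorem identifies the remainder as $R_n(t)=\beta^n\int_a^t u(s)\frac{(t-s)^{n-1}}{(n-1)!}\,ds$, so that $|R_n(t)|\le\beta^n\frac{(b-a)^{n-1}}{(n-1)!}\int_a^b|u(s)|\,ds\to 0$ as $n\to\infty$; letting $n\to\infty$ recovers $u(t)\le\alpha e^{\beta(t-a)}$. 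I would keep this iteration as a fallback for readers who prefer to sidestep absolute continuity.
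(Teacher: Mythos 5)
Your argument is correct and follows essentially the same route as the paper's proof: introduce the absolutely continuous majorant $v(t)=\alpha+\beta\int_a^t u(s)\,ds$, derive $v'\le\beta v$ a.e.\ from $\beta\ge 0$ and $u\le v$, and conclude via the integrating factor $e^{-\beta(t-a)}$ together with the fact that an absolutely continuous function with a.e.\ non-positive derivative is non-increasing. The only differences are cosmetic (the paper uses $e^{-\beta t}$ instead of $e^{-\beta(t-a)}$, and your Fubini-iteration fallback is an extra alternative not present in the paper).
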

{\bf Proof.} Let us define the function $h:[a,b]\to\mathbb{R}$ as follows
\begin{equation}
\label{GL_1}
	h(t)=\alpha+\beta\int\limits_a^t u(s)ds.
\end{equation}
Then, by the assumption
\begin{equation}
\label{GL_2}
	u(t)\leq h(t).
\end{equation}
for all $t\in [a,b]$. Moreover, $h$ is absolutely continuous and 
\begin{equation}
\label{GL_3}
	h'(t)=\beta\cdot u(t)
\end{equation}
for almost all $t$ in $[0,T]$. Since $\beta\geq 0$, we get from \eqref{GL_2} and \eqref{GL_3} that
\begin{equation}
\label{GL_4}
	h'(t)\leq \beta h(t)
\end{equation}
for almost all $t$ in $[0,T]$. Furthermore, by \eqref{GL_4}
\begin{equation}
	\Bigl(e^{-\beta t}h(t)\Bigr)^{'}=-\beta e^{-\beta t}h(t)+e^{-\beta t}h'(t)\leq 0
\end{equation} 
for almost all $t$ in $[0,T]$. The functions $[a,b]\ni t\to e^{-\beta t}$ and $h$ are absolutely continuous, hence so it is   $[a,b]\ni t\to e^{-\beta t}h(t)$. Thereby
\begin{equation}
e^{-\beta t}h(t)-e^{-\beta a}h(a)=\int\limits_a^t \Bigl(e^{-\beta s}h(s)\Bigr)^{'}ds\leq 0
\end{equation}
for all $t\in [a,b]$. Hence, 
\begin{equation}
	u(t)\leq h(t)\leq e^{\beta (t-a)}h(a)=\alpha e^{\beta (t-a)}, \ t\in [a,b].
\end{equation}
This completes the proof. \ \ \ $\blacksquare$ \\ \\
Let $(X_i,\mathcal{A}_i)$, $i=1,2$, be measurable spaces. On the Cartesian product $X_1\times X_2$ of the sets $X_i$, $i=1,2$, we consider the so called {\it product} $\sigma$-field $\mathcal{A}_1\otimes \mathcal{A}_2$ defined by
\begin{equation}
	\mathcal{A}_1\otimes \mathcal{A}_2=\sigma\Bigl(\{A_1\times A_2 \ | \ A_i\in\mathcal{A}_i, i=1,2\}\Bigr).
\end{equation}
Then $(X_1\times X_2,\mathcal{A}_1\otimes \mathcal{A}_2)$ is a measurable space. Note that we also have
\begin{equation}
	\mathcal{A}_1\otimes \mathcal{A}_2=\sigma(\mathcal{\hat A}_1\cup \mathcal{\hat A}_2),
\end{equation}
where
\begin{eqnarray}
	\mathcal{\hat A}_1=\{A\times X_2 \ | \ A\in\mathcal{A}_1\},\\
	\mathcal{\hat A}_2=\{X_1\times B \ | \ B\in\mathcal{A}_2\},
\end{eqnarray}
see page. 5 in \cite{Cinlar}. Now, let $(X_i,\mathcal{A}_i,\mu_i)$, $i=1,2$, be $\sigma$-finite measure spaces. Then there exists a unique measure $\mu_1\times \mu_2$ on the $\sigma$-field $\mathcal{A}_1\otimes \mathcal{A}_2$ such that 
\begin{equation}
	(\mu_1\times \mu_2)(A_1\times A_2)=\mu_1(A_1)\cdot\mu_2(A_2)
\end{equation}
holds for any $A_i\in\mathcal{A}_i$, $i=1,2$. The measure $\mu_1\times \mu_2$ is called the {\it product measure}. Of course $(X_1\times X_2,\mathcal{A}_1\otimes \mathcal{A}_2,\mu_1\times \mu_2)$ is a measure space. 
\begin{prop} Let $(X,\mathcal{A})$, $(Y_i,\mathcal{C}_i)$ be measurable spaces and let $f_i:X\to Y_i$ for $i=1,2$. Define $H:X\to Y_1\times Y_2$ by
\begin{equation}
	H(x)=(f_1,f_2)(x)=(f_1(x),f_2(x)), \ x\in X.
\end{equation}
Then $H$ is $\mathcal{A} / \mathcal{C}_1\otimes\mathcal{C}_2$-measurable iff $f_i:X\to Y_i$ is $\mathcal{A} / \mathcal{C}_i$-measurable for  $i=1,2$.
\end{prop}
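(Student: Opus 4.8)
The plan is to prove the two implications separately, in each case reducing to facts already available in the excerpt.

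For the ``only if'' direction I would write $f_i=\pi_i\circ H$, where $\pi_i:Y_1\times Y_2\to Y_i$ denotes the $i$-th coordinate projection. The first step is to observe that each $\pi_i$ is $\mathcal{C}_1\otimes\mathcal{C}_2 / \mathcal{C}_i$-measurable: indeed $\pi_1^{-1}(B)=B\times Y_2$ and $\pi_2^{-1}(B)=Y_1\times B$ for $B\in\mathcal{C}_i$, and both of these sets belong to the generating family $\{A_1\times A_2 \mid A_i\in\mathcal{C}_i\}$ of $\mathcal{C}_1\otimes\mathcal{C}_2$, hence to $\mathcal{C}_1\otimes\mathcal{C}_2$ itself. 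Assuming $H$ is $\mathcal{A}/\mathcal{C}_1\otimes\mathcal{C}_2$-measurable, Proposition \ref{meas_comp_f} then immediately yields that $f_i=\pi_i\circ H$ is $\mathcal{A}/\mathcal{C}_i$-measurable for $i=1,2$.

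For the ``if'' direction, assume both $f_i$ are measurable. The key computation is that for a measurable rectangle $A_1\times A_2$ with $A_i\in\mathcal{C}_i$ we have
\[
 H^{-1}(A_1\times A_2)=\{x\in X \mid f_1(x)\in A_1,\ f_2(x)\in A_2\}=f_1^{-1}(A_1)\cap f_2^{-1}(A_2),
\]
which lies in $\mathcal{A}$ because $f_1^{-1}(A_1),f_2^{-1}(A_2)\in\mathcal{A}$ and $\mathcal{A}$ is a $\sigma$-field. Thus $H$ pulls back every member of the generating family of $\mathcal{C}_1\otimes\mathcal{C}_2$ into $\mathcal{A}$.

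The remaining step --- the only place where a short abstract argument is needed rather than a direct check --- is to upgrade ``measurable on a generating class'' to ``measurable''. Here I would introduce $\mathcal{D}=\{C\in\mathcal{C}_1\otimes\mathcal{C}_2 \mid H^{-1}(C)\in\mathcal{A}\}$ and verify, using the identities $H^{-1}((Y_1\times Y_2)\setminus C)=X\setminus H^{-1}(C)$, $H^{-1}(\bigcup_n C_n)=\bigcup_n H^{-1}(C_n)$, and the analogous one for countable intersections, together with $H^{-1}(Y_1\times Y_2)=X\in\mathcal{A}$, that $\mathcal{D}$ is a $\sigma$-field on $Y_1\times Y_2$. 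By the previous paragraph $\mathcal{D}$ contains $\{A_1\times A_2 \mid A_i\in\mathcal{C}_i\}$, hence $\mathcal{D}\supset\sigma(\{A_1\times A_2 \mid A_i\in\mathcal{C}_i\})=\mathcal{C}_1\otimes\mathcal{C}_2$, so $\mathcal{D}=\mathcal{C}_1\otimes\mathcal{C}_2$, which is exactly the $\mathcal{A}/\mathcal{C}_1\otimes\mathcal{C}_2$-measurability of $H$. I do not anticipate any genuine obstacle; the only point demanding a little care is stating and applying this ``generator test'' correctly, since the excerpt does not isolate it as a numbered lemma.
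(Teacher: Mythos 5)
Your proof is correct. The paper states this proposition without proof, referring to Proposition 6.27 in Cinl\.ar and Lemma 1.8 in Kallenberg, and your argument (coordinate projections plus Proposition \ref{meas_comp_f} for the ``only if'' direction, and the good-sets/generator principle applied to measurable rectangles for the ``if'' direction) is precisely the standard proof found there.
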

The proposition above has its multi-dimensional generalization, see, for example, Proposition 6.27, page 45 in \cite{Cinlar} or Lemma 1.8 in \cite{KALLEN}.

Below we state the measurability of sections.
\begin{prop} Let $(X_i,\mathcal{A}_i)$, $(Y,\mathcal{C})$, $i=1,2$, be measurable spaces. Let us assume  $f:X_1\times X_2\to Y$ to be $\mathcal{A}_1\otimes\mathcal{A}_2 / \mathcal{C}$-measurable. For any $x\in X_1$ the section 
\begin{equation}
	X_2\ni y\to f_x(y)=f(x,y)\in Y
\end{equation}
is $\mathcal{A}_2 /\mathcal{C}$-measurable, while for any $y\in X_2$ the section
\begin{equation}
	X_1\ni x\to f^y(x)=f(x,y)\in Y
\end{equation}
is $\mathcal{A}_1 /\mathcal{C}$-measurable.
\end{prop}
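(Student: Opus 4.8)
The plan is to realize each section as a composition of measurable maps and to invoke Proposition \ref{meas_comp_f}. Fix $x\in X_1$ and define the \emph{insertion map} $\iota_x:X_2\to X_1\times X_2$ by $\iota_x(y)=(x,y)$. By the very definition of the section, $f_x=f\circ\iota_x$. Hence, once we know that $\iota_x$ is $\mathcal{A}_2/(\mathcal{A}_1\otimes\mathcal{A}_2)$-measurable, Proposition \ref{meas_comp_f} immediately yields that $f_x$ is $\mathcal{A}_2/\mathcal{C}$-measurable, using that $f$ is $(\mathcal{A}_1\otimes\mathcal{A}_2)/\mathcal{C}$-measurable by hypothesis.

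To check measurability of $\iota_x$, write $\iota_x=(g_1,g_2)$ with $g_1:X_2\to X_1$ the constant map $g_1\equiv x$ and $g_2:X_2\to X_2$ the identity. The constant map is $\mathcal{A}_2/\mathcal{A}_1$-measurable, since $g_1^{-1}(A)\in\{\emptyset,X_2\}\subset\mathcal{A}_2$ for every $A\in\mathcal{A}_1$, and the identity is trivially $\mathcal{A}_2/\mathcal{A}_2$-measurable. Applying the proposition that characterizes measurability of the pairing $(g_1,g_2)$ in terms of its coordinates (stated earlier in this chapter), $\iota_x$ is $\mathcal{A}_2/(\mathcal{A}_1\otimes\mathcal{A}_2)$-measurable. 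The argument for $f^y$ is entirely symmetric: for fixed $y\in X_2$ put $\kappa_y:X_1\to X_1\times X_2$, $\kappa_y(x)=(x,y)$, observe $f^y=f\circ\kappa_y$, decompose $\kappa_y$ as (identity, constant), and conclude as before.

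As an alternative, self-contained route that avoids the pairing proposition, one can use a ``good sets'' argument: fix $x$ and set $\mathcal{D}=\{E\in\mathcal{A}_1\otimes\mathcal{A}_2 : E_x\in\mathcal{A}_2\}$, where $E_x=\{y\in X_2:(x,y)\in E\}$. From $(\emptyset)_x=\emptyset$, $(E^c)_x=(E_x)^c$ and $(\bigcup_n E_n)_x=\bigcup_n (E_n)_x$ one checks that $\mathcal{D}$ is a $\sigma$-field; since $(A_1\times A_2)_x$ equals $A_2$ when $x\in A_1$ and $\emptyset$ otherwise, $\mathcal{D}$ contains all measurable rectangles, so $\mathcal{D}=\mathcal{A}_1\otimes\mathcal{A}_2$. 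Then for any $B\in\mathcal{C}$ we get $f_x^{-1}(B)=\big(f^{-1}(B)\big)_x\in\mathcal{A}_2$. I would present the composition proof as the main one, since the needed tools (Proposition \ref{meas_comp_f} and the pairing proposition) are already available, and mention the good-sets version only as a remark. There is no genuine obstacle here; the only point requiring care is to confirm that the pairing and composition facts being cited are exactly those stated earlier, and that the set-theoretic identity $f_x=f\circ\iota_x$ (and its analogue for $f^y$) is used correctly.
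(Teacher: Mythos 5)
Your proof is correct. The paper states this proposition without any proof (it is quoted as a standard fact from the measure-theory references), so there is nothing to compare against; your composition argument $f_x=f\circ\iota_x$, with the insertion map $\iota_x$ shown measurable via the pairing proposition applied to the constant map and the identity, is exactly the natural route given the tools already stated in that chapter, and the good-sets argument you sketch as an alternative is also complete and correct.
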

Hence, product measurability implies measurability of each section. We want to underline here that the converse is not true - without some additional requirements measurability of sections does not imply product measurability. We have however the following  result, that is very useful from a point of view of stochastic processes (see, for example, page 46 in \cite{Cinlar} or Remark 1.14 at page 5. in \cite{KARSHR}).
\begin{lem} 
\label{prod_meas_lem}
Let $(X,\mathcal{A})$ be a measurable space and let $f:X\times\mathbb{R}\to\mathbb{R}$ be such that the section $y\to f(x,y)$ is right-continuous (or left-continuous) for each $x\in X$ and that the section $x\to f(x,y)$ is $\mathcal{A} /\mathcal{B}(\mathbb{R})$-measurable for each $y\in\mathbb{R}$. Then, $f$ is $\mathcal{A}\otimes\mathcal{B}(\mathbb{R}) / \mathcal{B}(\mathbb{R})$-measurable.
\end{lem}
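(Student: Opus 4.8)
The plan is to approximate $f$ pointwise by a sequence of functions $(f_n)_{n\in\mathbb{N}}$ that discretize the $y$-variable on the dyadic grid, each of which is transparently $\mathcal{A}\otimes\mathcal{B}(\mathbb{R})/\mathcal{B}(\mathbb{R})$-measurable, and then invoke the fact that a pointwise limit of measurable real-valued functions is measurable. I would carry out the argument under the right-continuity hypothesis; the left-continuous case is identical after reversing the direction in which $y$ is rounded.

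First I would fix $n\in\mathbb{N}$ and define, for $(x,y)\in X\times\mathbb{R}$,
\begin{equation}
	f_n(x,y)=\sum_{k\in\mathbb{Z}} f\!\left(x,\frac{k+1}{2^n}\right)\cdot\mathbf{1}_{[k/2^n,\,(k+1)/2^n)}(y).
\end{equation}
For each $(x,y)$ exactly one term of this sum is nonzero, so $f_n$ is a well-defined real-valued function. Next I would check that $f_n$ is $\mathcal{A}\otimes\mathcal{B}(\mathbb{R})/\mathcal{B}(\mathbb{R})$-measurable: for each fixed $k$, the map $x\mapsto f(x,(k+1)/2^n)$ is $\mathcal{A}/\mathcal{B}(\mathbb{R})$-measurable by hypothesis, hence $(x,y)\mapsto f(x,(k+1)/2^n)$ is $\mathcal{A}\otimes\mathcal{B}(\mathbb{R})/\mathcal{B}(\mathbb{R})$-measurable as the composition of that map with the (continuous, hence measurable) projection $X\times\mathbb{R}\to X$, using Propositions \ref{meas_fun_examples} and \ref{meas_comp_f}; the map $(x,y)\mapsto\mathbf{1}_{[k/2^n,\,(k+1)/2^n)}(y)$ is measurable because $[k/2^n,(k+1)/2^n)\in\mathcal{B}(\mathbb{R})$ and we compose with the projection onto $\mathbb{R}$; the product of two real-valued measurable functions is measurable; and a countable sum of measurable functions is measurable. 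So $f_n$ is product-measurable.

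Then I would pass to the limit. Fix $(x,y)$ and let $k_n=k_n(y)$ be the unique integer with $y\in[k_n/2^n,(k_n+1)/2^n)$. Then $(k_n+1)/2^n>y$ and $0<(k_n+1)/2^n-y\le 2^{-n}$, so $(k_n+1)/2^n\to y$ from the right as $n\to\infty$. By right-continuity of the section $t\mapsto f(x,t)$ we obtain $f_n(x,y)=f(x,(k_n+1)/2^n)\to f(x,y)$. Hence $f=\lim_{n\to\infty}f_n$ pointwise on $X\times\mathbb{R}$, and since each $f_n$ is $\mathcal{A}\otimes\mathcal{B}(\mathbb{R})/\mathcal{B}(\mathbb{R})$-measurable, so is $f$.

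The only point requiring genuine care is matching the rounding convention to the one-sided continuity hypothesis — rounding $y$ up to the next dyadic point when the sections are right-continuous, and down (using intervals $(k/2^n,(k+1)/2^n]$ and the values $f(x,k/2^n)$) when they are left-continuous — together with the routine verification that the discretized functions are genuinely product-measurable. There is no deeper obstacle; the argument is a standard approximation by "staircase in $y$" functions.
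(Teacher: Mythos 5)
Your proof is correct and is exactly the standard dyadic-staircase argument; the paper itself does not prove this lemma but only cites the literature (Cinlar, Karatzas--Shreve), and the proof given there is the same discretize-in-$y$-and-pass-to-the-limit construction you use. One cosmetic point: the projection $X\times\mathbb{R}\to X$ is not ``continuous'' (the measurable space $(X,\mathcal{A})$ carries no topology); its $\mathcal{A}\otimes\mathcal{B}(\mathbb{R})/\mathcal{A}$-measurability follows directly from the definition of the product $\sigma$-field, since the preimage of $A\in\mathcal{A}$ is $A\times\mathbb{R}$.
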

Note that $f$ may be a mapping with values in a separable metric space (such as the euclidean space $\mathbb{R}^d$) and the thesis of Lemma \ref{prod_meas_lem} still holds, see Lemma 6.4.6 in \cite{bogach2}.

\begin{thm} (Tonelli's Theorem)
\label{tonelli_thm}
Let $(X_i,\mathcal{A}_i,\mu_i)$, $i=1,2$, be $\sigma$-finite measure spaces, and let $f:X_1\times X_2\to [0,+\infty]$ be $\mathcal{A}_1\otimes\mathcal{A}_2 / \mathcal{B}(\mathbb{\bar R})$-measurable. Then
\begin{itemize}
	\item [(i)] the function
	\begin{equation}
		X_1\ni x_1\to \int\limits_{X_2}  f(x_1,x_2) \mu_2(dx_2) 
	\end{equation}
	is $\mathcal{A}_1 /\mathcal{B}(\mathbb{\bar R})$-measurable and the function
	\begin{equation}
		X_2\ni x_2\to \int\limits_{X_1}  f(x_1,x_2) \mu_1(dx_1) 
	\end{equation}
	is $\mathcal{A}_2 /\mathcal{B}(\mathbb{\bar R})$-measurable, and
	\item [(ii)] for the function $f$ the following holds
	\begin{eqnarray}
	\label{tonell_eq}
		\int\limits_{X_1\times X_2} f(x_1,x_2) \ \mu_1\times\mu_2(dx_1,dx_2)&=&\int\limits_{X_1}\Bigl(\int\limits_{X_2}f(x_1,x_2)\mu_2(dx_2)\Bigr)\mu_1(dx_1)\notag\\
&=&\int\limits_{X_2}\Bigl(\int\limits_{X_1}f(x_1,x_2)\mu_1(dx_1)\Bigr)\mu_2(dx_2).
	\end{eqnarray}
\end{itemize}
\end{thm}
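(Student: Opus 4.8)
The plan is to follow the standard four-layer bootstrapping argument, building up from rectangles to indicators of general sets, then to simple functions, then to arbitrary nonnegative measurable functions. Throughout I write $E_{x_1}=\{x_2\in X_2 : (x_1,x_2)\in E\}$ for the $x_1$-section of a set $E\subseteq X_1\times X_2$; by the measurability-of-sections result quoted just above the theorem, $E_{x_1}\in\mathcal{A}_2$ whenever $E\in\mathcal{A}_1\otimes\mathcal{A}_2$, and symmetrically for $x_2$-sections, so all the inner integrals appearing below are at least well-defined.

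\textbf{Step 1: the claim for $\mathbf{1}_E$.} I first prove the set-function version: for every $E\in\mathcal{A}_1\otimes\mathcal{A}_2$ the map $x_1\mapsto \mu_2(E_{x_1})$ is $\mathcal{A}_1/\mathcal{B}(\bar{\mathbb{R}})$-measurable and $\int_{X_1}\mu_2(E_{x_1})\,\mu_1(dx_1)=(\mu_1\times\mu_2)(E)$, together with the symmetric statement; this is exactly \eqref{tonell_eq} for $f=\mathbf{1}_E$. Assume first that $\mu_1$ and $\mu_2$ are \emph{finite}. Let $\mathcal{D}$ be the collection of all $E\in\mathcal{A}_1\otimes\mathcal{A}_2$ for which both measurability assertions hold and both iterated integrals equal $(\mu_1\times\mu_2)(E)$. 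Measurable rectangles $A_1\times A_2$ lie in $\mathcal{D}$, since $(A_1\times A_2)_{x_1}=A_2$ if $x_1\in A_1$ and $=\emptyset$ otherwise, so $x_1\mapsto\mu_2((A_1\times A_2)_{x_1})=\mu_2(A_2)\mathbf{1}_{A_1}(x_1)$ is measurable with integral $\mu_1(A_1)\mu_2(A_2)$. The rectangles form a $\pi$-system generating $\mathcal{A}_1\otimes\mathcal{A}_2$, so by the Dynkin $\pi$--$\lambda$ theorem it suffices to check that $\mathcal{D}$ is a $\lambda$-system: it contains $X_1\times X_2$; it is closed under proper differences because $\mu_2((E\setminus F)_{x_1})=\mu_2(E_{x_1})-\mu_2(F_{x_1})$ when $F\subseteq E$ (here finiteness of $\mu_2$ is what makes this subtraction legitimate), and integration is linear; and it is closed under increasing countable unions by the monotone convergence theorem applied to the functions $x_1\mapsto\mu_2((E_n)_{x_1})$. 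Hence $\mathcal{D}=\mathcal{A}_1\otimes\mathcal{A}_2$ in the finite case. For the general $\sigma$-finite case, choose increasing sequences $X_1=\bigcup_k A_1^{(k)}$, $X_2=\bigcup_k A_2^{(k)}$ with $\mu_i(A_i^{(k)})<\infty$, apply the finite case to the finite measures $\mu_i(\,\cdot\cap A_i^{(k)})$ and to the sets $E\cap(A_1^{(k)}\times A_2^{(k)})$, and let $k\to\infty$ using monotone convergence on both the inner integrals and the outer ones.

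\textbf{Steps 2--3: simple functions, then monotone convergence.} If $f=\sum_{j=1}^m c_j\mathbf{1}_{E_j}$ is a nonnegative simple $\mathcal{A}_1\otimes\mathcal{A}_2$-measurable function, then for each fixed $x_1$ the section $x_2\mapsto f(x_1,x_2)=\sum_j c_j\mathbf{1}_{(E_j)_{x_1}}(x_2)$ is simple and $\int_{X_2}f(x_1,x_2)\,\mu_2(dx_2)=\sum_j c_j\mu_2((E_j)_{x_1})$, which is a finite linear combination of the measurable functions from Step~1; measurability in $x_1$ and the identity \eqref{tonell_eq} then follow by linearity of the integral. Finally, for arbitrary $\mathcal{A}_1\otimes\mathcal{A}_2/\mathcal{B}(\bar{\mathbb{R}})$-measurable $f:X_1\times X_2\to[0,+\infty]$, pick a sequence $(s_n)_{n\in\mathbb{N}}$ of nonnegative simple measurable functions with $s_n\uparrow f$ pointwise. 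For each $x_1$, monotone convergence gives $\int_{X_2}s_n(x_1,x_2)\,\mu_2(dx_2)\uparrow\int_{X_2}f(x_1,x_2)\,\mu_2(dx_2)$; since each $x_1\mapsto\int_{X_2}s_n(x_1,x_2)\,\mu_2(dx_2)$ is $\mathcal{A}_1$-measurable by Step~3, the pointwise increasing limit $x_1\mapsto\int_{X_2}f(x_1,x_2)\,\mu_2(dx_2)$ is $\mathcal{A}_1$-measurable, proving (i). Applying monotone convergence once more, now to the outer integral over $X_1$, and using \eqref{tonell_eq} for each $s_n$, yields
\begin{equation}
\int_{X_1}\!\Big(\int_{X_2}f\,\mu_2(dx_2)\Big)\mu_1(dx_1)=\lim_{n\to\infty}\int_{X_1}\!\Big(\int_{X_2}s_n\,\mu_2(dx_2)\Big)\mu_1(dx_1)=\lim_{n\to\infty}\int_{X_1\times X_2}\! s_n\,d(\mu_1\times\mu_2)=\int_{X_1\times X_2}\! f\,d(\mu_1\times\mu_2),
\end{equation}
and the symmetric computation gives the other equality in \eqref{tonell_eq}.

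\textbf{Main obstacle.} The only genuinely delicate point is Step~1, specifically verifying that the good-set class $\mathcal{D}$ is closed under complementation/differences: this is exactly where finiteness of the measures is indispensable (to subtract $\mu_2(F_{x_1})$ from $\mu_2(E_{x_1})$ and stay in $[0,\infty)$), and it is the reason the theorem is stated for $\sigma$-finite spaces and why the exhaustion argument at the end of Step~1 is needed. Everything after Step~1 is a routine cascade of linearity and the monotone convergence theorem, with no additional hypotheses required (in particular no integrability of $f$, which is the whole point of Tonelli versus Fubini).
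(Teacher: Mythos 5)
Your proof is correct and is the standard four-stage bootstrapping argument (rectangles via the $\pi$--$\lambda$ theorem, $\sigma$-finite exhaustion, simple functions, monotone convergence); the paper itself does not include a proof of this theorem but defers to \cite{cohn}, where essentially this same argument is given. The only cosmetic blemish is the internal reference to ``Step~3'' inside your merged ``Steps 2--3,'' which should point to the simple-function result you have just established.
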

We stress that we can use \eqref{tonell_eq} for every nonnegative and $\mathcal{A}_1\otimes\mathcal{A}_2 / \mathcal{B}(\mathbb{\bar R})$-measurable function $f$, no matter if it is $\mu_1\times \mu_2$-integrable or not. Hence, for an arbitrary $\mathcal{A}_1\otimes\mathcal{A}_2 / \mathcal{B}(\mathbb{\bar R})$-measurable function $f:X_1\times X_2\to \mathbb{\bar R}$ we can check if it is $\mu_1\times \mu_2$-integrable by applying \eqref{tonell_eq} to $\displaystyle{\int\limits_{X_1\times X_2} |f(x_1,x_2)| \ \mu_1\times\mu_2(dx_1,dx_2)}$. Such situation occurs many times in the theory of stochastic processes.
\begin{thm}
(Fubini's Theorem)
\label{fubini_thm}
Let $(X_i,\mathcal{A}_i,\mu_i)$, $i=1,2$, be $\sigma$-finite measure spaces, and let $f:X_1\times X_2\to [-\infty,+\infty]$ be $\mathcal{A}_1\otimes\mathcal{A}_2 / \mathcal{B}(\mathbb{\bar R})$-measurable and $\mu_1\times\mu_2$-integrable. Then
\begin{itemize}
	\item [(i)] for $\mu_1$-almost every $x_1\in X_1$ the section $f_{x_1}$ is $\mu_2$-integrable and for $\mu_2$-almost every $x_2\in X_2$ the section $f^{x_2}$ is $\mu_1$-integrable,
	\item [(ii)] the functions $I_f$ and $J_f$ defined by
	\begin{equation}
	\label{def_I_f}
		I_f(x_1)=\left\{ \begin{array}{ll}
			\int\limits_{X_2}f_{x_1}(x_2)\mu_2(dx_2) & \ \hbox{if} \ f_{x_1} \ \hbox{is} \ \mu_2-\hbox{integrable} , \\
			0 & \ \hbox{otherwise} 
		\end{array}\right.
	\end{equation}
	and
	\begin{equation}
	\label{def_J_f}
		J_f(x_2)=\left\{ \begin{array}{ll}
			\int\limits_{X_1}f^{x_2}(x_1)\mu_1(dx_1) & \ \hbox{if} \ f^{x_2} \ \hbox{is} \ \mu_1-\hbox{integrable} , \\
			0 & \ \hbox{otherwise}
		\end{array}\right.
	\end{equation}
	belong to $L^1(X_1,\mathcal{A}_1,\mu_1;\mathbb{R})$ and $L^1(X_2,\mathcal{A}_2,\mu_2;\mathbb{R})$, respectively, and
	\item [(iii)] the relation
	\begin{equation}
		\int\limits_{X_1\times X_2} f(x_1,x_2) \ \mu_1\times\mu_2(dx_1,dx_2)=\int\limits_{X_1}I_f(x_1)\mu_1(dx_1)=\int\limits_{X_2}J_f(x_2)\mu_2(dx_2)
	\end{equation}
	holds.
\end{itemize}
\end{thm}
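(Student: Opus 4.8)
The plan is to deduce Fubini's Theorem from Tonelli's Theorem (Theorem~\ref{tonelli_thm}) by the standard decomposition of $f$ into its positive and negative parts. First I would set $f^{+}=\max\{f,0\}$ and $f^{-}=\max\{-f,0\}$, so that $f=f^{+}-f^{-}$ and $|f|=f^{+}+f^{-}$; both $f^{+}$ and $f^{-}$ are nonnegative and $\mathcal{A}_1\otimes\mathcal{A}_2/\mathcal{B}(\mathbb{\bar R})$-measurable, and since $f$ is $\mu_1\times\mu_2$-integrable, so are $f^{+}$, $f^{-}$ and $|f|$. Applying Tonelli to $|f|$ shows that $x_1\mapsto\int_{X_2}|f|(x_1,x_2)\,\mu_2(dx_2)$ is $\mathcal{A}_1/\mathcal{B}(\mathbb{\bar R})$-measurable with $\int_{X_1}\bigl(\int_{X_2}|f|\,d\mu_2\bigr)d\mu_1=\int_{X_1\times X_2}|f|\,d(\mu_1\times\mu_2)<+\infty$; hence this function is $\mu_1$-a.e.\ finite, so the set $E_1:=\{x_1\in X_1: f_{x_1}\text{ is }\mu_2\text{-integrable}\}=\{x_1:\int_{X_2}|f_{x_1}|\,d\mu_2<+\infty\}$ is $\mathcal{A}_1$-measurable with $\mu_1(X_1\setminus E_1)=0$. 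On $E_1$ both sections $f^{+}_{x_1}$ and $f^{-}_{x_1}$ are $\mu_2$-integrable, so $f_{x_1}=f^{+}_{x_1}-f^{-}_{x_1}$ is $\mu_2$-integrable; the symmetric argument with the roles of $X_1$ and $X_2$ exchanged gives the analogous statement for $f^{x_2}$. This is exactly part (i).

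For part (ii), apply Tonelli separately to $f^{+}$ and $f^{-}$: the functions $I_{f^{+}}(x_1):=\int_{X_2}f^{+}_{x_1}\,d\mu_2$ and $I_{f^{-}}(x_1):=\int_{X_2}f^{-}_{x_1}\,d\mu_2$ are $\mathcal{A}_1$-measurable and nonnegative, with $\int_{X_1}I_{f^{\pm}}\,d\mu_1=\int_{X_1\times X_2}f^{\pm}\,d(\mu_1\times\mu_2)<+\infty$, so $I_{f^{+}},I_{f^{-}}\in L^1(X_1,\mathcal{A}_1,\mu_1;\mathbb{R})$ and in particular both are finite on $E_1$ (which is precisely what membership in $E_1$ means). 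By linearity of the integral for integrable sections, $I_f(x_1)=I_{f^{+}}(x_1)-I_{f^{-}}(x_1)$ for every $x_1\in E_1$, while $I_f:=0$ off $E_1$; thus $I_f=\mathbf{1}_{E_1}\cdot(I_{f^{+}}-I_{f^{-}})$, which is $\mathcal{A}_1$-measurable (well-defined because $I_{f^{\pm}}$ are finite on $E_1$) and agrees $\mu_1$-a.e.\ with the $L^1$ function $I_{f^{+}}-I_{f^{-}}$; hence $I_f\in L^1(X_1,\mathcal{A}_1,\mu_1;\mathbb{R})$. The same reasoning, with the coordinates swapped, gives $J_f\in L^1(X_2,\mathcal{A}_2,\mu_2;\mathbb{R})$.

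For part (iii), I would simply chain equalities: since $I_f$ and $I_{f^{+}}-I_{f^{-}}$ coincide $\mu_1$-a.e., additivity of the Lebesgue integral together with Tonelli applied to $f^{+}$ and $f^{-}$ give
\begin{align*}
\int_{X_1}I_f\,d\mu_1 &=\int_{X_1}I_{f^{+}}\,d\mu_1-\int_{X_1}I_{f^{-}}\,d\mu_1\\
&=\int_{X_1\times X_2}f^{+}\,d(\mu_1\times\mu_2)-\int_{X_1\times X_2}f^{-}\,d(\mu_1\times\mu_2)\\
&=\int_{X_1\times X_2}f\,d(\mu_1\times\mu_2),
\end{align*}
the last line being the definition of the integral of $f=f^{+}-f^{-}$; the computation for $J_f$ is identical. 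There is no analytic difficulty here: the whole argument is a bookkeeping reduction to Theorem~\ref{tonelli_thm}. The one place requiring care is the measurability and $L^1$-membership of $I_f$ and $J_f$ in part (ii): one must verify that the exceptional set $E_1$ is genuinely $\mathcal{A}_1$-measurable (which is why one first applies Tonelli to $|f|$ rather than directly to $f^{\pm}$) and that the convention $I_f:=0$ on the $\mu_1$-null set $X_1\setminus E_1$ does not destroy measurability or alter the value of the integral. That null-set bookkeeping, rather than any estimate, is the main obstacle.
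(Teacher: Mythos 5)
Your argument is correct and is the standard reduction of Fubini to Tonelli via the decomposition $f=f^{+}-f^{-}$, with the null-set and measurability bookkeeping for $E_1$, $I_f$ and $J_f$ handled properly; the paper itself gives no proof of this theorem (it cites Cohn's \emph{Measure Theory}), and the proof given there is essentially the one you describe. The only cosmetic point is that $\mathbf{1}_{E_1}\cdot(I_{f^{+}}-I_{f^{-}})$ is better written as $I_{f^{+}}\mathbf{1}_{E_1}-I_{f^{-}}\mathbf{1}_{E_1}$ to avoid the undefined expression $\infty-\infty$ off $E_1$, but this does not affect the validity of the argument.
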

Proofs of Theorems \ref{tonelli_thm} and \ref{fubini_thm} (and their generalizations to a product of finite number of $\sigma$-finite measure spaces) can be found in \cite{cohn}. 

Let $(\Omega,\Sigma,\mathbb{P})$ be a probability space, i.e., a measure space such that $\mathbb{P}(\Omega)=1$. A $\Sigma / \mathcal{B}(\mathbb{R}^d)$-measurable function $\xi:\Omega\to\mathbb{R}^d$ is called a random variable (or a random vector). If $(X,\mathcal{A})$ is a measurable space then we call a function $\xi:\Omega\to X$ a ($X$-valued) {\it random element} (or a {\it random function}) provided that it is $\Sigma / \mathcal{A}$-measurable. The law of $\xi$ is given by
\begin{equation}
\label{xi_law}
	\mu(A)=\mathbb{P}(\{\xi \in A\})=\mathbb{P}(\xi^{-1}(A)),
\end{equation}
for all $A\in\mathcal{A}$. Hence, the random element $\xi$ induces measure on $(X,\mathcal{A})$ and then the space $(X,\mathcal{A},\mu)$ is a measure space equipped with the probability measure $\mu$ ($\mu(X)=1$). The classical example is a Wiener space $(C([0,T]),\mathcal{B}(C([0,T])), w)$ equipped with the Wiener measure $w$. The measure $w$ is a law of the random element $\hat W:\Omega\to C([0,T])$, generated by a Wiener process $W$ (see Theorem \ref{gen_rand_el_1}).

Below we recall the Central Limit Theorem. For the proof see, for example, Theorem 10.3.16 in  \cite{cohn}.
\begin{thm} (Central Limit Theorem, CLT)
\label{CLT}
	Let $(X_j)_{j\in\mathbb{N}}$ be an i.i.d sequence of random variables with common mean $\mu$ and variance $\sigma^2$. Let $\displaystyle{S_n=\sum\limits_{j=1}^n X_j, n\in\mathbb{N}}$. Then the sequence
	\begin{equation}
		\Bigl(\frac{S_n-\mathbb{E}(S_n)}{\sqrt{Var(S_n)}}\Bigr)_{n\in\mathbb{N}}
	\end{equation}
	converges in distribution to a normal distribution with mean $0$ and variance $1$. 
\end{thm}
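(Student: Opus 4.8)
The plan is to prove the statement via characteristic functions together with L\'evy's continuity theorem, which is the standard route and the one underlying the reference \cite{cohn}. (An alternative, more hands-on, approach would be Lindeberg's replacement argument, but the Fourier-analytic proof is shorter.)

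First I would reduce to the standardized case. Setting $Y_j = (X_j - \mu)/\sigma$ produces an i.i.d.\ sequence with $\mathbb{E}(Y_1) = 0$, $Var(Y_1) = 1$, and
\[
\frac{S_n - \mathbb{E}(S_n)}{\sqrt{Var(S_n)}} = \frac{1}{\sqrt{n}}\sum_{j=1}^n Y_j =: T_n ,
\]
so it suffices to show that $T_n$ converges in distribution to $N(0,1)$.

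Next I would bring in the characteristic function $\varphi(t) = \mathbb{E}(e^{itY_1})$. By independence and the i.i.d.\ assumption, the characteristic function of $T_n$ is $\varphi_{T_n}(t) = \bigl(\varphi(t/\sqrt{n})\bigr)^n$. Because $\mathbb{E}|Y_1|^2 < +\infty$, the function $\varphi$ is twice differentiable at the origin with $\varphi(0) = 1$, $\varphi'(0) = i\,\mathbb{E}(Y_1) = 0$, $\varphi''(0) = -\mathbb{E}(Y_1^2) = -1$, and hence a second-order expansion gives, as $u \to 0$,
\[
\varphi(u) = 1 - \tfrac{1}{2}u^2 + o(u^2) .
\]
Substituting $u = t/\sqrt{n}$ yields $\varphi(t/\sqrt{n}) = 1 - \tfrac{t^2}{2n} + o(1/n)$ for each fixed $t$, and then the elementary complex-analytic fact that $(1 + z_n/n)^n \to e^{z}$ whenever $z_n \to z$ in $\mathbb{C}$ gives $\varphi_{T_n}(t) \to e^{-t^2/2}$ pointwise in $t$.

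Finally, $e^{-t^2/2}$ is precisely the characteristic function of the $N(0,1)$ law, and in particular it is continuous at $t = 0$; L\'evy's continuity theorem therefore upgrades the pointwise convergence of characteristic functions to convergence in distribution of $T_n$ to $N(0,1)$, which is the assertion. The only delicate point — rather than a genuine obstacle — is controlling the Taylor remainder of $\varphi$ using just the second moment; the clean way is to write $\varphi(u) - 1 + \tfrac12 u^2 = \mathbb{E}\bigl(e^{iuY_1} - 1 - iuY_1 + \tfrac12 u^2 Y_1^2\bigr)$ and to bound the integrand by $\min\{|u|^3|Y_1|^3/6,\ u^2 Y_1^2\}$, after which dominated convergence (with dominating function $u^2 Y_1^2$ for $|u|\le 1$) gives the $o(u^2)$ estimate. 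Since the theorem is quoted from \cite{cohn}, no further detail is needed in the text.
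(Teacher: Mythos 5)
Your argument is correct and complete in outline: the reduction to the standardized sum, the factorization $\varphi_{T_n}(t)=(\varphi(t/\sqrt{n}))^n$, the second-order expansion of $\varphi$ justified by the bound $\bigl|e^{ix}-1-ix+\tfrac12 x^2\bigr|\leq\min\{|x|^3/6,\,x^2\}$ together with dominated convergence, the limit $(1+z_n/n)^n\to e^z$, and L\'evy's continuity theorem are exactly the standard ingredients, and each step is used correctly (the only implicit assumption, shared with the statement itself, is $\sigma^2>0$ so that the standardization is well defined). The paper does not supply its own proof of this theorem --- it only cites Theorem 10.3.16 of \cite{cohn} --- so there is nothing to compare against; your characteristic-function proof is precisely the classical route that the cited reference follows.
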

Note that under the assumptions of Theorem \ref{CLT}
\begin{equation}
\label{CLT_F1}
\frac{S_n-\mathbb{E}(S_n)}{\sqrt{Var(S_n)}}=\frac{S_n-n\mu}{\sigma\sqrt{n}}=\sqrt{n}\frac{\bar S_n-\mu}{\sigma},
\end{equation}
where $\displaystyle{\bar S_n=S_n/n}$.

The following result is a vary useful consequence of Borel-Cantelli lemma.
\begin{thm}(\cite{KLAN})
\label{BC_asconv}
 Let $\alpha>0$ and $K(p)\in [0,+\infty)$ for $p\geq 1$. In addition, let $Z_n$, $n\in\mathbb{N}$, be a sequence of random variables such that
\begin{equation}
	\Bigl(\mathbb{E}|Z_n|^p\Bigr)^{1/p}\leq K(p)\cdot n^{-\alpha}
\end{equation}
for all $p\geq 1$ and all $n\in\mathbb{N}$. Then for all $\varepsilon\in (0,\alpha)$ there exists a random variable $\eta_{\varepsilon}$ such that
\begin{equation}
	|Z_n|\leq \eta_{\varepsilon}\cdot n^{-\alpha+\varepsilon} \ \hbox{almost surely}
\end{equation}
for all $n\in\mathbb{N}$. Moreover, $\mathbb{E}|\eta_{\varepsilon}|^p<+\infty$ for all $p\geq 1$.
\end{thm}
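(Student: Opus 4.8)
The plan is to construct $\eta_\varepsilon$ explicitly as a weighted supremum of the $|Z_n|$ and then verify the two required properties directly. Fix $\varepsilon\in(0,\alpha)$ and set
\[
\eta_\varepsilon:=\sup_{n\in\mathbb{N}}\, |Z_n|\cdot n^{\alpha-\varepsilon}.
\]
As a countable supremum of $\Sigma/\mathcal{B}([0,+\infty])$-measurable maps this is an extended-real-valued random variable, and by its very definition the inequality $|Z_n|\le \eta_\varepsilon\cdot n^{-\alpha+\varepsilon}$ holds for every $n\in\mathbb{N}$ and every $\omega\in\Omega$. So the almost-sure bound claimed in the statement will be automatic once we know that $\eta_\varepsilon$ is finite almost surely; the whole content of the theorem is the moment estimate $\mathbb{E}|\eta_\varepsilon|^p<+\infty$ for all $p\ge 1$, which in particular forces $\eta_\varepsilon<+\infty$ a.s. After that one may freely redefine $\eta_\varepsilon$ to equal $0$ on the $\mathbb{P}$-null set $\{\eta_\varepsilon=+\infty\}$ to obtain a genuine real-valued random variable without affecting the a.s. bound.

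For the moment estimate I would first treat large exponents. Choose and fix $p_0>1/\varepsilon$. For any $p\ge p_0$, bounding the supremum by the sum (so that $|\eta_\varepsilon|^p=\sup_n|Z_n|^p n^{(\alpha-\varepsilon)p}\le\sum_n|Z_n|^p n^{(\alpha-\varepsilon)p}$) and using Tonelli's theorem (Theorem \ref{tonelli_thm}) to interchange expectation and summation,
\[
\mathbb{E}|\eta_\varepsilon|^p\le \mathbb{E}\Bigl(\sum_{n=1}^{\infty}|Z_n|^p\,n^{(\alpha-\varepsilon)p}\Bigr)=\sum_{n=1}^{\infty}n^{(\alpha-\varepsilon)p}\,\mathbb{E}|Z_n|^p\le K(p)^p\sum_{n=1}^{\infty}n^{-\varepsilon p},
\]
where the last step uses the hypothesis in the form $\mathbb{E}|Z_n|^p\le K(p)^p n^{-\alpha p}$. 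The series $\sum_n n^{-\varepsilon p}$ converges since $\varepsilon p\ge \varepsilon p_0>1$, so $\mathbb{E}|\eta_\varepsilon|^p<+\infty$ for every $p\ge p_0$.

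It remains to cover $1\le p<p_0$, which is where the only genuine subtlety lies: the direct estimate above produces a convergent series only when $\varepsilon p>1$, so it fails for small $p$. Here I would instead invoke the Lyapunov inequality (a consequence of Jensen's inequality, Theorem \ref{JEN_ineq}, applied to the convex map $x\mapsto |x|^{p_0/p}$ and the random variable $|\eta_\varepsilon|^p$ on the probability space $(\Omega,\Sigma,\mathbb{P})$): for $1\le p<p_0$ one gets $\mathbb{E}|\eta_\varepsilon|^p\le\bigl(\mathbb{E}|\eta_\varepsilon|^{p_0}\bigr)^{p/p_0}<+\infty$ by the previous step. Combining the two ranges yields $\mathbb{E}|\eta_\varepsilon|^p<+\infty$ for all $p\ge 1$, hence $\eta_\varepsilon<+\infty$ almost surely, and the construction in the first paragraph delivers the claimed pathwise bound. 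The main obstacle is precisely this patching of small and large exponents; the rest is a one-line application of Tonelli's theorem together with the hypothesis.
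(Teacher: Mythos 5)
Your proof is correct. The paper itself gives no proof of Theorem \ref{BC_asconv} — it is stated with a citation to \cite{KLAN} — and your argument is precisely the standard one from that reference: define $\eta_\varepsilon=\sup_n n^{\alpha-\varepsilon}|Z_n|$, dominate the supremum by the series, apply Tonelli and the moment hypothesis with any fixed $p_0>\max\{1,1/\varepsilon\}$ so that $\sum_n n^{-\varepsilon p_0}<+\infty$, and descend to $1\le p<p_0$ via the Lyapunov inequality. It is worth noting that the paper advertises the result as ``a consequence of the Borel--Cantelli lemma''; a Borel--Cantelli route (Markov's inequality plus summability of $\mathbb{P}(|Z_n|>n^{-\alpha+\varepsilon})$) would yield the almost sure bound with \emph{some} finite $\eta_\varepsilon$, but would not by itself deliver the integrability $\mathbb{E}|\eta_\varepsilon|^p<+\infty$ for all $p\ge 1$ that the statement asserts, so your direct $L^p$ argument is actually the stronger and more appropriate one.
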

Below we recall the famous Burkholder inequality, which is nowadays widely used when establishing error bounds for Monte Carlo algorithms (for references, see, \cite{BURKH}, \cite{RKYW1}). For the exposition of the inequality we need a notion of a filtration and martingale in the discrete-time case.

A {\it discrete filtration} on a probability space $(\Omega,\Sigma,\mathbb{P})$ is an increasing family $(\Sigma_n)_{n\in\mathbb{N}_0}$ of sub-$\sigma$-fields of $\Sigma$. A sequence $M=(M_n)_{n\in\mathbb{N}_0}$ of $\mathbb{R}^d$-valued random variables on $(\Omega,\Sigma,\mathbb{P})$ is said to be a {\it discrete-time martingale} with respect to $(\Sigma_n)_{n\in\mathbb{N}_0}$ provided that
\begin{itemize}
	\item [(i)] $M=(M_n)_{n\in\mathbb{N}_0}$ is adapted to $(\Sigma_n)_{n\in\mathbb{N}_0}$, i.e., $\sigma(M_n)\subset\Sigma_n$ for all $n\in\mathbb{N}_0$, 
	\item [(ii)] $\mathbb{E}\|M_n\|_2<+\infty$ for all $n\in\mathbb{N}_0$,
	\item [(iii)] $\mathbb{E}(M_{n+1} \ | \ \Sigma_n)=M_n$ for all $n\in\mathbb{N}_0$.
\end{itemize}  
We now recall two main inequalities for discrete-time martingales.
\begin{thm} (Hoeffding inequality - orginal version, \cite{Hoeff1})
\label{H_ineq}
 Let $(X_i)_{i=1,2,\ldots,n}$ be a sequence of independent random variables such that $a_i\leq X_i\leq b_i$ with probability one for $i=1,2,\ldots,n$ and for some real numbers $a_1,\ldots,a_n$, $b_1,\ldots,b_n$. Let $\displaystyle{S_n=\sum\limits_{k=1}^n X_k}$. Then for all $t>0$
\begin{equation}
	\mathbb{P}\Bigl(|S_n-\mathbb{E}(S_n)|>t\Bigr)\leq 2\exp\Bigl(-\frac{2t^2}{\sum\limits_{i=1}^n (b_i-a_i)^2}\Bigr).
\end{equation}
\end{thm}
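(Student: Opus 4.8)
The plan is to run the classical Chernoff–Cram\'er exponential moment argument, with Hoeffding's lemma supplying the key one-variable bound. First I would reduce to the centered case: set $Y_i = X_i - \mathbb{E}(X_i)$, so that $\mathbb{E}(Y_i)=0$ and $a_i' \le Y_i \le b_i'$ almost surely with $b_i'-a_i' = b_i-a_i$, and $S_n - \mathbb{E}(S_n) = \sum_{i=1}^n Y_i$. Thus it suffices to bound $\mathbb{P}(\sum_{i=1}^n Y_i > t)$ for mean-zero, bounded, independent $Y_i$; the two-sided estimate with the factor $2$ then follows by applying the one-sided bound also to $(-Y_i)_{i=1,\dots,n}$ and using $\mathbb{P}(|Z|>t)\le \mathbb{P}(Z>t)+\mathbb{P}(-Z>t)$.

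For the one-sided bound, fix $s>0$ and apply the Markov inequality to the nonnegative random variable $e^{s\sum_i Y_i}$:
\begin{equation}
	\mathbb{P}\Bigl(\sum_{i=1}^n Y_i > t\Bigr) = \mathbb{P}\Bigl(e^{s\sum_i Y_i} > e^{st}\Bigr) \le e^{-st}\,\mathbb{E}\Bigl(e^{s\sum_i Y_i}\Bigr) = e^{-st}\prod_{i=1}^n \mathbb{E}\bigl(e^{sY_i}\bigr),
\end{equation}
where the last equality uses independence of the $Y_i$ (hence of the $e^{sY_i}$) together with the fact that each $e^{sY_i}$ is bounded, so all expectations are finite. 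The heart of the matter is then Hoeffding's lemma: if $\mathbb{E}(Y)=0$ and $a \le Y \le b$ almost surely, then $\mathbb{E}(e^{sY}) \le \exp\bigl(s^2(b-a)^2/8\bigr)$. I would prove this by first using convexity of $x\mapsto e^{sx}$ to write $e^{sY} \le \frac{b-Y}{b-a}e^{sa} + \frac{Y-a}{b-a}e^{sb}$ pointwise on $[a,b]$; taking expectations and using $\mathbb{E}(Y)=0$ gives $\mathbb{E}(e^{sY}) \le \frac{b}{b-a}e^{sa} - \frac{a}{b-a}e^{sb} =: e^{\psi(u)}$ with $u = s(b-a)$ and $\psi(u) = \gamma u + \ln(1-\gamma+\gamma e^u)$ for $\gamma = -a/(b-a) \in [0,1]$ (note $a\le 0 \le b$ since $\mathbb{E}(Y)=0$). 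A direct computation shows $\psi(0)=0$, $\psi'(0)=0$, and $\psi''(u) = \gamma(1-\gamma)e^u/(1-\gamma+\gamma e^u)^2 \le 1/4$ (the last step is the quadratic inequality $p(1-p)\le 1/4$ with $p = \gamma e^u/(1-\gamma+\gamma e^u)$), so Taylor's theorem with remainder yields $\psi(u) \le u^2/8 = s^2(b-a)^2/8$.

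Combining these ingredients, $\mathbb{P}(\sum_i Y_i > t) \le \exp\bigl(-st + \frac{s^2}{8}\sum_{i=1}^n (b_i-a_i)^2\bigr)$, and I would finish by optimizing the exponent over $s>0$: it is minimized at $s^\ast = 4t/\sum_{i=1}^n (b_i-a_i)^2$, producing the bound $\exp\bigl(-2t^2/\sum_{i=1}^n (b_i-a_i)^2\bigr)$. Adding the symmetric tail doubles the constant, which is the claimed inequality. I expect the only delicate point to be the estimate $\psi''(u) \le 1/4$ in Hoeffding's lemma — everything else (centering, Chernoff bounding, the convexity step, the final optimization in $s$) is routine — so I would present that computation carefully and leave the rest in condensed form.
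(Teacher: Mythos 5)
The paper does not prove this theorem at all: it is stated as a known result with a citation to Hoeffding's 1963 paper, so there is no in-text argument to compare against. Your proposal is the standard Chernoff--Cram\'er argument combined with Hoeffding's lemma, and it is essentially correct and complete: the centering step, the reduction of the two-sided bound to two one-sided bounds, the use of Markov's inequality and independence to factor the exponential moment, the convexity bound, the estimate $\psi''\le 1/4$ via $q(1-q)\le 1/4$, and the optimization $s^\ast = 4t/\sum_i(b_i-a_i)^2$ are all as they should be. One small slip: with $\gamma=-a/(b-a)$ one has $sa=-\gamma u$, so the correct cumulant is $\psi(u)=-\gamma u+\ln(1-\gamma+\gamma e^u)$, not $+\gamma u$; with your sign, $\psi'(0)=2\gamma\neq 0$ and the Taylor step would fail. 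Since you correctly assert $\psi'(0)=0$ and your formula for $\psi''$ matches the correct $\psi$ (the linear term does not affect the second derivative), this is clearly a typo rather than a gap. You may also want to note the trivial degenerate case $b_i=a_i$ for all $i$ (where the stated bound involves division by zero but the left-hand side vanishes), though this is cosmetic.
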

\begin{thm} (Hoeffding-Azuma inequality, \cite{SLUO})
\label{HA_ineq}
 Let $(M_j)_{j=0,1,\ldots,n}$ be a real-valued martingale such that $|M_j-M_{j-1}|\leq a_j$ a.s. for all $j=1,2,\ldots,n$. Then, for all $r\geq 0$
 \begin{equation}
 \mathbb{P}\Bigl(|M_n-M_0|\geq r\Bigr)\leq 2 \exp\Biggl(-\frac{r^2}{2\sum\limits_{j=1}^na_j^2}\Biggr).
 \end{equation}
\end{thm}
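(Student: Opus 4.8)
The plan is to use the standard exponential-moment (Chernoff) method, which is the classical route to Hoeffding-type concentration. First I would dispose of the trivial cases $r=0$ (the right-hand side is then at least $2$) and $A:=\sum_{j=1}^n a_j^2=0$ (then $M_n=M_0$ almost surely, so the left-hand side vanishes for $r>0$), and assume henceforth $r>0$ and $A>0$. I would work with the natural filtration $\Sigma_j=\sigma(M_0,M_1,\ldots,M_j)$ and the martingale differences $D_j:=M_j-M_{j-1}$, which satisfy $\mathbb{E}(D_j\mid\Sigma_{j-1})=0$ and $|D_j|\leq a_j$ almost surely. By Markov's inequality applied to the nonnegative random variable $e^{\lambda(M_n-M_0)}$, for every $\lambda>0$ one has $\mathbb{P}(M_n-M_0\geq r)\leq e^{-\lambda r}\,\mathbb{E}\bigl(e^{\lambda(M_n-M_0)}\bigr)$, so it remains to bound the exponential moment.

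The heart of the argument is a conditional version of Hoeffding's lemma: for a random variable $X$ with $\mathbb{E}(X\mid\mathcal{G})=0$ and $|X|\leq a$ almost surely, one has $\mathbb{E}(e^{\lambda X}\mid\mathcal{G})\leq e^{\lambda^2a^2/2}$ almost surely. I would prove this by using convexity of $x\mapsto e^{\lambda x}$ on $[-a,a]$ to get the pointwise bound $e^{\lambda x}\leq\frac{a-x}{2a}e^{-\lambda a}+\frac{a+x}{2a}e^{\lambda a}$, taking conditional expectations (the term linear in $x$ drops out since $\mathbb{E}(X\mid\mathcal{G})=0$), which leaves $\cosh(\lambda a)$, and then invoking $\cosh t\leq e^{t^2/2}$ from a termwise Taylor-series comparison. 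Next I would iterate: writing $M_n-M_0=\sum_{j=1}^n D_j$ and conditioning on $\Sigma_{n-1}$ via the tower property gives $\mathbb{E}(e^{\lambda(M_n-M_0)})\leq e^{\lambda^2a_n^2/2}\,\mathbb{E}(e^{\lambda(M_{n-1}-M_0)})$, and an induction on $n$ then yields $\mathbb{E}(e^{\lambda(M_n-M_0)})\leq e^{\lambda^2A/2}$.

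Combining the two estimates, $\mathbb{P}(M_n-M_0\geq r)\leq e^{-\lambda r+\lambda^2A/2}$ for all $\lambda>0$; choosing $\lambda=r/A$ to minimize the exponent gives the one-sided bound $\mathbb{P}(M_n-M_0\geq r)\leq e^{-r^2/(2A)}$. Finally I would observe that $(-M_j)_{j}$ is again a martingale whose increments are bounded by the same $a_j$, apply the one-sided bound to it to obtain $\mathbb{P}(M_n-M_0\leq -r)\leq e^{-r^2/(2A)}$, and add the two estimates to conclude $\mathbb{P}(|M_n-M_0|\geq r)\leq 2e^{-r^2/(2A)}$, which is exactly the claimed inequality. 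The main obstacle — really the only nontrivial point — is the conditional Hoeffding lemma together with checking that the conditional-expectation manipulations and the "pull out what is $\Sigma_{j-1}$-measurable" step in the tower-property iteration hold almost surely; the optimization over $\lambda$ and the symmetrization are routine.
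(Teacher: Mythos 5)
Your proof is correct. Note, however, that the paper does not actually prove Theorem \ref{HA_ineq}: it is stated in the auxiliary-results chapter as a known fact with a citation to \cite{SLUO}, so there is no in-paper argument to compare against. What you give is the standard Chernoff-bound proof of the Azuma--Hoeffding inequality, and every step checks out: the conditional Hoeffding lemma via the convexity bound $e^{\lambda x}\leq\frac{a-x}{2a}e^{-\lambda a}+\frac{a+x}{2a}e^{\lambda a}$ and $\cosh t\leq e^{t^2/2}$ does yield the constant $a_j^2/2$ per increment (matching $(b-a)^2/8$ with $b-a=2a_j$), the tower-property iteration is legitimate because $e^{\lambda(M_n-M_0)}$ is bounded (so all conditional expectations exist and the $\Sigma_{j-1}$-measurable factor can be pulled out), the choice $\lambda=r/A$ optimizes the exponent, and the symmetrization via $(-M_j)_j$ plus a union bound gives the two-sided statement with the factor $2$. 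The only microscopic gap is that the convexity inequality divides by $2a_j$, so the case $a_j=0$ for an individual $j$ (with $A>0$) needs the one-line remark that then $D_j=0$ a.s.\ and $\mathbb{E}(e^{\lambda D_j}\mid\Sigma_{j-1})=1$; your disposal of the case $A=0$ does not cover this. With that sentence added, the argument is complete.
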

\begin{thm}(Burkholder's inequality, \cite{BURKH}) 
\label{BDG_DISC}
For each $p\in (1,+\infty)$ there exist positive constants $c_p$ and $C_p$ such
that for every $\mathbb{R}^d$-valued discrete-time martingale $M=(M_n)_{n\in\mathbb{N}_0}$ and for every $n\in\mathbb{N}_0$ we have
\begin{equation}
	c_p\|[M]_n^{1/2}\|_{L^p(\Omega)}\leq \Bigl\|\max\limits_{0\leq j \leq n} \|M_j\|_2 \Bigl\|_{L^p(\Omega)}\leq C_p\|[M]_n^{1/2}\|_{L^p(\Omega)},
\end{equation}
where $\displaystyle{[M]_n=\|M_0\|^2_2+\sum\limits_{k=1}^n \|M_k-M_{k-1}\|^2_2}$ is the quadratic variation of $M$.
\end{thm}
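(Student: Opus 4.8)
The plan is to prove the two-sided estimate by the classical \emph{good-$\lambda$ inequality} method, comparing directly the two random variables $M^*_n:=\max_{0\le j\le n}\|M_j\|_2$ and $S_n:=[M]_n^{1/2}$. The exponent $p=2$ serves as the base case and is elementary: by orthogonality of martingale increments (using $\mathbb{E}\langle M_{k-1},M_k-M_{k-1}\rangle=0$ and telescoping) one gets $\mathbb{E}\|M_n\|_2^2=\mathbb{E}\|M_0\|_2^2+\sum_{k=1}^n\mathbb{E}\|M_k-M_{k-1}\|_2^2=\mathbb{E}(S_n^2)$; combining this with Doob's $L^2$-maximal inequality $\mathbb{E}((M^*_n)^2)\le 4\,\mathbb{E}\|M_n\|_2^2$ and the trivial bound $\mathbb{E}\|M_n\|_2^2\le\mathbb{E}((M^*_n)^2)$ yields the claim for $p=2$ with $c_2\le1$, $C_2\le2$. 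Note that since $[M]_n$ is the $\mathbb{R}^d$-valued quadratic variation and the $L^2$ orthogonality identity $\mathbb{E}\|M_k-M_j\|_2^2=\sum_{i=j+1}^k\mathbb{E}\|M_i-M_{i-1}\|_2^2$ holds verbatim in $\mathbb{R}^d$, no reduction to the scalar case is needed.

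For general $p\in(1,+\infty)$ I would establish two good-$\lambda$ inequalities. Fix $\beta>1$, a small $\delta>0$ and $\lambda>0$, and introduce the stopping times $\tau=\inf\{k:\|M_k\|_2>\lambda\}$, $\sigma=\inf\{k:\|M_k\|_2>\beta\lambda\}$, $\rho=\inf\{k:S_k>\delta\lambda\}$, each $\wedge\,n$. On $\{M^*_n>\beta\lambda,\ S_n\le\delta\lambda\}$ one has $\tau<\sigma\le n<\rho$, and the stopped process $(M_{k\wedge\sigma}-M_{k\wedge\tau})_k$ is a martingale whose bracket is bounded by $S_n^2\le\delta^2\lambda^2$ except for one overshoot increment $\|M_\sigma-M_{\sigma-1}\|_2$, which is itself $\le\delta\lambda$ on that event (since each increment of $M$ contributes to $S_n^2\le\delta^2\lambda^2$). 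Applying the $L^2$ estimate to this stopped martingale on $\{\tau<\infty\}\in\Sigma_\tau$ and Chebyshev gives $\mathbb{P}(M^*_n>\beta\lambda,\ S_n\le\delta\lambda)\le \tfrac{C\delta^2}{(\beta-1)^2}\,\mathbb{P}(M^*_n>\lambda)$. Symmetrically, with $\rho'=\inf\{k:S_k>\beta\lambda\}$, $\rho=\inf\{k:S_k>\lambda\}$ together with a further stopping time at which $M^*$ first exceeds $\delta\lambda$, one proves $\mathbb{P}(S_n>\beta\lambda,\ M^*_n\le\delta\lambda)\le\varepsilon(\delta)\,\mathbb{P}(S_n>\lambda)$ with $\varepsilon(\delta)\to0$ as $\delta\to0$; here the crucial discrete-time point is that the bracket overshoot at $\rho$ equals $\|M_\rho-M_{\rho-1}\|_2^2\le 4(M^*_n)^2\le4\delta^2\lambda^2$, so it is harmless, while the remaining tail bracket is estimated by $L^2$ after stopping $M$ where $M^*$ exceeds $\delta\lambda$.

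The passage to $L^p$ uses the standard distributional lemma: if $X,Y\ge0$ satisfy $\mathbb{P}(X>\beta\lambda,\ Y\le\delta\lambda)\le\varepsilon\,\mathbb{P}(X>\lambda)$ for all $\lambda>0$ and if $\beta^p\varepsilon<1$, then $\|X\|_p\le\frac{\beta^p}{1-\beta^p\varepsilon}\,\delta^{-p}\,\|Y\|_p$ whenever $\|X\|_p<+\infty$. Feeding in the first inequality with $\delta$ small enough that $\beta^p C\delta^2(\beta-1)^{-2}<1$ gives $\|M^*_n\|_p\le C_p\|S_n\|_p$, and feeding in the second gives $\|S_n\|_p\le c_p^{-1}\|M^*_n\|_p$; the constants depend only on $p$. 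The a priori finiteness of $\|M^*_n\|_p$ (and of $\|S_n\|_p$) required to start the iteration is obtained by first replacing $M$ by the bounded martingale obtained by stopping when $\|M_k\|_2\vee S_k$ first exceeds $m$, proving the estimates with $m$-independent constants, and letting $m\to+\infty$ by monotone convergence.

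The hard part is not the $L^p$ step, which is routine once the good-$\lambda$ inequalities are in hand, but the stopping-time bookkeeping in the two good-$\lambda$ bounds: in continuous time the stopped bracket at a stopping instant is automatically below the threshold, whereas in discrete time there is always a single overshoot increment that must be separately controlled, and this must be arranged uniformly in $p$ so as to cover both $p\in(1,2)$ and $p\ge2$, together with the truncation that makes the distributional lemma applicable with finite $L^p$ norms. An alternative that sidesteps the good-$\lambda$ calculus altogether is Burkholder's original method, which builds explicit functions $U,V$ on $\mathbb{R}^d\times[0,+\infty)$ with $U\ge V$ and a supermartingale-type property along $(M_k,[M]_k)$, directly yielding the inequality with sharp constants; I would mention this as a remark but carry out the proof via good-$\lambda$.
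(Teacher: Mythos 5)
The paper does not actually prove Theorem \ref{BDG_DISC}; it states it with a citation to \cite{BURKH} and only derives the elementary consequences \eqref{BDG_UPP_EST} afterwards, so there is no in-paper argument to compare against. Your overall strategy — the $p=2$ case by orthogonality of increments plus Doob, then two good-$\lambda$ inequalities between $M_n^*=\max_{0\le j\le n}\|M_j\|_2$ and $S_n=[M]_n^{1/2}$, then the standard distributional lemma and a truncation to secure a priori finiteness — is indeed the classical route, and the $p=2$ base case, the distributional lemma, and the reduction by stopping are all essentially correct (modulo the minor point that the stopped martingale is not \emph{bounded}, only dominated by $m+\max_k\|M_k-M_{k-1}\|_2$, which lies in $L^p$ once one assumes, as one may, that the right-hand side of the inequality being proved is finite, since $\max_k\|M_k-M_{k-1}\|_2\le S_n$ and $\le 2M_n^*$).

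The genuine gap is in the good-$\lambda$ step itself, precisely at the overshoot you flag. In the first inequality you bound $\mathbb{P}(M_n^*>\beta\lambda,\ S_n\le\delta\lambda)$ by Chebyshev applied to $\mathbb{E}\|M_{\sigma\wedge\rho\wedge n}-M_{\tau\wedge\rho\wedge n}\|_2^2$, which by the $L^2$ isometry equals the expectation of the stopped bracket $\sum_{\tau<k\le\sigma\wedge\rho\wedge n}\|M_k-M_{k-1}\|_2^2$ over the \emph{whole} event $\{\tau\le n\}=\{M_n^*>\lambda\}$, not merely over the good event $\{S_n\le\delta\lambda\}$. Your justification that the overshoot increment is $\le\delta\lambda$ ``on that event'' therefore does not apply where it is needed: off the good event the increment $\|M_\rho-M_{\rho-1}\|_2$ at the first time the bracket exceeds $\delta^2\lambda^2$ can be arbitrarily large, and the estimate $\mathbb{E}[\text{bracket}]\le C\delta^2\lambda^2\,\mathbb{P}(M_n^*>\lambda)$ fails as stated. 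The same defect occurs in the second good-$\lambda$ inequality, where $\|M_\rho-M_{\rho-1}\|_2^2\le 4(M_n^*)^2\le 4\delta^2\lambda^2$ again uses the good event inside an expectation taken over a larger one. The missing idea is a device that makes the increment bound \emph{predictable} so that stopping genuinely caps the bracket: the standard choice is the Davis decomposition $M=G+H$, where $G$ has increments dominated by the $\Sigma_{k-1}$-measurable quantity $2\max_{j<k}\|M_j-M_{j-1}\|_2$ (so that the stopping time $\rho=\inf\{k:[G]_{k-1}+(2\max_{j<k}\|M_j-M_{j-1}\|_2)^2>\delta^2\lambda^2\}$ yields $[G]_{\rho\wedge n}\le\delta^2\lambda^2$ almost surely, with no overshoot), while $H$ is controlled through $\max_k\|M_k-M_{k-1}\|_2\le S_n\wedge 2M_n^*$. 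Without this (or an equivalent predictable-domination argument, or Burkholder's function-theoretic method that you mention only as a remark), the two good-$\lambda$ inequalities are not established and the proof does not close.
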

 In the case when $p\in [2,+\infty)$ we show some useful upper estimates for $\|[M]_n^{1/2}\|_{L^p(\Omega)}$. Firstly, let us denote by $\Delta M_k:=M_k-M_{k-1}$ for $k=,1,2,\ldots,n$ and $\Delta M_0:=M_0$. Then, by the triangle inequality
\begin{displaymath}
   \|[M]_n^{1/2}\|^2_{L^p(\Omega)}=\|[M]_n\|_{L^{p/2}(\Omega)}=\Bigl\|\sum_{k=0}^n\|\Delta M_k\|_2^2\Bigl\|_{L^{p/2}(\Omega)}\leq \sum_{k=0}^n\Bigl\|\|\Delta M_k\|_2^2\Bigl\|_{L^{p/2}(\Omega)}=\sum_{k=0}^n \|\Delta M_k\|^2_{L^p(\Omega,\mathbb{R}^d)},
\end{displaymath}
hence
\begin{equation}
    \Bigl\|\max\limits_{0\leq j \leq n} \|M_j\|_2 \Bigl\|_{L^p(\Omega)}\leq C_p\Bigl(\sum_{k=0}^n \|\Delta M_k\|^2_{L^p(\Omega,\mathbb{R}^d)}\Bigr)^{1/2}.
\end{equation}
Next, by the H\"older inequality 
\begin{eqnarray}
    \sum_{k=0}^n\|\Delta M_k\|^2_{L^p(\Omega,\mathbb{R}^d)}\leq \Bigl(\sum_{k=0}^n \|\Delta M_k\|^p_{L^p(\Omega,\mathbb{R}^d)}\Bigr)^{2/p}\cdot (n+1)^{1-\frac{2}{p}}.
\end{eqnarray}
Combining the estimates above we obtain
\begin{equation}
\label{BDG_UPP_EST}
    \Bigl\|\max\limits_{0\leq j \leq n} \|M_j\|_2 \Bigl\|_{L^p(\Omega)}\leq C_p\Bigl(\sum_{k=0}^n \|\Delta M_k\|^2_{L^p(\Omega,\mathbb{R}^d)}\Bigr)^{1/2}\leq C_p\cdot (n+1)^{\frac{1}{2}-\frac{1}{p}}\cdot\Bigl(\sum_{k=0}^n \|\Delta M_k\|^p_{L^p(\Omega,\mathbb{R}^d)}\Bigr)^{1/p}.
\end{equation}

We also recall definition of a martingale and a local martingale in continuous time.  

An $\mathbb{R}^d$-valued stochastic process $M=(M(t))_{t\in [0,+\infty)}$  on $(\Omega,\Sigma,\mathbb{P})$ is said to be a {\it continuous-time martingale} with respect to the given filtration $(\Sigma_t)_{t\in [0,+\infty)}$ provided that
\begin{itemize}
	\item [(i)] $M=(M(t))_{t\geq 0}$ is adapted to $(\Sigma_t)_{t\in [0,+\infty)}$, i.e., $\sigma(M(t))\subset\Sigma_t$ for all $t\in [0,+\infty)$, 
	\item [(ii)] $\mathbb{E}\|M(t)\|_2<+\infty$ for all $t\in [0,+\infty)$,
	\item [(iii)] $\mathbb{E}(M(t) \ | \ \Sigma_s)=M(s)$ for all $0\leq s<t$.
\end{itemize}
A random variable $\tau:\Omega\to [0,+\infty]$ is said to be {\it stopping time} with respect to the filtration $(\Sigma_t)_{t\in [0,+\infty)}$ if, for every $t\in [0,+\infty)$,
\begin{equation}
	\{\tau\leq t\}\in\Sigma_t.
\end{equation}
We also set
\begin{equation}
	\Sigma_{\tau}=\{A\in\Sigma \ | \ \forall_{t\in [0,+\infty)} \ A\cap\{\tau\leq t\}\in\Sigma_t\}.
\end{equation}
An $\mathbb{R}^d$-valued stochastic process $M=(M(t))_{t\in [0,+\infty)}$  on $(\Omega,\Sigma,\mathbb{P})$ is said to be a {\it continuous-time local  martingale} with respect to the given filtration $(\Sigma_t)_{t\in [0,+\infty)}$ if it is adapted to $(\Sigma_t)_{t\in [0,+\infty)}$ and  if there exists an increasing sequence $(\tau_n)_{n\in\mathbb{N}}$ of  $(\Sigma_t)_{t\in [0,+\infty)}$-stopping times such that 
\begin{itemize}
	\item [(i)] $\mathbb{P}(\lim\limits_{n\to +\infty}\tau_n=+\infty)=1$,
	\item [(ii)] for every $n$ the process $(M(t\wedge\tau_n),\Sigma_t)_{t\in [0,+\infty)}$ is a martingale.
\end{itemize}
Let $\{\mathcal{F}_1,\ldots,\mathcal{F}_m\}$ be a family of sub-$\sigma$-fields of $\Sigma$. W call this collection {\it a family of independent $\sigma$-fields} or we say that the $\sigma$-fields $\mathcal{F}_1,\ldots,\mathcal{F}_m$ are {\it (mutually) independent} if, for every $A_1\in\mathcal{F}_1,\ldots,A_m\in\mathcal{F}_m$,
\begin{equation}
	\mathbb{P}(A_1\cap\ldots \cap A_m)=\mathbb{P}(A_1)\cdot\ldots\cdot\mathbb{P}(A_m).
\end{equation} 
Independence of events from $\Sigma$ can also be expressed in the term of independent $\sigma$-fields. Namely, the events $A_1,\ldots,A_m\in\Sigma$ are independent iff the $\sigma$-fields $\sigma(\{A_1\}),\ldots,\sigma(\{A_m\})$ are independent, where $\sigma(\{A_i\})=\{\emptyset,\Omega,A_i,\Omega\setminus A_i\}$.  

Random elements $\xi_j:\Omega\to X_j$, with values in arbitrary measurable spaces $(X_j,\mathcal{A}_j)$, $j=1,2,\ldots,m$, are {\it independent} if and only if the $\sigma$-fields $\sigma(\xi_1),\ldots,\sigma(\xi_m)$ are independent. Note that the independence of the events $A_1,\ldots,A_m\in\Sigma$ is equivalent  to the independence of the random variables $\mathbf{1}_{A_1},\ldots,\mathbf{1}_{A_m}$.

For a (possibly infinite) family $(\xi_i)_{i\in I}$ of random elements we say that they are {\it independent} if and only if  the random elements $\xi_{i_1},\ldots,\xi_{i_m}$ are independent for every choice of a finite number $i_1,\ldots,i_m$ of distinct indices in $I$. An analogous definition holds for an infinite family $(\mathcal{F}_i)_{i\in I}$ of  sub-$\sigma$-fields of $\Sigma$. 

Let $(\mathcal{A}_i)_{i\in I}$ be a family of (arbitrary) subsets of $\Sigma$. We say that $\mathcal{A}_i$'s are {\it (mutually) independent} iff for any finite subset $J$ of $I$ and for any $A_j\in\mathcal{A}_{j}$, $j\in J$, it holds
\begin{equation}
	\mathbb{P}\Bigl(\bigcap_{j\in J}A_j\Bigr)=\prod\limits_{j\in J}\mathbb{P}(A_j).
\end{equation}
\begin{lem}
\label{indep_3}
Let $(\mathcal{A}_i)_{i\in I}$ be a family of $\pi$-systems and $\mathcal{A}_i\subset\Sigma$ for $i\in I$. Then $(\sigma(\mathcal{A}_i))_{i\in I}$ is a family of independent $\sigma$-fields iff $(\mathcal{A}_i)_{i\in I}$ are independent. 
\end{lem}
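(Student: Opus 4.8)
The plan is to deduce everything from the Dynkin $\pi$--$\lambda$ theorem (see \cite{cohn}), exploiting the fact that the defining equality for independence, $\mathbb{P}(\bigcap_j A_j)=\prod_j\mathbb{P}(A_j)$, is stable under the operations that characterise a $\lambda$-system. The ``only if'' direction is immediate: since $\mathcal{A}_i\subseteq\sigma(\mathcal{A}_i)$ for every $i$, if $(\sigma(\mathcal{A}_i))_{i\in I}$ is an independent family of $\sigma$-fields then $(\mathcal{A}_i)_{i\in I}$ inherits independence directly from the definition. So the content is the ``if'' direction.

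For that, I would first reduce to finitely many indices: by the definition of independence of an infinite family it suffices to fix a finite $J\subseteq I$, relabel it $\{1,\dots,n\}$, and prove $\mathbb{P}\bigl(\bigcap_{j=1}^n B_j\bigr)=\prod_{j=1}^n\mathbb{P}(B_j)$ for all $B_j\in\sigma(\mathcal{A}_j)$. A convenient preliminary is to replace each $\mathcal{A}_i$ by $\mathcal{A}_i\cup\{\Omega\}$: this is still a $\pi$-system, generates the same $\sigma$-field, and preserves independence (any occurrence of $\Omega$ in a finite intersection can be deleted, contributing the factor $\mathbb{P}(\Omega)=1$, reducing to a smaller index set with sets from the original $\pi$-systems). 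Hence I may assume $\Omega\in\mathcal{A}_i$ for all $i$.

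Then I would enlarge one slot at a time. Fix $A_2\in\mathcal{A}_2,\dots,A_n\in\mathcal{A}_n$, put $C=A_2\cap\dots\cap A_n$, and set
\[
\mathcal{D}_1=\Bigl\{B\in\Sigma:\ \mathbb{P}(B\cap C)=\mathbb{P}(B)\cdot\textstyle\prod_{j=2}^n\mathbb{P}(A_j)\Bigr\}.
\]
Applying the hypothesis to the index sets $\{1,\dots,n\}$ and $\{2,\dots,n\}$ gives $\mathcal{A}_1\subseteq\mathcal{D}_1$ and $\Omega\in\mathcal{D}_1$; moreover $\mathcal{D}_1$ is closed under proper set differences because $\mathbb{P}((B_2\setminus B_1)\cap C)=\mathbb{P}(B_2\cap C)-\mathbb{P}(B_1\cap C)$ when $B_1\subseteq B_2$, and under increasing unions by continuity of $\mathbb{P}$ from below, so $\mathcal{D}_1$ is a $\lambda$-system. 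By the $\pi$--$\lambda$ theorem $\sigma(\mathcal{A}_1)\subseteq\mathcal{D}_1$, which yields the product formula whenever $B_1\in\sigma(\mathcal{A}_1)$ and $B_2\in\mathcal{A}_2,\dots,B_n\in\mathcal{A}_n$. Next I would iterate: with $B_1\in\sigma(\mathcal{A}_1)$ and $A_3,\dots,A_n$ fixed, the analogous collection $\mathcal{D}_2$ for the second slot contains $\mathcal{A}_2$, its membership of $\Omega$ being exactly the previous step specialised to $A_2=\Omega$, and is again a $\lambda$-system, hence $\sigma(\mathcal{A}_2)\subseteq\mathcal{D}_2$; after $n$ such passes every $\mathcal{A}_j$ has been replaced by $\sigma(\mathcal{A}_j)$ and the claim follows.

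The argument is essentially routine once this scaffolding is in place; the only point requiring genuine care is the bookkeeping in the reduction to a fixed finite $J$ combined with the $\Omega$-adjoining trick, which is precisely what legitimises treating ``missing'' indices while checking that each $\mathcal{D}_k$ is a $\lambda$-system. I do not expect any substantive obstacle beyond that.
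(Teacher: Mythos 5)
Your proof is correct. The paper states this lemma without proof (it is quoted as a standard fact from the cited references), and your argument is exactly the canonical one: reduce to a finite index set, adjoin $\Omega$ to each $\pi$-system, and upgrade one coordinate at a time by checking that the class of sets satisfying the product formula is a $\lambda$-system and invoking the Dynkin $\pi$--$\lambda$ theorem. All the delicate points --- why adjoining $\Omega$ preserves independence, why $\Omega\in\mathcal{D}_k$ at each stage, and why $\mathcal{D}_k$ is closed under proper differences and increasing unions --- are handled correctly, so there is nothing to add.
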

W also say that the random element $\xi$ is independent of the sub-$\sigma$-field $\mathcal{G}\subset\Sigma$ if the $\sigma$-fields $\sigma(\xi)$ and $\mathcal{G}$ are independent. 

The following simple but useful lemma holds.
\begin{lem}
\label{indep_1}
  Let $\mathcal{A}_1,\ldots,\mathcal{A}_m$, $\mathcal{B}_1,\ldots,\mathcal{B}_m$, $\mathcal{C}_1,\ldots,\mathcal{C}_m$ be a sub-$\sigma$-fields of $\Sigma$. Moreover, we assume that
  \begin{itemize}
	\item [(i)] $\mathcal{A}_i,\mathcal{B}_i\subset\mathcal{C}_i$ for all $i=1,\ldots,m$,
	\item [(ii)] $(\mathcal{C}_i)_{i=1,\ldots,m}$ is a family of independent $\sigma$-fields,
	\item [(iii)] for all $i=1,\ldots,m$ the $\sigma$-fields $\mathcal{A}_i$ and $\mathcal{B}_i$ are independent.
  \end{itemize}
  Then $\{\mathcal{A}_1,\ldots,\mathcal{A}_m,\mathcal{B}_1,\ldots,\mathcal{B}_m\}$ is a family of independent $\sigma$-fields.
\end{lem}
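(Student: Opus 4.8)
The plan is to check directly the defining factorization. I would fix arbitrary events $A_i\in\mathcal{A}_i$ and $B_i\in\mathcal{B}_i$ for $i=1,\ldots,m$; by the definition of an independent family of finitely many $\sigma$-fields it suffices to prove
\[
\mathbb{P}\Bigl(\bigcap_{i=1}^m A_i\cap\bigcap_{i=1}^m B_i\Bigr)=\prod_{i=1}^m\mathbb{P}(A_i)\cdot\prod_{i=1}^m\mathbb{P}(B_i),
\]
since the analogous identity for any sub-family (indexed by subsets $S,T\subseteq\{1,\ldots,m\}$) is recovered by taking $A_i=\Omega$ for $i\notin S$ and $B_j=\Omega$ for $j\notin T$, using $\Omega\in\mathcal{A}_i$, $\Omega\in\mathcal{B}_j$ and $\mathbb{P}(\Omega)=1$.

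First I would set $D_i:=A_i\cap B_i$ for each $i$. Assumption (i) gives $A_i,B_i\in\mathcal{C}_i$, and since $\mathcal{C}_i$ is a $\sigma$-field it is closed under finite intersections, so $D_i\in\mathcal{C}_i$. Then assumption (ii) — the independence of $\mathcal{C}_1,\ldots,\mathcal{C}_m$ — applied to the events $D_1,\ldots,D_m$ yields
\[
\mathbb{P}\Bigl(\bigcap_{i=1}^m D_i\Bigr)=\prod_{i=1}^m\mathbb{P}(D_i).
\]
Next I would invoke assumption (iii): for each fixed $i$ the $\sigma$-fields $\mathcal{A}_i$ and $\mathcal{B}_i$ are independent, hence $\mathbb{P}(D_i)=\mathbb{P}(A_i\cap B_i)=\mathbb{P}(A_i)\mathbb{P}(B_i)$. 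Substituting this into the previous display and using the set identity $\bigcap_{i=1}^m D_i=\bigcap_{i=1}^m A_i\cap\bigcap_{i=1}^m B_i$ gives exactly the required factorization.

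There is no serious obstacle in this argument; in particular no $\pi$-system/Dynkin step (Lemma \ref{indep_3}) is needed, because we manipulate the $\sigma$-fields exactly as given. The only point deserving a line of care is the reduction noted above from the full $2m$-fold product formula to the formula for arbitrary sub-families, which is why one inserts copies of $\Omega$; for $m=1$ the statement collapses to assumption (iii) itself. I would also remark at the outset that $\mathcal{A}_i$ and $\mathcal{B}_i$ are to be treated as distinct members of the family even if they happen to coincide as $\sigma$-fields, so that the claim genuinely asserts the joint independence of all $2m$ entries.
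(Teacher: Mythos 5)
Your proposal is correct and follows essentially the same route as the paper's proof: form $D_i=A_i\cap B_i\in\mathcal{C}_i$ via (i), factor over $i$ using (ii), then split each $\mathbb{P}(A_i\cap B_i)$ using (iii). The only addition is your explicit remark on recovering sub-family factorizations by inserting copies of $\Omega$, which is a harmless (and, given the paper's definition of independence, essentially automatic) refinement.
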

{\bf Proof.} Let us take arbitrary $A_i\in\mathcal{A}_i$, $B_i\in\mathcal{B}_i$, $i=1,\ldots,m$. By the assumption (i) we have $A_i\cap B_i\in\mathcal{C}_i$ for $i=1,\ldots,m$, and by the assumptions (ii), (iii) we get
\begin{eqnarray}
	&&\mathbb{P}(A_1\cap\ldots\cap A_m\cap B_1\cap\ldots\cap B_m)=\mathbb{P}((A_1\cap B_1)\cap\ldots\cap(A_m\cap B_m))\notag\\
	&&=\prod\limits_{j=1}^m\mathbb{P}(A_j\cap B_j)=\prod\limits_{j=1}^m\mathbb{P}(A_j)\mathbb{P}(B_j).
\end{eqnarray}
This ends the proof. \ \ \ $\blacksquare$ \\ \\
Pairwise independence is much weaker than being mutually independent. However, independence can be checked by repeated checks for pairwise independence, see, for example, \cite{Cinlar} or \cite{KALLEN}.
\begin{prop}
\label{indep_2}
The sub-$\sigma$-fields $\mathcal{F}_1,\mathcal{F}_2,\ldots$ of $\Sigma$ are independent if and only if $\bigvee_{i=1,\ldots,n}\mathcal{F}_i$ and $\mathcal{F}_{n+1}$ are independent for all $n\in\mathbb{N}$.
\end{prop}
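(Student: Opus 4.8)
The plan is to prove both implications using the $\pi$-system criterion for independence recorded in Lemma \ref{indep_3}, together with a short induction on the number of $\sigma$-fields involved. For each $n\in\mathbb{N}$ introduce the family
\[
	\mathcal{C}_n=\{A_1\cap A_2\cap\cdots\cap A_n \ | \ A_i\in\mathcal{F}_i, \ i=1,\ldots,n\}.
\]
First I would record two easy facts. (a) $\mathcal{C}_n$ is a $\pi$-system, since $(A_1\cap\cdots\cap A_n)\cap(B_1\cap\cdots\cap B_n)=(A_1\cap B_1)\cap\cdots\cap(A_n\cap B_n)$ and $A_i\cap B_i\in\mathcal{F}_i$. (b) $\sigma(\mathcal{C}_n)=\bigvee_{i=1}^n\mathcal{F}_i$: because $\Omega\in\mathcal{F}_j$ for every $j$, each $\mathcal{F}_i$ is contained in $\mathcal{C}_n$, so $\bigcup_{i\le n}\mathcal{F}_i\subset\mathcal{C}_n\subset\sigma\bigl(\bigcup_{i\le n}\mathcal{F}_i\bigr)$, and the proposition stating that $\mathcal{C}\subset\mathcal{D}\subset\sigma(\mathcal{C})$ implies $\sigma(\mathcal{C})=\sigma(\mathcal{D})$ gives the claim. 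I would also note once for all that a finite family $\mathcal{G}_1,\ldots,\mathcal{G}_N$ of sub-$\sigma$-fields of $\Sigma$ is independent if and only if $\mathbb{P}(G_1\cap\cdots\cap G_N)=\prod_{i=1}^N\mathbb{P}(G_i)$ for every choice $G_i\in\mathcal{G}_i$, the passage to arbitrary finite subfamilies being obtained by taking the remaining $G_i$ equal to $\Omega$.

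For the forward implication, assume $(\mathcal{F}_i)_{i\in\mathbb{N}}$ are independent and fix $n$. In particular $\mathcal{F}_1,\ldots,\mathcal{F}_{n+1}$ are independent, so for all $A_i\in\mathcal{F}_i$,
\[
	\mathbb{P}(A_1\cap\cdots\cap A_n\cap A_{n+1})=\prod_{i=1}^{n+1}\mathbb{P}(A_i)=\Bigl(\prod_{i=1}^{n}\mathbb{P}(A_i)\Bigr)\mathbb{P}(A_{n+1})=\mathbb{P}(A_1\cap\cdots\cap A_n)\cdot\mathbb{P}(A_{n+1}),
\]
where the last equality uses independence of $\mathcal{F}_1,\ldots,\mathcal{F}_n$. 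Hence the two $\pi$-systems $\mathcal{C}_n$ and $\mathcal{F}_{n+1}$ are independent, and Lemma \ref{indep_3} applied to the two-element family $\{\mathcal{C}_n,\mathcal{F}_{n+1}\}$ shows that $\sigma(\mathcal{C}_n)=\bigvee_{i=1}^n\mathcal{F}_i$ and $\sigma(\mathcal{F}_{n+1})=\mathcal{F}_{n+1}$ are independent.

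For the converse, assume $\bigvee_{i=1}^n\mathcal{F}_i$ and $\mathcal{F}_{n+1}$ are independent for every $n\in\mathbb{N}$, and show by induction on $N$ that $\mathcal{F}_1,\ldots,\mathcal{F}_N$ are independent. The case $N=1$ is trivial. Assuming it for $N$, take $A_i\in\mathcal{F}_i$ for $i=1,\ldots,N+1$; since $A_1\cap\cdots\cap A_N\in\bigvee_{i=1}^N\mathcal{F}_i$, the hypothesis with $n=N$ gives $\mathbb{P}(A_1\cap\cdots\cap A_N\cap A_{N+1})=\mathbb{P}(A_1\cap\cdots\cap A_N)\cdot\mathbb{P}(A_{N+1})$, and the inductive hypothesis gives $\mathbb{P}(A_1\cap\cdots\cap A_N)=\prod_{i=1}^N\mathbb{P}(A_i)$; multiplying yields $\mathbb{P}\bigl(\bigcap_{i=1}^{N+1}A_i\bigr)=\prod_{i=1}^{N+1}\mathbb{P}(A_i)$, which by the padding remark is exactly independence of $\mathcal{F}_1,\ldots,\mathcal{F}_{N+1}$. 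Finally, any finite subfamily $\mathcal{F}_{i_1},\ldots,\mathcal{F}_{i_m}$ is a subfamily of $\mathcal{F}_1,\ldots,\mathcal{F}_N$ with $N=\max_j i_j$, and a subfamily of an independent family is trivially independent; hence $(\mathcal{F}_i)_{i\in\mathbb{N}}$ is independent. I do not anticipate a serious obstacle: the whole argument reduces to the Dynkin-type statement already packaged in Lemma \ref{indep_3} plus elementary bookkeeping, and the only points requiring a little care are the identification $\sigma(\mathcal{C}_n)=\bigvee_{i=1}^n\mathcal{F}_i$ and the $\Omega$-padding convention linking the ``all indices present'' and ``arbitrary finite subfamily'' formulations of independence.
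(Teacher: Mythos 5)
Your proof is correct and complete: the identification $\sigma(\mathcal{C}_n)=\bigvee_{i=1}^n\mathcal{F}_i$, the application of Lemma \ref{indep_3} to the two $\pi$-systems $\mathcal{C}_n$ and $\mathcal{F}_{n+1}$, the $\Omega$-padding reduction, and the induction in the converse are all sound. The paper itself gives no proof of this proposition (it only cites the literature), and your argument is exactly the standard $\pi$-system route one finds there, so there is nothing to contrast.
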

We also recall the grouping property, see, for example, \cite{KALLEN}.
\begin{prop} Let $(\mathcal{F})_{i\in I}$ be a family of independent sub-$\sigma$-fields of $\Sigma$, and let $\mathcal{T}$ be a disjoint partition of $I$. Then the $\sigma$-fields
\begin{equation}
	\mathcal{F}_{S}=\bigvee_{s\in S}\mathcal{F}_s, \ S\in\mathcal{T},
\end{equation}
are again independent.
\end{prop}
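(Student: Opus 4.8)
The plan is to reduce the statement to Lemma \ref{indep_3} by exhibiting, for each block $S\in\mathcal{T}$, a convenient $\pi$-system that generates $\mathcal{F}_S$. First I would set, for each $S\in\mathcal{T}$,
\[
\mathcal{C}_S=\Bigl\{\,\textstyle\bigcap_{s\in F}A_s \ \Bigl| \ F\subseteq S \text{ finite},\ A_s\in\mathcal{F}_s \text{ for } s\in F\,\Bigr\},
\]
with the convention that the empty intersection equals $\Omega$, so that $\Omega\in\mathcal{C}_S$. Two routine verifications are needed: that $\mathcal{C}_S$ is a $\pi$-system, and that $\sigma(\mathcal{C}_S)=\mathcal{F}_S$. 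The first holds because, given $\bigcap_{s\in F_1}A_s$ and $\bigcap_{s\in F_2}B_s$ in $\mathcal{C}_S$, their intersection is $\bigcap_{s\in F_1\cup F_2}C_s$, where $C_s=A_s\cap B_s$, $C_s=A_s$, or $C_s=B_s$ according as $s$ lies in $F_1\cap F_2$, $F_1\setminus F_2$, or $F_2\setminus F_1$, and each $C_s\in\mathcal{F}_s$ since $\mathcal{F}_s$ is a $\sigma$-field. The second holds because $\mathcal{F}_s\subseteq\mathcal{C}_S$ for each $s\in S$ (take $F=\{s\}$), so $\bigcup_{s\in S}\mathcal{F}_s\subseteq\mathcal{C}_S\subseteq\sigma\bigl(\bigcup_{s\in S}\mathcal{F}_s\bigr)=\mathcal{F}_S$.

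Next I would show that the family $(\mathcal{C}_S)_{S\in\mathcal{T}}$ is independent. Fix distinct blocks $S_1,\dots,S_n\in\mathcal{T}$ and sets $E_k\in\mathcal{C}_{S_k}$, say $E_k=\bigcap_{s\in F_k}A^{(k)}_s$ with $F_k\subseteq S_k$ finite and $A^{(k)}_s\in\mathcal{F}_s$. Since $\mathcal{T}$ is a disjoint partition of $I$, the finite sets $F_1,\dots,F_n$ are pairwise disjoint, so $\bigcap_{k=1}^n E_k$ is a finite intersection of sets indexed by pairwise distinct elements of $I$, at most one from each $\mathcal{F}_s$. Applying the independence of $(\mathcal{F}_i)_{i\in I}$ — once to the whole index set $\bigsqcup_k F_k$ and once inside each $F_k$ separately — gives
\[
\mathbb{P}\Bigl(\bigcap_{k=1}^n E_k\Bigr)=\prod_{k=1}^n\prod_{s\in F_k}\mathbb{P}\bigl(A^{(k)}_s\bigr)=\prod_{k=1}^n\mathbb{P}\Bigl(\bigcap_{s\in F_k}A^{(k)}_s\Bigr)=\prod_{k=1}^n\mathbb{P}(E_k),
\]
the case of an empty $F_k$ being trivial since then $E_k=\Omega$ and $\mathbb{P}(E_k)=1$. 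Hence $(\mathcal{C}_S)_{S\in\mathcal{T}}$ is independent in the sense of the definition for families of arbitrary subsets of $\Sigma$.

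Finally I would invoke Lemma \ref{indep_3}: since each $\mathcal{C}_S$ is a $\pi$-system, independence of $(\mathcal{C}_S)_{S\in\mathcal{T}}$ is equivalent to independence of $\bigl(\sigma(\mathcal{C}_S)\bigr)_{S\in\mathcal{T}}=(\mathcal{F}_S)_{S\in\mathcal{T}}$, which is exactly the assertion. I do not anticipate any genuine obstacle here; the only points requiring attention are the bookkeeping that $\mathcal{C}_S$ is closed under finite intersections, the empty-index-set convention, and keeping track that $F_1,\dots,F_n$ really are pairwise disjoint — which is precisely where the hypothesis that $\mathcal{T}$ is a partition is used. If one wished to avoid Lemma \ref{indep_3}, an alternative route would be an induction on $n$ using Proposition \ref{indep_2}, but the $\pi$-system argument above is shorter and more transparent.
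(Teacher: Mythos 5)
Your proof is correct. Note that the paper does not actually prove this proposition --- it only cites \cite{KALLEN} --- so there is no in-text argument to compare against; your reduction to Lemma \ref{indep_3} via the $\pi$-systems $\mathcal{C}_S$ of finite intersections is the standard route (essentially Kallenberg's), and it uses exactly the tool the paper supplies for this purpose. All the delicate points are in order: $\mathcal{C}_S$ is closed under finite intersections, the sandwich $\bigcup_{s\in S}\mathcal{F}_s\subseteq\mathcal{C}_S\subseteq\sigma\bigl(\bigcup_{s\in S}\mathcal{F}_s\bigr)$ gives $\sigma(\mathcal{C}_S)=\mathcal{F}_S$, and the disjointness of the blocks of $\mathcal{T}$ is precisely what makes $F_1,\ldots,F_n$ pairwise disjoint so that the product formula for $(\mathcal{F}_i)_{i\in I}$ can be applied both to $\bigcup_k F_k$ and within each $F_k$ separately.
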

Independence has its consequence for the product of random variables.
\begin{lem} Let $X,Y$ be independent and integrable random variables. Then
\begin{equation}
	\mathbb{E}(X\cdot Y)=\mathbb{E}(X)\cdot\mathbb{E}(Y).
\end{equation}
The same holds true if $X,Y$ are nonnegative and not necessarily integrable.
\end{lem}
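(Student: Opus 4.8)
The plan is to run the classical ``standard machine'' of measure theory: prove the identity first for products of indicator functions, extend it by linearity to nonnegative simple functions, pass to general nonnegative measurable $X,Y$ by monotone approximation, and finally reduce the integrable case to the nonnegative one via the positive/negative part decomposition. First I would take $X=\mathbf{1}_A$ and $Y=\mathbf{1}_B$. By definition, independence of $X$ and $Y$ means that $\sigma(X)$ and $\sigma(Y)$ are independent $\sigma$-fields; since $A\in\sigma(X)$ and $B\in\sigma(Y)$, this gives $\mathbb{P}(A\cap B)=\mathbb{P}(A)\mathbb{P}(B)$, i.e. $\mathbb{E}(\mathbf{1}_A\mathbf{1}_B)=\mathbb{E}(\mathbf{1}_{A\cap B})=\mathbb{P}(A)\mathbb{P}(B)=\mathbb{E}(\mathbf{1}_A)\mathbb{E}(\mathbf{1}_B)$. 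Next, if $X=\sum_i a_i\mathbf{1}_{A_i}$ with $a_i\geq 0$, $A_i\in\sigma(X)$, and $Y=\sum_j b_j\mathbf{1}_{B_j}$ with $b_j\geq 0$, $B_j\in\sigma(Y)$, then $XY=\sum_{i,j}a_ib_j\mathbf{1}_{A_i\cap B_j}$, and summing the previous identity together with linearity of the expectation yields $\mathbb{E}(XY)=\sum_{i,j}a_ib_j\mathbb{P}(A_i)\mathbb{P}(B_j)=\bigl(\sum_i a_i\mathbb{P}(A_i)\bigr)\bigl(\sum_j b_j\mathbb{P}(B_j)\bigr)=\mathbb{E}(X)\mathbb{E}(Y)$.

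For general nonnegative Borel $X,Y$ (possibly not integrable) I would take the canonical increasing sequences of nonnegative simple functions $X_n\uparrow X$, $Y_n\uparrow Y$ given by dyadic truncation, $X_n=\min\{n,2^{-n}\lfloor 2^nX\rfloor\}$ and similarly for $Y_n$. Each $X_n$ is of the form $g_n\circ X$ for a Borel function $g_n$, hence $\sigma(X_n)\subset\sigma(X)$ (by the inclusion $\sigma(f_2\circ f_1)\subset\sigma(f_1)$ recalled before Proposition \ref{meas_comp_f}), and likewise $\sigma(Y_n)\subset\sigma(Y)$; so each pair $X_n,Y_n$ is again independent, and the simple-function case applies: $\mathbb{E}(X_nY_n)=\mathbb{E}(X_n)\mathbb{E}(Y_n)$ for all $n$. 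Since $X_nY_n\uparrow XY$, the monotone convergence theorem gives $\mathbb{E}(X_nY_n)\uparrow\mathbb{E}(XY)$, $\mathbb{E}(X_n)\uparrow\mathbb{E}(X)$, $\mathbb{E}(Y_n)\uparrow\mathbb{E}(Y)$, and letting $n\to+\infty$ (with the convention $0\cdot(+\infty)=0$) yields $\mathbb{E}(XY)=\mathbb{E}(X)\mathbb{E}(Y)$; this settles the nonnegative assertion. For integrable $X,Y$ I would write $X=X^+-X^-$, $Y=Y^+-Y^-$; the functions $X^\pm$ are $\sigma(X)$-measurable and $Y^\pm$ are $\sigma(Y)$-measurable, so all four pairs $(X^+,Y^+),(X^+,Y^-),(X^-,Y^+),(X^-,Y^-)$ are independent nonnegative. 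Applying the nonnegative case to $|X|\cdot|Y|$ first shows $\mathbb{E}|XY|=\mathbb{E}|X|\,\mathbb{E}|Y|<+\infty$, so $XY$ is integrable; then expanding $XY=X^+Y^+-X^+Y^--X^-Y^++X^-Y^-$, applying linearity and the nonnegative case to each (finite) term gives $\mathbb{E}(XY)=\bigl(\mathbb{E}X^+-\mathbb{E}X^-\bigr)\bigl(\mathbb{E}Y^+-\mathbb{E}Y^-\bigr)=\mathbb{E}(X)\mathbb{E}(Y)$.

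There is no substantial obstacle here; the only points that need attention are the measurability bookkeeping---ensuring that the approximants $X_n$ and the functions $X^\pm,|X|$ remain $\sigma(X)$-measurable, so that independence is inherited at every stage---and the handling of $+\infty$ values in the nonnegative step. An alternative route would be to first prove that independence of $X$ and $Y$ is equivalent to the joint law $\mu_{(X,Y)}$ being the product measure $\mu_X\times\mu_Y$, and then deduce the identity directly from Tonelli's/Fubini's theorem (Theorems \ref{tonelli_thm}, \ref{fubini_thm}) applied to $(x,y)\mapsto xy$; but establishing that the joint law factors requires essentially the same approximation argument, so I would keep the direct proof above.
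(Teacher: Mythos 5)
Your proof is correct and complete; the paper states this lemma in its auxiliary-results chapter without proof, as a standard fact, so there is no in-text argument to compare against. The ``standard machine'' route you follow (indicators, simple functions, monotone approximation, then positive/negative parts) is the canonical one, and you handle the two delicate points properly: the approximants $X_n=g_n\circ X$ stay $\sigma(X)$-measurable so independence is inherited, and the $0\cdot(+\infty)=0$ convention is consistent in the limit because each $\mathbb{E}(X_n)$ is finite and vanishes identically whenever $\mathbb{E}(X)=0$.
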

The proof of the following projection property of conditional expectation can be found, for example, in \cite{Cinlar}.
\begin{thm} 
\label{proj_wwo}
Let $\mathcal{G}$ be a sub-$\sigma$-field of $\Sigma$. 	For every $X\in L^2(\Omega,\Sigma,\mathbb{P})$ we have
\begin{equation}
	\mathbb{E}\Bigl(X-\mathbb{E}( X \ | \ \mathcal{G})\Bigr)^2=\inf\limits_{Y\in L^2(\Omega,\mathcal{G},\mathbb{P})}\mathbb{E}(X-Y)^2.
\end{equation}
\end{thm}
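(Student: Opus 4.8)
The plan is to establish the two inequalities separately, the heart of the matter being an orthogonal decomposition of $X-Y$ around the candidate minimizer $\mathbb{E}(X\mid\mathcal{G})$.

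First I would record that $\mathbb{E}(X\mid\mathcal{G})$ is itself an element of $L^2(\Omega,\mathcal{G},\mathbb{P})$: applying the conditional version of Jensen's inequality (Theorem \ref{JEN_ineq}, in its conditional form) to the convex function $t\mapsto t^2$ gives $\mathbb{E}|\mathbb{E}(X\mid\mathcal{G})|^2\leq \mathbb{E}\bigl(\mathbb{E}(X^2\mid\mathcal{G})\bigr)=\mathbb{E}(X^2)<+\infty$. Hence the infimum on the right-hand side is taken over a set that contains $\mathbb{E}(X\mid\mathcal{G})$, and therefore it is automatically bounded above by $\mathbb{E}\bigl(X-\mathbb{E}(X\mid\mathcal{G})\bigr)^2$.

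For the reverse inequality, fix an arbitrary $Y\in L^2(\Omega,\mathcal{G},\mathbb{P})$ and write $X-Y=R+D$ with $R=X-\mathbb{E}(X\mid\mathcal{G})$ and $D=\mathbb{E}(X\mid\mathcal{G})-Y$, noting $R,D\in L^2(\Omega,\Sigma,\mathbb{P})$. Expanding the square, $\mathbb{E}(X-Y)^2=\mathbb{E}(R^2)+2\mathbb{E}(RD)+\mathbb{E}(D^2)$, where the product $RD$ is integrable by the Cauchy--Schwarz inequality. The crucial step is that the cross term vanishes: since $D$ is $\mathcal{G}$-measurable and $RD\in L^1$, the "taking out what is known" and tower properties of conditional expectation give $\mathbb{E}(RD)=\mathbb{E}\bigl(\mathbb{E}(RD\mid\mathcal{G})\bigr)=\mathbb{E}\bigl(D\cdot\mathbb{E}(R\mid\mathcal{G})\bigr)$, and $\mathbb{E}(R\mid\mathcal{G})=\mathbb{E}(X\mid\mathcal{G})-\mathbb{E}(X\mid\mathcal{G})=0$ almost surely. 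Thus $\mathbb{E}(X-Y)^2=\mathbb{E}(R^2)+\mathbb{E}(D^2)\geq \mathbb{E}(R^2)=\mathbb{E}\bigl(X-\mathbb{E}(X\mid\mathcal{G})\bigr)^2$, and taking the infimum over $Y$ yields the reverse inequality. Combining the two directions gives the claimed equality.

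The argument presents no genuine obstacle; the only points demanding care are the integrability of the cross term $RD$ (so that the conditional-expectation manipulations are legitimate) and the precise form of the "taking out what is known" property for $L^2$ factors rather than merely bounded ones. If one wishes to sidestep that, an alternative is to consider the real quadratic $\varphi(t)=\mathbb{E}\bigl(X-\mathbb{E}(X\mid\mathcal{G})-t\,(\mathbb{E}(X\mid\mathcal{G})-Y)\bigr)^2$ and use that $\varphi$ is minimized at $t=0$, but the direct orthogonal decomposition above is the cleanest route.
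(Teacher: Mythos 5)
Your proof is correct. The paper does not actually prove Theorem \ref{proj_wwo} in the text---it defers entirely to \cite{Cinlar}---so there is no in-text argument to compare against; what you give is the standard self-contained orthogonality proof, and every step checks out. Conditional Jensen places $\mathbb{E}(X\mid\mathcal{G})$ in $L^2(\Omega,\mathcal{G},\mathbb{P})$, which yields the upper bound on the infimum, and the decomposition $X-Y=R+D$ with the cross term annihilated by $\mathbb{E}(R\mid\mathcal{G})=0$ yields the lower bound. The ``taking out what is known'' step is legitimate in exactly the generality you need: if $D$ is $\mathcal{G}$-measurable and both $R$ and $RD$ are integrable (the latter guaranteed here by Cauchy--Schwarz), then $\mathbb{E}(RD\mid\mathcal{G})=D\cdot\mathbb{E}(R\mid\mathcal{G})$ almost surely. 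One caveat concerns only your closing aside: the alternative via $\varphi(t)=\mathbb{E}\bigl(R-tD\bigr)^2$ ``being minimized at $t=0$'' is circular as stated, since knowing that $t=0$ minimizes $\varphi$ for every choice of $Y$ is equivalent to knowing $\mathbb{E}(RD)=0$, i.e. precisely the orthogonality you are trying to establish; that variational route only works if one first produces a minimizer abstractly (Hilbert-space projection onto the closed subspace $L^2(\Omega,\mathcal{G},\mathbb{P})$) and then identifies it with the conditional expectation via the first-order condition. The direct decomposition you actually use is the cleaner and fully rigorous choice.
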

\begin{thm} (Conditional Fubini's Theorem, \cite{Brks})
	\label{cond_fubini}
	Suppose that $X\in L^1(\Omega\times [0,T],\Sigma\otimes \mathcal{B}([0,T]),\mathbb{P}\times\lambda_1)$ and let $\mathcal{G}$ be a sub-$\sigma$-field of $\Sigma$. Then
	\begin{equation}
		\mathbb{E}\Bigl(\int\limits_0^T X(t)dt \ | \ \mathcal{G}\Bigr)=\int\limits_0^T \mathbb{E}(X(t) \ | \ \mathcal{G}) \ dt \ a.s.
	\end{equation}
\end{thm}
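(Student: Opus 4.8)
The plan is to exhibit a jointly measurable version of the right‑hand side and verify that it satisfies the two defining properties of $\mathbb{E}\bigl(\int_0^T X(t)\,dt \mid \mathcal{G}\bigr)$, namely $\mathcal{G}$‑measurability and correct integration against indicators of sets in $\mathcal{G}$. The first point to settle is that the right‑hand side makes sense at all: by Fubini's theorem (Theorem~\ref{fubini_thm}) applied to $X$ one has $X(\cdot,t)\in L^1(\Omega,\Sigma,\mathbb{P})$ for $\lambda_1$‑almost every $t$, so $\mathbb{E}(X(t)\mid\mathcal{G})$ is defined for a.e.\ $t$; but since this conditional expectation is, for each fixed $t$, only an equivalence class, one needs a genuine function $Z:\Omega\times[0,T]\to\mathbb{R}$, measurable for $\mathcal{G}\otimes\mathcal{B}([0,T])$, whose $t$‑section $Z(\cdot,t)$ is, for $\lambda_1$‑a.e.\ $t$, a version of $\mathbb{E}(X(t)\mid\mathcal{G})$. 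Everything beyond that is a double use of Fubini to slide the $t$‑integral in and out of $\mathbb{E}(\cdot\mid\mathcal{G})$.

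First I would treat the product case $X(\omega,t)=\xi(\omega)g(t)$ with $\xi\in L^1(\Omega,\Sigma,\mathbb{P})$ and $g\in L^1([0,T])$: here $Z(\omega,t):=\mathbb{E}(\xi\mid\mathcal{G})(\omega)\,g(t)$ is patently $\mathcal{G}\otimes\mathcal{B}([0,T])$‑measurable, and by linearity and the take‑out property of conditional expectation
\[
\mathbb{E}\Bigl(\int_0^T \xi g(t)\,dt \ \Big| \ \mathcal{G}\Bigr)=\Bigl(\int_0^T g(t)\,dt\Bigr)\mathbb{E}(\xi\mid\mathcal{G})=\int_0^T Z(\cdot,t)\,dt \quad \text{a.s.}
\]
By linearity the same identity holds for finite sums $X=\sum_{i=1}^m \xi_i g_i$ of such products. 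These sums are dense in $L^1(\Omega\times[0,T],\Sigma\otimes\mathcal{B}([0,T]),\mathbb{P}\times\lambda_1)$: indicators of measurable rectangles are already such products, the algebra generated by rectangles approximates $\Sigma\otimes\mathcal{B}([0,T])$ in the finite measure $\mathbb{P}\times\lambda_1$, and simple functions are dense in $L^1$.

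Next I would pass to the limit. Take $X_n\to X$ in $L^1(\mathbb{P}\times\lambda_1)$ with each $X_n$ a finite sum of products, and let $Z_n$ be the associated functions. Conditional expectation is an $L^1$‑contraction, and integrating this in $t$ via Tonelli's theorem (Theorem~\ref{tonelli_thm}) gives $\|Z_n-Z_m\|_{L^1(\mathbb{P}\times\lambda_1)}\le\|X_n-X_m\|_{L^1(\mathbb{P}\times\lambda_1)}$, so $(Z_n)$ converges in $L^1(\mathbb{P}\times\lambda_1)$ to some $Z$, which may be taken $\mathcal{G}\otimes\mathcal{B}([0,T])$‑measurable; passing to an a.e.\ convergent subsequence and using that a.e.\ convergence on the product space yields a.e.\ convergence of $t$‑sections for $\lambda_1$‑a.e.\ $t$, one checks that $Z(\cdot,t)$ is a version of $\mathbb{E}(X(t)\mid\mathcal{G})$ for a.e.\ $t$. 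By Fubini (Theorem~\ref{fubini_thm}), $Y(\omega):=\int_0^T Z(\omega,t)\,dt$ is a.e.\ defined, $\mathcal{G}$‑measurable and integrable, and is the $L^1(\mathbb{P})$‑limit of $\int_0^T Z_n(\omega,t)\,dt=\mathbb{E}\bigl(\int_0^T X_n(t)\,dt\mid\mathcal{G}\bigr)$. Since $\int_0^T X_n(t)\,dt\to\int_0^T X(t)\,dt$ in $L^1(\mathbb{P})$ (Fubini again plus contraction), applying the $L^1$‑contraction of $\mathbb{E}(\cdot\mid\mathcal{G})$ once more shows this limit equals $\mathbb{E}\bigl(\int_0^T X(t)\,dt\mid\mathcal{G}\bigr)$. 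Comparing the two limits yields $\int_0^T\mathbb{E}(X(t)\mid\mathcal{G})\,dt=Y=\mathbb{E}\bigl(\int_0^T X(t)\,dt\mid\mathcal{G}\bigr)$ a.s.

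The main obstacle is the joint‑measurability bookkeeping in the last paragraph: the $L^1$‑limit $Z$ is a priori only an equivalence class, and it must be realized by an honest $\mathcal{G}\otimes\mathcal{B}([0,T])$‑measurable function whose $t$‑sections are, for almost every $t$, the correct conditional expectations — this is precisely what the subsequence step accomplishes, and it is the only non‑routine part. One could instead bypass the density argument and run the functional monotone class theorem starting from indicators of measurable rectangles, but the skeleton — reduce to product functions, take limits exploiting contractivity of conditional expectation and of integration, and invoke Fubini to move the $t$‑integral in and out — is unchanged.
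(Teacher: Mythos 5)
The paper does not actually prove Theorem~\ref{cond_fubini}; it only cites Brooks, so there is no in-text argument to compare yours against. Your proof is correct and is the standard one: reduce to product functions $\xi(\omega)g(t)$, use density of simple functions built from measurable rectangles in $L^1(\mathbb{P}\times\lambda_1)$, and pass to the limit using the $L^1$-contractivity of $\mathbb{E}(\cdot\mid\mathcal{G})$ together with Tonelli. You also correctly identify the real content of the statement, namely that the right-hand side only makes sense once a $\mathcal{G}\otimes\mathcal{B}([0,T])$-measurable version $Z$ of $(\omega,t)\mapsto\mathbb{E}(X(t)\mid\mathcal{G})(\omega)$ has been produced.

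Two small points. First, in the identification of the sections of the limit, a.e.\ convergence of $Z_{n_k}$ on the product space gives you a.s.\ convergence of $Z_{n_k}(\cdot,t)$ to $Z(\cdot,t)$ for a.e.\ $t$, but to conclude that this limit is $\mathbb{E}(X(t)\mid\mathcal{G})$ you must also invoke the sectionwise contraction $\|Z_n(\cdot,t)-\mathbb{E}(X(t)\mid\mathcal{G})\|_{L^1(\mathbb{P})}\leq\|X_n(\cdot,t)-X(\cdot,t)\|_{L^1(\mathbb{P})}$ and the fact that the right-hand side, viewed as a function of $t$, tends to $0$ in $L^1([0,T])$ and hence $\lambda_1$-a.e.\ along a further subsequence; this is implicit in your sketch but should be said. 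Second, your opening announces that you will verify the two defining properties of conditional expectation, but the argument you then run is a double limiting argument instead. Once $Z$ is in hand, the announced route is actually shorter: $Y=\int_0^T Z(\cdot,t)\,dt$ is $\mathcal{G}$-measurable and integrable by Fubini, and for any $G\in\mathcal{G}$ two applications of ordinary Fubini give
\begin{equation}
\mathbb{E}\Bigl(\mathbf{1}_G\int\limits_0^T Z(t)\,dt\Bigr)=\int\limits_0^T\mathbb{E}\bigl(\mathbf{1}_G\,\mathbb{E}(X(t)\mid\mathcal{G})\bigr)dt=\int\limits_0^T\mathbb{E}\bigl(\mathbf{1}_G X(t)\bigr)dt=\mathbb{E}\Bigl(\mathbf{1}_G\int\limits_0^T X(t)\,dt\Bigr),
\end{equation}
which finishes the proof without the second limit passage. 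Either way the argument stands.
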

\begin{thm} (The freezing lemma, \cite{parras})
\label{f_lemma}
 Let $X$ and $Y$ be respectively a $d$-dimensional and a $k$-dimensional random vector, and let $\psi:\mathbb{R}^{d+k}\to\mathbb{R}$ be Borel measurable such that $\psi(X,Y)\geq 0$ a.s., or $\mathbb{E}|\psi(X,Y)|<+\infty$. Let $\mathcal{G}$ be a sub-$\sigma$-field of $\Sigma$ such that $X$ is $\mathcal{G}$-measurable, and $\sigma(Y)$, $\mathcal{G}$ are independent $\sigma$-fields. Then
\begin{equation}
	\mathbb{E}\Bigl(\psi(X,Y) \ | \ \mathcal{G}\Bigr)=\mathbb{E}\Bigl(\psi(x, Y)\Bigr)\Bigl|_{x=X}=\int\limits_{\mathbb{R}^k}\psi(X,y)\mathbb{P}_Y(dy).
\end{equation}
\end{thm}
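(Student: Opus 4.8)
The plan is to reduce the identity to the defining property of conditional expectation and to evaluate the resulting integrals using independence together with the Fubini--Tonelli theorem. First I would note that since $X$ is $\mathcal{G}$-measurable, $\sigma(X)\subset\mathcal{G}$, so by the assumed independence of $\mathcal{G}$ and $\sigma(Y)$ the random vectors $X$ and $Y$ are independent; hence the joint law of $(X,Y)$ on $(\mathbb{R}^{d+k},\mathcal{B}(\mathbb{R}^{d+k}))$ is the product measure $\mathbb{P}_X\times\mathbb{P}_Y$. Define
\[
 g(x)=\int_{\mathbb{R}^k}\psi(x,y)\,\mathbb{P}_Y(dy)
\]
whenever $\psi(x,\cdot)$ is $\mathbb{P}_Y$-integrable (or for every $x$ in the nonnegative case), and $g(x)=0$ otherwise. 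By Tonelli's Theorem \ref{tonelli_thm} applied to $|\psi|$, the map $x\mapsto\int_{\mathbb{R}^k}|\psi(x,y)|\mathbb{P}_Y(dy)$ is Borel, and in the integrable case $\mathbb{E}|\psi(X,Y)|=\int_{\mathbb{R}^{d+k}}|\psi|\,d(\mathbb{P}_X\times\mathbb{P}_Y)<+\infty$ forces $\psi(x,\cdot)$ to be $\mathbb{P}_Y$-integrable for $\mathbb{P}_X$-a.e.\ $x$; thus $g$ is Borel measurable (it is the function $I_f$ of Theorem \ref{fubini_thm}), and therefore $g(X)$ is $\mathcal{G}$-measurable by Proposition \ref{meas_comp_f}. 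It then suffices to check that $\mathbb{E}(\mathbf{1}_G\,\psi(X,Y))=\mathbb{E}(\mathbf{1}_G\,g(X))$ for every $G\in\mathcal{G}$.

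To verify this I would set $Z=(\mathbf{1}_G,X):\Omega\to\{0,1\}\times\mathbb{R}^d$. Since $\sigma(Z)\subset\mathcal{G}$ is independent of $\sigma(Y)$, the pair $(Z,Y)$ has law $\mathbb{P}_Z\times\mathbb{P}_Y$. Applying the change-of-measure formula (Theorem \ref{COM}) to $(a,x,y)\mapsto a\,\psi(x,y)$ and then Fubini's Theorem \ref{fubini_thm} (Tonelli in the nonnegative case),
\[
 \mathbb{E}(\mathbf{1}_G\,\psi(X,Y))=\int a\,\psi(x,y)\,(\mathbb{P}_Z\times\mathbb{P}_Y)(d(a,x),dy)=\int a\,g(x)\,\mathbb{P}_Z(d(a,x))=\mathbb{E}(\mathbf{1}_G\,g(X)),
\]
which is exactly the defining identity for $\mathbb{E}(\psi(X,Y)\mid\mathcal{G})$. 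Hence $g(X)$ is a version of $\mathbb{E}(\psi(X,Y)\mid\mathcal{G})$, and since $g(x)=\mathbb{E}(\psi(x,Y))=\int_{\mathbb{R}^k}\psi(x,y)\mathbb{P}_Y(dy)$, substituting $x=X$ yields the thesis.

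An alternative, if one prefers to avoid the joint-law argument, is to begin with $\psi=\mathbf{1}_{A\times B}$, $A\in\mathcal{B}(\mathbb{R}^d)$, $B\in\mathcal{B}(\mathbb{R}^k)$: here $\mathbf{1}_A(X)$ is $\mathcal{G}$-measurable, so by "taking out what is known" and independence $\mathbb{E}(\mathbf{1}_A(X)\mathbf{1}_B(Y)\mid\mathcal{G})=\mathbf{1}_A(X)\,\mathbb{E}(\mathbf{1}_B(Y))=\mathbf{1}_A(X)\,\mathbb{P}_Y(B)$, which matches $\mathbb{E}(\psi(x,Y))|_{x=X}$; one then extends through linear combinations, the functional monotone class theorem, monotone convergence for conditional expectations (to all bounded, then all nonnegative $\psi$), and finally a decomposition into positive and negative parts for integrable $\psi$.

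The only delicate point — the main obstacle — is the measurability of $x\mapsto g(x)$ and its $\mathbb{P}_X$-a.e.\ finiteness in the merely integrable case; both follow from Tonelli/Fubini once the joint law of $(X,Y)$ has been identified as $\mathbb{P}_X\times\mathbb{P}_Y$, so deriving that independence at the outset (from $\sigma(X)\subset\mathcal{G}\perp\sigma(Y)$) is the key preliminary step. Everything else is routine manipulation of conditional expectations.
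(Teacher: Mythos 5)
The paper does not actually prove this lemma: it is stated among the auxiliary results and attributed to \cite{parras}, so there is no in-text argument to compare yours against. Your proof is correct and self-contained. The key steps are all in order: $\sigma(X)\subset\mathcal{G}$ together with the independence of $\mathcal{G}$ and $\sigma(Y)$ gives $\mathbb{P}_{(X,Y)}=\mathbb{P}_X\times\mathbb{P}_Y$; the trick of packaging $Z=(\mathbf{1}_G,X)$ so that $\mathbb{P}_{(Z,Y)}=\mathbb{P}_Z\times\mathbb{P}_Y$ lets you verify the defining identity $\mathbb{E}(\mathbf{1}_G\psi(X,Y))=\mathbb{E}(\mathbf{1}_G g(X))$ by a single application of the change-of-measure theorem and Fubini; and you correctly isolate and resolve the one delicate point, namely the Borel measurability and $\mathbb{P}_X$-a.e.\ finiteness of $g(x)=\mathbb{E}\psi(x,Y)$ in the merely integrable case. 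One cosmetic remark: the hypothesis $\psi(X,Y)\geq 0$ a.s.\ only guarantees $\psi\geq 0$ off a $(\mathbb{P}_X\times\mathbb{P}_Y)$-null set, not everywhere, so in the nonnegative branch you should formally replace $\psi$ by $\psi^{+}$ (which changes nothing almost surely) before asserting that $g$ is defined "for every $x$" via Tonelli. Your alternative route through indicators of product sets and a monotone-class argument is the more commonly seen textbook proof; either approach is acceptable, and your primary one is arguably the more direct.
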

The following fact can be found for example in \cite{Applb}, page 238.
\begin{lem}
\label{itoint_det_f} Let $W=(W(t))_{t\in [0,T]}$ be the Wiener process with respect to the filtration $(\Sigma_t)_{t\in [0,T]}$. If $f\in L^2([0,T])$ is a deterministic function, then for all $t\in [0,T]$ the $\sigma$-fields $\displaystyle{\sigma\Bigl(\int\limits_t^T f(s)dW(s)\Bigr)}$ and $\Sigma_t$ are independent.
\end{lem}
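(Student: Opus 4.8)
The plan is to establish the claim first for simple (step) integrands supported in $[t,T]$, where $\int_t^T f\,dW$ is an explicit finite linear combination of increments of $W$ over $[t,T]$, and then to pass to a general $f\in L^2([0,T])$ by density of step functions in $L^2([t,T])$ together with the It\^o isometry.

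\emph{Step 1: simple integrands.} Fix a partition $t=t_0<t_1<\ldots<t_n=T$ and a step function $f=\sum_{i=0}^{n-1}c_i\mathbf{1}_{[t_i,t_{i+1})}$ with $c_i\in\mathbb{R}$, so that $\int_t^T f(s)dW(s)=\sum_{i=0}^{n-1}c_i\,\Delta_i$ with $\Delta_i:=W(t_{i+1})-W(t_i)$. I would show that $\{\Sigma_t,\sigma(\Delta_0),\ldots,\sigma(\Delta_{n-1})\}$ is a family of independent $\sigma$-fields. By a finite induction (in the spirit of Proposition \ref{indep_2}) it suffices to check, for each $k$, that $\Sigma_t\vee\sigma(\Delta_0)\vee\cdots\vee\sigma(\Delta_{k-1})$ is independent of $\sigma(\Delta_k)$. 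Since $\Sigma_t\subset\Sigma_{t_k}$ and every $\Delta_i=W(t_{i+1})-W(t_i)$ with $i\le k-1$ is $\Sigma_{t_k}$-measurable (because $\sigma(W(t_j))\subset\Sigma_{t_j}\subset\Sigma_{t_k}$ for $j\le k$), we have $\Sigma_t\vee\sigma(\Delta_0)\vee\cdots\vee\sigma(\Delta_{k-1})\subset\Sigma_{t_k}$; and $\sigma(\Delta_k)=\sigma(W(t_{k+1})-W(t_k))$ is independent of $\Sigma_{t_k}$ by property (iv) of the Wiener process. The base case (independence of $\Sigma_t$ and $\sigma(\Delta_0)=\sigma(W(t_1)-W(t))$) is again property (iv). Hence the family is independent, and by the grouping property applied to the two-block partition separating $\Sigma_t$ from the increments, $\Sigma_t$ is independent of $\sigma(\Delta_0,\ldots,\Delta_{n-1})$. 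Since $\int_t^T f\,dW$ is $\sigma(\Delta_0,\ldots,\Delta_{n-1})$-measurable, $\sigma\bigl(\int_t^T f(s)dW(s)\bigr)\subset\sigma(\Delta_0,\ldots,\Delta_{n-1})$ is independent of $\Sigma_t$.

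\emph{Step 2: general integrands.} For $f\in L^2([0,T])$, restrict $f$ to $[t,T]$ and choose step functions $f_m$ supported in $[t,T]$ with $f_m\to f$ in $L^2([t,T])$. Setting $X_m:=\int_t^T f_m(s)dW(s)$ and $X:=\int_t^T f(s)dW(s)$, the It\^o isometry gives $\mathbb{E}|X_m-X|^2=\int_t^T|f_m(s)-f(s)|^2ds\to 0$, so $X_m\to X$ in $L^2(\Omega)$, hence in probability. By Step 1, for every $A\in\Sigma_t$ and $u\in\mathbb{R}$ we have $\mathbb{E}(\mathbf{1}_A e^{iuX_m})=\mathbb{P}(A)\,\mathbb{E}(e^{iuX_m})$. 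As $x\mapsto e^{iux}$ is bounded and continuous and $X_m\to X$ in probability, the bounded convergence theorem lets me pass to the limit on both sides, giving $\mathbb{E}(\mathbf{1}_A e^{iuX})=\mathbb{P}(A)\,\mathbb{E}(e^{iuX})$ for all $A\in\Sigma_t$, $u\in\mathbb{R}$. Fixing $A$, the two finite measures $B\mapsto\mathbb{P}(A\cap\{X\in B\})$ and $B\mapsto\mathbb{P}(A)\,\mathbb{P}(X\in B)$ on $\mathcal{B}(\mathbb{R})$ have equal Fourier transforms, hence coincide; since $A\in\Sigma_t$ was arbitrary and $\sigma(X)$ is generated by the sets $\{X\in B\}$, this yields independence of $\sigma(X)$ and $\Sigma_t$. (Alternatively one may observe that each $X_m$, hence $X$, is Gaussian, but this is not needed.)

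\emph{Main obstacle.} The only genuine subtlety is in Step 1: pairwise independence of a single increment $\Delta_i$ with $\Sigma_t$ is immediate from property (iv), whereas what is required is the joint independence of $\Sigma_t$ with the entire increment vector, which is delivered precisely by the nesting $\Sigma_t\vee\sigma(\Delta_0)\vee\cdots\vee\sigma(\Delta_{k-1})\subset\Sigma_{t_k}$ combined with the inductive criterion of Proposition \ref{indep_2} and the grouping property. The passage to general $f$ in Step 2 is routine once independence is phrased through characteristic functions so that $L^2(\Omega)$-convergence can be exploited.
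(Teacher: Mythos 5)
Your proof is correct. Note that the paper itself does not prove this lemma --- it only cites Applebaum, p.~238 --- so there is no in-text argument to compare against; your route (explicit computation for step integrands via a nested-filtration induction and the grouping property, followed by an $L^2$/characteristic-function passage to the limit) is the standard one, and every step is supported by the paper's toolbox: property (iv) of the Wiener process gives independence of $\sigma(\Delta_k)$ from $\Sigma_{t_k}$, the inclusion $\Sigma_t\vee\sigma(\Delta_0)\vee\cdots\vee\sigma(\Delta_{k-1})\subset\Sigma_{t_k}$ feeds the inductive criterion of Proposition \ref{indep_2}, and the grouping property collapses the increments into the single $\sigma$-field $\sigma(\Delta_0,\ldots,\Delta_{n-1})$ containing $\sigma\bigl(\int_t^T f\,dW\bigr)$. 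The one spot deserving an extra line is the limit passage in Step 2: from $X_m\to X$ in probability and $|e^{iuX_m}|\le 1$ you should either extract an almost surely convergent subsequence before invoking dominated convergence, or note that a uniformly bounded sequence converging in probability converges in $L^1(\Omega)$; either way $\mathbb{E}(\mathbf{1}_A e^{iuX})=\mathbb{P}(A)\,\mathbb{E}(e^{iuX})$ follows, and uniqueness of Fourier transforms of finite measures completes the argument exactly as you describe.
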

\chapter{Auxiliary results--part 2}
In this section we recall basic facts from the theory of continuous-time stochastic processes.

Let $(\Omega,\Sigma,\mathbb{P})$ be a {\it complete probability space}, that is all subsets of measure zero sets are also measurable (and hence, also of measure zero). We consider the product space $(\Omega\times [0,+\infty),\Sigma\otimes \mathcal{B}([0,+\infty)),\mathbb{P}\times \lambda_1)$. By a stochastic process $X$, with the base probability space $(\Omega,\Sigma,\mathbb{P})$ and with values in $\mathbb{R}^d$, we mean a mapping
\begin{equation}
	X:\Omega\times [0,+\infty)\to\mathbb{R}^d,
\end{equation}
such that for each $t\in [0,+\infty)$ the function
\begin{equation}
	\Omega\ni \omega\to X(t,\omega)\in\mathbb{R}^d
\end{equation}
is $\Sigma / \mathcal{B}(\mathbb{R}^d)$-measurable, i.e., it is a random variable. We usually use the following notational convention $X=(X(t))_{t\in [0,+\infty)}$, which, in particular, underlines that a stochastic process might be seen as a collection of random variables. For a fixed $\omega\in\Omega$ the $\mathbb{R}^d$-valued function 
\begin{equation}
\label{def_sp}
	[0,+\infty)\ni t\to X(\omega,t)\in\mathbb{R}^d
\end{equation} 
is called a {\it trajectory} (or {\it realization} or {\it sample function}) corresponding to the elementary event $\omega$. Note that, without any additional assumptions imposed on $X$, such a trajectory might not be even measurable. We say that the stochastic process  $X$ is {\it product measurable} if it is $\Sigma\otimes \mathcal{B}([0,+\infty)) / \mathcal{B}(\mathbb{R}^d)$ - measurable.   The following widely used fact is a direct consequence of a product measurability and Fubini's theorem.
\begin{prop}
\label{meas_Fub_1} Let $X=(X(t))_{t\in [0,+\infty)}$ be a product measurable stochastic process. Then
\begin{itemize}
	\item [(i)] All trajectories of $X$ are $\mathcal{B}([0,+\infty) / \mathcal{B}(\mathbb{R}^d)$-measurable functions.
	\item [(ii)] For any $t\in [0,+\infty)$ the function 
	\begin{equation}
		\Omega\ni\omega\to X(\omega,t)\in \mathbb{R}^d
	\end{equation}
	is $\Sigma / \mathcal{B}(\mathbb{R}^d)$-measurable, so it is $\mathbb{R}^d$-valued random variable.
	\item [(iii)] if $\mathbb{E}(X(t))$ exists for all $t\in [0,+\infty)$ then
	\begin{equation}
		[0,+\infty)\ni t\to\mathbb{E}(X(t))\in \mathbb{R}^d
	\end{equation}
	is $\mathcal{B}([0,+\infty)) / \mathcal{B}(\mathbb{R}^d)$-measurable.
	\item [(iv)] if $A\in\mathcal{B}([0,+\infty))$ and $\displaystyle{\int\limits_{A}\mathbb{E}|X(t)|dt<+\infty}$, then
		\begin{equation}
			\int\limits_A|X(t)|dt<+\infty \ \hbox{almost surely},
		\end{equation}
		and
		\begin{equation}
			\int\limits_A \mathbb{E}(X(t))dt=\mathbb{E}\Bigl(\int\limits_A X(t) dt\Bigr).
		\end{equation}
\end{itemize}
\end{prop}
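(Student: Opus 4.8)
The plan is to deduce all four parts from the product measurability of $X$ together with the measurability-of-sections result and the Tonelli/Fubini theorems recalled in the appendix, working throughout componentwise: writing $X=(X_1,\dots,X_d)$ and using that a map into $\mathbb{R}^d$ is Borel iff each coordinate is.

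Parts (i) and (ii) I would get for free. Since $X:\Omega\times[0,+\infty)\to\mathbb{R}^d$ is $\Sigma\otimes\mathcal{B}([0,+\infty))/\mathcal{B}(\mathbb{R}^d)$-measurable, the measurability-of-sections proposition, applied with the first factor $\Omega$, the second factor $[0,+\infty)$, and target $\mathbb{R}^d$, gives at once that every trajectory $t\mapsto X(\omega,t)$ is $\mathcal{B}([0,+\infty))/\mathcal{B}(\mathbb{R}^d)$-measurable (this is (i)) and that every section $\omega\mapsto X(\omega,t)$ is $\Sigma/\mathcal{B}(\mathbb{R}^d)$-measurable, hence a random vector (this is (ii)).

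For part (iii) I would fix $i$, split $X_i=X_i^+-X_i^-$ into nonnegative and nonpositive parts, and note that $X_i^{\pm}$ are product measurable $[0,+\infty]$-valued functions. Applying the measurability assertion of Tonelli's theorem (Theorem \ref{tonelli_thm}(i)) to $X_i^+$ and to $X_i^-$ shows that $t\mapsto\mathbb{E}(X_i^+(t))$ and $t\mapsto\mathbb{E}(X_i^-(t))$ are $\mathcal{B}([0,+\infty))/\mathcal{B}(\bar{\mathbb{R}})$-measurable. Since $\mathbb{E}(X_i(t))$ exists for every $t$ by hypothesis, at least one of $\mathbb{E}(X_i^{\pm}(t))$ is finite for each $t$, so $\mathbb{E}(X_i(t))=\mathbb{E}(X_i^+(t))-\mathbb{E}(X_i^-(t))$ is a well-defined difference of Borel functions, hence Borel; stacking coordinates yields (iii).

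For part (iv) I would apply Tonelli's theorem (Theorem \ref{tonelli_thm}(ii)) to the nonnegative product measurable function $(\omega,t)\mapsto\|X(\omega,t)\|_2\,\mathbf{1}_A(t)$, obtaining $\int_\Omega\bigl(\int_A\|X(\omega,t)\|_2\,dt\bigr)\mathbb{P}(d\omega)=\int_A\mathbb{E}\|X(t)\|_2\,dt<+\infty$ by assumption. In particular the inner integral is finite $\mathbb{P}$-a.s.\ (the exceptional set being measurable by Theorem \ref{tonelli_thm}(i)), which is the first claim, and the same computation shows $X\mathbf{1}_A\in L^1(\Omega\times[0,+\infty),\Sigma\otimes\mathcal{B}([0,+\infty)),\mathbb{P}\times\lambda_1;\mathbb{R}^d)$. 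Then Fubini's theorem (Theorem \ref{fubini_thm}) applies to each coordinate $X_i\mathbf{1}_A$: its parts (i)--(iii) give that $I_{X_i\mathbf{1}_A}(\omega)=\int_A X_i(\omega,t)\,dt$ for $\mathbb{P}$-a.e.\ $\omega$, that $J_{X_i\mathbf{1}_A}(t)=\mathbf{1}_A(t)\,\mathbb{E}(X_i(t))$ for a.e.\ $t$, and hence $\mathbb{E}\bigl(\int_A X_i(t)\,dt\bigr)=\int_A\mathbb{E}(X_i(t))\,dt$; collecting coordinates gives the vector identity. The argument is essentially bookkeeping; the only points needing care are the reduction of the vector statements to coordinates, keeping straight the distinction between the $[0,+\infty]$-valued Tonelli theorem and the real-valued Fubini theorem, and disposing of the possible $\infty-\infty$ in (iii) via the standing assumption that $\mathbb{E}(X(t))$ exists. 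I do not expect a genuine obstacle.
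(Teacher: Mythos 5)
Your proof is correct and follows exactly the route the paper indicates: the paper states this proposition without proof, remarking only that it is "a direct consequence of product measurability and Fubini's theorem," and your argument fills in precisely those details — sections for (i)–(ii), Tonelli's measurability assertion applied to $X_i^{\pm}$ for (iii), and Tonelli plus Fubini coordinatewise for (iv). No gaps.
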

Let $(\Sigma_t)_{t\in [0,+\infty)}$ be a family of sub-$\sigma$-fields of $\Sigma$, such that 
\begin{equation}
	\Sigma_s\subset\Sigma_t\subset\Sigma, \ \hbox{for all} \ 0\leq s<t.
\end{equation}
Then we call $(\Sigma_t)_{t\in [0,+\infty)}$ the {\it filtration} on $(\Omega,\Sigma,\mathbb{P})$. Moreover, we set
\begin{equation}
	\Sigma_{\infty}=\sigma\Bigl(\bigcup_{t\geq 0}\Sigma_t\Bigr).
\end{equation}
Of course $\Sigma_{\infty}\subset\Sigma$. We say that the filtration $(\Sigma_t)_{t\in [0,+\infty)}$ is {\it complete} if 
\begin{equation}
	\mathcal{N}\subset\Sigma_0,
\end{equation}
where 
\begin{equation}
\label{null_sys}
	\mathcal{N}=\{A\in\Sigma \ | \ \mathbb{P}(A)=0\}.
\end{equation}
If $(\Sigma_t)_{t\in [0,+\infty)}$ is not complete then we can always take
\begin{equation}
	\bar\Sigma_t=\sigma(\Sigma_t\cup\mathcal{N}), \ t\in [0,+\infty).
\end{equation}
Of course the filtration $(\bar\Sigma_t)_{t\in [0,+\infty)}$ is complete and it is called {\it augmented} filtration.
The filtration $(\Sigma_t)_{t\in [0,+\infty)}$ is called {\it right-continuous} if $\Sigma_t=\Sigma_{t+}$ for all $t\in [0,+\infty)$, where $\displaystyle{\Sigma_{t+}=\bigcap_{s>t}\Sigma_s}$. (We have that $\Sigma_t\subset\Sigma_{t+}$ for all $t\in [0,+\infty)$ and the family $(\Sigma_{t+})_{t\in [0,+\infty)}$ is already a right-continuous filtration.) If $(\Sigma_t)_{t\in [0,+\infty)}$ is complete and right-continuous then we say that it satisfies the {\it usual conditions}. Note that the filtration $(\tilde\Sigma_t)_{t\in [0,+\infty)}$, where
\begin{equation}
	\tilde\Sigma_t=\bigcap_{s>t}\bar\Sigma_s \ (=\bar\Sigma_{t+}),
\end{equation}
always satisfies the usual conditions, even if the initial filtration $(\Sigma_t)_{t\in [0,+\infty)}$ does not.

If for all $t\in [0,+\infty)$
\begin{equation}
	\sigma(X(t))\subset \Sigma_t,
\end{equation} 
then we say that the stochastic process $X=(X(t))_{t\in [0,+\infty)}$ is adapted to $(\Sigma_t)_{t\in [0,+\infty)}$. Every stochastic process $X$ is adapted to $(\Sigma^X_t)_{t\in [0,+\infty)}$ where
\begin{equation}
	\Sigma_t^X=\sigma(X(s) \ | \ s\in [0,t])=\sigma\Bigl(\bigcup_{s\in [0,t]} \sigma(X(s))\Bigr).
\end{equation}
We call $(\Sigma^X_t)_{t\in [0,+\infty)}$ the {\it natural filtration} of the process $X$. This is the smallest filtration to which $X$ is adapted. We stress that the natural filtration of $X$ neither  have to be complete nor  right-continuous, while $(\tilde\Sigma^X_t)_{t\in [0,+\infty)}$ satisfies the usual conditions. Analogously we also define the last element of $(\Sigma_t^X)_{t\in [0,+\infty)}$ by
\begin{equation}
	\Sigma_{\infty}^X=\sigma\Bigl(\bigcup_{t\geq 0}\Sigma_t^X\Bigr).
\end{equation}
Note that we have 
\begin{equation}	
	\label{last_el_fX}
\Sigma_{\infty}^X=\sigma\Bigl(\bigcup_{t\geq 0}\sigma(X(t))\Bigr).
\end{equation}

\subsection{What does it mean that two processes are equivalent?}
\begin{defn} Suppose that $X=(X(t))_{t\in [0,+\infty)}$ and $Y=(Y(t))_{t\in [0,+\infty)}$, are two $\mathbb{R}^d$-valued stochastic processes defined on the same probability space $(\Omega,\Sigma,\mathbb{P})$. 
\begin{itemize}
	\item [(i)] We say that $X$ and $Y$ are {\bf stochastically equivalent} if
		\begin{equation}
			\forall_{t\in [0,+\infty)} \ \mathbb{P}\Bigl(\{\omega\in\Omega \ |\ X(\omega,t)=Y(\omega,t)\}\Bigr)=1.
		\end{equation} 
		We then say that $Y$ is a {\bf modification} (or a {\bf version}) of $X$.
	\item [(ii)] We say that $X$ and $Y$ are {\bf indistinguishable} if	
	\begin{equation}
			\mathbb{P}\Bigl(\{\omega\in\Omega \ | \ \forall_{t\in [0,+\infty)} \ X(\omega,t)=Y(\omega,t)\}\Bigr)=1.
		\end{equation}
\end{itemize}
\end{defn}
Of course if two processes are indistinguishable then they are stochastically equivalent, but the converse is in general not true. However, the following important result holds, see \cite{elcoh}.
\begin{thm} Suppose that $X=(X(t))_{t\in [0,+\infty)}$ and $Y=(Y(t))_{t\in [0,+\infty)}$ are stochastically equivalent, and that both processes are almost surely right-(or left-) continuous. Then they are indistinguishable. 
\end{thm}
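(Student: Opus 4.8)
The plan is to use right-continuity (the left-continuous case is entirely symmetric) to collapse the uncountable family of conditions $\{X(t)=Y(t)\}_{t\in[0,+\infty)}$ to a countable subfamily indexed by the nonnegative rationals, and then recover the remaining $t$ by passing to a limit along rationals.

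First I would set $\mathbb{Q}_{+}=\mathbb{Q}\cap[0,+\infty)$ and, for each $q\in\mathbb{Q}_{+}$, consider the event $A_q=\{\omega\in\Omega : X(\omega,q)=Y(\omega,q)\}$. Stochastic equivalence gives $\mathbb{P}(A_q)=1$ for every $q$, and since $\mathbb{Q}_{+}$ is countable, the event $A:=\bigcap_{q\in\mathbb{Q}_{+}}A_q$ still satisfies $\mathbb{P}(A)=1$ (this is Exercise 3 in Chapter \ref{sec: CMCM}, or just countable subadditivity applied to the complements). Next I would introduce the events $B_X$ and $B_Y$ on which, respectively, the trajectory $t\mapsto X(\omega,t)$ and the trajectory $t\mapsto Y(\omega,t)$ is right-continuous on $[0,+\infty)$; by hypothesis $\mathbb{P}(B_X)=\mathbb{P}(B_Y)=1$, so $C:=A\cap B_X\cap B_Y$ has $\mathbb{P}(C)=1$.

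The key step is then the limiting argument. Fix $\omega\in C$ and an arbitrary $t\in[0,+\infty)$, and choose rationals $q_n\downarrow t$. Right-continuity of the two trajectories at $t$ gives $X(\omega,t)=\lim_{n}X(\omega,q_n)$ and $Y(\omega,t)=\lim_{n}Y(\omega,q_n)$, while $\omega\in A$ forces $X(\omega,q_n)=Y(\omega,q_n)$ for every $n$; hence $X(\omega,t)=Y(\omega,t)$. As $t$ was arbitrary, $C\subseteq\{\omega\in\Omega : \forall_{t\in[0,+\infty)}\ X(\omega,t)=Y(\omega,t)\}$, and since $\mathbb{P}(C)=1$ the latter set has probability one as well. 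At this point I would invoke completeness of $(\Omega,\Sigma,\mathbb{P})$ to guarantee that this "indistinguishability set" actually lies in $\Sigma$ (it contains the measurable set $C$ and its complement is contained in the null set $\Omega\setminus C$). This is exactly the assertion that $X$ and $Y$ are indistinguishable.

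The argument has no genuine analytic obstacle — the limit along rationals is routine — so the only delicate point is measurability bookkeeping: one must be careful that $B_X$, $B_Y$ and the final indistinguishability set are honest events, which is precisely where the standing assumption that the probability space is complete is used. I would also flag explicitly where right- versus left-continuity enters (rationals decreasing to $t$ in one case, increasing to $t$ in the other), so that both cases are dispatched by the same reasoning with only this change.
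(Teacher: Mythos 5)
Your proof is correct and is the standard argument for this fact: restrict to the countable dense set $\mathbb{Q}_+$, use a.s.\ one-sided continuity to propagate the equality $X(\omega,t)=Y(\omega,t)$ to all $t$, and invoke completeness so that the indistinguishability set is an honest event of probability one. The paper does not prove this theorem (it only cites a reference), but your argument is exactly the one found there; the only point worth making explicit is that in the left-continuous case the endpoint $t=0$ has no rationals approaching it from below, which is harmless since $0\in\mathbb{Q}_+$ and is already covered by the countable intersection.
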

In summary, we have four different ways in which we can say that two stochastic processes are equal:
\begin{itemize}
	\item [(i)] $X(\omega,t)=Y(\omega,t)$ for all $(\omega,t)\in \Omega\times [0,+\infty)$, but this situation occurs rarely,
	\item [(ii)] $X$ and $Y$ are stochastically equivalent,
	\item [(iii)] $X$ and $Y$ are indistinguishable,
	\item [(iv)] $X=Y$ $d\mathbb{P}\times d\lambda_1$-almost everywhere, i.e.,
	\begin{equation}
		(\mathbb{P}\times\lambda_1)\Bigl(\{(\omega,t)\in\Omega\times [0,+\infty) \ | \ X(\omega,t)\neq Y(\omega,t)\}\Bigr)=0.
	\end{equation}
	In order to use that notion of equivalence we have to assume that $X$ and $Y$ are product measurable processes.
\end{itemize}
\subsection{Three notions of measurability of stochastic processes}
Let $(\Omega,\Sigma,\mathbb{P})$ be a complete probability space with a filtration $(\Sigma_t)_{t\in [0,+\infty)}$ satisfying the usual conditions. We consider the following classes of stochastic processes
\begin{equation}
\label{def_proc_D}
	\mathbb{D}=\{X:\Omega\times [0,+\infty)\to\mathbb{R}^d \ | \ X \ \hbox{is adapted to} \ (\Sigma_t)_{t\in [0,+\infty)} \ \hbox{and has c\`adl\`ag paths}\},
\end{equation}
and
\begin{equation}
\label{def_proc_L}
	\mathbb{L}=\{X:\Omega\times [0,+\infty)\to\mathbb{R}^d \ | \ X \ \hbox{is adapted to} \ (\Sigma_t)_{t\in [0,+\infty)} \ \hbox{and has  c\`agl\`ad paths}\}.
\end{equation}
Then, the {\it predictable $\sigma$-field $\mathcal{P}$} on $\Omega\times [0,+\infty)$ is the smallest $\sigma$-field making all processes in $\mathbb{L}$ measurable, i.e.,
\begin{equation}
	\mathcal{P}=\sigma\Bigl(\bigcup\limits_{X\in \mathbb{L}}\sigma(X)\Bigr).
\end{equation}
The {\it optional $\sigma$-field $\mathcal{O}$} on $\Omega\times [0,+\infty)$ is the smallest $\sigma$-field making all processes in $\mathbb{D}$ measurable, i.e.,
\begin{equation}
	\mathcal{O}=\sigma\Bigl(\bigcup\limits_{X\in \mathbb{D}}\sigma(X)\Bigr).
\end{equation}
A stochastic process $X=(X(t))_{t\in [0,+\infty)}$ is called  {\it progressive} or {\it progressively measurable} if for each $t\in [0,+\infty)$ the mapping
\begin{equation}
	\Omega\times [0,t] \ni (\omega,s)\to X(\omega,s)\in\mathbb{R}^d
\end{equation}
is $\Sigma_t\otimes\mathcal{B}([0,t]) / \mathcal{B}(\mathbb{R}^d)$-measurable. The {\it progressive $\sigma$-field $\mathcal{A}$} on $\Omega\times [0,+\infty)$ is the smallest $\sigma$-field that makes all progressive processes measurable. We have that
\begin{equation}
	\mathcal{P}\subset\mathcal{O}\subset\mathcal{A}\subset \Sigma\otimes\mathcal{B}([0,+\infty)),
\end{equation}
see \cite{prott}.
For the proof of the fact below we refer to \cite{MEDVEG}.
\begin{prop} Let $f:[0,+\infty)\to \mathbb{R}^d$ be a Borel function and let us consider the (deterministic process) $X(\omega,t)=f(t)$ for all $(\omega,t)\in \Omega\times [0,+\infty)$. Then $X=(X(t))_{t\in [0,+\infty)}$ is predictable.
\end{prop}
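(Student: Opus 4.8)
The plan is to realize the deterministic process $X$ as a composition of two measurable maps and then invoke the measurability-of-composition result, Proposition~\ref{meas_comp_f}. First I would introduce the ``time projection'' process $\pi=(\pi(t))_{t\in[0,+\infty)}$ defined by $\pi(\omega,t)=t$ for all $(\omega,t)\in\Omega\times[0,+\infty)$. This process is deterministic, so for every $t$ the random variable $\pi(t)$ is the constant $t$ and $\sigma(\pi(t))=\{\emptyset,\Omega\}\subset\Sigma_t$; hence $\pi$ is adapted to $(\Sigma_t)_{t\in[0,+\infty)}$. Its trajectories $t\mapsto t$ are continuous, in particular c\`agl\`ad, so $\pi\in\mathbb{L}$. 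By the very definition of the predictable $\sigma$-field $\mathcal{P}=\sigma\bigl(\bigcup_{Y\in\mathbb{L}}\sigma(Y)\bigr)$ we get $\sigma(\pi)\subset\mathcal{P}$, which is exactly the statement that $\pi:\Omega\times[0,+\infty)\to[0,+\infty)$ is $\mathcal{P}$-to-$\mathcal{B}([0,+\infty))$-measurable (using that $\mathcal{B}([0,+\infty))$ is the trace of $\mathcal{B}(\mathbb{R})$ on $[0,+\infty)$ and $\pi$ is $[0,+\infty)$-valued).

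Second, I would observe that $X(\omega,t)=f(t)=f(\pi(\omega,t))$, i.e.\ $X=f\circ\pi$. Since $f:[0,+\infty)\to\mathbb{R}^d$ is assumed Borel, i.e.\ $\mathcal{B}([0,+\infty))$-to-$\mathcal{B}(\mathbb{R}^d)$-measurable, and $\pi$ is $\mathcal{P}$-to-$\mathcal{B}([0,+\infty))$-measurable by the previous step, Proposition~\ref{meas_comp_f} applied to the measurable spaces $(\Omega\times[0,+\infty),\mathcal{P})$, $([0,+\infty),\mathcal{B}([0,+\infty)))$, $(\mathbb{R}^d,\mathcal{B}(\mathbb{R}^d))$ yields that $X=f\circ\pi$ is $\mathcal{P}$-to-$\mathcal{B}(\mathbb{R}^d)$-measurable, that is, $X$ is predictable. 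This closes the argument.

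There is essentially no hard obstacle here; the only points requiring a word of care are (i) checking that the deterministic continuous process $\pi$ genuinely lies in $\mathbb{L}$ (adaptedness is immediate since constant random variables generate the trivial $\sigma$-field, and continuity implies the c\`agl\`ad property), and (ii) the routine remark on the trace $\sigma$-field $\mathcal{B}([0,+\infty))$ so that the composition hypothesis of Proposition~\ref{meas_comp_f} is literally met. As an alternative route, if one prefers to avoid the projection trick, one can use a functional monotone class argument: the collection of bounded Borel $f$ for which $(\omega,t)\mapsto f(t)$ is $\mathcal{P}$-measurable is a vector space, is closed under bounded monotone limits, and contains the indicators $\mathbf{1}_{(a,b]}$ and $\mathbf{1}_{\{0\}}$ (whose associated deterministic processes are c\`agl\`ad, hence in $\mathbb{L}$, hence $\mathcal{P}$-measurable); since these generators form a $\pi$-system generating $\mathcal{B}([0,+\infty))$, the monotone class theorem produces all bounded Borel $f$, truncation then gives all Borel $f$, and one argues componentwise in the $\mathbb{R}^d$-valued case. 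I would present the composition argument as the main proof, being the shorter one.
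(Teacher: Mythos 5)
Your proof is correct and complete. The paper itself gives no argument for this proposition (it only cites Medvegyev's book), so your composition argument actually fills a gap rather than duplicating anything. The key observations all check out: the time projection $\pi(\omega,t)=t$ is adapted (each $\pi(t)$ is constant, so $\sigma(\pi(t))=\{\emptyset,\Omega\}\subset\Sigma_t$) and has continuous, hence c\`agl\`ad, trajectories, so $\pi\in\mathbb{L}$ and therefore $\sigma(\pi)\subset\mathcal{P}$ by the very definition of the predictable $\sigma$-field; writing $X=f\circ\pi$ and invoking Proposition \ref{meas_comp_f} with the trace $\sigma$-field $\mathcal{B}([0,+\infty))$ then gives $\mathcal{P}$-to-$\mathcal{B}(\mathbb{R}^d)$-measurability of $X$. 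The only cosmetic caveat is that $\mathbb{L}$ is stated in the paper for $\mathbb{R}^d$-valued processes, so one should read $\pi$ as the scalar ($d=1$) process $t\mapsto t$; this is harmless. Your alternative monotone-class route (starting from the indicators $\mathbf{1}_{(a,b]}$ and $\mathbf{1}_{\{0\}}$, which are indeed c\`agl\`ad) is also valid and is essentially the textbook proof, but the projection trick is shorter and is the better choice as the main argument.
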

\subsection{Stochastic process as a random element with values in functional space and vice versa}
Sometimes it is convenient to switch between two possible ways of looking at stochastic processes. The first possibility is to consider a stochastic process $X$ as a product measurable function \eqref{def_sp} with values in $\mathbb{R}^d$. However, if (almost) all trajectories of $X$ belong to some functional space $E$ equipped with $\sigma$-field $\mathcal{E}$, then we can consider a mapping $\hat X:\Omega\to E$  defined as
\begin{equation}	
\label{XHX}
	\hat X(\omega)=X(\omega,\cdot), \ \omega\in\Omega.
\end{equation}  
If $\hat X$ is $\Sigma / \mathcal{E}$-measurable then we say that the process $X$ generates the random element $\hat X$ in $(E,\mathcal{E})$. Moreover, the law $\mu$ of $\hat X$ is a probabilistic measure induced by $\hat X$ on the measure space $(E,\mathcal{E})$, see \eqref{xi_law}.
 
We listed below three most common cases. For the proofs see, for example, \cite{gusak}.
\begin{thm}
\label{gen_rand_el_1}
\begin{itemize}
	\item [(i)] Let $E=C([0,T])$, equipped with supremum norm $\|\cdot\|_{\infty}$, and $\mathcal{E}=\mathcal{B}(E)$. If $X$ is a stochastic process with continuous trajectories, then it generates a random element in $C([0,T])$.
	\item [(ii)] Let $E=D([0,T])$ be the Skorokhod space, equipped with the Skorokhod metric $d$, and $\mathcal{E}=\mathcal{B}(E)$. If $X$ is a stochastic process with c\`adl\`ag trajectories, then it generates a random element in $D([0,T])$.
	\item [(iii)] Let $E=L^2([0,T])$, equipped with the norm $\|\cdot\|_{L^2([0,T])}$, and $\mathcal{E}=\mathcal{B}(E)$. If $X\in L^2(\Omega\times [0,T],\Sigma\otimes\mathcal{B}([0,T]),\mathbb{P}\times\lambda_1)$, then it generates a random element in $L^2([0,T])$.
\end{itemize}
\end{thm}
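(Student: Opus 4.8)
The plan is to treat the three cases uniformly. In each case the target space $E$ (namely $C([0,T])$, $D([0,T])$, or $L^2([0,T])$) is a separable metric space carrying its Borel $\sigma$-field $\mathcal{E}=\mathcal{B}(E)$, and the object to be shown measurable is the map $\hat X:\Omega\to E$, $\hat X(\omega)=X(\omega,\cdot)$ (in case (iii), the equivalence class of $X(\omega,\cdot)$, after first checking this is well defined). The key observation is that, by separability, $\mathcal{B}(E)$ is generated by a \emph{countable} family $\{g_n\}_{n\in\mathbb{N}}$ of real-valued $\mathcal{B}(E)$-measurable functionals on $E$; one then invokes the elementary generator lemma, immediate from the definition of the generated $\sigma$-field and the Propositions on $\sigma$-fields recalled earlier: a map $h:\Omega\to E$ is $\Sigma/\mathcal{B}(E)$-measurable iff each $g_n\circ h:\Omega\to\mathbb{R}$ is $\Sigma/\mathcal{B}(\mathbb{R})$-measurable. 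Thus in each case it remains to exhibit the generators $g_n$, to verify that they indeed generate $\mathcal{B}(E)$, and to check that $g_n\circ\hat X$ is a random variable.

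For (i) and (ii) I would take $g_n=\pi_{q_n}$, the evaluation functionals at the points of a countable dense subset $\{q_n\}\subset[0,T]$ containing $T$, where $\pi_t(f)=f(t)$. In both cases $\pi_t\circ\hat X=X(\cdot,t)$, which is an $\mathbb{R}^d$-valued random variable by the very definition of a stochastic process; composing with the coordinate projections of $\mathbb{R}^d$ (Proposition \ref{meas_comp_f}) reduces this to the scalar case, so this step is routine. The content is the identity $\mathcal{B}(E)=\sigma(\pi_t:t\in[0,T])=\sigma(\pi_{q_n}:n\in\mathbb{N})$. For $E=C([0,T])$ with the sup norm this is easy: each $\pi_t$ is continuous, hence $\mathcal{B}(E)$-measurable, giving one inclusion; for the other, every open set of the separable space $E$ is a countable union of open balls, and an open ball $\{f:\sup_{t\in[0,T]}\|f(t)-f_0(t)\|<r\}$ equals $\{f:\sup_{q\in\{q_n\}}\|f(q)-f_0(q)\|<r\}$ by continuity, hence lies in $\sigma(\pi_{q_n})$. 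For $E=D([0,T])$ with the Skorokhod metric the analogous identity $\mathcal{B}(D([0,T]))=\sigma(\pi_t:t\in[0,T])$ is a standard but less elementary fact about the Skorokhod topology (the finite-dimensional projections generate the Borel $\sigma$-field; see \cite{gusak}), and I would cite it; each $\pi_t$ is Borel measurable on $D([0,T])$, which is also standard.

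For (iii) the first task is to see that $\hat X$ is well defined as an $L^2([0,T])$-valued map. Since $X\in L^2(\Omega\times[0,T],\Sigma\otimes\mathcal{B}([0,T]),\mathbb{P}\times\lambda_1)$, Tonelli's Theorem \ref{tonelli_thm} applied to $\|X\|_2^2$ gives $\int_0^T\|X(\omega,t)\|_2^2\,dt<+\infty$ for $\mathbb{P}$-almost all $\omega$; redefining $X(\omega,\cdot):=0$ on the exceptional $\mathbb{P}$-null set (permissible since $(\Omega,\Sigma,\mathbb{P})$ is complete and this does not change the equivalence class in $L^2(\Omega\times[0,T])$) yields $\hat X(\omega)\in L^2([0,T])$ for every $\omega$. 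For measurability I would use that $L^2([0,T])$ is a separable Hilbert space, so by the Pettis measurability theorem (equivalently, by applying the generator lemma above with $g_n(f)=\langle f,e_n\rangle_{L^2}$ for a countable orthonormal basis $\{e_n\}$) it suffices to check that $\omega\mapsto\langle\hat X(\omega),g\rangle_{L^2}=\int_0^T X(\omega,t)g(t)\,dt$ is $\Sigma$-measurable for each $g\in L^2([0,T])$. Here $(\omega,t)\mapsto X(\omega,t)g(t)$ is product measurable, and by the Cauchy--Schwarz inequality $\int_0^T\|X(\omega,t)g(t)\|_2\,dt\le\|X(\omega,\cdot)\|_{L^2}\,\|g\|_{L^2}<+\infty$ for a.e.\ $\omega$, so Proposition \ref{meas_Fub_1}(iv) (a consequence of Fubini's Theorem \ref{fubini_thm}) gives the required measurability, and we conclude. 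I expect the main obstacle to be the generator identity in case (ii): proving $\mathcal{B}(D([0,T]))=\sigma(\pi_t:t\in[0,T])$ genuinely uses the structure of the Skorokhod topology and is the one place where a nontrivial external result is needed; in case (iii) the only real care is the interplay of null sets with completeness and the essential use of separability (the same phenomenon that, as recalled earlier, makes Borel measurability into a non-separable Banach space badly behaved), while the remaining steps -- compositions of measurable maps, continuity of evaluation on $C([0,T])$, and the Tonelli/Fubini bookkeeping -- are routine.
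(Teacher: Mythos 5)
Your proof is correct. Note that the paper does not actually prove Theorem \ref{gen_rand_el_1}: it defers entirely to \cite{gusak}, so there is no in-text argument to compare against; your route is, however, the natural companion to the paper's own proof of the converse statement (Theorem \ref{from_B2prod}), which likewise hinges on the coordinate mappings of Lemma \ref{cm_meas}. Your uniform reduction to a countable generating family of real functionals is the standard and efficient way to organize all three cases, and each step you leave as ``routine'' genuinely is. Three small remarks. In (i), the step from the ball $\{f:\sup_{n}\|f(q_n)-f_0(q_n)\|<r\}$ to membership in $\sigma(\pi_{q_n}:n\in\mathbb{N})$ uses that a countable supremum of functions measurable with respect to that $\sigma$-field is again measurable, so its sublevel sets lie in it; this is exactly where countability of the dense set enters, as you intend. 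In (ii), you correctly isolate the one nontrivial external input: Lemma \ref{cm_meas}(b) gives only the inclusion $\sigma(\pi_t:t\in[0,T])\subset\mathcal{B}(D([0,T]))$, and the reverse inclusion (finite-dimensional projections over a countable dense set containing $T$ generate the Borel $\sigma$-field of the Skorokhod space) must indeed be quoted from the literature. In (iii), the Fubini step is licensed because $(\omega,t)\mapsto X(\omega,t)g(t)$ lies in $L^1(\mathbb{P}\times\lambda_1)$ by Cauchy--Schwarz on the product space, $\|Xg\|_{L^1(\Omega\times[0,T])}\leq\|X\|_{L^2(\Omega\times[0,T])}\cdot\|g\|_{L^2([0,T])}$, which is the hypothesis Proposition \ref{meas_Fub_1}(iv) actually requires; your pointwise-in-$\omega$ Cauchy--Schwarz gives a.e.\ finiteness, but it is the global integrability that makes the partial integral a measurable function of $\omega$. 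None of these points affects the validity of your argument.
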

Having a random element $\hat X$ in some functional space $E$ it is natural to ask if there is a product measurable stochastic process $X$ satisfying \eqref{XHX}. In the case when $E=L^2([0,T])$ the situation is more subtle since $L^2([0,T])$ consists of equivalence classes.
\begin{thm}
\label{from_B2prod}
	\item [(i)] Let $\hat X$ be a random element in $C([0,T])$. Then the process $X$ defined by
	\begin{equation}
	\label{prod_meas_rep}
			X(\omega,t)=(\hat X(\omega))(t), (\omega,t)\in \Omega\times [0,T],
	\end{equation}
	is product measurable and has continuous trajectories.
	\item [(ii)] Let $\hat X$ be a random element in $D([0,T])$. Then the process $X$ defined by \eqref{prod_meas_rep} 	is product measurable and has c\`adl\`ag trajectories.
	\item [(iii)] Let $\hat X$ be a random element in $L^2([0,T])$. Then there exists a product measurable process $X$ such that for almost all $\omega$, the equality $\displaystyle{X(\omega,t)=(\hat X(\omega))(t)}$ holds almost everywhere on $[0,T]$.
\end{thm}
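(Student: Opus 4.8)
The plan is to handle the three parts separately; (i) and (ii) are quick applications of Lemma \ref{prod_meas_lem}, while (iii) carries the real content. In each case I set $X(\omega,t)=(\hat X(\omega))(t)$. For (i), the evaluation functional $\mathrm{ev}_t\colon C([0,T])\to\mathbb{R}^d$, $f\mapsto f(t)$, is continuous, hence Borel, so by Proposition \ref{meas_comp_f} the section $\omega\mapsto X(\omega,t)=\mathrm{ev}_t(\hat X(\omega))$ is $\Sigma/\mathcal{B}(\mathbb{R}^d)$-measurable; every section $t\mapsto X(\omega,t)$ is continuous, in particular right-continuous. Lemma \ref{prod_meas_lem} in its $\mathbb{R}^d$-valued version (cf.\ \cite{bogach2}), applied after extending the paths constantly beyond $[0,T]$, then gives product measurability, and the trajectories are continuous by construction. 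Part (ii) is verbatim the same with $C([0,T])$ replaced by the Skorokhod space $D([0,T])$: the only extra input is the standard fact that $\mathcal{B}(D([0,T]))$ is generated by the coordinate maps $\pi_t\colon w\mapsto w(t)$, so each $\pi_t$ is Borel and the sections in $\omega$ are measurable; the sections in $t$ are c\`adl\`ag, so Lemma \ref{prod_meas_lem} applies again.

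For (iii) the obstruction is that ``$(\hat X(\omega))(t)$'' is only defined up to a $\lambda_1$-null set of $t$'s, and $L^2$-convergence does not produce pointwise a.e.\ convergence. Fix an orthonormal basis $(e_n)_{n\in\mathbb{N}}$ of $L^2([0,T])$ made of continuous functions (e.g.\ the normalised trigonometric system) and put $c_n(\omega)=\langle\hat X(\omega),e_n\rangle_{L^2([0,T])}$, which is $\Sigma$-measurable since $g\mapsto\langle g,e_n\rangle$ is continuous on $L^2([0,T])$. The partial sums $Z_N(\omega,t)=\sum_{n=1}^{N}c_n(\omega)e_n(t)$ are product measurable, and for every $\omega$ one has $Z_N(\omega,\cdot)\to\hat X(\omega)$ in $L^2([0,T])$, so $(Z_N(\omega,\cdot))_N$ is $L^2$-Cauchy for each $\omega$; moreover $\omega\mapsto\|Z_N(\omega,\cdot)-Z_M(\omega,\cdot)\|_{L^2([0,T])}$ is $\Sigma$-measurable by Tonelli's theorem \ref{tonelli_thm}. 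One now extracts a Cauchy subsequence measurably: let $k_1(\omega)$ be the least $k$ with $\|Z_j(\omega,\cdot)-Z_k(\omega,\cdot)\|_{L^2}\le 1/2$ for all $j\ge k$, and inductively let $k_{m+1}(\omega)$ be the least $k>k_m(\omega)$ with $\|Z_j(\omega,\cdot)-Z_k(\omega,\cdot)\|_{L^2}\le 2^{-(m+1)}$ for all $j\ge k$. Each event $\{k_m=k\}$ lies in $\Sigma$, so the $k_m$ are measurable and $W_m(\omega,t):=Z_{k_m(\omega)}(\omega,t)=\sum_{k\in\mathbb{N}}\mathbf{1}_{\{k_m(\omega)=k\}}\,Z_k(\omega,t)$ is product measurable.

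By construction $\|W_{m+1}(\omega,\cdot)-W_m(\omega,\cdot)\|_{L^2([0,T])}\le 2^{-m}$ for every $\omega$, so $\sum_m|W_{m+1}(\omega,\cdot)-W_m(\omega,\cdot)|$ converges in $L^2([0,T])$ and hence $\lim_m W_m(\omega,t)$ exists in $\mathbb{R}^d$ for $\lambda_1$-a.e.\ $t$, for every $\omega$. Put $A=\{(\omega,t):\ \lim_m W_m(\omega,t)\text{ exists}\}\in\Sigma\otimes\mathcal{B}([0,T])$ and define $X(\omega,t)=\mathbf{1}_A(\omega,t)\cdot\lim_{m\to\infty}W_m(\omega,t)$; then $X$ is product measurable. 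For each $\omega$, $W_m(\omega,\cdot)\to X(\omega,\cdot)$ $\lambda_1$-a.e.\ and, since $k_m(\omega)\to\infty$, also $W_m(\omega,\cdot)\to\hat X(\omega)$ in $L^2([0,T])$; comparing the two limits shows $X(\omega,\cdot)=\hat X(\omega)$ $\lambda_1$-a.e. The main obstacle throughout is precisely this measurable subsequence extraction for (iii) --- verifying that the thresholds $k_m(\omega)$ are $\Sigma$-measurable (which is where Tonelli's theorem enters) and that the resulting increments are $L^2$-summable uniformly in $\omega$; everything else is routine.
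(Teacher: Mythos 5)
Your treatment of (i) and (ii) is essentially the paper's own: measurability of the $\omega$-sections comes from composing $\hat X$ with the Borel coordinate maps $\delta_t$ (Lemma \ref{cm_meas}, Proposition \ref{meas_comp_f}), and product measurability then follows from Lemma \ref{prod_meas_lem} using the (right-)continuity of the $t$-sections; the paper proves (i) exactly this way and leaves (ii) as an exercise. The genuine divergence is in (iii): the paper does not prove it, deferring to Proposition 2, page 741 in \cite{MAKPOD}, whereas you supply a self-contained construction. Your argument is correct: expanding $\hat X(\omega)$ in a fixed orthonormal basis of continuous functions makes the partial sums $Z_N$ product measurable; the events $\{k_m=k\}$ are countable Boolean combinations of the measurable events $\{\|Z_j-Z_k\|_{L^2}\le 2^{-m}\}$ (indeed $\|Z_j-Z_k\|^2_{L^2}=\sum_{n=k+1}^{j}|c_n(\omega)|^2$, so measurability is immediate even without invoking Tonelli); the bound $\|W_{m+1}(\omega,\cdot)-W_m(\omega,\cdot)\|_{L^2}\le 2^{-m}$ follows from the defining property of $k_m$ applied to $j=k_{m+1}(\omega)\ge k_m(\omega)$; and the resulting $\lambda_1$-a.e.\ convergence of $W_m(\omega,\cdot)$, combined with uniqueness of $L^2$-limits, identifies the product-measurable limit $X(\omega,\cdot)$ with the class $\hat X(\omega)$. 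Your version is in fact slightly stronger than the stated (iii), since the a.e.\ equality in $t$ holds for every $\omega$ rather than for almost all $\omega$. What the citation buys the paper is brevity; what your construction buys is a proof that stays entirely within the measure-theoretic toolkit already assembled in the paper's auxiliary chapters.
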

For the proof of $(iii)$ see Proposition 2, page 741. in \cite{MAKPOD}. In order to show $(i)$ and $(ii)$ we recall after \cite{Parth} what follows.
\begin{lem} 
\label{cm_meas}
	\begin{itemize}
		\item [(a)] The coordinate mappings $\delta_t:C([0,T])\to \mathbb{R}$ defined by
			\begin{equation}
			\label{cm_1}
				\delta_t (f)=f(t), \ f\in C([0,T])
			\end{equation}
			are $\mathcal{B}(C([0,T])) / \mathcal{B}(\mathbb{R})$-measurable  for all $t\in [0,T]$.
		\item [(b)] The coordinate mappings $\delta_t:D([0,T])\to \mathbb{R}$ defined by
			\begin{equation}
			\label{cm_2}
				\delta_t (f)=f(t), \ f\in D([0,T])
			\end{equation}
			are $\mathcal{B}(D([0,T])) / \mathcal{B}(\mathbb{R})$-measurable  for all $t\in [0,T]$.
	\end{itemize}
\end{lem}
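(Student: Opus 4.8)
The two parts are of very different difficulty: evaluation at a fixed time is continuous on $C([0,T])$ with the uniform norm, whereas on the Skorokhod space $D([0,T])$ it is continuous only at the two endpoints and must be recovered at interior times by an approximation argument.

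For (a), the plan is simply to note that $\delta_t$ is Lipschitz. Indeed, for $f,g\in C([0,T])$ and $t\in[0,T]$,
\[
|\delta_t(f)-\delta_t(g)|=|f(t)-g(t)|\le\sup_{s\in[0,T]}|f(s)-g(s)|=\|f-g\|_\infty,
\]
so $\delta_t$ is $1$-Lipschitz, hence continuous on $(C([0,T]),\|\cdot\|_\infty)$, hence $\mathcal{B}(C([0,T]))/\mathcal{B}(\mathbb{R})$-measurable by Proposition~\ref{meas_fun_examples}(i). Nothing further is needed.

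For (b), I would first dispose of the endpoints $t\in\{0,T\}$. Recall that the Skorokhod metric has the form $d(f,g)=\inf_{\lambda}\big(\|\lambda-\mathrm{id}\|_\infty\vee\|f-g\circ\lambda\|_\infty\big)$, the infimum being over increasing homeomorphisms $\lambda$ of $[0,T]$ with $\lambda(0)=0$ and $\lambda(T)=T$. Since every admissible $\lambda$ fixes $0$ and $T$, we have $|f(0)-g(0)|\le\|f-g\circ\lambda\|_\infty$ and $|f(T)-g(T)|\le\|f-g\circ\lambda\|_\infty$ for each $\lambda$, and passing to the infimum gives $|\delta_0(f)-\delta_0(g)|\le d(f,g)$, $|\delta_T(f)-\delta_T(g)|\le d(f,g)$; thus $\delta_0$ and $\delta_T$ are Lipschitz, hence Borel. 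For an interior time $t\in(0,T)$ the functional $\delta_t$ is genuinely discontinuous, so instead I would write it as a pointwise limit of continuous functionals. Fix $h_n\downarrow0$ with $t+h_n\le T$ and set $A_n(f)=h_n^{-1}\int_t^{t+h_n}f(s)\,ds$. The key step is to verify that each $A_n$ is continuous on $(D([0,T]),d)$: if $d(f_k,f)\to0$, choose reparametrizations $\lambda_k$ with $\|\lambda_k-\mathrm{id}\|_\infty\to0$ and $\|f_k-f\circ\lambda_k\|_\infty\to0$; the latter forces $\sup_k\|f_k\|_\infty<\infty$, and since a c\`adl\`ag path has at most countably many discontinuities, $f\circ\lambda_k\to f$ Lebesgue-almost everywhere, so bounded convergence gives $\int_t^{t+h_n}f_k\,ds\to\int_t^{t+h_n}f\,ds$, i.e.\ $A_n(f_k)\to A_n(f)$. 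Hence each $A_n$ is continuous, therefore Borel; and right-continuity of $f\in D([0,T])$ at $t$ gives $A_n(f)\to f(t+)=f(t)=\delta_t(f)$. A pointwise limit of Borel functions is Borel, so $\delta_t$ is Borel.

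The main obstacle is precisely the continuity of the averaging functionals $A_n$ on the Skorokhod space: this is where one genuinely uses the structure of the $J_1$ topology — the endpoint-fixing reparametrizations, the uniform bound extracted from a convergent sequence, and the co-countability of the jump set combined with dominated convergence. An alternative that bypasses the interior-point analysis altogether is to invoke the classical fact (see \cite{Parth}, or Billingsley's monograph) that $\mathcal{B}(D([0,T]))=\sigma(\delta_s:s\in[0,T])$, which makes measurability of every $\delta_t$ immediate, at the cost of importing that (nontrivial) generation theorem rather than proving the lemma by hand.
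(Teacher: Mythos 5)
Your proof is correct, and it is worth noting that the paper itself does not prove this lemma at all: it is recalled from \cite{Parth} and used as a black box in the proof of Theorem \ref{from_B2prod}. Your argument therefore supplies a genuinely self-contained proof rather than an alternative to one in the text. Part (a) is the expected one-line Lipschitz observation. For part (b) your treatment is sound: the endpoint cases follow from the fact that every admissible reparametrization fixes $0$ and $T$, so $\delta_0$ and $\delta_T$ are $1$-Lipschitz for the Skorokhod metric; for interior $t$ the averaging functionals $A_n(f)=h_n^{-1}\int_t^{t+h_n}f(s)\,ds$ are indeed continuous on $(D([0,T]),d)$ — the uniform bound $\sup_k\|f_k\|_\infty\le\|f\|_\infty+o(1)$ (using that $\lambda_k$ is a bijection and that c\`adl\`ag paths on a compact interval are bounded), the almost-everywhere convergence $f\circ\lambda_k\to f$ off the countable jump set, and dominated convergence together give sequential continuity, which suffices in a metric space — and right-continuity yields $A_n(f)\to f(t+)=f(t)$, so $\delta_t$ is a pointwise limit of continuous functionals and hence Borel. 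Compared with simply importing the generation theorem $\mathcal{B}(D([0,T]))=\sigma(\delta_s:s\in[0,T])$ (the route the paper implicitly takes by citing \cite{Parth}), your hands-on argument is longer but elementary and makes visible exactly where the $J_1$ structure enters; the citation route is shorter but rests on a nontrivial external fact. Either is acceptable; your version has the advantage of being verifiable within the notes.
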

\noindent
{\bf Proof of Theorem \ref{from_B2prod} (i).} For any $t\in [0,T]$ we can write   \eqref{prod_meas_rep} as
\begin{equation}
 X(\omega,t)=\delta_t(\hat X(\omega)), \omega\in\Omega.
\end{equation}
By Lemma \ref{cm_meas} (a) and from the fact that the random element $\hat X$ is $\Sigma /\mathcal{B}(C([0,T]))$-measurable, we get for any $B\in\mathcal{B}(\mathbb{R})$ that
\begin{equation}
	(X(t))^{-1}(B)=\Bigl(\delta_t(\hat X)\Bigr)^{-1}(B)=\hat X^{-1}(\delta_t^{-1}(B)) \in\Sigma,
\end{equation}
so for any $t\in [0,T]$ the mapping
\begin{equation}
	\Omega\ni\omega\to X(\omega,t)\in\mathbb{R}
\end{equation}
is $\Sigma / \mathcal{B}(\mathbb{R})$-measurable. For any $\omega\in\Omega$ we have that  $\hat X(\omega)\in C([0,T])$. Hence, by \eqref{prod_meas_rep} we get that for any $\omega\in\Omega$ the function
\begin{equation}
	[0,T]\ni t\to X(\omega,t)\in\mathbb{R}
\end{equation}
is continuous. By Lemma \ref{prod_meas_lem} we obtain that $X$ is $\Sigma\otimes \mathcal{B}([0,T]) / \mathcal{B}(\mathbb{R})$-measurable. \ \ \ $\blacksquare$ \\ \\
The proof of Theorem \ref{from_B2prod} (ii) is analogous and therefore it is left as an exercise.
\begin{rem} The spaces $C([0,T])$, $L^2([0,T])$  considered in Theorem \ref{gen_rand_el_1} (i), (iii) are separable Banach spaces. Hence, the random elements in Theorem \ref{gen_rand_el_1} (i), (iii) are Bochner measurable. Since the space $D([0,T])$, under the Skorokhod metric $d$, is complete separable metric space, the random element in Theorem \ref{gen_rand_el_1} (ii) is Borel measurable. (Recall also that the vector space $D([0,T])$ under the supremum norm is Banach but not separable.)
\end{rem}
\section{Exercises}
\begin{itemize}
	\item [1.] Show that $\mathcal{N}$ in \eqref{null_sys} is a $\pi$-system. Moreover, prove that
\begin{equation}
	\mathcal{C}=\{A\in\Sigma \ | \  \mathbb{P}(A)\in\{0,1\}\}
\end{equation} 
is a $\sigma$-field on $\Omega$ and $\sigma(\mathcal{N})=\mathcal{C}$.
	\item [2.] Let $\mathcal{G}$ be a sub-$\sigma$-field of $\Sigma$. Show that the $\sigma$-fields $\sigma(\mathcal{N})$ and $\mathcal{G}$ are independent.
	\item [3.] Let $(\Sigma_t)_{t\in [0,+\infty)}$ be a complete filtration of a complete probability space $(\Omega,\Sigma,\mathbb{P})$. Show that for each $t\in [0,+\infty)$ the probability space $(\Omega,\Sigma_t,\mathbb{P})$ is complete.
	\item [4.] Give a proof of \eqref{last_el_fX}.\\
	Hint: Use Proposition \ref{sum_s_alg}.
	\item [5.] Let $f:[0,+\infty)\times\mathbb{R}^d\to\mathbb{R}^m$ be a Borel function and let $X=(X(t))_{t\in [0,+\infty)}$ be a $\mathbb{R}^d$-valued predictable (optional, progressive) process. Show that the process $(f(t,X(t)))_{t\in [0,+\infty)}$ is predictable (optional, progressive).
	\item [6.] Let $f:[0,+\infty)\times\mathbb{R}^d\to\mathbb{R}^m$ be a Borel function and let $X=(X(t))_{t\in [0,+\infty)}$ be a $\mathbb{R}^d$-valued adapted and product measurable process. Show that the process $(f(t,X(t)))_{t\in [0,+\infty)}$ is adapted and product measurable.
	\item [7.] Let us assume that $(\Omega,\Sigma,(\Sigma_t)_{t\in [0,+\infty)},\mathbb{P})$ is a complete probability space with a complete filtration $(\Sigma_t)_{t\in [0,+\infty)}$. Suppose that $X=(X(t))_{t\in [0,+\infty)}$ and $Y=(Y(t))_{t\in [0,+\infty)}$ are two stochastically equivalent processes, where $X$ is adapted to $(\Sigma_t)_{t\in [0,+\infty)}$. Show that $Y$ is also adapted.
	\item [8.] For any two product measurable stochastic processes $X,Y$ prove that the following are equivalent:
	\begin{itemize}
		\item [(i)] $\displaystyle{(\mathbb{P}\times\lambda_1)\Bigl(\{(\omega,t)\in\Omega\times [0,+\infty) \ | \ X(\omega,t)\neq Y(\omega,t)\}\Bigr)=0}$,
		\item [(ii)] $\displaystyle{\mathbb{P}\Bigl(\{\omega\in\Omega \ | \ X(\omega,t)\neq Y(\omega,t)\}\Bigr)=0}$ for $d\lambda_1$-almost all $t\in [0,+\infty)$,
		\item [(iii)]$\displaystyle{\lambda_1\Bigl(\{t\in [0,+\infty) \ | \ X(\omega,t)\neq Y(\omega,t)\}\Bigr)=0}$ for $d\mathbb{P}$-almost all $\omega\in\Omega$.
	\end{itemize}
	\item [9.] Show that for any $t\in [0,T]$ the coordinate mapping \eqref{cm_1} is continuous.
	\item [10.] Give a proof of Theorem \ref{from_B2prod} (ii).
\end{itemize}
\begingroup
\renewcommand{\section}[2]{}%

\endgroup
\end{document}